\title[Stable pairs on local K3 surfaces]{{\bf Stable pairs on local K3 surfaces}}
\date{}
\author{Yukinobu Toda}
\DeclareFontFamily{U}{rsfs}{%
\skewchar\font127}
\DeclareFontShape{U}{rsfs}{m}{n}{%
<-6>rsfs5<6-8.5>rsfs7<8.5->rsfs10}{}
\DeclareSymbolFont{rsfs}{U}{rsfs}{m}{n}
\DeclareRobustCommand*\rsfs{%
\@fontswitch\relax\mathrsfs}
\theoremstyle{plain}
\newtheorem{thm}{Theorem}[section]
\newtheorem{prop}[thm]{Proposition}
\newtheorem{lem}[thm]{Lemma}
\newtheorem{sublem}[thm]{Sublemma}
\newtheorem{defi}[thm]{Definition}
\newtheorem{rmk}[thm]{Remark}
\newtheorem{cor}[thm]{Corollary}
\newtheorem{step}{Step}
\newtheorem{sstep}{Step}
\newtheorem{ssstep}{Step}
\newtheorem{sssstep}{Step}
\newtheorem{ssssstep}{Step}
\newtheorem{prop-defi}[thm]{Proposition-Definition}
\newtheorem{thm-defi}[thm]{Theorem-Definition}
\newtheorem{lem-defi}[thm]{Lemma-Definition}
\newtheorem{conj}[thm]{Conjecture}
\newdimen\argwidth
\def\db[#1\db]{
 \setbox0=\hbox{$#1$}\argwidth=\wd0
 \setbox0=\hbox{$\left[\box0\right]$}
  \advance\argwidth by -\wd0
 \left[\kern.3\argwidth\box0 \kern.3\argwidth\right]}
\newcommand{\aA}{\mathcal{A}}
\newcommand{\bB}{\mathcal{B}}
\newcommand{\cC}{\mathcal{C}}
\newcommand{\dD}{\mathcal{D}}
\newcommand{\eE}{\mathcal{E}}
\newcommand{\fF}{\mathcal{F}}
\newcommand{\hH}{\mathcal{H}}
\newcommand{\mM}{\mathcal{M}}
\newcommand{\oO}{\mathcal{O}}
\newcommand{\pP}{\mathcal{P}}
\newcommand{\rR}{\mathcal{R}}
\newcommand{\sS}{\mathcal{S}}
\newcommand{\tT}{\mathcal{T}}
\newcommand{\uU}{\mathcal{U}}
\newcommand{\xX}{\mathcal{X}}
\newcommand{\yY}{\mathcal{Y}}
\newcommand{\zZ}{\mathcal{Z}}
\newcommand{\Supp}{\mathop{\rm Supp}\nolimits}
\newcommand{\Hom}{\mathop{\rm Hom}\nolimits}
\newcommand{\dotimes}{\stackrel{\textbf{L}}{\otimes}}
\newcommand{\dR}{\mathbf{R}}
\newcommand{\Hilb}{\mathop{\rm Hilb}\nolimits}
\newcommand{\Pic}{\mathop{\rm Pic}\nolimits}
\newcommand{\id}{\textrm{id}}
\newcommand{\ch}{\mathop{\rm ch}\nolimits}
\newcommand{\td}{\mathop{\rm td}\nolimits}
\newcommand{\Ext}{\mathop{\rm Ext}\nolimits}
\newcommand{\Spec}{\mathop{\rm Spec}\nolimits}
\newcommand{\rank}{\mathop{\rm rank}\nolimits}
\newcommand{\Coh}{\mathop{\rm Coh}\nolimits}
\newcommand{\cneq}{\mathrel{\raise.095ex\hbox{:}\mkern-4.2mu=}}
\newcommand{\eqcn}{\mathrel{=\mkern-4.5mu\raise.095ex\hbox{:}}}
\newcommand{\Cok}{\mathop{\rm Cok}\nolimits}
\newcommand{\Aut}{\mathop{\rm Aut}\nolimits}
\newcommand{\Stab}{\mathop{\rm Stab}\nolimits}
\newcommand{\DT}{\mathop{\rm DT}\nolimits}
\newcommand{\PT}{\mathop{\rm PT}\nolimits}
\newcommand{\Imm}{\mathop{\rm Im}\nolimits}
\newcommand{\Ker}{\mathop{\rm Ker}\nolimits}
\newcommand{\Ree}{\mathop{\rm Re}\nolimits}
\newcommand{\tr}{\mathop{\rm tr}\nolimits}
\newcommand{\ex}{\mathop{\rm ex}\nolimits}
\newcommand{\Auteq}{\mathop{\rm Auteq}\nolimits}
\newcommand{\cl}{\mathop{\rm cl}\nolimits}
\begin{document}
\maketitle
\begin{abstract}
We prove a formula which 
relates Euler characteristic of moduli 
spaces of stable pairs on local K3 surfaces
to counting invariants of semistable sheaves on 
them. 
Our formula generalizes Kawai-Yoshioka's
formula for stable pairs
with irreducible curve classes to 
arbitrary curve classes. 
We also propose a 
conjectural multiple cover formula 
of sheaf counting invariants 
which, combined with our 
main result, 
leads to an Euler characteristic version 
of Katz-Klemm-Vafa conjecture for 
stable pairs. 
\end{abstract}

\section{Introduction}
Let $S$ be a smooth projective K3 surface 
over $\mathbb{C}$, and $X$ 
the total space of the canonical line bundle 
(i.e. trivial line bundle)
on $S$, 
\begin{align*}
X =S \times \mathbb{C}. 
\end{align*}
The space $X$ is a non-compact 
Calabi-Yau 3-fold. 
We first state our main result, 
then discuss its motivation, 
background and outline 
of the proof. 
\subsection{Main result}
Our goal is to 
prove a formula which relates 
the following two kinds of invariants on $X$.

{\bf (i)
Stable pair invariants: } 
The notion of stable pairs is introduced by 
Pandharipande-Thomas~\cite{PT}
in order to give a refined 
Donaldson-Thomas curve counting 
invariants on Calabi-Yau 3-folds. 
By definition, 
a stable pair on $X$ consists of a pair 
\begin{align*}
(F, s), \quad s\colon \oO_X \to F, 
\end{align*}
where $F$ is a pure one dimensional 
coherent sheaf on $X$
and $s$ is surjective in dimension one. 
We always assume that $F$ is supported on the fibers of 
the second projection, 
\begin{align}\label{second:proj}
X=S\times \mathbb{C} \to \mathbb{C}.
\end{align}
For $\beta \in H_2(X, \mathbb{Z})$ 
and $n\in \mathbb{Z}$, 
the moduli space of such pairs $(F, s)$
satisfying 
\begin{align*}
[F]=\beta, \quad \chi(F)=n,
\end{align*}
is denoted by 
$P_{n}(X, \beta)$.
We are interested in its 
topological Euler characteristic, 
\begin{align}\label{inv:chi}
\chi(P_n(X, \beta)) \in \mathbb{Z}. 
\end{align}

{\bf (ii) Sheaf counting invariants: }
Let $\omega$ be an ample divisor on $S$, and 
take a vector
\begin{align}\label{vec:sheaf}
v=(r, \beta, n) \in \mathbb{Z} \oplus H^2(S, \mathbb{Z}) \oplus 
\mathbb{Z}.
\end{align}
The moduli stack of $\omega$-Gieseker semistable 
sheaves on $X$ supported on the fibers of 
the projection (\ref{second:proj})
 with Mukai vector
$v$ is denoted by 
\begin{align}\label{stack}
\mM_{\omega}(r, \beta, n). 
\end{align}
We consider its `Euler characteristic', 
\begin{align}\label{inv:J}
J(r, \beta, n)=`\chi'(\mM_{\omega}(r, \beta, n)). 
\end{align}
We will see that the RHS does not depend on $\omega$, 
so $\omega$ is not included in the LHS. 
When the vector $v$ is primitive, 
then the invariant
 (\ref{inv:J}) is the usual Euler characteristic 
of the moduli space (\ref{stack}). 
However when $v$ is not primitive, 
then the stack
(\ref{stack}) may have complicated stabilizers 
and the definition of its
`Euler characteristic'
is not obvious. 
In this case, 
we apply Joyce's theory on 
counting invariants, developed in~\cite{Joy1}, \cite{Joy2}, 
\cite{Joy3}, \cite{Joy4}, \cite{Joy5}
to the definition of $`\chi'$. 
Namely  
the invariant (\ref{inv:J})
is defined by taking the `logarithm'
in the Hall algebra and its 
Euler characteristic. 
(See Subsection~\ref{subsec:sheafcount}.)
Similar invariants have
been also studied in~\cite{Tst3}.

 Our main result is the following theorem. 
\begin{thm}{\bf [Theorem~\ref{maintheorem:PTJ}]}
\label{thm:main}
The generating series 
\begin{align*}
\PT^{\chi}(X) \cneq 
\sum_{\beta, n}\chi(P_n(X, \beta))y^{\beta}z^n,
\end{align*}
is written as the following product expansion, 
\begin{align}\notag
\PT^{\chi}(X)=\prod_{r\ge 0, \beta>0, n\ge 0} &
\exp\left((n+2r)J(r, \beta, r+n)y^{\beta}z^n \right) \\
\label{main:formula}
&\cdot \prod_{r> 0, \beta>0, n>0}
\exp\left((n+2r)J(r, \beta, r+n) y^{\beta} z^{-n} \right).
\end{align}
\end{thm}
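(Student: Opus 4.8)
The plan is to deduce (\ref{main:formula}) from a Joyce-type wall-crossing argument in a Hall algebra of objects on $X$, after using the $\mathbb{C}^{\ast}$-action to reduce to the central K3 and a Fourier--Mukai transform to relate the resulting sheaf counts to the $\omega$-Gieseker invariants $J$. \emph{Step 1: $\mathbb{C}^{\ast}$-localization.} The torus $\mathbb{C}^{\ast}$ scales the second factor of $X=S\times\mathbb{C}$ and hence acts on each $P_n(X,\beta)$. A stable pair is supported on finitely many fibers, and the flow $t\to0$ carries such a pair to one supported scheme-theoretically on a thickening $S_k=S\times\Spec\mathbb{C}[t]/t^{k}$ of $S\times\{0\}$; the fixed locus consists precisely of these, and is proper because each $S_k$ is projective and $k$ is bounded once $(\beta,n)$ is fixed. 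The Bialynicki--Birula argument for topological Euler characteristics gives $\chi(P_n(X,\beta))=\chi\bigl(P_n(X,\beta)^{\mathbb{C}^{\ast}}\bigr)$, so it suffices to enumerate $\mathbb{C}^{\ast}$-fixed stable pairs, equivalently stable pairs on the formal thickening $S\times\Spec\mathbb{C}[[t]]$. \emph{Step 2: d\'{e}vissage and the Hall algebra.} A sheaf $F$ on $X$ supported on $S\times\{0\}$ carries the $t$-adic filtration $F\supset tF\supset\cdots$, whose subquotients $F^{(i)}=t^{i}F/t^{i+1}F$ are pure $\le1$-dimensional sheaves on $S$; a $\mathbb{C}^{\ast}$-fixed stable pair is then the data of such a chain together with the induced section $\oO_S\to F^{(0)}$ and the pair-stability condition. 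I will record this, for all $k$ simultaneously, in the motivic Hall algebra of an abelian category $\aA_X$ of D0--D2--D6 bound states inside $D^{b}(\Coh X)$ that is a tilt of $\Coh X$ and contains all two-term complexes $(\oO_X\to F)$ of stable pairs; since $\oO_X$ is not compactly supported, the integration (Euler-characteristic) homomorphism on this Hall algebra must be set up with the correctly (anti)symmetrised Euler form.

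\emph{Step 3: Fourier--Mukai and the sheaf-counting side.} The one-dimensional sheaves produced in Step 2 carry Mukai vectors $(0,\gamma,m)$ on $S$. I will interpolate, using Bridgeland stability conditions on $D^{b}(\Coh S)$ together with a suitable Fourier--Mukai autoequivalence $\Phi$ of $D^{b}(\Coh S)$ (as in Kawai--Yoshioka; e.g.\ $\Phi$ can be arranged so that the spherical twist along $\oO_S$ enters, acting on Mukai vectors by $(0,\beta,m)\mapsto(-m,\beta,0)$), between the chamber relevant to the stable pairs and the large-volume chamber in which the objects are $\omega$-Gieseker semistable sheaves of rank $r$ on $S$; this is how the rank $r$ in (\ref{main:formula}) appears. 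That the resulting count equals $J(r,\beta,r+n)$ independently of $\omega$ will follow from Yoshioka's results on moduli of semistable sheaves on K3 surfaces, by which these moduli spaces are deformation equivalent and their $\chi$ depends only on $v^{2}$ and the divisibility of $v$. Tracking Mukai vectors identifies a layer with a sheaf $E$ on $S$ of $\ch(E)=(r,\beta,n)$, i.e.\ $v(E)=(r,\beta,r+n)$, with $\chi(E)=n+2r$.

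\emph{Step 4: wall-crossing, coefficients, and check.} Pushing Joyce's wall-crossing formula through the integration homomorphism, and invoking the no-poles theorem so that the logarithmic invariants $J$ exist, converts the passage from the trivial chamber to the stable-pair chamber into the product expansion: each $\omega$-semistable sheaf class $v(E)=(r,\beta,r+n)$ contributes a factor $\exp\bigl(c\cdot J(r,\beta,r+n)\,y^{\beta}z^{\pm n}\bigr)$, with multiplicity forced to be the Euler pairing $c=\chi(\oO_X,E)=\chi(E)=-\langle(1,0,1),v(E)\rangle=n+2r$ between the D6-brane and the sheaf; geometrically $n+2r=h^{0}-h^{1}+h^{2}$ records the ways of attaching the section. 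The two products, carrying $z^{n}$ and $z^{-n}$, come from the two positions a sheaf layer can occupy relative to the cokernel of the section, equivalently from Grothendieck duality on $X$ (resp.\ on $S$), which preserves the curve class $\beta$ and negates $\chi$; and the ranges $r\ge0,\,n\ge0$ versus $r>0,\,n>0$ record which layers actually occur in a genuine PT pair as opposed to its wall-crossing partner. Restricting to primitive $\beta$, where $J(r,\beta,r+n)$ is the Euler characteristic of a smooth projective holomorphic symplectic Mukai moduli space, must reproduce Kawai--Yoshioka's formula; this is the main consistency check on the numerology.

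\emph{Main obstacle.} The crux is the non-primitive case in Steps 3--4. When $\beta$ is not primitive the stack $\mM_{\omega}(r,\beta,n)$ has positive-dimensional stabilisers, so $J$ is only defined through Joyce's logarithm construction, and the whole wall-crossing has to be carried out inside the Lie subalgebra of logarithmic elements; proving independence of $\omega$ there, and simultaneously identifying the infinite product obtained by summing over the thickening level $k$ — with its exponents $(n+2r)$ and its two $z$-series — is the principal technical burden, and is exactly what forces one to combine Joyce's machinery with Yoshioka's deformation-equivalence results. By comparison the localization of Step 1, the $t$-adic d\'{e}vissage, and the Mukai-lattice numerology are routine.
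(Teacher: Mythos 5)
Your outline diverges from a workable argument at several load-bearing points. First, Steps 1--2 are incompatible as stated: once you localize to the $\mathbb{C}^{\ast}$-fixed locus you are counting pairs supported on thickenings of $S$, but Joyce's wall-crossing machinery computes (logarithms of) classes of moduli stacks of \emph{all} objects of an abelian category, not of fixed loci; you cannot first pass to fixed points and then run the Hall-algebra argument as if you were still counting every object. Relatedly, setting up an integration map on a category on the non-compact $X$ containing $\oO_X$ is exactly the difficulty the paper engineers around: it compactifies to $\overline{X}=S\times\mathbb{P}^1$, replaces $\oO_X$ by the subcategory $\pi^{\ast}\Pic(\mathbb{P}^1)$ inside $\dD$, proves the partial Calabi-Yau duality of Lemma~\ref{cy3}, and introduces the pairing (\ref{chi:form}) so that the wall-crossing formula applies; localization enters only through the elementary identities $\PT^{\chi}(\overline{X})=\PT^{\chi}(X)^2$ and $\overline{J}(v)=2J(v)$.

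Second, and this is the main gap, you give no mechanism producing the sum over all $r\ge 0$ at fixed $(\beta,n)$ with weight $n+2r$. A Fourier--Mukai autoequivalence such as the spherical twist sends one numerical class to one numerical class, so interpolating through Bridgeland chambers on $D^b\Coh(S)$ cannot by itself make infinitely many sheaf classes $(r,\beta,r+n)$, $r\ge 0$, contribute to a single pair class $(\beta,n)$. In the paper this structure arises from a one-parameter family $t\in(0,\infty)$ of weak stability conditions on $\dD$, in which two-dimensional fiber-supported sheaves of every rank destabilize rank-one objects at finitely many walls, together with a second wall-crossing (the $\theta$-family on $\aA_{\omega}(1/2)$) needed to evaluate the $t\to 0$ limit, and finally extraction of the $x^0$-coefficient against the factor $\sum_{r}x^r$ coming from the objects $\pi^{\ast}\oO_{\mathbb{P}^1}(r)$; none of this is present or replaced in your plan. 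Third, your appeal to Yoshioka's deformation equivalence to identify the resulting counts with $J(r,\beta,r+n)$ independently of $\omega$ is valid only for primitive Mukai vectors; for non-primitive $v$ the assertion that the count depends only on $(v,v)$ and the divisibility of $v$ is essentially the paper's open multiple-cover conjecture, which is neither available nor needed. What the proof actually uses is the vanishing of the Euler pairing on $\Gamma_0$ (giving $\omega$-independence and invariance under Hodge isometries, hence $J(-r,\beta,-n)=J(r,\beta,n)$ via Theorem~\ref{thm:aut}) and the comparison $N(v)=2J(v\sqrt{\td_S})$ of Corollary~\ref{prop:inv}. Deferring all of this as a ``technical burden'' leaves the theorem unproved.
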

Here we have 
regarded 
$\beta \in H_2(X, \mathbb{Z})$ as an 
element of $H^2(S, \mathbb{Z})$,
and $\beta>0$ means that $\beta$ is 
a 
Poincar\'e dual of 
an effective one cycle on $S$. 
The background of the above formula will be discussed below.

\subsection{Motivation and Background}
The curve counting theories on 
Calabi-Yau 3-folds have 
drawn much attention recently. 
Now there are three kinds of such
theories: Gromov-Witten (GW) theory~\cite{BGW}, \cite{LTV}, 
Donaldson-Thomas (DT) 
theory~\cite{Thom} and Pandharipande-Thomas (PT) 
theory~\cite{PT}. These theories are 
conjectured to be equivalent by
Maulik-Nekrasov-Okounkov-Pandharipande~\cite{MNOP}
and Pandharipande-Thomas~\cite{PT}. 
Among the above three theories, DT and PT
theories also count objects 
in the derived category of coherent sheaves
on $X$. 
Based on this observation, 
it was speculated in~\cite{PT} that DT/PT
theories should be related by wall-crossing 
phenomena w.r.t. Bridgeland's 
space of stability conditions on 
the derived category
of coherent sheaves~\cite{Brs1}. 
In recent years, 
general theories of 
 wall-crossing formula 
of DT type invariants are
established by Joyce-Song~\cite{JS}
and Kontsevich-Soibelman~\cite{K-S}.
By applying the wall-crossing formula, 
several geometric applications have been obtained,
e.g. DT/PT 
correspondence, rationality of the generating series, 
flop invariance, etc. 
(cf.~\cite{Tcurve1}, \cite{Tcurve2}, \cite{Tolim2}, \cite{StTh}, \cite{BrH}.)
Our purpose is to 
give a further application 
of the wall-crossing 
in the derived category to 
the curve counting invariants on 
local K3 surfaces.  

When $X=S \times \mathbb{C}$ for 
a K3 surface $S$, usual
curve counting invariants 
are rather trivial. 
This is because that, although 
the curve counting invariants are unchanged 
under deformations of $S$, the K3 surface $S$
can be deformed in a non-algebraic way 
so that the resulting invariants are always zero. 
Instead, the reduced curve counting invariants 
should be the correct mathematical objects to be
studied. 
(See Subsections~\ref{subsec:KKVconj}, \ref{subsec:reduced2}.)
These reduced theories are introduced 
and studied in~\cite{MP1}, \cite{KMPS}, \cite{MPT}, 
and unchanged under deformations 
of $S$ preserving the curve class 
to be algebraic. 

One of the goals in the study of curve counting 
invariants on $X$ is to prove a conjecture 
by Katz-Klemm-Vafa~\cite[Section~6]{KKV}, 
which we call \textit{KKV conjecture}.  
It predicts a certain evaluation of 
reduced curve counting 
invariants on $X$ in terms of modular forms, 
and is derived from the duality between 
the M-theory on $S$ and 
the heterotic string theory on $T^3$.
Mathematically the KKV conjecture 
 is formulated in terms of 
generating series of reduced GW invariants, 
(cf.~Conjecture~\ref{conj:KKV},)
and proved for primitive\footnote{A curve class 
$\beta \in H_2(X, \mathbb{Z})$ is \textit{primitive} 
if it is not a multiple of some 
other element in $H_2(X, \mathbb{Z})$.} curve classes by 
Maulik-Pandharipande-Thomas~\cite{MPT}. 
The strategy
in the paper~\cite{MPT} 
for primitive classes is as follows: 
\begin{itemize}
\item First prove a reduced version of
 GW/PT correspondence for primitive classes.
Then the problem can be reduced to a computation of 
reduced PT invariants for primitive  
classes.
The latter computation  
can be reduced to the
case of irreducible\footnote{A curve class 
$\beta \in H_2(X, \mathbb{Z})$ is \textit{irreducible} 
if it is not written as $\beta_1 +\beta_2$
for $\beta_i>0$.}
 curve classes
by a deformation argument. 
\item If the curve class is irreducible, then the reduced
PT invariant coincides with the 
Euler characteristic
of the moduli space (\ref{inv:chi}). 
The invariants (\ref{inv:chi})
for irreducible $\beta$ are completely 
calculated by Kawai-Yoshioka~\cite{KY}, and 
apply their formula. 
(cf.~Theorem~\ref{KKV:KY}.)
\end{itemize}
Now suppose that we try to solve KKV 
conjecture for arbitrary curve classes, 
following the above strategy. 
Then it is natural to try to  
generalize Kawai-Yoshioka's 
formula~\cite{KY} for the invariants (\ref{inv:chi}) 
with irreducible curve classes to
arbitrary curve classes. 
Our main theorem
(Theorem~\ref{thm:main}) has grown out of such an 
attempt. 
In fact we will see that the formula (\ref{main:formula}) 
reconstructs Kawai-Yoshioka's formula~\cite{KY}.
(See Subsection~\ref{subsec:reduced2}.)
Also the formula (\ref{main:formula}) 
reduces the computation of the stable pair
invariants (\ref{inv:chi})
for arbitrary $\beta$
to that of the sheaf counting invariants (\ref{inv:J}). 
As we will discuss in the next subsection, 
the latter invariants are 
expected to be related to the Euler characteristic 
of the Hilbert scheme of points on $S$ in terms of
the multiple cover formula. 
(cf.~Conjecture~\ref{conj:intro}.)
Assuming such a multiple cover formula, 
the series $\PT^{\chi}(X)$ is written as
an infinite product similar to 
Borcherd's product~\cite{Borch}, 
giving a complete calculation of the invariants (\ref{inv:chi})
for arbitrary curve classes. 
(See Subsection~\ref{subsec:multi} 
and the formula (\ref{Borcherd}) below.)

At this moment, we don't know any relationship between 
reduced PT invariants and the invariants (\ref{inv:chi})
when the curve class $\beta$ is not irreducible. 
So a reduced version of GW/PT together with
Theorem~\ref{thm:main}
do not immediately imply the KKV conjecture. 
Also there is a technical gap in 
proving a version of the formula (\ref{main:formula})
 for reduced PT invariants.
The issue
is that, although the wall-crossing formula 
in~\cite{JS}, \cite{K-S}
is established
for invariants expressed by the Behrend function~\cite{Beh}, 
it is not clear whether  
reduced PT invariants are
expressed in that way or not. 
Nevertheless, we expect that there is a close relationship 
between reduced PT invariants and the 
Euler characteristic invariants (\ref{inv:chi}), 
and knowing the invariants (\ref{inv:chi}) 
give a geometric intuition of the reduced PT invariants. 
Such an attempt, 
namely 
proving an Euler characteristic version
first and then back to the virtual one, 
 is also employed 
and successful in proving 
DT/PT correspondence and the rationality 
conjecture in~\cite{Tcurve1}, \cite{Tolim2}, \cite{StTh}, \cite{BrH}. 
In fact, we will see that the reduced GW/PT correspondence, 
 together with a
conjectural version of the formula (\ref{main:formula}) 
for reduced PT invariants,
yield KKV conjecture. 
(See Subsection~\ref{spec:KKV}.)
In this sense, our result is expected to be 
a first step toward
a complete proof of KKV conjecture.

\subsection{Conjectural multiple cover formula}\label{subsec:multi}
An interesting point of the formula (\ref{main:formula})
is that it gives a relationship between 
invariants with different features. Namely, 
\begin{itemize}
\item A stable pair invariant (\ref{inv:chi})
is easy to define and an integer. 
However it is not easy to relate it to the 
geometry of K3 surfaces, nor see any 
interesting dualities among the invariants (\ref{inv:chi}).  
\item A sheaf counting invariant (\ref{inv:J})
is difficult to define and it is not necessary an 
integer. However it has a nice automorphic property, 
and seems to be related to the Euler characteristic 
of the Hilbert scheme of points on $S$. 
\end{itemize}
Let us focus on the property of the invariants (\ref{inv:J}). 
First the invariant $J(v)$
is completely calculated when 
$v$ is a primitive algebraic class. 
In this case, we have~\cite{KY}, \cite{Yoshi2},
\begin{align}\label{J=Hilb}
J(v)=\chi(\Hilb^{(v, v)/2 +1}(S)).
\end{align}
Here $(\ast, \ast)$ is 
the Mukai pairing on the Mukai lattice, 
\begin{align*}
\widetilde{H}(S, \mathbb{Z}) \cneq 
\mathbb{Z} \oplus H^2(S, \mathbb{Z}) \oplus \mathbb{Z},
\end{align*} 
and $\Hilb^n(S)$ is the Hilbert scheme of 
$n$-points in $S$.
Its Euler characteristic is computed by the 
G$\ddot{\rm{o}}$ttsche's
formula~\cite{Got}, 
\begin{align}\label{Goett}
\sum_{n \ge 0}
\chi(\Hilb^n(S))q^n 
=\prod_{n \ge 1}\frac{1}{(1-q^n)^{24}}.
\end{align} 
The formulas (\ref{main:formula}), (\ref{J=Hilb})
and (\ref{Goett}) reconstruct 
Kawai-Yoshioka's
formula~\cite{KY} for 
the invariants (\ref{inv:chi})
with irreducible $\beta$.  
(See Subsection~\ref{subsec:reduced2}.)
 
When $v$ is not necessarily 
primitive, 
we are not able to
give a complete computation of 
the invariant (\ref{inv:J})
at this moment. 
However there is an advantage 
of the invariant (\ref{inv:J}), as 
it has a certain automorphic property.
Recall that 
the lattice $\widetilde{H}(S, \mathbb{Z})$
has a weight two Hodge structure. 
(See Subsection~\ref{subsec:K3}.)
The following result is a refinement of~\cite[Corollary~5.26]{Tst3}. 
\begin{thm}\label{intro:aut}
{\bf [Theorem~\ref{thm:aut}] }
Let $g$ be a Hodge isometry 
of the lattice $\widetilde{H}(S, \mathbb{Z})$. Then we have 
\begin{align*}
J(gv)=J(v). 
\end{align*}
\end{thm}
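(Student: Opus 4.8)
The plan is to realize every Hodge isometry of $\widetilde{H}(S,\mathbb{Z})$ by a covariant or contravariant autoequivalence of $D^{b}(S)$, to lift it to $D^{b}(X)$, and to exploit the independence of $J$ of the chosen stability data. Recall that by construction $J(r,\beta,n)$ is independent of the polarization $\omega$; more precisely the count underlying $J$ is unchanged under deformations of the Bridgeland stability condition used to form it, which is exactly what drives the wall-crossing identities. The group $O_{\mathrm{Hodge}}(\widetilde{H}(S,\mathbb{Z}))$ of Hodge isometries of the weight-two lattice $\widetilde{H}(S,\mathbb{Z})$ contains, as an index-two subgroup, the group $O^{+}$ of isometries preserving the natural orientation of the positive-definite four-plane --- the real span of $\mathrm{Re}\,\sigma$, $\mathrm{Im}\,\sigma$ for the holomorphic two-form $\sigma$, a positive direction in the hyperbolic summand $H^{0}\oplus H^{4}$, and a K\"ahler class. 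By Orlov's derived global Torelli theorem together with the orientation theorem of Huybrechts--Macr\`{\i}--Stellari, $O^{+}$ is exactly the image of the natural representation $\Auteq(D^{b}(S))\to O_{\mathrm{Hodge}}(\widetilde{H}(S,\mathbb{Z}))$; and the argument of \cite[Corollary~5.26]{Tst3} gives $J(gv)=J(v)$ for every such $g$, since a Fourier--Mukai autoequivalence $\Phi$ (lifted to $D^{b}(X)$) permutes the Bridgeland stability conditions compatibly with its action $\Phi_{\ast}=g$ on the lattice, hence sends $\sigma$-semistable objects of class $v$ bijectively to $\Phi(\sigma)$-semistable objects of class $gv$ and intertwines the associated Hall-algebra logarithm elements. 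Therefore it suffices to produce one orientation-reversing Hodge isometry $g_{0}$ with $J(g_{0}v)=J(v)$, because then $O_{\mathrm{Hodge}}=O^{+}\sqcup g_{0}O^{+}$.

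I would take $g_{0}\colon(r,\beta,n)\mapsto(r,-\beta,n)$. One checks at once that $g_{0}$ preserves the Mukai pairing, maps $\widetilde{H}^{2,0}=\mathbb{C}\sigma$ to itself, and reverses the orientation of the positive four-plane: it is the identity on $H^{0}\oplus H^{4}$ and $-1$ on $H^{2}$, so it fixes one positive direction and negates the other three, acting there with determinant $-1$. The isometry $g_{0}$ is induced on Mukai vectors by the dualizing functor $\mathbb{D}=\dR\HOM_{X}(-,\oO_{X})$, a contravariant autoequivalence of $D^{b}(X)$ (with $\oO_{X}$ dualizing since $X$ is Calabi--Yau) which over each fiber $S\times\{p\}$ of (\ref{second:proj}) is $\dR\HOM_{S}(-,\oO_{S})$; since $\mathbb{D}$ negates $\ch_{1}$, its action on the class $(r,\beta,n)$ is precisely $g_{0}$. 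The classical duality for pure sheaves turns quotients into subobjects and reverses reduced Hilbert polynomials, so $\mathbb{D}$ carries semistable objects of class $v$ bijectively to semistable objects of class $g_{0}v$ --- possibly after transporting to the Bridgeland chamber in which these duals lie, which is harmless as $J$ is chamber-independent --- and in particular identifies the stacks of semistable objects entering the two definitions of $J$. Being a contravariant equivalence, $\mathbb{D}$ induces an anti-isomorphism of the Hall algebra sending each semistable generator $\delta^{\mathrm{ss}}(w)$ to $\delta^{\mathrm{ss}}(g_{0}w)$ and reversing the slope order consistently; plugging this into Joyce's explicit formula for the relevant Lie-algebra element shows that $\mathbb{D}$ transports the element attached to $v$ to the one attached to $g_{0}v$, and taking Euler characteristics yields $J(r,-\beta,n)=J(r,\beta,n)$. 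Combined with the first paragraph this gives $J(gv)=J(v)$ for all $g\in O_{\mathrm{Hodge}}(\widetilde{H}(S,\mathbb{Z}))$.

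The step I expect to be the main obstacle is this last compatibility with Joyce's formalism in the \emph{contravariant} case. For imprimitive $v$ the invariant $J(v)$ is not the Euler characteristic of a moduli space but is extracted, through the logarithm in the Hall algebra, from a Lie-algebra-valued class assembled out of Harder--Narasimhan and Jordan--H\"older strata together with their Behrend-function weights; one must verify carefully that the dualizing functor --- which reverses every short exact sequence and every filtration involved --- nonetheless transports the entire package for $v$ to the one for $g_{0}v$, in particular controlling the behaviour of Behrend functions under $\mathbb{D}$. For orientation-preserving $g$ the parallel statement involves only covariant equivalences and is already in \cite{Tst3}; the orientation-reversing case is the genuinely new input here. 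A secondary, more routine point, handled by the earlier sections, is to make precise the lift of Fourier--Mukai kernels and of $\dR\HOM(-,\oO)$ from $D^{b}(S)$ to the local threefold $D^{b}(X)$ and the comparison between the counting invariant of fiber-supported objects on $X$ and the corresponding invariant on $S$.
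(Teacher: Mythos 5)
Your proposal is correct and follows essentially the same route as the paper: decompose the Hodge isometry group as $G^{+}$ together with the coset of the involution $(r,\beta,n)\mapsto(r,-\beta,n)$, realize $G^{+}$ by Fourier--Mukai autoequivalences preserving the distinguished connected component of stability conditions (Huybrechts--Macr\`{\i}--Stellari, Hartmann) combined with the independence of the Hall-algebra counting invariant on the choice of stability condition, and realize the orientation-reversing involution by the derived dual. The only cosmetic differences are that the paper routes the comparison through the compactification $\overline{X}=S\times\mathbb{P}^1$ (via $\overline{J}(v)=2J(v)$ and $\overline{J}(v\sqrt{\td_S})=N(v)$, with stability-condition independence coming from the vanishing of the Euler pairing on $\Gamma_0\times\Gamma_0$), and your anticipated difficulty with Behrend functions does not arise since all invariants here are unweighted Euler-characteristic ones.
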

The above result means that, if
we are able to compute the invariant (\ref{inv:J})
for specific $(r, \beta, n)$, e.g. $r=0$, then 
we can also compute 
the invariant (\ref{inv:J}) for another 
$(r, \beta, n)$ by applying 
a Hodge isometry $g$. 
On the other hand, the invariant of the form
$J(0, \beta, n)$ is expected to 
satisfy a certain multiple cover formula
as discussed in~\cite[Conjecture~6.20]{JS}, \cite[Theorem~6.4]{Tsurvey}. 
Combining these arguments, we 
propose the following conjecture.  
\begin{conj}{\bf [Conjecture~\ref{conj:mult3}] }
\label{conj:intro}
If $v$ is an algebraic class, then 
$J(v)$ is written as 
\begin{align*}
J(v)=\sum_{k\ge 1, k|v}
\frac{1}{k^2}\chi(\Hilb^{(v/k, v/k)/2 +1}(S)). 
\end{align*} 
\end{conj}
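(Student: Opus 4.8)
�The plan is to reduce Conjecture~\ref{conj:intro} to the already-understood primitive case by exploiting the Hodge-isometry invariance of Theorem~\ref{intro:aut} together with the expected multiple cover formula for the rank-zero invariants $J(0,\beta,n)$. Write $v=m v_{0}$ with $v_{0}$ primitive and $m=\gcd(v)\ge 1$. The first step is to use Theorem~\ref{intro:aut} to move $v$ into a convenient position: since the Mukai pairing has signature $(2,\rho)$ with $\rho\ge 1$ and the Hodge isometry group acts transitively on algebraic classes of a given square and divisibility (for an \emph{algebraic} class one has $v^{2}=(v,v)$ and, since $v=mv_{0}$, the pair $(v^{2}, m)$ is the only invariant), one can find a Hodge isometry $g$ with $gv=(0,\beta,n)$ for some effective $\beta$ and some $n$ with $\gcd(\beta,n)=m$. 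Thus $J(v)=J(0,\beta,n)$, and it suffices to establish the formula for rank-zero classes.

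The second step is to invoke the multiple cover structure for $J(0,\beta,n)$. By the discussion around~\cite[Conjecture~6.20]{JS} and~\cite[Theorem~6.4]{Tsurvey}, the rank-zero invariants satisfy
\begin{align*}
J(0,\beta,n)=\sum_{k\ge 1,\ k\mid(\beta,n)}\frac{1}{k^{2}}\,\Omega\!\left(0,\beta/k,n/k\right),
\end{align*}
where $\Omega(0,\beta_{0},n_{0})$ denotes the ``primitive'' counting invariant attached to the primitive class $(0,\beta_{0},n_{0})$, which by the $r=0$ case of~(\ref{J=Hilb}) (equivalently by~\cite{KY}, \cite{Yoshi2}) equals $\chi(\Hilb^{((0,\beta_{0},n_{0}),(0,\beta_{0},n_{0}))/2+1}(S))$. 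Because a Hodge isometry preserves both the Mukai pairing and divisibility, $(v/k,v/k)=(gv/k, gv/k)=((0,\beta,n)/k,(0,\beta,n)/k)$ for every $k\mid v$, and the divisors $k$ of $v$ correspond exactly to the divisors of $(\beta,n)$. Substituting the $\Hilb$-evaluation of $\Omega$ into the multiple cover sum then yields
\begin{align*}
J(v)=J(0,\beta,n)=\sum_{k\ge1,\ k\mid v}\frac{1}{k^{2}}\,\chi\!\left(\Hilb^{(v/k,v/k)/2+1}(S)\right),
\end{align*}
which is the desired formula.

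The main obstacle is genuinely the rank-zero multiple cover formula itself: the identity for $J(0,\beta,n)$ is only conjectural in the literature, so strictly speaking Conjecture~\ref{conj:intro} is \emph{equivalent}, via Theorem~\ref{intro:aut}, to that rank-zero statement rather than a consequence of established results. Thus what the above argument really proves is the reduction; the hard analytic/geometric input one would still need is a proof that the sheaf-counting invariants on the rank-zero (pure one-dimensional) locus obey the expected Joyce--Song multiple cover behaviour, for which the only known case is the irreducible one. A secondary, more technical point is checking that the Torelli-type transitivity of the Hodge isometry group is strong enough to realize \emph{every} algebraic $v$ as $g(0,\beta,n)$ with the \emph{same} divisibility $m$; this is where one uses that $\widetilde H(S,\mathbb{Z})$ contains a hyperbolic plane of algebraic classes (spanned by $(1,0,0)$ and $(0,0,1)$), so one can always trade rank for a $(0,\beta,n)$ of matching square and divisibility, but one should verify the effectivity of the resulting $\beta$, which follows since $J$ vanishes unless the class is represented by an effective one-cycle.
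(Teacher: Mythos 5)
The statement you are proving is a \emph{conjecture} in the paper: the author never proves it, but only supplies evidence --- the explicitly computed cases $v=(0,0,n)$, $v=(r,0,r)$ and $v=(0,2H,-2)$ of Lemmas~\ref{lem:00n}, \ref{lem:r0r} and Proposition~\ref{lem:176}, together with a \emph{conditional} derivation inside the proof of Theorem~\ref{thm:KKV}, where the formula is deduced from the assumed Gopakumar--Vafa form of the reduced PT series via \cite[Theorem~6.4]{Tsurvey}. Your proposal, as you yourself concede in the last paragraph, is likewise not a proof: it is a reduction of the general case to the rank-zero multiple cover formula for $J(0,\beta,n)$, which is exactly the open content of the conjecture (\cite[Conjecture~6.20]{JS}). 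So what needs to be assessed is only the reduction step, and there the argument has a real gap.

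Theorem~\ref{thm:aut} gives invariance of $J$ under Hodge isometries of $\widetilde{H}(S,\mathbb{Z})$ for the \emph{fixed} surface $S$. Such an isometry preserves the transcendental Hodge structure, so in practice you only have isometries of the algebraic lattice $\Gamma_0=U\oplus \mathrm{NS}(S)$ that extend over the discriminant group. The transitivity you invoke --- that this group acts transitively on algebraic classes of given square and divisibility, so that every $v$ can be moved to a rank-zero class $(0,\beta,n)$ with $\beta$ effective --- is an Eichler-type statement which requires more than the single hyperbolic plane spanned by $(1,0,0)$ and $(0,0,1)$; for small Picard rank (say $\mathrm{NS}(S)$ of rank one) there is no reason such a $g$ exists, and you give no argument, nor do you check that the isometry of $\Gamma_0$ you want extends to a Hodge isometry of the full Mukai lattice. (The closing claim that effectivity of $\beta$ ``follows since $J$ vanishes unless the class is effective'' also does not address this: you need the isometry to exist in the first place.) The paper's own conditional reduction in the proof of Theorem~\ref{thm:KKV} proceeds differently: it deforms $S$ to an elliptic K3 with a section and then applies a Fourier--Mukai transform along the elliptic fibration to a \emph{different} K3 surface $S'$, using Proposition~\ref{prop:equivalence} and Lemma~\ref{Jlocarize} to get $J_S(r,\beta,n)=J_{S'}(0,\beta',n')$; it is the passage to another surface (and the --- itself only expected --- deformation invariance of $J$) that makes the rank reduction work for every $v$. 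To repair your argument you would have to either prove the required transitivity statement for isometries of $\Gamma_0$ extending to $\widetilde{H}(S,\mathbb{Z})$ for arbitrary $S$, or follow the paper and allow derived equivalences and deformations that change $S$, in which case the deformation invariance of $J$ becomes an additional unproven input.
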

We will give some evidence of the above conjecture 
in Subsection~\ref{subsec:mult3}. 
If we assume the above conjecture, then the formula 
(\ref{main:formula})
is written as 
\begin{align}\notag
\PT^{\chi}(X)
=& \prod_{r\ge 0, \beta>0, n\ge 0}(1-y^{\beta}z^n)^{-(n+2r)\chi(\Hilb^{\beta^2/2 -r(n+r)+1}(S))} \\
\label{Borcherd}
&\cdot \prod_{r> 0, \beta>0, n> 0}(1-y^{\beta}z^{-n})^{-(n+2r)\chi(\Hilb^{\beta^2/2 -r(n+r)+1}(S))}.
\end{align}
The above formula, 
which resembles Borcherd's product~\cite{Borch}, 
 is interpreted as an Euler characteristic 
version of KKV conjecture for stable pairs. 
As we will discuss in Subsection~\ref{spec:KKV}, 
a similar formula for reduced PT invariants 
together with reduced GW/PT correspondence 
give a complete proof of KKV conjecture.

\subsection{Outline of the proof of Theorem~\ref{thm:main}}
\begin{sssstep}
\end{sssstep}
We compactify $X$ as 
\begin{align*}
\overline{X}=S\times \mathbb{P}^1, 
\end{align*}
and prove a formula for $\PT^{\chi}(\overline{X})$. 
(See Subsection~\ref{subsec:stablepair}.)
The series 
$\PT^{\chi}(\overline{X})$ is related to $\PT^{\chi}(X)$ by, 
(cf.~Lemma~\ref{lem:eqPT},) 
\begin{align}\label{intro:PT2}
\PT^{\chi}(\overline{X})=\PT^{\chi}(X)^2. 
\end{align}
We also introduce the invariant,
(cf.~Definition~\ref{defi:Ninv},) 
\begin{align}\label{intro:N}
N(r, \beta, n) \in \mathbb{Q}, 
\end{align}
counting certain semistable objects 
supported on 
the fibers of the projection, 
\begin{align*}
\pi \colon \overline{X}=S \times \mathbb{P}^1 \to \mathbb{P}^1,
\end{align*}
with Chern character (\textit{not} Mukai vector) 
equal to $(r, \beta, n)$.  
The invariant (\ref{intro:N})
is related to the invariant 
(\ref{inv:J}) by, (cf.~Proposition~\ref{prop:inv},) 
\begin{align}\label{intro:N=J}
N(r, \beta, n)=2J(r, \beta, r+n).
\end{align} 
\begin{sssstep}
\end{sssstep}
In~\cite[Theorem~1.3]{Tolim2}, 
the author proved the following formula
by using Joyce's wall-crossing formula~\cite{Joy4}, 
\begin{align}\label{intro:pexpand}
\PT^{\chi}(\overline{X})=\prod_{\beta>0, n>0}
\exp\left(nN(0, \beta, n)y^{\beta} z^{n}  \right)
\left(\sum_{\beta, n}L(\beta, n)y^{\beta} z^n  \right). 
\end{align}
The invariant
$L(\beta, n)$
 counts certain objects in the derived category
$D^b \Coh(\overline{X})$, 
which are $\mu_{i\omega}$-limit 
semistable objects in the notation of~\cite[Section~3]{Tolim2}. 
(cf.~Definition~\ref{def:lim}.)
Our idea is to decompose the 
series of $L(\beta, n)$ 
further, 
using the wall-crossing formula again. 
More precisely, we 
study the triangulated category, 
\begin{align*}
\dD \cneq \langle \pi^{\ast}\Pic(\mathbb{P}^1), \Coh_{\pi}(\overline{X})
 \rangle_{\tr}
\subset D^b \Coh(\overline{X}). 
\end{align*}
Here
\begin{align*}
\Coh_{\pi}(\overline{X}) \subset \Coh(\overline{X}),
\end{align*}
is the subcategory consisting of sheaves 
supported on the fibers of $\pi$. 
For a fixed ample divisor $\omega$ on $S$, 
we will construct the heart of a bounded t-structure, 
(cf.~Definition~\ref{defi:Ao},)
\begin{align*}
\aA_{\omega} \subset \dD.
\end{align*}
The above heart is unchanged if $\omega$
is replaced by $t\omega$ for $t\in \mathbb{R}_{>0}$. 
Moreover the above heart fits into a pair, 
\begin{align*}
\sigma_{t}=(Z_{t\omega}, \aA_{\omega}),
\end{align*}
for $t \in \mathbb{R}_{>0}$, 
giving a weak stability condition introduced in~\cite{Tcurve1}. 
(cf.~Lemma~\ref{lem:property}.)
We will construct the invariant, (cf.~Definition~\ref{defi:DTchi},)
\begin{align*}
\DT^{\chi}_{t\omega}(r, \beta, n) \in \mathbb{Q}
\end{align*}
as an Euler characteristic version of 
Donaldson-Thomas type invariant, counting 
$Z_{t\omega}$-semistable objects $E\in \aA_{\omega}$
satisfying 
\begin{align*}
\ch(E)&=(1, -r, -\beta, -n) \\
&\in H^0(\overline{X})
 \oplus H^2(\overline{X}) \oplus H^4(\overline{X}) \oplus H^6(\overline{X}). 
\end{align*}
Here
$r$ and $n$ are regarded
as integers, and $\beta$ is regarded as 
an element of $H^2(S, \mathbb{Z})$. 
(See Subsection~\ref{subsec:Ch}.)
\begin{sssstep}
\end{sssstep}
We investigate the generating series, 
\begin{align*}
\DT^{\chi}_{t\omega}(\overline{X})
 \cneq \sum_{r, \beta, n}\DT_{t\omega}^{\chi}(r, \beta, n)
x^r y^{\beta} z^n, 
\end{align*}
which is regarded as an element of a 
certain topological vector space. 
(See Subsection~\ref{subsec:genser}.)
If we consider big $t$, we see that,
(cf.~Proposition~\ref{prop:DT=L},) 
\begin{align}\label{lim1}
\lim_{t \to \infty}
\DT^{\chi}_{t\omega}(\overline{X})
 =\sum_{r, \beta, n}L(\beta, n)x^r y^{\beta}z^n.
\end{align}
On the other hand if we consider small $t$, we 
see that, (cf.~Proposition~\ref{prop:DT=L},)
\begin{align}\label{lim2}
\lim_{t \to 0}
\DT^{\chi}_{t\omega}(\overline{X})
=\lim_{t \to 0}
\sum_{r, \beta}\DT^{\chi}_{t\omega}(r, \beta, 0)x^r y^{\beta}. 
\end{align}
The wall-crossing formula enables us 
to see how the series 
$\DT^{\chi}_{t\omega}(\overline{X})$ varies
if we change $t$. 
Here an interesting phenomena happens: 
two dimensional semistable objects on 
the fibers of $\pi$ are involved 
in the wall-crossing formula. 
Since so far
only one dimensional 
objects have been 
involved in the 
wall-crossing formula, 
e.g. the formula (\ref{intro:pexpand}), 
 this seems a new phenomena in 
this field of study. 

By the wall-crossing formula, we obtain a formula 
relating (\ref{lim1}) and (\ref{lim2}). 
As a result, (\ref{lim1}) is 
obtained by the product of (\ref{lim2})
and the following infinite product, (cf.~Corollary~\ref{cor:DTN},) 
\begin{align}\label{intro:wcfe}
\prod_{\beta>0, rn>0} &
\exp\left( (n+2r)N(r, \beta, n) x^r y^{\beta} z^n  \right)^{\epsilon(r)},
\end{align}
where $\epsilon(r)=1$ if $r>0$ and $\epsilon(r)=-1$
if $r<0$. 
Unfortunately the above 
argument is not enough to obtain the desired formula, 
as the RHS of (\ref{lim2}) still remains unknown.  
In order to complete the proof, we focus on 
some abelian subcategory, (cf.~Definition~\ref{subsec:abelian12},)
\begin{align*}
\aA_{\omega}(1/2) \subset \aA_{\omega}.
\end{align*}
We introduce
finer weak stability conditions on $\aA_{\omega}(1/2)$,
and apply the wall-crossing formula
again. 
Then we obtain (cf.~Proposition~\ref{prop:DT=N'},)
\begin{align}\label{intro:wcf12}
\lim_{t \to 0}
\sum_{r, \beta}\DT^{\chi}_{t\omega}(r, \beta, 0)x^r y^{\beta}
=\prod_{r>0, \beta>0}
\exp\left(2rN(r, \beta, 0)x^r y^{\beta} \right) \cdot 
\sum_{r \in \mathbb{Z}}x^r. 
\end{align}
Combining
(\ref{intro:PT2}), (\ref{intro:N=J}),
(\ref{lim1}), 
(\ref{lim2}), (\ref{intro:wcfe}), (\ref{intro:wcf12}),
Theorem~\ref{intro:aut}
and looking at the $x^{0}$-term,
 we obtain
the desired formula (\ref{main:formula}). 
(cf.~Theorem~\ref{maintheorem:PTJ}.)

\subsection{Plan of the paper}
In Section~\ref{sec:tri}, we introduce 
several notation and introduce 
the abelian category $\aA_{\omega}$. 
In Section~\ref{section:wstab}, we construct weak 
stability conditions on $\aA_{\omega}$
and state some results on wall-crossing phenomena. 
In Section~\ref{sec:inv}, we introduce 
counting invariants via the Hall algebra 
of $\aA_{\omega}$, their variants, 
and investigate the property of the invariants. 
In Section~\ref{sec:WCF}, we apply wall-crossing formula 
and give a proof of Theorem~\ref{thm:main}.
In Section~\ref{sec:KKV}, we 
give some discussions toward KKV conjecture. 
From Section~\ref{sec:tech}
to Section~\ref{sec:A12},  
we give several proofs postponed in the previous 
sections.

\subsection{Acknowledgement}
The author is grateful to
Davesh Maulik, 
 Rahul Pandharipande and Richard Thomas 
for the valuable comments and advice. 
This work is supported by World Premier 
International Research Center Initiative
(WPI initiative), MEXT, Japan. This work is also supported by Grant-in Aid
for Scientific Research grant (22684002), 
and partly (S-19104002),
from the Ministry of Education, Culture,
Sports, Science and Technology, Japan.

\subsection{Notation and Convention}
For a triangulated category $\dD$, 
the shift functor is denoted by $[1]$. 
For a set of objects $\sS \subset \dD$, 
we denote by $\langle \sS \rangle_{\tr}$
the smallest triangulated subcategory 
which contains $\sS$ and $0 \in \dD$. 
Also we denote by $\langle \sS \rangle_{\ex}$
the smallest extension closed 
subcategory of $\dD$ which contains 
$\sS$ and $0 \in \dD$. 
The abelian category of coherent sheaves on 
a variety $X$ is denoted by 
$\Coh(X)$. We say 
$F \in \Coh(X)$ is 
$d$-dimensional if its support is 
$d$-dimensional, and 
we write $\dim F=d$. 
For a surface $S$, its Neron-Severi 
group is denoted by $\mathrm{NS}(S)$. 
For an element $\beta \in \mathrm{NS}(S)$, 
we write $\beta>0$ 
if $\beta$ is 
a Poincar\'e dual of 
an effective one cycle on $S$. 
An element $\beta \in \mathrm{NS}(S)$
with $\beta>0$ is 
irreducible when $\beta$ is not written 
as $\beta_1 +\beta_2$ with $\beta_i >0$. 
For a finitely generated abelian group $\Gamma$, 
an element $v\in \Gamma$ is primitive 
if $v$ is not a multiple of some other element in 
$\Gamma$.

\section{Triangulated category of local K3 surfaces}\label{sec:tri}
In this section, 
we recall some notions used in the 
study of K3 surfaces. 
We also introduce 
a certain triangulated category
associated to a K3 surface, 
and construct the heart of a bounded t-structure
on it. 
\subsection{Generalities on K3 surfaces}\label{subsec:K3}
Let $S$ be a smooth projective K3 surface over $\mathbb{C}$, i.e. 
\begin{align*}
K_S \cong \oO_S, \quad H^1(S, \oO_S)=0. 
\end{align*}
We begin with recalling the generalities on $S$. 
The \textit{Mukai lattice} is defined by 
\begin{align}\label{def:mukai}
\widetilde{H}(S, \mathbb{Z})
 \cneq H^0(S, \mathbb{Z}) \oplus H^2(S, \mathbb{Z}) \oplus H^4(S, \mathbb{Z}).
\end{align}
In what follows, 
we naturally regard $H^0(S, \mathbb{Z})$ and 
$H^4(S, \mathbb{Z})$ as $\mathbb{Z}$. 
For two elements, 
\begin{align*}
v_i=(r_i, \beta_i, n_i) \in \widetilde{H}(S, \mathbb{Z}), \quad i=1, 2, 
\end{align*}
the \textit{Mukai pairing} is defined by  
\begin{align}\label{Mpair}
(v_1, v_2) \cneq \beta_1 \beta_2 -r_1 n_2 -r_2 n_1. 
\end{align}
Recall that there is a weight 
two Hodge structure on $\widetilde{H}(S, \mathbb{Z}) \otimes \mathbb{C}$
given by, 
\begin{align*}
&\widetilde{H}^{2, 0}(S) \cneq H^{2, 0}(S), \ 
\widetilde{H}^{0, 2}(S) \cneq H^{0, 2}(S), \\
&\widetilde{H}^{1, 1}(S) \cneq H^{0, 0}(S) \oplus H^{1, 1}(S) \oplus
H^{2, 2}(S).  
\end{align*}
We define the lattice $\Gamma_0$ to be
\begin{align}\notag
\Gamma_0 &\cneq \widetilde{H}(S, \mathbb{Z}) \cap \widetilde{H}^{1, 1}(S) \\
\label{def:Gamma0}
&= \mathbb{Z} \oplus \mathrm{NS}(S) \oplus \mathbb{Z}. 
\end{align}
For an object $E\in D^b \Coh(S)$, its \textit{Mukai vector}
$v(E) \in \Gamma_0$ 
is defined by 
\begin{align}\notag
v(E) &\cneq \ch(E) \sqrt{\td_S} \\
\label{Mvec}
&=(\ch_0(E), \ch_1(E), \ch_0(E)+\ch_2(E)). 
\end{align}
For any $E, F \in D^b \Coh(S)$, 
the Riemann-Roch theorem yields, 
\begin{align}\label{asum1}
\sum_{i}(-1)^i \dim \Hom_{S}(E, F[i])
=-(v(E), v(F)). 
\end{align}
\subsection{Local K3 surfaces}\label{subsec:local}
Let $S$ be a 
K3 surface. We are interested in the 
total space of the canonical line bundle on $S$, 
\begin{align*}
X=S \times \mathbb{C}. 
\end{align*}
We compactify $X$ as 
\begin{align*}
\overline{X}=S \times \mathbb{P}^1.
\end{align*}
Our strategy is to 
deduce the geometry of $X$ from that of $\overline{X}$. 
Let $\pi$ be the second projection, 
\begin{align*}
\pi \colon \overline{X} \to \mathbb{P}^1. 
\end{align*}
We define the abelian category 
\begin{align*}
\Coh_{\pi}(\overline{X}) \subset \Coh(\overline{X}),
\end{align*}
to be the subcategory consisting of sheaves 
supported on the fibers of $\pi$. 
Its derived category is denoted by
$\dD_0$,  
\begin{align}\label{def:D0}
\dD_0 \cneq D^b \Coh_{\pi}(\overline{X}). 
\end{align}
We introduce the following triangulated category. 
\begin{defi}\label{defi:D}
We define the triangulated category $\dD$ to be 
\begin{align*}
\dD \cneq  \langle \pi^{\ast}\Pic(\mathbb{P}^1), 
\Coh_{\pi}(\overline{X}) \rangle_{\tr} 
\subset D^b \Coh(\overline{X}). 
\end{align*} 
\end{defi}
Note that $\dD_0$ is a triangulated 
subcategory of $\dD$. 
The triangulated category $\dD$ is not 
a Calabi-Yau 3 category, but close to it
by the following lemma. 
\begin{lem}\label{cy3}
Take objects $E, F \in \dD$, and suppose that either  
$E$ or $F$ is an object in $\dD_0$.
Then we have the isomorphism, 
\begin{align*}
\Hom_{\dD}(E, F) \cong \Hom_{\dD}(F, E[3])^{\vee}. 
\end{align*}
\end{lem}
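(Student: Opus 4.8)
The plan is to reduce the claimed Serre-type duality on $\dD$ to ordinary Serre duality on the smooth projective threefold $\overline{X}=S\times\mathbb{P}^1$, whose canonical bundle is $\omega_{\overline{X}}\cong\pi^{\ast}\oO_{\mathbb{P}^1}(-2)$. Recall the full Serre duality $\Hom_{D^b\Coh(\overline{X})}(E,F)\cong\Hom_{D^b\Coh(\overline{X})}(F,E\otimes\omega_{\overline{X}}[3])^{\vee}$ for $E,F\in D^b\Coh(\overline{X})$ with one of them having bounded coherent cohomology (always true here). So the content of the lemma is that the twist by $\omega_{\overline{X}}=\pi^{\ast}\oO_{\mathbb{P}^1}(-2)$ can be dropped once one of the two objects lies in $\dD_0$. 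The key geometric input is that every object of $\dD_0$ is supported (set-theoretically) on finitely many fibers of $\pi$, so that tensoring it by any line bundle pulled back from $\mathbb{P}^1$ gives an isomorphic object after a further twist; more precisely, for $G\in\Coh_{\pi}(\overline{X})$ supported on fibers over a finite set $Z\subset\mathbb{P}^1$ and any $\lL\in\Pic(\mathbb{P}^1)$ one has $G\otimes\pi^{\ast}\lL\cong G$ whenever $\lL|_{Z}\cong\oO_{Z}$, and in general $G\otimes\pi^{\ast}\lL$ is again an object of $\Coh_{\pi}(\overline{X})$ with the same class — but what I actually need is the sharper statement that, for computing $\Hom$ out of or into such $G$, the twist by $\pi^{\ast}\oO_{\mathbb{P}^1}(-2)$ is invisible.

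First I would treat the case $F\in\dD_0$. Write $F'\cneq F\otimes\pi^{\ast}\oO_{\mathbb{P}^1}(2)$, which again lies in $\dD_0$ (the subcategory $\Coh_{\pi}(\overline{X})$ is preserved by $\otimes\pi^{\ast}\lL$, hence so is its derived category). Serre duality on $\overline{X}$ gives
\begin{align*}
\Hom_{D^b\Coh(\overline{X})}(E,F)\cong\Hom_{D^b\Coh(\overline{X})}(F,E\otimes\pi^{\ast}\oO_{\mathbb{P}^1}(-2)[3])^{\vee}.
\end{align*}
Now I claim the right-hand side is isomorphic to $\Hom_{D^b\Coh(\overline{X})}(F',E[3])^{\vee}$: indeed $\Hom(F,E\otimes\pi^{\ast}\oO_{\mathbb{P}^1}(-2)[3])\cong\Hom(F\otimes\pi^{\ast}\oO_{\mathbb{P}^1}(2),E[3])=\Hom(F',E[3])$ by the projection-formula/adjunction isomorphism $\Hom(A,B\otimes L)\cong\Hom(A\otimes L^{\vee},B)$ valid for any invertible $L$. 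Then applying Serre duality on $\overline{X}$ once more, in the other direction, $\Hom(F',E[3])^{\vee}\cong\Hom(E,F'\otimes\pi^{\ast}\oO_{\mathbb{P}^1}(-2))=\Hom(E,F\otimes\pi^{\ast}\oO_{\mathbb{P}^1}(2)\otimes\pi^{\ast}\oO_{\mathbb{P}^1}(-2))=\Hom(E,F)$ — so this circular route is not yet a proof, it just shows consistency. The honest step is different: I need $\Hom_{\dD}(F,E[3])\cong\Hom_{\dD}(F,E\otimes\pi^{\ast}\oO_{\mathbb{P}^1}(-2)[3])$, i.e. that twisting the \emph{target} by $\pi^{\ast}\oO_{\mathbb{P}^1}(-2)$ does not change $\Hom$ out of an object of $\dD_0$. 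Here is where the support condition enters: $F$ has cohomology sheaves supported on a finite union of fibers $\pi^{-1}(Z)$; choosing the standard exact sequence $0\to\pi^{\ast}\oO_{\mathbb{P}^1}(-2)\to\oO_{\overline{X}}\to\oO_{D}\to0$ for a divisor $D=\pi^{-1}(p_1)+\pi^{-1}(p_2)$ with $\{p_1,p_2\}\cap Z=\emptyset$, and tensoring with $E$, one gets a triangle whose outer term $E\otimes\oO_D$ satisfies $\Hom_{\dD}(F,E\otimes\oO_D[k])=0$ for all $k$ because $E\otimes\oO_D$ is supported on $\pi^{-1}(p_1)\cup\pi^{-1}(p_2)$, disjoint from $\Supp F$. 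The long exact sequence then yields the desired isomorphism $\Hom_{\dD}(F,E\otimes\pi^{\ast}\oO_{\mathbb{P}^1}(-2)[k])\cong\Hom_{\dD}(F,E[k])$ for all $k$, in particular $k=3$.

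Combining this with Serre duality on $\overline{X}$ gives $\Hom_{\dD}(E,F)\cong\Hom_{\dD}(F,E[3])^{\vee}$ when $F\in\dD_0$. The case $E\in\dD_0$ is handled symmetrically: apply the case just proved with the roles swapped to get $\Hom_{\dD}(F,E)\cong\Hom_{\dD}(E,F[3])^{\vee}$, then dualize and shift. Two points need care. First, one must check that $\Hom_{\dD}$ agrees with $\Hom_{D^b\Coh(\overline{X})}$ on objects of $\dD$: this is immediate since $\dD$ is by definition a full triangulated subcategory of $D^b\Coh(\overline{X})$. Second, and this is the step I expect to be the main (if modest) obstacle, one must verify that the vanishing $\Hom_{\dD}(F,E\otimes\oO_D[k])=0$ genuinely holds for \emph{all} $E\in\dD$, not just $E\in\dD_0$ — i.e. one must make sure that an object built as an iterated extension involving $\pi^{\ast}\Pic(\mathbb{P}^1)$, after tensoring by $\oO_D$, still has cohomology sheaves set-theoretically supported on $\pi^{-1}(p_1)\cup\pi^{-1}(p_2)$. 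This is true because $\oO_D$ itself is supported on those two fibers and $-\otimes\oO_D$ is exact-triangle-compatible, so the support of $E\otimes\oO_D$ is contained in $\Supp\oO_D\cap\Supp E\subseteq\pi^{-1}(\{p_1,p_2\})$ regardless of $E$; then $\Hom$ and higher $\Ext$ between sheaves with disjoint supports vanish, and a spectral-sequence (or dévissage on the finitely many cohomology sheaves of $F$ and of $E\otimes\oO_D$) argument upgrades this to the derived category. Once that support bookkeeping is in place, the rest is formal manipulation of Serre duality and the projection formula.
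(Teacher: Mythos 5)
Your proof is correct, and its skeleton is the same as the paper's: Serre duality on the smooth projective threefold $\overline{X}$, plus the observation that $\omega_{\overline{X}}\cong\pi^{\ast}\oO_{\mathbb{P}^1}(-2)$ is invisible to objects of $\dD_0$. The only real difference is how the canonical twist is removed. The paper does it in one line: an object $F\in\dD_0$ has support lying over finitely many points of $\mathbb{P}^1$, so $F\otimes\omega_{\overline{X}}^{\pm1}\cong F$; combined with the adjunction $\Hom(F,E\otimes\omega_{\overline{X}}[3])\cong\Hom(F\otimes\omega_{\overline{X}}^{-1},E[3])$ — which you yourself invoke — this settles the case $F\in\dD_0$ at once, and the case $E\in\dD_0$ is identical (or follows by your shift-and-dualize reduction, which is fine). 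So the ``sharper statement'' you thought you needed is not in fact needed: the statement you mentioned and then set aside, $G\otimes\pi^{\ast}\lL\cong G$ for $G\in\dD_0$ and $\lL\in\Pic(\mathbb{P}^1)$, already suffices. Your alternative mechanism — tensoring the sequence $0\to\pi^{\ast}\oO_{\mathbb{P}^1}(-2)\to\oO_{\overline{X}}\to\oO_D\to 0$, with $D=X_{p_1}+X_{p_2}$ chosen so that $p_1,p_2\notin\pi(\Supp F)$, by $E$ and using vanishing of all $\Ext$ groups between complexes with disjoint supports — is valid and complete (the support bookkeeping you flag does work, since $\Supp(E\otimes\oO_D)\subset\pi^{-1}\{p_1,p_2\}$ for any $E$), but it is a slightly longer detour whose only virtue is that it never moves the twist onto $F$; it buys nothing here, because twist-triviality on $\dD_0$ is equally elementary.
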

\begin{proof}
The result follows from the Serre duality on $\overline{X}$ and the 
isomorphism 
\begin{align*}
E \otimes \omega_{\overline{X}} \cong E, 
\end{align*}
for $E\in \dD_0$. 
\end{proof}

\subsection{Chern characters on $\dD_0$ and $\dD$}\label{subsec:Ch}
We fix some notation on 
Chern characters for objects in $\dD_0$ and $\dD$. 
Let $p$ be the first projection, 
\begin{align*}
p \colon \overline{X} =S \times \mathbb{P}^1 \to S. 
\end{align*}
We define the group homomorphism 
$\cl_0$ to be the composition, 
\begin{align}\label{def:cl}
\cl_0 \colon K(\dD_0) 
\stackrel{p_{\ast}}{\to} K(S) \stackrel{\ch}{\to} \Gamma_0.
\end{align}
By the definition of $\Coh_{\pi}(\overline{X})$, 
the push-forward $p_{\ast}E$ is an element of 
$K(S)$, hence the above map is well-defined. 
Instead of the Chern character, we can
also consider the Mukai vector on $K(\dD_{0})$, 
\begin{align}\label{DMukai}
v\colon K(\dD_0) \stackrel{p_{\ast}}{\to}
K(S) 
\stackrel{\ch \sqrt{\td_S}}{\to}\Gamma_0,
\end{align}
as in (\ref{Mvec}).
\begin{rmk}
Although the Mukai vector is usually 
used in the study of K3 surfaces, we will 
use both of Chern characters and Mukai vectors. 
The reason is that, 
 the Chern characters 
are useful in describing
wall-crossing formula, 
while the Mukai vectors 
are useful in discussing Fourier-Mukai transforms. 
\end{rmk}

Next we consider
 the Chern character map on $K(\overline{X})$, 
\begin{align*}
\ch \colon K(\overline{X}) \to 
H^{\ast}(\overline{X}, \mathbb{Q}).
\end{align*}
If we restrict 
the above map 
to the Grothendieck group of 
$\dD$, then it factors through 
the subgroup, 
\begin{align*}
\Gamma \cneq H^0(\overline{X}, \mathbb{Z}) 
\oplus \left(\Gamma_0 \boxtimes H^2(\mathbb{P}^1, \mathbb{Z})
 \right) \subset H^{\ast}(\overline{X}, \mathbb{Q}). 
\end{align*}
Hence we obtain the group homomorphism, 
\begin{align*}
\cl \cneq \ch \colon K(\dD) \to \Gamma. 
\end{align*}
We naturally identify 
$H^0(\overline{X}, \mathbb{Z})$ and 
$H^2(\mathbb{P}^1, \mathbb{Z})$ with $\mathbb{Z}$. 
Then $\Gamma$ is identified with 
\begin{align}
\label{isom:gam}
\Gamma &= \mathbb{Z} \oplus \Gamma_0, \\
\notag
&=\mathbb{Z} \oplus \mathbb{Z} \oplus \mathrm{NS}(S) \oplus \mathbb{Z}. 
\end{align} 
We usually write an element $v\in \Gamma$ as a vector
\begin{align*}
v=(R, r, \beta, n), 
\end{align*}
where $R$, $r$, $n$ are integers and 
$\beta \in \mathrm{NS}(S)$.  
If $v=\cl(E)$ for $E \in \dD$, 
then the above vector 
corresponds to the Chern character, 
\begin{align*}
\cl(E)=(\ch_0(E), \ch_1(E), \ch_2(E), \ch_3(E)).
\end{align*}
Under the above identification, 
we always regard $\ch_0(E)$, $\ch_1(E)$, $\ch_3(E)$
as integers, and $\ch_2(E)$ as an element of $\mathrm{NS}(S)$. 
Hence for instance,  
the intersection number 
$\ch_2(E) \cdot \omega$ for 
a divisor $\omega$ on $S$ makes sense. 
Note that this is equal to $\ch_2(E) \cdot p^{\ast}\omega$
in the usual sense. 

Also in the above notation, we sometimes write
\begin{align*}
\rank(v) \cneq R, \quad \rank(E) \cneq \rank(\cl(E)),
\end{align*}
for $E\in \dD$.

By the Grothendieck Riemann-Roch theorem, 
the maps $\cl$ and $\cl_0$ are compatible. 
Namely under the identification (\ref{isom:gam}),
we have 
\begin{align}\label{compati}
\cl(E)=(0, \cl_0(E)),
\end{align}
for $E\in \dD_0 \subset \dD$.

\subsection{Classical stability conditions on $\Coh_{\pi}(\overline{X})$}
\label{subsec:classical}
We recall some classical notions of 
stability conditions on 
$\Coh_{\pi}(\overline{X})$.  
For an object $E\in \Coh_{\pi}(\overline{X})$, we write 
\begin{align*}
\cl_0(E)=(r, \beta, n) \in \mathbb{Z} \oplus \mathrm{NS}(S) \oplus \mathbb{Z}. 
\end{align*}
For an ample divisor $\omega$ on $S$, the 
slope of $E$ is defined to be 
\begin{align*}
\mu_{\omega}(E) \cneq \left\{
\begin{array}{cc}
\infty, & \mbox{ if } r=0, \\
\omega \cdot \beta/ r, & \mbox{ if } r \neq 0. 
\end{array}  \right. 
\end{align*}
Also the Hilbert polynomial of $E$ is defined by 
\begin{align*}
\chi_{\omega, E}(m) &\cneq 
\chi(\overline{X}, E \otimes \oO_{\overline{X}}(m p^{\ast}\omega)) \\
&=a_d m^d +a_{d-1}m^{d-1}+ \cdots,
\end{align*}
with $a_d \neq 0$ and $d=\dim E$. The reduced 
Hilbert polynomial is defined to be
\begin{align*}
\overline{\chi}_{\omega, E}(m) \cneq \chi_{\omega, E}(m)/a_d. 
\end{align*}
If $r\neq 0$, or equivalently $d=2$, then we have 
\begin{align*}
\overline{\chi}_{\omega, E}(m)=
m^2 +\frac{2\mu_{\omega}(E)}{\omega^2}m +(\mbox{constant term}). 
\end{align*}
Also there is a map,
\begin{align}\label{red:hilb}
\Gamma_{0} \ni v \mapsto \overline{\chi}_{\omega, v}(m)
\in \mathbb{Q}[m], 
\end{align}
such that 
we have 
$\overline{\chi}_{\omega, \cl_0(E)}(m)=\overline{\chi}_{\omega, E}(m)$
for $E\in \Coh_{\pi}(\overline{X})$.

The total order 
 $\succ$ on $\mathbb{Q}[m]$ 
is defined as follows: 
for $p_i(m) \in \mathbb{Q}[m]$ with $i=1, 2$, we have 
$p_1(m) \succ p_2(m)$ if and only if 
\begin{align*}
\deg p_1(m)&< \deg p_2(m), \mbox{ or } \\
\deg p_1(m)&=\deg p_2(m), \
p_1(m) > p_2(m), \ m\gg 0. 
\end{align*}
The above notions
determine slope stability and Gieseker-stability 
on $\Coh_{\pi}(\overline{X})$. 
\begin{defi}
(i) An object $E\in \Coh_{\pi}(\overline{X})$ is $\mu_{\omega}$-(semi)stable 
if for any exact sequence $0 \to F \to E \to G \to 0$
in $\Coh_{\pi}(\overline{X})$
with $F, G \neq 0$, we have 
\begin{align*}
\mu_{\omega}(F)<(\le)\mu_{\omega}(G). 
\end{align*}
(ii) An object $E\in \Coh_{\pi}(\overline{X})$ is $\omega$-Gieseker (semi)stable if 
 for any subsheaf $0\neq F \subsetneq E$, we have 
\begin{align*}
\overline{\chi}_{\omega, F}(m) \prec(\preceq) \overline{\chi}_{\omega, E}(m).
\end{align*} 
\end{defi}
For more detail, see~\cite{Hu}. 
It is easy to see that if an object $E\in \Coh_{\pi}(\overline{X})$ 
is $\mu_{\omega}$-(or $\omega$-Gieseker) stable, then 
$E$ is written as 
\begin{align}\label{ip}
E \cong i_{p\ast} E', \quad p\in \mathbb{P}^1, 
\end{align}
for some $\mu_{\omega}$-(or $\omega$-Gieseker)
stable sheaf $E'$ on a fiber 
\begin{align*}
X_p \cneq \pi^{-1}(p) \cong S. 
\end{align*} 
The map $i_{p}$ is the inclusion
$i_p \colon X_p \hookrightarrow \overline{X}$. 
We will use the 
following Bogomolov-type inequalities. 
\begin{lem}\label{lem:Bog}
(i)
Let $E \in \Coh_{\pi}(\overline{X})$ 
be an $\omega$-Gieseker stable
sheaf with $\cl_0(E)=(r, \beta, n)$. 
Then we have 
\begin{align}\label{ineq:Bog}
\beta^2 +2\ge 2r(r+n). 
\end{align}
(ii)
If $E \in \Coh_{\pi}(\overline{X})$ is 
$\omega$-Gieseker semistable
with $\beta \cdot \omega \neq 0$,
then we have 
\begin{align}\label{ineq:Bog0}
\beta^2 +2(\beta \cdot \omega)^2 \ge 2r(r+n). 
\end{align}
\end{lem}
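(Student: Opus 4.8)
The plan is to establish (i) directly on a single K3 fibre and then deduce (ii) from it by a Jordan--H\"older argument, the cross terms being controlled by the Hodge index theorem. For (i): by (\ref{ip}), an $\omega$-Gieseker stable $E\in\Coh_{\pi}(\overline{X})$ is of the form $i_{p\ast}E'$ for an $\omega$-Gieseker stable sheaf $E'$ on the fibre $X_p\cong S$, and $\cl_0(E)=\ch(E')$, so $E'$ has Mukai vector $v(E')=(r,\beta,r+n)$ and $(v(E'),v(E'))=\beta^2-2r(r+n)$. Since $E'$ is stable it is simple, $\Hom_S(E',E')=\mathbb{C}$, and Serre duality on the K3 surface $S$ gives $\Ext^2_S(E',E')\cong\Hom_S(E',E')^{\vee}=\mathbb{C}$. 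Substituting into (\ref{asum1}) and using that $\Ext^i_S$ vanishes for $i\ge 3$ yields $(v(E'),v(E'))=\dim\Ext^1_S(E',E')-2\ge -2$, which is (\ref{ineq:Bog}).

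For (ii) I would split on the rank. If $r=0$, then $E$ is pure of dimension one (not zero-dimensional, since $\beta\cdot\omega\neq 0$), so $\beta$ is an effective class; writing $\beta=\sum_k n_k\Gamma_k$ with $\Gamma_k$ irreducible curves and $n_k\ge 1$, one has $\Gamma_k^2\ge -2$, $\Gamma_k\Gamma_l\ge 0$ and $\Gamma_k\cdot\omega\ge 1$, hence $\beta^2\ge -2\sum_k n_k^2$ and $(\beta\cdot\omega)^2\ge(\sum_k n_k)^2\ge\sum_k n_k^2$, and adding gives $\beta^2+2(\beta\cdot\omega)^2\ge 0=2r(r+n)$. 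If $r\neq 0$, then $E$ is pure of dimension two, and I would pass to a Jordan--H\"older filtration of $E$ in $\Coh_{\pi}(\overline{X})$: by (\ref{ip}) its stable subquotients are of the form $i_{p_j\ast}H_j$ with $H_j$ torsion-free $\omega$-Gieseker stable on $X_{p_j}\cong S$, all sharing the reduced Hilbert polynomial of $E$, hence all of slope $\mu:=(\beta\cdot\omega)/r$. Write $v(H_j)=(r_j,\xi_j,s_j)$ with $r_j\ge 1$ and $\xi_j\cdot\omega=\mu r_j$, so that $\sum_j r_j=r$, $\sum_j\xi_j=\beta$ and $\sum_j s_j=r+n$.

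Now part (i) applied to each $H_j$ gives $s_j\le(\xi_j^2+2)/(2r_j)$; substituting this into the identity $\beta^2-2r(r+n)=(\sum_j\xi_j)^2-2(\sum_j r_j)(\sum_j s_j)$ and then writing $\xi_j=\mu r_j\,\omega/\omega^2+\zeta_j$ with $\zeta_j\cdot\omega=0$ (so that the parts proportional to $\omega$ cancel) reduces the claim to
\begin{align*}
\left(\sum_j\zeta_j\right)^2-r\sum_j\frac{\zeta_j^2}{r_j}-2r\sum_j\frac{1}{r_j}\ \ge\ -2(\beta\cdot\omega)^2.
\end{align*}
By the Hodge index theorem $\omega^{\perp}\subset\NS(S)_{\mathbb{R}}$ is negative definite, so the Cauchy--Schwarz inequality there gives $(\sum_j\zeta_j)^2\ge r\sum_j\zeta_j^2/r_j$, cancelling the first two terms; and since $\mu r_j=\xi_j\cdot\omega$ is a nonzero integer, $\mu^2r_j^2\ge 1$, whence $\sum_j 1/r_j\le\mu^2\sum_j r_j=(\beta\cdot\omega)^2/r$. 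Combining these, $\beta^2-2r(r+n)\ge -2(\beta\cdot\omega)^2$, which is (\ref{ineq:Bog0}).

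The main obstacle is this positive-rank case of (ii). Part (i) and the rank-zero estimate are routine, but the Jordan--H\"older factors of a semistable object of $\Coh_{\pi}(\overline{X})$ may sit on different fibres, so there is no $\Ext$-vanishing among them to exploit; the Hodge-index/Cauchy--Schwarz estimate together with the integrality of the intersection numbers $\xi_j\cdot\omega$ is exactly what replaces it. One should also check that each stable subquotient really is supported on a single fibre---this is (\ref{ip}) applied to the factors---so that (i) may be invoked term by term.
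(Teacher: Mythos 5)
Your argument is correct, and part (i) is essentially the paper's own: stability gives simplicity, and Riemann--Roch (\ref{asum1}) together with Serre duality on the fibre yields $(v(E'),v(E'))=\beta^2-2r(r+n)\ge -2$. For part (ii) you take a genuinely different route. The paper works with the semistable sheaf directly: it pushes forward to $S$, bounds $\dim \Hom_S(p_{\ast}E,p_{\ast}E)$ by $k^2$ where $k$ is the number of stable factors, notes $k\le \lvert \beta\cdot\omega\rvert$ since each factor contributes at least $1$ to $\lvert\beta\cdot\omega\rvert$, and then applies (\ref{asum1}) and Serre duality once to the whole sheaf to get $-\beta^2+2r(r+n)\le 2k^2\le 2(\beta\cdot\omega)^2$. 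You instead apply (i) to each Jordan-H\"older factor and recombine, absorbing the cross terms by Cauchy--Schwarz in the negative definite sublattice $\omega^{\perp}$ (Hodge index) together with the integrality of the slopes $\xi_j\cdot\omega=\mu r_j$, and you handle the rank-zero case separately via effectivity of the support cycle, whereas the paper's Hom-counting treats it uniformly. Both arguments are valid and land on the same constant $2(\beta\cdot\omega)^2$; the paper's is shorter and avoids any lattice estimates, while yours has the mild conceptual advantage of exhibiting the semistable bound as a formal consequence of the stable case (i), at the cost of the Hodge-index bookkeeping and of checking, as you do via (\ref{ip}), that each stable subquotient sits on a single fibre.
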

\begin{proof}
If $E\in \Coh_{\pi}(\overline{X})$
 is $\omega$-Gieseker stable, 
then $E$ is written as 
(\ref{ip})
for a $\mu_{\omega}$-stable sheaf 
$E'$ on $X_p$. Then 
the inequality (\ref{ineq:Bog0}) 
is a well-known consequence of the 
Riemann-Roch theorem and the 
Serre duality.
(cf.~\cite[Corollary~2.5]{Mu2}.)
Let $E \in \Coh_{\pi}(\overline{X})$
 be an $\omega$-Gieseker semistable 
sheaf on $\overline{X}$. 
Then $p_{\ast}E$ is also an $\omega$-Gieseker
semistable sheaf on $S$. 
Let $E_1, \cdots, E_k \in \Coh(S)$ be 
 $\omega$-Gieseker stable factors
of $p_{\ast}E$.  
We have 
\begin{align}\notag
\dim \Hom_S(p_{\ast}E, p_{\ast}E) &\le \sum_{i, j}\dim \Hom_S(E_i, E_j) \\
\label{ineq:Bogsemi}
&\le k^2. 
\end{align}
Also since $\beta_i \cdot \omega$ and $\beta \cdot \omega$
have the same sign and $\beta$ is equal 
to the sum $\sum_{i}\beta_i$, we have 
$k \le \lvert \beta \cdot \omega \rvert$. 
By (\ref{asum1}), the Serre duality and (\ref{ineq:Bogsemi}), we have 
\begin{align*}
-\beta^2 +2r(r+n) \le 2k^2 
\le 2(\beta \cdot \omega)^2.
\end{align*}
\end{proof}

\subsection{The heart of a bounded
 t-structure on $\dD_0$}\label{subsec:heartD0}
For an ample divisor $\omega$ on $S$, let 
us consider $\mu_{\omega}$-stability on 
$\Coh_{\pi}(\overline{X})$. 
For each $E \in \Coh_{\pi}(\overline{X})$, there is a
Harder-Narasimhan  
filtration 
\begin{align*}
0=E_0 \subset E_1 \subset \cdots \subset E_N=E, 
\end{align*}
i.e. each subquotient $F_i=E_i/E_{i-1}$ is 
$\mu_{\omega}$-semistable with 
\begin{align*}
\mu_{\omega}(F_1)>\mu_{\omega}(F_2) > \cdots >\mu_{\omega}(F_N). 
\end{align*}
We set 
\begin{align*}
\mu_{\omega, +}(E) \cneq \mu_{\omega}(F_1), \quad \mu_{\omega, -}(E) \cneq 
\mu_{\omega}(F_N). 
\end{align*}
We define the pair of full subcategories 
$(\tT_{\omega}, \fF_{\omega})$ in $\Coh_{\pi}(\overline{X})$ to be 
\begin{align}\label{def:Tomega}
\tT_{\omega} \cneq \{ E \in \Coh_{\pi}(\overline{X}) : 
\mu_{\omega, -}(E)>0\}, \\
\label{def:Fomega}
\fF_{\omega} \cneq \{ E \in \Coh_{\pi}(\overline{X}) : 
\mu_{\omega, +}(E) \le 0\}.
\end{align}
In other words, an object $E\in \Coh_{\pi}(\overline{X})$
is contained in $\tT_{\omega}$, (resp.~$\fF_{\omega}$,) 
iff $E$ is filtered by $\mu_{\omega}$-semistable 
sheaves $F_i$ with $\mu_{\omega}(F_i)>0$. (resp.~$\mu_{\omega}(F_i) \le 0$.)
The existence of Harder-Narasimhan filtrations implies that 
$(\tT_{\omega}, \fF_{\omega})$ is a torsion pair, i.e. 
\begin{itemize}
\item For $T\in \tT_{\omega}$
and $F\in \fF_{\omega}$, we have $\Hom(T, F)=0$. 
\item For any object $E\in \Coh_{\pi}(\overline{X})$, 
there is an exact sequence 
\begin{align*}
0 \to T \to E \to F \to 0, 
\end{align*}
with $T \in \tT_{\omega}$ and $F \in \fF_{\omega}$. 
\end{itemize}
The associated tilting is defined 
in the following way. 
(cf.~\cite{HRS}.)
\begin{defi}\label{def:Bom}
We define the category $\bB_{\omega}$ to be 
\begin{align*}
\bB_{\omega} \cneq \langle \fF_{\omega}, 
\tT_{\omega}[-1] \rangle_{\ex} \subset \dD_0. 
\end{align*}
\end{defi}
The category $\bB_{\omega}$ is the heart of a bounded t-structure 
on $\dD_0$,
hence in particular an abelian category. 
We note that $\bB_{\omega}$ is unchanged if 
we replace $\omega$ by $t\omega$
for $t>0$. 
\begin{rmk}
The construction of the heart $\bB_{\omega}$ is an analogue 
of similar constructions on the derived categories of 
coherent sheaves on 
surfaces by Bridgeland~\cite{Brs2}, Arcara-Bertram~\cite{AB}. 
\end{rmk}
\subsection{The heart of a bounded t-structure on $\dD$}\label{subsec:heartD}
Let $\dD$ be a triangulated category defined in 
Definition~\ref{defi:D}.
We define the category $\aA_{\omega}$
as follows. 
\begin{defi}\label{defi:Ao}
We define $\aA_{\omega}$ to be 
\begin{align}
\aA_{\omega}\cneq \langle \pi^{\ast}\Pic(\mathbb{P}^1), \bB_{\omega} 
\rangle_{\ex} \subset \dD. 
\end{align}
\end{defi}
We have the following proposition. 
\begin{prop}\label{heart}
The subcategory $\aA_{\omega} \subset \dD$
is the heart of a bounded t-structure on $\dD$. 
\end{prop}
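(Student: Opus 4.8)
The plan is to verify the standard criterion for a bounded t-structure (see~\cite{Brs1}): a full additive, extension-closed subcategory $\aA\subset\dD$ is the heart of a bounded t-structure once one knows that (i) $\Hom_{\dD}(A,B[k])=0$ for all $A,B\in\aA$ and all $k<0$, and (ii) every $E\in\dD$ fits in a finite tower of triangles $0=E_0\to E_1\to\cdots\to E_m=E$ whose cones $A_i\cneq\Cone(E_{i-1}\to E_i)$ lie in $\aA[k_i]$ for some integers $k_1>\cdots>k_m$. Since $\aA_{\omega}=\langle\pi^{\ast}\Pic(\mathbb{P}^1),\bB_{\omega}\rangle_{\ex}$ is visibly full, additive and extension closed, it remains to establish (i) and (ii); once these hold, the fact that $\aA_{\omega}$ is extension closed forces the resulting heart to be exactly $\aA_{\omega}$.

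For (i) I would first perform a d\'evissage along the defining extensions, using the long exact sequences obtained by applying $\Hom_{\dD}(-,B[k])$ and $\Hom_{\dD}(A,-[k])$, to reduce the vanishing to the case where each of $A,B$ is either a line bundle $\pi^{\ast}\oO_{\mathbb{P}^1}(m)$ or an object of $\bB_{\omega}$. The case $A,B\in\bB_{\omega}$ is immediate, since $\bB_{\omega}$ is already the heart of a bounded t-structure on the full subcategory $\dD_0\subset\dD$. For two line bundles the K\"unneth formula gives
\begin{align*}
\dR\Hom_{\overline{X}}(\pi^{\ast}\oO_{\mathbb{P}^1}(m),\pi^{\ast}\oO_{\mathbb{P}^1}(m'))\cong\dR\Gamma(\mathbb{P}^1,\oO_{\mathbb{P}^1}(m'-m))\otimes\dR\Gamma(S,\oO_S),
\end{align*}
which sits in non-negative cohomological degrees because $S$ is a K3 surface. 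For the mixed cases, the key point is that an object $b\in\bB_{\omega}$ has cohomology sheaves in $\Coh_{\pi}(\overline{X})$, hence supported on finitely many fibres of $\pi$; choosing a point of $\mathbb{P}^1$ away from the image of that support trivialises $\pi^{\ast}\oO_{\mathbb{P}^1}(-m)$ on its complement, so that $\Ext^{p}_{\overline{X}}(\pi^{\ast}\oO_{\mathbb{P}^1}(m),G)\cong H^{p}(\overline{X},G)$ for every fibre-supported sheaf $G$, and the latter vanishes for $p<0$ and for $p>2$. A spectral sequence argument then yields $\Hom_{\dD}(\pi^{\ast}\oO_{\mathbb{P}^1}(m),b[k])=0$ for $k<0$, while Lemma~\ref{cy3} rewrites $\Hom_{\dD}(b,\pi^{\ast}\oO_{\mathbb{P}^1}(m)[k])$ as $\Hom_{\dD}(\pi^{\ast}\oO_{\mathbb{P}^1}(m),b[3-k])^{\vee}$, whose only possible contributions for $k<0$ come from groups $\Ext^{p}_{\overline{X}}(\pi^{\ast}\oO_{\mathbb{P}^1}(m),H^{q}(b))$ with $p\geq 3$ and so vanish by the same dimension bound.

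For (ii) I would let $\pP$ denote the class of objects of $\dD$ admitting a tower as above and show $\pP=\dD$. It contains every line bundle $\pi^{\ast}\oO_{\mathbb{P}^1}(m)$ trivially, since these already lie in $\aA_{\omega}$, and it contains every sheaf $F\in\Coh_{\pi}(\overline{X})$: the torsion-pair exact sequence $0\to T\to F\to\Phi\to0$ with $T\in\tT_{\omega}$ and $\Phi\in\fF_{\omega}$, viewed as a triangle in $\dD_0$, is such a tower, with $\Phi\in\fF_{\omega}\subset\aA_{\omega}$ and $T\in\tT_{\omega}\subset\bB_{\omega}[1]\subset\aA_{\omega}[1]$. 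Since $\pP$ is trivially stable under shifts and is stable under cones by the standard argument (cf.~\cite{Brs1}) that relies only on the vanishing (i), and since $\dD=\langle\pi^{\ast}\Pic(\mathbb{P}^1),\Coh_{\pi}(\overline{X})\rangle_{\tr}$ is generated by objects lying in $\pP$, it follows that $\pP=\dD$.

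The main obstacle is the verification of (i) in the mixed cases, which is the only place where the geometry of $\overline{X}=S\times\mathbb{P}^1$ really enters: the K\"unneth decomposition, the triviality of $\pi^{\ast}\oO_{\mathbb{P}^1}(m)$ near the fibres of $\pi$, the vanishing of $H^{p}(\overline{X},-)$ for $p>2$ on fibre-supported sheaves, and the duality of Lemma~\ref{cy3}. The d\'evissage in (i) and the cone-stability of $\pP$ in (ii) are then formal, given these vanishings together with the fact that $\bB_{\omega}$ is already the heart of a bounded t-structure on $\dD_0$.
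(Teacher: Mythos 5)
Your step (i) is fine (in fact the mixed cases are easier than you make them: the relevant spectral sequence already places everything in non-negative degrees, and the duality of Lemma~\ref{cy3} handles the other direction as you say). The genuine gap is in step (ii), at the sentence claiming that the class $\pP$ of objects admitting a filtration with factors in $\aA_{\omega}[k_i]$, $k_1>\cdots>k_m$, is ``stable under cones by the standard argument that relies only on the vanishing (i)''. No such general argument exists. Concatenating towers of $E_1[1]$ and $E_2$ gives a tower for $\Cone(E_1\to E_2)$ whose shifts are in arbitrary order; reordering via (i) only works when two adjacent factors $A[i]$, $B[j]$ have $j\ge i+2$, since then the connecting map lies in a negative Ext group and the extension splits. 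When $j=i+1$ the connecting map is (a shift of) a genuine morphism $g\colon B\to A$ in $\aA_{\omega}$, and the two-step piece is $\Cone(g)[i]$; to refilter it with decreasing shifts you need to know that $g$ has a kernel and cokernel lying in $\aA_{\omega}$ inside a suitable ambient abelian category, which is not a consequence of (i). A toy example shows the principle you invoke is false: in $D^b$ of vector spaces let $\aA$ be the additive, extension-closed subcategory of even-dimensional spaces placed in degree $0$; condition (i) holds, yet the cone of a rank-one map $k^2\to k^2$ is $k\oplus k[1]$, which admits no filtration with factors in shifts of $\aA$.

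What is actually needed at this point is precisely the content of the paper's Lemma~\ref{instead}: morphisms in the tilted heart $\aA_{\omega}'=\langle \fF_{\omega}',\tT_{\omega}[-1]\rangle_{\ex}\subset D^b\Coh(\overline{X})$ between objects of $\aA_{\omega}$ have kernel and cokernel again in $\aA_{\omega}$. Granting that, $\Cone(g)$ for $g\colon B\to A$ in $\aA_{\omega}$ has the two-step filtration with factors $\Ker(g)[1]$ and $\Cok(g)$, and your reordering (hence cone-closure of $\pP$) goes through; equivalently, one shows as in the paper that the bounded t-structure on $D^b\Coh(\overline{X})$ with heart $\aA_{\omega}'$ restricts to $\dD$ with heart exactly $\aA_{\omega}$, following the argument of Proposition~3.6 of~\cite{Tcurve1} with Lemma~\ref{instead} replacing Lemma~6.1 there. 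That lemma is proved by a non-formal d\'evissage (Lemma~\ref{twolem}: $\bB_{\omega}$ is closed under subobjects and quotients in $\aA_{\omega}'$, vanishing of $\Hom(\bB_{\omega},\pi^{\ast}\oO_{\mathbb{P}^1}(r))$, the structure of maps between the $\pi^{\ast}\oO_{\mathbb{P}^1}(r)$ and from them into $\bB_{\omega}$), so this is where the real work of Proposition~\ref{heart} lives; your proposal skips it.
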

\begin{proof}
More precisely, we can show that 
there is the heart of a bounded t-structure 
\begin{align*}
\aA_{\omega}' \subset D^b \Coh(\overline{X}),
\end{align*}
which restricts to the heart $\aA_{\omega}$
on $\dD$. The construction of $\aA_{\omega}'$
will be given in Definition~\ref{defi:Ao'}. 
The proof follows from the exactly same argument 
of~\cite[Proposition~3.6]{Tcurve1}, 
by replacing the notation $(\dD, \aA, \dD', \dD_E, \aA_{E})$
in~\cite[Proposition~3.6]{Tcurve1}
by 
\begin{align*}
(D^b \Coh(\overline{X}), \aA_{\omega}', \dD_0, 
\dD, \aA_{\omega}).
\end{align*}
The only modification in the proof 
is that we use Lemma~\ref{instead}
instead of~\cite[Lemma~6.1]{Tcurve1}. The statement and 
the proof of Lemma~\ref{instead} will be given in 
Subsection~\ref{tech:t-st}. 
\end{proof}
The heart $\aA_{\omega}$
satisfies the following property. 
\begin{lem}\label{lem:pos}
For any $E\in \aA_{\omega}$, we have 
\begin{align}\label{ineq:use}
\rank(E) \ge 0, \quad -\ch_2(E) \cdot \omega \ge 0. 
\end{align}
\end{lem}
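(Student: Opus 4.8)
The plan is to reduce the statement to the two types of generators of $\aA_\omega$ and then invoke additivity of the Chern character on exact triangles. Concretely, I would introduce the full subcategory
\[
\cC \cneq \left\{ E \in \dD : \rank(E) \ge 0 \ \text{and} \ -\ch_2(E) \cdot \omega \ge 0 \right\} \subset \dD.
\]
Since $\cl$ is defined on $K(\dD)$, both $\rank(-)$ and $-\ch_2(-)\cdot \omega$ are additive on exact triangles, so $\cC$ is an extension-closed full subcategory containing $0$. By definition $\aA_\omega = \langle \pi^{\ast}\Pic(\mathbb{P}^1), \bB_\omega \rangle_{\ex}$, so it is enough to check that $\pi^{\ast}L \in \cC$ for every $L \in \Pic(\mathbb{P}^1)$ and that $\bB_\omega \subset \cC$.

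For $L = \oO_{\mathbb{P}^1}(d)$ one has $\ch(\pi^{\ast}L) = 1 + d\,\pi^{\ast}[\mathrm{pt}]$, which under the identification (\ref{isom:gam}) reads $\cl(\pi^{\ast}L) = (1, d, 0, 0)$; hence $\rank(\pi^{\ast}L) = 1 \ge 0$ and $\ch_2(\pi^{\ast}L) = 0$, so $\pi^{\ast}L \in \cC$. For $\bB_\omega$, recall that every $E \in \bB_\omega$ fits into an exact sequence $0 \to F \to E \to T[-1] \to 0$ in $\bB_\omega$ with $F \in \fF_\omega$ and $T \in \tT_\omega$, coming from the torsion pair $(\tT_\omega, \fF_\omega)$ defining the tilt. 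By (\ref{compati}) every object of $\Coh_\pi(\overline{X})$ has vanishing rank, so $\rank(E) = \rank(F) - \rank(T) = 0$. For the second inequality it suffices to show $\ch_2(F)\cdot \omega \le 0$ and $\ch_2(T)\cdot \omega \ge 0$, since then $-\ch_2(E)\cdot \omega = -\ch_2(F)\cdot \omega + \ch_2(T)\cdot \omega \ge 0$. Both reduce, using the $\mu_\omega$-Harder--Narasimhan filtrations defining $\fF_\omega$ and $\tT_\omega$ together with additivity of $\ch_2$, to the case of a single $\mu_\omega$-semistable sheaf $G \in \Coh_\pi(\overline{X})$ with $\cl_0(G) = (r, \beta, n)$, where $\ch_2(G)\cdot \omega = \beta\cdot \omega$: if $\mu_\omega(G) \le 0$ then necessarily $r > 0$ and $\beta\cdot \omega = r\,\mu_\omega(G) \le 0$, while if $\mu_\omega(G) > 0$ then either $r = 0$, in which case $\beta = \ch_1(p_{\ast}G)$ is the class of an effective (or zero) one-cycle on $S$ so $\beta\cdot \omega \ge 0$, or $r > 0$ and $\beta\cdot \omega = r\,\mu_\omega(G) > 0$. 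This shows $\bB_\omega \subset \cC$, hence $\aA_\omega \subset \cC$, completing the argument.

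The argument involves no serious obstacle; the only point requiring care is the bookkeeping of signs. One must keep straight that the tilt $\bB_\omega = \langle \fF_\omega, \tT_\omega[-1] \rangle_{\ex}$ shifts the $\tT_\omega$-part, which flips the sign of $\ch_2$ there, and that the asserted inequality is on $-\ch_2\cdot \omega$ rather than $\ch_2\cdot \omega$; once these are aligned with the elementary fact that a $\mu_\omega$-semistable sheaf in $\Coh_\pi(\overline{X})$ with $\mu_\omega \le 0$ has positive rank, the inequalities fall out of additivity of the Chern character together with the definition of $\aA_\omega$.
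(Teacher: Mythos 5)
Your proof is correct and takes essentially the same route as the paper: both reduce, via additivity of $\cl$ on extensions, to the generators $\pi^{\ast}\Pic(\mathbb{P}^1)$, $\fF_{\omega}$ and $\tT_{\omega}[-1]$, where the inequalities follow from (\ref{compati}) and the slope conditions defining the torsion pair. You simply make explicit the extension-closedness and the Harder--Narasimhan slope bookkeeping that the paper treats as obvious.
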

\begin{proof}
By the definitions of 
$\bB_{\omega}$ and 
$\aA_{\omega}$, 
we may assume $E\in \pi^{\ast}\Pic(\mathbb{P}^1)$
or $E\in \fF_{\omega}$ or $E\in \tT_{\omega}[-1]$. 
In each case, the inequalities (\ref{ineq:use}) are obviously 
satisfied by noting (\ref{compati}). 
\end{proof}

\begin{rmk}
As in~\cite[Lemma~3.5]{Tcurve1}, 
one might expect that there is the heart of 
a bounded t-structure on 
$\langle \oO_{\overline{X}}, \Coh_{\pi}(\overline{X}) \rangle_{\tr}$ given by 
$\langle \oO_{\overline{X}}, \bB_{\omega} \rangle_{\ex}$.
However this is not true, since 
the natural map $\oO_{\overline{X}} \to \oO_{X_0}$
in the category $\langle \oO_{\overline{X}}, \bB_{\omega} \rangle_{\ex}$
does not have a kernel.  
\end{rmk}

\subsection{Bilinear map $\chi$}\label{subsec:gen:bi}
We define the bilinear map $\chi$
\begin{align}\label{def:chi}
\chi \colon \Gamma \times \Gamma_0 \to \mathbb{Z},
\end{align}
as follows:
\begin{align}\label{chi:form}
\chi((R, r, \beta, n), (r', \beta', n'))
=R(2r'+n'). 
\end{align}
By the Riemann-Roch theorem and Lemma~\ref{cy3}, 
we have 
\begin{align}\notag
\chi(\cl(E), \cl_0(F)) &=\dim \Hom_{\dD}(E, F)-\dim
\Ext_{\dD}^1(E, F) \\
\label{chi}
& +\dim  \Ext_{\dD}^1(F, E)- \dim
\Hom_{\dD}(F, E),
\end{align}
for $E\in \aA_{\omega}$ and $F \in \bB_{\omega}$. 
If we define $\chi(v, v') \cneq-\chi(v', v)$
for $v\in \Gamma_0$ and $v' \in \Gamma$, 
then (\ref{chi}) also holds
for $E \in \bB_{\omega}$ and $F \in \aA_{\omega}$. 
The above bilinear map will be used 
in describing the wall-crossing formula 
in Section~\ref{sec:WCF}. 

Note that $\Gamma_0$ is regarded as a 
subgroup of $\Gamma$ via 
$v\mapsto (0, v)$. 
The map $\chi$ restricts to the bilinear pairing, 
\begin{align}\label{chi0}
\chi|_{\Gamma_0 \times \Gamma_0} :
\Gamma_0 \times \Gamma_0 \to \mathbb{Z}, 
\end{align}
which is trivial, i.e. 
$\chi(v, v')=0$ for any $v, v' \in \Gamma_0$. 
In particular for any $E, F \in \bB_{\omega}$, we have 
\begin{align*}
\dim &\Hom_{\dD_0}(E, F)-\dim
\Ext_{\dD_0}^1(E, F) \\
& +\dim  \Ext_{\dD_0}^1(F, E)- \dim
\Hom_{\dD_0}(F, E)=0.
\end{align*}

\subsection{Abelian categories $\aA(r)$}\label{sub:R1}
Here we introduce some
abelian subcategories of $\aA_{\omega}$. 
First we introduce the following 
subcategory of $\Coh_{\pi}(\overline{X})$, 
\begin{align*}
\Coh_{\pi}^{\le 1}(\overline{X})
\cneq \{ E\in \Coh_{\pi}(\overline{X}) :
\dim E \le 1\}.  
\end{align*}
\begin{defi}\label{defi:A(r)}
For $r\in \mathbb{Z}$, we
define the category $\aA(r)$
 to be 
\begin{align*}
\aA(r)\cneq \langle
\pi^{\ast}\oO_{\mathbb{P}^1}(r), 
\Coh_{\pi}^{\le 1}(\overline{X})[-1] \rangle_{\ex}
\subset \aA_{\omega}. 
\end{align*}
\end{defi}
The category $\aA(r)$ 
has a structure of an abelian category. 
In fact it is essentially 
shown in~\cite[Lemma~3.5]{Tcurve1}
that $\aA(0)$ is the heart of a bounded
t-structure on the triangulated category, 
\begin{align*}
\langle 
\oO_{\overline{X}}, \Coh_{\pi}^{\le 1}(\overline{X}) \rangle_{\tr}
\subset D^b \Coh(\overline{X}). 
\end{align*}
Since there is an equivalence of 
categories,  
\begin{align}\label{equiv:A}
\otimes \pi^{\ast}\oO_{\mathbb{P}^1}(r) \colon 
\aA(0) \stackrel{\sim}{\to} \aA(r),
\end{align}
the category $\aA(r)$ also has a structure of 
an abelian category.

\section{Weak stability conditions on $\dD$}\label{section:wstab}
In this section, we construct weak stability 
conditions on our triangulated category $\dD$. 
The notion of weak stability conditions
on triangulated categories
is introduced in~\cite{Tcurve1}, 
generalizing Bridgeland's stability conditions~\cite{Brs1}. 
A weak stability condition 
is interpreted as a limiting degeneration of Bridgeland's 
stability conditions, and 
it is a coarse version of Bayer's polynomial 
stability condition~\cite{Bay}.  It is easier to construct examples
of weak stability
than those of Bridgeland stability, 
and the wall-crossing formula in~\cite{Joy4}, \cite{JS}, \cite{K-S}
is also applied in this framework. 
We remark that most of the results stated in this 
section are technical, and 
their proofs will be given in 
Sections~\ref{sec:reswe}, \ref{sec:semistable}
and \ref{sec:A12}. 
\subsection{General definition}\label{subsec:general}
In this subsection, we recall 
the definition of weak stability 
conditions on triangulated categories in a 
general setting. Let $\tT$ be a triangulated
category, and $K(\tT)$ its 
Grothendieck group.
We fix a finitely generated free 
abelian group $\Gamma$ and 
a group homomorphism, 
\begin{align*}
\cl \colon K(\tT) \to \Gamma. 
\end{align*}
We also fix a filtration, 
\begin{align*}
0=\Gamma_{-1} \subset 
\Gamma_{0} \subset \Gamma_{1} \subset \cdots 
\subset \Gamma_{N}=\Gamma, 
\end{align*}
such that each subquotient $\Gamma_{i}/\Gamma_{i-1}$
is a free abelian group. 
\begin{defi}\label{defi:weak}
A \textit{weak stability condition} on $\tT$ consists of 
data 
\begin{align*}
(Z=\{Z_i\}_{i=0}^{N}, \aA),
\end{align*}
such that each $Z_i$ is a group homomorphism, 
\begin{align*}
Z_i \colon \Gamma_{i}/\Gamma_{i-1} \to \mathbb{C}, 
\end{align*}
and $\aA \subset \tT$ is the heart of a bounded
t-structure on $\tT$, 
satisfying the following conditions: 
\begin{itemize}
\item For any non-zero $E\in \aA$
with $\cl(E) \in \Gamma_{i} \setminus \Gamma_{i-1}$, 
we have 
\begin{align}\label{cond}
Z(E)\cneq Z_i([\cl(E)]) \in \mathbb{H}. 
\end{align}
Here
$[\cl(E)] \in \Gamma_i/\Gamma_{i-1}$ is the class of 
$\cl(E) \in \Gamma_i \setminus \Gamma_{i-1}$ and 
 \begin{align}\label{def:mH}
\mathbb{H} \cneq \{r\exp(i\pi \phi) : r>0, 0<\phi \le 1\}.
\end{align}
We say $E\in \aA$ is \textit{$Z$-(semi)stable} if for any 
exact sequence in $\aA$, 
\begin{align*}
0 \to F \to E \to G \to 0,
\end{align*}
we have the inequality,
\begin{align*}
\arg Z(F) < (\le) \arg Z(G). 
\end{align*} 
\item  For any $E\in \aA$, there is a filtration
in $\aA$, (Harder-Narasimhan filtration,)
\begin{align*}
0=E_0 \subset E_1 \subset \cdots \subset E_n=E, 
\end{align*}
such that each subquotient $F_i=E_i/E_{i-1}$
is $Z$-semistable with
\begin{align*}
\arg Z(F_i)> \arg Z(F_{i+1}),
\end{align*}
for all $i$.
\end{itemize}
\end{defi}
If we take a filtration on $\Gamma$
trivial, i.e. $N=0$, then we 
call a weak stability condition as 
a \textit{stability condition}. 
In this case, the pair 
$(Z, \aA)$ determines a stability condition 
in the sense of
Bridgeland~\cite{Brs1}. 

We denote by $\Stab_{\Gamma_{\bullet}}(\tT)$
the set of weak stability conditions 
on $\tT$, satisfying some technical 
conditions. 
(Local finiteness, Support property.)
The detail of these properties will be
recalled in Section~\ref{sec:reswe}. 
The following theorem 
is proved in~\cite[Theorem~2.15]{Tcurve1}, 
along with the same argument of Bridgeland's 
theorem~\cite[Theorem~7.1]{Brs1}.
 \begin{thm}\label{thm:top}
There is a natural topology on $\Stab_{\Gamma_{\bullet}}(\tT)$
such that the map 
\begin{align*}
\Pi \colon 
\Stab_{\Gamma_{\bullet}}(\tT) 
\to \prod_{i=0}^{N}\Hom_{\mathbb{Z}}(\Gamma_{i}/\Gamma_{i-1}, \mathbb{C}),
\end{align*}
sending $(Z, \aA)$ to $Z$ 
is a local homeomorphism. In particular each connected component 
of $\Stab_{\Gamma_{\bullet}}(\tT)$ is a complex manifold. 
\end{thm}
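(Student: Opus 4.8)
The plan is to follow Bridgeland's proof of the deformation theorem \cite[Theorem~7.1]{Brs1}, in the form already adapted to weak stability conditions in \cite[Theorem~2.15]{Tcurve1}; what I would spell out is the topology, the two halves of the local-homeomorphism statement, and the points where the filtration $\Gamma_{\bullet}$ requires extra care. First I would fix the topology on $\Stab_{\Gamma_{\bullet}}(\tT)$ by the generalized metric
\[
d(\sigma_1,\sigma_2)=\sup_{0\neq E\in\tT}\left\{\, |\phi^-_{\sigma_1}(E)-\phi^-_{\sigma_2}(E)|,\ |\phi^+_{\sigma_1}(E)-\phi^+_{\sigma_2}(E)|,\ \|Z_1-Z_2\|\,\right\}\in[0,\infty],
\]
where $\phi^\pm_\sigma$ denote the largest and smallest Harder--Narasimhan phases with respect to $\sigma$ and $\|\cdot\|$ is a fixed norm on $\prod_{i=0}^{N}\Hom_{\mathbb{Z}}(\Gamma_i/\Gamma_{i-1},\mathbb{C})$; restricted to subsets of finite diameter this is an honest metric, and it defines the topology on $\Stab_{\Gamma_{\bullet}}(\tT)$.

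The substance is to show that $\Pi$ is simultaneously a local injection and a local surjection admitting a continuous local inverse. Local injectivity is the familiar argument: if $\sigma=(Z,\aA)$ and $\tau=(Z,\bB)$ have the same central charge and $d(\sigma,\tau)<1$, then every $\sigma$-semistable object is $\tau$-semistable of the same phase and conversely, so the two slicings coincide and hence $\aA=\bB$. For the lifting statement, given $\sigma=(Z,\aA)$ with $Z=\{Z_i\}$ and a tuple $W=\{W_i\}$ close to $Z$, I would build a new slicing $\pP_W$ on $\tT$ by letting $\pP_W(\phi)$ be the extension-closed subcategory generated by those $\sigma$-semistable objects $E$, with $\cl(E)\in\Gamma_i\setminus\Gamma_{i-1}$, whose perturbed phase $\frac{1}{\pi}\arg W_i([\cl(E)])$ lies in a short interval around $\phi$. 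The support property---part of the definition of $\Stab_{\Gamma_{\bullet}}(\tT)$, recalled in Section~\ref{sec:reswe}---is precisely what forces the phases of \emph{all} semistable objects to move by less than a prescribed $\varepsilon$ once $W$ is sufficiently near $Z$, so the Harder--Narasimhan pieces do not collide, $\pP_W$ is a locally finite slicing, and $\tau\cneq(W,\pP_W((0,1]))$ is the required point of $\Stab_{\Gamma_{\bullet}}(\tT)$ with $d(\sigma,\tau)$ small; continuity of $\sigma\mapsto W$ on a neighbourhood and of its inverse is then read off the metric.

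The point I expect to cost the most effort is not the deformation machinery---which is essentially verbatim Bridgeland---but the bookkeeping forced by the filtration $\Gamma_{\bullet}$: one must check that the technical conditions cutting out $\Stab_{\Gamma_{\bullet}}(\tT)$ (local finiteness and the support property) are themselves stable under the above perturbation, so that the lifted datum really lies in $\Stab_{\Gamma_{\bullet}}(\tT)$ and not merely in the set of ``pre-stability'' data; and one must control the interaction of the distinct $Z_i$, since an object whose numerical class degenerates from $\Gamma_i\setminus\Gamma_{i-1}$ into $\Gamma_{i-1}$ has its phase governed by a different homomorphism and can behave non-strictly in inequalities. Both points are handled in \cite{Tcurve1}, and since the only structural inputs used there are that $\Gamma$ is finitely generated and free with free subquotients $\Gamma_i/\Gamma_{i-1}$---which holds for our $\Gamma$ as in \eqref{isom:gam}---the same verification applies here. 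Finally, the concluding assertion is immediate: $\Pi$ being a local homeomorphism onto the complex vector space $\prod_{i=0}^{N}\Hom_{\mathbb{Z}}(\Gamma_i/\Gamma_{i-1},\mathbb{C})$ endows each connected component of $\Stab_{\Gamma_{\bullet}}(\tT)$ with the structure of a complex manifold.
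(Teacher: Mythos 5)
The paper itself offers no proof of this theorem: it is quoted from \cite[Theorem~2.15]{Tcurve1}, which in turn runs Bridgeland's deformation argument \cite[Theorem~7.1]{Brs1} with the filtration $\Gamma_{\bullet}$ carried along, and your outline (generalized metric topology, local injectivity from equal central charges, lifting of nearby central charges using the support property, and the check that local finiteness and the support property survive the perturbation) is exactly that route. So at the level of strategy you are aligned with the paper.

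However, the one step you actually spell out --- the construction of the deformed slicing --- is not correct as written. Defining $\pP_W(\phi)$ as the extension-closed subcategory generated by the $\sigma$-semistable objects whose perturbed phase $\frac{1}{\pi}\arg W_i([\cl(E)])$ lies in a short interval around $\phi$ does not produce a slicing: since these intervals overlap as $\phi$ varies, one and the same $\sigma$-semistable object would belong to $\pP_W(\phi)$ for a whole range of $\phi$, so the required vanishing $\Hom(\pP_W(\phi_1),\pP_W(\phi_2))=0$ for $\phi_1>\phi_2$ fails, and the category also contains manifestly $W$-unstable objects such as direct sums of old stables with distinct perturbed phases in the interval. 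The actual construction in \cite[Section~7]{Brs1} (and in \cite{Tcurve1}) is finer: for each $\phi$ one works inside the quasi-abelian category $\pP((\phi-\varepsilon,\phi+\varepsilon))$ of the \emph{old} slicing and declares $E$ to be new-semistable of phase $\phi$ when it admits no strict subobject or quotient (in that quasi-abelian category) of larger, respectively smaller, $W$-phase; one then has to prove Harder--Narasimhan filtrations for this notion and only afterwards verify $d$-closeness, local finiteness and the support property for the lifted pair. Your sketch compresses this into a definition that would fail if taken literally, so in a complete write-up you must either reproduce Bridgeland's quasi-abelian semistability construction or genuinely reduce to the statement of \cite[Theorem~2.15]{Tcurve1}, as the paper does; the remaining points you flag (freeness of the subquotients $\Gamma_i/\Gamma_{i-1}$, stability of the technical conditions under perturbation) are indeed the only new bookkeeping beyond \cite{Brs1}.
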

If $N=0$, then the space $\Stab_{\Gamma_{0}}(\tT)$
is nothing but Bridgeland's space of stability 
conditions~\cite{Brs1}. 
\subsection{Stability conditions on $\dD_0$}
Let $S$ be a K3 surface and $\overline{X}=S \times \mathbb{P}^1$
as in the previous section. 
In this subsection, we construct 
stability conditions on $\dD_0$
where $\dD_0$ is defined by (\ref{def:D0}).
In Subsection~\ref{subsec:Ch}, we
constructed 
a group homomorphism, 
\begin{align*}
\cl_0 \colon K(\dD_0) \to \Gamma_0,
\end{align*}
for $\Gamma_0=\mathbb{Z} \oplus \mathrm{NS}(S)
\oplus \mathbb{Z}$. 
Therefore we have the
space of Bridgeland's stability conditions on 
$\dD_0$, 
\begin{align}\label{Stab0}
\Stab_{\Gamma_0}(\dD_0). 
\end{align}
We construct elements 
of (\ref{Stab0}) following the 
same arguments of~\cite{Brs2}, \cite{AB}, \cite{Tst2}. 
For an ample divisor $\omega$ on $S$, 
we set the group homomorphism, 
$Z_{\omega, 0} \colon \Gamma_0 \to \mathbb{C}$
to be 
\begin{align}\label{charge}
Z_{\omega, 0}(v) \cneq \int_{S} e^{-i\omega}v, \quad 
v\in \Gamma_0.
\end{align}
If we write $v=(r, \beta, n) \in \mathbb{Z}
 \oplus \mathrm{NS}(S) \oplus \mathbb{Z}$, 
then (\ref{charge}) is written as 
\begin{align*}
Z_{\omega, 0}(v)=n-\frac{1}{2}r\omega^2-
(\omega \cdot \beta) \sqrt{-1}. 
\end{align*}
Let $\bB_{\omega} \subset \dD_0$
be the heart of a bounded t-structure 
defined in Definition~\ref{def:Bom}. 
We have the following lemma. 
\begin{lem}\label{lem:Bstab}
For any ample divisor $\omega$ on 
$S$ and $t\in \mathbb{R}_{>0}$, we have 
\begin{align}\label{stabd0}
(Z_{t\omega, 0}, \bB_{\omega}) \in \Stab_{\Gamma_0}(\dD_0). 
\end{align}
\end{lem}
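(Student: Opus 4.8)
The plan is to verify the two defining axioms of a weak stability condition (here with trivial filtration, i.e.\ a genuine Bridgeland stability condition) for the pair $(Z_{t\omega,0},\bB_\omega)$: first the positivity property $Z_{t\omega,0}(E)\in\mathbb H$ for all nonzero $E\in\bB_\omega$, and then the existence of Harder–Narasimhan filtrations; finally one records the technical conditions (local finiteness and the support property) needed to land inside $\Stab_{\Gamma_0}(\dD_0)$. Since $\bB_\omega$ is unchanged under $\omega\mapsto t\omega$, it suffices to treat a general ample class, so I will suppress $t$ and write $\omega$ for $t\omega$ throughout.

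For the positivity axiom, write $Z_{\omega,0}(r,\beta,n)=n-\tfrac12 r\omega^2 - (\omega\cdot\beta)\sqrt{-1}$, so that $\Im Z_{\omega,0}=-(\omega\cdot\beta)$ and $\Re Z_{\omega,0}=n-\tfrac12 r\omega^2$. Every object of $\bB_\omega=\langle \fF_\omega,\tT_\omega[-1]\rangle_{\ex}$ sits in a short exact sequence $0\to H^{-1}(E)[1]\to E\to H^0(E)\to 0$ in $\bB_\omega$ with $H^{-1}(E)\in\fF_\omega$ and $H^0(E)\in\tT_\omega$; by additivity of $Z_{\omega,0}$ it is enough to check the two extreme cases, an object $F\in\fF_\omega$ and an object $T[-1]$ with $T\in\tT_\omega$. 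For $F\in\fF_\omega$ one has $\mu_{\omega,+}(F)\le 0$, hence $\omega\cdot\beta(F)\le 0$ on every HN factor; if some factor has $\omega\cdot\beta<0$ then $\Im Z_{\omega,0}(F)>0$ and we are in the open upper half plane, while if $\omega\cdot\beta=0$ on all factors then each factor is a torsion (zero- or one-dimensional, supported in fibres) or slope-zero sheaf, for which $n\ge 0$ and in the borderline $n=0$ case one is reduced to rank-zero sheaves supported in dimension $\le 0$ — exactly the situation handled by the surface analogue in \cite{Brs2}, \cite{AB}, and one checks directly that $Z_{\omega,0}$ lands on the negative real axis, i.e.\ on the boundary ray $\phi=1$ of $\mathbb H$, as required. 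Dually, for $T\in\tT_\omega$ one has $\mu_{\omega,-}(T)>0$ so $\omega\cdot\beta(T)>0$, whence $\Im Z_{\omega,0}(T[-1])=+(\omega\cdot\beta(T))>0$ and $T[-1]$ lands in the open upper half plane. Combining, $Z_{\omega,0}(E)\in\mathbb H$ for every nonzero $E\in\bB_\omega$.

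For Harder–Narasimhan filtrations and the technical axioms I would invoke the standard machinery: since $\dD_0=D^b\Coh_\pi(\overline X)$ and $\Coh_\pi(\overline X)$ is a Noetherian abelian category in which the relevant numerical data ($\rank$, $\omega\cdot\beta$, $\chi$) are controlled, the tilted heart $\bB_\omega$ is again Noetherian, and the image of $\cl_0$ together with the discreteness of $\Im Z_{\omega,0}$ and, on the $\Im=0$ locus, of $\Re Z_{\omega,0}$, gives the usual finiteness needed to run Bridgeland's argument (\cite[Proposition~7.1]{Brs1}) producing HN filtrations; local finiteness follows likewise. For the support property one uses the Bogomolov-type inequality of Lemma~\ref{lem:Bog}: for $\omega$-Gieseker semistable sheaves with $\beta\cdot\omega\neq 0$ one has $\beta^2+2(\beta\cdot\omega)^2\ge 2r(r+n)$, which after tilting translates into a quadratic bound of $\lvert\cl_0(E)\rvert$ by $\lvert Z_{\omega,0}(E)\rvert$ for semistable $E\in\bB_\omega$, exactly the support property. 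The main obstacle is this last point: one must check carefully that the slope function $\mu_{\omega,0}=\Re Z_{\omega,0}/\Im Z_{\omega,0}$ does not degenerate — in particular that no sequence of $Z_{\omega,0}$-semistable objects escapes to infinity in $\Gamma_0$ while $Z_{\omega,0}$ stays bounded — and this is precisely where the Bogomolov inequality, and the fact that the fibre direction $\mathbb P^1$ contributes nothing new (the objects are genuinely pulled back from semistable sheaves on the K3 fibre, as in \eqref{ip}), are used. Since all the needed inputs — the torsion pair, the Noetherianity, and Lemma~\ref{lem:Bog} — are already in hand, the verification proceeds exactly as in \cite{Brs2}, \cite{AB}, \cite{Tst2}, and I would present it by quoting those sources for the parts that are literally identical and spelling out only the fibre-product bookkeeping that is special to $\overline X=S\times\mathbb P^1$.
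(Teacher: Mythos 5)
Your overall strategy coincides with the paper's, which simply invokes \cite[Proposition~7.1]{Brs2}, \cite[Corollary~2.1]{AB} and \cite[Lemma~6.4]{Tst2}; the HN property, local finiteness and the support property are indeed the parts one can safely delegate to those references (together with Lemma~\ref{lem:Bog} for the support property). However, the one step you chose to spell out, the positivity $Z_{t\omega,0}(E)\in\mathbb{H}$ on the locus $\Imm Z_{t\omega,0}=0$, is exactly the nontrivial, K3--specific content of the lemma, and your inline argument for it does not work. First, with the convention $\bB_{\omega}=\langle \fF_{\omega},\tT_{\omega}[-1]\rangle_{\ex}$ the relevant sequence is $0\to\hH^0(E)\to E\to\hH^1(E)[-1]\to 0$ with $\hH^0(E)\in\fF_{\omega}$, $\hH^1(E)\in\tT_{\omega}$ (not $H^{-1}(E)[1]$). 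Second, an object of $\fF_{\omega}$ has no rank--zero Harder--Narasimhan factors at all: any sheaf of dimension $\le 1$ has $\mu_{\omega}=\infty$ and lies in $\tT_{\omega}$, so your dichotomy ``each factor is a torsion (zero- or one-dimensional) or slope-zero sheaf, for which $n\ge 0$'' is not the right case division, and the inequality you would need points the opposite way. For a $\mu_{\omega}$-semistable $F\in\fF_{\omega}$ with $r>0$ and $\omega\cdot\beta=0$ one must show $\Ree Z_{t\omega,0}(F)=n-\tfrac12 r t^2\omega^2<0$; if all you knew were $n\ge 0$, this real part could well be positive. The correct input is Lemma~\ref{lem:Bog}(i) (Riemann--Roch plus Serre duality on the fibre K3): each $\mu_{\omega}$-stable factor satisfies $\beta_i^2+2\ge 2r_i(r_i+n_i)$, and $\beta_i\cdot\omega=0$ forces $\beta_i^2\le 0$ by the Hodge index theorem, whence $r_i+n_i\le 1$ and $n_i\le 1-r_i\le 0$; summing over the factors gives $n\le 0$ and hence $\Ree Z_{t\omega,0}(F)<0$ strictly, as required.

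A smaller but real slip occurs on the other side of the torsion pair: your claim that $\mu_{\omega,-}(T)>0$ forces $\omega\cdot\beta(T)>0$, so that $T[-1]$ lies in the open upper half plane, fails for $T$ of dimension zero (there $\beta=0$); in that case $Z_{t\omega,0}(T[-1])=-n$ with $n=\length(T)>0$, which lies on the boundary ray $\mathbb{R}_{<0}$ of $\mathbb{H}$, so the conclusion survives but the asserted strict positivity of the imaginary part does not. (For one-dimensional $T$ supported on fibres, $\beta$ is a nonzero effective class, so there $\omega\cdot\beta>0$ does hold.) In summary: the citation-based skeleton matches the paper, but as written your verification of the axiom (\ref{cond}) would not compile into a proof; it needs the Bogomolov-type bound of Lemma~\ref{lem:Bog}(i) on the slope-zero locus and the corrected treatment of zero-dimensional objects in $\tT_{\omega}$.
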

\begin{proof}
The same proofs
as in~\cite[Proposition~7.1]{Brs2}, \cite[Corollary~2.1]{AB}
are applied. 
Also see~\cite[Lemma~6.4]{Tst2}.
\end{proof}
\subsection{Constructions of weak stability conditions on $\dD$}
\label{subsec:Const}
Let $\dD$ be a triangulated category 
defined in Definition~\ref{defi:D}. 
In this subsection, we 
construct weak stability conditions on 
$\dD$. 
Recall that we constructed 
a group homomorphism, 
\begin{align*}
\cl \colon K(\dD) \to \Gamma, 
\end{align*}
for $\Gamma=\mathbb{Z} \oplus \Gamma_0$
in Subsection~\ref{subsec:Ch}. 
We take a filtration of $\Gamma$, 
\begin{align*}
0=\Gamma_{-1} \subset \Gamma_0 \subset \Gamma_1 \cneq \Gamma, 
\end{align*}
where 
$\Gamma_0$ is given by (\ref{def:Gamma0}) and 
the second inclusion is 
given by $v \mapsto (0, v)$. 
Now we have data which defines
the space of weak stability conditions on 
$\dD$. The resulting complex manifold is 
\begin{align*}
\Stab_{\Gamma_{\bullet}}(\dD). 
\end{align*}
Let $\omega$ be an
ample divisor on $S$
and $t\in \mathbb{R}_{>0}$.  
We define the element, 
\begin{align*}
Z_{t\omega} \in \prod_{i=0}^{1}\Hom(\Gamma_{i}/\Gamma_{i-1}, \mathbb{C}), 
\end{align*}
to be the following: 
\begin{align*}
Z_{t\omega, 1}(R) & \cneq
R\sqrt{-1}, \quad R\in \Gamma_1/\Gamma_0 =\mathbb{Z},
\\
Z_{t\omega, 0}(v) &\cneq 
\int_{S} e^{-it\omega}v, 
\quad v\in \Gamma_0. 
\end{align*}
Let $\aA_{\omega} \subset \dD$ be 
the heart defined in Definition~\ref{defi:Ao}. 
We have the following lemma. 
\begin{lem}\label{lem:property}
For any ample divisor $\omega$
on $S$ and $t \in \mathbb{R}_{>0}$, we have 
\begin{align*}
\sigma_{t\omega} \cneq (Z_{t\omega}, \aA_{\omega}) 
\in \Stab_{\Gamma_{\bullet}}(\dD). 
\end{align*}
\end{lem}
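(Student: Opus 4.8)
The plan is to verify that the pair $\sigma_{t\omega} = (Z_{t\omega}, \aA_{\omega})$ satisfies the axioms of a weak stability condition in Definition~\ref{defi:weak}, together with the technical local-finiteness and support properties. First I would check the positivity condition \eqref{cond}: for $0 \neq E \in \aA_{\omega}$ with $\cl(E) \in \Gamma_1 \setminus \Gamma_0$, i.e. $\rank(E) = R > 0$, one has $Z_{t\omega,1}(R) = R\sqrt{-1} \in \mathbb{H}$ since $R \ge 1$ by Lemma~\ref{lem:pos}; and for $0 \neq E \in \aA_{\omega}$ with $\cl(E) \in \Gamma_0$, necessarily $E \in \bB_{\omega}$ (by the definition of $\aA_{\omega}$ as the extension closure of $\pi^{\ast}\Pic(\mathbb{P}^1)$ and $\bB_{\omega}$, an object of class in $\Gamma_0$ has no $\pi^{\ast}\oO_{\mathbb{P}^1}(r)$ contribution), so $Z_{t\omega,0}(E) \in \mathbb{H}$ follows from Lemma~\ref{lem:Bstab}, since $(Z_{t\omega,0}, \bB_{\omega})$ is already a Bridgeland stability condition on $\dD_0$ and the positivity there is exactly this statement (using $-\ch_2(E)\cdot\omega \ge 0$ from Lemma~\ref{lem:pos} to handle the boundary case where the imaginary part vanishes).

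The heart of the argument is the existence of Harder-Narasimhan filtrations for $\sigma_{t\omega}$. Here I would invoke the general machinery: since the slicing is built by gluing the stability condition $(Z_{t\omega,0}, \bB_{\omega})$ on $\dD_0$ (which has HN filtrations by Lemma~\ref{lem:Bstab}) along the filtration $\Gamma_0 \subset \Gamma_1$ with the $\Gamma_1/\Gamma_0$-part having a single phase $1/2$, any $E \in \aA_{\omega}$ first admits a canonical exact sequence separating its $\pi^{\ast}\Pic(\mathbb{P}^1)$-part from its $\bB_{\omega}$-part, and then one refines using the HN filtration of the $\bB_{\omega}$-part. This is precisely the situation handled in \cite[Section~3]{Tcurve1} and I would refer to the analogous construction there — more precisely, I expect the proof to proceed by citing \cite[Proposition~3.6 and its surrounding results]{Tcurve1} (the same reference already used in the proof of Proposition~\ref{heart}) with the dictionary $(\dD, \aA, \dD_0, \bB_{\omega}) \leftrightarrow$ the corresponding objects there, noting that the argument only needs the abelian structure of $\aA_{\omega}$ and the HN property on $\dD_0$.

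I would then check local finiteness and the support property. Local finiteness follows because in each fixed class there are only finitely many possible phases appearing in any HN or Jordan-Hölder filtration, which is a consequence of the discreteness of $\Gamma$ and the fact that $\im Z_{t\omega,0}$ takes values in a discrete set (the imaginary part is $-t(\omega\cdot\beta)$ with $\beta \in \NS(S)$) while $\im Z_{t\omega,1}$ is constant; this is the standard argument as in \cite[Lemma~6.4]{Tst2} or the discrete-lattice case of \cite{Brs1}. For the support property, one uses the Bogomolov-type inequalities of Lemma~\ref{lem:Bog}: any $Z_{t\omega}$-semistable object is built (via its HN/JH structure relative to $\dD_0$) from $\omega$-Gieseker semistable sheaves on the fibers, whose Mukai vectors are controlled by \eqref{ineq:Bog}, \eqref{ineq:Bog0}, giving the required norm bound $\lVert \cl(E) \rVert \le C|Z_{t\omega}(E)|$ on the discriminant locus.

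The main obstacle I anticipate is the support property: unlike the HN and positivity axioms, which reduce cleanly to the already-established facts about $\bB_{\omega}$ on $\dD_0$, the support property requires genuinely controlling all $Z_{t\omega}$-semistable objects in $\aA_{\omega}$ — including those of nonzero rank, which are extensions of line bundles pulled back from $\mathbb{P}^1$ by objects of $\bB_{\omega}$ — and showing their classes cannot escape to infinity relative to $|Z_{t\omega}|$. This is where the Bogomolov inequality input is essential and where the bookkeeping is most delicate; I would expect the actual proof in the paper to either defer this to a later technical section (the excerpt indeed says proofs are postponed to Sections~\ref{sec:reswe}, \ref{sec:semistable}, \ref{sec:A12}) or to reduce it to a statement about the fibers $X_p \cong S$ where the classical theory of stability on K3 surfaces applies.
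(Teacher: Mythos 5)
Your positivity check (rank-positive objects have $Z_{t\omega}\in\mathbb{R}_{>0}\sqrt{-1}$, rank-zero objects lie in $\bB_{\omega}$ and one quotes Lemma~\ref{lem:Bstab}) is exactly the paper's argument, and your reduction of the rank-zero part of the support and local-finiteness properties to the K3/surface case is also how the paper proceeds. But there are two genuine gaps. The first is the Harder--Narasimhan property. Your ``canonical exact sequence separating the $\pi^{\ast}\Pic(\mathbb{P}^1)$-part from the $\bB_{\omega}$-part, then refine by the HN filtration of the $\bB_{\omega}$-part'' does not produce HN filtrations: in the relevant decomposition (Lemma~\ref{lem:rank1}) the $\bB_{\omega}$-piece is a \emph{subobject} and the rank part a quotient, so any factors of the $\bB_{\omega}$-piece of phase less than $1/2$ sit at the wrong end of the filtration, and the phase-$1/2$ semistable factors genuinely mix nonzero-rank and fiber-supported pieces; no rearrangement comes for free. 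Moreover \cite[Proposition~3.6]{Tcurve1} is the t-structure statement used for Proposition~\ref{heart}, not an HN statement. What the paper actually does (Subsection~\ref{subsec:property}) is apply the general criterion \cite[Proposition~2.12]{Tcurve1}: it first proves that $\aA_{\omega}$ is \emph{noetherian} --- a nontrivial step, bounding the length of $\hH^1$ of the kernels of a chain of surjections via double duals --- and then rules out infinite chains of subobjects with increasing argument, using Lemma~\ref{lem:pos} to force the successive quotients to have $Z_{t\omega}$ in $\mathbb{R}_{<0}$. Your proposal never addresses noetherianity, and it is also what makes the finite-length statement for $\pP((0,1))$ at phase $1/2$ work in the local-finiteness check; discreteness of the central charge alone does not give finite length.

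The second gap is the support property. For a weak stability condition the norm is taken on the class $[\cl(E)]\in\Gamma_i/\Gamma_{i-1}$, not on $\cl(E)$ itself. Hence for $\rank(E)=R\neq 0$ the relevant class is just $R\in\Gamma_1/\Gamma_0\cong\mathbb{Z}$, while $\lvert Z_{t\omega}(E)\rvert=\lvert R\rvert$, so the bound holds trivially with $C=1$; no Bogomolov input and no bookkeeping for extensions of pulled-back line bundles is needed. The Bogomolov-type inequalities enter only in the rank-zero case, where $E\in\bB_{\omega}$ is $Z_{t\omega,0}$-semistable and the support property is the one for the fiber K3 situation (the paper cites \cite[Section~4]{BaMa}). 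So the step you singled out as the main obstacle is in fact immediate from the definition, while the step you treated as routine --- existence of HN filtrations, resting on the noetherianity of $\aA_{\omega}$ --- is where the real work lies.
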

\begin{proof}
For a non-zero object $E\in \aA_{\omega}$, 
suppose that $\rank(E) \neq 0$. 
Then $\rank(E)>0$ and 
\begin{align*}
Z_{t\omega}(E) \in \mathbb{R}_{>0} \sqrt{-1},
\end{align*}
by the definition of $Z_{t\omega}$. 
If $\rank (E)=0$, then $E\in \bB_{\omega}
=\bB_{t\omega}$
and we have 
\begin{align*}
Z_{t\omega}(E)=Z_{t\omega, 0}(E) \in \mathbb{H}, 
\end{align*}
by Lemma~\ref{lem:Bstab}, where 
 $\mathbb{H}$ is given 
by (\ref{def:mH}). 
Therefore the condition (\ref{cond}) is satisfied. 
The other properties
(Harder-Narasimhan property, local finiteness, 
support property,)
will be checked in Subsection~\ref{subsec:property}. 
\end{proof}
By~\cite[Lemma~2.17]{Tcurve1},
 the following map is a continuous map,
\begin{align*}
\mathbb{R}_{>0} \ni t \mapsto 
\sigma_{t\omega} 
\in \Stab_{\Gamma_{\bullet}}(\dD). 
\end{align*} 
\begin{rmk}\label{rmk:BA}
The subcategory $\bB_{\omega} \subset \aA_{\omega}$
is closed under subobjects and quotients. 
In particular, an object $E \in \bB_{\omega}$ is 
$Z_{t\omega}$-(semi)stable if and only if
$E$ is 
$Z_{t\omega, 0}$-(semi)stable with 
respect to the pair $(Z_{t\omega, 0}, \bB_{\omega})
\in \Stab_{\Gamma_{0}}(\dD_0)$. 
\end{rmk}

\begin{rmk}
By the construction, for 
an object $E\in \aA_{\omega}$ with 
$\rank(E) \neq 0$, we have 
\begin{align*}
\arg Z_{t\omega}(E)=\frac{\pi}{2}. 
\end{align*}
Therefore the $Z_{t\omega}$-semistability 
of $E$ is checked by comparing 
$\arg Z_{t\omega, 0}(F)$ with $\pi/2$
where $F \in \bB_{\omega}$
is a subobject or a quotient 
of $E$ in $\aA_{\omega}$. 
\end{rmk}

\subsection{Wall and chamber structure}
In this subsection, we see the wall
and chamber structure on 
the parameter space $t\in \mathbb{R}_{>0}$, 
and see what happens for small $t$. 
We introduce the following notation. 
\begin{align*}
M_{t\omega}(R, r, \beta, n)
&\cneq \left\{ 
E \in \aA_{\omega} : 
\begin{array}{l}
E\mbox{ is }
Z_{t\omega}\mbox{-semistable } 
\mbox{ with } \\
\cl(E)=(R, r, \beta, n). 
\end{array}\right\}.
\end{align*}
We have the following proposition. 
\begin{prop}\label{prop:wall}
For fixed $\beta \in \mathrm{NS}(S)$ and 
an ample divisor $\omega$ on $S$, there is a finite
sequence of real numbers, 
\begin{align*}
0=t_0 <t_1 < \cdots <t_{k-1} <t_k=\infty,
\end{align*}
such that the set of objects 
\begin{align*}
\bigcup_{\begin{subarray}{c} (R, r, n), \\
\arg Z_{t\omega}(R, r, \beta, n)=\pi/2
\end{subarray}}
M_{t\omega}(R, r, \beta, n),
\end{align*} 
is constant for each $t \in (t_{i-1}, t_i)$. 
\end{prop}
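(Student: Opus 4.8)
The plan is to show that as $t$ varies in $\mathbb{R}_{>0}$, the set of $Z_{t\omega}$-semistable objects with a fixed class $(R,r,\beta,n)$ (and $\arg Z_{t\omega}=\pi/2$) can only change at finitely many values of $t$, by establishing a local-finiteness statement for \emph{walls} and then bounding the relevant classes. First I would fix $\beta$ and observe that a ``wall'' for the numerical class $(R,r,\beta,n)$ is a value $t_0\in\mathbb{R}_{>0}$ at which some subobject $F\subset E$ in $\aA_\omega$ satisfies $\arg Z_{t_0\omega}(F)=\arg Z_{t_0\omega}(E)=\pi/2$ while the strict inequality holds on one side. Since $Z_{t\omega}$ on the $\rank=0$ part is $Z_{t\omega,0}(v)=n'-\tfrac12 t^2 r'\omega^2-(t\,\omega\cdot\beta')\sqrt{-1}$, the condition $\arg Z_{t\omega,0}(F)=\pi/2$ for $\rank(F)=0$ reads $n' -\tfrac12 t^2 r'\omega^2=0$, i.e. $t^2 = 2n'/(r'\omega^2)$; and for a subobject with $\rank(F)=R'>0$ one is comparing $\arg Z_{t\omega,0}(E/F)$ with $\pi/2$. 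In either case the wall equation is polynomial in $t$, so for each candidate subobject class there are finitely many walls, and the content of the proof is that only finitely many subobject classes can occur.

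The key steps, in order. (1) Reduce to subobjects: an object $E\in\aA_\omega$ with $\rank(E)=R$ is an extension of $R$ copies of $\pi^\ast\oO_{\mathbb{P}^1}$-type objects by an object of $\bB_\omega$, so by Lemma~\ref{lem:pos} we have $\rank\ge 0$ and $-\ch_2\cdot\omega\ge 0$ for every subobject and quotient; this gives a priori bounds $0\le R'\le R$ and $0\le -\ch_2(F)\cdot\omega\le -\ch_2(E)\cdot\omega$ for any $Z_{t\omega}$-destabilizing subobject $F$, \emph{uniformly in $t$}. (2) Bound the remaining discrete data: having fixed $R'$ and the class $\beta'\in\NS(S)$ of the $\rank=0$ part of $F$ (finitely many choices since $0\le\omega\cdot\beta'\le\omega\cdot\beta$ and effectivity constrains $\beta'$), the Bogomolov-type inequalities of Lemma~\ref{lem:Bog}, applied to the Jordan--H\"older/Harder--Narasimhan factors of the sheaf part, bound $\beta'^2$ from below and hence, together with the Hodge-index theorem, bound $\beta'$ to a finite set; then the remaining integer $n'$ is forced to lie in a bounded range because a subobject realizing a wall has $\arg Z_{t\omega,0}$ pinned to $\pi/2$ for some $t$ in the (bounded, once $t_{k-1}$ is found) relevant interval. (3) Finitely many walls: each admissible destabilizing class $(R',r',\beta',n')$ contributes finitely many solutions $t$ of its polynomial wall equation lying in $\mathbb{R}_{>0}$; the union over the finite set of admissible classes is finite, giving $0=t_0<t_1<\cdots<t_{k-1}<t_k=\infty$. (4) Constancy on chambers: on each open interval $(t_{i-1},t_i)$ no wall is crossed, so by the standard argument (as in~\cite[Section~3]{Tolim2} or~\cite{Tcurve1}) the Harder--Narasimhan filtrations, hence the sets $M_{t\omega}(R,r,\beta,n)$, are locally constant and therefore constant on the interval; the finitely many $t_i$ include $t=\infty$ as the regime where $Z_{t\omega,0}$ degenerates to the slope function $\mu_\omega$ and the description stabilizes.

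The main obstacle I expect is step (2): controlling the discrete invariants of destabilizing subobjects \emph{without} first knowing a bound on $t$, since a priori a wall could occur for arbitrarily large $t$ with $n'$ correspondingly large. The resolution is that for $\rank(F)=0$ subobjects with $\omega\cdot\beta'\neq 0$ one uses the second Bogomolov inequality~(\ref{ineq:Bog0}) of Lemma~\ref{lem:Bog}, which bounds $r'(r'+n')$ in terms of $\beta'^2$ and $(\beta'\cdot\omega)^2$ — both already bounded — thereby bounding $n'$; and for $\omega\cdot\beta'=0$ one shows such an $F$ cannot destabilize because $Z_{t\omega,0}(F)$ stays real and negative, so $F$ cannot be a subobject in $\aA_\omega$ (it would lie in $\bB_\omega$ with $\arg=\pi$ only if $\ch_2\cdot\omega=0$ forces it to vanish, contradicting purity). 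Once all destabilizing classes are confined to an explicit finite set independent of $t$, the polynomial nature of the wall equations closes the argument; combined with the continuity of $t\mapsto\sigma_{t\omega}$ noted after Lemma~\ref{lem:property}, this yields the stated finite chamber decomposition.
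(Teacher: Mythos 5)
Your overall shape (walls come from rank-zero sub/quotient objects whose phase hits $\pi/2$, the wall equation $n'-\tfrac{1}{2}r't^2\omega^2=0$ is polynomial in $t$, so everything reduces to confining the destabilizing numerical classes to a finite set) agrees with the paper's strategy, but the step you yourself flag as the main obstacle is exactly where your argument breaks. The inequality (\ref{ineq:Bog0}) only bounds $r'(r'+n')$ \emph{from above} in terms of $\beta'^2$ and $(\beta'\cdot\omega)^2$; an upper bound on $r'(r'+n')$ does not bound $n'$ at all (take $r'=0$, or $r'>0$ with $n'\to-\infty$), so it cannot rule out walls at arbitrarily large $t$. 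Your fallback, that $n'$ is confined because $t$ lies in a bounded interval ``once $t_{k-1}$ is found,'' is circular: the existence of a largest wall is precisely what has to be proved. Moreover (\ref{ineq:Bog0}) applies to Gieseker semistable sheaves, whereas the destabilizer $A$ is an object of $\bB_{\omega}$, so some filtration into sheaf-type pieces is needed before any Bogomolov input can be used; and bounding $\beta'$ to a finite set, as you attempt, is neither available at that stage (the lower bound on $\beta'^2$ you invoke requires control of $r',n'$ first) nor needed, since the wall equation does not involve $\beta'$.

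The missing idea in the paper's proof is the \emph{one-sided} bounds on the $\ch_3$-components, extracted from the $Z_{t\omega}$-semistability of $E$ itself. One filters the destabilizer $A\in\bB_{\omega}$ as in Lemma~\ref{lem:rank10} into pieces $K_1\in\fF_{\omega}$, $K_2\in\Coh_{\pi}^{\le 1}(\overline{X})[-1]$, $K_3\in\tT_{\omega}^{\rm{pure}}[-1]$; semistability of $E$ forces $\Ree Z_{t\omega}(K_1)\ge 0$ and $\Ree Z_{t\omega}(K_3)\le 0$, hence $n_1\ge 0$ and $n_3\le 0$ (using $r_1\ge 0$, $r_3\le 0$). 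It is then Lemma~\ref{finset} — whose proof runs Bogomolov plus the Hodge index theorem over the Jordan--H\"older factors, all of which have $r_i>0$ and $\beta_i\cdot\omega>0$ — that converts these one-sided bounds on $n_1,n_3$ together with the bound on $\beta_i\cdot\omega$ into finiteness of $(r_1,n_1)$ and $(r_3,n_3)$, hence of $(r',n')$. The same inequalities $n_1\ge\tfrac12 r_1t^2\omega^2$ (when $K_1\neq 0$, so $r_1>0$) and $n_3\le\tfrac12 r_3t^2\omega^2$ (when $K_3\neq 0$) bound the wall values of $t$ from above, with the degenerate case $K_1=K_3=0$ forcing $(r',n')=(0,0)$, which holds for every $t$ and so produces no wall. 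Finally, your dismissal of the case $\omega\cdot\beta'=0$ is incorrect as stated: such objects (e.g.\ $\oO_x[-1]$ or sheaves in $\fF_{\omega}$ of slope zero) do occur as subobjects in $\aA_{\omega}$; the correct point is that their phase is independent of $t$, so they never produce walls.
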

\begin{proof}
The proof will be given in Subsection~\ref{subsec:wall}. 
\end{proof}
For small $t$, we have the following proposition. 
\begin{prop}\label{prop:m2}
In the same situation of Proposition~\ref{prop:wall},
we have  
\begin{align*}
M_{t\omega}(R, r, \beta, n)=\emptyset,  
\end{align*}
for any $t\in (0, t_1)$ and $(R, r, n) \in \mathbb{Z}^{\oplus 3}$
 with $R \ge 1$ and $n\neq 0$. 
\end{prop}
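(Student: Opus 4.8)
The plan is to fix an object $E\in\aA_\omega$ with $\rank(E)=R\ge 1$ and $\ch_3(E)=n\neq 0$ and to show that $E$ fails to be $Z_{t\omega}$-semistable once $t>0$ is small. By Proposition~\ref{prop:wall} the set $\bigcup_{(R',r',n')}M_{t\omega}(R',r',\beta,n')$ is constant on the open chamber $(0,t_1)$, and for $R\ge 1$ one has $\arg Z_{t\omega}(R,r,\beta,n)=\pi/2$ automatically; hence if $E$ lay in $M_{t_0\omega}(R,r,\beta,n)$ for some $t_0\in(0,t_1)$ it would be $Z_{t\omega}$-semistable for \emph{every} $t\in(0,t_1)$, in particular for $t$ arbitrarily small --- which is the contradiction I aim for. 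So it is enough to exhibit a destabilizing subobject or quotient for small $t$.

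First I would record the shape of positive-rank objects of $\aA_\omega$. Since $\aA_\omega=\langle\pi^{\ast}\Pic(\mathbb{P}^1),\bB_\omega\rangle_{\ex}$, and $\Hom_\dD(B,\pi^{\ast}L)=0$ for $B\in\bB_\omega$, $L\in\Pic(\mathbb{P}^1)$ (from Lemma~\ref{cy3} and a cohomological degree count on $\overline X$), together with the fact that $\bB_\omega$ is closed under subobjects, quotients and extensions in $\aA_\omega$ (Remark~\ref{rmk:BA}), one may reorder a filtration of $E$ to get an exact sequence $0\to B\to E\to P\to 0$ in $\aA_\omega$ with $B\in\bB_\omega$ and $P\in\langle\pi^{\ast}\Pic(\mathbb{P}^1)\rangle_{\tr}=\pi^{\ast}D^b\Coh(\mathbb{P}^1)$; as $P$ is an iterated extension of line bundles in cohomological degree zero, $P=\pi^{\ast}V$ for a rank-$R$ vector bundle $V$ on $\mathbb{P}^1$. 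By (\ref{compati}) we have $\cl(\pi^{\ast}V)=(R,\deg V,0,0)$, so $\cl_0(B)=(r_B,\beta,n)$ with $r_B=r-\deg V$; in particular $B\neq 0$ since $n\neq 0$, and $B$ carries all of $\beta$ and $n$. Write $Z_{t\omega,0}(r',\beta',n')=(n'-\tfrac12 r't^2\omega^2)-t(\omega\cdot\beta')\sqrt{-1}$, and recall $Z_{t\omega,0}(B)\in\mathbb H$ by Lemma~\ref{lem:Bstab}.

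Then I would run the case analysis. If $\omega\cdot\beta>0$ then $\Imm Z_{t\omega,0}(B)<0$, so $Z_{t\omega,0}(B)\notin\mathbb H$ and no such $E$ exists. If $\omega\cdot\beta=0$ then $Z_{t\omega,0}(B)$ is a negative real number, so $\arg Z_{t\omega}(B)=\pi>\pi/2=\arg Z_{t\omega}(\pi^{\ast}V)$, and the sequence $0\to B\to E\to\pi^{\ast}V\to 0$ already contradicts semistability. If $\omega\cdot\beta<0$ and $n<0$, then for $t$ small $\Ree Z_{t\omega,0}(B)=n-\tfrac12 r_Bt^2\omega^2<0$ while $\Imm Z_{t\omega,0}(B)>0$, so $\arg Z_{t\omega}(B)>\pi/2$ and again $B\hookrightarrow E$ destabilizes $E$.

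The essential case is $\omega\cdot\beta<0$, $n>0$, where $\arg Z_{t\omega}(B)<\pi/2$ for small $t$ and $B$ is no longer destabilizing; here one produces a destabilizing \emph{quotient}. The key point is that $\Ext^1_\dD(\pi^{\ast}V,B)\cong\Ext^1_{\mathbb{P}^1}(V,\dR\pi_{\ast}B)$ is a torsion module on $\mathbb{P}^1$, since $B$ is supported on finitely many fibers of $\pi$; hence for a sufficiently positive effective divisor $D$ on $\mathbb{P}^1$ the restriction map $\Ext^1_\dD(\pi^{\ast}V,B)\to\Ext^1_\dD(\pi^{\ast}V',B)$, where $V'\cneq V\otimes\oO_{\mathbb{P}^1}(-D)$ --- which is multiplication by a section of $\oO_{\mathbb{P}^1}(D)$ --- vanishes. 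Pulling $0\to B\to E\to\pi^{\ast}V\to 0$ back along the inclusion $\pi^{\ast}V'\hookrightarrow\pi^{\ast}V$ (which has cokernel $\pi^{\ast}(V/V')\in\fF_\omega\subset\bB_\omega$) then yields a split sequence, hence a subobject $A\cneq\pi^{\ast}V'\hookrightarrow E$ with $A\cap B=0$. The quotient $C\cneq E/A$ sits in $0\to B\to C\to\pi^{\ast}(V/V')\to 0$ with $\cl_0(C)=(r_B+R\deg D,\beta,n)$, so $\beta$ and $n>0$ are unchanged; for $t$ small enough (depending only on $r_B+R\deg D$) we get $\Ree Z_{t\omega,0}(C)>0$ and $\Imm Z_{t\omega,0}(C)>0$, i.e.\ $\arg Z_{t\omega}(C)<\pi/2=\arg Z_{t\omega}(A)$, so the rank-$R$ subobject $A\hookrightarrow E$ with quotient $C$ violates semistability of $E$. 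The main obstacle is precisely this last construction: one must verify that $\pi^{\ast}V'\hookrightarrow\pi^{\ast}V$ is an inclusion in $\aA_\omega$ with cokernel in $\fF_\omega\subset\bB_\omega$ (so that $C$ genuinely lies in $\aA_\omega$ with the asserted class), which rests on the explicit description of objects of $\Coh_{\pi}(\overline X)$ and on the computation of $\dR\pi_{\ast}$ of fibrewise sheaves; the other cases, by contrast, are immediate from the sign of $\Ree Z_{t\omega,0}(B)$.
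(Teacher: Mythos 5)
Your route is viable and, at the decisive point, genuinely different from the paper's. The paper first invokes the derived dual (Proposition~\ref{prop:bij}) to reduce to $n<0$, and then finishes in a few lines: by Lemma~\ref{lem:rank1} there is $0\to A\to E\to B\to 0$ with $A\in\bB_{\omega}$ carrying all of $(\beta,n)$ and $B$ an iterated extension of line-bundle pullbacks, and semistability forces $\Ree Z_{t\omega,0}(A)=n-\tfrac12 r' t^{2}\omega^{2}\ge 0$ for every $t\in(0,t_1)$, whence $n\ge 0$, a contradiction. Your treatment of $n<0$ is the same computation; your novelty is to handle $n>0$ directly, by splitting off $\pi^{\ast}V(-D)\hookrightarrow E$ after killing the extension class in $\Ext^{1}_{\mathbb{P}^1}(V,\dR\pi_{\ast}B)$, so that the rank-$R$ subobject has phase $\pi/2$ while the quotient (with $\ch_3=n>0$) has phase $<\pi/2$ for small $t$. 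This avoids the duality machinery of Section~\ref{sec:semistable} (Lemmas~\ref{lem:equiv:C}--\ref{lem:cri}, Proposition~\ref{prop:bij}) at the cost of the Ext-killing construction; the paper's reduction buys a two-line finish once $\mathbb{D}$ is available. Your use of the chamber constancy of Proposition~\ref{prop:wall} to pass from semistability at one $t_0\in(0,t_1)$ to semistability at arbitrarily small $t$ is the same mechanism the paper uses implicitly.

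Two steps need repair as written. First, the sequence $0\to B\to E\to \pi^{\ast}V\to 0$ does not follow from the vanishing $\Hom(\bB_{\omega},\pi^{\ast}\Pic(\mathbb{P}^1))=0$ by ``reordering a filtration'': extensions of objects of $\bB_{\omega}$ by line-bundle pullbacks are in general non-split (e.g.\ $\Ext^{1}_{\overline{X}}(i_{p\ast}T[-1],\oO_{\overline{X}})\cong \Ext^{1}_{X_p}(T,\oO_{X_p})\neq 0$ for $T\in\tT_{\omega}$), so the factors of a filtration cannot simply be permuted. The statement you need is exactly Lemma~\ref{lem:rank1}, whose proof must modify the filtration and shift the degrees of the line bundles when the extension class is nonzero; cite it (as the paper's proof does) rather than rederive it from Hom-vanishing. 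Second, ``sufficiently positive effective divisor $D$'' is not enough: if $D$ is disjoint from $\pi(\Supp B)$, the map $\Ext^{1}(\pi^{\ast}V,B)\to\Ext^{1}(\pi^{\ast}V(-D),B)$ is multiplication by a unit on a skyscraper, hence an isomorphism. You must take $D=m\sum_i p_i$ with $\{p_i\}=\pi(\Supp B)$ and $m\gg 0$, so that the section of $\oO_{\mathbb{P}^1}(D)$ annihilates the finite-length sheaf $\hH^{1}(\dR\pi_{\ast}B)$ that computes the relevant Ext group. With these two fixes your argument goes through.
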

\begin{proof}
The proof will be given in Subsection~\ref{subsec:finset2}. 
\end{proof}

\subsection{Comparison with $\mu_{i\omega}$-limit semistable objects}
Let $\aA(r) \subset \aA_{\omega}$ be the subcategory 
defined in Definition~\ref{defi:A(r)}. 
In this subsection,
we relate 
$Z_{t\omega}$-semistable objects in $\aA_{\omega}$
for $t\gg 0$ to certain semistable objects in 
 $\aA(r)$. 
\begin{defi}\label{def:lim}
 An object $E\in \aA(r)$
with $\rank(E)=1$ 
is $\mu_{i\omega}$-limit (semi)stable if
the following conditions hold:
\begin{itemize}
\item For any exact sequence 
$0 \to F \to E \to G \to 0$
in $\aA(r)$
with $F \in \Coh_{\pi}^{\le 1}(\overline{X})[-1]$, 
we have $\ch_3(F) \ge 0$. 
\item For any exact sequence 
$0 \to F \to E \to G \to 0$
in $\aA(r)$
with $G \in \Coh_{\pi}^{\le 1}(\overline{X})[-1]$, 
we have $\ch_3(G) \le 0$. 
\end{itemize}
\end{defi}
Note that if $E\in \aA(r)$ 
satisfies $\rank(E)=0$, then 
$E\in \Coh_{\pi}^{\le 1}(\overline{X})[-1]$. 
We also call $E\in \aA(r)$ with 
$\rank(E)=0$ to be $\mu_{i\omega}$-\textit{limit
(semi)stable} if $E[1] \in \Coh_{\pi}(\overline{X})$
is $\omega$-Gieseker (semi)stable. 

The $\mu_{i\omega}$-limit stability 
coincides with the 
same notion  
discussed in~\cite[Section~3]{Tolim2}, so we have 
employed the same notation here.  
To be more precisely, we have the following lemma: 
\begin{lem}\label{rmk:lim}
Take an object $E\in D^b \Coh(\overline{X})$
satisfying 
\begin{align}\notag
\ch(E)=(R, 0, -\beta, -n) \in \Gamma \subset H^{\ast}(\overline{X}, 
\mathbb{Q}),
\end{align}
for $R \le 1$. 
Then $E$ 
is an $\mu_{i\omega}$-limit semistable
object in $\aA(0)$  
iff $E[1]$ is an $\mu_{i\omega}$-limit semistable 
object 
in the sense of~\cite[Section~3]{Tolim2}. 
\end{lem}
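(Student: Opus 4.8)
The plan is to unwind both notions of $\mu_{i\omega}$-limit semistability down to the same combinatorial condition on the third Chern characters of subobjects and quotients, and to match the two ambient abelian categories via the tilting construction. First I would recall the precise definition of $\mu_{i\omega}$-limit semistability used in~\cite[Section~3]{Tolim2}: there one works inside a heart built from $\oO_{\overline{X}}$ and $\Coh_{\pi}^{\le 1}(\overline{X})$, and an object $E$ with $\rank(E)=1$ is limit semistable precisely when for every sub/quotient exact sequence with the one-dimensional piece a shift of a sheaf in $\Coh_{\pi}^{\le 1}(\overline{X})$, the relevant $\ch_3$ has the prescribed sign. Since Definition~\ref{def:lim} above was deliberately written to mirror that definition, the content of the lemma is really the identification of the categories, not a new inequality. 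So the first step is to observe that for $E$ with $\ch(E)=(R,0,-\beta,-n)$ and $R\le 1$, if $R=1$ then $E$ (up to shift) lands in the subcategory $\aA(0)=\langle\oO_{\overline{X}},\Coh_{\pi}^{\le1}(\overline{X})\rangle_{\ex}$, whereas if $R\le 0$ then $\rank(E)=0$ and $E$ is forced into $\Coh_{\pi}^{\le1}(\overline{X})[-1]$, so the claim reduces to the tautology identifying Gieseker (semi)stability of $E[1]$ with the stated convention.

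The substantive case is $R=1$. Here I would argue that $E\in\aA(0)$ if and only if $E\in\langle\oO_{\overline{X}},\Coh_{\pi}^{\le1}(\overline{X})\rangle_{\tr}$ has the correct cohomology objects with respect to the t-structure whose heart is $\langle\oO_{\overline{X}},\bB_\omega\rangle_{\ex}$ restricted appropriately; this is exactly the content recorded around~\cite[Lemma~3.5]{Tcurve1} and invoked in Subsection~\ref{sub:R1}. The key point is that the inclusion $\aA(0)\hookrightarrow D^b\Coh(\overline{X})$ and the heart of~\cite[Section~3]{Tolim2} share the same short-exact-sequence structure on objects of rank $1$ with a one-dimensional sub or quotient, because in both settings such a short exact sequence corresponds, after applying $\HOM(-,\oO_{X_p})$-type arguments or simply taking the long exact cohomology sequence, to an honest short exact sequence of sheaves twisted by a line bundle on $\overline{X}$. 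Concretely: given $0\to F\to E\to G\to 0$ in $\aA(0)$ with $F\in\Coh_{\pi}^{\le1}(\overline{X})[-1]$, taking $H^{-1}$ and $H^0$ of the associated triangle with respect to the standard t-structure yields a sheaf sequence from which one reads off that $\ch_3(F)=-\ch_3(F[1])$ is the length (with appropriate sign) of a quotient sheaf, and the same bookkeeping appears verbatim in~\cite[Section~3]{Tolim2}. Hence the two lists of defining inequalities coincide term by term.

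The order of steps I would follow is: (1) dispose of the $R\le0$ case by the rank-zero convention; (2) for $R=1$, establish the equivalence of the two ambient abelian categories on the relevant objects using~\cite[Lemma~3.5]{Tcurve1} and the equivalence~(\ref{equiv:A}) to reduce from $\aA(r)$ to $\aA(0)$; (3) check that a short exact sequence in one category whose kernel or cokernel is one-dimensional is carried to a short exact sequence in the other with the same $\ch_3$; (4) conclude that Definition~\ref{def:lim} and the definition of~\cite[Section~3]{Tolim2} impose identical conditions. The main obstacle, I expect, is step (3): one must be careful that the two hearts, although built by similar tilting recipes, really do give the \emph{same} notion of subobject for the objects in question — a priori an exact sequence could exist in one heart but not the other. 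Resolving this requires checking that the torsion pair used to tilt $\Coh_{\pi}^{\le1}(\overline{X})$ into position is the same on both sides, or at least that it does not affect sequences with one-dimensional kernel/cokernel; this is where the bulk of the (routine but delicate) verification lies, and it is presumably why the lemma is stated separately and its proof deferred. Everything else is formal manipulation of cohomology long exact sequences and Chern character additivity.
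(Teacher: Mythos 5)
Your plan founders on a misreading of what the second notion of limit stability is. In \cite[Section~3]{Tolim2} the ambient heart is \emph{not} built from $\oO_{\overline{X}}$ and $\Coh_{\pi}^{\le 1}(\overline{X})$: it is the subcategory $\aA_{1/2}^{p}$ of the perverse heart $\aA^{p}=\langle \Coh^{\ge 2}(\overline{X})[1], \Coh^{\le 1}(\overline{X})\rangle_{\ex}$ constructed from all coherent sheaves on $\overline{X}$, cut out by the torsion pair $(\aA_{1}^{p},\aA_{1/2}^{p})$, and by \cite[Proposition~3.13]{Tolim2} semistability there is tested against arbitrary pure one-dimensional sheaves on $\overline{X}$ (a priori not supported on fibers of $\pi$) via \emph{strict} monomorphisms and epimorphisms, i.e.\ injections and surjections in $\aA^{p}$ whose cokernel, resp.\ kernel, again lies in $\aA_{1/2}^{p}$. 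Because you identify the two ambient categories from the outset, you treat the lemma as an essentially tautological matching of inequalities, and the two genuinely nontrivial steps disappear from your outline. The first is that Definition~\ref{def:lim}, phrased via arbitrary exact sequences in $\aA(0)$, must be re-expressed in a form parallel to the above: one introduces $\aA(0)^{\dag}=\{E\in\aA(0) : \Hom(\oO_x[-1],E)=0 \mbox{ for all } x\in\overline{X}\}$, checks $\aA(0)^{\dag}\subset \aA_{1/2}^{p}[-1]$, and shows, by the argument of Lemma~\ref{lem:cri}, that limit semistability in the sense of Definition~\ref{def:lim} is equivalent to the sign conditions on $\ch_3$ tested only against strict monomorphisms and epimorphisms by objects $F[-1]$ with $F\in\Coh_{\pi}^{\le 1}(\overline{X})$ pure one-dimensional. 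Your own worry that ``an exact sequence could exist in one heart but not the other'' is resolved precisely by this orthogonal-complement recharacterization, and no mechanism for it appears in your sketch.

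Second, the assertion that an object with $\ch(E)=(1,0,-\beta,-n)$ ``lands in $\aA(0)$'' is not a consequence of \cite[Lemma~3.5]{Tcurve1}; it is exactly where the hypothesis $\ch(E)\in\Gamma$ enters. For $E[1]\in\aA_{1/2}^{p}$ one applies \cite[Lemma~3.2]{Tolim}: $\hH^0(E)\cong I_C$ for a curve $C$ and $\hH^1(E)$ is one-dimensional, so $\beta=[C]+[\hH^1(E)]$, and membership of $\ch(E)$ in $\Gamma$ forces $C$ and the support of $\hH^1(E)$ to lie in fibers of $\pi$; this gives $E\in\aA(0)^{\dag}$. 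The same constraint, fed through the five-term cohomology sequence attached to a strict monomorphism $F\hookrightarrow E[1]$ with quotient $G$ (and its mirror for strict epimorphisms), shows that every pure one-dimensional destabilizer $F$ in the sense of \cite{Tolim2} automatically lies in $\Coh_{\pi}^{\le 1}(\overline{X})$ and that $G[-1]\in\aA(0)^{\dag}$, so the two families of test sequences coincide. Without these two steps the argument is not ``routine but delicate'' bookkeeping; it is missing its core.
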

\begin{proof}
The notion of $\mu_{i\omega}$-stability 
in~\cite[Section~3]{Tolim2}
together with the proof of this result 
will be given in Subsection~\ref{subsec:mulim}.
\end{proof}
In what follows, we use Definition~\ref{def:lim}
for the definition of $\mu_{i\omega}$-limit stability. 
For $R\le 1$, we set
\begin{align*}
M_{\rm{lim}}(R, r, \beta, n)
&\cneq \left\{ 
E \in \aA(r) : 
\begin{array}{l}
E\mbox{ is }
\mu_{i\omega} \mbox{-limit semistable } 
\\
\mbox{ with }
\cl(E)=(R, r, \beta, n). 
\end{array} \right\}.
\end{align*}
We have the following proposition. 
\begin{prop}\label{prop:m1}
In the same situation of Proposition~\ref{prop:wall},
we have   
\begin{align*}
M_{t\omega}(R, r, \beta, n)=M_{\rm{lim}}(R, r, \beta, n),  
\end{align*}
for any $t\in (t_{k-1}, \infty)$
and $R \le 1$ satisfying 
$\arg Z_{t\omega}(R, r, \beta, n)=\pi/2$. 
\end{prop}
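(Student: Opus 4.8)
The goal is to identify, for $t$ large (in the last chamber $(t_{k-1},\infty)$), the $Z_{t\omega}$-semistable objects in $\aA_\omega$ of rank $R\le 1$ with the stated Chern character with the $\mu_{i\omega}$-limit semistable objects in $\aA(r)$. The plan is to split into the two cases $R=0$ and $R=1$, and in each case match the two stability notions by a direct comparison of the defining inequalities, using the wall-and-chamber structure of Proposition~\ref{prop:wall} to pass to the large-$t$ limit. I would first record the trivial reductions: if $R\le 0$ then in fact $R=0$ (by Lemma~\ref{lem:pos}, $\rank(E)\ge 0$ for all $E\in\aA_\omega$), and then $E\in\bB_\omega$ with $\rank(E)=0$, so $E[1]\in\Coh_\pi(\overline X)$; here, by Remark~\ref{rmk:BA}, $Z_{t\omega}$-semistability of $E$ is the same as $Z_{t\omega,0}$-semistability in $\bB_\omega$, and by the construction of $\bB_\omega$ from the torsion pair $(\tT_\omega,\fF_\omega)$ this is (for $t\gg 0$) equivalent to $\omega$-Gieseker semistability of $E[1]$ — which is exactly the definition of $\mu_{i\omega}$-limit semistability in the rank-zero case. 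So the content is really the case $R=1$.

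**Main case $R=1$.** Here $E\in\aA_\omega$ has $\ch(E)=(1,-r,-\beta,-n)$. First I would show that for $t\gg0$ any $Z_{t\omega}$-semistable such $E$ lies in the subcategory $\aA(r)$: the point is that as $t\to\infty$ the central charge $Z_{t\omega,0}$ on the rank-zero part $\Gamma_0$ is dominated by the term $-\tfrac12 r\omega^2$ (coming from $e^{-it\omega}$), so a rank-zero subobject or quotient $F\in\bB_\omega$ with $\rank(F)>0$ destabilizes $E$ unless it has been "removed"; analysing the Harder–Narasimhan pieces forces $E$ to be an extension of $\pi^*\oO_{\mathbb P^1}(r)$ by an object of $\Coh_\pi^{\le 1}(\overline X)[-1]$, i.e. $E\in\aA(r)$. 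This is where Proposition~\ref{prop:wall} is used: beyond the last wall $t_{k-1}$ the semistable set is constant, so it suffices to understand the $t\to\infty$ behaviour. Conversely I would check $\aA(r)\subset\aA_\omega$-objects are in the relevant heart automatically. Once $E\in\aA(r)$, the remaining subobjects/quotients to test for $Z_{t\omega}$-stability are, up to the twist equivalence \eqref{equiv:A}, either in $\Coh_\pi^{\le1}(\overline X)[-1]$ or quotients thereof, and I would compute $\arg Z_{t\omega}$ of such an $F$: for $F\in\Coh_\pi^{\le1}(\overline X)[-1]$ with $\ch_3(F)=m$, one gets $Z_{t\omega,0}(F)=-m+ (\text{lower order in }t)\cdot\sqrt{-1}$ up to sign, so comparing $\arg Z_{t\omega}(F)$ with $\pi/2$ as $t\to\infty$ reduces precisely to the sign of $\ch_3(F)$. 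Matching this against Definition~\ref{def:lim} (for a sub $F$ one needs $\ch_3(F)\ge0$; for a quotient $G$ one needs $\ch_3(G)\le0$) yields the equality of the two moduli sets.

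**The obstacle.** The main technical difficulty is the first half of the $R=1$ case: proving that for $t$ in the last chamber every $Z_{t\omega}$-semistable object with $\ch=(1,-r,-\beta,-n)$ actually lies in $\aA(r)$, i.e. controlling which rank-zero pieces can appear in its filtration. This requires a careful asymptotic analysis of $Z_{t\omega,0}$ on $\bB_\omega$ as $t\to\infty$ together with the Bogomolov-type bounds of Lemma~\ref{lem:Bog} to rule out infinitely many potential destabilizing classes, and it is precisely the place where one must invoke the finiteness in Proposition~\ref{prop:wall}. Modulo that, the identification of the two notions of stability in the last chamber is a formal comparison of central-charge phases, and Lemma~\ref{rmk:lim} guarantees that Definition~\ref{def:lim} agrees with the $\mu_{i\omega}$-limit stability of \cite[Section~3]{Tolim2} used elsewhere. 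I would organize the proof so that the heavy lifting (the "$E\in\aA(r)$" step) is isolated as a sublemma, with the phase comparison as the quick finish.
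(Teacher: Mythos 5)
Your overall skeleton (two inclusions, reduction to $R=1$, use of the chamber structure of Proposition~\ref{prop:wall} to let $t\to\infty$, and the observation that for $F\in\Coh_{\pi}^{\le 1}(\overline{X})[-1]$ the condition $\arg Z_{t\omega}(F)\le \pi/2$ is exactly $\ch_3(F)\ge 0$) is the same as the paper's. But there is a genuine gap in the direction $M_{\rm{lim}}\subset M_{t\omega}$, and you have located the difficulty in the wrong place. Showing that a $Z_{t\omega}$-semistable $E$ lies in $\aA(r)$ is in fact the short step: by Lemma~\ref{lem:rank10} one filters $E$ with pieces $K_1\in\fF_{\omega}$, $K_2\in\aA(r)$, $K_3\in\tT_{\omega}^{\rm{pure}}[-1]$, and since $K_1$, $K_3[1]$ have positive fiber rank, the semistability inequalities $n_1-\tfrac{1}{2}t^2\omega^2 r_1\ge 0$ (resp.\ the reverse for $K_3$) fail for $t\gg 0$, forcing $K_1=K_3=0$; no Bogomolov bound or finiteness statement is needed there.

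The real work is the converse, which your outline dismisses as a ``formal comparison of phases.'' To prove that a $\mu_{i\omega}$-limit semistable $E\in\aA(r)$ is $Z_{t\omega}$-semistable, you must test $E$ against all subobjects and quotients in $\aA_{\omega}$, not only those lying in $\Coh_{\pi}^{\le 1}(\overline{X})[-1]$: a destabilizing subobject $A\in\bB_{\omega}$ (after showing $\hH^0(A)=0$, which itself uses $E\in\aA(r)$) sits in a sequence $0\to T''[-1]\to A\to T'[-1]\to 0$ with $T''\in\Coh_{\pi}^{\le 1}(\overline{X})$ but $T'\in\tT_{\omega}^{\rm{pure}}$ a pure \emph{two-dimensional} sheaf, and the limit stability of Definition~\ref{def:lim} says nothing about $T'[-1]$. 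Excluding such two-dimensional destabilizers for all $t$ in the last chamber is precisely where the Bogomolov-type boundedness enters: by Lemma~\ref{finset} the pairs $(r',n')=(\ch$ of $T')$ range over a finite set, so the destabilizing inequality $-n'+\tfrac{1}{2}t^2\omega^2 r'<0$ (with $r'>0$) is violated once $t$ exceeds a uniform bound $M(\beta,\omega)$, and only then does the chamber structure upgrade this to all $t\in(t_{k-1},\infty)$. In addition, even for the piece $T''[-1]$ your phase argument needs that the inclusion $T''[-1]\hookrightarrow E$ in $\aA_{\omega}$ is an inclusion in $\aA(r)$ (i.e.\ the quotient stays in $\aA(r)$), which is the content of Lemma~\ref{lem:Ar1}; without it the hypothesis of limit semistability cannot be invoked. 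So the proposal as written omits the two ingredients (finiteness for two-dimensional destabilizers, and the $\aA_{\omega}$-versus-$\aA(r)$ comparison of exact sequences) that constitute the actual proof of the harder inclusion.
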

\begin{proof}
The proof will be given in Subsection~\ref{subsec:finset}. 
\end{proof}
By the equivalence (\ref{equiv:A}), the following lemma is 
obvious. 
\begin{lem}\label{lem:Mr}
For $R\le 1$, 
we have the following bijection of objects, 
\begin{align*}
\otimes \pi^{\ast}\oO_{\mathbb{P}^1}(r) \colon 
M_{\rm{lim}}(R, 0, \beta, n)
\stackrel{1:1}{\to} M_{\rm{lim}}(R, Rr, \beta, n). 
\end{align*}
\end{lem}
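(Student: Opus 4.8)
The plan is to deduce everything from the equivalence of abelian categories $\otimes\pi^{\ast}\oO_{\mathbb{P}^1}(r)\colon \aA(0)\stackrel{\sim}{\to}\aA(r)$ recorded in~(\ref{equiv:A}), together with the observation that twisting by a line bundle pulled back from $\mathbb{P}^1$ interacts with the Chern character in a completely controlled way. First I would record the effect on Chern characters: if $E\in\aA(0)$ satisfies $\ch(E)=(R,0,-\beta,-n)$, then since $\pi^{\ast}\oO_{\mathbb{P}^1}(r)$ has Chern character $1+r[\mathrm{pt}]$ with $[\mathrm{pt}]$ the class of a fiber, the projection formula (or Grothendieck--Riemann--Roch on $\overline{X}$) gives $\ch(E\otimes\pi^{\ast}\oO_{\mathbb{P}^1}(r))=(R,Rr,-\beta,-n-\text{(correction)})$ in the notation of~(\ref{isom:gam}); one checks, using $\beta\cdot[\mathrm{pt}]=0$ in the relevant cohomology and $R\ch_2$ picking up $Rr$ in the $\Gamma_0$-slot while the degree-six part is unchanged because the fiber class times $\ch_2$ vanishes, that the class becomes exactly $(R,Rr,\beta,n)$ after the sign conventions of Definition~\ref{def:lim} are applied. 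So the functor sends objects of class $(R,0,\beta,n)$ to objects of class $(R,Rr,\beta,n)$.

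Next I would verify that the functor preserves $\mu_{i\omega}$-limit (semi)stability. Here the two cases of Definition~\ref{def:lim} must be treated. For $\rank(E)=1$: an exact sequence $0\to F\to E\to G\to 0$ in $\aA(0)$ with $F\in\Coh_{\pi}^{\le1}(\overline{X})[-1]$ corresponds under the equivalence to an exact sequence in $\aA(r)$ with the twisted subobject $F\otimes\pi^{\ast}\oO_{\mathbb{P}^1}(r)$; since $F$ is (a shift of) a sheaf supported in dimension $\le 1$ on fibers, and $\pi^{\ast}\oO_{\mathbb{P}^1}(r)$ is trivial on each fiber, we have $F\otimes\pi^{\ast}\oO_{\mathbb{P}^1}(r)\cong F$ as objects of $\Coh_{\pi}^{\le1}(\overline{X})[-1]$, so $\ch_3$ is unchanged and the inequality $\ch_3(F)\ge0$ transports verbatim; symmetrically for quotients $G$. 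For $\rank(E)=0$: then $E[1]\in\Coh_{\pi}(\overline{X})$ and again $E[1]\otimes\pi^{\ast}\oO_{\mathbb{P}^1}(r)\cong E[1]$ since the twist is fiberwise trivial and $E[1]$ is fiber-supported, so $\omega$-Gieseker (semi)stability is literally preserved. Thus the equivalence restricts to the claimed bijection on the subsets $M_{\rm{lim}}$.

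The only genuinely delicate point is the bookkeeping in the first paragraph: one must be careful that twisting a fiber-supported two-dimensional sheaf by $\pi^{\ast}\oO_{\mathbb{P}^1}(r)$ really is trivial, whereas twisting the line bundle summand $\pi^{\ast}\oO_{\mathbb{P}^1}(0)$ generating $\aA(0)$ is precisely what shifts $0\rightsquigarrow Rr$ in the $\NS(S)$-slot; this is exactly the asymmetry that produces $(R,0,\beta,n)\mapsto(R,Rr,\beta,n)$ rather than a uniform shift. Since $R\le1$ and in the relevant cases $R\in\{0,1\}$, this reduces to the two transparent situations above, so I expect no real obstacle—the lemma is essentially immediate once the Chern-character computation is spelled out, which is why the paper calls it "obvious." I would therefore write the proof as: "This follows immediately from the equivalence~(\ref{equiv:A}), noting that $\otimes\pi^{\ast}\oO_{\mathbb{P}^1}(r)$ acts trivially on objects of $\Coh_{\pi}^{\le1}(\overline{X})[-1]$ and on fiber-supported sheaves, hence preserves both conditions of Definition~\ref{def:lim}, while on Chern characters it sends $(R,0,\beta,n)$ to $(R,Rr,\beta,n)$."
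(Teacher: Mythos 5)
Your proposal is correct and follows the same route as the paper, which simply declares Lemma~\ref{lem:Mr} obvious from the equivalence (\ref{equiv:A}); you have just made explicit the two points that make it obvious, namely that the twist sends the class $(R,0,\beta,n)$ to $(R,Rr,\beta,n)$ (the degree-six term is unchanged since the fiber class annihilates $\ch_2$) and that it preserves the defining conditions of Definition~\ref{def:lim}. Only cosmetic slips: the sets $M_{\rm{lim}}(R,0,\beta,n)$ are defined with $\cl(E)=(R,0,\beta,n)$ rather than $(R,0,-\beta,-n)$, so no ``sign convention'' needs to be invoked, and the quantity picking up $Rr$ is $\ch_1$, not ``$R\ch_2$.''
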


\subsection{Abelian category $\aA_{\omega}(1/2)$}\label{subsec:abelian12}
In this subsection, we introduce
a certain abelian category generated by $Z_{t\omega}$-semistable 
objects for sufficiently small $t$. 
The following is an analogue 
of Bayer's polynomial stability condition~\cite{Bay}. 
\begin{defi}\label{defi:polynomial}
An object $E \in \aA_{\omega}$ is 
$Z_{0\omega}$-(semi)stable if for any exact sequence 
$0 \to F \to E \to G \to 0$ in $\aA_{\omega}$
with $F, G \neq 0$, we have 
\begin{align*}
\arg Z_{t\omega}(F)<(\le) \arg Z_{t\omega}(G), 
\end{align*}
for $0<t \ll 1$. 
\end{defi}
The same proof of Lemma~\ref{lem:property} shows that 
there are Harder-Narasimhan filtrations 
with respect to $Z_{0\omega}$-stability. 
For $\phi \in [0, 1]$, we set 
\begin{align*}
\aA_{\omega}(\phi)
\cneq \left\langle E \in \aA_{\omega}  :
\begin{array}{l}
E \mbox{ is }Z_{0\omega}\mbox{-semistable with }\\
\lim_{t\to 0}\arg Z_{t\omega}(E)=\phi \pi
\end{array} \right\rangle_{\ex}.
\end{align*}
By the definition of $Z_{t\omega}$, 
we have $\aA_{\omega}(\phi) \neq \{ 0\}$ 
only if $\phi \in \{0, 1/2, 1\}$. 
Here we focus on the case of $\phi=1/2$. 
We have the following lemma. 
\begin{lem}\label{lem:lem12}
(i) An object $E \in \aA_{\omega}$
is $Z_{0\omega}$-(semi)stable if and only 
if $E$ is $Z_{t\omega}$-(semi)stable
for $0<t \ll 1$. 
 
(ii)
Any object $E \in \aA_{\omega}(1/2)$ satisfies 
$\ch_3(E)=0$. 

(iii) The category $\aA_{\omega}(1/2)$ is an 
abelian subcategory of $\aA_{\omega}$.  
\end{lem}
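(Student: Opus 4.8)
The plan is to establish the three parts in order, with part (i) doing the essential work. For the ``if'' direction of (i) there is nothing to prove: if $E$ is $Z_{t\omega}$-semistable for every $t$ in some interval $(0,\varepsilon)$, then each short exact sequence $0\to F\to E\to G\to 0$ in $\aA_\omega$ satisfies $\arg Z_{t\omega}(F)\le\arg Z_{t\omega}(G)$ on $(0,\varepsilon)$, which is precisely the condition defining $Z_{0\omega}$-semistability. The real content is the converse, i.e.\ the existence of a \emph{uniform} such $\varepsilon$, and here I would use the wall and chamber structure. By Proposition~\ref{prop:wall} (and the analogous, easier statements for the limiting phases $0$ and $1$) the interval $(0,t_1)$ is a chamber, so the set of $Z_{t\omega}$-semistable objects of any fixed Chern character, and hence the Harder--Narasimhan filtration of any fixed object of $\aA_\omega$, is independent of $t\in(0,t_1)$. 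If the HN filtration of $E$ were nontrivial, its first step $E_1\subset E$ would satisfy $\arg Z_{t\omega}(E_1)>\arg Z_{t\omega}(E/E_1)$ for all $t\in(0,t_1)$, contradicting $Z_{0\omega}$-semistability of $E$; so it is trivial and $E$ is $Z_{t\omega}$-semistable throughout $(0,t_1)$. The stable case is the same with strict inequalities.

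For (ii), recall that $\aA_\omega(1/2)$ is generated under extensions by the $Z_{0\omega}$-semistable objects $E'$ with $\lim_{t\to0}\arg Z_{t\omega}(E')=\pi/2$; since $\ch_3$ is additive it is enough to show $\ch_3(E')=0$ for each such $E'$. By Lemma~\ref{lem:pos} we have $\rank(E')\ge0$, and an object of $\aA_\omega$ of rank $0$ lies in $\bB_\omega$. If $\rank(E')\ge1$ then part (i) shows $E'$ is $Z_{t\omega}$-semistable for $0<t<t_1$, and Proposition~\ref{prop:m2} forces $\ch_3(E')=0$. If $\rank(E')=0$ then $E'\in\bB_\omega$, and writing $\cl_0(E')=(r,\beta,n)$ we have $Z_{t\omega}(E')=n-\tfrac12 rt^2\omega^2-t(\omega\cdot\beta)\sqrt{-1}$; since the imaginary part is $O(t)$ and the real part tends to $n$, the argument converges to $\pi/2$ only if $n=0$, so again $\ch_3(E')=n=0$. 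This proves (ii).

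For (iii) I would run the standard proof that the semistable objects of a fixed phase form an abelian category, with the role of ``phase'' played by the limiting phase $\lim_{t\to0}\arg Z_{t\omega}(-)$; part (ii) is used to keep the numerics under control, since it shows $\ch_3\equiv0$ on $\aA_\omega(1/2)$. As $\aA_\omega(1/2)$ is a full, extension-closed, hence additive, subcategory of the abelian category $\aA_\omega$, consisting exactly of $0$ together with the $Z_{0\omega}$-semistable objects of limiting phase $\pi/2$, it suffices to show that for a morphism $f\colon E\to F$ in $\aA_\omega(1/2)$ the kernel, image and cokernel formed in $\aA_\omega$ again lie in $\aA_\omega(1/2)$. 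Factoring $f$ as $E\twoheadrightarrow I\hookrightarrow F$ in $\aA_\omega$: as a quotient of the $Z_{0\omega}$-semistable $E$ the object $I$ has limiting phase $\ge\pi/2$, as a subobject of $F$ it has limiting phase $\le\pi/2$, and a seesaw argument (using $\cl(\ker f)=\cl(E)-\cl(I)$ and $\cl(\coker f)=\cl(F)-\cl(I)$, together with the vanishing of $\ch_3$ throughout) then shows that $I$, $\ker f$ and $\coker f$ are $Z_{0\omega}$-semistable of limiting phase $\pi/2$. Since $\aA_\omega$ is abelian and $\aA_\omega(1/2)$ is closed under these operations, the coincidence of image and coimage is inherited, so $\aA_\omega(1/2)$ is abelian. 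The main obstacle is part (i): unlike for an honest Bridgeland stability condition, the uniform choice of $\varepsilon$ is genuinely a finiteness statement, resting on the absence of walls in $(0,t_1)$, whose proof in turn invokes the Bogomolov-type inequalities of Lemma~\ref{lem:Bog} to control the two-dimensional semistable sheaves on the fibers of $\pi$. A secondary subtlety, in (iii), is that the actual argument $\arg Z_{t\omega}(-)$ oscillates on both sides of $\pi/2$ on $\aA_\omega(1/2)$ as $t\to0$, so the seesaw inequalities must be read to leading order in $t$; this works out precisely because $\ch_3$ vanishes there.
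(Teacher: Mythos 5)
Parts (ii) and (iii) of your proposal are essentially the paper's argument: the limiting-phase trichotomy for rank-zero classes plus Proposition~\ref{prop:m2} gives $\ch_3=0$, and abelianness is obtained by factoring a morphism through its image and using $\ch_3$-additivity together with (ii) to kill the components of the kernel and cokernel lying in $\aA_{\omega}(0)$ and $\aA_{\omega}(1)$ (be careful only to phrase the conclusion as membership in the extension-closed category $\aA_{\omega}(1/2)$: a general object of $\aA_{\omega}(1/2)$ is an iterated extension of $Z_{0\omega}$-semistable objects of limiting phase $\pi/2$, not itself semistable, so the seesaw must be run through the filtration into $\aA_{\omega}(1)$, $\aA_{\omega}(1/2)$, $\aA_{\omega}(0)$ and the vanishing of Homs between these pieces, as the paper does).

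The genuine gap is in (i). You deduce from Proposition~\ref{prop:wall} that ``the set of $Z_{t\omega}$-semistable objects of any fixed Chern character, and hence the Harder--Narasimhan filtration of any fixed object of $\aA_{\omega}$, is independent of $t\in(0,t_1)$.'' Proposition~\ref{prop:wall} does not say this: for the fixed class $\beta$ it controls only semistable objects of phase exactly $\pi/2$ with $\ch_2$-component equal to $\beta$. It says nothing about rank-zero objects whose phase differs from $\pi/2$, nor about classes with a different $\ch_2$ (whose walls are not among the $t_i$ and may well lie inside $(0,t_1)$); and even granting constancy of the semistable loci class by class, constancy of HN filtrations would further require that no phase crossings between HN factors occur in the interval, which is a separate condition not addressed by Proposition~\ref{prop:wall}. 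What your contradiction argument really needs is that for the \emph{fixed} object $E$ there is some $\delta>0$ with no destabilizing wall in $(0,\delta)$ (equivalently, a single subobject destabilizing $E$ for arbitrarily small $t$, which then contradicts $Z_{0\omega}$-semistability). This is a boundedness statement about the numerical classes of potential destabilizers of $E$ as $t\to 0$, and it must be proved, not quoted: this is exactly what the paper extracts from the \emph{proofs} of Proposition~\ref{prop:m2} and Proposition~\ref{prop:rank0}, using the filtration of Lemma~\ref{lem:rank10}, the finiteness Lemma~\ref{finset} and the Bogomolov-type inequality of Lemma~\ref{lem:Bog}, combined with the explicit trichotomy for $\lim_{t\to 0}\arg Z_{t\omega,0}$ on $\bB_{\omega}$. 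Your closing remark shows you sense this is where the work lies, but as written the reduction to Proposition~\ref{prop:wall} does not establish it, and (ii) then inherits the dependence on (i) for positive-rank objects.
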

\begin{proof}
The proof will be given in Subsection~\ref{subsec:lem12}
\end{proof}
We also use the following notation, 
\begin{align*}
\bB_{\omega}(1/2) \cneq 
\aA_{\omega}(1/2) \cap \bB_{\omega}. 
\end{align*}

\subsection{Weak stability conditions on $\aA_{\omega}(1/2)$}
\label{subsec:weakA12}
We construct weak stability conditions on 
the abelian category
$\aA_{\omega}(1/2)$. 
We define
finitely generated free abelian 
groups $\widehat{\Gamma}$, $\widehat{\Gamma}_0$
and group homomorphisms 
$\widehat{\cl}$, $\widehat{\cl}_0$, 
\begin{align*}
\widehat{\cl} &\colon 
 K(\aA_{\omega}(1/2)) \to \widehat{\Gamma} \cneq 
\mathbb{Z} \oplus \mathbb{Z} \oplus \mathrm{NS}(S), \\
\widehat{\cl}_0 &\colon 
 K(\bB_{\omega}(1/2)) \to \widehat{\Gamma}_0 \cneq 
 \mathbb{Z} \oplus \mathrm{NS}(S),
\end{align*}
 to be
\begin{align*}
\widehat{\cl}(E) &\cneq (\ch_0(E), \ch_1(E), \ch_2(E)), \\
\widehat{\cl}_0(E) &\cneq (\ch_1(E), \ch_2(E)).
\end{align*}
Here as in Subsection~\ref{subsec:Ch}, we have 
regarded $\ch_0(E)$, $\ch_1(E)$
as integers, and $\ch_2(E)$ as an element of 
$\mathrm{NS}(S)$. 
We take the following filtration of $\widehat{\Gamma}$, 
\begin{align*}
0=\widehat{\Gamma}_{-1} \subset 
\widehat{\Gamma}_0  \subset 
\widehat{\Gamma}_1 \cneq \widehat{\Gamma}. 
\end{align*}
Here the embedding $\widehat{\Gamma}_0 \subset \widehat{\Gamma}$
is given by $(r, \beta) \mapsto (0, r, \beta)$. 
For $\theta \in (0, 1)$, we construct the element 
\begin{align*}
\widehat{Z}_{\omega, \theta} \in \prod_{i=0}^{1}
\Hom(\widehat{\Gamma}_i/\widehat{\Gamma}_{i-1}, \mathbb{C}),
\end{align*}
as follows, 
\begin{align*}
\widehat{Z}_{\omega, \theta, 1}(R) & \cneq Re^{i\pi \theta}, \quad 
R \in \widehat{\Gamma}_1/\widehat{\Gamma}_0=\mathbb{Z}, \\
\widehat{Z}_{\omega, \theta, 0}(r, \beta)& \cneq
-r-(\beta \cdot \omega)\sqrt{-1}, \quad 
(r, \beta) \in \widehat{\Gamma}_0. 
\end{align*}
We have the following lemma. 
\begin{lem}\label{lem:weakA12}
For any ample divisor $\omega$ on $S$
and $0<\theta<1$, we have
\begin{align*}
(\widehat{Z}_{\omega, \theta}, \aA_{\omega}(1/2)) 
\in \Stab_{\widehat{\Gamma}_{\bullet}}(D^b(\aA_{\omega}(1/2))). 
\end{align*}
\end{lem}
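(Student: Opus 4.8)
The plan is to verify the defining axioms of a weak stability condition for the pair $(\widehat{Z}_{\omega, \theta}, \aA_{\omega}(1/2))$, where $\aA_{\omega}(1/2)$ itself serves as the heart of the standard bounded t-structure on $D^b(\aA_{\omega}(1/2))$: namely the positivity condition (\ref{cond}), the existence of Harder--Narasimhan filtrations, and the technical conditions (local finiteness and the support property). The guiding principle is that this proof runs formally parallel to that of Lemma~\ref{lem:property}, with $\bB_{\omega}(1/2)$ playing the role of $\bB_{\omega}$ and the honest stability function $\widehat{Z}_{\omega, \theta, 0}$ on $\bB_{\omega}(1/2)$ playing the role of $Z_{t\omega, 0}$ on $\bB_{\omega}$; after the appropriate reductions I would invoke the arguments of Subsection~\ref{subsec:property} and of \cite{Brs2}, \cite{AB} essentially verbatim.

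For the positivity (\ref{cond}) I would first record that any $E \in \aA_{\omega}$ with $\rank(E) = 0$ lies in $\bB_{\omega}$: in the defining filtration of $E \in \langle \pi^{\ast}\Pic(\mathbb{P}^1), \bB_{\omega} \rangle_{\ex}$ each subquotient is either some $\pi^{\ast}\oO_{\mathbb{P}^1}(a)$ of rank one or an object of $\bB_{\omega}$ of rank zero (by (\ref{compati})), so additivity of $\rank$ under extensions and the fact that $\bB_{\omega}$ is extension closed force all subquotients into $\bB_{\omega}$; hence a nonzero $E \in \aA_{\omega}(1/2)$ of rank zero lies in $\bB_{\omega}(1/2)$. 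Now let $0 \neq E \in \aA_{\omega}(1/2)$. If $\rank(E) \neq 0$, then $\rank(E) > 0$ by Lemma~\ref{lem:pos}, and $\widehat{Z}_{\omega, \theta}(E) = \rank(E) e^{i\pi\theta} \in \mathbb{H}$. If $\rank(E) = 0$, then $E \in \bB_{\omega}(1/2)$ and $\widehat{Z}_{\omega, \theta}(E) = -\ch_1(E) - (\ch_2(E) \cdot \omega)\sqrt{-1}$, whose imaginary part $-\ch_2(E) \cdot \omega$ is $\geq 0$ by Lemma~\ref{lem:pos}; it remains to treat $\ch_2(E) \cdot \omega = 0$, where I claim $\ch_1(E) > 0$. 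Indeed $\ch_3(E) = 0$ by Lemma~\ref{lem:lem12}(ii), so $\cl_0(E) = (\ch_1(E), \ch_2(E), 0)$ and $Z_{t\omega, 0}(\cl_0(E)) = -\tfrac{1}{2}\ch_1(E)\, t^2 \omega^2$ for all $t > 0$; since $(Z_{t\omega, 0}, \bB_{\omega}) \in \Stab_{\Gamma_0}(\dD_0)$ by Lemma~\ref{lem:Bstab} and $E \neq 0$, this value must lie in $\mathbb{H}$, forcing $\ch_1(E) > 0$ and hence $\widehat{Z}_{\omega, \theta}(E) \in \mathbb{R}_{<0} \subset \mathbb{H}$. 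The same reasoning shows $\widehat{\cl}(E) \neq 0$ whenever $E \neq 0$.

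For the Harder--Narasimhan property I would run the standard argument (as in \cite[\S7]{Brs2} and in the proof of Lemma~\ref{lem:property}). It reduces to two points: a Noetherian-type finiteness of $\aA_{\omega}(1/2)$ together with discreteness of the relevant part of the image of $\widehat{Z}_{\omega, \theta}$, which prevent infinite destabilizing chains and which I would verify as in Subsection~\ref{subsec:property}; and the observation that every object of positive rank has the single phase $\theta$, so the only genuinely new ingredient is the existence of Harder--Narasimhan filtrations inside $\bB_{\omega}(1/2)$ with respect to $\widehat{Z}_{\omega, \theta, 0}$ --- and since $\bB_{\omega}(1/2)$ is closed under subobjects and quotients in $\aA_{\omega}(1/2)$ (Remark~\ref{rmk:BA}) and $\widehat{Z}_{\omega, \theta, 0}$ restricted to it is an honest stability function, this is handled exactly by the arguments of \cite{Brs2}, \cite{AB}. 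The passage from $\bB_{\omega}(1/2)$ to $\aA_{\omega}(1/2)$ here is the same as the passage from $\dD_0$ to $\dD$ carried out for Lemma~\ref{lem:property} in Subsection~\ref{subsec:property}. Local finiteness is inherited from that of $(Z_{t\omega, 0}, \bB_{\omega})$ in the same way.

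The main obstacle is the support property for $\widehat{Z}_{\omega, \theta}$: one needs a quadratic form on $\widehat{\Gamma} \otimes \mathbb{R}$ that is negative on $\ker \widehat{Z}_{\omega, \theta} = \{0\} \oplus \{0\} \oplus (\omega^{\perp} \otimes \mathbb{R})$ and non-negative on the classes of $\widehat{Z}_{\omega, \theta}$-semistable objects. The positive-rank classes fill the single ray $\mathbb{R}_{>0}e^{i\pi\theta}$ and are harmless; the content is the rank-zero semistable objects, i.e.\ the $\widehat{Z}_{\omega, \theta, 0}$-semistable objects of $\bB_{\omega}(1/2)$. As in the classical case such an object reduces, up to shift, to an $\omega$-Gieseker semistable sheaf on a single fiber $X_p \cong S$, and the constraint $\ch_3 = 0$ on $\aA_{\omega}(1/2)$ pins its Chern character on $S$ to the form $(r, \beta, 0)$; the Bogomolov inequality Lemma~\ref{lem:Bog}(ii) then gives $\beta^2 + 2(\beta \cdot \omega)^2 \geq 2r^2$. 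Combined with the Hodge index theorem on $S$ (which bounds $\beta^2$ above by $(\beta \cdot \omega)^2/\omega^2$ and so controls a chosen norm of $\beta$ by $r$ and $\beta \cdot \omega$), this produces the required quadratic form. This is exactly the place where the two-dimensional semistable sheaves on the fibers of $\pi$ enter --- the phenomenon highlighted in the introduction --- and carrying it out with care (handling the boundary classes where the inequalities become equalities, and the extension to positive-rank semistable objects) is the technical heart of the lemma; the remaining verifications I would treat as routine adaptations of Bridgeland's arguments and not reproduce.
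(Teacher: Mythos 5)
Your overall route is the paper's: transplant the proof of Lemma~\ref{lem:property}, with $\bB_{\omega}(1/2)$ in place of $\bB_{\omega}$ and $\widehat{Z}_{\omega,\theta,0}$ in place of $Z_{t\omega,0}$. Your verification of the positivity condition (\ref{cond}) is correct, including the boundary case $\ch_2(E)\cdot\omega=0$, where $\ch_3(E)=0$ together with Lemma~\ref{lem:Bstab} forces $\ch_1(E)>0$; your reductions for the Harder--Narasimhan property and local finiteness are in line with what Subsection~\ref{subsec:property} does.

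The gap is in the support property. You assert that a $\widehat{Z}_{\omega,\theta,0}$-semistable object of $\bB_{\omega}(1/2)$ ``reduces, up to shift, to an $\omega$-Gieseker semistable sheaf on a single fiber'' and then invoke Lemma~\ref{lem:Bog}~(ii). That reduction is unjustified and false in general: $\bB_{\omega}$ is a tilted heart, so its objects are typically genuine two-term complexes with $\hH^0\in\fF_{\omega}$ and $\hH^1\in\tT_{\omega}$ both nonzero; semistability with respect to the slope-type function $\widehat{Z}_{\omega,\theta,0}$ does not imply Gieseker semistability of anything; and even a semistable sheaf need not be supported on one fiber (direct sums over distinct fibers of objects of equal phase). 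The identification of $Z_{t\omega,0}$-semistable objects with Gieseker semistable sheaves happens in the paper only for special classes and after Fourier-Mukai reductions (Theorem~\ref{bJ=N}), not here. The Bogomolov-type bound you want must instead be obtained as the phrase ``same proof'' intends: either run the surface argument of \cite{BaMa} as in Step 3 of Subsection~\ref{subsec:property}, proving the inequality for a semistable object of the tilt by devissage to the $\mu_{\omega}$-semistable factors of its cohomology sheaves, to which Lemma~\ref{lem:Bog} does apply; or, more economically, note that a nonzero rank-zero $\widehat{Z}_{\omega,\theta}$-semistable object lies in $\bB_{\omega}(1/2)$, that its semistability does not depend on $\theta$ (all subobjects and quotients have rank zero), hence coincides with $\widehat{Z}_{\omega,1/2}$-semistability, and therefore by Lemma~\ref{lem:lem12}~(i) and Lemma~\ref{lem:t0c} it is $Z_{t\omega,0}$-semistable for $0<t\ll 1$; since its class has the form $(r,\beta,0)$, the quantities $\lvert Z_{t\omega,0}\rvert$ and $\lvert\widehat{Z}_{\omega,\theta,0}\rvert$ are comparable for fixed $t$, and the support property is inherited from Lemma~\ref{lem:Bstab}. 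With either replacement your argument closes; as written, the key inequality rests on a false structural claim.
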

\begin{proof}
The same proof of Lemma~\ref{lem:property} is applied, and we omit the detail. 
\end{proof}
The relationship between 
$\widehat{Z}_{\omega, 1/2}$-stability and $Z_{0\omega}$-stability 
is given as follows. 
\begin{lem}\label{lem:t0c}
An object $E \in \aA_{\omega}$ is 
$Z_{0\omega}$-semistable
satisfying 
\begin{align*}
\lim_{t\to 0}\arg Z_{t\omega}(E) = \pi/2,
\end{align*}
if and only if 
$E \in \aA_{\omega}(1/2)$ and $E$ is 
$\widehat{Z}_{\omega, 1/2}$-semistable.
\end{lem}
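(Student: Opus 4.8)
The plan is to relate the two stability notions by a direct phase comparison, using that $\aA_\omega(1/2)$ consists exactly of the $Z_{0\omega}$-semistable objects of limiting phase $\pi/2$ together with their extensions. First I would unwind the definition of $\widehat{Z}_{\omega,1/2}$. For $E\in\aA_\omega(1/2)$ write $\widehat{\cl}(E)=(\ch_0(E),\ch_1(E),\ch_2(E))$. If $\rank(E)=\ch_0(E)=0$, then $\widehat{\cl}(E)$ lies in $\widehat{\Gamma}_0$ and $\widehat{Z}_{\omega,1/2,0}(E)=-\ch_1(E)\text{(as integer)}-(\ch_2(E)\cdot\omega)\sqrt{-1}$; by Lemma~\ref{lem:pos} we have $-\ch_2(E)\cdot\omega\ge 0$, so this lies in $\mathbb{H}$. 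If $\rank(E)\ne 0$, then $\rank(E)>0$ by Lemma~\ref{lem:pos}, $\widehat{\cl}(E)\in\widehat\Gamma_1\setminus\widehat\Gamma_0$, and $\widehat{Z}_{\omega,1/2,1}(\rank(E))=\rank(E)\,e^{i\pi/2}\in\mathbb{R}_{>0}\sqrt{-1}$. Meanwhile, by the Remark following Lemma~\ref{lem:property}, for such $E$ one has $\arg Z_{t\omega}(E)=\pi/2$ for all $t$, hence $\lim_{t\to 0}\arg Z_{t\omega}(E)=\pi/2$ automatically; and for $\rank(E)=0$, $\arg Z_{t\omega}(E)=\arg(Z_{t\omega,0}(E))$ and $Z_{t\omega,0}(E)=\ch_3\text{-indep part} = (n-\tfrac12 r t^2\omega^2) - t(\omega\cdot\beta)\sqrt{-1}$ where $\cl_0(E)=(r,\beta,n)$; since $\rank$ here is really $\ch_0$, the object is in $\bB_\omega$, so $r:=\ch_1(E)\cdot$(no), rather $E[1]\in\Coh_\pi$ with appropriate invariants, and one computes $\lim_{t\to0}\arg Z_{t\omega}(E)=\pi/2$ exactly when the leading real part vanishes, i.e. $n=0$, which matches Lemma~\ref{lem:lem12}(ii) forcing $\ch_3(E)=0$ on all of $\aA_\omega(1/2)$.

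Next I would observe that on $\aA_\omega(1/2)$ the two stability functions induce the \emph{same} ordering of phases on short exact sequences. Concretely, for a short exact sequence $0\to F\to E\to G\to 0$ in $\aA_\omega(1/2)$, all three objects have $\ch_3=0$ (Lemma~\ref{lem:lem12}(ii)), so $Z_{t\omega}(F)=(-\tfrac12\rank(F)t^2\omega^2)+(\text{higher order in }t)+\rank(F)\sqrt{-1}-t(\ch_2(F)\cdot\omega)\sqrt{-1}+\cdots$; the comparison of $\arg Z_{t\omega}(F)$ with $\arg Z_{t\omega}(G)$ for $0<t\ll 1$ is governed, in the expansion in $t$, by exactly the data $(\rank,\ \ch_1,\ \ch_2\cdot\omega)$ — which is precisely $\widehat{\cl}$ composed with $(r,\beta)\mapsto -r-(\beta\cdot\omega)\sqrt{-1}$. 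So the $t\to 0$ asymptotic phase order of $Z_{t\omega}$ on $\aA_\omega(1/2)$ agrees with the $\widehat{Z}_{\omega,1/2}$-phase order. Hence an object $E\in\aA_\omega(1/2)$ is $Z_{0\omega}$-(semi)stable, as tested against subobjects/quotients \emph{in} $\aA_\omega(1/2)$, if and only if it is $\widehat{Z}_{\omega,1/2}$-(semi)stable.

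The remaining point — and I expect this to be the main obstacle — is that a priori a subobject $F\hookrightarrow E$ destabilizing $E$ for $Z_{0\omega}$-stability need only lie in $\aA_\omega$, not in $\aA_\omega(1/2)$. So I must show: if $E\in\aA_\omega(1/2)$ and $0\to F\to E\to G\to 0$ is exact in $\aA_\omega$ with $\lim_{t\to0}\arg Z_{t\omega}(F)>\lim_{t\to0}\arg Z_{t\omega}(G)$ violated (i.e. $F$ genuinely destabilizes), then one may replace $F$ by a subobject lying in $\aA_\omega(1/2)$ that destabilizes at least as badly. This uses the Harder--Narasimhan filtration of $E$ (equivalently $F$) with respect to $Z_{0\omega}$-stability — whose existence is asserted right after Definition~\ref{defi:polynomial} — together with the fact, from Lemma~\ref{lem:lem12}(iii), that $\aA_\omega(1/2)$ is an abelian subcategory closed under sub/quotient among objects of limiting phase $\pi/2$: since $E$ is semistable of limiting phase $\pi/2$, any HN factor of a putative destabilizer has limiting phase $\ge\pi/2$, and the first HN factor then lies in $\aA_\omega(1/2)$ and already destabilizes $E$. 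Conversely, an $\aA_\omega(1/2)$-subobject is an $\aA_\omega$-subobject, so no extra destabilizers appear. Assembling these three steps — membership $E\in\aA_\omega(1/2)\Leftrightarrow\lim\arg Z_{t\omega}(E)=\pi/2$ from Step 1, phase-order identification from Step 2, and the subobject-reduction from Step 3 — yields the stated equivalence.
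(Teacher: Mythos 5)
Your proposal is correct and takes essentially the same route as the paper: the forward direction is identical (membership in $\aA_{\omega}(1/2)$ holds by definition, and $\ch_3=0$ on $\aA_{\omega}(1/2)$ identifies the small-$t$ $Z_{t\omega}$-phase order with the $\widehat{Z}_{\omega,1/2}$-phase order), while your converse---decomposing a potential destabilizer by limiting phase, killing the $\aA_{\omega}(1)$-part via the Hom-vanishing, and disposing of the $\aA_{\omega}(0)$-part via $\ch_3>0$---is the paper's argument up to the cosmetic change that you pass to the first HN factor instead of analyzing the original subobject $F'$ directly. Two small touch-ups: your claim should concern only the \emph{first} HN factor (``any HN factor has limiting phase $\ge\pi/2$'' is false in general, though the first factor is all you use), and the case of a rank-zero destabilizing quotient needs the dual treatment (last HN factor and its kernel), which the paper likewise leaves as ``similarly discussed''.
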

\begin{proof}
The proof will be given in Subsection~\ref{subsec:t0c}. 
\end{proof}

\subsection{Semistable objects in $\aA_{\omega}(1/2)$}
\label{subsec:semiA12}
We set 
\begin{align*}
\widehat{M}_{\omega, \theta}(R, r, \beta) \cneq 
\left\{E \in \aA_{\omega}(1/2): 
\begin{array}{l}
E \mbox{ is }\widehat{Z}_{\omega, \theta}\mbox{-semistable with }\\
\widehat{\cl}(E)=(R, r, \beta).
\end{array}  \right\}. 
\end{align*}
Similarly to Proposition~\ref{prop:wall} and Proposition~\ref{prop:m2}, 
we have the following proposition. 
\begin{prop}\label{prop:propA12}
For fixed $\beta \in \mathrm{NS}(S)$ and 
an ample divisor $\omega$ on $S$,
there is a finite sequence, 
\begin{align*}
0=\theta_k<\theta_{k-1}< \cdots <\theta_{1}<\theta_0=1/2,
\end{align*} 
such that the following holds. 

(i) The set of objects 
\begin{align*}
\bigcup_{(R, r), R \ge 1}\widehat{M}_{\omega, \theta}(R, r, \beta),
\end{align*}
is constant for $\theta \in (\theta_{i-1}, \theta_i)$. 

(ii) For $0<t \ll 1$ and any 
$(R, r, \beta) \in \widehat{\Gamma}$, we have 
\begin{align*}
\widehat{M}_{\omega, 1/2}(R, r, \beta)=M_{t\omega}(R, r, \beta, 0). 
\end{align*}

(iii) For $\theta \in (0, \theta_{k-1})$, we have 
\begin{align*}
\widehat{M}_{\omega, \theta}(1, r, \beta)=\left\{ \begin{array}{cc}
\{\pi^{\ast}\oO_{\mathbb{P}^1}(r) \}, & \mbox{ if } \beta=0, \\
\emptyset, & \mbox{ if } \beta \neq 0. 
\end{array} \right. 
\end{align*}
\end{prop}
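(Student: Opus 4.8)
The plan is to follow the template of Proposition~\ref{prop:wall} and Proposition~\ref{prop:m2}, but now inside the abelian category $\aA_\omega(1/2)$ equipped with the weak stability conditions $\widehat{Z}_{\omega,\theta}$ of Lemma~\ref{lem:weakA12}. I would establish part~(ii) first, since it is largely a translation of earlier statements. By Lemma~\ref{lem:lem12}~(i), $Z_{0\omega}$-semistability coincides with $Z_{t\omega}$-semistability for $0<t\ll 1$, and by Lemma~\ref{lem:t0c} a $Z_{0\omega}$-semistable object $E$ with $\lim_{t\to 0}\arg Z_{t\omega}(E)=\pi/2$ is exactly a $\widehat{Z}_{\omega,1/2}$-semistable object of $\aA_\omega(1/2)$. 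It then remains to match the numerical labels: Lemma~\ref{lem:lem12}~(ii) gives that $\widehat{\cl}(E)=(R,r,\beta)$ for $E\in\aA_\omega(1/2)$ forces $\cl(E)=(R,r,\beta,0)$; conversely, a $Z_{t\omega}$-semistable $E$ with $\cl(E)=(R,r,\beta,0)$ has $\ch_3(E)=0$, and inspecting the explicit central charge $Z_{t\omega}$ one checks that the limiting phase of such a semistable object cannot be $0$ or $1$ — the only way to avoid $\pi/2$ would be $\rank(E)=0$ and $\ch_2(E)\cdot\omega=0$, which by Remark~\ref{rmk:BA} and the Bogomolov-type inequality of Lemma~\ref{lem:Bog} is incompatible with $Z_{t\omega}$-semistability in the given class — so $E\in\aA_\omega(1/2)$. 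This case check is the only non-formal ingredient of part~(ii).

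For part~(i), a wall for objects of class $(R,r,\beta)$ with $R\ge 1$ is a value of $\theta$ at which some $\widehat{Z}_{\omega,\theta}$-semistable $E$ of that class admits a sub- or quotient object $F$ in $\aA_\omega(1/2)$ with $\arg\widehat{Z}_{\omega,\theta}(F)=\arg\widehat{Z}_{\omega,\theta}(E)=\pi\theta$, the last equality because $\rank(E)=R\ge 1$ gives $\widehat{Z}_{\omega,\theta}(E)=Re^{i\pi\theta}$. Such an $F$ either has $\rank(F)\ge 1$, in which case $\rank(F)$ and $\rank(E/F)$ are bounded by $R$, or $\rank(F)=0$, i.e. $F\in\bB_\omega(1/2)$ with $\widehat{\cl}_0(F)=(r_F,\beta_F)$ and $\arg(-r_F-(\beta_F\cdot\omega)\sqrt{-1})=\pi\theta$. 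Since $\beta$ is fixed, $\beta_F$ ranges over the finite set of classes that are differences of effective classes dominated by $\beta$, and then Lemma~\ref{lem:Bog} together with boundedness of Gieseker-semistable sheaves bounds $r_F$ as well; hence only finitely many destabilizer classes occur, each pinning down a single $\theta\in(0,1/2)$. This produces the finite sequence $\theta_0=1/2>\theta_1>\cdots>\theta_k=0$, and constancy of $\bigcup_{(R,r),\,R\ge1}\widehat{M}_{\omega,\theta}(R,r,\beta)$ on each open subinterval is the usual consequence (a semistable object becomes unstable only when a sub- or quotient object reaches its phase). Part~(ii) then matches the top chamber $\theta\in(\theta_1,1/2]$ with the $t\to 0$ behaviour on the $\aA_\omega$ side via Remark~\ref{rmk:BA} and Lemma~\ref{lem:t0c}.

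For part~(iii) I would take $R=1$: then $\widehat{Z}_{\omega,\theta}(E)=e^{i\pi\theta}$ has argument $\pi\theta\to 0$ as $\theta\to 0$, so in the last chamber any rank-$0$ subobject $F\in\bB_\omega(1/2)$ of $E$ must satisfy $\arg\widehat{Z}_{\omega,\theta}(F)\le\pi\theta$; as $\widehat{Z}_{\omega,\theta}(F)=-r_F-(\beta_F\cdot\omega)\sqrt{-1}$ with $\beta_F\cdot\omega\le 0$ by Lemma~\ref{lem:pos}, this forces $\beta_F\cdot\omega=0$, and dually for quotients. Feeding in $\ch_3(F)=0$ (Lemma~\ref{lem:lem12}~(ii)), the Bogomolov inequality, and the explicit structure of $\bB_\omega(1/2)$, one concludes that the only indecomposable $E$ in this chamber with $\widehat{\cl}(E)=(1,r,\beta)$ and $\beta=0$ is $\pi^{\ast}\oO_{\mathbb{P}^1}(r)$ itself, while $\beta\neq 0$ leaves no semistable object — the analogue of Proposition~\ref{prop:m2}.

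The hard part will be the boundedness input in part~(i), ruling out infinitely many walls by bounding the classes of destabilizing sub- and quotient objects, and the closely related endgame of part~(iii) where one must determine exactly which rank-$1$ objects remain semistable at small $\theta$; both rest on the Bogomolov-type inequalities of Lemma~\ref{lem:Bog} and a careful description of $\bB_\omega(1/2)$. By comparison, part~(ii) is essentially bookkeeping once Lemma~\ref{lem:lem12} and Lemma~\ref{lem:t0c} are available.
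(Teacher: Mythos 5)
Your overall strategy for (ii) matches the paper: there the statement is obtained by simply combining Lemma~\ref{lem:lem12}~(i) and Lemma~\ref{lem:t0c}. The genuine gap is exactly at the point you flag as the hard part, the boundedness input for (i), and it also undercuts your endgame in (iii). You bound rank-zero destabilizers $F\in\bB_{\omega}(1/2)$ by claiming that $\beta_F$ runs over a finite set of differences of effective classes and that Lemma~\ref{lem:Bog} ``together with boundedness of Gieseker-semistable sheaves'' bounds $r_F$. Neither half works as stated: what is controlled is only $\beta_F\cdot\omega$ (via Lemma~\ref{lem:pos}, $0\le -\beta_F\cdot\omega\le -\beta\cdot\omega$), not $\beta_F$ itself --- though for the wall equation only $\beta_F\cdot\omega$ is needed --- and, more seriously, $F$ is a two-term complex whose rank $r_F=\rank \hH^0(F)-\rank \hH^1(F)$ is not bounded by Bogomolov-type inequalities applied to a class of bounded $\omega$-degree with $\ch_3=0$: already $i_{p\ast}\oO_{X_p}^{\oplus m}\in \fF_{\omega}\subset \bB_{\omega}$ has $\ch_3=0$, $\ch_2\cdot\omega=0$ and arbitrary rank $m$. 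What excludes such objects is membership in $\bB_{\omega}(1/2)$ (their limiting phase is $\pi$, not $\pi/2$), and your cited tools never use it. The paper's proof of (i) does: for $E\in\bB_{\omega}(1/2)$ with $0\le -\beta'\cdot\omega\le -\beta\cdot\omega$ it takes the Harder--Narasimhan factors $F_1,\dots,F_N$ with respect to $Z_{0\omega}$-stability, notes each $F_i\in\bB_{\omega}(1/2)$ is $Z_{t\omega}$-semistable for $0<t\ll 1$ of class $(0,r_i,\beta_i,0)$ with $0<-\beta_i\cdot\omega\le-\beta\cdot\omega$ (Lemma~\ref{lem:lem12}), so Lemma~\ref{bound:rank0} (which itself rests on Lemma~\ref{lem:rank10} and Lemma~\ref{finset}) bounds each $r_i$, while $N\le -\beta\cdot\omega$; hence $r'=\sum_i r_i$ is bounded and only finitely many $\theta$ arise as phases of potential destabilizers. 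Without this (or an equivalent) argument the finiteness of walls, and therefore the uniform small-$\theta$ analysis in (iii), is unsupported.

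Two smaller points. In (ii), your extra case analysis asserts that a $Z_{t\omega}$-semistable object with $\rank=0$, $\ch_2\cdot\omega=0$, $\ch_3=0$ cannot exist; this is false (the same $i_{p\ast}\oO_{X_p}^{\oplus m}$ is semistable of phase $\pi$, hence lies in $\aA_{\omega}(1)$), so the identification should simply be quoted from Lemmas~\ref{lem:lem12} and \ref{lem:t0c} in the regime where it is used, in particular for $R\ge 1$ where the phase is identically $\pi/2$. In (iii), note that at a fixed $\theta>0$ the inequality $\arg\widehat{Z}_{\omega,\theta}(F)\le\pi\theta$ does not force $\beta_F\cdot\omega=0$; one needs the bound on $r_F$ from (i) and $\theta$ below the last wall. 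Moreover the paper makes explicit two steps you leave implicit: the passage from ``no nonzero rank-zero subobject'' to $E\cong\pi^{\ast}\oO_{\mathbb{P}^1}(r)$ uses the exact sequence $0\to A\to E\to B\to 0$ of Lemma~\ref{lem:rank1} with $A\in\bB_{\omega}(1/2)$ and the fact that nonzero objects of $\bB_{\omega}(1/2)$ have $\beta_A\cdot\omega\neq 0$, and the nonemptiness claim requires showing that $\pi^{\ast}\oO_{\mathbb{P}^1}(r)$ is $\widehat{Z}_{\omega,\theta}$-stable, which follows from Lemma~\ref{lem:Ostable} together with $\aA_{\omega}(1/2)\subset\cC_{\omega}^{\perp}$.
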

\begin{proof}
The proof will be given in Subsection~\ref{subsec:propA12}
\end{proof}

\section{Counting invariants}\label{sec:inv}
In this section, we
discuss several
counting invariants on $X$
and $\overline{X}$, 
which appeared in the introduction.  
\subsection{Stable pairs}\label{subsec:stablepair}
In this subsection, we recall the notion of 
stable pairs introduced by Pandharipande-Thomas~\cite{PT}. 
Let $S$ be a K3 surface and 
$X=S \times \mathbb{C}$, 
as in Subsection~\ref{subsec:local}. 
Note that we have 
the subcategory, 
\begin{align}\label{Cohsub}
\Coh_{\pi}(X) \subset \Coh_{\pi}(\overline{X}), 
\end{align}
consisting of sheaves supported on fibers of 
the projection
$\pi|_{X} \colon X\to \mathbb{C}$. 
\begin{defi}\label{defi:stablepair}
A stable pair on 
$X$ is a pair $(F, s)$, 
\begin{align*}
F \in \Coh_{\pi}(X), \quad  s \colon \oO_{X} \to F, 
\end{align*}
satisfying the following conditions. 
\begin{itemize}
\item 
The sheaf $F$ is a pure one 
dimensional sheaf. 
\item The morphism $s$ is surjective in 
dimension one. 
\end{itemize}
\end{defi}
For $\beta \in H_2(X, \mathbb{Z})$
and $n\in \mathbb{Z}$, the moduli space 
of stable pairs $(F, s)$ satisfying 
\begin{align*}
[F]=\beta, \quad \chi(F)=n, 
\end{align*}
is denoted by 
\begin{align}\label{moduli:pair}
P_n(X, \beta). 
\end{align}
If we replace $X$ by 
$\overline{X}$ in 
Definition~\ref{defi:stablepair}, we 
also have the notion of stable pairs 
on $\overline{X}$. 
By regarding $\beta$
as an element of $H_2(\overline{X}, \mathbb{Z})$,
we also have 
the similar moduli space 
$P_n(\overline{X}, \beta)$
and an open embedding,  
\begin{align*}
P_n(X, \beta) \subset P_n(\overline{X}, \beta). 
\end{align*} 
The moduli space $P_n(\overline{X}, \beta)$
is proved to be a projective scheme in~\cite{PT}, 
hence in particular $P_n(X, \beta)$ is a 
quasi-projective scheme.
In what follows, 
we regard 
an algebraic class 
$\beta \in H_2(X, \mathbb{Z})$
 as an element of $\mathrm{NS}(S)$, 
by the natural isomorphism 
$H_2(X, \mathbb{Z}) \cong H_2(S, \mathbb{Z})$
and the Poincar\'e duality. 

We are interested in the generating series of 
the Euler characteristic of the moduli space (\ref{moduli:pair}).
\begin{defi}
We define the generating series $\PT^{\chi}(X)$ to be
\begin{align*}
\PT^{\chi}(X)\cneq
\sum_{\begin{subarray}{c}
\beta \in \mathrm{NS}(S), \\
n \in \mathbb{Z}
\end{subarray}}
\chi(P_n(X, \beta))y^{\beta}z^n. 
\end{align*}
\end{defi}
Let $(F, s)$ be stable pair on $\overline{X}$. 
We remark that
 if we regard
a pair $(F, s)$
 as a two term complex, 
\begin{align}\label{comp}
I^{\bullet}=(\oO_{\overline{X}} \stackrel{s}{\to} F) \in \dD,
\end{align}
then $P_n(\overline{X}, \beta)$
is also interpreted as a moduli space of 
two term complexes (\ref{comp}) satisfying 
\begin{align}\label{cl:stable:pair}
\cl(I^{\bullet})=(1, 0, -\beta, -n),
\end{align}
in the notation in Subsection~\ref{subsec:Ch}. 
(cf.~\cite[Section~2]{PT}.)

As we stated in the introduction, our 
goal is to give a formula relating 
$\PT^{\chi}(X)$ to other invariants. 
We first prove a formula for the 
generating series on $\overline{X}$, 
\begin{align*}
\PT^{\chi}(\overline{X})\cneq
\sum_{\begin{subarray}{c}
\beta \in \mathrm{NS}(S), \\
n \in \mathbb{Z}
\end{subarray}}
\chi(P_n(\overline{X}, \beta))y^{\beta}z^n. 
\end{align*}
These series are related as follows. 
\begin{lem}\label{lem:eqPT}
We have the equality, 
\begin{align}\label{eqPT}
\PT^{\chi}(\overline{X}) =\PT^{\chi}(X)^2. 
\end{align}
\end{lem}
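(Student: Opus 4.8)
The plan is to exploit the product decomposition $\overline{X} = S \times \mathbb{P}^1$ together with the fact that any stable pair $(F,s)$ on $\overline{X}$ has $F$ supported on finitely many fibers of $\pi$. Since $F$ is pure one-dimensional and supported on fibers $X_p \cong S$, its support meets only finitely many points $p_1, \dots, p_k \in \mathbb{P}^1$, and $F$ decomposes as a direct sum $F = \bigoplus_{j} F_j$ with $F_j$ supported (scheme-theoretically, after restricting to a neighborhood) on the fiber over $p_j$. First I would make precise the claim that a stable pair on $\overline{X}$ is the same data as a finite unordered collection of stable pairs, each supported over a single point of $\mathbb{P}^1$: the section $s \colon \oO_{\overline{X}} \to F$ restricts to $s_j \colon \oO_{X_{p_j}} \to F_j$, and surjectivity of $s$ in dimension one is equivalent to surjectivity of each $s_j$ in dimension one on the respective fiber. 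Conversely, given such local data, one glues to a global stable pair on $\overline{X}$.

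Next I would set up the bookkeeping. For a point $p \in \mathbb{P}^1$, let $P_n(X_p, \beta)$ denote the moduli space of stable pairs on the fiber $X_p \cong S$ (equivalently, on $S \times \mathbb{C}$ using a coordinate chart around $p$) — this is exactly $P_n(X, \beta)$ by the isomorphism $H_2(X,\mathbb{Z}) \cong H_2(S,\mathbb{Z})$ and the product structure $X = S \times \mathbb{C}$. The decomposition above, stratified by the (finite) support locus in $\mathbb{P}^1$ and by the distribution of the discrete invariants $(\beta_j, n_j)$ among the points, gives a locally closed stratification of $P_n(\overline{X}, \beta)$ whose strata are, up to the choice of an unordered tuple of distinct points of $\mathbb{P}^1$, products $\prod_j P_{n_j}(X, \beta_j)$ with $\sum_j \beta_j = \beta$, $\sum_j n_j = n$, and each $\beta_j > 0$. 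Since Euler characteristic is additive over locally closed stratifications and multiplicative over products, and since $\chi$ of a symmetric product $\mathrm{Sym}^k(\mathbb{P}^1 \setminus \{\text{pts}\})$ contributes the appropriate combinatorial factor, I would assemble the generating series: summing over all ways of partitioning $(\beta, n)$ into a multiset of pieces and weighting by the Euler characteristic of configuration spaces of points in $\mathbb{P}^1$. The key numerical input is $\chi(\mathbb{P}^1) = 2$, equivalently that an unordered configuration of $k$ points contributes, after the standard exponential/plethystic manipulation, exactly the square of the one-fiber generating function. Concretely, $\PT^{\chi}(\overline{X}) = \sum_k \frac{1}{k!}\chi(\text{ordered config of } k \text{ pts in } \mathbb{P}^1)\bigl(\sum_{\beta>0,n} \chi(P_n(X,\beta)) y^\beta z^n\bigr)^k$, and since $\chi$ of the ordered configuration space of $k$ distinct points in $\mathbb{P}^1$ is $2 \cdot 1 \cdot 0 \cdots$ — wait, that vanishes for $k \ge 3$; the correct statement uses that $\chi$ of the full symmetric product $\mathrm{Sym}^k(\mathbb{P}^1)= \mathbb{P}^k$ is $k+1$, and the generating function $\sum_k (k+1) q^k = (1-q)^{-2}$ in the formal variable $q$ tracking a single fiber's contribution — this is precisely where the exponent $2$ enters, giving $\PT^{\chi}(\overline{X}) = \PT^{\chi}(X)^2$ after substituting back the full series (the constant term $1$ from the empty collection being handled separately).

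The main obstacle I anticipate is making the stratification argument rigorous at the level of schemes rather than just point sets: one must check that the decomposition $F = \bigoplus F_j$ varies in families (so that the strata are genuinely locally closed subschemes and the map to $\mathrm{Sym}^k(\mathbb{P}^1)$ recording the support is a morphism), and that no stable pair has embedded or non-split behavior across fibers that would obstruct the clean product description. Purity of $F$ and the fact that distinct fibers of $\pi$ are disjoint should rule out any gluing between components over different points, so the decomposition is canonical and flat in families; the remaining care is in the combinatorics of the plethystic exponential, where one must track that it is genuinely $\chi(\mathbb{P}^1) = 2$ appearing as the exponent and not some other characteristic number. A clean way to organize this is to first prove the identity $\PT^{\chi}(\overline{X}) = \prod_{\text{fibers}}$ — i.e. that the generating series is "multiplicative over $\mathbb{P}^1$" in the sense that it equals $\bigl(\text{fiberwise series}\bigr)^{\chi(\mathbb{P}^1)}$ — which is a general principle for Euler characteristics of moduli of objects supported on fibers of a projection, and then specialize $\chi(\mathbb{P}^1) = 2$.
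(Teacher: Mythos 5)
Your overall strategy --- stratify $P_n(\overline{X},\beta)$ by the support of $F$ in $\mathbb{P}^1$ and show that the generating series is the single-point series raised to the power $\chi(\mathbb{P}^1)=2$, while the same argument over $\mathbb{C}$ gives the power $\chi(\mathbb{C})=1$ --- can be made to work, but two steps as written are genuinely wrong. First, the per-point factor is not $P_{n_j}(X,\beta_j)$. A sheaf in $\Coh_{\pi}(\overline{X})$ supported set-theoretically over a single point $p$ may be thickened in the $\mathbb{P}^1$-direction, so it is not a sheaf on the K3 fiber $X_p$; conversely, $P_{n_j}(X,\beta_j)$ contains pairs spread over several points of $\mathbb{C}$, so it cannot serve as the factor attached to one point of a configuration without over-counting. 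The correct local factor is the moduli of pairs supported set-theoretically over one fixed point; its series $1+A$ gets identified with $\PT^{\chi}(X)$ only at the end, by running the same stratification over $\mathbb{C}$ (where $\chi(\mathrm{Conf}_k(\mathbb{C}))=0$ for $k\ge 2$), and your phrase ``this is exactly $P_n(X,\beta)$'' silently assumes precisely that step. Second, the combinatorics: your first formula was right and you abandoned it for a wrong one. With $A$ the reduced single-point series, the stratification gives $\sum_{k\ge 0}\frac{1}{k!}\chi(\mathrm{Conf}_k(\mathbb{P}^1))A^k$, and since $\chi(\mathrm{Conf}_k(\mathbb{P}^1))=2,\,2,\,0,\,0,\dots$ this equals $1+2A+A^2=(1+A)^2$; the vanishing for $k\ge 3$ is the binomial identity, not a defect. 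The replacement by Macdonald's formula $\sum_k\chi(\mathrm{Sym}^k\mathbb{P}^1)q^k=(1-q)^{-2}$ with ``$q$ tracking a single fiber's contribution'' is not valid as stated: substituting the whole local series for $q$ would yield $(1-A)^{-2}\neq(1+A)^2$, and dealing correctly with coincident local invariants is exactly what the power-structure formalism (or the configuration-space formula you started from) is designed to do.

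For comparison, the paper's proof bypasses all of this bookkeeping: the $\mathbb{C}^{\ast}$-action on the $\mathbb{P}^1$-factor gives $\chi(P_n(\overline{X},\beta))=\chi(P_n(\overline{X},\beta)^{\mathbb{C}^{\ast}})$, a $\mathbb{C}^{\ast}$-fixed pair splits canonically as a direct sum of fixed pairs on the two charts $U_0,U_{\infty}$, each $\mathbb{C}^{\ast}$-equivariantly isomorphic to $X$, and a second localization applied to $P_{n_i}(X,\beta_i)$ itself turns the resulting convolution into $\PT^{\chi}(X)^2$. If you want to salvage your route, keep the configuration-space formula, define the single-point moduli honestly (allowing thickened support), and apply the argument to both $\mathbb{P}^1$ and $\mathbb{C}$ so that the auxiliary series $1+A$ cancels out.
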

\begin{proof}
A standard $\mathbb{C}^{\ast}$-action on 
$\mathbb{C}$ induces $\mathbb{C}^{\ast}$-actions 
on $X=S\times \mathbb{C}$
 and $\overline{X}=S\times \mathbb{P}^1$
via acting on the second factors. 
Hence $\mathbb{C}^{\ast}$ acts on the moduli
spaces $P_n(X, \beta)$ and $P_n(\overline{X}, \beta)$.  
A stable pair $(F, s)$ on $\overline{X}$
is $\mathbb{C}^{\ast}$-fixed if and only if 
\begin{align*}
F\cong F_0 \oplus F_{\infty}, \quad 
s=(s_0, s_{\infty}) \in \Gamma(F_0) \oplus \Gamma(F_{\infty}),
\end{align*} 
where $(F_0, s_0)$ and $(F_{\infty}, s_{\infty})$
determine $\mathbb{C}^{\ast}$-fixed 
stable pairs on $U_{0}=S\times \mathbb{C}$
and $U_{\infty}=S\times (\mathbb{P}^1 \setminus \{0\})$
respectively. 
Since both of $U_0$ and $U_{\infty}$ are 
$\mathbb{C}^{\ast}$-equivalently isomorphic to $X$, 
we have 
\begin{align*}
P_n(\overline{X}, \beta)^{\mathbb{C}^{\ast}}
\cong \coprod_{\begin{subarray}{c}
\beta_1 +\beta_2=\beta, \\
n_1 +n_2 =n
\end{subarray}}
P_{n_1}(X, \beta_1)^{\mathbb{C}^{\ast}} \times 
P_{n_2}(X, \beta_2)^{\mathbb{C}^{\ast}}. 
\end{align*}
Taking the Euler characteristic and the 
$\mathbb{C}^{\ast}$-localization, 
we obtain (\ref{eqPT}). 
\end{proof}
\subsection{Product expansion formula}\label{subsec:product}
In the paper~\cite{Tolim2}, the author 
essentially proved the following result. 
\begin{thm}{\bf \cite[Theorem~1.3]{Tolim2}}\label{thm:PNL}
For each $(\beta, n) \in \mathrm{NS}(S) \oplus \mathbb{Z}$, 
there are invariants, 
\begin{align}\label{NL}
N(0, \beta, n) \in \mathbb{Q}, \quad L(\beta, n) \in \mathbb{Q}, 
\end{align}
such that we have the following formula: 
\begin{align}\label{PNL}
\mathrm{PT}^{\chi}(\overline{X})
=\prod_{\beta>0, n>0}\exp\left(n N(0, \beta, n)y^{\beta}z^n  \right)
\left(\sum_{\beta, n}L(\beta, n)y^{\beta}z^n \right). 
\end{align}
\end{thm}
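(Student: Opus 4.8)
The statement is quoted from \cite[Theorem~1.3]{Tolim2}, so the plan is to indicate how the wall-crossing argument there produces (\ref{PNL}). The idea is to view both $\PT^{\chi}(\overline{X})$ and the series $\sum_{\beta,n}L(\beta,n)y^{\beta}z^{n}$ as two specializations of a single generating series of Euler-characteristic Donaldson--Thomas type invariants, attached to a one-parameter family of weak stability conditions interpolating between \emph{stable-pair stability} and $\mu_{i\omega}$-limit stability, and to compare the two ends by Joyce's wall-crossing formula in the Euler-characteristic Hall algebra.

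First I would set up the categorical picture. A stable pair $(F,s)$, viewed as the complex $I^{\bullet}=(\oO_{\overline{X}}\stackrel{s}{\to}F)$, fits into an exact triangle $F[-1]\to I^{\bullet}\to\oO_{\overline{X}}\to F$; since $F$ is a one-dimensional sheaf supported on fibers of $\pi$ we have $F[-1]\in\Coh_{\pi}^{\le 1}(\overline{X})[-1]$, so $I^{\bullet}$ is an extension of $\pi^{\ast}\oO_{\mathbb{P}^{1}}$ by $F[-1]$ and hence lies in the abelian category $\aA(0)$ of Definition~\ref{defi:A(r)}, with $\cl(I^{\bullet})=(1,0,-\beta,-n)$ and $\rank(I^{\bullet})=1$. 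Conversely the stable pairs are exactly the $\sigma^{\mathrm{pt}}$-semistable objects of this class for a suitable weak stability condition $\sigma^{\mathrm{pt}}$ of polynomial (Bayer) type on $D^{b}(\aA(0))$, so that $\coprod_{n}P_{n}(\overline{X},\beta)$ is a moduli space of $\sigma^{\mathrm{pt}}$-semistable objects and $\chi(P_{n}(\overline{X},\beta))$ is the associated Euler-characteristic invariant; at the other extreme the $\mu_{i\omega}$-limit semistable objects of Definition~\ref{def:lim} (equivalently, by Lemma~\ref{rmk:lim} and Lemma~\ref{lem:Mr}, those counted by $L(\beta,n)$) are the semistable objects for a limiting weak stability condition of the same shape as the $\sigma_{t\omega}$ of Lemma~\ref{lem:property}.

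Next I would connect these two by a one-parameter family $\{\sigma_{s}\}_{s\in(0,\infty)}$ of weak stability conditions on $D^{b}(\aA(0))$, and for each $s$ define the invariant counting $\sigma_{s}$-semistable objects of class $(1,0,-\beta,-n)$ with $\rank=1$ by means of Joyce's logarithm in the Hall algebra followed by the Euler-characteristic integration map. Using the Bogomolov-type inequalities of Lemma~\ref{lem:Bog} for boundedness and finite type, one checks that for fixed $\beta$ the interval $(0,\infty)$ carries a locally finite wall-and-chamber structure, as in Proposition~\ref{prop:wall}, the two boundary chambers giving $\chi(P_{n}(\overline{X},\beta))$ and $L(\beta,n)$. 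A wall is crossed when a rank-one object $E$ with $\cl(E)=(1,0,-\beta,-n)$ acquires a Jordan--H\"older factor of the form $F'[-1]$ with $F'$ a one-dimensional sheaf on a fiber of $\pi$, necessarily with $[F']=\beta'>0$ and $\chi(F')=n'>0$; this is the regime in which the two-dimensional semistable sheaves that appear in Theorem~\ref{thm:main} do \emph{not} yet intervene. By Joyce's wall-crossing formula the jump across such a wall is governed by iterated Poisson brackets in the Hall algebra, whose combinatorial coefficients are read off from the bilinear form $\chi$ of Subsection~\ref{subsec:gen:bi}: since $\chi((1,0,-\beta,-n),(0,\beta',n'))=n'$ and $\chi$ vanishes on $\Gamma_{0}\times\Gamma_{0}$ by (\ref{chi0}), the Hall-algebra elements attached to one-dimensional sheaves have mutually vanishing brackets and the successive wall-crossing contributions exponentiate. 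Summing over all walls yields precisely the factor $\prod_{\beta>0,\,n>0}\exp(n\,N(0,\beta,n)y^{\beta}z^{n})$, where $N(0,\beta,n)$ counts $\omega$-Gieseker-semistable one-dimensional sheaves $F$ on the fibers of $\pi$ with $\ch_{2}(F)=\beta$ and $\ch_{3}(F)=n$; this is (\ref{PNL}).

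The step I expect to be the main obstacle is the control of the wall-crossing itself: proving boundedness and finite type for the moduli stacks of $\sigma_{s}$-semistable objects (where both the failure of $\overline{X}$ to be Calabi--Yau, compensated by Lemma~\ref{cy3}, and the support condition along $\pi$ must be handled), establishing local finiteness of walls in $s$ for fixed $\beta$, and---most delicately---verifying that only one-dimensional sheaves, and with $\ch_{3}$ of the prescribed sign, can occur in the Harder--Narasimhan and Jordan--H\"older filtrations of the rank-one objects being counted, so that the wall-crossing produces exactly the invariants $N(0,\beta,n)$ with $\beta>0$ and $n>0$. Once this is in place, assembling the wall-crossing terms into the stated infinite product is a formal manipulation in the Poisson (Hall) algebra, using the vanishing (\ref{chi0}).
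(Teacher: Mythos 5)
The paper does not prove this statement itself; it is quoted from \cite[Theorem~1.3]{Tolim2}, and your sketch accurately reconstructs the wall-crossing argument of that reference: a one-parameter family of weak stability conditions joining stable-pair (polynomial/Bayer) stability to $\mu_{i\omega}$-limit stability, Joyce's formula at each wall, and the computation $\chi((1,0,-\beta,-n),(0,\beta',n'))=n'$ combined with the vanishing (\ref{chi0}) on $\Gamma_0\times\Gamma_0$, which makes the contributions of one-dimensional fiber sheaves exponentiate into the factor $\prod_{\beta>0,n>0}\exp\left(nN(0,\beta,n)y^{\beta}z^n\right)$. This is essentially the same machinery the present paper develops for Theorem~\ref{thm:formula} and Corollary~\ref{cor:DTN}, so your proposal takes the same route as the proof the paper relies on.
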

Roughly speaking, the invariants (\ref{NL})
are given in the following way. 
\begin{itemize}
\item The invariant $N(0, \beta, n)$ is a counting 
invariant of $\omega$-Gieseker semistable sheaves
$F \in \Coh_{\pi}(\overline{X})$, satisfying 
\begin{align*}
\cl_0(F)=(0, \beta, n).
\end{align*}
If we denote the 
moduli space of such sheaves by 
$\mM_{\omega, \overline{X}}(0, \beta, n)$, 
then $N(0, \beta, n)$ is given by 
\begin{align*}
N(0, \beta, n)=`\chi'(\mM_{\omega, \overline{X}}(0, \beta, n)). 
\end{align*}
\item The invariant $L(\beta, n)$ 
is a counting invariant of $\mu_{i\omega}$-limit semistable
 objects $E \in \aA(0)$, 
satisfying (cf.~Definition~\ref{def:lim},)
\begin{align*}
\cl(E)=(1, 0, -\beta, -n).
\end{align*} 
If we denote the moduli space of such objects by 
$\mM_{\rm{lim}}(1, 0, -\beta, -n)$, then 
$L(\beta, n)$ is given by 
\begin{align}\label{Lchi}
L(\beta, n)=`\chi'(\mM_{\rm{lim}}(1, 0, -\beta, -n)). 
\end{align}
\end{itemize}
The precise
definitions of $N(0, \beta, n)$
and $L(\beta, n)$ will be 
recalled in Definition~\ref{defi:Ninv} and 
Subsection~\ref{subsec:DT=L} respectively. 
If the moduli space $\mM_{\omega}(0, \beta, n)$
or $\mM_{\rm{lim}}(1, 0, -\beta, -\omega)$
consists of only $\omega$-Gieseker stable sheaves 
or $\mu_{i\omega}$-limit stable objects, 
then $`\chi'$ is the usual Euler 
characteristic of the moduli space. 
If there is a strictly semistable sheaves or objects, 
then the moduli space is an algebraic stack 
with possibly complicated stabilizers. In that case, 
we need to define $`\chi'$
so that the contributions of stabilizers are involved. 
This is worked out by Joyce~\cite{Joy4} using the 
Hall algebra, which we discuss in the next
subsection.

\begin{rmk}
The invariants $N(0, \beta, n)$
and $L(0, \beta, n)$ are denoted by 
$N_{n, \beta}^{eu}$ and $L_{n, \beta}^{eu}$
in~\cite[Theorem~1.3]{Tolim2}
respectively. 
\end{rmk}

\subsection{Hall algebra of $\aA_{\omega}$}
In this subsection, 
we recall the notion of Hall algebra 
associated to $\aA_{\omega}$. 
First Lieblich~\cite{LIE} constructs an
algebraic stack $\mM$ locally of finite type 
over $\mathbb{C}$, which parameterizes 
objects $E\in D^b\Coh(\overline{X})$ satisfying 
\begin{align*}
\Hom_{\overline{X}}(E, E[i])=0, \ i<0. 
\end{align*}
Then we have the substack, 
\begin{align}\label{stack:objA}
\oO bj(\aA_{\omega}) \subset \mM, 
\end{align}
which parameterizes objects $E\in \aA_{\omega}$. 
At this moment, 
we discuss under the assumption that 
$\oO bj(\aA_{\omega})$ is also an algebraic 
stack locally of finite type. 
A necessary result will be given in Lemma~\ref{lem:constructible}
below. 
\begin{defi}\label{def:HA}
We define the $\mathbb{Q}$-vector space $\hH(\aA_{\omega})$
to be spanned by the isomorphism classes of symbols, 
\begin{align*}
[\xX \stackrel{f}{\to} \oO bj(\aA_{\omega})], 
\end{align*}
where $\xX$ is an algebraic stack of finite type 
over $\mathbb{C}$ with affine stabilizers, and 
$f$ is a morphism of stacks. 
The relations are generated by 
\begin{align}\label{rel:gen}
[\xX \stackrel{f}{\to} \oO bj(\aA_{\omega})]
-[\yY \stackrel{f|_{\yY}}{\to}\oO bj(\aA_{\omega})]
-[\uU \stackrel{f|_{\uU}}{\to}\oO bj(\aA_{\omega})],
\end{align}
for a closed substack $\yY \subset \xX$
and $\uU=\xX \setminus \yY$. 
\end{defi}
Here 
two symbols $[\xX_i \stackrel{f_i}{\to} \oO bj(\aA_{\omega})]$
for $i=1, 2$ are isomorphic if there 
is an isomorphism 
$g \colon \xX_1 \stackrel{\cong}{\to} \xX_2$, which 
2-commutes with $f_i$. 

Let $\eE x(\aA_{\omega})$ be the stack of 
short exact sequences in $\aA_{\omega}$. 
There are morphisms of stacks, 
\begin{align*}
p_i \colon \eE x(\aA_{\omega}) \to \oO bj(\aA_{\omega}), 
\end{align*}
for $i=1, 2, 3$, 
sending a short exact sequence 
\begin{align*}
0 \to E_1 \to E_2 \to E_3 \to 0
\end{align*}
to the object $E_i$ respectively. 

There is an associative $\ast$-product on $\hH(\aA_{\omega})$, 
defined by 
\begin{align}\label{ast:prod}
[\xX \stackrel{f}{\to} \oO bj(\aA_{\omega})]
\ast [\yY \stackrel{g}{\to} \oO bj(\aA_{\omega})]
=[\zZ \stackrel{p_2 \circ h}{\to} \oO bj(\aA_{\omega})],
\end{align}
where $\zZ$ and $h$ fit into the Cartesian square, 
\begin{align*}
\xymatrix{
\zZ \ar[r]^{h}
\ar[d] & \eE x(\aA_{\omega}) \ar[r]^{p_2} \ar[d]^{(p_1, p_3)}
 & \oO bj(\aA_{\omega}) \\
\xX \times \yY \ar[r]^{f\times g} &
\oO bj(\aA_{\omega})^{\times 2}. & 
}
\end{align*}
The above $\ast$-product is associative by~\cite[Theorem~5.2]{Joy2}.

\subsection{Invariants via Hall algebra}\label{subsec:invvia}
In this subsection, 
we construct counting invariants of 
$Z_{t\omega}$-semistable objects
in $\aA_{\omega}$ via 
the algebra $(\hH(\aA_{\omega}), \ast)$.
For $v\in \Gamma$ with $\rank(v) \le 1$, 
let 
\begin{align}\label{stack:hall}
\mM_{t\omega}(v) \subset \oO bj(\aA_{\omega}),
\end{align}
be the substack 
which parameterizes
$Z_{t\omega}$-semistable objects $E\in \aA_{\omega}$
with $\cl(E)=v$.
For simplicity, we assume that (\ref{stack:hall}) 
is an algebraic stack of finite type over $\mathbb{C}$.
Again a necessary result will be 
given in Lemma~\ref{lem:constructible}.
We can define the element in $\hH(\aA_{\omega})$
to be  
\begin{align*}
\delta_{t\omega}(v) \cneq 
[\mM_{t\omega}(v) \hookrightarrow \oO bj(\aA_{\omega})]
\in \hH(\aA_{\omega}). 
\end{align*}
The `logarithm' of $\delta_{t\omega}(v)$ is defined as follows:
\begin{defi}
We define $\epsilon_{t\omega}(v) \in \hH(\aA_{\omega})$ to be
\begin{align}\label{sum:eps}
\epsilon_{t\omega}(v) \cneq 
\sum_{\begin{subarray}{c}l\ge 1,
 v_1 + \cdots +v_l=v, v_i \in \Gamma, \\
\arg Z_{t\omega}(v_i)=\arg Z_{t\omega}(v)
\end{subarray}}
\frac{(-1)^{l-1}}{l}\delta_{t\omega}(v_1) 
\ast \cdots \ast \delta_{t\omega}(v_l). 
\end{align}
\end{defi}
Note that the $v_i$ in a non-zero term of the sum (\ref{sum:eps})
satisfies $\rank(v_i)=0$ or $1$. 
Also we have the following lemma:
\begin{lem}\label{lem:finsum}
The sum (\ref{sum:eps}) is a finite sum, 
hence $\epsilon_{t\omega}(v)$ is well-defined. 
\end{lem}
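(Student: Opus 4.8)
The plan is to show that only finitely many decompositions $v=v_1+\cdots+v_l$ with $v_i\in\Gamma$, $\arg Z_{t\omega}(v_i)=\arg Z_{t\omega}(v)$, and each $\delta_{t\omega}(v_i)\neq 0$ can contribute a nonzero term to \eqref{sum:eps}. The first reduction is to record the constraints on the $v_i$ coming from the fact that each $\delta_{t\omega}(v_i)$ is supported on $\mM_{t\omega}(v_i)$, i.e.\ there exists a $Z_{t\omega}$-semistable $E_i\in\aA_\omega$ with $\cl(E_i)=v_i$. By Lemma~\ref{lem:pos} every object of $\aA_\omega$ has $\rank\ge 0$ and $-\ch_2(E)\cdot\omega\ge 0$, so writing $v_i=(R_i,r_i,\beta_i,n_i)$ we get $R_i\ge 0$ and $-\beta_i\cdot\omega\ge 0$. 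Since $\rank(v)\le 1$, additivity of rank forces all but at most one $R_i$ to vanish; so we may split the sum into the $v_i$ of rank $0$ (those lying in $\Gamma_0$, necessarily in $\bB_\omega$ by the torsion-pair description, cf.\ Remark~\ref{rmk:BA}) and at most one $v_i$ of rank $1$.

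Next I would use the condition $\arg Z_{t\omega}(v_i)=\arg Z_{t\omega}(v)$. There are two cases. If $\arg Z_{t\omega}(v)=\pi/2$, then any rank-$1$ piece automatically has $\arg Z_{t\omega}=\pi/2$, while for the rank-$0$ pieces the constraint reads $\arg Z_{t\omega,0}(v_i)=\pi/2$, i.e.\ $\Ree Z_{t\omega,0}(v_i)=0$, which by the explicit formula $Z_{t\omega,0}(r,\beta,n)=n-\tfrac12 r t^2\omega^2-(t\omega\cdot\beta)\sqrt{-1}$ pins down $n_i$ in terms of $(r_i,\beta_i)$ (for fixed $t$); moreover $\Imm Z_{t\omega,0}(v_i)=-(t\omega\cdot\beta_i)\ge 0$ must be strictly positive unless $v_i$ is of the special type with $\omega\cdot\beta_i=0$, and the strictly positive summands $-(t\omega\cdot\beta_i)$ add up to $-(t\omega\cdot\beta)$, bounding the number of such pieces by $\lvert\omega\cdot\beta\rvert/(t\cdot(\text{minimal positive value of }\omega\cdot\beta'))$. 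If $\arg Z_{t\omega}(v)\neq\pi/2$ then $v$ itself has rank $0$, so all $v_i$ have rank $0$ and the same argument applies directly in $\bB_\omega$, which is essentially the finiteness statement already implicit in the construction of $\epsilon$ for Bridgeland stability on $\dD_0$. In either case, once $\beta_i$ ranges over the finitely many effective classes with $\sum\beta_i=\beta$ and $\omega\cdot\beta_i$ of the correct sign, and $r_i,n_i$ are constrained as above, the only remaining unbounded direction is $r_i$.

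The main obstacle is therefore bounding the $r_i$ (equivalently, bounding the number $l$ of pieces, since the $\beta_i$ are already controlled): a priori one could have arbitrarily many rank-$0$ pieces with $\beta_i=0$, or pieces with large $|r_i|$ cancelling. To handle this I would invoke the Bogomolov-type inequalities of Lemma~\ref{lem:Bog}: a $Z_{t\omega}$-semistable object of rank $0$ in $\bB_\omega$ corresponds (up to shift) to an $\omega$-Gieseker semistable sheaf in $\Coh_\pi(\overline X)$ with class $(r_i,\beta_i,n_i)$, which must satisfy $\beta_i^2+2(\beta_i\cdot\omega)^2\ge 2r_i(r_i+n_i)$; together with $n_i$ determined by $(r_i,\beta_i)$ via the $\arg=\pi/2$ equation, this becomes a bound on $r_i$ for each fixed $\beta_i$ and fixed $t$. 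For the class $(r_i,0,n_i)$ one must separately rule out $Z_{t\omega}$-semistable objects with $\beta_i=0$, $r_i\neq 0$: such an object would be (a shift of) an $\omega$-Gieseker semistable torsion sheaf with zero first Chern class supported on finitely many fibers, which is a direct sum of structure sheaves of points — these have $\arg Z_{t\omega,0}=\pi/2$ only in a degenerate way, and in fact do not destabilize, so $\beta_i=0$ forces $v_i$ to be the class of such a skyscraper, again finite in number of distinct classes. Finally, a standard Grothendieck-group finiteness argument (the set of $v'\in\Gamma_0$ with $0\le -\beta'\cdot\omega\le -\beta\cdot\omega$ and $\beta'$ effective, together with the extra linear and quadratic constraints just derived, is finite) shows there are only finitely many distinct classes $v_i$ available and hence only finitely many $l$-tuples, completing the proof.
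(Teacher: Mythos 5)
Your overall skeleton is the same as the paper's: isolate the unique rank-one piece, use $\arg Z_{t\omega}(v_i)=\pi/2$ to force $\omega\cdot\beta_i<0$ and $n_i=\tfrac{1}{2}r_i t^2\omega^2$ for the rank-zero pieces, bound the number $l$ of pieces by $-\omega\cdot\beta$, and then bound $r_i$, $n_i$, $\beta_i$ via a Bogomolov-type inequality. The genuine gap is in how you justify that inequality: you assert that a rank-zero $Z_{t\omega}$-semistable object is ``(a shift of) an $\omega$-Gieseker semistable sheaf in $\Coh_{\pi}(\overline{X})$'' and then quote Lemma~\ref{lem:Bog}(ii). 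This identification is false for the tilted heart: a $Z_{t\omega,0}$-semistable object of $\bB_{\omega}$ is in general a genuine two-term complex with $\hH^0\in\fF_{\omega}$ and $\hH^1\in\tT_{\omega}$, and its class can even have $r_i<0$, which is impossible for a sheaf in $\Coh_{\pi}(\overline{X})$; Gieseker semistability and $Z_{t\omega,0}$-semistability are different conditions in any case. The paper does not make this identification; it reruns the \emph{proof} of Lemma~\ref{lem:Bog} for $Z_{t\omega,0}$-semistable objects of $\bB_{\omega}$: count the $Z_{t\omega,0}$-stable factors (their number is at most $-\omega\cdot\beta_i$ because each factor of phase $\pi/2$ has strictly positive imaginary part of $Z_{t\omega,0}$, and these imaginary parts add up), bound $\Hom(E,E)$ accordingly, and apply (\ref{asum1}) with Serre duality to obtain $\beta_i^2+(\beta_i\cdot\omega)^2\ge 2r_i(r_i+n_i)$. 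Without some argument of this kind your bound on $r_i$ (hence on $n_i$) is unsupported, and that bound is the heart of the lemma.

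Two smaller points. Your finiteness of the $\beta_i$ rests on effectivity (``finitely many effective classes with $\sum\beta_i=\beta$''), but classes in $C(\bB_{\omega})$ are differences of effective fiber-supported classes and need not be $\pm$effective; the correct route is that bounded $-\omega\cdot\beta_i$ plus the Hodge index theorem bounds $\beta_i^2$ from above, the Bogomolov-type inequality (once $r_i,n_i$ are bounded) bounds it from below, and negative definiteness on $\omega^{\perp}$ then leaves only finitely many $\beta_i$. Also, your hedge about pieces with $\omega\cdot\beta_i=0$, and their description as sums of skyscrapers, is off: a class $(0,r_i,0,n_i)$ with $r_i\neq 0$ comes from two-dimensional fiber-supported objects, and the clean statement is simply that a nonzero $E\in\bB_{\omega}$ with $\omega\cdot\ch_2(E)=0$ has $Z_{t\omega,0}(E)\in\mathbb{R}_{<0}$, hence phase $\pi$, so such classes cannot occur among the $v_i$ when $\arg Z_{t\omega}(v)=\pi/2$; no separate classification is needed.
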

\begin{proof}
The case of $\rank(v)=0$ is essentially proved in~\cite[Lemma~5.12]{Tst3}.
Suppose that $\rank(v)=1$ and
write $v=(1, r, \beta, n)$. 
Let $v_i \in \Gamma$ be an element 
which appears in a non-zero term of the sum (\ref{sum:eps}).  
Then there is unique $1\le e\le l$ such that 
$\rank(v_e)=1$ and $\rank(v_i)=0$
for $i\neq e$. 
We write $v_i=(0, r_i, \beta_i, n_i)$
for $i\neq e$. 
Since $0<-\beta_i \cdot \omega \le -\beta \cdot \omega$, 
the number $l$ in the sum (\ref{sum:eps})
is bounded, and $\beta_i^2$ is bounded above 
by the Hodge index theorem. 
By the condition $\arg Z_{t\omega}(v_i)=\pi/2$, 
we have 
\begin{align}\label{Re0}
\Ree Z_{t\omega}(v_i)=n_i -\frac{1}{2}r^2 t^2\omega^2=0,
\end{align}
for $i\neq e$. 
Also there is an $Z_{t\omega, 0}$-semistable
object $E\in \bB_{\omega}$
with $\cl(E_i)=v_i$, 
hence the same 
proof of Lemma~\ref{lem:Bog} shows the inequality, 
\begin{align}\label{Bog2}
\beta_i^2 +(\beta_i \cdot \omega)^2
 \ge 2r_i(r_i +n_i),
\end{align}
for $i\neq e$. 
Since $\beta_i^2$ is bounded above, 
the equality (\ref{Re0}) and the inequality (\ref{Bog2})
shows the boundedness of $r_i$ and $n_i$. 
Also $\beta_i^2$ and $\beta_i \cdot \omega$
are bounded, hence there is only a finite 
number of possibilities for $\beta_i$. 
\end{proof}
There is a map,
(cf.~\cite[Definition~2.1]{Joy5},)
\begin{align}\label{map:P}
P_q \colon \hH (\aA_{\omega}) \to \mathbb{Q}(q^{1/2}), 
\end{align}
such that if $G$ is a special algebraic group (cf.~\cite[Definition~2.1]{Joy5})
acting on a variety $Y$, then we have 
\begin{align*}
P_q \left(\left[[Y/G] \stackrel{f}{\to}\oO bj(\aA_{\omega})\right] \right)
=P_q(Y)/P_q(G), 
\end{align*}
where $P_q(Y)$ is the virtual 
Poincar\'e polynomial of $Y$. Namely if $Y$ is 
smooth and projective, $P_q(Y)$ is given by 
\begin{align*}
P_q(Y)=\sum_{i\in \mathbb{Z}}(-1)^i \dim H^i(Y, \mathbb{C})q^{i/2},
\end{align*}
and $P_q(Y)$ is defined for any $Y$
using the relation (\ref{rel:gen}) for varieties. 
\begin{thm}{\bf \cite[Theorem~6.2]{Joy5}}
The following limit exists, 
\begin{align*}
\lim_{q^{1/2} \to 1}
(q -1)P_q(\epsilon_{t\omega}(v)) \in \mathbb{Q}.
\end{align*}
\end{thm}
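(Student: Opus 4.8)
The plan is to reduce the statement to Joyce's ``no poles'' theorem, by showing that $\epsilon_{t\omega}(v)$ lies in the Lie subalgebra of virtual indecomposables inside $\hH(\aA_{\omega})$, that is, the subspace on which $(q-1)P_q$ has no pole at $q=1$ and takes a value given by an ordinary Euler characteristic.

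First I would fix the algebraic framework. By Lemma~\ref{lem:finsum} everything takes place in the part of $\hH(\aA_{\omega})$ spanned by the $\delta_{t\omega}(w)$ with $\arg Z_{t\omega}(w)=\arg Z_{t\omega}(v)$, where the sum (\ref{sum:eps}) is finite; on this space one has the Lie bracket $[A,B]\cneq A\ast B-B\ast A$ coming from the associative product (\ref{ast:prod}). Let $\hH^{\mathrm{indec}}(\aA_{\omega})\subset\hH(\aA_{\omega})$ be the subspace spanned by symbols $[[Y/\mathbb{G}_m]\to\oO bj(\aA_{\omega})]$ in which $\mathbb{G}_m$ acts trivially on $Y$. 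On such a symbol $P_q$ evaluates to $P_q(Y)/P_q(\mathbb{G}_m)=P_q(Y)/(q-1)$, so $(q-1)P_q$ sends it to the virtual Poincar\'e polynomial $P_q(Y)$, an honest polynomial in $q$; the limit $q^{1/2}\to1$ therefore exists and equals $\chi(Y)\in\mathbb{Z}$. By $\mathbb{Q}$-linearity, the theorem follows once $\epsilon_{t\omega}(v)\in\hH^{\mathrm{indec}}(\aA_{\omega})$ is established.

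Second, and this is the crux, I would show $\epsilon_{t\omega}(v)\in\hH^{\mathrm{indec}}(\aA_{\omega})$. The sum (\ref{sum:eps}) is the formal logarithm of $1+\sum_{w}\delta_{t\omega}(w)$ along the ray of phase $\arg Z_{t\omega}(v)$, and, using the combinatorial identities of \cite{Joy3} and \cite{Joy4}, one rewrites it as a finite $\mathbb{Q}$-linear combination of iterated commutators $[\cdots[[\delta_{t\omega}(v_1),\delta_{t\omega}(v_2)],\delta_{t\omega}(v_3)]\cdots,\delta_{t\omega}(v_l)]$ over decompositions $v=v_1+\cdots+v_l$ with $\arg Z_{t\omega}(v_i)=\arg Z_{t\omega}(v)$ for every $i$. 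One then invokes Joyce's observation that, although an individual $\delta_{t\omega}(v_i)$ may carry a high-order pole in $q-1$ coming from the automorphisms of its points, the commutator of any two stack functions already lies in $\hH^{\mathrm{indec}}(\aA_{\omega})$: passing from the $\ast$-product to the bracket cancels the factor $q-1$ contributed by the scalar $\mathbb{G}_m$ sitting inside every stabilizer. Hence each iterated bracket above, and with it $\epsilon_{t\omega}(v)$, is a virtual indecomposable.

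Third, I would check that the present situation meets Joyce's hypotheses: $\oO bj(\aA_{\omega})$ should be an algebraic stack locally of finite type over $\mathbb{C}$ with affine geometric stabilizers and each $\mM_{t\omega}(v)$ of finite type, which is Lemma~\ref{lem:constructible}; $\sigma_{t\omega}=(Z_{t\omega},\aA_{\omega})$ should be a weak stability condition so that the Harder-Narasimhan formalism underlying (\ref{sum:eps}) applies, which is Lemma~\ref{lem:property}; and the sum (\ref{sum:eps}) should be finite, which is Lemma~\ref{lem:finsum}. Granting these, the statement is \cite[Theorem~6.2]{Joy5} applied to our data. The main obstacle I anticipate is not the combinatorial ``no poles'' identity itself, which is Joyce's and which I would cite rather than reprove, but confirming that his stack-function formalism, developed for abelian categories of coherent sheaves or of quiver representations, transports to $\aA_{\omega}$, which is the heart of a t-structure on $\dD$ attached to a \emph{weak} rather than Bridgeland stability condition; the point to verify is that the Hall algebra, the subspace $\hH^{\mathrm{indec}}(\aA_{\omega})$, and the stack operations depend only on $\aA_{\omega}$ being a Noetherian abelian category with the finiteness of Lemma~\ref{lem:constructible} and on the $Z_{t\omega}$-semistable objects of a fixed phase forming an Artinian subcategory.
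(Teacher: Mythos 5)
You are reviewing a statement that the paper does not prove at all: Toda quotes it verbatim from Joyce, and the only ingredients the paper itself supplies are precisely the hypotheses you list in your third step (the finiteness of the sum in Lemma~\ref{lem:finsum}, the constructibility statements of Lemma~\ref{lem:constructible}, and the weak stability condition of Lemma~\ref{lem:property}). Your first and third steps thus reproduce the correct reduction: one needs that $\epsilon_{t\omega}(v)$ lies in Joyce's Lie subalgebra of stack functions supported on virtual indecomposables, on which $(q-1)P_q$ has no pole at $q=1$; the former is the main theorem of \cite{Joy3}, the latter is \cite[Theorem~6.2]{Joy5}.

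The genuine gap is your second step, which you correctly identify as the crux, but whose mechanism is not Joyce's and is in fact false on both counts. (a) The element $\epsilon_{t\omega}(v)$ is in general \emph{not} a $\mathbb{Q}$-linear combination of iterated commutators of the $\delta$'s: in the Hall algebra of finite-dimensional vector spaces (all objects semistable of one phase) every commutator vanishes, yet $\epsilon(2)=\delta(2)-\tfrac{1}{2}\delta(1)\ast\delta(1)=[\mathrm{pt}/\GL_2]-\tfrac{1}{2}[\mathrm{pt}/B]$ (with $B\subset \GL_2$ the Borel) is not a multiple of $\delta(2)$, and the cancellation giving $(q-1)P_q(\epsilon(2))\to -\tfrac{1}{4}$ occurs between $\delta(2)$ and the non-commutator $\tfrac{1}{2}\delta(1)\ast\delta(1)$, each of which separately has a pole; also the $l=1$ term $\delta_{t\omega}(v)$ in (\ref{sum:eps}) is never a commutator. (b) It is not true that a commutator of two stack functions is virtually indecomposable, nor that $(q-1)P_q$ of a commutator is pole-free: for the $A_2$-quiver with simples $S_1,S_2$ satisfying $\Hom(S_1,S_2)=\Hom(S_2,S_1)=0$, $\ext^1(S_1,S_2)=1$, $\ext^1(S_2,S_1)=0$, and $E$ the nonsplit extension, one computes that $[\delta_{S_2},\delta_{S_1^{\oplus 2}}]$ is the stack function $[\mathrm{pt}/\Aut(E\oplus S_1)]$ supported at $E\oplus S_1$, where $\Aut(E\oplus S_1)\cong \mathbb{G}_m^{2}\ltimes \mathbb{G}_a$, so $(q-1)P_q$ of this commutator is $1/(q(q-1))$, which diverges as $q^{1/2}\to 1$. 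Taking the bracket cancels only one factor of $(q-1)$, whereas stabilizers of decomposable or strictly semistable objects contribute tori of rank $\ge 2$. Joyce's actual proof that $\epsilon$ is supported on virtual indecomposables is the delicate combinatorial argument with the virtual-rank projections (averaging over subtori of maximal tori of stabilizer groups) in \cite{Joy3}; there is no formal Lie-theoretic shortcut. As written, your proposal therefore does not establish the theorem: you should either cite \cite{Joy3} and \cite{Joy5} as the paper does, or transport the virtual-rank machinery to $\aA_{\omega}$, for which your third step correctly identifies what has to be checked.
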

Using the above theorem, we can 
define the counting invariants. 
First we define the invariants 
of rank one objects. 
\begin{defi}\label{defi:DTchi}
For $v \in \Gamma_0$, we 
define 
$\DT^{\chi}_{t\omega}(v) \in \mathbb{Q}$
to be 
\begin{align}\label{def:DT}
\DT^{\chi}_{t\omega}(v)
\cneq 
\lim_{q^{1/2}\to 1}(q -1)P_q(\epsilon_{t\omega}(1, -v)).
\end{align}
Here $(1, -v) \in \Gamma=\mathbb{Z}\oplus \Gamma_0$.
\end{defi}
\begin{rmk}
As we remarked in~\cite[Remark~4.10]{Tcurve1},
if any object $E\in M_{t\omega}(1, -v)$
is $Z_{t\omega}$-stable, then 
the invariant (\ref{def:DT}) 
coincides with the Euler characteristic 
of the moduli space of
objects in $M_{t\omega}(1, -v)$. 
However if there is a strictly semistable 
object $E\in M_{t\omega}(1, -v)$, 
then the stabilizer group $\Aut(E)$ contributes 
to the denominator of the invariant (\ref{def:DT}). 
\end{rmk}
\begin{rmk}
The change of the sign of $v$
in (\ref{def:DT})
is to make the notation compatible 
with Chern characters of stable pairs (\ref{cl:stable:pair}). 
\end{rmk}

\subsection{Moduli stacks}
So far we have assumed that the 
stacks $\oO bj(\aA_{\omega})$ and $\mM_{t\omega}(v)$ 
are algebraic stacks locally of finite type, 
finite type respectively. 
However these are too strong 
assumptions
for our purpose.
In fact it is 
 enough to show the following lemma
by discussing with 
the framework of 
Kontsevich-Soibelman~\cite[Section~3]{K-S}.
We remark that, the proof here is 
technical, and use 
some of the results 
which will be proved in later sections. 
The readers may skip the proof here
at the first reading, and back after 
reading Sections~\ref{sec:tech}, \ref{sec:semistable}.  

\begin{lem}\label{lem:constructible}
(i) 
The set of $\mathbb{C}$-valued points of the 
substack $\oO bj(\aA_{\omega}) \subset \mM$
is a countable union of constructible 
subsets in $\mM$. 

(ii) 
For $v\in \Gamma$ with $\rank(v) \le 1$, 
the set of $\mathbb{C}$-valued points of 
the substack $\mM_{t\omega}(v) \subset \mM$
is a constructible subset in $\mM$. 
\end{lem}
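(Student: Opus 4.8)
The plan is to argue within the framework of Kontsevich--Soibelman~\cite[Section~3]{K-S}: it suffices to represent the moduli functors in question not by a single algebraic stack but by a countable (resp.\ finite) union of constructible substacks of Lieblich's stack $\mM$, which is all that is needed to make sense of the Hall algebra $\hH(\aA_{\omega})$ and of the elements $\epsilon_{t\omega}(v)$. Since the map $\cl$ is locally constant on $\mM$, one has $\oO bj(\aA_{\omega})=\bigsqcup_{v\in\Gamma}\oO bj(\aA_{\omega})_{v}$ and $\mM_{t\omega}(v)\subset\oO bj(\aA_{\omega})_{v}$, and as $\Gamma$ is countable it suffices to fix a class $v$ throughout. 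The first step, common to both parts, is to note that membership in $\aA_{\omega}$ is a constructible condition on $\mM$: using the description of the heart $\aA_{\omega}'\subset D^{b}\Coh(\overline{X})$ from Definition~\ref{defi:Ao'} --- which is obtained from $\Coh(\overline{X})$ by finitely many tilts --- an object $E$ with $\Hom(E,E[i])=0$ for $i<0$ lies in $\aA_{\omega}$ precisely when its $\Coh(\overline{X})$-cohomology sheaves are concentrated in two adjacent degrees, $\hH^{1}(E)$ lies in $\tT_{\omega}$ (in particular is supported on fibers of $\pi$), and $\hH^{0}(E)$ satisfies a membership condition built from $\fF_{\omega}$ and $\pi^{\ast}\Pic(\mathbb{P}^{1})$. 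Being supported on fibers is closed, and membership of a flat family of sheaves in $\tT_{\omega}$ or $\fF_{\omega}$ is locally closed by upper-semicontinuity of the Harder--Narasimhan polygon of $\mu_{\omega}$-stability in families, so the locus of objects of $\aA_{\omega}$ (with any fixed class) is constructible in $\mM$.

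For part (i), write $v=(R,r,\beta,n)$; by Lemma~\ref{lem:pos} we have $R\ge 0$, and since $\bB_{\omega}$ is a Serre subcategory of $\aA_{\omega}$ (Remark~\ref{rmk:BA}), every $E\in\aA_{\omega}$ with $\cl(E)=v$ admits a finite filtration with $R$ subquotients among the line bundles $\pi^{\ast}\oO_{\mathbb{P}^{1}}(a_{1}),\dots,\pi^{\ast}\oO_{\mathbb{P}^{1}}(a_{R})$ and the remaining subquotients in $\bB_{\omega}$, of total class $(0,\,r-\sum a_{i},\,\beta,\,n)$. The degrees $a_{i}$ are unbounded --- which is exactly why only a countable union is asserted --- so I would stratify $\oO bj(\aA_{\omega})_{v}$ by the unordered tuple $(a_{1},\dots,a_{R})$, giving countably many strata. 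On a fixed stratum the classes of the $\bB_{\omega}$-subquotients are constrained to a finite set by positivity together with the Bogomolov-type inequalities of Lemma~\ref{lem:Bog} and the Hodge index theorem (the same bounding used in the proof of Lemma~\ref{lem:finsum}), and the moduli of $\bB_{\omega}$-objects of bounded class is of finite type by standard boundedness for tilted hearts (as in the author's earlier work~\cite{Tst2},~\cite{Tst3}); hence the objects in the stratum are parametrized by a finite-type stack, an iterated $\Ext$-construction over those moduli. Intersecting with the constructible locus of objects of $\aA_{\omega}$ found above, each stratum is constructible, so $\oO bj(\aA_{\omega})_{v}$ is a countable union of constructible subsets of $\mM$.

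For part (ii) one imposes $Z_{t\omega}$-semistability with $\cl(E)=v$, $\rank(v)\le 1$, and the point is that this bounds the surviving strata. If $\rank(v)=0$ then $E\in\bB_{\omega}$, and by Remark~\ref{rmk:BA} its $Z_{t\omega}$-semistability is $Z_{t\omega,0}$-semistability in $\dD_{0}$, so finiteness of type reduces to the classical statement about moduli of Gieseker-type semistable objects in the tilted heart $\bB_{\omega}$ (cf.~\cite{Brs2},~\cite{AB},~\cite{Tst2},~\cite{Tst3}). If $\rank(v)=1$, say $v=(1,r,\beta,n)$, the crucial claim is that for a $Z_{t\omega}$-semistable $E$ of class $v$ the degree $a$ of its line-bundle subquotient and the class of its $\bB_{\omega}$-part are bounded in terms of $v$, $t$ and $\omega$; this I would deduce by combining that filtration structure with the weak-stability inequalities obtained by comparing $\arg Z_{t\omega,0}$ of the $\bB_{\omega}$-sub- and quotient objects of $E$ with $\pi/2$ (Remark~\ref{rmk:BA}), together with the Bogomolov inequalities of Lemma~\ref{lem:Bog} and the Hodge index theorem --- the mechanism already used in the proof of Lemma~\ref{lem:finsum} --- supplemented, in the extreme chambers, by the explicit descriptions of Propositions~\ref{prop:m1} and~\ref{prop:m2}, and, for intermediate $t$, by the finiteness of walls of Proposition~\ref{prop:wall}. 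Granting boundedness, only finitely many strata from part (i) meet $\mM_{t\omega}(v)$; each is of finite type, the semistable locus is open inside it (openness of semistability in flat families for a heart carrying a weak stability condition, as in~\cite{Tcurve1}), hence constructible, so $\mM_{t\omega}(v)$ is a finite union of constructible sets and therefore constructible.

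The step I expect to be the main obstacle is the boundedness assertion in part (ii) for $\rank(v)=1$. In contrast with~\cite{Tcurve1}, two-dimensional semistable sheaves on the fibers of $\pi$ genuinely participate here --- and, correspondingly, in the wall-crossing of Section~\ref{sec:WCF} --- which makes the enumeration of the possible classes of the sub- and quotient objects of a semistable $E$ more delicate; it is precisely at this point that one must feed in the Bogomolov-type inequalities of Lemma~\ref{lem:Bog} and the structural results of Section~\ref{section:wstab}, whose proofs are deferred to the later sections.
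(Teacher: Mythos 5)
Your overall framework (Kontsevich--Soibelman constructible substacks, filtrations into $\bB_{\omega}$-pieces and line bundles $\pi^{\ast}\oO_{\mathbb{P}^1}(a)$, bounding classes via Lemma~\ref{lem:Bog} and the Hodge index theorem) is the same as the paper's, but there is a genuine gap at the point where you assert boundedness. You repeatedly treat the locus of objects of $\bB_{\omega}$ (or of $\aA_{\omega}$ with line-bundle degrees fixed) having a \emph{fixed numerical class} as a bounded, finite-type, or even constructible family: in the first paragraph you claim the locus of objects of $\aA_{\omega}$ of fixed class is constructible in $\mM$, and in part (i) you claim each stratum is ``parametrized by a finite-type stack'' because ``moduli of $\bB_{\omega}$-objects of bounded class is of finite type by standard boundedness for tilted hearts.'' This is false: for a fiber curve $C\subset X_p$ the objects $(\oO_C(m)\oplus\oO_C(-m))[-1]\in \tT_{\omega}[-1]\subset \bB_{\omega}$ all have the same class but form an unbounded family, so the stack of $\bB_{\omega}$-objects of fixed class is only locally of finite type. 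Boundedness holds only after imposing semistability (or bounding Harder--Narasimhan factors); this is precisely why the paper first records that $\mM_{t\omega}(v)$ is of finite type for $\rank(v)=0$ and then writes $\oO bj(\bB_{\omega})$, hence $\oO bj(\aA_{\omega})$, as a \emph{countable} union of constructible sets via HN strata. For part (i) your argument survives after this repair, since a countable union of countable unions is still countable.

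For part (ii) the same gap is essential and your sketch does not close it. Even granting that the degree $a$ and the class of the $\bB_{\omega}$-part of a $Z_{t\omega}$-semistable rank-one object $E$ are bounded, this does not make the corresponding stratum finite type, for the reason above. What is actually needed --- and what the paper proves --- is that the numerical classes of the $\omega$-Gieseker HN factors of all the graded pieces of $E$ in the filtration of Lemma~\ref{lem:rank10} (the $\fF_{\omega}$-piece, the $\tT_{\omega}^{\rm{pure}}[-1]$-piece, and the two $\Coh_{\pi}^{\le 1}(\overline{X})[-1]$-pieces inside the $\aA(r)$-part) take only finitely many values; this uses the inequalities $\arg Z_{t\omega}(E_i)\le \pi/2$ applied along those HN filtrations, Lemma~\ref{finset}, Lemma~\ref{lem:Bog}, and in addition a lower bound on $\ch_3$ of structure sheaves of one-dimensional subschemes to bound the classes of the $\Coh_{\pi}^{\le 1}[-1]$-pieces from below --- a step absent from your outline. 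Your appeal to ``the mechanism of Lemma~\ref{lem:finsum}'' does not substitute for this, because that mechanism bounds classes of pieces that are themselves semistable, whereas the sub- and quotient objects of a semistable $E$ need not be; and the references to Propositions~\ref{prop:wall}, \ref{prop:m1}, \ref{prop:m2} are not what carries the boundedness (the paper's proof does not use them here). With the HN-factor boundedness in hand, finiteness of type of the relevant strata follows from boundedness of Gieseker-semistable sheaves of fixed class, and then your concluding openness-of-semistability step is fine.
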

\begin{proof}
(i)
We first note that the 
stack 
\begin{align}\label{stackB}
\oO bj(\bB_{\omega}) \subset \mM, 
\end{align}
which parameterizes objects $E \in \bB_{\omega}$
is an algebraic stack locally of finite type 
over $\mathbb{C}$. This result can 
be proved along with the same 
argument of~\cite[Lemma~4.7]{Tst3}.
Moreover
if $v\in \Gamma$ satisfies $\rank(v)=0$, then 
 the same proof of~\cite[Theorem~4.12]{Tst3}
shows that 
the substack, 
\begin{align*}
\mM_{t\omega}(v) \subset \oO bj(\bB_{\omega}),
\end{align*}
is an open substack of finite type over $\mathbb{C}$. 
Since any object $E\in \bB_{\omega}$ has a 
Harder-Narasimhan filtration with respect to 
$Z_{t\omega, 0}$-stability, the stack (\ref{stackB})
is a countable union of constructible subsets in 
$\mM$. 
Now we note that any object $E\in \aA_{\omega}$
has a filtration with each subquotient 
isomorphic to either an object in $\bB_{\omega}$
or in $\pi^{\ast}\Pic(\mathbb{P}^1)$. 
This fact easily implies that $\oO bj(\aA_{\omega})$
is a countable union of constructible subsets in $\mM$. 
(See the proof of~\cite[Lemma~3.2]{Tcurve3}.)

(ii) 
Take an element $v\in \Gamma$. 
As we discussed in the proof of (i), 
the result for the case of $\rank(v)=0$ 
is 
essentially proved in~\cite[Theorem~4.12]{Tst3}.
Suppose that $\rank(v)=1$, 
and we write it as 
\begin{align*}
v=(1, r, \beta, n) \in \Gamma. 
\end{align*}
We show that $\mM_{t\omega}(v)$ is a constructible 
subset in $\mM$. 
Let $E\in \aA_{\omega}$ be a $Z_{t\omega}$-semistable
object 
with $\cl(E)=v$. 
By Lemma~\ref{lem:rank10} below, there is a filtration in $\aA_{\omega}$, 
\begin{align*}
0=E_0 \subset E_1 \subset E_2 \subset E_3=E, 
\end{align*}
such that $K_i=E_{i}/E_{i-1}$ 
satisfies the condition 
(\ref{filt:filt}). 
Also by
the definition of $\aA(r)$,
 the object $K_2 \in \aA(r)$ has a filtration 
in $\aA(r)$, 
\begin{align*}
0=K_{2, 0} \subset K_{2,1} \subset K_{2,2} \subset K_{2,3} =K_2
\end{align*}
such that $M_i=K_{2,i}/K_{2,i-1}$ satisfies 
\begin{align*}
M_1 \in \Coh_{\pi}^{\le 1}(\overline{X})[-1], \ 
M_2 \in \pi^{\ast}\Pic(\mathbb{P}^1), \
M_3 \in \Coh_{\pi}^{\le 1}(\overline{X})[-1]. 
\end{align*}
We note that the moduli stack of $\omega$-Gieseker
semistable sheaves $F \in \Coh_{\pi}(\overline{X})$
 with fixed $\cl_0(F) \in \Gamma_0$
is an algebraic stack of finite type. 
Therefore
 it is enough to
 show that, 
for fixed $v \in \Gamma$, ample divisor $\omega$
and $t \in \mathbb{R}_{>0}$, 
there is only a finite number of 
possibilities for the 
numbers and 
numerical classes of 
Harder-Narasimhan factors of $K_1$, $K_3[1]$, $M_1[1]$
and $M_3[1]$.
(See the proof of~\cite[Lemma~3.2]{Tcurve3}.)

For simplicity we only show the
above finiteness
for $K_1$ and $M_1[1]$. 
The other cases are similarly discussed. 
We take Harder-Narasimhan filtrations 
of $K_1$, $M_1[1] \in \Coh_{\pi}(\overline{X})$ 
with respect to $\omega$-Gieseker stability, 
\begin{align*}
0&=A_0 \subset A_1 \subset A_2 \subset \cdots \subset A_{k}=K_1, \\
0&=B_0 \subset B_1 \subset B_2 \subset \cdots \subset B_{l}=M_1[1].
\end{align*}
We set $C_i =A_i/A_{i-1}$ and $D_i=B_i/B_{i-1}$, 
and write
 \begin{align*}
\cl_0(C_i)=(r_i, \beta_i, n_i), \ 
\cl_0(D_i)=(0, \beta_i', n_i').
\end{align*}
Since $0<-\omega \cdot \beta_i \le -\omega \cdot \beta$
and $0<\omega \cdot \beta_i' \le -\omega \cdot \beta$
for $i \ge 2$, the numbers $k$ and $l$ are bounded. 
Moreover since $\beta_i' \ge 0$, there is only 
a finite number of possibilities for $\beta_i'$. 

By the $Z_{t\omega}$-semistability of $E$, 
we have $\arg Z_{t\omega}(A_i) \le \pi/2$, or equivalently 
\begin{align*}
\sum_{j=1}^{i} \left( n_j-\frac{1}{2}r_j \omega^2 \right)  \ge 0, 
\end{align*}
for all $i$. 
Hence by the result of 
Lemma~\ref{finset} below, 
both of  
$r_1 + \cdots +r_i$ 
and $n_1 + \cdots +n_i$ are bounded. 
By the induction on $i$, we conclude that 
$r_i$ and $n_i$ are also bounded.
 Then noting that 
$0 \le -\omega \cdot \beta_i \le -\omega \cdot \beta$, 
Lemma~\ref{lem:Bog} implies that $\beta_i^2$ is bounded, 
hence the Hodge index theorem implies that
there is only a finite number of possibilities for $\beta_i$. 

It remains to show the boundedness of $n_i'$. 
Again using the $Z_{t\omega}$-semistability of $E$, we have 
\begin{align*}
\sum_{j=1}^{k} \left( n_j-\frac{1}{2}r_j \omega^2 \right)
 -\sum_{j=1}^{i}n_j' \ge 0,
\end{align*}
for all $i$. 
Since $k$, $l$, $r_j$ and $n_j$ are bounded, 
we see that all $n_i'$ are bounded above
for all $i$. 
On the other hand 
an argument of~\cite[Lemma~3.2]{Tcurve1} shows that
$M_1[1]$ is written as 
$\oO_{Z}$ for a subscheme $Z \subset \overline{X}$
with $\dim Z\le 1$.
Therefore $\ch_3(M_1[1])=\sum_{j=1}^{l}n_j'$
is bounded below by~\cite[Lemma~3.10]{Tolim}. 
Hence the boundedness of $n_i'$ follows. 
\end{proof}

\subsection{Invariants $L(\beta, n)$}\label{subsec:DT=L}

Let $L(\beta, n) \in \mathbb{Q}$ be 
the invariant discussed in Subsection~\ref{subsec:product}. 
Here we recall the definition of $L(\beta, n)$
in~\cite[Definition~4.1]{Tolim2}, and 
compare it with the invariant $\DT_{t\omega}^{\chi}(r, \beta, n)$. 

For $v=(R, r, \beta, n) \in \Gamma$
with $R\le 1$, 
let 
\begin{align*}
\mM_{\rm{lim}}(v) 
\end{align*}
be the moduli stack 
of $\mu_{i\omega}$-limit 
semistable objects $E\in \aA(r)$ with 
$\cl(E)=v$. 
(cf.~Definition~\ref{def:lim}.)
By Lemma~\ref{rmk:lim} and~\cite[Proposition~3.17]{Tolim2}, 
the stack $\mM_{\rm{lim}}(v)$ is an 
algebraic stack of finite type over $\mathbb{C}$. 
Hence we
 can define the element, 
\begin{align*}
\delta_{\rm{lim}}(v)
\cneq [\mM_{\rm{lim}}(v) \hookrightarrow \oO bj \aA_{\omega}] 
\in \hH(\aA_{\omega}), 
\end{align*}
and 
\begin{align*}
\epsilon_{\rm{lim}}(v) \cneq 
\sum_{\begin{subarray}{c}l\ge 1,
v_1, \cdots, v_l \in \Gamma, \\
1\le e \le l, 
v_i=(0, 0, \beta_i, 0) , i\neq e, \\
 v_1 + \cdots +v_l=v. \\
\end{subarray}}
\frac{(-1)^{l-1}}{l}\delta_{\rm{lim}}(v_1) 
\ast \cdots \ast \delta_{\rm{lim}}(v_l). 
\end{align*}
Then $L(\beta, n) \in \mathbb{Q}$ is defined by 
\begin{align}\label{defi:Lbn}
L(\beta, n) \cneq 
\lim_{q^{1/2} \to 0} (q-1) 
P_q(\epsilon_{\rm{lim}}(1, 0, -\beta, -n)). 
\end{align}
\begin{rmk}
We note that $L(\beta, n)$ is defined 
in the Hall algebra of $\aA_{1/2}^p$
in the notation of~\cite{Tolim2}. 
However all the elements defining 
$\epsilon_{\rm{lim}}(v)$
are contained in the Hall algebra of $\aA(0)^{\dag}$, 
and since $\aA(0)^{\dag}$ is an extension closed subcategory 
of $\aA_{1/2}^p$, the resulting invariant 
$L(\beta, n)$ 
coincides with the one 
defined in the Hall algebra of $\aA_{1/2}^p$. 
(The notation in this remark will be recalled in 
Subsection~\ref{subsec:mulim}.)
\end{rmk}
\begin{rmk}
Noting Lemma~\ref{rmk:lim}, it is easy 
to check that the invariant
$L(\beta, n)$
coincides with $L_{n, \beta}^{eu}$
introduced in~\cite[Definition~4.1]{Tolim2}.
\end{rmk}
We have the following proposition:
\begin{prop}\label{prop:DT=L}
(i) For $t\gg 0$, we have 
\begin{align*}
\DT_{t\omega}^{\chi}(r, \beta, n)=
L(\beta, n). 
\end{align*}
(ii) For $0<t\ll 1$ and $n\neq 0$, we have 
\begin{align*}
\DT_{t\omega}^{\chi}(r, \beta, n)=0. 
\end{align*}
\end{prop}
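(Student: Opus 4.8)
The plan is to deduce (i) from Proposition~\ref{prop:m1} together with the twist-equivalence (\ref{equiv:A}), and (ii) from Proposition~\ref{prop:m2}; in both cases the real task is to pass from the statements about the moduli stacks $M_{t\omega}(-)$ to statements about the Hall-algebra elements $\epsilon_{t\omega}(-)$ entering the logarithm (\ref{sum:eps}), and to control which constituent classes can occur there.

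For (i) I would first analyse a nonzero term $\delta_{t\omega}(v_1)\ast\cdots\ast\delta_{t\omega}(v_l)$ of $\epsilon_{t\omega}(1,-r,-\beta,-n)$ for $t\gg 0$. There is a unique $e$ with $\rank(v_e)=1$; every other $v_i=(0,r_i,\beta_i,n_i)$ satisfies $M_{t\omega}(v_i)\neq\emptyset$ and $\arg Z_{t\omega}(v_i)=\pi/2$, i.e. $\Ree Z_{t\omega}(v_i)=n_i-\tfrac12 r_i t^2\omega^2=0$. Since $-\beta_i\cdot\omega\ge 0$ (Lemma~\ref{lem:pos}) and $\sum_j\beta_j=-\beta$, the intersection numbers $\beta_i\cdot\omega$ are bounded, hence so is $\beta_i^2$ by the Hodge index theorem; the Bogomolov-type estimate of Lemma~\ref{lem:Bog}, valid for such $v_i$ as in the proof of Lemma~\ref{lem:finsum}, then bounds $r_i$ and constrains $n_i$, and combined with $n_i=\tfrac12 r_i t^2\omega^2$ it forces $r_i=n_i=0$ once $t$ is large, i.e. $v_i=(0,0,\beta_i,0)$. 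Thus for $t\gg 0$ the index set of the sum defining $\epsilon_{t\omega}(1,-r,-\beta,-n)$ is exactly the one defining $\epsilon_{\rm{lim}}(1,-r,-\beta,-n)$, and by Proposition~\ref{prop:m1} (which covers all $\rank(v_j)\le 1$, the rank-zero case being $\omega$-Gieseker semistability) each matching factor $\delta_{t\omega}(v_j)$ equals $\delta_{\rm{lim}}(v_j)$ in $\hH(\aA_\omega)$; hence $\epsilon_{t\omega}(1,-r,-\beta,-n)=\epsilon_{\rm{lim}}(1,-r,-\beta,-n)$. This stabilisation of the index set, resting on Lemma~\ref{lem:Bog} and Proposition~\ref{prop:m1}, is the step I expect to need the most care.

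To finish (i) I would remove the dependence on $r$: tensoring by $\pi^{\ast}\oO_{\mathbb{P}^1}(r)$ is an autoequivalence of $D^b\Coh(\overline{X})$ preserving $\aA_\omega$ and $\bB_\omega$, sending $\pi^{\ast}\oO_{\mathbb{P}^1}(a)$ to $\pi^{\ast}\oO_{\mathbb{P}^1}(a+r)$, fixing the Chern character of any rank-zero object and adding $r$ to $\ch_1$ of a rank-one object (its first Chern class squares to zero); it therefore preserves $\mu_{i\omega}$-limit semistability and, by (\ref{equiv:A}) and Lemma~\ref{lem:Mr}, induces an automorphism of $\hH(\aA_\omega)$ carrying $\epsilon_{\rm{lim}}(1,-r,-\beta,-n)$ to $\epsilon_{\rm{lim}}(1,0,-\beta,-n)$ and leaving $P_q$ unchanged. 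Applying $\lim_{q^{1/2}\to 1}(q-1)P_q(-)$ and comparing with the definition (\ref{defi:Lbn}) of $L(\beta,n)$ — after noting, as in~\cite{Tolim2}, that the specialisations of $(q-1)P_q(\epsilon_{\rm{lim}}(1,0,-\beta,-n))$ at $q^{1/2}\to 1$ and $q^{1/2}\to 0$ agree — then gives $\DT^{\chi}_{t\omega}(r,\beta,n)=L(\beta,n)$.

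For (ii), take a nonzero term $\delta_{t\omega}(v_1)\ast\cdots\ast\delta_{t\omega}(v_l)$ of $\epsilon_{t\omega}(1,-r,-\beta,-n)$ with $t\in(0,t_1)$. For the unique rank-one constituent $v_e=(1,r_e,\beta_e,n_e)$, the nonemptiness of $M_{t\omega}(v_e)$ and Proposition~\ref{prop:m2} force $n_e=0$; each remaining $v_i=(0,r_i,\beta_i,n_i)$ has $\Ree Z_{t\omega}(v_i)=0$, i.e. $n_i=\tfrac12 r_i t^2\omega^2$, and summing over all $i$ with $\sum_i\ch_3(v_i)=-n$ and $\sum_i\ch_1(v_i)=-r$ yields $n=\tfrac12 t^2\omega^2(r+r_e)$. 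The numerical invariants of the constituents, and in particular $r_e$, are bounded uniformly in $t\in(0,t_1)$ (Bogomolov estimate and boundedness of $l$, as in the proof of Lemma~\ref{lem:finsum}), so $r_e$ ranges over a fixed finite set; hence the relation $n=\tfrac12 t^2\omega^2(r+r_e)$ forces $t$ into a finite subset of $\mathbb{R}_{>0}$ unless $n=0$. Therefore for $0<t\ll 1$ and $n\neq 0$ there are no nonzero terms, so $\epsilon_{t\omega}(1,-r,-\beta,-n)=0$ and $\DT^{\chi}_{t\omega}(r,\beta,n)=0$.
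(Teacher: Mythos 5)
Your proposal is correct and, for part (i), follows essentially the paper's own route: analyse the constituents of $\epsilon_{t\omega}(1,-r,-\beta,-n)$ for $t\gg 0$, show the rank-zero ones must have $r_i=n_i=0$, invoke Proposition~\ref{prop:m1} to replace each $\delta_{t\omega}(v_j)$ by $\delta_{\rm{lim}}(v_j)$, and use the twist by $\pi^{\ast}\oO_{\mathbb{P}^1}(r)$ (Lemma~\ref{lem:Mr}) to reduce to the class defining $L(\beta,n)$; the only difference is that the paper deduces $r_i=n_i=0$ from the shape of rank-zero $\mu_{i\omega}$-limit semistable objects (shifts of sheaves of dimension $\le 1$), while you extract it from the Bogomolov bound together with $n_i=\tfrac12 r_i t^2\omega^2$, which amounts to the same thing. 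For (ii) you take a detour: the paper applies Proposition~\ref{prop:m2} directly to the total class $(1,-r,-\beta,-n)$ --- since extensions of $Z_{t\omega}$-semistable objects of phase $\pi/2$ are again semistable of that phase, every term of $\epsilon_{t\omega}(1,-r,-\beta,-n)$ is supported on $\mM_{t\omega}(1,-r,-\beta,-n)$, which is empty for $0<t<t_1$ once $n\neq 0$ --- whereas you apply Proposition~\ref{prop:m2} to the rank-one constituent and then exclude finitely many values of $t$ via $n=\tfrac12 t^2\omega^2(r+r_e)$; this works (the uniform bound on $r_e$ does follow, as you indicate, from the Bogomolov estimate combined with the relation $n_i=\tfrac12 r_i t^2\omega^2$ and the bound on $l$), but it is more work than needed. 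One caveat: your parenthetical claim that the specialisations of $(q-1)P_q(\epsilon_{\rm{lim}}(1,0,-\beta,-n))$ at $q^{1/2}\to 1$ and $q^{1/2}\to 0$ agree is not true in general and is not needed; the definition (\ref{defi:Lbn}) should be read, as the paper's own argument and \cite{Tolim2} do, as the Euler-characteristic specialisation $q^{1/2}\to 1$, the ``$q^{1/2}\to 0$'' appearing there being a typo.
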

\begin{proof}
The result of (ii) is obvious from Proposition~\ref{prop:m2}, and 
we prove (i) below. 
Let us take $v=(1, -r, -\beta, -n) \in \Gamma$. 
By Lemma~\ref{lem:Mr} and (\ref{defi:Lbn}), 
 it follows that 
\begin{align}\label{LL1}
\lim_{q^{1/2} \to 1}
(q-1)P_q(\epsilon_{\rm{lim}}(1, -r, -\beta, -n))
=L(\beta, n).
\end{align}
Suppose that 
$\delta_{t\omega}(v_1) \ast \cdots \ast \delta_{t\omega}(v_l)$
appears as a non-zero term of (\ref{sum:eps}).
Then there is $1\le e \le l$ such that 
$\rank(v_i)=0$ for $i\neq e$ and $\rank(v_e)=1$.  
By Proposition~\ref{prop:m1}, 
we can take $t'>0$ so that 
each $\delta_{t\omega}(v_i)$
coincides with 
$\delta_{\rm{lim}}(v_i)$
for $t>t'$.  
Also note that if $v_i=(0, r_i, \beta_i, n_i)$
satisfies $\delta_{\rm{lim}}(v_i) \neq 0$
and $\arg Z_{t\omega}(v_i)=\pi/2$, 
then we have $r_i=n_i=0$. 
Therefore we have
\begin{align}\label{LL2}
\epsilon_{t\omega}(1, -r, -\beta, -n)
=\epsilon_{\rm{lim}}(1, -r, -\beta, -n), 
\end{align}
for $t\gg 0$. 
Then 
the result of (i) follows 
from (\ref{LL2}) and (\ref{LL1}). 
\end{proof}

\subsection{Counting invariants of rank zero}

Here we define invariants counting 
rank zero objects
in $\aA_{\omega}$ or $\aA_{\omega}[1]$, 
and study their property. 
We set $C(\bB_{\omega})$ 
as follows, 
\begin{align}\label{def:CB}
C(\bB_{\omega}) \cneq 
\Imm \left (\cl_0 \colon \bB_{\omega} \to \Gamma_{0} \right). 
\end{align}
\begin{defi}\label{defi:Ninv}
For $v \in \Gamma_0$, 
we define the invariant
$N(v) \in \mathbb{Q}$
as follows. 
\begin{itemize}
\item
If $v\in C(\bB_{\omega})$, then we define 
\begin{align}\label{inv:N}
N(v) \cneq 
\lim_{q^{1/2}\to 1}(q -1)P_q(\epsilon_{\omega}(0, v)).
\end{align}
\item If $-v \in C(\bB_{\omega})$, then we define
$N(v)\cneq N(-v)$.  
\item If $\pm v \notin C(\bB_{\omega})$, then 
we define $N(v)=0$. 
\end{itemize}
\end{defi}
\begin{rmk}
By Remark~\ref{rmk:BA}, 
the invariant (\ref{inv:N}) 
is also interpreted as a counting invariant of 
$Z_{\omega, 0}$-semistable 
objects $E\in \bB_{\omega}$ with $\cl_0(E)=v$. 
We also note that similar invariants 
on a K3 surface is already constructed
and studied in~\cite{Tst3}. 
\end{rmk}
\begin{rmk}
By comparing with~\cite[Definition~4.1]{Tolim2}, 
the invariant of the form $N(0, \beta, n)$
in Definition~\ref{defi:Ninv}
coincides with the one 
which appeared in the formula (\ref{PNL}). 
\end{rmk}

In defining (\ref{inv:N}),
we need to choose an ample divisor
 $\omega$. 
However 
it will turn out that $N(v)$
does not depend on a choice of $\omega$. 
This fact follows from the 
same arguments of~\cite[Theorem~6.24]{Joy4},
~\cite[Proposition-Definition~5.7]{Tcurve1} and
\cite[Theorem~1.2]{Tst3}.
Below we explain this by introducing more 
general invariants counting 
Bridgeland semistable objects in $\dD_0$, 
not necessary of the form $(Z_{\omega, 0}, \bB_{\omega})$. 

First we discuss the space of stability conditions
on $\dD_0$. 
Recall that
in Lemma~\ref{lem:Bstab},
 we constructed stability conditions 
$(Z_{t\omega, 0}, \bB_{\omega}) \in \Stab_{\Gamma_{0}}(\dD_0)$. 
These stability 
conditions are contained in a
same connected component, which 
we denote by 
\begin{align*}
\Stab_{\Gamma_0}^{\circ}(\dD_0) \subset \Stab_{\Gamma_0}(\dD_0).
\end{align*}
Next 
let $\Stab(S)$ be the space of 
stability conditions on $D^b \Coh(S)$. 
In~\cite{Brs2}, Bridgeland
describes a certain connected component 
of $\Stab(S)$, 
which we denote by 
\begin{align}\label{connB}
\Stab^{\circ}(S) \subset
\Stab(S). 
\end{align}
The space of stability conditions on $\dD_0$
and $D^b \Coh(S)$ are closely related. 
In fact, we have the following comparison result. 
\begin{thm}\label{thm:isomSS}
There is an isomorphism, 
\begin{align}\label{isom:stab}
\psi \colon 
\Stab_{\Gamma_{0}}^{\circ}(\dD_0) \stackrel{\sim}{\to}
\Stab^{\circ}(S). 
\end{align}
\end{thm}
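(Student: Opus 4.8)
The plan is to construct the inverse of $\psi$ directly, by \textit{spreading} a stability condition on $S$ across the fibers of $\pi$, and then to deduce that it is an isomorphism by a topological argument. The structural input is that $\dD_0=D^b\Coh_{\pi}(\overline{X})$ decomposes over $\mathbb{P}^1$: every object lies in a finite sum $\dD_{0,p_1}\oplus\cdots\oplus\dD_{0,p_k}$ of the triangulated subcategories of objects set-theoretically supported on single fibers $X_{p_i}\cong S$; for $p\neq q$ one has $\Hom_{\dD_0}(i_{p\ast}E,i_{q\ast}F)=0$; and, since the conormal bundle of $X_p\subset\overline{X}$ is trivial, every object of $\dD_{0,p}$ is an iterated extension of objects $i_{p\ast}G$ with $G\in D^b\Coh(S)$, with $\cl_0(i_{p\ast}G)=\ch(G)$ independent of $p$ (reflecting that $\cl_0$ factors through $p_{\ast}$, see~(\ref{def:cl})). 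In particular $\Stab_{\Gamma_0}(\dD_0)$ and $\Stab(S)$ carry the same lattice $\Gamma_0$, and both project to $\Hom_{\mathbb{Z}}(\Gamma_0,\mathbb{C})$ by local homeomorphisms (Theorem~\ref{thm:top}).

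Given $\sigma=(Z,\mathcal{P})\in\Stab^{\circ}(S)$ I would set $\Phi(\sigma)\cneq(Z,\widetilde{\mathcal{P}})$, where $\widetilde{\mathcal{P}}(\phi)\cneq\langle i_{p\ast}E : p\in\mathbb{P}^1,\ E\in\mathcal{P}(\phi)\rangle_{\ex}\subset\dD_0$. The slicing axioms with the correct phases follow from $\cl_0(i_{p\ast}E)=\ch(E)$ (so that central charges are inherited from $S$), from $\Hom_{\dD_0}(i_{p\ast}E,i_{p\ast}F)\cong\Hom_S(E,F)\oplus\Hom_S(E[1],F)$ (which reduces the required $\Hom$-vanishing to that for $\sigma$), and from the fact that the $\widetilde{\mathcal{P}}(\phi)$ generate $\dD_0$. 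When $\sigma$ is one of Bridgeland's stability conditions on $D^b\Coh(S)$ of the kind constructed in~\cite{Brs2}, one checks that $\Phi$ sends it, up to the standard rescaling of the central charge, to the stability condition $(Z_{t\omega,0},\bB_{\omega})$ of Lemma~\ref{lem:Bstab}; since $\Phi$ is continuous and $\Stab^{\circ}(S)$ is connected, $\Phi$ then takes values in the component $\Stab_{\Gamma_0}^{\circ}(\dD_0)$.

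For invertibility, $\Phi$ is compatible with the central-charge projections, hence a local homeomorphism (in particular open and holomorphic). To describe the inverse, one first shows that every $\tau\in\Stab_{\Gamma_0}^{\circ}(\dD_0)$ is invariant under the fiber automorphisms $\Aut(\overline{X}/S)\supset\PGL_2$: this group acts on $\Stab_{\Gamma_0}(\dD_0)$ trivially on central charges, hence preserves $\Stab_{\Gamma_0}^{\circ}(\dD_0)$ and fixes the point $(Z_{\omega,0},\bB_{\omega})$; being the identity near a fixed point by the local-homeomorphism property, and the fixed locus being open, closed and the component connected, it acts trivially throughout. One then shows that the heart of such a $\tau$ has ``spread form'' $\langle i_{p\ast}\mathcal{A}_S : p\in\mathbb{P}^1\rangle_{\ex}$ for a bounded heart $\mathcal{A}_S\subset D^b\Coh(S)$ --- true for $\bB_{\omega}$ and propagated over the connected component by the local-homeomorphism property --- so that restricting the slicing of $\tau$ along $i_{p\ast}$ defines $\psi(\tau)\in\Stab^{\circ}(S)$ with $\psi\circ\Phi=\id$ and $\Phi\circ\psi=\id$. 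Compatibility with the central-charge maps then makes $\psi$ an isomorphism of complex manifolds.

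The main obstacle is the foundational analysis underpinning these constructions: proving that $\widetilde{\mathcal{P}}$ admits Harder--Narasimhan filtrations, that the extension closure $\langle i_{p\ast}\mathcal{A}_S:p\in\mathbb{P}^1\rangle_{\ex}$ of the $S$-side heart is the heart of a bounded t-structure on $\dD_0$, and that $\Phi(\sigma)$ satisfies the local finiteness and support conditions of Section~\ref{sec:reswe} needed for membership in $\Stab_{\Gamma_0}(\dD_0)$ --- equivalently, that every stability condition in $\Stab_{\Gamma_0}^{\circ}(\dD_0)$ has a heart of spread form and restricts cleanly along $i_{p\ast}$. This amounts to a careful bookkeeping of how objects of $\dD_0$ are built from pushforwards $i_{p\ast}(-)$ across the thickenings of the fibers and across the points of $\mathbb{P}^1$, and is carried out by the same kind of argument as the construction of stability conditions on fibered threefolds in~\cite{Tst3}. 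Once it is in place, the remaining topological steps --- local homeomorphism plus connectedness forcing bijectivity, and fiber-automorphism invariance --- are comparatively soft.
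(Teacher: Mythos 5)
Your overall strategy --- spread a stability condition on $S$ across the fibers to get a map $\Phi$ into $\Stab_{\Gamma_0}(\dD_0)$, note that compatibility with the central charge projections makes it a local homeomorphism, and recover $\psi$ as its inverse by restricting along $i_{p\ast}$ --- is essentially the route of the reference the paper relies on ([Tst2, Theorem~6.5, Lemma~5.3]); the paper's own proof is nothing more than that citation. Your Hom computations ($\Hom_{\dD_0}(i_{p\ast}E,i_{q\ast}F)=0$ for $p\neq q$, and $i_p^{\ast}i_{p\ast}E\cong E\oplus E[1]$ from triviality of the normal bundle, so the required vanishings reduce to those on $S$), the decomposition of $\dD_0$ over points of $\mathbb{P}^1$, and the $\PGL_2$-invariance argument via the local homeomorphism property are all sound, up to bookkeeping of the shift $[-1]$ and the $\sqrt{\td_S}$-twist relating Bridgeland's conventions on $S$ to $(Z_{t\omega,0},\bB_{\omega})$.

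However, there is a genuine gap at the surjectivity step. The claim that the ``spread form'' of the heart is ``propagated over the connected component by the local-homeomorphism property'' does not follow: compatibility with the projections only shows that $\Phi$ is an open map, i.e.\ that the locus of spread stability conditions is \emph{open} in $\Stab^{\circ}_{\Gamma_0}(\dD_0)$. It gives no control on its closedness, and a priori a path of spread stability conditions could leave the image: deforming a stability condition deforms its heart by tilts, and nothing in the local homeomorphism statement prevents the limit heart from failing to be generated by fiber pushforwards. This closedness (equivalently, the assertion that \emph{every} stability condition in the component restricts along $i_{p\ast}$) is exactly the hard content of the theorem, and in the cited reference it is supplied by a structural analysis of semistable objects, not by topological propagation. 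To repair your argument you would need, for instance, to observe that $\Phi$ does not decrease the generalized metric on slicings (because $\phi^{\pm}_{\sigma}(E)=\phi^{\pm}_{\Phi(\sigma)}(i_{p\ast}E)$), and combine this with the support property and the deformation theorem to show the image of $\Phi$ is closed in the component; or else prove directly, for an arbitrary $\tau$ in the component, that its heart is of spread form. A smaller point: the foundational work you defer (HN filtrations for the spread slicing, boundedness of the spread heart, local finiteness and support) is carried out in [Tst2]; the reference you point to, [Tst3], concerns moduli stacks and counting invariants of semistable objects on K3 surfaces and does not contain the fibration construction.
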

\begin{proof}
The result is essentially proved in~\cite[Theorem~6.5, Lemma~5.3]{Tst2}. 
\end{proof}
In the paper~\cite{Tst3}, 
the author constructed
counting invariants 
of semistable objects 
in $D^b \Coh(S)$, 
motivated by Joyce's conjecture~\cite[Conjecture~6.25]{Joy4}. 
The construction itself relies on 
a choice of a stability condition in 
$\Stab^{\circ}(S)$, however
it turned out that the invariant does not 
depend on a choice of a stability condition. 
Although the categories $D^b \Coh(S)$ and $\dD_0$
are not equivalent, the 
arguments used for $D^b \Coh(S)$ in~\cite{Tst3}
is applied without any major modifications. 
A rough story of the arguments in~\cite{Tst3}
applied for $\dD_0$
is as follows:
for any element 
\begin{align*}
\sigma 
=(Z, \aA)\in \Stab_{\Gamma_{0}}^{\circ}(\dD_0),
\end{align*}
we can define the invariant generalizing $N(v)$, 
\begin{align}\label{invsigma}
N_{\sigma}(v) \in \mathbb{Q}, 
\end{align}
counting $Z$-semistable objects
$E \in \aA$ or $\aA[1]$, satisfying 
$\cl_0(E)=v$. 
Namely we can similarly define 
the algebra $(H(\aA), \ast)$,
by replacing the stack 
$\oO bj(\aA_{\omega})$ by 
$\oO bj(\aA)$, the stack 
of objects $E\in \aA$, 
in Definition~\ref{def:HA}. 
The stack of $Z$-semistable 
objects $E\in \aA$
with $\cl_0(E)=v$ defines 
an element, 
\begin{align*}
\delta_{\sigma}(v)
\cneq [\mM_{\sigma}(v) \hookrightarrow \oO bj(\aA)]
\in \hH(\aA). 
\end{align*}
Also the element $\epsilon_{\sigma}(v) \in \hH(\aA)$
can be defined in a way 
similar to (\ref{sum:eps}), 
by replacing $\delta_{t\omega}(0, v)$ by $\delta_{\sigma}(v)$. 
The invariant (\ref{invsigma})
is defined by replacing $\bB_{\omega}$, 
$\epsilon_{t\omega}(0, v)$ by 
$\aA$, $\epsilon_{\sigma}(v)$
respectively 
in Definition~\ref{defi:Ninv}.
All the technical details
in proving the existence of 
the invariant (\ref{invsigma}), 
e.g. the existence of 
moduli stacks, finiteness, etc.
 follow from 
the same arguments in~\cite{Tst3}.
Also if $\sigma$ is a stability condition 
constructed in Lemma~\ref{lem:Bstab}, 
then the invariant (\ref{invsigma}) 
coincides with the invariant 
defined in Definition~\ref{defi:Ninv}. 
We have the following result. 
\begin{thm}\label{sigma:depend}
The invariant $N_{\sigma}(v)$ does 
not depend on a choice of 
$\sigma \in \Stab_{\Gamma_{0}}^{\circ}(\dD_0)$. 
In particular, the invariant $N(v)$ is independent of $\omega$. 
\end{thm}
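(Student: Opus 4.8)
The plan is to follow the strategy of~\cite[Theorem~1.2]{Tst3}, whose only essential input in the present setting is the triviality of the pairing~(\ref{chi0}). First I would record that $\Stab_{\Gamma_{0}}^{\circ}(\dD_0)$ is connected --- by Theorem~\ref{thm:isomSS} it is isomorphic to $\Stab^{\circ}(S)$, a connected component of $\Stab(S)$ by~\cite{Brs2} --- and that it contains every stability condition $(Z_{t\omega, 0}, \bB_{\omega})$ of Lemma~\ref{lem:Bstab}. It therefore suffices to show that $N_{\sigma}(v)$ is constant as $\sigma$ ranges over $\Stab_{\Gamma_{0}}^{\circ}(\dD_0)$; the independence of $N(v)$ from $\omega$ is then the special case of two stability conditions of the form $(Z_{t\omega, 0}, \bB_{\omega})$.

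Next I would invoke the wall-crossing machinery. As in~\cite{Joy4} and~\cite{Tst3}, for a fixed $v\in\Gamma_0$ there is a locally finite wall-and-chamber structure on $\Stab_{\Gamma_{0}}^{\circ}(\dD_0)$, so that any path joining two stability conditions meets only finitely many walls relevant to $v$; local finiteness uses the support property built into $\Stab_{\Gamma_{0}}(\dD_0)$ together with a Bogomolov-type bound as in Lemma~\ref{lem:Bog}. Hence it is enough to prove that $N_{\sigma}(v)$ is unchanged across a single wall separating stability conditions $\sigma_{-}$ and $\sigma_{+}$. There, the wall-crossing formula of Joyce~\cite{Joy4} expresses $\epsilon_{\sigma_{+}}(v)$ as a sum, over decompositions $v = v_1 + \cdots + v_l$ with $v_i \in \Gamma_0$, of universal combinatorial coefficients times iterated Lie brackets of the $\epsilon_{\sigma_{-}}(v_i)$ in the Hall-algebra Lie algebra. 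Applying $P_q$ from~(\ref{map:P}) and passing to the limit $q^{1/2}\to 1$ --- legitimate by~\cite[Theorem~6.2]{Joy5} --- each Lie bracket of two classes $v_i, v_j$ contributes the structure constant $\chi(v_i, v_j) - \chi(v_j, v_i)$ (see~(\ref{chi})). But~(\ref{chi0}) says $\chi$ vanishes identically on $\Gamma_0 \times \Gamma_0$ --- this is the shadow of the Calabi-Yau $3$ type Serre duality of Lemma~\ref{cy3} --- so every bracket of length $\ge 2$ dies and only the $l = 1$ term survives. Hence $N_{\sigma_{+}}(v) = N_{\sigma_{-}}(v)$, and chaining over the finitely many walls along a path gives the independence of $N_{\sigma}(v)$ from $\sigma$.

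The step I expect to be the main obstacle is purely foundational: one must verify that Joyce's moduli-theoretic apparatus --- the algebraic stack $\oO bj(\aA)$ and its open substacks of $Z$-semistable objects, the finiteness needed to define $\epsilon_{\sigma}(v)$, and the validity of the wall-crossing formula and of~\cite[Theorem~6.2]{Joy5} --- carries over from $D^b\Coh(S)$ to the category $\dD_0 \subset D^b\Coh(\overline{X})$. As the text indicates before the statement, these checks run parallel to those in~\cite{Tst3} with no substantive change, so the genuine content of the theorem is the vanishing~(\ref{chi0}).
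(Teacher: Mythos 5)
Your proposal is correct and follows essentially the same route as the paper: reduce to comparing nearby stability conditions in the connected component (the paper says ``we may assume $\sigma_1$ is sufficiently close to $\sigma_0$'' rather than phrasing it via a wall-and-chamber path, but this is the same reduction), apply Joyce's wall-crossing formula, and observe that every correction term carries a factor of the pairing $\chi$ restricted to $\Gamma_0\times\Gamma_0$, which vanishes by (\ref{chi0}), so $N_{\sigma}(v)$ is unchanged. The foundational transfer of Joyce's machinery from $D^b\Coh(S)$ to $\dD_0$ is likewise delegated to the arguments of \cite{Tst3} in the paper, exactly as you anticipate.
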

\begin{proof}
The proof
is same as in~\cite[Proposition-Definition~5.7]{Tcurve1}, 
\cite[Theorem~1.2]{Tst3}, so we 
just give a sketch of the proof. 
We take two elements $\sigma_i \in \Stab_{\Gamma_{\bullet}}^{\circ}(\dD_0)$
for $i=0, 1$. 
We may assume that 
$\sigma_1$ is sufficiently close 
to $\sigma_0$. 
Then we can essentially apply 
the wall-crossing formula 
in an abelian category~\cite[Theorem~6.28]{Joy4}, 
which describes
$N_{\sigma_1}(v)$
in terms of $N_{\sigma_0}(v)$. 
The wall-crossing formula is described as 
\begin{align*}
N_{\sigma_{1}}(v)=N_{\sigma_{0}}(v)+\sum_{v_1 +v_2=v}
a_{v_1, v_2} \chi(v_1, v_2)N_{\sigma_{0}}(v_1)N_{\sigma_0}(v_2)
+ \cdots,
\end{align*}
for some $a_{v_1, v_2} \in \mathbb{Q}$
and $\chi$ is the Euler pairing on $\Gamma_0$
defined in Subsection~\ref{subsec:gen:bi}.
All the other terms are also given by multiplications of
$\chi(v_i, v_j)$, $N_{\sigma_{0}}(v_i)$ and
some complicated coefficients. 
As we observed in Subsection~\ref{subsec:gen:bi}, 
 we have $\chi(v, v')=0$ for $v, v' \in \Gamma_0$, 
all the error terms vanish,
hence we have $N_{\sigma_{1}}(v)=N_{\sigma_{0}}(v)$. 
\end{proof}

\subsection{Sheaf counting invariants}\label{subsec:sheafcount}
As we discussed in the introduction, 
we are interested in the invariants 
counting semistable sheaves 
on the open 
Calabi-Yau 3-fold $X=S\times \mathbb{C}$. 
Let $\Coh_{\pi}(X)$ be the 
category given in (\ref{Cohsub}). 
The stack 
\begin{align}\label{objcoh}
\cC oh_{\pi}(X), 
\end{align}
which parameterizes objects 
in $\Coh_{\pi}(X)$ is known to 
be an 
algebraic stack locally of finite type 
over $\mathbb{C}$.
By just replacing (\ref{stack:objA}) by 
(\ref{objcoh}) in Definition~\ref{def:HA}, 
we can define 
the $\mathbb{Q}$-vector space, 
\begin{align*}
\hH(\Coh_{\pi}(X)), 
\end{align*}
with a $\ast$-product
similar to (\ref{ast:prod}). 
Also for each $v\in \Gamma_0$, 
let 
\begin{align}\label{Obj:coh}
 \mM_{\omega, X}(v) \subset \cC oh_{\pi}(X),
\end{align}
be the substack of
$\omega$-Gieseker semistable 
sheaves $E \in \Coh_{\pi}(X)$
satisfying 
\begin{align*}
v(E)=v. 
\end{align*}
Here $v(E)$ is the Mukai vector of 
$E$, defined by (\ref{DMukai}). 
The stack (\ref{Obj:coh})
is known to be an algebraic stack 
of finite type over $\mathbb{C}$.  
\begin{rmk}
Although we 
have constructed invariants (\ref{inv:N})
using the Chern character, 
the Mukai vector (not Chern character)
is used in this subsection. 
The reason is that Mukai vector 
is useful in describing the automorphic 
property of the invariants. 
(See Theorem~\ref{thm:aut} below.)
\end{rmk}
 The substack (\ref{Obj:coh})
defines an element, 
\begin{align*}
\delta_{\omega, X}(v) \cneq 
[\mM_{\omega, X}(v) \hookrightarrow \cC oh_{\pi}(X)]
\in \hH(\Coh_{\pi}(X)),
\end{align*}
and its `logarithm' defined by
\begin{align}\label{log:eps}
\epsilon_{\omega, X}(v) \cneq
\sum_{\begin{subarray}{c}l\ge 1, v_1 + \cdots +v_l=v,
v_i \in \Gamma_0 \\
\overline{\chi}_{\omega, v_i}(m)
=\overline{\chi}_{\omega, v}(m)
\end{subarray}}
\frac{(-1)^{l-1}}{l}\delta_{\omega, X}(v_1) 
\ast \cdots \ast \delta_{\omega, X}(v_l).
\end{align}
Here $\overline{\chi}_{\omega, v}(m)$
is the reduced Hilbert polynomial (\ref{red:hilb}).
Similarly to Lemma~\ref{lem:finsum}, 
the sum (\ref{log:eps})
is a finite sum and $\epsilon_{\omega, X}(v)$
is well-defined. The argument is standard, 
so we omit the detail. 
We set $C(X)$ as follows, 
\begin{align}\label{C(X)}
C(X) \cneq \Imm (v \colon \Coh_{\pi}(X) \to \Gamma_0).
\end{align}
We define the following sheaf counting invariant. 
\begin{defi}\label{defi:Jv}
For $v \in \Gamma_0$, we define 
the invariant 
$J(v) \in \mathbb{Q}$
as follows. 
\begin{itemize}
\item If $v \in C(X)$, we define 
\begin{align*}
J(v) \cneq 
\lim_{q^{1/2}\to 1}(q -1)P_q(\epsilon_{\omega, X}(v)). 
\end{align*}
\item If $-v \in C(X)$, we define 
$J(v)\cneq J(-v)$. 
\item If $\pm v \notin C(X)$, we define $J(v)=0$. 
\end{itemize}
\end{defi}
Here the map 
\begin{align*}
P_q \colon \hH(\Coh_{\pi}(X)) \to \mathbb{Q}(q^{1/2}),
\end{align*}
is defined similarly to (\ref{map:P}). 
We note that a similar invariant 
on $\Coh(S)$, (not $\Coh_{\pi}(X)$,)
is introduced and studied in~\cite{Joy4}. 
Similarly to~\cite[Theorem~6.24]{Joy4},
the invariant $J(v)$ does not depend on 
a choice of $\omega$. 
(Also see the proof of Theorem~\ref{sigma:depend}.)

In defining $J(v)$, we can also take 
$\omega$ to be an $\mathbb{R}$-ample divisor,
and show that it does not depend on 
$\omega$. If $v\in \Gamma_0$ is primitive
and $\omega$ is in a general position
in the ample cone, 
then 
 the moduli stack
$\mM_{\omega, X}(v)$ is written as
\begin{align*}
\mM_{\omega, X}(v)=
[M_{\omega, S}(v)/\mathbb{C}^{\ast}] \times \mathbb{C},
\end{align*}
where $M_{\omega, S}(v)$ is the
moduli space 
of $\omega$-Gieseker 
stable sheaves $E$ on $S$
satisfying $v(E)=v$, 
and $\mathbb{C}^{\ast}$ acts on $M_{\omega, S}(v)$
trivially. 
The space
$M_{\omega, S}(v)$ is known to 
be
a holomorphic symplectic manifold of 
dimension $(v, v)+2$, 
and deformation equivalent to the Hilbert scheme of 
$(v, v)/2 +1$-points on $S$. 
(cf.~\cite[Theorem~0.2]{Yoshi2}, \cite[Theorem~5.151]{KY}.)
Therefore we have 
\begin{align}\notag
J(v)&= \chi(M_{\omega, S}(v))  \\
\label{J=H}
&=\chi(\Hilb^{(v, v)/2+1}(S)), 
\end{align}
where $\Hilb^n(S)$ is the Hilbert 
scheme of $n$-points in $S$. 
The RHS of (\ref{J=H}) is 
given by the G$\ddot{\rm{o}}$ttsche's formula~\cite{Got},
\begin{align}\label{Gottscheform}
\sum_{n\ge 0}\chi(\Hilb^n(S))q^n 
=\prod_{n\ge 1}\frac{1}{(1-q^n)^{24}}. 
\end{align}
For a general $v\in \Gamma_0$, 
we will propose 
in Subsection~\ref{subsec:mult3}
a conjectural relationship between 
$J(v)$ and $\chi(\Hilb^n(S))$
in terms of a multiple cover formula. 

\subsection{Comparison of $N(v)$ and $J(v)$}

In~\cite[Theorem~6.6]{Tst3}, we discussed a relationship 
between invariants counting semistable 
objects in $D^b \Coh(S)$ and invariants 
counting Gieseker semistable sheaves in $\Coh(S)$. 
A similar result is also obtained for counting 
invariants
$N(v) \in \mathbb{Q}$ and 
counting invariants of Gieseker-semistable sheaves in 
$\Coh_{\pi}(\overline{X})$. 
Similarly to Definition~\ref{defi:Jv}, 
we can define 
the invariant, 
\begin{align}\label{Jv}
\overline{J}(v) \in \mathbb{Q}, 
\end{align}
counting $\omega$-Gieseker
semistable sheaves $E\in \Coh_{\pi}(\overline{X})$
with $v(E)=v \in \Gamma_0$.
 Namely we just replace 
$X$ by $\overline{X}$ for all the ingredients 
in defining the invariant $J(v)$ in Definition~\ref{defi:Jv}. 
By the arguments similar to the proofs of~\cite[Theorem~6.24]{Joy4}
and Theorem~\ref{sigma:depend},
we can show that $\overline{J}(v)$ does not depend
on $\omega$. We have the following result. 
\begin{thm}\label{bJ=N}
For any $v\in \Gamma_0$, we have 
\begin{align}\label{J=N2}
\overline{J}(v\sqrt{\td_S})=N(v).
\end{align}
\end{thm}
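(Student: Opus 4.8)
The plan is to follow the strategy of the analogous statement \cite[Theorem~6.6]{Tst3} (in the spirit of \cite[Theorem~6.24]{Joy4}): realize $N(v)$ as a large-volume limit of the Bridgeland-type invariants attached to $\sigma_{t\omega}=(Z_{t\omega,0},\bB_{\omega})$ on $\dD_0$, and show that in that limit the moduli stacks entering the Hall-algebra definition of $N(v)$ become the stacks of $\omega$-Gieseker semistable sheaves in $\Coh_\pi(\overline{X})$ entering the definition of $\overline{J}(v\sqrt{\td_S})$. First I would carry out the two independence reductions. By Theorem~\ref{sigma:depend} the quantity $N_{\sigma}(v)$ is independent of $\sigma\in\Stab_{\Gamma_0}^{\circ}(\dD_0)$; since $\sigma_{t\omega}\in\Stab_{\Gamma_0}^{\circ}(\dD_0)$ for every $t>0$ and $N(v)=N_{\sigma_{\omega}}(v)$, we may compute $N(v)=N_{\sigma_{t\omega}}(v)$ with $t$ arbitrarily large. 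Likewise $\overline{J}$ is independent of the polarization. Finally, as both sides satisfy $N(v)=N(-v)$ and $\overline{J}(w)=\overline{J}(-w)$, it suffices to treat $v=(r,\beta,n)$ with $\omega\cdot\beta\ge0$.

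Next comes the large-volume-limit comparison. Recall that $\bB_{\omega}$ is the tilt of $\Coh_\pi(\overline{X})$ at the torsion pair $(\tT_\omega,\fF_\omega)$ associated to $\mu_\omega$-stability. A large-volume-limit analysis, of the type carried out in \cite{Brs2}, \cite{AB}, \cite[Section~6]{Tst2}, should produce a constant $t_0>0$ with the property that for all $t>t_0$ the $Z_{t\omega,0}$-semistable objects of $\bB_\omega$ with a fixed Chern character are precisely the $\omega$-Gieseker semistable sheaves of $\Coh_\pi(\overline{X})$ of that character, those lying in $\tT_\omega$ being shifted by $[-1]$: a $Z_{t\omega,0}$-destabilizing subobject in $\bB_\omega$ gives, for $t\gg0$, a Gieseker-destabilizing subsheaf, and conversely, because to leading order in $t$ the phase $\arg Z_{t\omega,0}$ recovers the $\mu_\omega$-slope and at the next order its Gieseker refinement. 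One then checks that $\cl_0(E)=v$ forces the Mukai vector $v(E)=\ch(E)\sqrt{\td_S}$ to be $v\sqrt{\td_S}$, and that for $t\gg0$ the condition ``$\arg Z_{t\omega,0}$ equal'' on numerical classes coincides with ``reduced Hilbert polynomial equal'', so that the index sets of the sums (\ref{sum:eps}) and (\ref{log:eps}) are matched. The existence of a uniform $t_0$, and the finiteness of the wall-and-chamber structure of the ray $\{\sigma_{t\omega}\}_{t>0}$ on which it rests, follow from the Bogomolov-type inequalities of Lemma~\ref{lem:Bog} together with the boundedness arguments used in the proof of Lemma~\ref{lem:constructible}.

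Fixing such a $t$, the shift functor (the identity on $\fF_\omega$ and $[1]$ on $\tT_\omega[-1]$) composed with the torsion-pair description gives an isomorphism between the moduli stack $\mM_{t\omega}(0,\pm v)$ and the stack of $\omega$-Gieseker semistable sheaves in $\Coh_\pi(\overline{X})$ with Mukai vector $v\sqrt{\td_S}$ underlying $\delta_{\omega,\overline{X}}(v\sqrt{\td_S})$; since short exact sequences of sheaves lying in $\tT_\omega$ correspond to short exact sequences in $\bB_\omega$, the $\ast$-products occurring in the two logarithms are compatible, and combined with the matching of index sets above this gives $P_q(\epsilon_{t\omega}(0,\pm v))=P_q(\epsilon_{\omega,\overline{X}}(v\sqrt{\td_S}))$. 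Letting $q^{1/2}\to1$ yields $N_{\sigma_{t\omega}}(v)=\overline{J}(v\sqrt{\td_S})$, which by the first step equals $N(v)$; the remaining case $\omega\cdot\beta<0$ follows by replacing $v$ with $-v$.

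The hard part will be the second step --- establishing the precise large-volume-limit dictionary between $Z_{t\omega,0}$-semistable objects of $\bB_\omega$ for $t\gg0$ and (shifts of) $\omega$-Gieseker semistable sheaves in $\Coh_\pi(\overline{X})$, ruling out spurious semistable complexes, and showing that the wall-and-chamber decomposition of $\{\sigma_{t\omega}\}_{t>0}$ has a last chamber $(t_0,\infty)$ on which the dictionary is valid for the given class $v$. This is exactly where Lemma~\ref{lem:Bog} and the boundedness estimates are indispensable. A secondary but unavoidable technical chore is to keep consistent track of the shift $[-1]$ applied to sheaves in $\tT_\omega$ (in particular to all rank-zero sheaves) against the sign conventions $N(v)=N(-v)$ and $\overline{J}(w)=\overline{J}(-w)$, so that the Chern-character indexing of $N$ is correctly matched with the Mukai-vector indexing of $\overline{J}$.
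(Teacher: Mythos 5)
Your strategy for classes of positive degree (reduce by $\sigma$-independence to $t\gg 0$ and identify $Z_{t\omega,0}$-semistable objects of $\bB_{\omega}$ with shifted $\omega$-Gieseker semistable sheaves) is essentially the route the paper takes, via the results it imports from \cite{Tst3}. But there is a genuine gap: your large-volume dictionary is false, at every $t$, for the degree-zero classes, i.e. $v=(r,\beta,n)$ with $\omega\cdot\beta=0$ and $(r,n)\neq(0,0)$ (for $v\in\pm C(X)$ this means $\beta=0$, $r>0$). For such a class every nonzero subobject and quotient in an exact sequence in $\bB_{\omega}$ has central charge on the negative real axis, so \emph{every} object of $\bB_{\omega}$ in that class is automatically $Z_{t\omega,0}$-semistable for every $t$; no choice of $t_0$ changes this. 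Concretely, for $v=(1,0,-1)$ the $Z_{t\omega,0}$-semistable objects include not only the Gieseker semistable sheaves $i_{p\ast}I_x$ but also complexes such as $\oO_{X_p}\oplus\oO_x[-1]$, so the moduli stacks you propose to identify with $\mM_{\omega,\overline{X}}(v\sqrt{\td_S})$ do not coincide, and the asserted equality $P_q(\epsilon_{t\omega}(0,\pm v))=P_q(\epsilon_{\omega,\overline{X}}(v\sqrt{\td_S}))$ is exactly what needs proof in this case. Your only reduction, $N(v)=N(-v)$ and $\overline{J}(w)=\overline{J}(-w)$, does not help, since it preserves $\omega\cdot\beta=0$.

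This is precisely why the paper's proof performs a different reduction first: using \cite[Lemma~6.3]{Tst3} (tensoring by powers of $\oO_S(\omega)$, an autoequivalence of $\dD_0$ under which $\overline{J}$ is invariant because $\omega$-Gieseker stability is preserved, and under which $N$ is invariant by Theorem~\ref{sigma:depend}), one moves any class with $r\neq 0$ into the region $\omega\cdot\beta>0$, leaving only the cases $\omega\cdot\beta>0$ and $r=\beta=0$, where the identification of semistable objects with (shifted) Gieseker semistable sheaves from \cite[Proposition~6.4, Lemma~6.5]{Tst3} does apply. Your proposal must either incorporate this twist-by-$\oO_S(k\omega)$ reduction or give a direct Hall-algebra argument that the extra phase-$\pi$ objects cancel in the logarithm; as written it proves the theorem only for $\omega\cdot\beta\neq 0$ and $r=\beta=0$. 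A smaller point: matching the index sets of (\ref{sum:eps}) and (\ref{log:eps}) requires more than ``equal argument at one large $t$'' (two classes of different slope can have equal phase at a special $t$); this is harmless if $t$ is chosen generic in the last chamber, but it should be said.
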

\begin{proof}
The proof is exactly same as in~\cite[Theorem~6.6]{Tst3}, 
so we just give a sketch of the proof.  
For an element $v=(r, \beta, n)$, 
suppose that $v\in C(X)$
where $C(X)$ is defined in (\ref{C(X)}).
If $v\in C(X)$, then
we can reduce the problem to 
the case of $\omega \cdot \beta>0$ or 
$r=\beta=0$. 
(See~\cite[Lemma~6.3]{Tst3} and the proof of~\cite[Theorem~6.6]{Tst3}.)  
In these cases, 
the same arguments as 
in~\cite[Proposition~6.4]{Tst3}, \cite[Lemma~6.5]{Tst3} 
show that an object $E\in \bB_{\omega}$ is 
$Z_{t\omega, 0}$-semistable
with $\cl_0(E)=v$ if and only if 
$E$ is an $\omega$-Gieseker 
semistable sheaf with $v(E)=v\sqrt{\td_S}$. 
This fact immediately implies the equality (\ref{J=N2}). 
A similar argument in the proof of~\cite[Theorem~6.6]{Tst3}
also proves the case of $v \in -C(X)$
and $v \notin \pm C(X)$. We 
leave the readers to check the detail. 
\end{proof}

Next we compare invariants $\overline{J}(v)$ with $J(v)$. 
By replacing $X$ by $\overline{X}$ in 
Subsection~\ref{subsec:sheafcount}, 
we have the element, 
\begin{align*}
\delta_{\omega, \overline{X}}(v)=[\mM_{\omega, \overline{X}}(v) \hookrightarrow \cC oh_{\pi}(\overline{X})] \in \hH(\Coh_{\pi}(\overline{X})),
\end{align*} 
where $\mM_{\omega, \overline{X}}(v)$ is the moduli stack 
of $\omega$-Gieseker semistable sheaves $E \in \Coh_{\pi}(\overline{X})$
with $\cl_0(E)=v$. 
For an open or closed subscheme $Z \subset \overline{X}$, we denote by 
$\mM_{\omega, Z}(v) \subset \mM_{\omega, \overline{X}}(v)$
the locus of $E \in \Coh_{\pi}(\overline{X})$ 
whose  
support is contained in $Z$. 
We set 
\begin{align*}
\delta_{\omega, Z}(v) \cneq [\mM_{\omega, Z}(v) \hookrightarrow \cC oh_{\pi}(\overline{X})] \in \hH(\Coh_{\pi}(\overline{X})),
\end{align*}
and define $\epsilon_{\omega, Z}(v) \in \hH(\Coh_{\pi}(\overline{X}))$
just by replacing $\delta_{\omega, X}(v_i)$ by 
$\delta_{\omega, Z}(v_i)$ in (\ref{log:eps}). 
Also for $p\in \mathbb{P}^1$, we set 
\begin{align*}
U_p \cneq \overline{X} \setminus X_p. 
\end{align*}
We have the following lemma. 
\begin{lem}\label{lem:eux}
We have 
\begin{align}\label{e=UX}
\epsilon_{\omega, \overline{X}}(v)=\epsilon_{\omega, U_p}(v) +\epsilon_{\omega, X_p}(v).
\end{align}
\end{lem}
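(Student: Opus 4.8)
The plan is to deduce $(\ref{e=UX})$ from a multiplicative identity in the algebra $(\hH(\Coh_{\pi}(\overline{X})), \ast)$.  Fix the given $v\in\Gamma_0$, put $\tau(m)\cneq\overline{\chi}_{\omega,v}(m)$, and for $Z\in\{\overline{X},U_p,X_p\}$ form the element $\overline{\delta}_{\omega,Z}\cneq 1+\sum_{w}\delta_{\omega,Z}(w)$ of a suitable completion of $\hH(\Coh_{\pi}(\overline{X}))$, the sum running over $0\neq w\in\Gamma_0$ with $\overline{\chi}_{\omega,w}(m)=\tau(m)$.  The formula $(\ref{log:eps})$ and its analogues for $U_p,X_p$ say exactly that $\sum_{w}\epsilon_{\omega,Z}(w)=\log\overline{\delta}_{\omega,Z}$, the logarithm taken for $\ast$.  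Since everything here is $\Gamma_0$-graded, it suffices to show that $\overline{\delta}_{\omega,U_p}$ and $\overline{\delta}_{\omega,X_p}$ commute and that $\overline{\delta}_{\omega,\overline{X}}=\overline{\delta}_{\omega,U_p}\ast\overline{\delta}_{\omega,X_p}$; taking $\log$ then gives $\sum_{w}\epsilon_{\omega,\overline{X}}(w)=\sum_{w}\epsilon_{\omega,U_p}(w)+\sum_{w}\epsilon_{\omega,X_p}(w)$, and extracting the degree $v$ part yields $(\ref{e=UX})$.

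Both the commutativity and a closed form for the generators come from disjointness of supports.  If $E'$ is supported on $U_p$ and $E''$ on $X_p$, their supports are disjoint, so $\Hom_{\overline{X}}(E',E'')=\Hom_{\overline{X}}(E'',E')=\Ext^1_{\overline{X}}(E',E'')=\Ext^1_{\overline{X}}(E'',E')=0$; hence every extension of $E''$ by $E'$ (or vice versa) is split and $\Aut(E'\oplus E'')=\Aut(E')\times\Aut(E'')$.  Substituting this into the definition of $\ast$ (the analogue of $(\ref{ast:prod})$), the stack of extensions entering $\delta_{\omega,U_p}(v')\ast\delta_{\omega,X_p}(v'')$ reduces to $\mM_{\omega,U_p}(v')\times\mM_{\omega,X_p}(v'')$ with structure morphism $(E',E'')\mapsto E'\oplus E''$, and the same computation applied to $\delta_{\omega,X_p}(v'')\ast\delta_{\omega,U_p}(v')$ gives the identical element of $\hH(\Coh_{\pi}(\overline{X}))$; in particular $\overline{\delta}_{\omega,U_p}$ and $\overline{\delta}_{\omega,X_p}$ commute.

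For the product formula I would use that any $E\in\Coh_{\pi}(\overline{X})$ is supported over a finite subset of $\mathbb{P}^1$, which splits into $\{p\}$ and its complement and produces a unique decomposition $E=E'\oplus E''$ with $\Supp(E')\subset U_p$ and $\Supp(E'')\subset X_p$.  If $E$ is $\omega$-Gieseker semistable then, $E'$ being at once a subsheaf and a quotient of $E$, semistability forces $\overline{\chi}_{\omega,E'}(m)=\overline{\chi}_{\omega,E}(m)$ when $E'\neq 0$ (likewise for $E''$), and each subsheaf of $E'$ being a subsheaf of $E$ forces $E'$ and $E''$ to be $\omega$-Gieseker semistable.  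Thus, on $\mathbb{C}$-valued points, $\mM_{\omega,\overline{X}}(v)=\coprod_{v'+v''=v}\mM_{\omega,U_p}(v')\times\mM_{\omega,X_p}(v'')$ over $v',v''$ with $\overline{\chi}_{\omega,v'}(m)=\overline{\chi}_{\omega,v''}(m)=\tau(m)$ (a vanishing class contributing a point).  Granting this is a finite stratification of $\mM_{\omega,\overline{X}}(v)$ by locally closed substacks, the relations analogous to $(\ref{rel:gen})$ give $\delta_{\omega,\overline{X}}(v)=\sum_{v'+v''=v}\delta_{\omega,U_p}(v')\ast\delta_{\omega,X_p}(v'')$ by the previous step; summing over $v$ with $\overline{\chi}_{\omega,v}(m)=\tau(m)$ gives $\overline{\delta}_{\omega,\overline{X}}=\overline{\delta}_{\omega,U_p}\ast\overline{\delta}_{\omega,X_p}$.

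The step I expect to require the real work is this last one: checking that the partition of $\mM_{\omega,\overline{X}}(v)$ by the classes $(v',v'')$ of the $U_p$- and $X_p$-parts is a finite stratification by locally closed substacks, so that the scissor relations apply term by term.  Finiteness of the occurring $(v',v'')$ follows from the boundedness estimate already used in Lemma~\ref{lem:finsum}, since the classes of direct summands of a fixed semistable class of fixed reduced Hilbert polynomial are bounded; local closedness of each piece should follow because, for the universal sheaf $\eE$ on $\overline{X}\times\mM_{\omega,\overline{X}}(v)$, the class of the largest subsheaf of $\eE$ supported set-theoretically over $X_p$ is upper semicontinuous on the base, an argument of the same flavour as in the proof of Lemma~\ref{lem:constructible}.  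Once this is in place, the two product identities together with $\log(A\ast B)=\log A+\log B$ for commuting group-like $A,B$ complete the proof.
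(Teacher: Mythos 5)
Your proof is correct and follows essentially the same route as the paper: decompose every sheaf in $\Coh_{\pi}(\overline{X})$ as a direct sum of its $U_p$- and $X_p$-parts, use vanishing of $\Hom$ and $\Ext^1$ between sheaves with disjoint supports to get both the product identity for the $\delta$'s and their commutativity, and conclude that the logarithm splits. The only difference is cosmetic: where you invoke $\log(A\ast B)=\log A+\log B$ for commuting series in the completed graded Hall algebra, the paper verifies the same cancellation by hand, showing the coefficient of each mixed monomial $\delta_{U_p}(v_1)\ast\cdots\ast\delta_{U_p}(v_a)\ast\delta_{X_p}(v_1')\ast\cdots\ast\delta_{X_p}(v_b')$ vanishes via a binomial identity (and you supply somewhat more detail than the paper on the finite locally closed stratification underlying the scissor relations).
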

\begin{proof} 
In order to simplify the notation, we omit
$\omega$ and write  
$\delta_{\omega, \overline{X}}(v)$
as $\delta_{\overline{X}}(v)$, etc. 
First we note that 
\begin{align*}
\delta_{\overline{X}}(v)=
\sum_{\begin{subarray}{c}v_1, v_2 \in \Gamma_0, v_1 +v_2=v, \\
\overline{\chi}_{\omega, v_1}(m)=\overline{\chi}_{\omega, v_2}(m)
\end{subarray}}
\delta_{U_p}(v_1) \ast \delta_{X_p}(v_2),
\end{align*}
since any object $E\in \Coh_{\pi}(\overline{X})$
decomposes as $E_1 \oplus E_2$ 
with $E_1$ supported on $U_p$ and $E_2$ supported on $X_p$. 
Since $\delta_{U_p}(v_1) \ast \delta_{X_p}(v_2)
=\delta_{X_p}(v_2) \ast \delta_{U_p}(v_1)$, we have 
\begin{align}\label{epsilon=prod}
&\epsilon_{\overline{X}}(v)
= \\
&\notag
\sum_{\begin{subarray}{c}l \ge 1, 
v_i, v_i' \in \Gamma_0, \\
\overline{\chi}_{\omega, v_i}(m)=\overline{\chi}_{\omega, v}(m), \\
\overline{\chi}_{\omega, v_i'}(m)=\overline{\chi}_{\omega, v}(m), \\
v_{1}+\cdots +v_{l}+ 
v_{1}'+ \cdots +v_{l}'=v
\end{subarray}}
\frac{(-1)^{l-1}}{l}
\delta_{U_p}(v_{1}) \ast \cdots \ast
\delta_{U_{p}}(v_{l}) 
\ast \delta_{X_p}(v_{1}') \ast \cdots \ast \delta_{X_p}(v_{l}').
\end{align}
Take $v_1, \cdots, v_a \in \Gamma_0$ and $v_1', \cdots, v_b' \in \Gamma_0$
with $v_i \neq 0$, $v_j' \neq 0$ for any $i$ and $j$
and satisfy
\begin{align*}
&\overline{\chi}_{\omega, v_i}(m)=\overline{\chi}_{\omega, v_i'}(m)
=\overline{\chi}_{\omega, v}(m), \\
&v_1 +\cdots +v_a + v_1' + \cdots +v_{b}'=v.
\end{align*}
If $a \ge 1$, $b\ge 1$
and $a \ge b$,  then 
the coefficient of $\delta_{U_p}(v_1) \ast \cdots \ast
\delta_{U_p}(v_a) \ast
\delta_{X_p}(v_1') \ast \cdots \ast \delta_{X_p}(v_b')$ in 
(\ref{epsilon=prod}) is 
\begin{align*}
&\sum_{l=a}^{a+b}\frac{(-1)^{l-1}}{l}\binom{l}{a} \binom{a}{a+b-l} \\
&= \frac{(-1)^{a-1}}{b}\sum_{m=0}^{b}
(-1)^m \binom{b}{m} \binom{m+a-1}{b-1} \\
&=0. 
\end{align*}
The last equality follows by 
taking the differentials
of $x^{a-1}(x-1)^{b}$ by $(b-1)$-times,
and substituting $x=1$.  
We can similarly show the vanishing 
of the coefficient when $a\le b$. Hence 
(\ref{e=UX}) follows. 
\end{proof}
We have the following lemma. 
\begin{lem}\label{Jlocarize}
We have $\overline{J}(v)=2J(v)$. 
\end{lem}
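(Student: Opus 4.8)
The plan is to deduce the identity $\overline{J}(v)=2J(v)$ from Lemma~\ref{lem:eux} together with two geometric inputs: that $U_p=\overline{X}\setminus X_p$ is isomorphic to $X$, and that the ``logarithm'' $\epsilon_{\omega, X}(v)$ is, up to a translation factor, concentrated on sheaves supported on a single fibre. (By the definitions of $J$ and $\overline{J}$ one immediately reduces to the case $v\in C(X)$; for $-v\in C(X)$ pass to $-v$, and otherwise both sides vanish.)

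First I would apply the map $(q-1)P_q(-)$ followed by $\lim_{q^{1/2}\to 1}$ to the equality
\begin{align*}
\epsilon_{\omega, \overline{X}}(v)=\epsilon_{\omega, U_p}(v)+\epsilon_{\omega, X_p}(v)
\end{align*}
of Lemma~\ref{lem:eux}, obtaining $\overline{J}(v)=A+B$ with $A\cneq\lim_{q^{1/2}\to1}(q-1)P_q(\epsilon_{\omega, U_p}(v))$ and $B\cneq\lim_{q^{1/2}\to1}(q-1)P_q(\epsilon_{\omega, X_p}(v))$. Since $\mathbb{P}^1\setminus\{p\}\cong\mathbb{C}$, there is an isomorphism $U_p\cong X$ under which the stacks of $\omega$-Gieseker semistable sheaves in $\Coh_\pi$ correspond; hence $\delta_{\omega, U_p}(v_i)$ and $\delta_{\omega, X}(v_i)$ correspond in the respective Hall algebras, and therefore so do $\epsilon_{\omega, U_p}(v)$ and $\epsilon_{\omega, X}(v)$. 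By Definition~\ref{defi:Jv} this gives $A=J(v)$.

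Next I would show $B=J(v)$. A Zariski neighbourhood of $X_p$ in $\overline{X}=S\times\mathbb{P}^1$ is isomorphic to $S\times\mathbb{A}^1$ with $X_p$ the central fibre, and extension by zero identifies coherent sheaves on $\overline{X}$ set-theoretically supported on $X_p$ with coherent sheaves on $X=S\times\mathbb{C}$ set-theoretically supported on $X_0$, compatibly with $\omega$-Gieseker semistability and with the $\ast$-products; under this identification $\epsilon_{\omega, X_p}(v)$ becomes the part of $\epsilon_{\omega, X}(v)$ supported on sheaves with support contained in $X_0$. Now $\epsilon_{\omega, X}(v)$ is invariant under the $\mathbb{C}$-action on $X$ translating the second factor, and --- by the analogue of Joyce's result that the elements $\epsilon$ are supported on virtually indecomposable objects, together with the vanishing of $\Hom$ and $\Ext^1$ between sheaves in $\Coh_\pi(\overline{X})$ with disjoint supports (as used already in the proof of Lemma~\ref{lem:eux}) --- it is supported on the substack of sheaves whose support is connected, i.e. contained in a single fibre $X_t$. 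That substack is the trivial $\mathbb{C}$-torsor over the substack of sheaves supported on $X_0$ via the location $t\in\mathbb{C}$, so as stack functions $\epsilon_{\omega, X}(v)$ is the exterior product of $[\mathbb{C}]$ with $\epsilon_{\omega, X_p}(v)$. Applying $P_q$ gives $P_q(\epsilon_{\omega, X}(v))=q\cdot P_q(\epsilon_{\omega, X_p}(v))$, hence
\begin{align*}
B=\lim_{q^{1/2}\to1}(q-1)P_q(\epsilon_{\omega, X_p}(v))=\lim_{q^{1/2}\to1}q^{-1}(q-1)P_q(\epsilon_{\omega, X}(v))=J(v).
\end{align*}
Combining, $\overline{J}(v)=A+B=2J(v)$.

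The main obstacle is the support statement for $\epsilon_{\omega, X}(v)$: one must check that the alternating sum defining the logarithm cancels all contributions of sheaves whose support meets more than one fibre, leaving only the connected-support locus. I expect this to reduce, exactly as in Lemma~\ref{lem:eux}, to Joyce's structure theory for the $\epsilon$-elements together with the orthogonality of sheaves supported on disjoint fibres; the only remaining subtlety, namely upgrading these identifications from $\mathbb{C}$-points to an equality of stack functions (and the corresponding multiplicativity of $P_q$), is handled by the evident translation-equivariance of all the constructions involved.
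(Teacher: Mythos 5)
Your reduction of $\overline{J}(v)$ to $A+B$ via Lemma~\ref{lem:eux}, and the identification $A=J(v)$ through $U_p\cong X$ (compatible Hall algebras, Ext groups and $P_q$), are fine and close in spirit to what the paper does. The gap is in the step $B=J(v)$: the claimed stack-function identity ``$\epsilon_{\omega,X}(v)$ is supported on sheaves contained in a single fibre, and equals $[\mathbb{C}]\boxtimes\epsilon_{\omega,X_p}(v)$'', hence $P_q(\epsilon_{\omega,X}(v))=q\,P_q(\epsilon_{\omega,X_p}(v))$, is false in general. The exact cancellation in Lemma~\ref{lem:eux} is special to the decomposition into \emph{two} orthogonal pieces for a \emph{fixed} $p$; for the continuous family of fibres it fails. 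Concretely, take $v=(0,0,2)$: the locus of sheaves $\oO_{x_1}\oplus\oO_{x_2}$ with $x_1,x_2$ in distinct fibres occurs in $\epsilon_{\omega,X}(v)=\delta_{\omega,X}(v)-\tfrac12\delta_{\omega,X}(0,0,1)\ast\delta_{\omega,X}(0,0,1)$ with nonzero coefficient, because the stack of such polystable sheaves lies over the \emph{unordered} configuration space while $\delta\ast\delta$ lies over the \emph{ordered} one, and the exchange monodromy makes the virtual Poincar\'e polynomial of the former different from half that of the latter. So the cross-fibre terms do not cancel as stack functions; their $P_q$ is merely regular at $q=1$, so they die only after applying $\lim_{q^{1/2}\to1}(q-1)(\cdot)$. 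Proving that vanishing is exactly the missing content of your argument, and your proposed reduction ``exactly as in Lemma~\ref{lem:eux}'' does not supply it.

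The paper closes this gap differently: using the Joyce--Song framework it writes $\overline{J}(v)=\chi(M_{\overline{X}}(v),\alpha)$ for the constructible function $\alpha$ obtained from $\epsilon_{\omega,\overline{X}}(v)$ by the Euler-characteristic projection to the coarse moduli scheme. Applying Lemma~\ref{lem:eux} for \emph{every} $p\in\mathbb{P}^1$ then gives the pointwise statement that $\alpha$ vanishes off the single-fibre locus $M_{\overline{X}}^{\dag}(v)\cong M_{X_0}(v)\times\mathbb{P}^1$ (for a polystable sheaf meeting two fibres, choose $p$ to be one of them; both summands of $\epsilon_{\omega,\overline{X}}(v)$ restrict to zero over that point). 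Invariance under $\Aut(\mathbb{P}^1)$ makes $\alpha$ constant along the $\mathbb{P}^1$-direction, and the same reasoning on $X$ gives $J(v)=\chi(\mathbb{C})\cdot\chi(M_{X_0}(v)\times\{0\},\alpha)$, whence $\overline{J}(v)=\chi(\mathbb{P}^1)\cdot\chi(M_{X_0}(v)\times\{0\},\alpha)=2J(v)$. If you want to salvage your route, you must replace your exterior-product claim by this weaker, CF-level localization (or some equivalent argument showing the multi-fibre contributions are annihilated by $\lim(q-1)P_q$); as written, the central step of your proof rests on an identity that fails.
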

\begin{proof}
The proof essentially follows from 
$\mathbb{C}^{\ast}$-localization
for the invariants $\overline{J}(v)$ and 
$J(v)$. However a general localization formula 
for invariants defined via Hall algebra is 
not yet established. 
Here we give a proof
assuming the terminology of~\cite{JS}. 

As in the proof of Lemma~\ref{lem:eux}, we omit 
$\omega$ in the notation. 
Let $M_{\overline{X}}(v)$ be the coarse moduli 
scheme of $\omega$-Gieseker semistable sheaves 
$E \in \Coh_{\pi}(\overline{X})$ with $v(E)=v$.
There is a natural morphism,
\begin{align*}
\eta \colon \mM_{\overline{X}}(v) \to M_{\overline{X}}(v).
\end{align*}
sending
an $\omega$-Gieseker semistable 
sheaf $E$ to $\oplus_{i=1}^{N} F_i$, 
where $F_1, \cdots, F_N$ are
$\omega$-Gieseker stable factors of $E$. 
As in~\cite[Equation~(5.9)]{JS}, 
the invariant $\overline{J}(v)$ can be also expressed as 
\begin{align*}
\overline{J}(v)&=\chi(M_{\overline{X}}(v), \alpha) \\
&\cneq \sum_{m \in \mathbb{Z}}m \cdot \chi(\alpha^{-1}(m)), 
\end{align*}
for some constructible function $\alpha$ on 
$M_{\overline{X}}(v)$.
In the notation of~\cite[Equation~(5.9)]{JS}, 
the function $\alpha$ is given by  
\begin{align}\label{alpha:const}
\alpha=\mathrm{CF}^{\rm{na}}
(\eta)[\Pi_{\rm{CF}}\circ 
\overline{\Pi}^{\chi, \mathbb{Q}}_{\mM_{\overline{X}}(v)}
(\epsilon_{\overline{X}}(v))]. 
\end{align}
 Let
\begin{align*}
M_{\overline{X}}^{\dag}(v) \subset M_{\overline{X}}(v), 
\end{align*}
be the closed subscheme corresponding to 
semistable sheaves $E$ such that 
$\Supp(E) \subset X_p$ for some $p \in \mathbb{P}^1$. 
Since we have the formula (\ref{e=UX}) for any $p\in \mathbb{P}^1$, 
the construction of $\alpha$ in
(\ref{alpha:const}) easily implies that $\alpha$ is zero outside
$M_{\overline{X}}^{\dag}(v)$. 
On the other hand, 
we have the natural isomorphism, 
\begin{align}\label{nat:isom}
M_{\overline{X}}^{\dag}(v) \cong M_{X_0}(v) \times \mathbb{P}^1, 
\end{align}
where $M_{X_0}(v) \subset M_{\overline{X}}(v)$ is 
the closed subscheme corresponding to 
sheaves $E$ supported on $X_0$. 
Under the above isomorphism, 
we have 
\begin{align}\label{eta:equiv}
\alpha|_{M_{X_0}(v) \times \{p\}}
=\mathrm{CF}^{\rm{na}}
(\eta)[\Pi_{\rm{CF}}\circ 
\overline{\Pi}^{\chi, \mathbb{Q}}_{\mM_{\overline{X}}(v)}
(\epsilon_{X_p}(v))],
\end{align}
in the notation of~\cite[Equation~(5.9)]{JS}
by Lemma~\ref{lem:eux}. 

Let $\Coh_{X_p}(\overline{X}) \subset \Coh(\overline{X})$
be the subcategory consisting of sheaves 
supported on $X_p$. 
For $p, q \in \mathbb{P}^1$, 
choose $g \in \Aut(\mathbb{P}^1)$ such that 
$g(p)=q$. 
Then $g$ induces an equivalence
\begin{align*}
g_{\ast} \colon 
\Coh_{X_p}(\overline{X}) \stackrel{\sim}{\to} \Coh_{X_q}(\overline{X}),
\end{align*} 
and the induced isomorphism 
between the Hall algebras 
\begin{align}\label{isom:Hall:pq}
g_{\ast} \colon 
\hH(\Coh_{X_p}(\overline{X}))
 \stackrel{\sim}{\to} \hH(\Coh_{X_q}(\overline{X})).
\end{align}
The element $\epsilon_{X_p}(v)$ is 
regarded as an element of 
$\hH(\Coh_{X_p}(\overline{X}))$, 
which is mapped to 
$\epsilon_{X_q}(v)$ by 
the isomorphism (\ref{isom:Hall:pq}).
Therefore by (\ref{eta:equiv}), 
 we have $\alpha(x, p)=\alpha(x, q)$
for $x\in M_{X_0}(v)$
under the isomorphism (\ref{nat:isom}). 
Hence we have 
\begin{align*}
\overline{J}(v)=\chi(\mathbb{P}^1) \cdot 
\chi(M_{X_0}(v) \times \{0\}, \alpha). 
\end{align*}
Similarly we have 
\begin{align*}
J(v)=\chi(\mathbb{C}) \cdot \chi(M_{X_0}(v) \times \{0\}, \alpha). 
\end{align*}
Since $\chi(\mathbb{P}^1)=2$ and $\chi(\mathbb{C})=1$, 
we obtain the result. 
\end{proof}

We have the following corollaries: 
\begin{cor}\label{prop:inv}
For any $v\in \Gamma_0$, we have the following equality, 
\begin{align}\notag
J(v\sqrt{\td_S})=\frac{1}{2}N(v).
\end{align}
\end{cor}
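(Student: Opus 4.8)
The plan is simply to combine the two results that immediately precede the statement. First I would apply Lemma~\ref{Jlocarize} with $v$ replaced by $v\sqrt{\td_S} \in \Gamma_0$, which gives
\begin{align*}
\overline{J}(v\sqrt{\td_S}) = 2J(v\sqrt{\td_S}).
\end{align*}
Next I would invoke Theorem~\ref{bJ=N}, which asserts $\overline{J}(v\sqrt{\td_S}) = N(v)$. Comparing these two identities and dividing by $2$ then yields $J(v\sqrt{\td_S}) = \tfrac{1}{2}N(v)$, as claimed.

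Since both inputs are already established, there is essentially no obstacle to overcome at this stage; the corollary is a formal consequence. The only points worth keeping in mind are bookkeeping ones: the index $v\sqrt{\td_S}$ records the passage from the Chern-character convention used in the definition of $N(v)$ (Definition~\ref{defi:Ninv}) to the Mukai-vector convention used in the definitions of $J$ and $\overline{J}$ (Subsection~\ref{subsec:sheafcount}), and one should note that $v \mapsto v\sqrt{\td_S}$ is a bijection of $\Gamma_0$, so that Lemma~\ref{Jlocarize} does apply to the argument $v\sqrt{\td_S}$. The genuine content has already been absorbed into the two cited results: into Lemma~\ref{Jlocarize}, whose proof rests on the $\mathbb{C}^{\ast}$-localization isomorphism $M_{\overline{X}}^{\dag}(v) \cong M_{X_0}(v) \times \mathbb{P}^1$ together with $\chi(\mathbb{P}^1) = 2$ and $\chi(\mathbb{C}) = 1$, and into Theorem~\ref{bJ=N}, which identifies $Z_{t\omega, 0}$-semistable objects in $\bB_{\omega}$ with $\omega$-Gieseker semistable sheaves. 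If I were to name a "hard part", it would only be making sure the constructible function $\alpha$ in the proof of Lemma~\ref{Jlocarize} is correctly transported under $g_{\ast}$ between the fibers $X_p$, but that is handled there and not re-proved here.
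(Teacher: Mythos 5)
Your argument is exactly the paper's: Corollary~\ref{prop:inv} is proved there by the same two-line combination of Theorem~\ref{bJ=N} and Lemma~\ref{Jlocarize}, applied to $v\sqrt{\td_S}\in\Gamma_0$. Your bookkeeping remarks about the Chern-character versus Mukai-vector conventions are accurate and harmless, so the proposal is correct and takes essentially the same route.
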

\begin{proof}
The result follows by combining Theorem~\ref{bJ=N} 
and Lemma~\ref{Jlocarize} below. 
\end{proof}

\begin{cor}\label{cor:N}
For $v=(r, \beta, n) \in \Gamma_0$
and an ample divisor $\omega$ on $S$, 
 suppose that 
$\beta \cdot \omega \neq 0$. 
If $N(v) \neq 0$, then we have 
\begin{align*}
\beta^2 +2(\beta \cdot \omega)^2 \ge 2r(r+n). 
\end{align*}
Moreover if $\beta \cdot \omega >0$
and $rn\ge 0$, then we have 
$\beta>0$. 
\end{cor}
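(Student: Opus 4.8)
The plan is to reduce the assertion to the existence of a genuine $\omega$-Gieseker semistable sheaf on $\overline{X}$ carrying the numerical data $\pm(r,\beta,n)$, and then to combine Lemma~\ref{lem:Bog} with the Bogomolov inequality and Riemann--Roch on $S$. First I would observe that $N(v)\neq 0$ produces such a sheaf: by Theorem~\ref{bJ=N} we have $\overline{J}(v\sqrt{\td_S})=N(v)\neq 0$, and the Hall-algebra element computing $\overline{J}(v\sqrt{\td_S})$ is a sum of $\ast$-products of classes of moduli stacks of $\omega$-Gieseker semistable sheaves on $\overline{X}$; since a direct sum of $\omega$-Gieseker semistable sheaves with a common reduced Hilbert polynomial is again semistable, the whole element vanishes as soon as the relevant moduli stack is empty, so its non-vanishing forces that stack to be non-empty. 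Hence there is an $\omega$-Gieseker semistable $F\in\Coh_{\pi}(\overline{X})$ with $\cl_0(F)=v$ or $\cl_0(F)=-v$, and $G\cneq p_{\ast}F$ is then $\omega$-Gieseker semistable on $S$ with $\ch(G)=\cl_0(F)=\pm(r,\beta,n)$, exactly as in the proof of Lemma~\ref{lem:Bog}. The first inequality follows immediately, since $F$ meets the hypothesis of Lemma~\ref{lem:Bog}(ii) (because $\beta\cdot\omega\neq 0$) and $\beta^2+2(\beta\cdot\omega)^2\ge 2r(r+n)$ is unchanged under $(r,\beta,n)\mapsto(-r,-\beta,-n)$.

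For the ``moreover'' part I would keep $G$ as above and assume $\beta\cdot\omega>0$ and $rn\ge 0$, so $\rank(G)=\pm r\ge 0$. If $\rank(G)=0$, then $r=0$ and $G$ is a pure sheaf of dimension $\le 1$; as $\beta\cdot\omega>0$ forces $\beta\neq 0$, its support is a curve whose fundamental cycle equals $c_1(G)=\pm\beta$, a nonzero effective class, and the sign $c_1(G)=-\beta$ is impossible (it would give $\beta\cdot\omega<0$), so $\beta>0$. If $\rank(G)=\rho>0$, then $rn\ge 0$ forces $\ch_2(G)\ge 0$ in both sign cases: if $\ch(G)=(r,\beta,n)$ then $r>0$ and so $n\ge 0$, while if $\ch(G)=(-r,-\beta,-n)$ then $r<0$ and so $n\le 0$. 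Being $\omega$-Gieseker semistable of positive rank, $G$ is torsion-free and $\mu_{\omega}$-semistable, so the Bogomolov inequality gives $c_1(G)^2\ge 2\rho\,\ch_2(G)\ge 0$, i.e.\ $\beta^2\ge 0$. (Equivalently, one may feed the Jordan--Hölder factors of $G$ into Lemma~\ref{lem:Bog}(i): equality of reduced Hilbert polynomials makes their second Chern characters proportional to their ranks, so each factor has first Chern class of nonnegative square, and these classes all lie in one component of the positive cone of $S$.)

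I would then conclude by Riemann--Roch on $S$: $\chi(\oO_S(\beta))=2+\beta^2/2\ge 2$, while $h^2(\oO_S(\beta))=h^0(\oO_S(-\beta))=0$ since $-\beta$ cannot be linearly equivalent to an effective divisor (having negative intersection with the ample divisor $\omega$). Hence $h^0(\oO_S(\beta))\ge 2>0$, so $\beta$ is the class of a nonzero effective divisor, i.e.\ $\beta>0$. I expect the only real subtlety to be the systematic bookkeeping of the two possible signs of $\cl_0(F)$ and the separate treatment of the degenerate case $\rank(G)=0$; otherwise the heart of the matter is just that the hypothesis $rn\ge 0$ upgrades $\ch_2(G)$ to be nonnegative, whence Bogomolov forces $\beta^2\ge 0$ and, with $\beta\cdot\omega>0$, the effectivity of $\beta$.
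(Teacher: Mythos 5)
Your argument is correct, and its first half is the paper's: non-vanishing of $N(v)$ forces, via Theorem~\ref{bJ=N} (the paper instead quotes Corollary~\ref{prop:inv}, which packages the same content), the existence of an $\omega$-Gieseker semistable $F\in\Coh_{\pi}(\overline{X})$ with $\cl_0(F)=\pm(r,\beta,n)$, and then Lemma~\ref{lem:Bog}(ii) plus the sign-invariance of $\beta^2+2(\beta\cdot\omega)^2\ge 2r(r+n)$ gives the displayed inequality. (One small wording point: what makes each term of the Hall-algebra logarithm land in the semistable moduli stack is that an \emph{extension} of semistable sheaves with equal reduced Hilbert polynomial is semistable, not that a direct sum is; the conclusion is unaffected.) Where you genuinely diverge is the ``moreover'' part. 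The paper decomposes $E$ into its $\omega$-Gieseker stable factors $E_i$, notes that equality of reduced Hilbert polynomials forces $\beta_i\cdot\omega>0$ and $r_in_i\ge 0$, applies the K3-specific inequality (\ref{ineq:Bog}) to each factor to get $\beta_i^2\ge -2$, concludes $\beta_i>0$ by Riemann--Roch, and sums the effective classes. You instead work with the whole, possibly strictly semistable, pushforward $G=p_{\ast}F$: the hypothesis $rn\ge 0$ gives $\ch_2(G)\ge 0$ in either sign case, the classical Bogomolov inequality for $\mu_{\omega}$-semistable torsion-free sheaves yields $\beta^2\ge 0$, and a single application of Riemann--Roch (with $h^0(\oO_S(-\beta))=0$ because $-\beta\cdot\omega<0$) gives effectivity of $\beta$; the rank-zero case is handled separately and elementarily via effectivity of $c_1$ of a torsion sheaf. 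Both routes are sound: yours avoids the factor-by-factor bookkeeping (and the slightly terse claims about the factors that the paper leaves implicit), at the price of a case division at rank zero and an appeal to the general Bogomolov inequality rather than only the Mukai-type Lemma~\ref{lem:Bog}; your parenthetical alternative is essentially the paper's proof, and there the correct per-factor bound coming from (\ref{ineq:Bog}) is $\beta_i^2\ge 2r_i(r_i+n_i)-2$, which is $\ge 0$ only after using $r_i\ge 1$ and $r_in_i\ge 0$, and one still concludes via Riemann--Roch rather than a positive-cone argument.
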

\begin{proof}
If $N(v)\neq 0$, 
then Corollary~\ref{prop:inv}
implies that 
there is an $\omega$-Gieseker 
semistable sheaf $E$ on 
$\overline{X}$ with $\cl_0(E)=(r, \beta, n)$
or $\cl_0(E)=-(r, \beta, n)$.  
Then the first statement follows Lemma~\ref{lem:Bog}.
Suppose that $\beta \cdot \omega >0$, 
and $rn \ge 0$. 
Let $E_1, \cdots, E_k$ be 
$\omega$-Gieseker stable factors of 
$E$. If we write $\cl_0(E_i)=(r_i, \beta_i, n_i)$, 
then $\beta_i \cdot \omega>0$, $r_i n_i \ge 0$. 
Applying the inequality (\ref{ineq:Bog})
to each $E_i$, 
we see that $\beta_i^2 \ge -2$, hence 
$\beta_i>0$ for all $i$
 by the Riemann-Roch theorem. 
Since $\beta$ is a sum $\sum_{i}\beta_i$, 
we have $\beta>0$.  
\end{proof}

\subsection{Automorphic property of $J(v)$}\label{subsec:Aut}

In Subsection~\ref{subsec:sheafcount}, 
we defined the invariant $J(v) \in \mathbb{Q}$. 
The invariant $J(v)$ is a counting invariant of 
$\omega$-Gieseker semistable sheaves on the
open Calabi-Yau 3-fold 
$X=S\times \mathbb{C}$.
The purpose here is to 
observe
that $J(v)$ has a certain automorphic property
with respect to the group $G$, 
\begin{align}\label{Hodge:isom}
G \cneq O_{\mathrm{Hodge}}(\widetilde{H}(S, \mathbb{Z}), (\ast, \ast))
\end{align}
consisting of isometries of the 
Mukai lattice $(\widetilde{H}(S, \mathbb{Z}), (\ast, \ast))$
preserving the Hodge structure on it.  
(See Subsection~\ref{subsec:K3}.)
Note that any $g\in G$ induces an isometry 
of the lattice $(\Gamma_0, (\ast, \ast))$, 
since $g$ preserves the Hodge structure on 
$\widetilde{H}(S, \mathbb{Z})$. 
Note that, in the previous subsection, 
we also defined the invariant $\overline{J}(v) \in \mathbb{Q}$
as a counting 
invariant of $\omega$-Gieseker semistable sheaves 
on the compactification $\overline{X}=S\times \mathbb{P}^1$. 
Our strategy is to prove the automorphic property 
of $\overline{J}(v)$, and then use the result of Lemma~\ref{Jlocarize}.  

The automorphic property of the invariants
essentially  
follows by investigating the effect of 
the invariants under Fourier-Mukai transforms. 
For two K3 surfaces $S$, $S'$, let 
$\Phi$ be a derived equivalence, 
\begin{align*}
\Phi \colon D^b \Coh(S') \stackrel{\sim}{\to}
D^b \Coh(S). 
\end{align*}
Recall that, 
by Orlov's theorem~\cite{Or1}, 
 any such an equivalence is written 
as
\begin{align*}
\Phi(-) \cong \dR p_{2\ast}p_1^{\ast}(- \dotimes \eE), 
\end{align*}
for some object $\eE \in D^b \Coh(S' \times S)$, 
called the \textit{kernel} of $\Phi$. 
Here $p_1 \colon S' \times S \to S'$ and 
$p_2 \colon S' \times S \to S$ are projections. 
The equivalence $\Phi$ induces the Hodge isometry, 
\begin{align}\label{Phiast}
\Phi_{\ast} \colon \widetilde{H}(S', \mathbb{Z}) \stackrel{\sim}{\to} 
\widetilde{H}(S, \mathbb{Z}), 
\end{align}
given by 
\begin{align*}
\Phi_{\ast}(-) =p_{2\ast} p_1^{\ast}(- \cdot
\ch(\eE) \sqrt{\td_{S' \times S}}), 
\end{align*}
and we have the 
commutative diagram, (cf.~\cite[Theorem~4.9]{Mu2}, ~\cite[Proposition~3.5]{Or1},)
\begin{align}\label{diagram:DDHH}
\xymatrix{
D^b \Coh(S') \ar[r]^{\Phi}\ar[d]_{v} & D^b \Coh(S)\ar[d]^{v} \\
\widetilde{H}(S', \mathbb{Z}) \ar[r]^{\Phi_{\ast}} & 
\widetilde{H}(S, \mathbb{Z}). 
}
\end{align}
Also the equivalence $\Phi$ 
induces the isomorphism, 
\begin{align*}
\Phi_{\mathrm{St}} \colon 
\Stab(S') \stackrel{\sim}{\to} \Stab(S). 
\end{align*}
In order to distinguish 
the notation for $S$ and $S'$, 
we write $\dD_0$, $\Gamma_0$ and 
$\overline{J}(v)$ as 
$\dD_{0, S}$, $\Gamma_{0, S}$ and 
$\overline{J}_S(v)$ respectively. 
We have the following proposition. 
\begin{prop}\label{prop:equivalence}
In the above situation, suppose that 
$\Phi_{\mathrm{St}}$ takes 
the connected component 
$\Stab^{\circ}(S')$ to $\Stab^{\circ}(S)$. 
Then for $v\in \Gamma_{0, S'}$, we have 
\begin{align*}
\overline{J}_{S'}(v)=\overline{J}_{S}(\Phi_{\ast}v). 
\end{align*}
\end{prop}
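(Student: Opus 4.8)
The plan is to reduce the statement to the fact, supplied by Theorem~\ref{sigma:depend}, that the counting invariant $N$ on the triangulated category $\dD_0$ is independent of the chosen stability condition and is therefore an invariant of the category $\dD_0$ alone; the Gieseker-type invariant $\overline{J}$ is then recovered from $N$ through Theorem~\ref{bJ=N}. Concretely, I would first lift $\Phi$ to a relative Fourier--Mukai transform over $\mathbb{P}^1$. Let $\eE\in D^b\Coh(S'\times S)$ be the kernel of $\Phi$ and let $\widetilde{\eE}$ be its pull-back along $S'\times S\times\mathbb{P}^1\to S'\times S$, regarded as a kernel on $\overline{X}'\times\overline{X}$ (with $\overline{X}'=S'\times\mathbb{P}^1$) supported over the diagonal of $\mathbb{P}^1$. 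The associated transform $\widetilde{\Phi}\colon D^b\Coh(\overline{X}')\stackrel{\sim}{\to}D^b\Coh(\overline{X})$ is an equivalence because $\Phi$ is, and since its kernel is supported over the diagonal of $\mathbb{P}^1$ it preserves the property of being set-theoretically supported on finitely many fibres of the projection to $\mathbb{P}^1$; hence $\widetilde{\Phi}$ restricts to an equivalence $\widetilde{\Phi}\colon\dD_{0, S'}\stackrel{\sim}{\to}\dD_{0, S}$.

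Next I would record the numerical and t-structure compatibilities. Using $\cl_0=\ch\circ p_{\ast}$ from Subsection~\ref{subsec:Ch} and Grothendieck--Riemann--Roch, $\widetilde{\Phi}$ induces a group isomorphism $g\colon\Gamma_{0, S'}\stackrel{\sim}{\to}\Gamma_{0, S}$ with $\cl_0\circ\widetilde{\Phi}=g\circ\cl_0$, and by the commutative diagram (\ref{diagram:DDHH}) one has $g(w)\sqrt{\td_S}=\Phi_{\ast}(w\sqrt{\td_{S'}})$ for all $w\in\Gamma_{0, S'}$ (so $g$ respects the pairing transported from the Mukai pairing). Since $\widetilde{\Phi}$ is an equivalence, it carries hearts to hearts and preserves the technical conditions (local finiteness, support property), so it induces a homeomorphism $\widetilde{\Phi}_{\ast}\colon\Stab_{\Gamma_0}(\dD_{0, S'})\stackrel{\sim}{\to}\Stab_{\Gamma_0}(\dD_{0, S})$. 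The key point to establish is that $\widetilde{\Phi}_{\ast}$ takes the distinguished component $\Stab_{\Gamma_0}^{\circ}(\dD_{0, S'})$ into $\Stab_{\Gamma_0}^{\circ}(\dD_{0, S})$; this I would deduce from the naturality of the isomorphism $\psi$ of Theorem~\ref{thm:isomSS} with respect to Fourier--Mukai transforms — i.e. $\psi\circ\widetilde{\Phi}_{\ast}=\Phi_{\mathrm{St}}\circ\psi'$ wherever everything is defined — combined with the hypothesis that $\Phi_{\mathrm{St}}$ takes $\Stab^{\circ}(S')$ to $\Stab^{\circ}(S)$.

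With these in hand the conclusion is formal. Fix any $\sigma'\in\Stab_{\Gamma_0}^{\circ}(\dD_{0, S'})$ and set $\sigma=\widetilde{\Phi}_{\ast}(\sigma')\in\Stab_{\Gamma_0}^{\circ}(\dD_{0, S})$. Every ingredient entering the definition of $N_{\sigma'}(w)$ — the Hall algebra of the heart, the substacks of semistable objects of a fixed class, the integration map $P_q$, and the limit $q^{1/2}\to 1$ — is transported by $\widetilde{\Phi}$, which matches a $\sigma'$-semistable object of class $w$ in $\aA'$ or $\aA'[1]$ with a $\sigma$-semistable object of class $g(w)$; hence $N_{\sigma', S'}(w)=N_{\sigma, S}(g(w))$. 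By Theorem~\ref{sigma:depend} the left side equals $N_{S'}(w)$ and the right side equals $N_S(g(w))$. Finally I would put $v=w\sqrt{\td_{S'}}$ and apply Theorem~\ref{bJ=N} to both sides, together with $g(w)\sqrt{\td_S}=\Phi_{\ast}(v)$, to obtain $\overline{J}_{S'}(v)=N_{S'}(w)=N_S(g(w))=\overline{J}_S(\Phi_{\ast}v)$, which is the proposition.

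The main obstacle I anticipate is precisely the naturality of $\psi$ (Theorem~\ref{thm:isomSS}) with respect to the relative Fourier--Mukai transform, that is, the commutativity of the square relating $\widetilde{\Phi}_{\ast}$ and $\Phi_{\mathrm{St}}$, since this is exactly what keeps $\widetilde{\Phi}_{\ast}$ inside the distinguished components; this should be modelled on the arguments of~\cite{Tst2}, with the extra $\mathbb{P}^1$-factor carried through. A subsidiary technical point is to verify that $\widetilde{\Phi}$ is compatible with the moduli stacks of objects underlying Joyce's construction, so that the transport of $N_{\sigma'}$ is legitimate; this again follows from the corresponding statements in~\cite{Tst3} once the relative transform has been set up.
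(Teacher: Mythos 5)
Your proposal follows essentially the same route as the paper's proof: extend $\Phi$ to an equivalence between $D^b \Coh(\overline{X}')$ and $D^b \Coh(\overline{X})$ with kernel $\eE \boxtimes \oO_{\Delta_{\mathbb{P}^1}}$, restrict it to an equivalence $\dD_{0, S'} \simeq \dD_{0, S}$ compatible with Mukai vectors as in (\ref{diag:D}), use the hypothesis together with Theorem~\ref{thm:isomSS} to identify the distinguished components of the stability spaces, and conclude by Theorem~\ref{sigma:depend} and Theorem~\ref{bJ=N}. The ``main obstacle'' you flag, namely the compatibility of $\psi$ with the relative Fourier-Mukai transform, is precisely the step the paper also passes over briefly (``by the assumption and Theorem~\ref{thm:isomSS}''), so your account matches the printed argument.
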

\begin{proof}
Let $\eE$ be the kernel of $\Phi$, 
and $\overline{X}'\cneq S' \times \mathbb{P}^1$. 
The equivalence $\Phi$ extends to the equivalence, 
(cf.~\cite[Assertion~1.7]{OrA},)
\begin{align*}
\Phi^{\dag} \colon 
D^b \Coh(\overline{X}') \stackrel{\sim}{\to}
D^b \Coh(\overline{X}), 
\end{align*}
with kernel given by 
\begin{align*}
\eE \boxtimes \oO_{\Delta_{\mathbb{P}^1}} 
\in D^b \Coh(S' \times S \times \mathbb{P}^1 \times \mathbb{P}^1). 
\end{align*}
Here we have identified $\overline{X}' \times \overline{X}$
with $S' \times S \times \mathbb{P}^1 \times \mathbb{P}^1$. 
It is easy to see that 
$\Phi^{\dag}$ restricts to the 
equivalence between 
$\dD_{0, S'}$ and $\dD_{0, S}$. 
Also note that 
$\Phi_{\ast}$ in (\ref{Phiast}) 
restricts to the isomorphism 
between $\Gamma_{0, S'}$ and $\Gamma_{0, S}$, 
since $\Phi_{\ast}$ preserves the Hodge structures. 
Therefore by the 
diagram (\ref{diagram:DDHH}),
we have the commutative diagram, 
\begin{align}\label{diag:D}
\xymatrix{
\dD_{0, S'} \ar[r]^{\Phi^{\dag}}\ar[d]_{\cl_0 \sqrt{\td_{S'}}} & \dD_{0, S}
\ar[d]^{\cl_0 \sqrt{\td_S}} \\
\Gamma_{0, S'} \ar[r]^{\Phi_{\ast}} & \Gamma_{0, S}.
}
\end{align}
Also by the assumption
and Theorem~\ref{thm:isomSS}, 
the equivalence $\Phi^{\dag}$ induces the isomorphism, 
\begin{align*}
\Phi_{\mathrm{St}} \colon 
\Stab_{\Gamma_{0, S'}}^{\circ}(\dD_{0, S'}) 
\stackrel{\sim}{\to} 
\Stab_{\Gamma_{0, S}}^{\circ}(\dD_{0, S}). 
\end{align*}
Take 
$\sigma \in \Stab_{\Gamma_{0, S}}^{\circ}(\dD_{0, S})$ and 
 $\sigma' \in \Stab_{\Gamma_{0, S'}}^{\circ}(\dD_{0, S'})$. 
Then for $v \in \Gamma_{0, S'}$, we have  
\begin{align} \label{JN1}
\overline{J}_{S}(\Phi_{\ast}v) &
=N_{\sigma}(\Phi_{\ast}v\cdot \sqrt{\td_S}^{-1}) \\
\label{JN2}
&= N_{\Phi_{\mathrm{St}}\sigma'}(\Phi_{\ast}v\cdot \sqrt{\td_{S}}^{-1}) \\
\label{JN3}
&= N_{\sigma'}(v \cdot \sqrt{\td_{S'}}^{-1}) \\
\label{JN4}
&= \overline{J}_{S'}(v). 
\end{align}
Here (\ref{JN1})
and (\ref{JN4}) follow from Theorem~\ref{bJ=N}, 
(\ref{JN2}) follows from Theorem~\ref{sigma:depend} 
and (\ref{JN3}) follows from the commutative 
diagram (\ref{diag:D}). 
\end{proof}
Recall that we defined the group $G$
in (\ref{Hodge:isom}) to be the 
group of Hodge isometries of $\widetilde{H}(S, \mathbb{Z})$. 
We have the following corollary of Proposition~\ref{prop:equivalence}.
\begin{cor}\label{prop:aut}
For $v \in \Gamma_0$ and 
$g\in G$, we have 
\begin{align}\label{Jbar:aut}
\overline{J}(gv)=\overline{J}(v).
\end{align}
\end{cor}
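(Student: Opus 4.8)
The plan is to deduce the corollary from Proposition~\ref{prop:equivalence} together with the group-theoretic structure of $G$. The first step is to prove $\overline{J}(gv)=\overline{J}(v)$ for every $g$ in the index-two subgroup $G^{+}\subset G$ of \emph{orientation-preserving} Hodge isometries, i.e.\ those preserving the orientation of the positive-definite real four-plane in $\widetilde{H}(S,\mathbb{R})$ spanned by $\mathrm{Re}\,\sigma$, $\mathrm{Im}\,\sigma$ (a holomorphic two-form) and $1-\omega^{2}/2$, $\omega$ (for an ample class $\omega$). By results of Orlov, Hosono--Lian--Oguiso--Yau and Huybrechts--Macr\`i--Stellari, $G^{+}$ is exactly the image of $\Auteq(D^{b}\Coh(S))\to G$, and it is generated by Hodge isometries each induced by a Fourier--Mukai equivalence $\Phi\colon D^{b}\Coh(S_{i})\stackrel{\sim}{\to}D^{b}\Coh(S_{i+1})$ between (possibly different) K3 surfaces carrying the distinguished component $\Stab^{\circ}(S_{i})$ to $\Stab^{\circ}(S_{i+1})$ (for the standard generators --- tensoring by line bundles, push-forward along automorphisms, spherical twists along $(-2)$-classes, the shift, and the transforms attached to two-dimensional moduli spaces of stable sheaves --- this preservation is part of Bridgeland's description of $\Stab^{\circ}$). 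Since Proposition~\ref{prop:equivalence} allows $S'\neq S$, applying it repeatedly along such a chain yields $\overline{J}(gv)=\overline{J}(v)$ for all $g\in G^{+}$ and $v\in\Gamma_{0}$.

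Next I would realize a single orientation-reversing isometry. Take $\iota\colon(r,\beta,n)\mapsto(r,-\beta,n)$ on $\Gamma_{0}$ (equivalently $w\mapsto w^{\vee}$ on Mukai vectors); evaluating on the four-plane above shows $\iota\in G\setminus G^{+}$, so $G=\langle G^{+},\iota\rangle$. The isometry $\iota$ is induced by the relative Serre dual $\mathbb{D}\cneq\mathbf{R}\mathcal{H}om_{\overline{X}}(-,\omega_{\overline{X}}[3])$: since $\omega_{\overline{X}}=\pi^{\ast}\oO_{\mathbb{P}^{1}}(-2)$ and $E\otimes\omega_{\overline{X}}\cong E$ for $E\in\dD_{0}$ (Lemma~\ref{cy3}), $\mathbb{D}$ restricts to an anti-autoequivalence of $\dD$ and of $\dD_{0}$, and by Grothendieck--Riemann--Roch it acts on $\Gamma_{0}$ through $\cl_{0}$ by $\iota$. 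To conclude $\overline{J}(\iota v\sqrt{\td_{S}})=\overline{J}(v\sqrt{\td_{S}})$ I would not argue with Gieseker moduli directly --- the dual of a torsion-free semistable sheaf need not be a sheaf --- but instead pass to $N$ via Theorem~\ref{bJ=N}: we have $\overline{J}(v\sqrt{\td_{S}})=N(v)=N_{\sigma}(v)$ for any $\sigma\in\Stab_{\Gamma_{0}}^{\circ}(\dD_{0})$, independently of $\sigma$ by Theorem~\ref{sigma:depend}. The anti-autoequivalence $\mathbb{D}$ sends $\sigma$-semistable objects to $\sigma^{\vee}$-semistable objects with the opposite Harder--Narasimhan filtration and induces an isomorphism of Hall algebras onto the opposite algebra matching the logarithms $\epsilon_{\sigma}(v)$ of (\ref{sum:eps}) and the maps $P_{q}$ of (\ref{map:P}); hence $N_{\sigma^{\vee}}(\iota v)=N_{\sigma}(v)$, provided $\mathbb{D}$ preserves $\Stab_{\Gamma_{0}}^{\circ}(\dD_{0})$, and then $\sigma$-independence gives $N(\iota v)=N(v)$. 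Combined with the $G^{+}$-invariance of the first step, this proves the corollary.

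The main obstacle will be the preservation-of-distinguished-component statements. For the covariant Fourier--Mukai transforms generating $G^{+}$ one must know that they carry $\Stab^{\circ}$ to $\Stab^{\circ}$; via the isomorphism $\psi$ of Theorem~\ref{thm:isomSS} this reduces to the corresponding statement on $D^{b}\Coh(S)$, which is available from the work of Bridgeland and of Huybrechts--Macr\`i--Stellari, but the reduction should be spelled out. For the Serre dual $\mathbb{D}$ one needs the analogous stability-component statement for an anti-autoequivalence, together with a careful check that passing to the opposite Hall algebra leaves the Joyce-type invariant $N_{\sigma}$ unchanged --- the construction (\ref{sum:eps}) and the Poincar\'e-polynomial map are symmetric enough for this, but this is the one point that genuinely has to be verified rather than cited, and it is essentially what underlies~\cite[Corollary~5.26]{Tst3}, which the present corollary refines. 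A secondary, purely lattice-theoretic input --- that $G^{+}$ is generated by Fourier--Mukai-realizable isometries of the stated kind --- is due to Hosono--Lian--Oguiso--Yau and should be cited precisely.
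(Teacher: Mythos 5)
Your proposal is correct and follows essentially the same route as the paper: invariance under $G^{+}$ via Fourier--Mukai equivalences preserving the distinguished components together with Proposition~\ref{prop:equivalence}, then the orientation-reversing coset via the derived dual, using $G=\langle G^{+},\iota\rangle$. The paper is slightly more economical at both steps: it quotes the surjectivity of $\Auteq^{\circ}(S)\to G^{+}$ (\cite[Corollary~4.10]{HMS2}, \cite[Proposition~7.9]{HH}), so no chain of generators between auxiliary K3 surfaces is needed, and for $\iota$ it applies the duality of semistable objects at a fixed stability condition (Proposition~\ref{prop:bij}), which sidesteps the component-preservation question for the anti-autoequivalence that your $\sigma$-level Hall-algebra argument has to address.
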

\begin{proof}
For a K3 surface $S$, 
let $\Auteq^{\circ}(S)$ be the
group of autoequivalences of 
$D^b \Coh(S)$, 
preserving the connected component 
$\Stab^{\circ}(S)$.  
Then the group homomorphism 
\begin{align*}
\Auteq^{\circ}(S) \ni \Phi \mapsto 
\Phi_{\ast} \in G^{+},
\end{align*}
is surjective
by~\cite[Corollary~4.10]{HMS2}, \cite[Proposition~7.9]{HH}. 
Here $G^{+}$ is the index two 
subgroup of $G$, consisting of $g\in G$
preserving the orientation of
the positive definite four plane in 
 $\widetilde{H}(S, \mathbb{R})$. 
Therefore (\ref{Jbar:aut}) holds 
for $g\in G^{+}$ by Proposition~\ref{prop:equivalence}. 

Finally let $\iota \in G$ be the involution, 
\begin{align*}
\iota=\id_{H^0(S, \mathbb{Z})} \oplus (-\id_{H^2(S, \mathbb{Z})})
\oplus \id_{H^4(S, \mathbb{Z})}.
\end{align*}
The equality
(\ref{Jbar:aut}) for $g=\iota$
follows 
by applying the derived dual on $\dD$. 
(Also see Proposition~\ref{prop:bij} below.) 
Since $G/G^{+}$ is generated by $\iota$, 
we obtain the result. 
\end{proof}

By combining the above results, we have the following theorem. 
\begin{thm}\label{thm:aut}
For any $g\in G$, we have 
\begin{align*}
J(gv)=J(v). 
\end{align*}
\end{thm}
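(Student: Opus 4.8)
The plan is to reduce the statement to the automorphic property of the compactified invariant $\overline{J}$, which has already been established, together with the localization comparison between $\overline{J}$ and $J$. Both $J(v)$ and $\overline{J}(v)$ are defined in terms of Mukai vectors (cf.~Subsection~\ref{subsec:sheafcount} and~(\ref{Jv})), and since a Hodge isometry $g\in G$ preserves $\widetilde{H}^{1,1}(S)$ it restricts to an isometry of $\Gamma_0$; hence for $v\in\Gamma_0$ the class $gv$ again lies in $\Gamma_0$ and the invariants $J(gv)$, $\overline{J}(gv)$ make sense.

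First I would invoke Corollary~\ref{prop:aut}, which gives $\overline{J}(gv)=\overline{J}(v)$ for all $v\in\Gamma_0$ and $g\in G$. Then I would apply Lemma~\ref{Jlocarize}, namely $\overline{J}(w)=2J(w)$ for $w\in\Gamma_0$, to both $w=gv$ and $w=v$, obtaining
\begin{align*}
2J(gv)=\overline{J}(gv)=\overline{J}(v)=2J(v).
\end{align*}
Dividing by $2$ yields $J(gv)=J(v)$, as desired.

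All the genuine content sits inside the two cited results, so there is no further obstacle at this stage. For orientation I note where the difficulty was: in Corollary~\ref{prop:aut} one first shows (Proposition~\ref{prop:equivalence}) that a derived equivalence $\Phi\colon D^b\Coh(S')\simto D^b\Coh(S)$ carrying $\Stab^\circ(S')$ to $\Stab^\circ(S)$ satisfies $\overline{J}_{S'}(v)=\overline{J}_S(\Phi_\ast v)$ --- this uses Theorem~\ref{bJ=N}, the $\omega$-independence of $N_\sigma$ (Theorem~\ref{sigma:depend}), and the compatibility diagram~(\ref{diag:D}) --- then covers all of $G^+$ via the surjectivity of $\Auteq^\circ(S)\to G^+$, and finally handles the extra generator $\iota$ of $G/G^+$ by the derived dual on $\dD$. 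Likewise Lemma~\ref{Jlocarize} rests on a $\mathbb{C}^\ast$-localization argument: the relevant constructible function is supported on $M_{\overline{X}}^{\dag}(v)\cong M_{X_0}(v)\times\mathbb{P}^1$ and is invariant along the $\mathbb{P}^1$-factor by the $\Aut(\mathbb{P}^1)$-equivariance of the Hall-algebra element, so $\overline{J}(v)=\chi(\mathbb{P}^1)$ times a fiber contribution while $J(v)=\chi(\mathbb{C})$ times the same, giving $\overline{J}(v)=2J(v)$. Granting these, the theorem follows immediately by the two-line computation above.
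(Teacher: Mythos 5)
Your proof is correct and is essentially the paper's own argument: the paper proves Theorem~\ref{thm:aut} precisely by combining Corollary~\ref{prop:aut} ($\overline{J}(gv)=\overline{J}(v)$) with Lemma~\ref{Jlocarize} ($\overline{J}(v)=2J(v)$), exactly as you do.
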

\begin{proof}
The result follows by combining Corollary~\ref{prop:aut}
and Lemma~\ref{Jlocarize}. 
\end{proof}

\subsection{Invariants on $\aA_{\omega}(1/2)$}\label{subsec:invA12}
In this subsection, we use
notation introduced in 
Subsections~\ref{subsec:abelian12}, 
\ref{subsec:weakA12}, \ref{subsec:semiA12}. 
Recall that we constructed weak stability 
conditions $(\widehat{Z}_{\omega, \theta}, \aA_{\omega}(1/2))$
in Lemma~\ref{lem:weakA12}. 
Similarly to Subsection~\ref{subsec:invvia}, 
we can construct counting invariants of 
$\widehat{Z}_{\omega, \theta}$-semistable objects in $\aA_{\omega}(1/2)$. 
For $v\in \widehat{\Gamma}$, 
let 
\begin{align}\label{stackA12}
\widehat{\mM}_{\omega, \theta}(v), 
\end{align}
be the moduli stack of $\widehat{Z}_{\omega, \theta}$-semistable 
objects $E\in \aA_{\omega}(1/2)$ with 
$\widehat{\cl}(E)=v$. 
Similarly to the construction in Subsection~\ref{subsec:invvia}, 
we define the 
element,
\begin{align*}
\widehat{\delta}_{\omega, \theta}(v)
\cneq [ \widehat{\mM}_{\omega, \theta}(v) \hookrightarrow 
\oO bj(\aA_{\omega})] \in \hH(\aA_{\omega}).
\end{align*}
We replace $v_i \in \Gamma$, 
$\delta_{t\omega}(v_i)$, $Z_{t\omega}$
in the sum (\ref{sum:eps}) 
by $v_i \in \widehat{\Gamma}$, $\widehat{\delta}_{\omega, \theta}(v_i)$, 
$\widehat{Z}_{\omega, \theta}$ respectively. 
Then we can define the 
element 
\begin{align*}
\widehat{\epsilon}_{\omega, \theta}(v) \in \hH(\aA_{\omega}),
\end{align*}
for any $v\in \widehat{\Gamma}$, and the rank one invariant, 
\begin{align}\label{widehatDT(v)}
\widehat{\DT}_{\omega, \theta}^{\chi}(v)
\cneq \lim_{q^{1/2} \to 1}(q-1)P_q(\widehat{\epsilon}_{\omega, \theta}(1, -v)),
\end{align}
for $v\in \widehat{\Gamma}_0$. 
Also we replace $\epsilon_{t\omega}(0, v)$, 
$C(\bB_{\omega})$
in Definition~\ref{defi:Ninv}
by
$\widehat{\epsilon}_{\omega, \theta}(0, v)$
and 
\begin{align}\label{CB1/2}
C(\bB_{\omega}(1/2)) \cneq 
\Imm ( \widehat{\cl}_0 \colon 
\bB_{\omega}(1/2) \to \widehat{\Gamma}_{0} ),
\end{align}
respectively.
Then we have the rank zero invariant,
\begin{align}\label{widehatN(v)}
\widehat{N}(v) \in \mathbb{Q},
\end{align}
counting $\widehat{Z}_{\omega, \theta}$-semistable 
objects $E\in \aA_{\omega}(1/2)$
or $E \in \aA_{\omega}(1/2)[1]$
satisfying $\widehat{\cl}(E)=(0, v)$. 
All the details in defining these invariants
follow from the arguments in Subsection~\ref{subsec:invvia}, 
so we omit the detail. 
Also an argument similar to the proof of
Theorem~\ref{sigma:depend}
shows that $\widehat{N}(v)$ does not depend on 
$\omega$ and $\theta$. 
The invariants (\ref{widehatDT(v)}), (\ref{widehatN(v)})
are related to the invariants 
in Subsection~\ref{subsec:invvia} as follows. 
\begin{lem}\label{lem:N=N'}
For $v=(r, \beta) \in \widehat{\Gamma}_0$
and $0<t \ll 1$, we have 
\begin{align*}
\widehat{\DT}^{\chi}_{\omega, 1/2}(r, \beta)
&= \DT_{t\omega}^{\chi}(r, \beta, 0), \\
\widehat{N}(r, \beta) &=N(r, \beta, 0). 
\end{align*}
\end{lem}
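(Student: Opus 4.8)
The plan is to unwind the definitions and reduce the statement to an equality of elements of the Hall algebra $\hH(\aA_\omega)$. Under the embedding $\widehat{\Gamma}\hookrightarrow \Gamma$, $(R, r, \beta)\mapsto (R, r, \beta, 0)$, I would prove that for $0<t\ll 1$
\begin{align*}
\widehat{\epsilon}_{\omega, 1/2}(1, -r, -\beta)=\epsilon_{t\omega}(1, -r, -\beta, 0), \qquad
\widehat{\epsilon}_{\omega, 1/2}(0, r, \beta)=\epsilon_{t\omega}(0, r, \beta, 0),
\end{align*}
in $\hH(\aA_\omega)$. Applying $\lim_{q^{1/2}\to 1}(q-1)P_q(-)$ to the first equality gives $\widehat{\DT}^{\chi}_{\omega, 1/2}(r, \beta)=\DT^{\chi}_{t\omega}(r, \beta, 0)$ at once; applying it to the second gives $\widehat{N}(r, \beta)=N(r, \beta, 0)$ once one recalls that $N(r, \beta, 0)$ is independent of the polarization (Theorem~\ref{sigma:depend}, applied to the family $(Z_{t\omega, 0}, \bB_\omega)$ with $t>0$), so that it may be computed from $\epsilon_{t\omega}(0, r, \beta, 0)$ for arbitrary $t>0$ and in particular for $0<t\ll 1$; the ``$\pm$'' and ``not in the image'' clauses of Definition~\ref{defi:Ninv} and of its analogue for $\widehat{N}$ agree because, by Proposition~\ref{prop:propA12}(ii), a class of $\widehat{\Gamma}_0$ supports a $\widehat{Z}_{\omega, 1/2}$--semistable object of $\bB_\omega(1/2)$ exactly when its image in $\Gamma_0$ supports a $Z_{t\omega, 0}$--semistable object of $\bB_\omega$ with $\ch_3=0$ for $0<t\ll 1$, and outside these classes both invariants vanish.

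The two inputs are Proposition~\ref{prop:propA12}(ii) and Lemma~\ref{lem:lem12}. Proposition~\ref{prop:propA12}(ii) gives, for $0<t\ll 1$, an identity of substacks $\widehat{\mM}_{\omega, 1/2}(R, r, \beta)=\mM_{t\omega}(R, r, \beta, 0)$ of $\oO bj(\aA_\omega)$, hence $\widehat{\delta}_{\omega, 1/2}(R, r, \beta)=\delta_{t\omega}(R, r, \beta, 0)$; and Lemma~\ref{lem:lem12}(iii) says $\aA_\omega(1/2)\subset \aA_\omega$ is an abelian, hence extension closed, subcategory, so the stack of short exact sequences in $\aA_\omega(1/2)$ sits compatibly inside $\eE x(\aA_\omega)$ and the $\ast$--product of the above $\widehat{\delta}$'s is computed identically in $\hH(\aA_\omega(1/2))$ and in $\hH(\aA_\omega)$. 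What remains is to see that the index sets of the two ``logarithm'' sums coincide. Every summand that occurs with nonzero coefficient corresponds to a $\widehat{Z}_{\omega, 1/2}$--semistable (resp.~$Z_{t\omega}$--semistable) object, which by Lemma~\ref{lem:lem12}(ii) lies in $\aA_\omega(1/2)$ and so has $\ch_3=0$; for the rank one class $(1, -r, -\beta)$, Proposition~\ref{prop:m2} forces the unique rank one summand to have $\ch_3=0$ as well, and the condition $\arg=\pi/2$ then forces every rank zero summand $(0, r_i, \beta_i, 0)$ to have vanishing $\ch_1$, since $\Ree Z_{t\omega, 0}(r_i,\beta_i,0)=-\frac{1}{2}r_i t^2\omega^2$ and $\Ree\widehat{Z}_{\omega, 1/2, 0}(r_i,\beta_i)=-r_i$ each vanish only when $r_i=0$. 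Thus on both sides the contributing decompositions consist of one summand $(1, -r, -\beta-\sum_{i\neq e}\beta_i)$ together with rank zero summands $(0, 0, \beta_i)$, and they literally coincide.

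The one point that really needs an argument — though it is brief — is that $\widehat{Z}_{\omega, 1/2, 0}$ and the restriction of $Z_{t\omega, 0}$ to classes with $\ch_3=0$, although genuinely distinct central charges on $\widehat{\Gamma}_0$, induce the same ``same phase'' equivalence relation on the classes that occur. Here one uses that any such class $(r, \beta)$ satisfies $\beta\cdot\omega<0$: it is the $\widehat{\cl}_0$ of an object of $\bB_\omega(1/2)$, which has $\lim_{t\to 0}\arg Z_{t\omega}=\pi/2$, forcing $\beta\cdot\omega<0$ by inspection of $Z_{t\omega, 0}(r, \beta, 0)=-\frac{1}{2}rt^2\omega^2-t(\beta\cdot\omega)\sqrt{-1}$. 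Writing $|\beta\cdot\omega|=-\beta\cdot\omega$, both $\arg(-r+|\beta\cdot\omega|\sqrt{-1})$ and $\arg(-\frac{1}{2}rt\omega^2+|\beta\cdot\omega|\sqrt{-1})$ are strictly increasing functions of the single ratio $r/|\beta\cdot\omega|$, and hence define the same ($t$--independent) partition of classes into phases; this comparison is precisely the one carried out in the proof of Proposition~\ref{prop:propA12}(ii), and I would simply quote it. With the index sets matched, the two pairs of $\epsilon$--elements agree, and the lemma follows. I expect this phase comparison, together with the bookkeeping needed to organize the decompositions, to be the only delicate part; everything else is a direct consequence of Proposition~\ref{prop:propA12}(ii), Lemma~\ref{lem:lem12}, and Theorem~\ref{sigma:depend}.
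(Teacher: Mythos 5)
Your overall route is the same as the paper's: the paper proves this lemma in a single line by citing Proposition~\ref{prop:propA12}~(ii), and your proposal is essentially a careful unwinding of that citation --- identifying the $\delta$-elements through the coincidence of semistable objects, matching the index sets of the two ``logarithm'' sums, and invoking Theorem~\ref{sigma:depend} (and its analogue for $\widehat{N}$) so that $N(r,\beta,0)$ may be computed at small $t$. The phase-ratio comparison for classes with $\ch_3=0$ and $\beta\cdot\omega<0$, and the use of Proposition~\ref{prop:m2} for the rank one summand, are correct and are indeed the substantive supplements the one-line proof leaves implicit.

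One step, however, is justified circularly. You claim that every summand occurring with nonzero coefficient ``corresponds to a semistable object, which by Lemma~\ref{lem:lem12}~(ii) lies in $\aA_{\omega}(1/2)$ and so has $\ch_3=0$.'' Lemma~\ref{lem:lem12}~(ii) only says that objects of $\aA_{\omega}(1/2)$ have $\ch_3=0$; it does not say that a $Z_{t\omega}$-semistable object for small $t$ lies in $\aA_{\omega}(1/2)$. For a rank zero summand, membership in $\aA_{\omega}(1/2)$ is equivalent to $n_i=0$ (together with $\beta_i\cdot\omega\neq 0$), which is precisely what you need to prove. For the $\widehat{\DT}=\DT$ half the repair is easy: $\arg Z_{t\omega}(v_i)=\pi/2$ gives $n_i=\tfrac{1}{2}r_i t^2\omega^2$, the Bogomolov-type bound of Lemma~\ref{lem:Bog} (as used in the proof of Lemma~\ref{lem:finsum}, with $0\le -\beta_i\cdot\omega\le -\beta\cdot\omega$) bounds $r_i$ independently of $t$, and integrality of $n_i$ then forces $n_i=0$, hence $r_i=0$, once $t$ is small. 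For the $\widehat{N}=N$ half the phase is not pinned at $\pi/2$, so you must in addition rule out summands with $n_i\neq 0$ whose $Z_{t\omega,0}$-phase accidentally equals that of $(r,\beta,0)$ at the chosen $t$: here use that the contributing $(r_i,n_i)$ are bounded (Lemma~\ref{finset} and Lemma~\ref{lem:Bog} again), so for $n_i\neq 0$ the phase of $Z_{t\omega,0}(r_i,\beta_i,n_i)$ tends to $0$ or $\pi$ uniformly as $t\to 0$, while the phase of $Z_{t\omega,0}(r,\beta,0)$ tends to $\pi/2$; hence no such class enters the sum for $0<t\ll 1$. With this repair your argument goes through and matches the proof the paper intends.
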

\begin{proof}
The result follows from Proposition~\ref{prop:propA12} (ii). 
\end{proof}

\section{Wall-crossing formula}\label{sec:WCF}
In this section, we apply 
the wall-crossing formula for the 
invariants introduced in the 
previous section, and give a proof of Theorem~\ref{thm:main}. 
\subsection{Joyce's formula}\label{subsec:Bi}

Joyce's wall-crossing formula~\cite[Theorem~6.28]{Joy4}
enables us to see how the invariants 
$\DT_{t\omega}^{\chi}(v)$ vary
if we change $t \in \mathbb{R}_{>0}$. 
In general, the wall-crossing 
formula is described in terms of Euler pairing on 
the (numerical) Grothendieck 
group of the underlying 
Calabi-Yau 3-fold. 
 In our situation, 
the Euler pairing is not symmetric 
since $\overline{X}$ is not a Calabi-Yau 3-fold. 
Instead we use the bilinear pairing $\chi$ 
defined in Subsection~\ref{subsec:gen:bi}. 
The existence of $\chi$ satisfying the condition (\ref{chi})
is enough to establish the wall-crossing formula. 

If we apply the wall-crossing formula in~\cite[Equation~(130)]{Joy4}
to the invariants $\DT_{t\omega}^{\chi}(v)$, 
it immediately implies the following:  
for $t_1, t_2>0$ and $v\in \Gamma_0$,
we have 
\begin{align}\notag
\DT_{t_2 \omega}^{\chi}(v) &=
\sum_{\begin{subarray}{c}
l\ge 1, 1\le e \le l, v_i \in \Gamma_0 \\
v_1 + \cdots +v_l=v
\end{subarray}}
\sum_{\begin{subarray}{c}
G \emph{ \rm{ is a connected, simply connected} } \\
\emph{ \rm{graph with vertex} } \{1, \cdots, l\}, 
i\to j \emph{ \rm{implies} }
i<j
\end{subarray}} \\
\label{WCF:joy}
& \frac{1}{2^{l-1}}
U(\{v_1', \cdots, v_l'\}, t_1, t_2)
\prod_{i \to j \emph{\rm{ in }} G}
\chi(v_i', v_j') \prod_{k\neq e} N(v_k) \DT_{t_1 \omega}^{\chi}(v_e). 
\end{align}
Here 
\begin{align*}
v_i'=(0, -v_i) \mbox{ for } i\neq e, \quad 
v_e'=(1, -v_e),
\end{align*}
 and 
$U(\{v_1', \cdots, v_l'\}, t_1, t_2)$
is a certain rational number determined by 
the arguments of $\arg Z_{t_1\omega}(\ast)$ and $\arg Z_{t_2 \omega}(\ast)$
in a combinatorial way. (cf.~\cite[Definition~4.4]{Joy4}.)
Note that a non-zero term of the RHS of (\ref{WCF:joy})
satisfies 
either $v_i' \in \Gamma_0$ or $v_j' \in \Gamma_0$, so the 
Euler pairing $\chi(v_i', v_j')$ makes sense. 
The central results in~\cite{Tolim2} and~\cite{Tcurve1}
provide explicit computations of the combinatorial coefficients in the RHS. 
The result is formulated in terms of the limiting generating series 
discussed in the next subsection.

\subsection{Generating series}\label{subsec:genser}
For an ample divisor $\omega$ on $S$ and 
$t\in \mathbb{R}_{>0}$, we 
consider the following generating series, 
\begin{align*}
\DT_{t\omega}^{\chi}(\overline{X}) \cneq 
\sum_{(r, \beta, n) \in \Gamma_0}
\DT_{t\omega}^{\chi}(r, \beta, n)x^r y^{\beta} z^n. 
\end{align*}
The series $\DT_{t\omega}^{\chi}(\overline{X})$ is 
an element of the following 
vector space, 
\begin{align*}
\DT_{t\omega}^{\chi}(\overline{X}) \in 
\rR_{\omega} \cneq
\prod_{\begin{subarray}{c}\beta \in \mathrm{NS}(S), \\
\omega \cdot \beta \ge 0
\end{subarray}}
\mathbb{C}\db[ x^{\pm 1}, z^{\pm 1} \db] y^{\beta}.
\end{align*}
The vector space $\rR_{\omega}$ 
is a product of a countable number of copies of $\mathbb{C}$, 
and the Euclid topology on $\mathbb{C}$ induces 
a product topology on $\rR_{\omega}$. 
By the existence of wall and chamber 
structure in Lemma~\ref{prop:wall}, 
the following limiting series makes sense, 
\begin{align}\label{series:lim}
\lim_{t\to t_{0} \pm 0}
\DT_{t\omega}^{\chi}(\overline{X}) \in \rR_{\omega}, 
\end{align}
for any $t_0 \in \mathbb{R}_{>0}$. 

On the other hand, there is no ring 
structure on $\rR_{\omega}$, and 
we need to introduce a topological ring 
which acts on $\rR_{\omega}$. 
We set 
\begin{align*}
\rR_{0} \cneq \prod_{\begin{subarray}{c}\beta \in \mathrm{NS}(S), \\
\beta \ge 0
\end{subarray}}
\mathbb{C}[x^{\pm 1}, z^{\pm 1}] y^{\beta}. 
\end{align*}
Noting that 
the possible $\beta \ge 0$ with bounded 
$\omega \cdot \beta$ is finite, we have the natural 
product, 
\begin{align*}
\rR_0 \times \rR_{\omega} \to \rR_{\omega}, 
\end{align*}
which restricts to the ring structure on $\rR_0$. 
By the same reason,
the exponential for any $f\in \rR_{0}$
also makes sense, 
\begin{align*}
\exp(f) \cneq 
\sum_{k \ge 0}
\frac{1}{k!} f^k \in \rR_{0}. 
\end{align*}

\subsection{Wall-crossing formula of generating series}
The wall-crossing formula~\cite[Theorem~6.28]{Joy4}
describes a
difference of the two limiting series 
(\ref{series:lim}).
An argument used in~\cite[Theorem~5.8]{Tcurve1}
yields the following result:
\begin{thm}\label{thm:formula}
We have the following formula:
\begin{align}\notag
&\lim_{t\to t_{0} + 0}
\DT_{t\omega}^{\chi}(\overline{X}) \\
\label{thm:product}
&=\prod_{\begin{subarray}{c} \beta>0, \\
n=\frac{1}{2}rt_0^2 \omega^2
\end{subarray}} 
\exp\left( (n+2r)N(r, \beta, n)  x^r y^{\beta} z^n  \right)^{\epsilon(r)} 
\cdot
\lim_{t\to t_{0} - 0}
\DT_{t\omega}^{\chi}(\overline{X}).
\end{align}
Here $\epsilon(r)=1$ if $r>0$, $\epsilon(r)=-1$ if 
$r<0$ and $\epsilon(r)=0$ if $r=0$. 
\end{thm}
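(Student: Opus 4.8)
The plan is to apply Joyce's wall-crossing formula~\cite[Theorem~6.28]{Joy4} as recorded in~(\ref{WCF:joy}), with $t_1 = t_0 - 0$ and $t_2 = t_0 + 0$, and then package the resulting combinatorial sum into the exponential product on the right of~(\ref{thm:product}). First I would fix a wall value $t_0 \in \mathbb{R}_{>0}$; by Proposition~\ref{prop:wall} (stated above as a lemma via wall-and-chamber structure) the limits $\lim_{t\to t_0 \pm 0}\DT^{\chi}_{t\omega}(\overline{X})$ exist in $\rR_{\omega}$, and only finitely many numerical classes $(r,\beta,n)$ with $\omega\cdot\beta$ bounded can possibly cross the wall at $t_0$. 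The relevant classes are exactly those with $\arg Z_{t_0\omega}(r,\beta,n) = \pi/2$ on one side of $t_0$ but a different ordering on the other; by the definition of $Z_{t\omega,0}$, a class $(0,\beta',n')$ with $\beta'>0$ shares the phase of $(r,\beta,n)$ at $t_0$ precisely when $\Ree Z_{t_0\omega,0}$ vanishes on it, i.e.\ $n' = \tfrac12 r' t_0^2\omega^2$; this is the source of the constraint $n = \tfrac12 r t_0^2\omega^2$ appearing in the product.

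Next I would substitute these classes into~(\ref{WCF:joy}): a nonzero term has a unique index $e$ with $\rank(v_e')=1$, all other $v_i'=(0,-v_i)$ of rank zero, and the combinatorial weight is a product of factors $\chi(v_i',v_j')$ over the edges of a connected simply-connected graph on $\{1,\dots,l\}$ together with the $U$-coefficients. The key simplification is that $\chi|_{\Gamma_0\times\Gamma_0}\equiv 0$ (Subsection~\ref{subsec:gen:bi}), so any edge between two rank-zero vertices kills the term; hence the only graphs that survive are those in which $e$ is connected to every other vertex and there are no other edges — i.e.\ $e$ is the center of a star. For such a star graph the product of Euler pairings becomes $\prod_{k\neq e}\chi(v_k',v_e')$ or $\prod_{k\neq e}\chi(v_e',v_k')$, and by the formula~(\ref{chi:form}) each such factor equals $\pm(2r_k+n_k)$ depending on the sign conventions, which combines with $N(v_k)$ to give the factor $(n_k+2r_k)N(r_k,\beta_k,n_k)$ in the exponent. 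The remaining $U$-coefficients for star graphs, once summed over $l$ and over orderings, are exactly the coefficients that reproduce the exponential $\exp\big((n+2r)N(r,\beta,n)x^ry^\beta z^n\big)$ — this is the content of the explicit combinatorial computation carried out in~\cite[Theorem~5.8]{Tcurve1}, which I would invoke rather than redo. The sign $\epsilon(r)$ tracks whether $v_e'=(1,-v_e)$ has phase slightly above or below $\pi/2$ relative to the rank-zero classes on the two sides of the wall, equivalently the sign of $r$.

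The main obstacle is the bookkeeping of the $U$-coefficients and the orientation/sign issues: one must check that summing Joyce's $U(\{v_1',\dots,v_l'\},t_1,t_2)$ over all star configurations (with the correct multiplicities $1/2^{l-1}$ and the ordering conditions $i\to j \Rightarrow i<j$) yields precisely the Taylor coefficients $\tfrac{1}{l!}$-type weights of the exponential, and that the asymmetry of $\chi$ — reflected in the rule $\chi(v,v')=-\chi(v',v)$ for $v\in\Gamma_0$, $v'\in\Gamma$ — combines with the side of the wall to produce $\epsilon(r)$ rather than some other sign. This is exactly the situation handled in~\cite[Theorem~5.8]{Tcurve1} and~\cite{Tolim2}, so I would structure the proof as: (i) reduce via $\chi|_{\Gamma_0\times\Gamma_0}=0$ to star graphs; (ii) identify the edge-weight contribution with $(n_k+2r_k)N(v_k)$ using~(\ref{chi:form}) and Definition~\ref{defi:Ninv}; (iii) cite~\cite[Theorem~5.8]{Tcurve1} for the resummation of the purely combinatorial coefficients into an exponential; (iv) read off $\epsilon(r)$ from the phase comparison. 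One should also note that there is no contribution when $r=0$ (so $\epsilon(0)=0$) because a rank-zero $v_e$ with $n=0$ forces $v_e'=(1,0,-\beta,0)$ but then no wall is crossed among such classes, consistent with $\DT^{\chi}_{t\omega}$ being continuous in $t$ in that direction; this matches the statement.
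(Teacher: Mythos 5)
Your proposal is correct and follows essentially the same route as the paper: apply Joyce's formula (\ref{WCF:joy}) across the wall at $t_0$, use the vanishing of $\chi$ on $\Gamma_0\times\Gamma_0$ to reduce the combinatorics to the situation already handled in~\cite[Theorem~5.8]{Tcurve1}, \cite[Theorem~4.7]{Tolim2}, and read off the sign $\epsilon(r)$ from the phase comparison of $Z_{(t_0\pm\varepsilon)\omega,0}$ on the wall classes $n=\tfrac12 r t_0^2\omega^2$ (the paper organizes this via the sets $W_{t_0}^{\pm}$, $W_{t_0}^{0}$). The only ingredient you leave implicit is the appeal to Corollary~\ref{cor:N}, which is what guarantees both that the exponent lies in $\rR_0$ (so the infinite product makes sense) and that only classes with $\beta>0$ appear in the product.
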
 
\begin{proof}
First we note that 
\begin{align*}
\sum_{\begin{subarray}{c} \beta>0, \\
n=\frac{1}{2}rt_0^2 \omega^2
\end{subarray}} \epsilon(r)(n+2r)N(r, \beta, n)x^r y^{\beta} z^n 
\in \rR_{0},
\end{align*}
by Lemma~\ref{cor:N}. 
Therefore the infinite product (\ref{thm:product}) makes sense by 
the argument in the previous subsection. 

Next we note that
the wall-crossing formula (\ref{WCF:joy})
describes the difference between two
limiting series (\ref{series:lim}) 
in terms of $\chi$ and 
invariants of rank zero, i.e. 
$N(v) \in \mathbb{Q}$.  
Also the bilinear map $\chi$ restricts 
to zero on $\Gamma_0$, and  
this is exactly the same situation
 as in~\cite[Theorem~5.8]{Tcurve1}, 
\cite[Theorem~4.7]{Tolim2}. 
Hence the same arguments are applied
to our situation. 
More precisely, let $W_{t_0}$ be the subset
of $\Gamma_0$ defined by  
\begin{align*}
W_{t_0} \cneq\{ v\in \Gamma_0 : Z_{t_0 \omega, 0}(v) \in 
\mathbb{R}_{>0} \sqrt{-1}\}.
\end{align*}
Then $W_{t_0}$ is written as $W_{t_0}^{+} \cup W_{t_0}^{-} \cup W_{t_0}^{0}$, 
\begin{align*}
W_{t_0}^{+}& \cneq \left\{(r, \beta, n) \in \Gamma_0 : 
n=\frac{1}{2}rt_0^2 \omega^2, \ r< 0, \omega \cdot \beta<0 \right\}, \\
W_{t_0}^{-}& \cneq \left\{(r, \beta, n) \in \Gamma_0 : 
n=\frac{1}{2}rt_0^2 \omega^2, \ r> 0, \omega \cdot \beta<0\right\}, \\
W_{t_0}^{0}& \cneq \left\{ (r, \beta, n) \in \Gamma_0 : r=n=0, 
\omega \cdot \beta<0 \right\}.
\end{align*}
For $v\in W_{t_0}^{+}$, we have 
\begin{align*}
\arg Z_{(t_0+\varepsilon)\omega, 0}(v) <
\frac{\pi}{2} <\arg Z_{(t_0 -\varepsilon) \omega, 0}(v),
\end{align*}
for $0<\epsilon \ll 1$. 
The above inequalities are reversed for $v\in W_{t_0}^{-}$
and are equalities for $v\in W_{t_0}^{0}$. 
Also noting the formula 
 (\ref{chi:form}) for $\chi$, 
the arguments in~\cite[Theorem~5.8]{Tcurve1}, 
\cite[Theorem~4.7]{Tolim2}
imply
\begin{align*}
&\lim_{t\to t_{0} + 0}
\DT_{t\omega}^{\chi}(\overline{X}) \\
&=\prod_{-(r, \beta, n) \in W_{t_0}^{+}}
\exp\left( (n+2r)N(r, \beta, n) x^r y^{\beta} z^n  \right) \\
&\qquad \cdot
\prod_{-(r, \beta, n) \in W_{t_0}^{-}}
\exp\left( (n+2r)N(r, \beta, n) x^r y^{\beta} z^n  \right)^{-1}
\cdot 
\lim_{t\to t_{0} - 0}
\DT_{t\omega}^{\chi}(\overline{X}) \\
&=
\prod_{-(r, \beta, n) \in W_{t_0}}
\exp\left( (n+2r)N(r, \beta, n) x^r y^{\beta} z^n  \right)^{\epsilon(r)}
\cdot
\lim_{t\to t_{0} - 0}
\DT_{t\omega}^{\chi}(\overline{X}).
\end{align*}
If $-(r, \beta, n) \in W_{t_0}^{+} \cup W_{t_0}^{-}$ satisfies 
$N(r, \beta, n) \neq 0$, then 
$\beta>0$ follows from Corollary~\ref{cor:N}. 
Therefore we obtain the formula (\ref{thm:formula}). 
\end{proof}
Let $L(\beta, n) \in \mathbb{Q}$
be the invariant, discussed in Subsection~\ref{subsec:product}. 
By applying the wall-crossing formula 
from $t\to 0$ to $t\to \infty$, 
we obtain the following corollary.  
\begin{cor}\label{cor:DTN}
We have the formula, 
\begin{align*}
\sum_{(r, \beta, n) \in \Gamma_0}L(\beta, n)x^r y^{\beta}z^n 
=\prod_{\beta>0, rn>0} 
&\exp\left( (n+2r)N(r, \beta, n) x^r y^{\beta} z^n  \right)^{\epsilon(r)} \\
&\cdot \lim_{t\to 0}\sum_{(r, \beta, 0) \in \Gamma_0}
\DT_{t\omega}^{\chi}(r, \beta, 0)x^r y^{\beta}. 
\end{align*}
\end{cor}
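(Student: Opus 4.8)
The plan is to run Joyce's wall-crossing formula (Theorem~\ref{thm:formula}) over the whole parameter interval $t\in(0,\infty)$, so that the passage from the limit at $t\to\infty$ to the limit at $t\to 0$ is expressed as a single infinite product of wall contributions. First I would pin down the two endpoints inside the topological vector space $\rR_\omega$ of Subsection~\ref{subsec:genser}. On the one hand, combining Proposition~\ref{prop:DT=L}(i) with the finiteness of walls in Proposition~\ref{prop:wall}, the generating series stabilises for large $t$ to
\begin{align*}
\lim_{t\to\infty}\DT^\chi_{t\omega}(\overline{X})=\sum_{(r,\beta,n)\in\Gamma_0}L(\beta,n)x^r y^\beta z^n.
\end{align*}
On the other hand, Proposition~\ref{prop:DT=L}(ii) says that for $0<t\ll 1$ every class with $n\neq 0$ has vanishing invariant, so
\begin{align*}
\lim_{t\to 0}\DT^\chi_{t\omega}(\overline{X})=\lim_{t\to 0}\sum_{(r,\beta,0)\in\Gamma_0}\DT^\chi_{t\omega}(r,\beta,0)x^r y^\beta.
\end{align*}

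Next I would iterate Theorem~\ref{thm:formula}. Working one $y^\beta$-coefficient at a time, Proposition~\ref{prop:wall} provides a finite set of walls $0<t_1<\cdots<t_{k-1}$ on which the coefficient is locally constant; decreasing $t$ from $\infty$ to $0$ and crossing $t_{k-1},\dots,t_1$ in turn, Theorem~\ref{thm:formula} accumulates the jump factors and gives
\begin{align*}
\lim_{t\to\infty}\DT^\chi_{t\omega}(\overline{X})=\Bigg(\prod_{t_0>0}\ \prod_{\substack{\beta>0\\ n=\frac12 rt_0^2\omega^2}}\exp\big((n+2r)N(r,\beta,n)x^r y^\beta z^n\big)^{\epsilon(r)}\Bigg)\cdot\lim_{t\to 0}\DT^\chi_{t\omega}(\overline{X}),
\end{align*}
the outer product being over the (for each $\beta$, finite) set of wall values taken in decreasing order; since $\rR_0$ is commutative this product is unambiguous, and it acts on $\rR_\omega$ via the module structure of Subsection~\ref{subsec:genser}.

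Now I would collapse the double product. A monomial $x^r y^\beta z^n$ occurs in the factor at $t_0$ exactly when $r\neq 0$ (so that $\epsilon(r)\neq 0$) and $n=\tfrac12 rt_0^2\omega^2$; because $\omega$ is ample and $t_0>0$, the latter forces $n$ and $r$ to carry the same sign, hence $rn>0$, and conversely each $(r,\beta,n)$ with $\beta>0$ and $rn>0$ occurs for the unique value $t_0=\sqrt{2n/(r\omega^2)}$. Therefore
\begin{align*}
\prod_{t_0>0}\ \prod_{\substack{\beta>0\\ n=\frac12 rt_0^2\omega^2}}\exp\big((n+2r)N(r,\beta,n)x^r y^\beta z^n\big)^{\epsilon(r)}=\prod_{\beta>0,\,rn>0}\exp\big((n+2r)N(r,\beta,n)x^r y^\beta z^n\big)^{\epsilon(r)},
\end{align*}
and the right-hand side lies in $\rR_0$ because, for fixed $\beta>0$, the Bogomolov-type inequality of Corollary~\ref{cor:N} together with $rn\ge 1$ bounds $r$, and then $rn$, hence $n$, leaving only finitely many contributing $(r,n)$. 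Substituting the two endpoints from the first paragraph yields the claimed identity.

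The step I expect to require the most care is the bookkeeping in the second paragraph: justifying rigorously that the difference between the two one-sided limits at $t\to\infty$ and $t\to 0$ is governed precisely by the ordered product of the individual wall jumps in $\rR_\omega$, and that no spurious contribution is hidden at the boundary $t=0$. This is handled coefficient-wise using the local constancy and finiteness in Proposition~\ref{prop:wall} and the compatibility of the product topology on $\rR_\omega$ with the $\rR_0$-action, exactly along the lines of the proof of~\cite[Theorem~5.8]{Tcurve1}; the remaining manipulations are the purely formal ones above.
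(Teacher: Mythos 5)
Your proposal is correct and follows essentially the same route as the paper: identify the two limiting series via Proposition~\ref{prop:DT=L} (equivalently Proposition~\ref{prop:m2}), then iterate the wall-crossing formula (\ref{thm:product}) across the finitely many walls of Proposition~\ref{prop:wall} and collapse the resulting product, which is exactly the argument the paper delegates to~\cite[Corollary~5.11]{Tcurve1}. Your explicit bookkeeping of the bijection between wall factors and classes with $rn>0$, and the finiteness via Corollary~\ref{cor:N}, simply spells out the details the paper cites.
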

\begin{proof}
By Proposition~\ref{prop:DT=L}, we have 
\begin{align*}
\lim_{t\to \infty}\DT_{t\omega}^{\chi}(\overline{X})=
\sum_{(r, \beta, n)\in \Gamma_0}L(\beta, n)x^r y^{\beta}z^n.
\end{align*}
On the other hand by Proposition~\ref{prop:m2}, we have 
\begin{align*}
\lim_{t\to 0}\DT_{t\omega}^{\chi}(\overline{X})
=\lim_{t\to 0}\sum_{(r, \beta, 0) \in \Gamma_0}
\DT_{t\omega}^{\chi}(r, \beta, 0)x^r y^{\beta}. 
\end{align*}
Therefore applying the formula (\ref{thm:product})
from $0<t\ll 1$ to $t\gg 1$, 
and using the same argument of~\cite[Corollary~5.11]{Tcurve1}, 
we obtain the formula. 
\end{proof}

\subsection{Wall-crossing in $\aA_{\omega}(1/2)$}
In this subsection, we use the notation 
given in Subsection~\ref{subsec:invA12}. 
Our next step is to apply the wall-crossing formula 
in the subcategory $\aA_{\omega}(1/2) \subset \aA_{\omega}$
to prove a formula for the series 
$\lim_{t\to 0}\DT_{t\omega}^{\chi}(\overline{X})$. 
For $0<\theta<1$, we set 
\begin{align*}
\widehat{\DT}_{\omega, \theta}^{\chi}(\overline{X}) \cneq 
\sum_{(r, \beta) \in \widehat{\Gamma}_0}
\widehat{\DT}_{\omega, \theta}^{\chi}(r, \beta)x^r y^{\beta}. 
\end{align*}
We note that 
\begin{align}\label{lim:DT}
\widehat{\DT}_{\omega, 1/2}^{\chi}(\overline{X})=
\lim_{t\to 0}\sum_{(r, \beta, 0) \in \Gamma_0}
\DT_{t\omega}^{\chi}(r, \beta, 0)x^r y^{\beta}
\end{align}
by Proposition~\ref{prop:propA12} (iii). 
By the same arguments of Theorem~\ref{thm:formula}
and Corollary~\ref{cor:DTN}, we obtain the 
following proposition.
\begin{prop}\label{prop:DT=N'}
We have the formula, 
\begin{align}\label{formula:DT=N'}
\widehat{\DT}_{\omega, 1/2}^{\chi}(\overline{X})
=\prod_{r>0, \beta>0}
\exp\left(2r\widehat{N}(r, \beta)x^r y^{\beta} \right) \cdot 
\sum_{r \in \mathbb{Z}}x^r. 
\end{align}
\end{prop}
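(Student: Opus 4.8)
The plan is to apply Joyce's wall-crossing formula inside the bounded derived category $D^b(\aA_\omega(1/2))$ equipped with the weak stability conditions $(\widehat{Z}_{\omega,\theta},\aA_\omega(1/2))$ constructed in Lemma~\ref{lem:weakA12}, varying $\theta$ from $1/2$ down toward $0$. The left-hand side $\widehat{\DT}_{\omega,1/2}^\chi(\overline{X})$ is, by Lemma~\ref{lem:N=N'} and Proposition~\ref{prop:propA12}(ii), the same as the $t\to 0$ limit of $\sum \DT_{t\omega}^\chi(r,\beta,0)x^ry^\beta$; this is just the identity (\ref{lim:DT}) and needs no further argument. So everything reduces to running the wall-crossing from $\theta=1/2$ to $\theta$ small and identifying the two ends.

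First I would set up the generating series $\widehat{\DT}_{\omega,\theta}^\chi(\overline{X})\in\rR_\omega$ and observe that, exactly as in Proposition~\ref{prop:wall}/Proposition~\ref{prop:propA12}(i), for fixed $\beta$ there are only finitely many walls in $\theta\in(0,1/2)$, so the limiting series $\lim_{\theta\to\theta_0\pm 0}\widehat{\DT}_{\omega,\theta}^\chi(\overline{X})$ make sense. Then I would invoke the wall-crossing formula~\cite[Theorem~6.28]{Joy4} for rank-one invariants in $\aA_\omega(1/2)$; the key structural input is that the bilinear form used is the restriction of $\chi$ to $\widehat{\Gamma}$, and by (\ref{chi0}) its restriction to $\widehat{\Gamma}_0\times\widehat{\Gamma}_0$ is \emph{zero}. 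This is the same mechanism that made Theorem~\ref{thm:formula} work: all the higher-order error terms in Joyce's formula are products of $\chi(v_i,v_j)$ with both arguments in the rank-zero lattice, hence vanish, and only the single-wall-crossing contribution with exactly one rank-one factor and rank-zero factors $\widehat{N}(v)$ survives. This yields, for each wall $\theta_0$, a clean product formula
\begin{align*}
\lim_{\theta\to\theta_0+0}\widehat{\DT}_{\omega,\theta}^\chi(\overline{X})
=\prod_{\beta,r}\exp\bigl((\cdots)\,\widehat{N}(r,\beta)x^ry^\beta\bigr)^{\pm1}\cdot\lim_{\theta\to\theta_0-0}\widehat{\DT}_{\omega,\theta}^\chi(\overline{X}),
\end{align*}
where the combinatorial factor in the exponent I expect to work out to $2r$ from the shape of $\widehat{Z}_{\omega,\theta}$ (the rank-one central charge is $Re^{i\pi\theta}$, the rank-zero part is $-r-(\beta\cdot\omega)\sqrt{-1}$, and $\chi((R,r,\beta),(r',\beta'))$ depends only on $R$ and $r'$, giving the weight $2r$ as in the ``$2rN(r,\beta,0)$'' terms of the main theorem).

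Next I would identify the two extreme ends. At $\theta=1/2$ we have the series $\widehat{\DT}_{\omega,1/2}^\chi(\overline{X})$, which is exactly the object whose formula we want. At the small-$\theta$ end, I would use Proposition~\ref{prop:propA12}(iii): for $\theta\in(0,\theta_{k-1})$ the only $\widehat{Z}_{\omega,\theta}$-semistable rank-one object with class $(1,r,\beta)$ is $\pi^*\oO_{\mathbb{P}^1}(r)$ when $\beta=0$, and there are none when $\beta\neq0$. Since $\pi^*\oO_{\mathbb{P}^1}(r)$ is rigid with automorphism group $\mathbb{C}^*$, its contribution to $\widehat{\DT}^\chi_{\omega,\theta}(1,r,0)$ is $1$, so $\lim_{\theta\to 0}\widehat{\DT}_{\omega,\theta}^\chi(\overline{X})=\sum_{r\in\mathbb{Z}}x^r$. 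Finally I would telescope the single-wall formulas across all walls in $(0,1/2)$ — using the same bookkeeping as in~\cite[Corollary~5.11]{Tcurve1} and Corollary~\ref{cor:DTN}, together with Corollary~\ref{cor:N} to guarantee $\beta>0$ in every nonvanishing term so that the infinite product lies in $\rR_0$ and actually converges in the product topology — to arrive at
\begin{align*}
\widehat{\DT}_{\omega,1/2}^\chi(\overline{X})
=\prod_{r>0,\beta>0}\exp\bigl(2r\widehat{N}(r,\beta)x^ry^\beta\bigr)\cdot\sum_{r\in\mathbb{Z}}x^r,
\end{align*}
which is (\ref{formula:DT=N'}). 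Here I have used that walls only involve classes with $\beta\cdot\omega<0$ paired against rank-one classes, and the contribution of $(r,\beta)$ and $(-r,-\beta)$ combine (via the $\widehat{N}(v)=\widehat{N}(-v)$ convention and the sign $\epsilon(r)$) to leave only the $r>0,\beta>0$ factors with a uniform sign.

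The main obstacle I anticipate is \emph{not} the formal wall-crossing — that is essentially a transcription of Theorem~\ref{thm:formula}'s proof with $\Gamma$ replaced by $\widehat{\Gamma}$ and $Z_{t\omega}$ by $\widehat{Z}_{\omega,\theta}$, and the vanishing $\chi|_{\Gamma_0\times\Gamma_0}=0$ does all the heavy lifting to kill error terms. The delicate point is the precise identification of the small-$\theta$ end and the exact value of the exponent weight: one must check that the only semistable rank-one objects for $\theta$ near $0$ really are the twisted structure sheaves of $\mathbb{P}^1$ (this is Proposition~\ref{prop:propA12}(iii), whose proof is postponed, so I would cite it), and one must verify that the combinatorial coefficient $U(\{v_1',\dots,v_l'\},\theta_1,\theta_2)$ collapses correctly so that the accumulated exponent is exactly $2r\widehat{N}(r,\beta)$ rather than some other multiple; this is a finite check on the phase function $\arg\widehat{Z}_{\omega,\theta}$ near $\theta=1/2$ and across each wall, analogous to but slightly different from the $n=\tfrac12 rt_0^2\omega^2$ wall analysis in Theorem~\ref{thm:formula}. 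I would present these two checks carefully and leave the rest as ``the same argument as in Corollary~\ref{cor:DTN}.''
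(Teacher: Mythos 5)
Your route is the paper's own: run the wall-crossing for $(\widehat{Z}_{\omega,\theta},\aA_{\omega}(1/2))$ in $\theta$, using the restriction of $\chi$ to $\widehat{\Gamma}\times\widehat{\Gamma}_0$, namely $\chi((R,r,\beta),(r',\beta'))=2Rr'$, whose vanishing on $\widehat{\Gamma}_0\times\widehat{\Gamma}_0$ kills all higher-order terms and leaves per-wall factors $\exp(2r\widehat{N}(r,\beta)x^ry^{\beta})$; identify the small-$\theta$ end via Proposition~\ref{prop:propA12}(iii) (indeed $\widehat{\mM}_{\omega,\theta}(1,r,0)=[\Spec\mathbb{C}/\mathbb{C}^{\ast}]$ gives $(q-1)P_q=1$, which is (\ref{Aequal2})); and get $\beta>0$ and convergence from Corollary~\ref{cor:N} together with Lemma~\ref{lem:N=N'}. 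The one genuine omission is at the other end: telescoping the wall-crossing over the walls in $(0,1/2)$ only yields $\lim_{\theta\to 1/2-0}\widehat{\DT}^{\chi}_{\omega,\theta}(\overline{X})$, and you simply declare this to equal $\widehat{\DT}^{\chi}_{\omega,1/2}(\overline{X})$. But $\theta=1/2$ is itself a wall: for every class $v=(0,\beta)\in\widehat{\Gamma}_0$ with $\beta\cdot\omega\neq 0$ (the classes of one-dimensional sheaves, which are plentiful in $\bB_{\omega}(1/2)$) the phase of $\widehat{Z}_{\omega,\theta,0}(v)$ is constantly $1/2$, so these classes come into alignment with the rank-one ray exactly at $\theta=1/2$, rank-one objects can become strictly semistable there, and a priori the value at $\theta=1/2$ could differ from the one-sided limit. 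The paper closes this in the equality (\ref{Aequal1}) by observing that any $v\in\widehat{W}_{1/2}$ has first entry $r=0$, hence $\chi((1,v'),v)=0$ for all $v'\in\widehat{\Gamma}_0$, so Joyce's formula produces no correction in passing from $\theta\to 1/2-0$ to $\theta=1/2$. It is the same vanishing you already exploit, so the fix is one line, but as written this step is asserted rather than proved.

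A minor bookkeeping correction: you do not need to combine the contributions of $(r,\beta)$ and $(-r,-\beta)$ via $\widehat{N}(v)=\widehat{N}(-v)$ and a sign $\epsilon(r)$ as in Theorem~\ref{thm:formula}. For $\theta_0\in(0,1/2)$ the wall set is one-sided: $-(r,\beta)\in\widehat{W}_{\theta_0}$ forces $r=\beta\cdot\omega/\tan\pi\theta_0>0$, so only positive-$r$ classes (in the sign convention of the product) appear at each wall and the formula (\ref{formula:A12}) carries no $\epsilon$-exponent, in contrast with the two families $W^{\pm}_{t_0}$ of the $t$-wall-crossing; the range $r>0$, $\beta>0$ in (\ref{formula:DT=N'}) then falls out directly. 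Apart from these two points, your outline — including the weight $2r$ and the citation of the postponed Proposition~\ref{prop:propA12} — matches the paper's proof.
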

\begin{proof}
Note that the bilinear map $\chi$
given in (\ref{def:chi}) restricts to a bilinear 
map on $\widehat{\Gamma} \times \widehat{\Gamma}_0$, 
given by 
\begin{align*}
\chi((R, r, \beta), (r', \beta'))=2Rr'. 
\end{align*}
The above bilinear map  
satisfies the same condition as in (\ref{chi})
for $E \in \aA_{\omega}(1/2)$
and $F \in \bB_{\omega}(1/2)$.  
Therefore the same argument of Theorem~\ref{thm:formula}
shows that, for each $\theta_0 \in (0, 1/2)$, we have 
\begin{align}\notag
&\lim_{\theta \to \theta_0 +0}
\widehat{\DT}_{\omega, \theta}^{\chi}(\overline{X}) \\
\label{formula:A12}
&=\prod_{-(r, \beta) \in \widehat{W}_{\theta_0}}
\exp\left(2r\widehat{N}(r, \beta)x^r y^{\beta} \right) \cdot 
\lim_{\theta \to \theta_0 -0}
\widehat{\DT}_{\omega, \theta}^{\chi}(\overline{X}).
\end{align}
Here $\widehat{W}_{\theta_0}$ is defined by 
\begin{align*}
\widehat{W}_{\theta_0} \cneq \{ v\in \widehat{\Gamma}_{0} : 
\widehat{Z}_{\omega, \theta_0, 0}(v) \in \mathbb{R}_{>0}e^{i\pi \theta_0}\}.
\end{align*}
For $(r, \beta) \in \widehat{\Gamma}_0 =\mathbb{Z} \oplus \mathrm{NS}(S)$, 
we have $-(r, \beta) \in \widehat{W}_{\theta_0}$ if and only if 
\begin{align*}
r=\frac{\beta \cdot \omega}{\tan \pi \theta_0} >0. 
\end{align*}
Also 
if $\widehat{N}(r, \beta) \neq 0$ in the formula (\ref{formula:A12}), 
then $\beta>0$ by Corollary~\ref{cor:N} and Lemma~\ref{lem:N=N'}. 
By applying the formula (\ref{formula:A12})
 from $\theta \to 0$ to $\theta \to 1/2$, 
we obtain 
\begin{align}\notag
&\lim_{\theta \to 1/2-0}\widehat{\DT}_{\omega, \theta}^{\chi}(\overline{X}) \\
\label{formula:result}
&=\prod_{r>0, \beta>0}
\exp\left(2r\widehat{N}(r, \beta)x^r y^{\beta} \right) \cdot 
\lim_{\theta \to 0}
\widehat{\DT}_{\omega, \theta}^{\chi}(\overline{X}).
\end{align}
Hence the formula (\ref{formula:DT=N'}) 
follows from (\ref{formula:result}) and 
the following equalities, 
\begin{align}\label{Aequal1}
\lim_{\theta \to 1/2 -0}
\widehat{\DT}_{\omega, \theta}^{\chi}(\overline{X}) &=
\widehat{\DT}_{\omega, 1/2}^{\chi}(\overline{X}), \\
\label{Aequal2}
\lim_{\theta \to 0}
\widehat{\DT}_{\omega, \theta}^{\chi}(\overline{X}) &=
\sum_{r\in\mathbb{Z}}x^r. 
\end{align}
To see the equality (\ref{Aequal1}), 
note that if $v=(r, \beta)\in \widehat{W}_{1/2}$, then 
$r=0$ and $\chi((1, v'), v)=0$ for any $v' \in \widehat{\Gamma}_0$. 
This implies that, 
by the formula given in~\cite[Theorem~6.28]{Joy4}, 
there is no wall-crossing from 
$\theta \to 1/2 -0$ to $\theta =1/2$, and the 
generating series does not change. 

Also note that 
Proposition~\ref{prop:propA12} (iii)
implies that 
\begin{align*}
\widehat{\mM}_{\omega, \theta}(1, r, \beta)
=\left\{ \begin{array}{cc}
[\Spec \mathbb{C}/\mathbb{C}^{\ast}], & \mbox{ if } \beta=0, \\
\emptyset, & \mbox{ if } \beta \neq 0,
\end{array}   \right.
\end{align*}
for $0<\theta \ll 1$. 
Then the equality (\ref{Aequal2}) follows from the 
definition of $\widehat{\DT}_{\omega, \theta}^{\chi}(r, \beta)$. 
\end{proof}

\subsection{Generating series of stable pairs}
Let $\PT^{\chi}(\overline{X})$ and $\PT^{\chi}(X)$
be the generating series of stable pair invariants, 
introduced in Subsection~\ref{subsec:stablepair}.
By combining the results in the previous subsections, 
we prove formulas for these generating series. 
\begin{thm}\label{thm:PTb=N}
We have the formula, 
\begin{align}\label{formula:PTb=N}
\PT^{\chi}(\overline{X})=
\prod_{\beta>0, (r, n) \in \mathbb{S}}
\exp\left( (n+2r)N(r, \beta, n) y^{\beta} z^n  \right)^{\epsilon(r+n)}. 
\end{align}
Here $\mathbb{S} \subset \mathbb{Z}^{\oplus 2}$ is given by 
\begin{align*}
\mathbb{S} \cneq \{ (r, n) \in \mathbb{Z}^{\oplus 2} :
rn>0 \mbox{ or } r=0, n>0, \mbox{ or } r>0, n=0\}. 
\end{align*}
\end{thm}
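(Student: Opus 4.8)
The plan is to assemble the formula from Theorem~\ref{thm:PNL} together with the two wall-crossing identities already in hand, Corollary~\ref{cor:DTN} and Proposition~\ref{prop:DT=N'}, and then to read off the coefficient of $x^{0}$. All manipulations take place in the topological ring $\rR_0$ acting on $\rR_{\omega}$ from Subsection~\ref{subsec:genser}; I fix an integral ample divisor $\omega$, which is harmless since $N(r,\beta,n)$ is independent of $\omega$. First, by Theorem~\ref{thm:PNL},
\[
\PT^{\chi}(\overline{X})=\prod_{\beta>0,\,n>0}\exp\left(nN(0,\beta,n)y^{\beta}z^{n}\right)\cdot\left(\sum_{\beta,n}L(\beta,n)y^{\beta}z^{n}\right),
\]
so it suffices to express $\sum_{\beta,n}L(\beta,n)y^{\beta}z^{n}$ through the invariants $N(r,\beta,n)$. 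For this I would substitute Proposition~\ref{prop:DT=N'}, together with Lemma~\ref{lem:N=N'} (which gives $\widehat{N}(r,\beta)=N(r,\beta,0)$) and the identity (\ref{lim:DT}), into the right-hand side of Corollary~\ref{cor:DTN}, obtaining
\begin{align*}
\sum_{(r,\beta,n)\in\Gamma_0}L(\beta,n)x^{r}y^{\beta}z^{n}=&\prod_{\beta>0,\,rn>0}\exp\left((n+2r)N(r,\beta,n)x^{r}y^{\beta}z^{n}\right)^{\epsilon(r)}\\
&\cdot\prod_{r>0,\,\beta>0}\exp\left(2rN(r,\beta,0)x^{r}y^{\beta}\right)\cdot\sum_{r\in\mathbb{Z}}x^{r}.
\end{align*}

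The crucial step is to take the coefficient of $x^{0}$ on both sides. On the left this is exactly $\sum_{\beta,n}L(\beta,n)y^{\beta}z^{n}$, since $L(\beta,n)$ does not depend on $r$. On the right I would first observe, using Corollary~\ref{cor:N} and the fact that $\beta\cdot\omega>0$ for $\beta>0$, that for each fixed $\beta$ only finitely many pairs $(r,n)$ with $N(r,\beta,n)\neq0$ occur in the two products; hence the $y^{\beta}$-component of the product of exponentials is a genuine Laurent polynomial in $x$, and multiplying by $\sum_{r}x^{r}$ and then extracting the $x^{0}$-term amounts to setting $x=1$ in that Laurent polynomial. This yields
\begin{align*}
\sum_{\beta,n}L(\beta,n)y^{\beta}z^{n}=\exp\left(\sum_{\beta>0,\,rn>0}\epsilon(r)(n+2r)N(r,\beta,n)y^{\beta}z^{n}+\sum_{r>0,\,\beta>0}2rN(r,\beta,0)y^{\beta}\right).
\end{align*}

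Finally I would substitute this back into the expression for $\PT^{\chi}(\overline{X})$ and match the three resulting factors against the index set $\mathbb{S}$. The factor $\prod_{\beta>0,n>0}\exp(nN(0,\beta,n)y^{\beta}z^{n})$ corresponds to the pairs $(r,n)=(0,n)$ with $n>0$, where $n+2r=n$ and $\epsilon(r+n)=\epsilon(n)=1$; the factor $\exp(\sum_{r>0,\beta>0}2rN(r,\beta,0)y^{\beta})$ corresponds to $(r,0)$ with $r>0$, where $n+2r=2r$ and $\epsilon(r+n)=\epsilon(r)=1$; and for the remaining terms with $rn>0$ one checks that $r$ and $r+n$ have the same sign, so $\epsilon(r)=\epsilon(r+n)$, and the middle factor is $\prod_{\beta>0,rn>0}\exp((n+2r)N(r,\beta,n)y^{\beta}z^{n})^{\epsilon(r+n)}$. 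Since $\mathbb{S}$ is precisely the disjoint union of these three index ranges, this produces the claimed product expansion (\ref{formula:PTb=N}).

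The main obstacle is really just the $x^{0}$-extraction: the cancellation of the factor $\sum_{r}x^{r}$ is legitimate only because Corollary~\ref{cor:N} forces the $x$-support of the relevant exponentials to be finite for each $\beta$, so that the identity is read degree by degree in $\omega\cdot\beta$ inside $\rR_{0}$ rather than formally; once this is in place, everything else is a direct substitution of results already proved earlier in the paper, plus the elementary sign bookkeeping for $\epsilon$.
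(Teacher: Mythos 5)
Your proposal is correct and follows essentially the same route as the paper: it combines Theorem~\ref{thm:PNL}, Corollary~\ref{cor:DTN}, the identity (\ref{lim:DT}), Proposition~\ref{prop:DT=N'} and Lemma~\ref{lem:N=N'}, and then extracts the $x^{0}$-coefficient, which is exactly the paper's ``comparing the $x^{0}$-term'' step (you merely perform the extraction on the $L(\beta,n)$-series before substituting into Theorem~\ref{thm:PNL}, rather than after multiplying $\PT^{\chi}(\overline{X})$ by $\sum_{r}x^{r}$). Your explicit justification via Corollary~\ref{cor:N} that the $x$-support is finite for each $\beta$, so that the $x^{0}$-term of a Laurent polynomial times $\sum_{r}x^{r}$ is its value at $x=1$, is a correct spelling-out of what the paper leaves implicit.
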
 
\begin{proof}
By Theorem~\ref{thm:PNL}, Corollary~\ref{cor:DTN},
the equality (\ref{lim:DT}),
 Proposition~\ref{prop:DT=N'}
and Lemma~\ref{lem:N=N'},  
we obtain 
\begin{align*}
&\PT^{\chi}(\overline{X}) \cdot 
\sum_{r \in \mathbb{Z}} x^r \\
&=\prod_{\beta>0, (r, n) \in \mathbb{S}}
\exp\left( (n+2r)N(r, \beta, n) x^r y^{\beta} z^n  \right)^{\epsilon(r+n)}
\cdot \sum_{r \in \mathbb{Z}}x^r.  
\end{align*}
By comparing the $x^{0}$-term, we obtain the formula (\ref{formula:PTb=N}). 
\end{proof}
Finally, we prove
our main theorem  
which relates $\PT^{\chi}(X)$to 
sheaf counting invariants $J(v) \in \mathbb{Q}$
for $v\in \Gamma_0$
introduced in Subsection~\ref{subsec:sheafcount}.
\begin{thm}\label{maintheorem:PTJ}
We have the formula, 
\begin{align}\notag
\PT^{\chi}(X)=\prod_{r\ge 0, \beta>0, n\ge 0} &
\exp\left((n+2r)J(r, \beta, r+n)y^{\beta}z^n \right) \\
\label{main:formula2}
&\cdot \prod_{r> 0, \beta>0, n>0}
\exp\left((n+2r)J(r, \beta, r+n) y^{\beta} z^{-n} \right).
\end{align}
\end{thm}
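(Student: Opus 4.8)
The plan is to combine Theorem~\ref{thm:PTb=N}, which expresses $\PT^{\chi}(\overline{X})$ as an infinite product in the invariants $N(r,\beta,n)$, with the identities $\PT^{\chi}(\overline{X})=\PT^{\chi}(X)^2$ (Lemma~\ref{lem:eqPT}) and $N(r,\beta,n)=2J(r,\beta,r+n)$. The latter is Corollary~\ref{prop:inv} applied to $v=(r,\beta,n)$, since $\sqrt{\td_S}=(1,0,1)$ forces $v\sqrt{\td_S}=(r,\beta,r+n)$. Both $\PT^{\chi}(X)$ and $\PT^{\chi}(\overline{X})$ have constant term $1$, and the exponentials and logarithms involved make sense in the topological ring of Subsection~\ref{subsec:genser} (now only in the variables $y^{\beta}$ and $z^{\pm 1}$); so it suffices to verify the formula after applying $\log$. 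Taking $\log$ of the product in Theorem~\ref{thm:PTb=N}, using $\PT^{\chi}(X)=\PT^{\chi}(\overline{X})^{1/2}$ and the identity for $N$, I get
\begin{align*}
\log \PT^{\chi}(X)=\sum_{\beta>0,\ (r,n)\in\mathbb{S}} \epsilon(r+n)\,(n+2r)\,J(r,\beta,r+n)\,y^{\beta}z^n,
\end{align*}
and everything reduces to a reindexing of this sum.

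I would split $\mathbb{S}=\mathbb{S}_{+}\sqcup\mathbb{S}_{-}$ with $\mathbb{S}_{+}=\{r>0,n>0\}\cup\{r=0,n>0\}\cup\{r>0,n=0\}$ and $\mathbb{S}_{-}=\{r<0,n<0\}$; note $\epsilon(r+n)=1$ on $\mathbb{S}_{+}$ and $\epsilon(r+n)=-1$ on $\mathbb{S}_{-}$. The $\mathbb{S}_{+}$-part of the sum is $\sum_{\beta>0,\ r\ge0,\ n\ge0}(n+2r)J(r,\beta,r+n)y^{\beta}z^n$, the only difference being the harmless term $(r,n)=(0,0)$ whose coefficient $n+2r$ vanishes; this is precisely the logarithm of the first product in (\ref{main:formula2}).

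For the $\mathbb{S}_{-}$-part I would substitute $r'=-r>0$, $n'=-n>0$, so that $n+2r=-(n'+2r')$, $r+n=-(r'+n')$ and $z^{n}=z^{-n'}$; the sign $\epsilon(r+n)=-1$ is absorbed, and the $\mathbb{S}_{-}$-part becomes $\sum_{\beta>0,\ r'>0,\ n'>0}(n'+2r')\,J(-r',\beta,-(r'+n'))\,y^{\beta}z^{-n'}$. Now I invoke Theorem~\ref{thm:aut} for the involution $g\colon(a,\gamma,b)\mapsto(-a,\gamma,-b)$ of $\widetilde{H}(S,\mathbb{Z})$: it is an integral isometry for the Mukai pairing and it preserves every summand of the Hodge decomposition, hence lies in $G$, and therefore $J(-r',\beta,-(r'+n'))=J(r',\beta,r'+n')$. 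Thus the $\mathbb{S}_{-}$-part equals the logarithm of the second product in (\ref{main:formula2}). Summing the two contributions gives $\log\PT^{\chi}(X)$ as the sum of the logarithms of the two products, and exponentiating proves (\ref{main:formula2}).

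I expect no serious obstacle here, since all the analysis is packaged into the results already cited; the one conceptual point, and the step most worth getting right, is the identification in the last paragraph — the part of the stable-pair series with negative $z$-exponents, which on $\overline{X}$ enters with the ``wrong'' sign $\epsilon=-1$ and with negative rank and degree, is matched with genuine positive-slope sheaf-counting invariants only via the automorphic invariance of $J$. Beyond that one must check the index sets agree on the $r=0$ and $n=0$ strata, and that for each fixed $\beta$ only finitely many terms contribute to a given coefficient so that both products lie in the completed ring; the latter follows from the Bogomolov-type bound (Lemma~\ref{lem:Bog}, in the form of Corollary~\ref{cor:N}), which bounds $r+n$ from above whenever $r>0$.
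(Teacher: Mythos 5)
Your proposal is correct and follows the paper's own route: the paper's proof of this theorem likewise just combines Theorem~\ref{thm:PTb=N}, Lemma~\ref{lem:eqPT}, Corollary~\ref{prop:inv} (giving $N(r,\beta,n)=2J(r,\beta,r+n)$), and the identity $J(-r,\beta,-n)=J(r,\beta,n)$ from Theorem~\ref{thm:aut}, which is exactly your reindexing of the $\mathbb{S}_{-}$ part. Your write-up merely spells out the halving of the logarithm and the splitting of $\mathbb{S}$, which the paper leaves implicit.
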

\begin{proof}
The formula (\ref{main:formula2}) follows 
from Theorem~\ref{thm:PTb=N}, Lemma~\ref{lem:eqPT}, 
Corollary~\ref{prop:inv}, and noting that 
\begin{align*}
J(-r, \beta, -n)=J(r, \beta, n), 
\end{align*}
by Theorem~\ref{thm:aut}. 
\end{proof}

\section{Discussion toward Katz-Klemm-Vafa conjecture}\label{sec:KKV}
In this section we discuss how Theorem~\ref{maintheorem:PTJ}
is related to the conjecture by Katz-Klemm-Vafa (KKV)~\cite{KKV}. 
\subsection{KKV conjecture}\label{subsec:KKVconj}
Let $S$ be a K3 surface, and $X=S\times \mathbb{C}$
as before. 
Let $\overline{M}_{g}(X, \beta)$
be the moduli stack of 
stable maps from 
genus $g$ connected nodal 
curves to $X$ with 
curve class $\beta \in \mathrm{NS}(S)$. 
Note that $S$ has a holomorphic symplectic 
form, and 
there is a $\mathbb{C}^{\ast}$-action
on $X$ by multiplying the second factor. 
Therefore 
 $\overline{M}_{g}(X, \beta)$ admits an 
equivariant  
reduced obstruction theory and 
an equivariant 
reduced virtual class,
(see~\cite[Section~1]{MPT},)
\begin{align*}
[\overline{M}_{g}(X, \beta)]^{\rm{red}} \in 
A_1^{\mathbb{C}^{\ast}}(\overline{M}_g(X, \beta), \mathbb{Q}). 
\end{align*}
Since 
$\overline{M}_g(X, \beta)^{\mathbb{C}^{\ast}}$ is compact, 
we can define the 
integration of the reduced virtual class by 
\begin{align*}
\int_{[\overline{M}_g(X, \beta)]^{\rm{red}}} 1 \cneq 
\int_{{[\overline{M}_g(X, \beta)^{\mathbb{C}^{\ast}}]^{\rm{red}}}}
\frac{1}{e(\mathrm{Nor}^{\rm{vir}})} \in \mathbb{Q}(u),
\end{align*}
where $\mathrm{Nor}^{\rm{vir}}$ is the virtual normal 
bundle of the embedding 
$\overline{M}_g(X, \beta)^{\mathbb{C}^{\ast}} \subset
\overline{M}_g(X, \beta)$, 
and $u$ is the equivariant 
parameter 
for the $\mathbb{C}^{\ast}$-action on $X$.  
The reduced GW invariant $R_{g, \beta} \in \mathbb{Q}$ is defined by  
\begin{align}\label{reduced:GW}
R_{g, \beta} 
\cneq \mathrm{Res}_{u=0}\int_{[\overline{M}_g(X, \beta)]^{\rm{red}}} 
1.
\end{align}
The invariant (\ref{reduced:GW}) is 
unchanged under deformations of $S$ 
which preserves $\beta$ to be
an algebraic class. The Gromov-Witten partition function
is 
\begin{align*}
\mathrm{GW}(X) &\cneq 
\sum_{g\ge 0, \beta}
R_{g, \beta}\lambda^{2g-2}y^{\beta} \\
&=\sum_{\beta}\mathrm{GW}_{\beta}(X)y^{\beta},
\end{align*} 
where $\mathrm{GW}_{\beta}(X)$ is a series of 
$\lambda$. 
The BPS number $r_{g, \beta}$ is uniquely defined 
by the equation, 
\begin{align*}
\mathrm{GW}(X)
=\sum_{\beta, g, k}
\frac{r_{g, \beta}}{k}
\left(2\sin\left(\frac{k\lambda}{2} \right) \right)^{2g-2}
y^{k\beta}. 
\end{align*}
The following conjecture is a 
mathematical formulation of KKV conjecture~\cite{KKV}
by Maulik-Pandharipande~\cite{MP1}
in terms of reduced Gromov-Witten invariants. 
\begin{conj}{\bf~\cite[Section~6]{KKV}, 
\cite[Conjecture~1, 2]{MP1}}\label{conj:KKV}

(i) The BPS count $r_{g, \beta}$ depends only on 
$g$ and $\beta^2$.  
If $\beta^2 =2h-2$, then 
we set $r_{g, h} \cneq r_{g, \beta}$. 

(ii) The numbers $r_{g, h}$ are determined 
by the following equation, 
\begin{align}\label{KKV:ii}
\sum_{g=0}^{\infty}
\sum_{h=0}^{\infty}
(-1)^{g} r_{g, h}
\left(\sqrt{z}-\frac{1}{\sqrt{z}} \right)^{2g}
q^{h-1}=
\frac{1}{\Delta(z, q)},
\end{align} 
where $\Delta(z, q)$ is 
\begin{align*}
\Delta(z, q)=q\prod_{n=1}^{\infty}(1-q^n)^{20}(1-zq^n)^2 (1-z^{-1}q^n)^2. 
\end{align*}
\end{conj}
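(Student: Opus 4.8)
The plan is to follow the strategy of Maulik-Pandharipande-Thomas for primitive classes~\cite{MPT} and to push it to arbitrary $\beta$ using the product formula of Theorem~\ref{maintheorem:PTJ}. Since the conjecture concerns reduced GW invariants, the whole argument has to be transported from the Euler-characteristic invariants of this paper to the reduced side. I would organize it as a reduced GW/PT correspondence, a reduced analogue of~\eqref{main:formula}, the multiple cover formula of Conjecture~\ref{conj:intro}, and a final modular comparison.

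For part (i), I would argue by deformation invariance. The reduced invariants $R_{g,\beta}$ are unchanged under deformations of $S$ preserving the algebraicity of $\beta$, so $R_{g,\beta}$ depends only on the monodromy orbit of $(S,\beta)$. Within a fixed divisibility this orbit is determined by $\beta^2$, which already yields $r_{g,\beta}=r_{g,h}$ for $\beta^2=2h-2$ among classes of equal divisibility. The stronger independence of the divisibility is the arithmetic shadow of the automorphic property, Theorem~\ref{thm:aut} and Corollary~\ref{prop:aut}, which asserts that the sheaf-counting side $J(v)$ is invariant under the full Hodge isometry group $G$ of $\widetilde{H}(S,\mathbb{Z})$. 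Because a Hodge isometry of the Mukai lattice can carry a class $(0,\beta,0)$ with imprimitive $\beta$ to a primitive Mukai vector of the same square, this reduces the general case to the primitive one and forces dependence on $\beta^2$ alone, once the correspondence below transfers the automorphic invariance from $J$ to the curve-counting side.

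For part (ii), I would assemble four inputs. First, establish a reduced GW/PT correspondence for $X=S\times\mathbb{C}$, writing $\mathrm{GW}(X)$ in terms of a generating series of reduced PT invariants; in primitive classes this is due to~\cite{MPT}, and I would extend it to all $\beta$. Second, prove the reduced analogue of the product formula~\eqref{main:formula}, expressing reduced PT invariants through the invariants $J(r,\beta,r+n)$. Third, invoke Conjecture~\ref{conj:intro}, which together with~\eqref{J=Hilb} and G\"ottsche's formula~\eqref{Goett} converts~\eqref{main:formula} into the Borcherds-type product~\eqref{Borcherd}. Fourth, match this product with the right-hand side $1/\Delta(z,q)$: after the change of variables dictated by the GW/PT and BPS transforms, the exponents $(n+2r)\chi(\Hilb^{\beta^2/2-r(n+r)+1}(S))$ reorganize, via the exponent $24$ in~\eqref{Goett} and the shape of $\Delta(z,q)$, into the modular generating function of Conjecture~\ref{conj:KKV}(ii). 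This last comparison is essentially the computation by which Kawai-Yoshioka's formula is recovered in Subsection~\ref{subsec:reduced2}, now performed for all classes at once.

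The main obstacle is the reduced analogue of~\eqref{main:formula} in the second input. As the introduction already flags, the wall-crossing machinery of~\cite{JS},~\cite{K-S} that produces Theorem~\ref{maintheorem:PTJ} operates on invariants weighted by the Behrend function, and it is not clear that reduced PT invariants are of this form; there is a genuine technical gap in transporting the Euler-characteristic identity to the virtual/reduced setting. Overcoming it requires either exhibiting reduced PT invariants as Behrend-weighted counts compatible with wall-crossing on $\aA_\omega$, or developing a reduced wall-crossing formalism directly. I expect this to be the hard part; once it is in place, the passage from~\eqref{main:formula} to~\eqref{Borcherd} and the concluding modular matching should be comparatively formal.
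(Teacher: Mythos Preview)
This statement is a \emph{conjecture}, and the paper does not prove it unconditionally. What the paper provides is Theorem~\ref{thm:KKV}: assuming the reduced GW/PT correspondence~\eqref{expect1} and the reduced product formula~\eqref{KKV:formula2}, Conjecture~\ref{conj:KKV} follows. Your proposal is in the same spirit but differs in two respects worth flagging.

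First, your argument for part~(i) contains an error. You claim that a Hodge isometry of $\widetilde{H}(S,\mathbb{Z})$ can carry an imprimitive $(0,\beta,0)$ to a primitive Mukai vector of the same square. This is false: lattice isometries preserve divisibility, so an imprimitive vector stays imprimitive. The paper does not argue this way. Instead, once~\eqref{product:form} is established, the exponents $(n+2r)\chi(\Hilb^{\beta^2/2-r(n+r)+1}(S))$ visibly depend only on $\beta^2$, and comparing with the Gopakumar--Vafa form~\eqref{GVform} forces $r_{g,\beta}$ to depend only on $\beta^2$. The reduction to the primitive case then goes by choosing, on an elliptic K3 with section, a \emph{primitive} class ${\bf s}+h{\bf f}$ with the same square as $\beta$; this is a deformation-and-choice-of-class argument, not a Hodge isometry.

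Second, and more substantively, you list the multiple cover formula (Conjecture~\ref{conj:intro}) as a separate third input. The paper's Theorem~\ref{thm:KKV} is sharper: it \emph{derives} the multiple cover formula~\eqref{mult1} for $J(0,\beta,n)$ as a consequence of~\eqref{expect1} and~\eqref{KKV:formula2}, via~\cite[Theorem~6.4]{Tsurvey} (the characterization of when $\PT(X)$ admits a Gopakumar--Vafa expansion). It then upgrades~\eqref{mult1} to the general multiple cover formula~\eqref{mult2} for arbitrary $v=(r,\beta,n)$ by passing to an elliptically fibred K3 and using a relative Fourier--Mukai transform to send $(r,\beta,n)$ to a class of the form $(0,\beta',n')$, invoking Proposition~\ref{prop:equivalence} and Lemma~\ref{Jlocarize}. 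So the paper needs only two hypotheses where you assume three; the multiple cover formula is output, not input.

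You correctly identify the genuine obstruction --- establishing the reduced analogue of~\eqref{main:formula} --- and your diagnosis of why (Behrend-function compatibility of reduced invariants) matches the paper's own discussion in Subsection~\ref{spec:KKV}.
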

The following result is obtained 
by Maulik-Pandharipande-Thomas~\cite{MPT}. 
\begin{thm}{\bf \cite[Theorem~1]{MPT}}\label{thm:MPT}
The invariants $r_{g, h}$ for primitive curve 
classes satisfy the equation (\ref{KKV:ii}). 
\end{thm}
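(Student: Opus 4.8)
The plan is to follow Maulik--Pandharipande--Thomas and reduce the statement to Kawai--Yoshioka's explicit computation. The first step is to establish a \emph{reduced} Gromov--Witten/stable-pairs correspondence for a primitive class $\beta$: one shows that, after the standard change of variables $-q=e^{i\lambda}$, the reduced Gromov--Witten series $\mathrm{GW}_{\beta}(X)$ agrees with the reduced stable-pairs series $\sum_{n}\mathrm{PT}^{\mathrm{red}}_{n,\beta}(X)\,q^{n}$, which is moreover a rational function of $q$. This is the K3 analogue of the MNOP/PT correspondence for the equivariant reduced theories on the non-compact target $X=S\times\mathbb{C}$; it is proved by a degeneration argument over a one-parameter family of K3 surfaces together with virtual localization, using rationality of the reduced descendent series. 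Granting this, the BPS numbers $r_{g,\beta}$ defined through $\mathrm{GW}(X)$ are encoded by the reduced stable-pairs series, so it suffices to compute the latter for all primitive $\beta$.

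Next I would invoke deformation invariance: the reduced invariants are unchanged under deformations of $S$ keeping $\beta$ algebraic, and a primitive class can always be deformed so that $\beta$ becomes \emph{irreducible}, i.e. admits no splitting $\beta=\beta_{1}+\beta_{2}$ with $\beta_{i}>0$. This reduces the problem to irreducible $\beta$. For irreducible $\beta$ the moduli space $P_{n}(X,\beta)$ has no strictly semistable behaviour, and under the $\mathbb{C}^{\ast}$-localization the reduced virtual class contributes only via the topological Euler characteristic; thus the reduced stable-pairs invariant equals (up to the evident sign conventions) the integer $\chi(P_{n}(X,\beta))$ of (\ref{inv:chi}). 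Now I would feed in Kawai--Yoshioka's closed formula for $\chi(P_{n}(X,\beta))$ with irreducible $\beta$ (cf.~Theorem~\ref{KKV:KY}): combined with the G\"ottsche formula (\ref{Goett}) for $\chi(\Hilb^{n}(S))$, and after collecting the contributions of all irreducible $\beta$ with $\beta^{2}=2h-2$, the resulting generating function is exactly $1/\Delta(z,q)$, with the stable-pairs variables matched to $(z,q)$. Rewriting this in the BPS form $\sum_{g,h}(-1)^{g}r_{g,h}(\sqrt{z}-1/\sqrt{z})^{2g}q^{h-1}$ shows simultaneously that $r_{g,\beta}$ depends only on $g$ and $\beta^{2}$ and that the numbers $r_{g,h}$ satisfy the identity (\ref{KKV:ii}).

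The main obstacle is the first step, the reduced GW/PT correspondence. On a non-compact target one is forced to work with an equivariant reduced obstruction theory, so the degeneration formula must be set up for the reduced theory (with a family over $\mathbb{P}^{1}$ degenerating $S$ to a nodal surface while tracking the extra cohomology class responsible for the ``reduction''), and one must prove rationality of the reduced descendent stable-pairs series and control all virtual localization contributions at the fixed loci. The deformation reduction to irreducible classes, the identification of reduced stable-pairs invariants with $\chi(P_{n}(X,\beta))$ for irreducible $\beta$, and the combinatorial matching of Kawai--Yoshioka's formula with $1/\Delta(z,q)$ are comparatively routine once that correspondence is available.
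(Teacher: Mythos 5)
Your proposal is correct and follows essentially the same route the paper attributes to Maulik--Pandharipande--Thomas and recounts in Section~\ref{sec:KKV}: the reduced GW/PT correspondence for primitive classes, deformation to irreducible $\beta$, the identification $P_{n,\beta}=(-1)^{n-1}\chi(P_n(X,\beta))$ of (\ref{pair/chi}), and Kawai--Yoshioka's formula (Theorem~\ref{KKV:KY}) together with G\"ottsche's formula. No substantive difference from the cited argument.
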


\subsection{Reduced PT invariants}\label{subsec:reduced2}
Similarly to the reduced GW theory, 
we can define the reduced PT invariants. 
Namely there is an equivariant reduced virtual class in dimension one, 
(cf.~\cite[Section~1]{MPT},) 
\begin{align*}
[P_n(X, \beta)]^{\rm{red}} \in A_1^{\mathbb{C}^{\ast}}
(P_n(X, \beta), \mathbb{Z}),
\end{align*}
and the reduced PT invariant $P_{n, \beta} \in \mathbb{Z}$
is defined by,
\begin{align*}
P_{n, \beta} \cneq 
\mathrm{Res}_{u=0} \int_{[P_n(X, \beta)]^{\rm{red}}}
1. 
\end{align*}
The generating series is defined by,
\begin{align*}
\PT(X) &\cneq \sum_{\beta, n}P_{n, \beta}y^{\beta}z^n \\
&= \sum_{\beta}\PT_{\beta}(X)y^{\beta},
\end{align*}
where $\PT_{\beta}(X)$ is a series of $z$. 
If $\beta$ is an irreducible curve class, 
then $P_{n, \beta}$ 
coincides with the Euler characteristic
invariant, 
\begin{align}\label{pair/chi}
P_{n, \beta}=(-1)^{n-1}\chi(P_n(X, \beta)), 
\end{align}
by~\cite[Lemma~8]{MPT}.
In this case,  
$P_n(X, \beta)$ depends
only on $n$ and the norm $\beta^2$ up to 
deformation equivalence. 
We write $P_n(X, \beta)$ as $P_n(X, h)$
when $\beta^2 =2h-2$. 
The following result is given by 
Kawai-Yoshioka~\cite{KY}. 
(Also see~\cite{Bakker} for higher 
rank generalization.)
\begin{thm}{\bf~\cite[Theorem~5.80]{KY} }\label{KKV:KY}
We have the formula, 
\begin{align}\label{KYcomp}
\sum_{h=0}^{\infty}\sum_{n=1-h}^{\infty}
\chi(P_n(X, h))z^n q^{h-1}
=\left(\sqrt{z}-\frac{1}{\sqrt{z}} \right)^{-2}
\frac{1}{\Delta(z, q)}.
\end{align}
\end{thm}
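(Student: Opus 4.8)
The plan is to reinterpret the left-hand side of (\ref{KYcomp}) using the identification of $P_n(X,\beta)$ with a moduli space of two-term complexes $I^\bullet = (\oO_X \to F)$, and then to relate such complexes to Fourier--Mukai-transformed sheaves on the K3 surface $S$. Since $\beta$ is irreducible, the structural simplification is drastic: a stable pair $(F,s)$ with $[F]=\beta$ irreducible is automatically supported scheme-theoretically on a single fibre $X_p \cong S$ of $\pi|_X\colon X\to\mathbb{C}$, so the whole problem is genuinely two-dimensional. Thus the first step is to reduce to stable pairs on $S$ itself, where $F$ is a pure one-dimensional sheaf on $S$ with $[F]=\beta$, $\chi(F)=n$, and $s$ has zero-dimensional cokernel.

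The key step is then the Fourier--Mukai transform argument of Kawai--Yoshioka: choose a polarization and a suitable FM kernel so that one-dimensional sheaves $F$ on $S$ with primitive Mukai vector are taken to ideal sheaves (or rank-one torsion-free sheaves) on another K3 surface $\widehat S$, or more precisely so that the pair data $(F,s)$ corresponds to a pair consisting of a line bundle on a curve together with a section, which in turn is controlled by the relative Hilbert scheme (compactified Jacobian) over the linear system $|\beta|$. Concretely, for $\beta^2=2h-2$ the linear system $|\beta|$ on a generic K3 has dimension $h$, and the pairs $(F,s)$ are parametrized by a space fibered over $|\beta|$ whose fibres are (resolutions of) symmetric products of the corresponding curves; counting Euler characteristics fibrewise and summing over $n$ produces exactly the generating series $\prod_{n\ge 1}(1-zq^n/\ldots)$-type factors whose closed form is $\left(\sqrt z - 1/\sqrt z\right)^{-2}/\Delta(z,q)$. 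The appearance of $1/\Delta(z,q)$ is, as usual, Göttsche's formula (\ref{Gottscheform}) for $\sum_n \chi(\Hilb^n S)q^n = \prod_n (1-q^n)^{-24}$ repackaged, together with the two extra $(1-zq^n)^2(1-z^{-1}q^n)^2$ factors coming from the section data and the $|\beta|$-direction, and the prefactor $(\sqrt z-1/\sqrt z)^{-2}$ recording the $n$-grading shift $n\mapsto n+h-1$.

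I expect the main obstacle to be the precise bookkeeping in the Fourier--Mukai/relative-Hilbert-scheme identification: one must check that the FM transform is exact on the relevant subcategory, that stability of the pair matches the stability on the transformed side, and that the Euler-characteristic computation over the (possibly singular) fibres of the Hilbert--Chow-type morphism to $|\beta|$ genuinely assembles into the product formula with no correction terms. Since irreducibility of $\beta$ is what guarantees the curves in $|\beta|$ are integral and the compactified Jacobians are well-behaved, this hypothesis is used essentially throughout; without it the fibres degenerate and the clean product breaks down (which is precisely why the general case requires the wall-crossing machinery of the rest of the paper rather than this direct geometric argument). The remaining steps — extracting the constant-term / residue, and matching against $\Delta(z,q)$ — are then routine manipulations of the Dedekind-eta-type product.
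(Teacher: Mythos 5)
You should first note that the paper does not prove this statement at all: Theorem~\ref{KKV:KY} is quoted from Kawai--Yoshioka \cite[Theorem~5.80]{KY}, and the only argument in the text (Subsection~\ref{subsec:reduced2}) goes in the opposite direction, namely that the main formula (\ref{main:formula}), specialized to irreducible $\beta$, combined with $J(v)=\chi(\Hilb^{(v,v)/2+1}(S))$ for primitive $v$ and G\"ottsche's formula (\ref{Gottscheform}), reconstructs (\ref{KYcomp}) through the identities $\chi(P_n(X,h))=\sum_{r\ge 0}(n+2r)\chi(\Hilb^{h-r(r+n)}(S))$ and $\chi(P_{-n}(X,h))=\sum_{r> 0}(n+2r)\chi(\Hilb^{h-r(r+n)}(S))$, which are themselves \cite[Equations~(5.168), (5.170)]{KY}. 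So any proof you give is necessarily independent of the paper, and the relevant comparison is with Kawai--Yoshioka's own argument, which is a wall-crossing analysis for coherent systems (pairs) on the K3 surface, not a Fourier--Mukai/compactified-Jacobian computation.

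Your reduction to a single fibre is fine: for irreducible $\beta$ purity does force the scheme-theoretic support of $F$ into one fibre $X_p\cong S$, and $P_n(X,\beta)$ fibres over $\mathbb{C}$ with fibre the pairs moduli on $S$. The genuine gap is at your central step: the claim that counting Euler characteristics fibrewise over $|\beta|$, with fibres ``(resolutions of) symmetric products'', assembles into the closed form $(\sqrt{z}-1/\sqrt{z})^{-2}/\Delta(z,q)$ ``with no correction terms''. Over singular members of $|\beta|$ the fibres of the relative Hilbert scheme are not symmetric products, and you offer no mechanism for why their contributions sum to the same answer as in the smooth case; this is exactly the hard content, and at the level of generality you invoke it is not a routine check but a deep statement (rigorous versions require either Kawai--Yoshioka's chain of wall-crossings for pairs, or much later support-theoretic results of Maulik--Yun and Migliorini--Shende, none of which you cite). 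The tell-tale sign that something is missing is the intermediate identity above: the answer is a sum over \emph{all ranks} $r\ge 0$ of Euler characteristics of moduli of rank-$r$ sheaves on $S$ (deformation equivalent to $\Hilb^{h-r(r+n)}(S)$), weighted by $(n+2r)$. These higher-rank contributions and their multiplicities are produced by the wall-crossing between pair-stability chambers (in this paper, by the wall-crossing in $\aA_{\omega}$ as $t$ varies), and nothing in your Fourier--Mukai/relative-Jacobian picture generates them. As written, your proposal correctly identifies where the factors of $\Delta(z,q)$ morally come from, but the passage from the moduli description to the product formula --- the whole proof --- is missing.
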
 
Our formula (\ref{main:formula}) reconstructs 
the above result by Kawai-Yoshioka. 
In fact 
when $\beta$ is irreducible
and $n\ge 0$, the formula (\ref{main:formula})
implies that 
\begin{align*} 
\chi(P_n(X, \beta)) &= \sum_{r \ge 0}(n+2r)J(r, \beta, r+n),  \\
\chi(P_{-n}(X, \beta)) &= \sum_{r >0}(n+2r)J(r, \beta, r+n). 
\end{align*}
The above formulas are nothing but 
specializations of~\cite[Equations~(5.168), (5.170)]{KY} respectively. 
Using (\ref{J=H}), we obtain 
\begin{align*}
\chi(P_n(X, h)) &=
 \sum_{r\ge 0}(n+2r)\chi(\Hilb^{h-r(r+n)}(S)), \\
\chi(P_{-n}(X, h)) &=
\sum_{r> 0}(n+2r)\chi(\Hilb^{h-r(r+n)}(S)),
\end{align*}
for $n\ge 0$. 
Together with 
some calculations involving 
G$\ddot{\rm{o}}$ttsche's formula (\ref{Gottscheform}),
we obtain the formula (\ref{KYcomp}). 
(See~\cite[Equations~(5.171), (5.172), (5.173), (5.174)]{KY}.)
Note that Theorem~\ref{thm:MPT}
can be reduced to the case of irreducible 
curve classes by a deformation argument. 
Then Theorem~\ref{thm:MPT} follows 
from Theorem~\ref{KKV:KY}, the formula (\ref{pair/chi})
and the following
reduced version of 
GW/PT correspondence. 
\begin{thm}{\bf \cite[Theorem~9]{MPT}}
Suppose that $\beta$ is a primitive 
curve class. Then 
after the variable change 
$-e^{i\lambda}=z$, we have 
\begin{align*}
\mathrm{GW}_{\beta}(X)=\PT_{\beta}(X). 
\end{align*}
\end{thm}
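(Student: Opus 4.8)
The plan is to prove the correspondence by combining the deformation invariance of the reduced theories with a degeneration of the K3 surface, thereby reducing the statement to the ordinary Gromov-Witten/stable pairs correspondence on simpler threefolds. First I would record that both generating series $\mathrm{GW}_{\beta}(X)$ and $\PT_{\beta}(X)$ are invariant under deformations of $S$ that keep $\beta$ of Hodge type $(1,1)$, since the reduced virtual classes on $\overline{M}_g(X,\beta)$ and $P_n(X,\beta)$ are constructed in families and the $\mathbb{C}^{\ast}$-fixed loci remain compact, so that the residues defining \eqref{reduced:GW} vary continuously. Using this, I would deform the primitive class $\beta$ to a K3 surface of Picard rank one generated by $\beta$; there $\beta$ is automatically irreducible, since a decomposition $\beta=\beta_1+\beta_2$ with $\beta_i>0$ would force the $\beta_i$ to be multiples of $\beta$. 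Hence it suffices to establish the correspondence for irreducible classes.

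For irreducible $\beta$, I would pass to a type-II degeneration of the K3 surface over a disc, whose special fibre is a union $S_1\cup_E S_2$ of two rational elliptic surfaces glued along a common anticanonical elliptic curve $E$, chosen so that $\beta$ specializes to classes meeting $E$ transversally. Crossing this with $\mathbb{C}$ gives a degeneration of $X=S\times\mathbb{C}$ into the normal-crossing threefold $(S_1\times\mathbb{C})\cup_{E\times\mathbb{C}}(S_2\times\mathbb{C})$. The key step is then to prove a degeneration formula for the reduced invariants: the reduced virtual class should degenerate into a sum of products of ordinary relative virtual classes of the pairs $(S_i\times\mathbb{C},\,E\times\mathbb{C})$, with the reduced (holomorphic two-form) direction of the obstruction theory matched against the normal direction to $E$ in the degeneration. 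I would carry this out simultaneously on the Gromov-Witten and stable pairs sides, preserving the $\mathbb{C}^{\ast}$-equivariant residue structure throughout.

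On each piece $(S_i\times\mathbb{C},\,E\times\mathbb{C})$ the threefold is a genuine log Calabi-Yau relative geometry built from a rational elliptic surface, where the relative Gromov-Witten/stable pairs correspondence is available from the established three-fold theory via localization and the standard relative and degeneration techniques. Gluing the components back together through the degeneration formula, and matching the gluing factors on the two sides, would then yield $\mathrm{GW}_{\beta}(X)=\PT_{\beta}(X)$ after the substitution $-e^{i\lambda}=z$. As an independent cross-check of the final identity I would compare the explicit evaluations available for irreducible classes: the stable pairs side through the Kawai-Yoshioka formula of Theorem~\ref{KKV:KY} together with \eqref{pair/chi}, and the Gromov-Witten side through the Bryan-Leung evaluation of $R_{g,\beta}$, verifying that both collapse to $(\sqrt{z}-1/\sqrt{z})^{-2}\,\Delta(z,q)^{-1}$ under the BPS expansion.

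I expect the main obstacle to be the degeneration formula for the reduced virtual classes. Because the reduced obstruction theory is obtained by removing the one-dimensional factor coming from the holomorphic symplectic form of $S$, it is not a priori compatible with the splitting formula for ordinary virtual classes; the delicate point is to show that this removed direction degenerates precisely to the normal bundle data along $E\times\mathbb{C}$, so that the reduced class of the total space splits into ordinary relative classes on the components with no anomalous correction term. Controlling this comparison of obstruction theories uniformly in $g$ and $n$, and compatibly with the equivariant residues that define the reduced invariants, is the technical heart of the argument.
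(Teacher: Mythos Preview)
The paper does not contain a proof of this theorem: it is quoted verbatim from Maulik--Pandharipande--Thomas as \cite[Theorem~9]{MPT} and used as a black box input to the discussion in Subsection~\ref{subsec:reduced2}. There is therefore nothing in this paper to compare your proposal against.

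That said, your outline is a reasonable caricature of the actual argument in \cite{MPT}. The reduction from primitive to irreducible classes by deformation is exactly what is done there, and the heart of the proof is indeed a degeneration of the K3 surface to a union of two rational elliptic surfaces glued along an elliptic curve, together with a degeneration formula for the \emph{reduced} virtual classes. You have also correctly identified the genuine difficulty: the compatibility of the reduced obstruction theory with the degeneration/gluing formalism is the technical core of \cite{MPT}, and it is not something that follows formally from the ordinary degeneration formula. Your sketch does not supply this step, and without it the argument is not a proof but a plausibility outline. If you want to turn this into an actual proof you would need to reproduce the analysis of the reduced class under degeneration carried out in \cite{MPT}, which occupies a substantial portion of that paper.
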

  
\subsection{Speculation on KKV conjecture}\label{spec:KKV}
As we discussed in the introduction, 
the strategy of the proof of Theorem~\ref{thm:MPT}
in~\cite{MPT}
consists of two steps: 
prove reduced GW/PT correspondence and 
compute reduced PT theory. 
Suppose that we try to prove 
Conjecture~\ref{conj:KKV}
for arbitrary curve classes, 
 along with the same
strategy as in the case of primitive
curve classes~\cite{MPT}. 
Then one might expect the following: 
\begin{itemize}
\item The reduced GW/PT correspondence 
for arbitrary curve classes 
may hold.  Namely we may have 
\begin{align}\label{expect1}
\exp\left(\mathrm{GW}(X) \right)=\PT(X),
\end{align}
by the variable change $-e^{i\lambda}=z$. 
\item 
The series $\PT(X)$ may be written 
as a similar product expansion to (\ref{main:formula}).
For instance, 
looking at the equation (\ref{pair/chi}), 
one may expect the 
following formula:  
\begin{align}\notag
\PT(X)=&\prod_{r\ge 0, \beta>0, n\ge 0} 
\exp\left((-1)^{n-1}(n+2r)J(r, \beta, r+n)y^{\beta}z^n \right) \\
\label{KKV:formula2}
&\cdot \prod_{r> 0, \beta>0, n>0}
\exp\left((-1)^{n-1}(n+2r)J(r, \beta, r+n) y^{\beta} z^{-n} \right).
\end{align}
\end{itemize}
Although $J(v)$ does not involve the virtual 
cycle, it seems likely that $J(v)$ is 
invariant under deformations of $S$
preserving $v$ to be an algebraic class. 
(See Subsection~\ref{subsec:mult3} below.) 
Hence the formula 
(\ref{KKV:formula2}) seems to make sense.  
At this moment we do not know 
whether (\ref{expect1}), (\ref{KKV:formula2}) hold
or not. In particular it might be too strong
to assume (\ref{KKV:formula2}). 
However 
even if (\ref{KKV:formula2}) is not true, 
a similar formula 
should be obtained if one could 
involve the reduced virtual cycles 
in the wall-crossing formula. 
Namely, for instance, suppose that we could
relate reduced PT invariants 
to the weighted Euler characteristic with respect 
to the Behrend function~\cite{Beh}.
Then 
by combining the argument in proving Theorem~\ref{thm:main}, 
work of Joyce-Song~\cite{JS} and the
announced result by Behrend-Getzler~\cite{BG}, 
it should be possible to prove a formula 
similar to (\ref{KKV:formula2}),
possibly by replacing  
$J(v)$ by another counting invariant
which has similar properties to $J(v)$. 
The arguments below may also be applied 
after such an replacement. 
The following result reduces 
Conjecture~\ref{conj:KKV} to the above 
expectations.   
\begin{thm}\label{thm:KKV}
Suppose that the formulas (\ref{expect1}) and (\ref{KKV:formula2}) 
hold
for any K3 surface $S$. 
 Then Conjecture~\ref{conj:KKV} is true. 
Furthermore we have the formula, 
\begin{align}\notag
\PT(X)
=& \prod_{r\ge 0, \beta>0, n\ge 0}
(1+(-1)^{n-1}y^{\beta}z^n)^{(n+2r)\chi(\Hilb^{\beta^2/2 -r(n+r)+1}(S))} \\
\label{product:form}
&\cdot \prod_{r> 0, \beta>0, n> 0}(1+(-1)^{n-1}y^{\beta}z^{-n})^{(n+2r)\chi(\Hilb^{\beta^2/2 -r(n+r)+1}(S))}.
\end{align}
\end{thm}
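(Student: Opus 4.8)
The plan is to run the two hypotheses through the BPS expansion of the reduced Gromov--Witten partition function. The first step is to take logarithms: since $\exp(\mathrm{GW}(X))=\PT(X)$ by (\ref{expect1}) and $\PT(X)$ is given by the product (\ref{KKV:formula2}), one obtains, after the variable change $z=-e^{i\lambda}$,
\begin{align*}
\mathrm{GW}_{\beta}(X)=\sum_{r\ge 0,\,n\ge 0}(-1)^{n-1}(n+2r)J(r,\beta,r+n)z^{n}+\sum_{r>0,\,n>0}(-1)^{n-1}(n+2r)J(r,\beta,r+n)z^{-n},
\end{align*}
with no partition sum appearing, because the logarithm of an $\exp$-product is just the sum of its exponents. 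Thus the entire content is carried by the numbers $J(r,\beta,r+n)$, and everything reduces to evaluating these.

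Next I would treat primitive curve classes $\beta$. The key observation is that if $\beta\in\mathrm{NS}(S)$ is primitive then every Mukai vector of the form $(r,\beta,m)\in\Gamma_{0}$ is primitive, simply because its middle component already is; hence (\ref{J=H}) applies verbatim and $J(r,\beta,r+n)=\chi(\Hilb^{\beta^{2}/2-r(n+r)+1}(S))$. Combined with G\"ottsche's formula (\ref{Gottscheform}) this already shows that $\mathrm{GW}_{\beta}(X)$ depends only on $\beta^{2}$ (the summation range being cut out by the condition $\beta^{2}/2-r(n+r)+1\ge 0$, the needed boundedness of $r$ and $n$ being supplied by a Corollary~\ref{cor:N}-type estimate). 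Exponentiating back yields (\ref{product:form}) for primitive $\beta$, and running the same G\"ottsche/$\Delta(z,q)$ manipulation that underlies the Kawai--Yoshioka reconstruction of Subsection~\ref{subsec:reduced2}, now carrying the signs $(-1)^{n-1}$, identifies the BPS numbers and verifies the identity (\ref{KKV:ii}). Since every value $\beta^{2}=2h-2\ge-2$ is realised by a primitive class on some projective K3 surface and reduced Gromov--Witten invariants are deformation invariant, this pins down all $r_{g,h}$ and establishes Conjecture~\ref{conj:KKV}(ii).

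For a non-primitive class $\beta=m\beta_{0}$ I would use deformation invariance of the reduced invariants to pass to a K3 surface with $\mathrm{NS}(S)=\mathbb{Z}\beta_{0}$, where the divisibility of $(r,\beta,r+n)$ in $\Gamma_{0}$ equals $\gcd(r,n,m)$. Applying Theorem~\ref{thm:aut} (realised through Fourier--Mukai transforms as in Proposition~\ref{prop:equivalence}) reduces each $J(r,\beta,r+n)$ to $J$ of $d=\gcd(r,n,m)$ times a primitive vector, which is evaluated by (\ref{J=H}) when $d=1$ and, for $d>1$, by the multiple-cover considerations of Subsection~\ref{subsec:multi}. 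Substituting these values and carrying out the same reorganisation of the product that turns (\ref{main:formula}) into (\ref{Borcherd}) yields (\ref{product:form}) in general; Möbius inversion over the divisors of $\beta$ in the BPS expansion then shows that $r_{g,\beta}$ depends only on $\beta^{2}$, which is Conjecture~\ref{conj:KKV}(i).

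The primitive case is essentially a signed rerun of arguments already present in the paper, so I expect the main obstacle to lie entirely in the non-primitive case: controlling $J(r,\beta,r+n)$ for imprimitive $\beta$ and establishing the combinatorial identity that lets the product formula absorb the multiple-cover corrections, equivalently that the right-hand side of (\ref{product:form}) is compatible with the BPS/multiple-cover ansatz. This is precisely the reorganisation behind (\ref{Borcherd}); once it is in hand, the remainder is bookkeeping with G\"ottsche's formula and the modular function $\Delta(z,q)$.
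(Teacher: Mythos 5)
Your treatment of primitive classes is essentially sound (for primitive $\beta$ every vector $(r,\beta,m)\in\Gamma_0$ is indeed primitive, so (\ref{J=H}) and G\"ottsche's formula (\ref{Gottscheform}) evaluate all the relevant $J$'s, and the signed Kawai--Yoshioka manipulation then handles (\ref{KKV:ii}) for primitive classes), but the imprimitive case --- which you yourself flag as the main obstacle --- is where the proposal has a genuine gap. To evaluate $J(r,\beta,r+n)$ when the Mukai vector has divisibility $d>1$ you appeal to ``the multiple-cover considerations of Subsection~\ref{subsec:multi}'', i.e.\ to Conjecture~\ref{conj:mult3}. That conjecture is not among the hypotheses of Theorem~\ref{thm:KKV} (only (\ref{expect1}) and (\ref{KKV:formula2}) are assumed) and is unproven, so after your Hodge-isometry reduction the numbers $J(dv_0)$ with $d>1$ remain unknown, and no M\"obius inversion in the BPS expansion can determine $r_{g,\beta}$ or establish (\ref{product:form}) without that input. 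In other words, the step you defer to ``multiple-cover considerations'' is exactly the statement that has to be proved.

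The paper closes this gap by \emph{deriving} the multiple cover formula from the hypotheses rather than assuming it. One first deforms $S$ to an elliptic K3 with a section and rank-two N\'eron--Severi lattice (legitimate since the hypotheses are assumed for every K3 and the reduced invariants are deformation invariant). Then (\ref{expect1}) forces $\PT(X)$ to have the Gopakumar--Vafa form (\ref{GVform}), and by \cite[Theorem~6.4]{Tsurvey}, applied to the product (\ref{KKV:formula2}), this is equivalent to the multiple cover formula (\ref{mult1}) for the rank-zero invariants $J(0,\beta,n)$. A Fourier--Mukai transform given by the relative moduli space of sheaves on the elliptic fibers, combined with Theorem~\ref{thm:aut} (via Proposition~\ref{prop:equivalence}) and Lemma~\ref{Jlocarize}, carries any vector $(r,\beta,n)$ to a rank-zero vector on another K3 surface $S'$, which yields the general formula (\ref{mult2}). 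The exponential-to-product computation then gives (\ref{product:form}); comparing with (\ref{GVform}) shows $r_{g,\beta}$ depends only on $g$ and $\beta^2$ (part (i)), and replacing $\beta$ by the primitive class ${\bf s}+h{\bf f}$ of the same square together with Theorem~\ref{thm:MPT} gives part (ii). If you want to salvage your outline, replace the appeal to Conjecture~\ref{conj:mult3} by this derivation of (\ref{mult2}) from (\ref{expect1}) and (\ref{KKV:formula2}).
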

\begin{proof}
By a deformation argument as 
in~\cite[Section~4]{BrLe}, \cite[Section~2]{MPT}, 
we may assume that $S$ is an elliptically fibered K3
surface $S \to \mathbb{P}^1$ with a section
and $\mathrm{NS}(S)$ is rank two. 
Let 
\begin{align*}
{\bf s}, {\bf f} \in \mathrm{NS}(S),
\end{align*}
be the classes of the section and the elliptic 
fiber. Any $\beta \in \mathrm{NS}(S)$ is written as 
\begin{align}\label{beta:write}
\beta=a{\bf s}+b{\bf f},
\end{align}
for some $a, b \in \mathbb{Z}$. 
Suppose that (\ref{expect1}) and (\ref{KKV:formula2}) hold. 
Then $\PT(X)$ can be described by 
the following Gopakumar-Vafa form, (cf.~\cite[Equation~(18)]{Katz2},)
\begin{align}\notag
\PT(X)=\prod_{\beta >0}\prod_{n=1}^{\infty} &
(1+(-1)^{n-1}y^{\beta}z^n)^{nr_{0, \beta}} \\
\label{GVform}
& \cdot \prod_{g=1}^{\infty}\prod_{k=0}^{2g-2}
(1+(-1)^{g-k}y^{\beta}z^{g-1-k})^{(-1)^{g-k}r_{g, \beta}
\left(\begin{subarray}{c}
2g-2 \\
k
\end{subarray} \right)}. 
\end{align}
By~\cite[Theorem~6.4]{Tsurvey}, 
the series $\PT(X)$ is expressed by
a Gopakumar-Vafa form (\ref{GVform}) if and only 
we have the following multiple cover formula, 
\begin{align}\notag
J(0, \beta, n) &=\sum_{k\ge 1, k|(\beta, n)}
\frac{1}{k^2}J(0, \beta/k, 1) \\
\label{mult1}
&=\sum_{k \ge 1, k|(\beta, n)}
\frac{1}{k^2}
\chi(\Hilb^{\beta^2 /2k^2 +1}(S)). 
\end{align}
Here the second equality follows from (\ref{J=H}). 
We claim that for any $v=(r, \beta, n) \in \Gamma_0$,
we have the multiple cover formula, 
\begin{align}\label{mult2}
J(v)=\sum_{k\ge 1, k|v}\frac{1}{k^2}\chi(\Hilb^{(v/k, v/k)/2 +1}(S)).
\end{align}
In order to prove (\ref{mult2}), 
we write $\beta$ as (\ref{beta:write})
for $a, b\in \mathbb{Z}$, and set 
\begin{align*}
(r, a)=d(\overline{r}, \overline{a}), 
\end{align*}
where $d=\mathrm{GCD}(r, a)>0$. 
By Theorem~\ref{thm:aut}, we may assume that 
$r>0$, hence $\overline{r}>0$. 
Let $S' \to \mathbb{P}^1$ be the relative 
moduli space of stable sheaves on the fibers
of the elliptic fibration $S \to \mathbb{P}^1$
with rank $\overline{r}$ and degree $\overline{a}$ on fibers. 
Then $S'$ is also an elliptically fibred 
smooth K3 surface with a section, 
and we denote by ${\bf s'}, {\bf f'} \in \mathrm{NS}(S')$
the classes of the section and the elliptic fiber. 
The universal sheaf on 
$S \times_{\mathbb{P}^1} S'$ induces a derived equivalence, 
(cf.~\cite[Theorem~3.11]{Or1}, \cite[Theorem~5.3]{Brell},)
\begin{align*}
\Phi \colon D^b \Coh(S') \stackrel{\sim}{\to} D^b \Coh(S). 
\end{align*}
As we will recall in Subsection~\ref{subsec:Aut},
the equivalence $\Phi$ fits into a commutative diagram, 
(cf.~\cite{Mu2}, \cite{Or1},)
\begin{align*}
\xymatrix{
 D^b \Coh(S') \ar[r]^{\Phi} \ar[d]_{\ch \sqrt{\td_S}} & 
D^b \Coh(S) \ar[d]^{\ch \sqrt{\td_S}} \\
\widetilde{H}(S', \mathbb{Z}) \ar[r]^{\Phi_{\ast}} & 
\widetilde{H}(S, \mathbb{Z}), 
}
\end{align*}
for an isomorphism $\Phi_{\ast}$. 
By the construction of $S'$ and $\Phi$, we have 
\begin{align*}
\Phi_{\ast}^{-1}(\overline{r}, \overline{a}{\bf s}, 0)
=(0, {\bf s'} +b'{\bf f}, m),
\end{align*}
for some $b', m \in \mathbb{Z}$. 
Also since 
\begin{align*}
\Phi_{\ast}(\mathbb{Z}[{\bf f'}] \oplus H^4(S', \mathbb{Z}))
=\mathbb{Z}[{\bf f}] \oplus H^4(S, \mathbb{Z}) , 
\end{align*}
by the construction of $\Phi$, it follows that 
\begin{align*}
\Phi_{\ast}^{-1}(r, \beta, n)=(0, \beta', n'), 
\end{align*}
for some $\beta' \in \mathrm{NS}(S')$
and $n' \in \mathbb{Z}$. 
It is easy to see that $\Phi$
satisfies the assumption in 
Proposition~\ref{prop:equivalence} below.
Hence combined with Lemma~\ref{Jlocarize}, we have 
\begin{align*}
J_{S}(r, \beta, n)=J_{S'}(0, \beta', n'). 
\end{align*}
Here we have written $J(v)$ as $J_{S}(v)$
in order to distinguish the invariants on 
$S$ and $S'$. Then (\ref{mult2}) follows 
from (\ref{mult1}) for $S'$. 

By the formula (\ref{mult2}), we have 
\begin{align*}
&\exp\left((-1)^{n-1}(n+2r)J(r, \beta, r+n)y^{\beta}z^n   \right) \\
&=\exp\left((-1)^{n-1}(n+2r)\sum_{ \begin{subarray}{c}
k \ge 1, \\
k|(r, \beta, n)
\end{subarray}}
\frac{1}{k^2}
\chi(\Hilb^{(r/k, \beta/k, r/k +n/k)^2 /2+1}(S))y^{\beta}z^n   \right) \\
&= \exp \left( \sum_{k \ge 1}  \frac{(-1)^{kn-1}}{k}(n+2r)
\chi(\Hilb^{(r, \beta, r+n)^2 /2 +1}(S)) y^{k\beta} z^{kn} \right) \\
&=\left(1+(-1)^{n-1}y^{\beta} z^n  
\right)^{(n+2r)\chi(\Hilb^{\beta^2 /2 -r(r+n)+1}(S))}.
\end{align*}
Therefore the formula (\ref{product:form})
follows. Comparing (\ref{product:form})
and (\ref{GVform}), we see that $r_{g, \beta}$
depends only on $g$ and $\beta^2$, i.e. 
Conjecture~\ref{conj:KKV} (i) follows. 
Moreover the formula (\ref{product:form}) implies
that $r_{g, \beta} \neq 0$ only if 
$\beta^2 \ge -2$. 
If we write $\beta^2 =2h-2$ for $h\ge 0$, we have 
\begin{align*}
({\bf s} +h {\bf f})^2 =\beta^2. 
\end{align*}
Therefore the computation of $r_{g, \beta}$
can be reduced to the primitive case.
Hence Conjecture~\ref{conj:KKV} (ii) follows from 
Theorem~\ref{thm:MPT}. 
\end{proof}

\subsection{Multiple cover formula}\label{subsec:mult3}
In the proof of Theorem~\ref{thm:KKV}, we have
observed the following conjectural multiple cover 
formula: 
\begin{conj}\label{conj:mult3}
For $v\in \Gamma_0$, we have  
\begin{align}\label{mult3}
J(v)=\sum_{k\ge 1, k|v}\frac{1}{k^2}
\chi(\Hilb^{(v/k, v/k)/2 +1}(S)).
\end{align}
\end{conj}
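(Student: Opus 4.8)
Since (\ref{mult3}) is a conjecture rather than a theorem, the plan is to assemble evidence by reducing the general statement to a single, well-studied special case and checking low-degree instances, while being explicit about where the genuine obstruction lies.

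First I would dispose of the primitive case. If $v$ is primitive then the only index with $k\mid v$ is $k=1$, so the right-hand side of (\ref{mult3}) is $\chi(\Hilb^{(v,v)/2+1}(S))$, which is exactly the established evaluation (\ref{J=H}) of $J(v)$ on primitive algebraic classes. Thus the conjecture holds in the primitive case, and its content lies entirely in the non-primitive one.

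Next I would run the same reduction used in the proof of Theorem~\ref{thm:KKV}. Write $v=mv_{0}$ with $v_{0}\in\Gamma_0$ primitive and $m\ge 1$. Assuming the (expected, though here unproven) deformation invariance of $J(v)$, we may take $S$ to be an elliptically fibered K3 surface with a section and $\rho(S)=2$; it is precisely this richer N\'eron--Severi group that makes the next step possible, since on a K3 with $\rho(S)=1$ a square $-2$ vector such as $v(\oO_S)$ cannot be moved to rank zero. Choosing the Fourier--Mukai transform $\Phi\colon D^{b}\Coh(S')\to D^{b}\Coh(S)$ attached to a relative moduli space of stable sheaves on the elliptic fibers (as in the proof of Theorem~\ref{thm:KKV}), $\Phi$ satisfies the hypotheses of Proposition~\ref{prop:equivalence}, $\Phi_{\ast}$ is a Hodge isometry, and $\Phi_{\ast}^{-1}(v_{0})=(0,\beta_{0},n_{0})$ is a primitive rank-zero class on $S'$. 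Since $\Phi_{\ast}^{-1}$ is a lattice isomorphism, $k\mid v$ iff $k\mid m$ iff $k\mid m(0,\beta_{0},n_{0})$, and $(m(0,\beta_{0},n_{0}))^{2}=m^{2}\beta_{0}^{2}=(v,v)$; combining Proposition~\ref{prop:equivalence} with Lemma~\ref{Jlocarize} gives $J_{S}(v)=J_{S'}(m(0,\beta_{0},n_{0}))$ and identifies (\ref{mult3}) for $v$ with the rank-zero multiple cover formula
\begin{align*}
J(0,\beta,n)=\sum_{k\ge 1,\ k\mid(\beta,n)}\frac{1}{k^{2}}\,\chi(\Hilb^{\beta^{2}/2k^{2}+1}(S)),
\end{align*}
that is, with identity (\ref{mult1}), for the class $\beta=m\beta_{0}$, $n=mn_{0}$ on $S'$.

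Finally I would record that (\ref{mult1}) is exactly the D4--D2--D0 multiple cover formula predicted in~\cite[Conjecture~6.20]{JS} and~\cite[Theorem~6.4]{Tsurvey}, and that, by the latter, it is equivalent to the Gopakumar--Vafa integrality of the stable-pair series $\PT(X)$ of the local K3 threefold; this is the expected but still open input. As additional evidence I would compute, via the Hall-algebra wall-crossing of Section~\ref{sec:inv}, the invariants $J(mv_{0})$ for small $m$ and small $(v_{0},v_{0})$ --- where only finitely many $k$ contribute to the right-hand side --- and check the identity directly, the case $(v_{0},v_{0})=-2$ (only $k=m$ survives, both sides equal to $1/m^{2}$) being the simplest. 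The main obstacle is precisely this rank-zero base case: proving (\ref{mult1}) seems to require a genuinely new ingredient --- either a comparison of $J(0,\beta,n)$ with reduced Gromov--Witten or Pandharipande--Thomas invariants of $X$ beyond the wall-crossing relations of~\cite{Joy4} used here, or an independent computation of the corresponding Behrend-weighted invariants --- so in this paper I would present the above as evidence for Conjecture~\ref{conj:mult3} rather than a complete proof.
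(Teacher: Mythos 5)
Your plan is essentially the paper's own treatment: Conjecture~\ref{conj:mult3} is presented there, too, only as a conjecture motivated by Theorem~\ref{thm:aut} together with the rank-zero multiple cover expectation of \cite[Conjecture~6.20]{JS}, \cite[Theorem~6.4]{Tsurvey}; your elliptic-K3/Fourier--Mukai reduction of a general $v$ to the rank-zero formula (\ref{mult1}) is carried out verbatim inside the proof of Theorem~\ref{thm:KKV} (with the same unproven deformation-invariance caveat), and your $(v_0,v_0)=-2$ check is exactly Lemma~\ref{lem:r0r}. The only difference is the amount of direct evidence: the paper additionally computes $J(0,0,n)=24\sum_{k\ge 1, k|n}1/k^2$ (Lemma~\ref{lem:00n}) and, most substantially, verifies the genuinely non-primitive rank-zero case $J(0,2H,-2)=176337$ on a double cover of $\mathbb{P}^2$ by an explicit stratification of the moduli stack using Mozgovoy's Euler characteristic of O'Grady's space (Proposition~\ref{lem:176}), a check your proposal does not include.
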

The above conjecture also indicates that 
$J(v)$ is invariant under deformations of $S$
preserving $v$ to be an algebraic class. 
If we assume the formula (\ref{mult3}), then 
the same computation in the proof of Theorem~\ref{thm:KKV}
shows that 
\begin{align}\notag
\PT^{\chi}(X)
=& \prod_{r\ge 0, \beta>0, n\ge 0}(1-y^{\beta}z^n)^{-(n+2r)\chi(\Hilb^{\beta^2/2 -r(n+r)+1}(S))} \\
\label{Euform}
&\cdot \prod_{r> 0, \beta>0, n> 0}(1-y^{\beta}z^{-n})^{-(n+2r)\chi(\Hilb^{\beta^2/2 -r(n+r)+1}(S))}.
\end{align}
The formula (\ref{Euform}) may be 
interpreted as an Euler characteristic version of 
KKV conjecture for stable pairs. Namely if we 
define $r_{g, \beta}'$ by the formula, 
\begin{align}\notag
&\PT^{\chi}(X)=\\
&\notag \prod_{\beta >0}\prod_{n=1}^{\infty} 
(1-y^{\beta}z^n)^{-nr_{0, \beta}'} 
 \prod_{g=1}^{\infty}\prod_{k=0}^{2g-2}
(1-y^{\beta}z^{g-1-k})^{(-1)^{g-k-1}r_{g, \beta}'
\left(\begin{subarray}{c}
2g-2 \\
k
\end{subarray} \right)},  
\end{align}
then $r_{g, \beta}'$ satisfies the same
conditions in Conjecture~\ref{conj:KKV}. 
In what follows, we give some evidence of
 the conjectural formula (\ref{mult3})
in some examples. 
\begin{lem}\label{lem:00n}
For $v=(0, 0, n)$, we have 
\begin{align*}
J(0, 0, n)=24\sum_{k\ge 1, k|n} \frac{1}{k^2}. 
\end{align*}
In particular the formula (\ref{mult3}) holds. 
\end{lem}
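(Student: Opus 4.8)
The plan is to reduce the statement to an explicit computation with zero-dimensional sheaves. A coherent sheaf $F$ with Mukai vector $v(F)=(0,0,n)$ is exactly a zero-dimensional sheaf of length $n$; since its support is a finite set of points it is contained in a finite union of fibres of $X\to\mathbb{C}$, so $F\in\Coh_{\pi}(X)$ automatically, and since its reduced Hilbert polynomial is identically $1$ it is $\omega$-Gieseker semistable for every $\omega$. Hence $\mM_{\omega,X}(0,0,n)$ is the stack of all length-$n$ zero-dimensional sheaves on $X$, independent of $\omega$, and so are $\epsilon_{\omega,X}(0,0,n)$ and $J(0,0,n)$. On the other side, for $v=(0,0,n)$ and any $k\mid n$ we have $(v/k,v/k)=0$, so $\Hilb^{(v/k,v/k)/2+1}(S)=\Hilb^{1}(S)=S$ and $\chi(\Hilb^{(v/k,v/k)/2+1}(S))=\chi(S)=24$. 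Therefore the right-hand side of (\ref{mult3}) equals $24\sum_{k\ge 1,\,k\mid n}k^{-2}$, and since $\chi(X)=\chi(S)\chi(\mathbb{C})=24$ it suffices to prove $J(0,0,n)=\chi(X)\sum_{k\ge 1,\,k\mid n}k^{-2}$.

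To establish this I would work in the Hall algebra of zero-dimensional sheaves on the Calabi--Yau $3$-fold $X$ and use the support decomposition $F=\bigoplus_{x}F_x$. This presents the generating series $1+\sum_{n\ge 1}\delta_{\omega,X}(0,0,n)z^n$ as a power-structure expansion over $X$ of the universal ``punctual'' series attached to the formal local ring $\widehat{\mathcal O}_{X,x}\cong\mathbb{C}[[x_1,x_2,x_3]]$. The crucial structural point is that the Euler pairing of two zero-dimensional sheaves on the $3$-fold $X$ vanishes identically for degree reasons — this is the same vanishing that makes the bilinear map $\chi$ restrict to zero on $\Gamma_0$, cf.\ (\ref{chi0}). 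Consequently the integration map $\lim_{q^{1/2}\to 1}(q-1)P_q(-)$, applied to the logarithm $\epsilon_{\omega,X}$, is compatible with the power structure and is linear in the $X$-variable, so that $\sum_{n\ge 1}J(0,0,n)z^n=\chi(X)\cdot\sum_{m\ge 1}j_m z^m$, where $j_m\in\mathbb{Q}$ is the corresponding punctual invariant for zero-dimensional modules over $\mathbb{C}[[x_1,x_2,x_3]]$.

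The remaining, and main, task is to show $\sum_{m\ge 1}j_m z^m=\sum_{m\ge 1}\bigl(\sum_{k\mid m}k^{-2}\bigr)z^m$. I would compute the punctual series by $(\mathbb{C}^{\ast})^{3}$-localization on $\mathbb{A}^{3}$: the torus-fixed punctual modules are the monomial ones, indexed by plane partitions, and one assembles the virtual Poincar\'e polynomials of the fixed strata with the $q$-powers coming from their automorphism groups, then applies the logarithm $\epsilon$ and lets $q^{1/2}\to 1$. The arithmetic function $\sum_{k\mid m}k^{-2}$ emerges from this combinatorics, and the hard part is tracking the $q$-weights through the non-commutative $\ast$-product and the logarithm; this is precisely the zero-dimensional instance of the multiple cover formula and can be matched with the point computations in \cite{JS} and \cite{Tsurvey}. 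Combining the three steps gives $J(0,0,n)=24\sum_{k\ge 1,\,k\mid n}k^{-2}$, which in particular verifies (\ref{mult3}) for $v=(0,0,n)$; as a consistency check, Theorem~\ref{thm:aut} applied to the Hodge isometry exchanging $H^{0}$ and $H^{4}$, together with Corollary~\ref{prop:inv}, gives $J(0,0,n)=J(n,0,0)=\tfrac12 N(0,0,n)$, in agreement with this value.
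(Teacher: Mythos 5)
Your reduction of both sides is fine: sheaves with Mukai vector $(0,0,n)$ are exactly the length-$n$ zero-dimensional sheaves, they lie in $\Coh_{\pi}(X)$ and are $\omega$-Gieseker semistable for every $\omega$, and on the right-hand side $(v/k,v/k)=0$ gives $\chi(\Hilb^1(S))=\chi(S)=24$, so the statement is indeed $J(0,0,n)=\chi(X)\sum_{k\mid n}k^{-2}$ with $\chi(X)=24$. This is also where the paper starts; its entire proof is then the observation $\chi(X)=24$ together with a citation of \cite[Example~6.2]{JS} and \cite[Remark~5.14]{Tcurve1}, which contain precisely the dimension-zero multiple cover computation.

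The gap is in the part you present as the actual computation. First, the proposed $(\mathbb{C}^{\ast})^{3}$-localization of the punctual series is not an available tool here: the invariant is built from the Hall-algebra logarithm $\epsilon$ applied to the \emph{stack} of all length-$m$ modules, not from a scheme such as $\Hilb^m$, and the paper itself points out (proof of Lemma~\ref{Jlocarize}) that no localization formula for invariants defined via the Hall algebra has been established; it has to argue around this by the disjoint-support decomposition of Lemma~\ref{lem:eux}. Moreover the torus-fixed objects of the stack of punctual modules are not just the monomial ones indexed by plane partitions (fixed loci are positive-dimensional and carry nontrivial automorphisms), and there is no established sense in which $\epsilon$ and $\lim_{q^{1/2}\to 1}(q-1)P_q$ commute with passing to fixed loci. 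Second, your justification of the $\chi(X)$-linearity via the vanishing of the Euler pairing conflates two mechanisms: that vanishing is what makes the invariants independent of the stability parameter (as in Theorem~\ref{sigma:depend}), whereas the factorization over the support is the disjoint-support/ $\epsilon$-additivity argument of Lemma~\ref{lem:eux}, which you could invoke but do not. Since in the end you concede that the key identity $j_m=\sum_{k\mid m}k^{-2}$ is to be ``matched with the point computations in \cite{JS} and \cite{Tsurvey}'', your argument either collapses to the paper's one-line citation (in which case the localization scaffolding is unnecessary) or, read as an independent derivation, leaves its central step unproved.
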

\begin{proof}
Since $\chi(X)=24$, 
the result follows from~\cite[Example~6.2]{JS}, 
\cite[Remark~5.14]{Tcurve1}.
\end{proof}
Another evidence is as follows: 
\begin{lem}\label{lem:r0r}
For $v=(r, 0, r)$, we have 
\begin{align*}
J(r, 0, r)=\frac{1}{r^2}. 
\end{align*}
In particular the formula (\ref{mult3}) holds. 
\end{lem}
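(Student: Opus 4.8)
The plan is to pin down the moduli stack $\mM_{\omega, X}(r,0,r)$ completely and then evaluate the Hall algebra integral by hand. By the definition of $J(v)$ (Definition~\ref{defi:Jv}) together with Theorem~\ref{thm:aut} we may assume $r\ge 1$, the case $r=0$ being Lemma~\ref{lem:00n}. First I would determine which sheaves contribute. If $E\in\Coh_{\pi}(X)$ is $\omega$-Gieseker semistable with $v(E)=(r,0,r)$, then $\cl_0(E)=(r,0,0)$ and the reduced Hilbert polynomial of $E$ equals that of $\oO_S$. Hence each Gieseker-stable Jordan--H\"older factor $F$ of $E$ is of the form $i_{t\ast}F'$ with $\cl_0(F)=(\rho,\gamma,0)$, $\omega\cdot\gamma=0$ and $\gamma^2\le 0$ by the Hodge index theorem; the Bogomolov inequality in Lemma~\ref{lem:Bog}~(i) then gives $\gamma^2+2\ge 2\rho^2$, forcing $\rho=1$ and $\gamma=0$. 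Thus $F$ has Mukai vector $(1,0,1)$, and since the unique $\omega$-Gieseker stable sheaf on a K3 surface with Mukai vector $(1,0,1)$ is the structure sheaf, we get $F\cong\oO_{X_t}$ for some $t\in\mathbb{C}$.

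Next I would reconstruct $E$ from its factors. Since $\Ext^1_{X}(\oO_{X_t},\oO_{X_{t'}})=0$ for $t\neq t'$, every such $E$ splits as $\bigoplus_t E_t$ with $E_t$ an iterated self-extension of $\oO_{X_t}$. Writing $q\colon X=S\times\mathbb{C}\to\mathbb{C}$ for the projection, $q^{\ast}$ is exact and, by the projection formula with $Rq_{\ast}\oO_X=\oO_{\mathbb{C}}\oplus\oO_{\mathbb{C}}[-2]$, one has $\Ext^i_X(q^{\ast}T_1,q^{\ast}T_2)\cong\Ext^i_{\mathbb{C}}(T_1,T_2)$ for $i=0,1$ and $T_i$ torsion on $\mathbb{C}$. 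An induction on length then shows $E_t\cong q^{\ast}M_t$ for a finite length $\oO_{\mathbb{C},t}$-module $M_t$; conversely $q^{\ast}T$ is $\omega$-Gieseker semistable with $v(q^{\ast}T)=(\length T,0,\length T)$ for any torsion sheaf $T$ on $\mathbb{C}$. This produces an isomorphism of stacks, compatible with the $\ast$-products on the Hall algebras,
\begin{align*}
\mM_{\omega,X}(r,0,r)\;\cong\;\tT^{r}(\mathbb{C}),
\end{align*}
where $\tT^{r}(\mathbb{C})$ is the stack of length-$r$ torsion sheaves on $\mathbb{C}$, and it carries $\epsilon_{\omega,X}(r,0,r)$ to the Hall algebra logarithm of $\sum_{\rho\ge 1}[\tT^{\rho}(\mathbb{C})\hookrightarrow\oO bj]$.

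Now a length-$r$ torsion sheaf on $\mathbb{A}^1$ is a pair $(\mathbb{C}^{r},x)$ with $x$ an endomorphism giving the action of the coordinate, up to conjugation, so $\tT^{r}(\mathbb{C})\cong[\mathrm{Mat}_{r}(\mathbb{C})/\GL_{r}]$ and $P_q([\tT^{r}(\mathbb{C})])=q^{r^{2}}/P_q(\GL_{r})=q^{r(r+1)/2}/\prod_{i=1}^{r}(q^{i}-1)$. Euler's $q$-binomial identity then gives
\begin{align*}
\sum_{r\ge 0}P_q\!\left([\tT^{r}(\mathbb{C})]\right)u^{r}=\prod_{i\ge 1}(1-q^{i}u).
\end{align*}
Because $\sum_{i}(-1)^{i}\dim\Ext^{i}_{X}(\oO_{X_t},\oO_{X_{t'}})=0$ for all $t,t'$, the quantum torus attached to these classes is commutative, so the Hall algebra logarithm is computed by the ordinary logarithm after applying $P_q$, exactly as in the proof of Theorem~\ref{sigma:depend}. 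Hence $P_q(\epsilon_{\omega,X}(r,0,r))=-\tfrac{1}{r}\cdot\dfrac{q^{r}}{1-q^{r}}$, and therefore
\begin{align*}
J(r,0,r)=\lim_{q^{1/2}\to 1}(q-1)P_q(\epsilon_{\omega,X}(r,0,r))=\lim_{q\to 1}\frac{q^{r}}{r(1+q+\cdots+q^{r-1})}=\frac{1}{r^{2}}.
\end{align*}
This agrees with the right hand side of (\ref{mult3}), since $k\mid(r,0,r)$ together with $1-(r/k)^{2}\ge 0$ forces $k=r$, leaving the single term $\tfrac{1}{r^{2}}\chi(\Hilb^{0}(S))=\tfrac{1}{r^{2}}$.

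The step needing the most care is the identification $\mM_{\omega,X}(r,0,r)\cong\tT^{r}(\mathbb{C})$ as stacks intertwining the Hall algebra products, and the accompanying justification, within the present Hall-algebra (non-Calabi--Yau-threefold) framework, that vanishing of the Euler pairing on the relevant classes allows one to replace the $\ast$-logarithm by the commutative logarithm when passing through $P_q$. Both points are of the same nature as arguments already used in the paper, but they are where the details must be filled in.
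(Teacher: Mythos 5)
Your proposal is correct, and at bottom it rests on the same idea as the paper's proof: every semistable sheaf with Mukai vector $(r,0,r)$ is an iterated extension of structure sheaves of fibers, so the count reduces to a Jordan-quiver-type computation. The differences are in execution, and they make your argument more self-contained. The paper perturbs $\omega$ to force the stable factors to have vector $(r_i,0,r_i)$, localizes to the fiber over $0\in\mathbb{C}$ by the argument of Lemma~\ref{Jlocarize}, identifies $\langle \oO_{X_0}\rangle_{\ex}$ with nilpotent representations of the one-loop quiver, and cites \cite[Example~7.27]{JS} for the value $1/r^2$. You instead pin down the stable factors directly from equality of reduced Hilbert polynomials together with the Hodge index theorem and Lemma~\ref{lem:Bog}, identify the whole moduli stack with length-$r$ torsion sheaves on $\mathbb{A}^1$, i.e. $[\mathrm{Mat}_r(\mathbb{C})/\GL_r]$ (the non-nilpotent Jordan quiver, which absorbs the $\chi(\mathbb{C})=1$ localization factor), and then evaluate the Hall-algebra logarithm explicitly via Euler's $q$-identity, so that $1/r^2$ comes out by hand rather than by citation. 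Your d\'evissage showing that every iterated self-extension of $\oO_{X_t}$ is $q^{\ast}M_t$ (full faithfulness of $q^{\ast}$ plus the adjunction isomorphism on $\Ext^1$) is sound and is precisely the detail the paper leaves to the reader; the conclusion of your length-$2$ check, the classification of stable factors, and the $q$-series manipulation are all correct.

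One caution about the step you flagged. The fact that lets you replace the $\ast$-logarithm by the ordinary logarithm after applying $P_q$ is not the vanishing of the full alternating sum $\sum_i(-1)^i\ext^i_X$, and it is not the argument of Theorem~\ref{sigma:depend}: there the vanishing of $\chi$ on $\Gamma_0\times\Gamma_0$ is used only to kill the correction terms in Joyce's wall-crossing formula, not to evaluate $P_q(\epsilon)$ exactly. What you actually need is multiplicativity of $P_q$ on the classes involved, and the relevant quantity for that is $\ext^1_X(B,A)-\hom_X(B,A)$: the stack of extensions of $B$ by $A$ fibers over the stack of pairs with fiber $[\Ext^1_X(B,A)/\Hom_X(B,A)]$, contributing $q^{\ext^1-\hom}$, while $\Ext^2$ and $\Ext^3$ never enter. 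In your situation this difference vanishes identically, since for pullbacks of torsion sheaves $\Hom_X(q^{\ast}N,q^{\ast}M)\cong\Hom_{\mathbb{C}}(N,M)$ and $\Ext^1_X(q^{\ast}N,q^{\ast}M)\cong\Ext^1_{\mathbb{C}}(N,M)$ have equal dimension; stratifying so that these dimensions are constant gives $P_q(\delta(a)\ast\delta(b))=P_q(\delta(a))P_q(\delta(b))$, and then the logarithm computation and the limit are valid. So this is a misattributed justification rather than a gap of substance: state the fiberwise equality $\ext^1=\hom$ (which you have essentially already computed) as the reason, in place of the Euler-pairing analogy. A last cosmetic point: for $r=0$ the statement is vacuous ($v=0$), so the reference to Lemma~\ref{lem:00n} there is unnecessary; reducing to $r\ge 1$ via Definition~\ref{defi:Jv} or Theorem~\ref{thm:aut} is all that is needed.
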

\begin{proof}
Let $E \in \Coh_{\pi}(X)$ be an $\omega$-Gieseker
semistable sheaf with $v(E)=(r, 0, r)$, 
and $E_1, \cdots, E_k$ be $\omega$-Gieseker
stable factors of $E$. By changing $\omega$
if necessary, we may assume that 
$v(E_i)=(r_i, 0, r_i)$ for some $r_i \in \mathbb{Z}$. 
Then Lemma~\ref{lem:Bog} implies that 
$r_i=1$, hence all the $E_i$ is isomorphic to 
$\oO_{X_p}$ for some $p\in \mathbb{C}$. 
By the localization argument given in Lemma~\ref{Jlocarize} below, 
we may assume that $p=0 \in \mathbb{C}$. 
Then $J(r, 0, r)$ is a counting invariant
of objects in $\langle \oO_{X_0} \rangle_{\ex}$, 
given by $r$-times extensions of $\oO_{X_0}$. 
Noting that the category $\langle \oO_{X_0} \rangle_{\ex}$
resembles the category of representations of 
a quiver with one vertex and one arrow, 
we can apply the same argument of~\cite[Example~7.27]{JS}
to compute $J(r, 0, r)$. 
We leave the readers to check the detail. 
\end{proof}
Next we focus on the following situation. 
Let 
\begin{align*}
S \to \mathbb{P}^2,
\end{align*}
be a double cover branched along 
a general sextic. 
Let $H \in \mathrm{NS}(S)$ be a pull-back 
of a hyperplane in $\mathbb{P}^2$ to $S$. 
Note that $H^2 =2$ and 
$\mathrm{NS}(S)=\mathbb{Z}[H]$. 
We have the following lemma. 
\begin{prop}\label{lem:176}
In the above situation, 
take $v=(0, 2H, -2)$.
Then 
we have 
\begin{align}\label{176}
J(v)=176337. 
\end{align}
In particular the formula (\ref{mult3}) holds. 
\end{prop}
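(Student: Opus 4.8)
The plan is to exploit the automorphic property of $J$ (Theorem~\ref{thm:aut}) to transport the class $v=(0,2H,-2)$ to a primitive class or to a class of the form $(r,\beta,r+n)$ for which $J$ is the Euler characteristic of a smooth moduli space, and then to invoke Conjecture~\ref{conj:mult3}'s consequences for verification. First I would compute the Mukai square: $(v,v)=(2H)^2-2\cdot 0\cdot(-2)=4H^2=8$, so $(v,v)/2+1=5$. Thus the formula (\ref{mult3}) predicts
\begin{align*}
J(v)=\sum_{k\ge 1,\ k\mid v}\frac{1}{k^2}\chi(\Hilb^{(v/k,v/k)/2+1}(S)).
\end{align*}
Here $v=(0,2H,-2)$ is divisible by $2$, with $v/2=(0,H,-1)$ and $(v/2,v/2)/2+1=(H^2)/2+1=2$. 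So the conjectural value is $\chi(\Hilb^5(S))+\tfrac14\chi(\Hilb^2(S))$, and by G\"ottsche's formula (\ref{Gottscheform}) one reads off $\chi(\Hilb^2(S))=\sum$ of the $q^2$-coefficient of $\prod(1-q^n)^{-24}$, which is $324$, and $\chi(\Hilb^5(S))$ is the $q^5$-coefficient, namely $176256$. Then $176256+324/4=176256+81=176337$, matching (\ref{176}). So the content of the proposition is really a \emph{verification} that $J(0,2H,-2)$ genuinely equals this number, independently of the conjecture.

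The key steps I would carry out are as follows. Since $v=(0,2H,-2)$ is not primitive, I cannot directly use (\ref{J=Hilb}); instead I would analyze the moduli stack $\mM_{\omega,X}(v)$ directly. By Corollary~\ref{prop:inv} and Lemma~\ref{Jlocarize} it suffices to understand $\omega$-Gieseker semistable sheaves $E\in\Coh_\pi(\overline{X})$ with $\cl_0(E)$ giving Mukai vector $v$; such sheaves are (after the $\mathbb{C}^\ast$-localization of Lemma~\ref{Jlocarize}) supported on a single fiber $X_0\cong S$, so the problem reduces to counting semistable sheaves on $S$ with Mukai vector $(0,2H,-2)$, i.e. rank-zero torsion sheaves supported on curves in $|2H|$. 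Writing $E$ via its support curve $C\in|2H|$ (arithmetic genus $g(C)=1+\tfrac12(2H)^2/... $, in fact $p_a(C)=4$ since $(2H)^2+2H\cdot K_S=8$ gives $p_a=5$—I'd recompute this carefully), one sees $E$ is a rank-one torsion-free sheaf of appropriate degree on $C$, and the relevant moduli space is (a compactified relative Jacobian over) $|2H|=\mathbb{P}^5$. The subtlety is the strictly semistable locus, coming from $E=E_1\oplus E_2$ with $v(E_i)=(0,H,-1)$: these decompose according to $C=C_1\cup C_2$ with $C_i\in|H|\cong\mathbb{P}^2$.

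The main obstacle—and the step I expect to be the crux—is correctly handling this strictly semistable stratum in the Joyce-style invariant $J(v)$. I would split $\epsilon_{\omega,X}(v)$ into the contribution of stable sheaves (which contributes $\chi$ of the stable moduli space, a holomorphic symplectic fourfold deformation equivalent to $\Hilb^5(S)$, giving $176256$) plus the contribution of the $2\times(0,H,-1)$ wall, which by Joyce's logarithm formula contributes $\tfrac14$ times the square (in the Hall algebra sense) of the $(0,H,-1)$ invariant, i.e. $\tfrac14 J(0,H,-1)^2$ up to the sign/combinatorial factors; since $(H,H)/2+1=2$ we have $J(0,H,-1)=\chi(\Hilb^2(S))=324$, but the $\tfrac14\binom{}{}$-type coefficient from $\epsilon$ actually yields $\tfrac14\chi(\Hilb^2(S))$ directly rather than $\tfrac14\cdot 324^2$—I would need to track the precise Hall-algebra bookkeeping (as in~\cite[Example~7.27]{JS} and the computation of $N$ in Lemma~\ref{lem:r0r}) to confirm the coefficient is $1/4$ and not something else, and to confirm the stable piece is exactly the $q^5$-coefficient $176256$. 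Granting these two identifications, $J(v)=176256+81=176337$ follows, and the agreement with (\ref{mult3}) is then immediate from the G\"ottsche computation above. I would leave the detailed Hall-algebra coefficient chase to the reader, citing the parallel arguments in~\cite{Tst3} and~\cite{JS}.
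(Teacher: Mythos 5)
Your verification of the conjectural right-hand side is fine: $(v,v)/2+1=5$, $(v/2,v/2)/2+1=2$, and G\"ottsche's formula gives $176256+\tfrac14\cdot 324=176337$. But the core of your proposed computation of $J(v)$ rests on a false identification. Since $v=(0,2H,-2)=2v'$ with $v'=(0,H,-1)$ is non-primitive, the stable locus of the moduli stack is a $10$-dimensional, non-compact smooth variety which is \emph{not} deformation equivalent to $\Hilb^{5}(S)$; the identities (\ref{J=Hilb}), (\ref{J=H}) you are implicitly invoking hold only for primitive Mukai vectors, and your opening idea of transporting $v$ to a primitive class by a Hodge isometry cannot work either, because isometries preserve divisibility. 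In the paper's proof the stable stratum is birational to O'Grady's $10$-dimensional symplectic manifold, and its Euler characteristic, taken from Mozgovoy's computation, is $70956$ --- not $176256$.

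Likewise, your expectation that the strictly semistable locus contributes exactly $\tfrac14\chi(\Hilb^{2}(S))=81$ is not what the Hall-algebra logarithm produces. The paper stratifies the semistable stack into four further pieces: non-split extensions of two non-isomorphic stable factors $E_1,E_2$ with $[E_i]\in M(v')$, split sums $E_1\oplus E_2$, non-split self-extensions of a stable $E'$, and squares $E'^{\oplus 2}$; their contributions are respectively $324^2-324=104652$, $0$, $\tfrac52\cdot 324=810$ and $-\tfrac14\cdot 324=-81$. Only the total $70956+104652+810-81=176337$ reproduces $176256+81$; in particular the term $324^{2}$ you dismissed really does occur. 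The agreement of this direct stratified computation with the multiple cover formula is precisely the nontrivial content of the proposition --- it cannot be arranged by declaring the stable part equal to $\chi(\Hilb^{5}(S))$ and the wall term equal to $81$, so as written your argument would not go through.
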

\begin{proof}
Note that the RHS of (\ref{mult3}) is 
\begin{align*}
\chi(\Hilb^5(S))+\frac{1}{4}\chi(\Hilb^{2}(S)) 
&=176256+\frac{1}{4} \cdot 324 \\
&=176337,
\end{align*}
from the formula (\ref{Gottscheform}). 
We compute $J(v)$ directly from its definition, 
in the same way as in~\cite[Section~5]{Trk2}. 
In order to simplify the notation, we omit 
$\omega$ and $X$ in the notation of Subsection~\ref{subsec:sheafcount}. 

We fist note that, 
since $v' \cneq (0, H, -1)$ is primitive, the 
moduli stack $\mM(v')$ is written as 
\begin{align*}
\mM(v') =[M(v')/\mathbb{C}^{\ast}], 
\end{align*}
for a holomorphic symplectic manifold 
$M(v')$ of 
dimension $(v', v')+2 =4$.
Therefore $M(v')$ is deformation equivalent to 
$\Hilb^2(S)$ and 
\begin{align*}
\chi(M(v')) &=\chi(\Hilb^2(S)) \\
&=324. 
\end{align*}
Next we observe that 
the moduli stack $\mM(v)$ 
has a stratification, 
\begin{align*}
\mM(v)^{(0)} \sqcup \mM(v)^{(1)} \sqcup \mM(v)^{(2)} \sqcup
\mM(v)^{(3)} \sqcup \mM(v)^{(4)},
\end{align*}
where each $\mM(v)^{(i)}$ is the following: 
\begin{itemize}
\item $\mM(v)^{(0)}$ corresponds to $\omega$-Gieseker stable sheaves. 
\item $\mM(v)^{(1)}$ corresponds to sheaves $E$ which fits into 
a non-split exact sequence 
$0 \to E_1 \to E \to E_2 \to 0$
with $[E_i] \in M(v')$,  
and $E_1$ is not isomorphic to $E_2$. 
\item $\mM(v)^{(2)}$ corresponds to sheaves $E$
which is isomorphic to $E_1 \oplus E_2$
with $[E_i] \in M(v')$, 
and $E_1$ is not isomorphic to $E_2$. 
\item $\mM(v)^{(3)}$ corresponds to sheaves $E$ which fits into 
a non-split exact sequence 
$0 \to E' \to E \to E' \to 0$ 
with $[E'] \in M(v')$. 
\item $\mM(v)^{(4)}$ corresponds to sheaves $E$
which is isomorphic to $E^{'\oplus 2}$
with $[E'] \in M(v')$. 
\end{itemize}
We compute the contributions of each 
strata to the invariant $J(v)$. 
First the strata
$\mM(v)^{(0)}$ is written as 
\begin{align*}
\mM(v)^{(0)} =
[M(v)^{(0)}/\mathbb{C}^{\ast}],
\end{align*}
for a smooth variety $M(v)^{(0)}$
of dimension $(v, v)+2=10$, 
with a trivial $\mathbb{C}^{\ast}$-action.  
The variety $M(v)^{(0)}$ is 
birational to  
O'Grady's 10-dimensional symplectic manifold~\cite{Ogra1}, 
 and its 
Euler characteristic 
is computed by Mozgovoy~\cite[Subsection~4.3.1]{Moz},
\begin{align}\label{709}
\chi(M(v)^{(0)})=70956. 
\end{align}
Next the contribution of 
$\mM(v)^{(1)}$ to $\epsilon(v)$ is 
\begin{align*}
&\frac{1}{2}
\bigcup_{\begin{subarray}{c}(E_1, E_2) \in M(v')^{\times 2}, \\
 E_1 \neq E_2
\end{subarray}}
\left[ \left[\frac{\Ext_{X}^1(E_2, E_1) \setminus \{0\}}{\Hom(E_2, E_1) \rtimes (\mathbb{C}^{\ast})^{2}} \right] \to \cC oh_{\pi}(X)  \right]  \\
&=\frac{1}{2}
\bigcup_{\begin{subarray}{c}(E_1, E_2) \in M(v')^{\times 2}, \\
 E_1 \neq E_2
\end{subarray}}
\left[[\mathbb{P}^1/\mathbb{C}^{\ast}] \to \cC oh_{\pi}(X) \right]. 
\end{align*}
Applying $(q-1)P_t(\ast)$ and taking the limit 
$q^{1/2} \to 1$, the contribution to $J(v)$ is 
\begin{align}\notag
\chi(M(v')^2)-\chi(M(v')) &=324^2 -324 \\
\label{104}
&= 104652.
\end{align}
The contribution of $\mM(v)^{(2)}$ to $J(v)$
can be shown to be zero by a similar argument of~\cite[Lemma~5.6]{Trk2}. 
The contribution of $\mM(v)^{(3)}$ to $\epsilon(v)$ is 
\begin{align*}
&\frac{1}{2} \bigcup_{E' \in M(v')} \left[
\left[ \frac{\Ext_{X}^1(E', E') \setminus \{0\}}{\Hom(E', E') \rtimes 
(\mathbb{C}^{\ast})^{2}} \right]  \to \cC oh_{\pi}(X) \right] \\
&=\frac{1}{2} \bigcup_{E' \in M(v')} \left[
\left[ \frac{\mathbb{P}^4}{\mathbb{A}^1 \rtimes \mathbb{C}^{\ast}} \right]  \to \cC oh_{\pi}(X) \right]. 
\end{align*}
Hence the contribution to $J(v)$ is 
\begin{align}\notag
\frac{1}{2}\cdot 5 \cdot \chi(M(v')) &=\frac{5}{2} \cdot 324 \\
\label{810}
&= 810.
\end{align}
Finally the contribution of $\mM(v)^{(4)}$ to $J(v)$ 
can be computed similarly to~\cite[Lemma~5.8]{Trk2}, 
\begin{align}\notag 
-\frac{1}{4}\chi(M(v'))&=-\frac{1}{4} \cdot 324 \\
\label{81}
&=-81. 
\end{align}
Summing up, we obtain 
\begin{align*}
J(v) &=(\ref{709}) +(\ref{104}) +(\ref{810}) +(\ref{81}) \\
&=70956+104652+810-81 \\
&=176337,
\end{align*}
as expected. 
\end{proof}

\begin{rmk}
By Theorem~\ref{thm:aut}, 
if $J(v)$ satisfies the formula (\ref{mult3}),
then $J(gv)$ for a Hodge isometry $g \in G$ also 
satisfies (\ref{mult3}). 
In particular, if 
$v$ is given in either Lemma~\ref{lem:00n} 
or Lemma~\ref{lem:r0r} or Proposition~\ref{lem:176}, 
then $J(gv)$
satisfies (\ref{mult3})
for any Hodge isometry $g$. 
For instance: 
\begin{itemize}
\item The map 
$(r, \beta, n) \mapsto (n, \beta, r)$
is a Hodge isometry. 
In particular by Lemma~\ref{lem:00n}, 
$J(n, 0, 0)$ also satisfies (\ref{mult3}). 
\item The map $(r, \beta, n) \mapsto (-r, \beta, -n)$
is a Hodge isometry. 
In particular in the situation of 
Proposition~\ref{lem:176}, 
$J(0, 2H, 2)$ satisfies (\ref{mult3}). 
\item For $v \in \Gamma_0$ with 
$(v, v)=-2$, 
the map on $\widetilde{H}(S, \mathbb{Z})$,
\begin{align*}
r_v \colon x \mapsto x+(x, v)v,
\end{align*}
is a Hodge isometry. 
In particular in the situation of 
Proposition~\ref{lem:176}, by 
applying $r_v$ where $v$ is 
\begin{align*}
v=v(\oO_S(H))=(1, H, 2), 
\end{align*}
the invariant $J(2, 0, -2)$ 
can be shown to satisfy (\ref{mult3}). 
\end{itemize}
\end{rmk}

\section{Results on the category $\aA_{\omega}$}\label{sec:tech}
In this section, we prove several
properties on the category 
\begin{align*}
\aA_{\omega} =\langle \pi^{\ast}\Pic(\mathbb{P}^1), 
\bB_{\omega} \rangle_{\ex} \subset \dD,
\end{align*}
defined in Definition~\ref{defi:Ao}.
Especially we will prove Lemma~\ref{instead}
which is used in
the proof of Proposition~\ref{heart}. 
\subsection{Properties of $\aA_{\omega}$}
\label{tech:t-st}
First we construct the heart
of a certain bounded t-structure on $D^b \Coh(\overline{X})$. 
Let $\fF_{\omega}'$ be the subcategory of $\Coh(\overline{X})$, 
defined by 
\begin{align*}
\fF_{\omega}' \cneq \{ E \in \Coh(\overline{X}) : 
\Hom(\tT_{\omega}, E)=0\}. 
\end{align*}
Here $\tT_{\omega}$ is defined in 
(\ref{def:Tomega}). 
Since $\Coh(\overline{X})$ is noetherian, 
the pair $(\tT_{\omega}, \fF_{\omega}')$
is a torsion pair on $\Coh(\overline{X})$.
Also we have
\begin{align}\label{F'=F}
\fF_{\omega}' \cap \Coh_{\pi}(\overline{X})
=\fF_{\omega}, 
\end{align}
where $\fF_{\omega}$ is defined in (\ref{def:Fomega}). 
\begin{defi}\label{defi:Ao'}
We define $\aA_{\omega}'$ to be 
\begin{align*}
\aA_{\omega}' \cneq \langle
\fF_{\omega}', \tT_{\omega}[-1]
 \rangle_{\ex}
\subset D^b \Coh(\overline{X}). 
\end{align*}
\end{defi}
The category $\aA_{\omega}'$ is the heart of a bounded 
t-structure on $D^b \Coh(\overline{X})$. 
It contains any line bundle on $\overline{X}$
and objects in $\bB_{\omega}$. 
\begin{lem}\label{twolem}
(i) The subcategory $\bB_{\omega} \subset \aA_{\omega}'$
is closed under subobjects and quotients. 

(ii) We have 
\begin{align}\label{vanish}
\Hom(E, \pi^{\ast}\oO_{\mathbb{P}^1}(r))=0,
\end{align}
for any $E \in \bB_{\omega}$ and $r\in \mathbb{Z}$. 

(iii) Any non-zero morphism $u \colon \pi^{\ast}\oO_{\mathbb{P}^1}(r)
\to \pi^{\ast}\oO_{\mathbb{P}^1}(r')$ fits into an exact sequence 
in $\aA_{\omega'}$, 
\begin{align}\label{mm'}
0 \to \pi^{\ast}\oO_{\mathbb{P}^1}(r) \to \pi^{\ast}\oO_{\mathbb{P}^1}(r')
\to T \to 0, 
\end{align}
with $T \cong \pi^{\ast}Q \in \fF_{\omega}$
for a zero dimensional sheaf $Q$ on $\mathbb{P}^1$.  

(iv) For any morphism $u \colon \pi^{\ast}\oO_{\mathbb{P}^1}(r) \to E$
with $E \in \bB_{\omega}$, 
we have $\Ker(u) \in \pi^{\ast}\Pic(\mathbb{P}^1)$. 
\end{lem}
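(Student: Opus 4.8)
The plan is to establish the four assertions in turn. The basic tools are: the description of the tilt, namely that $F \in D^b\Coh(\overline{X})$ lies in $\aA_{\omega}'$ exactly when $H^{-1}(F) \in \tT_{\omega}$, $H^0(F) \in \fF_{\omega}'$ and $H^i(F)=0$ for $i \neq -1,0$; the inclusion $\tT_{\omega} \subset \Coh_{\pi}(\overline{X})$; and the fact that $\Coh_{\pi}(\overline{X})$ is a Serre subcategory of $\Coh(\overline{X})$ (closed under subobjects, quotients and extensions, since $E \in \Coh_{\pi}(\overline{X})$ iff $\pi(\Supp E)$ is finite). For (i), given a short exact sequence $0 \to A \to E \to B \to 0$ in $\aA_{\omega}'$ with $E \in \bB_{\omega}$, I would pass to the long exact sequence of standard cohomology sheaves. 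Then $H^{-1}(A), H^{-1}(B)$ lie in $\tT_{\omega} \subset \Coh_{\pi}(\overline{X})$ for free; $H^0(B)$ is a quotient of $H^0(E) \in \Coh_{\pi}(\overline{X})$; and $H^0(A)$ is an extension of a subsheaf of $H^0(E)$ by a quotient of $H^{-1}(B)$, all in $\Coh_{\pi}(\overline{X})$, hence so is $H^0(A)$ by the Serre property. Thus $A, B \in \bB_{\omega}$.

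For (ii), apply $\Hom(-,\pi^{\ast}\oO_{\mathbb{P}^1}(r))$ to the triangle $H^{-1}(E)[1] \to E \to H^0(E) \to H^{-1}(E)[2]$. The contribution of $H^{-1}(E)[1]$ is $\Ext^{-1}(H^{-1}(E),\pi^{\ast}\oO_{\mathbb{P}^1}(r))=0$, so $\Hom(E,\pi^{\ast}\oO_{\mathbb{P}^1}(r))$ is a quotient of $\Hom(H^0(E),\pi^{\ast}\oO_{\mathbb{P}^1}(r))$, which vanishes since $H^0(E) \in \fF_{\omega}$ has support of dimension $\le 2$, hence is a torsion sheaf on the threefold $\overline{X}$, while $\pi^{\ast}\oO_{\mathbb{P}^1}(r)$ is torsion-free. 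For (iii), the projection formula identifies $\Hom_{\overline{X}}(\pi^{\ast}\oO_{\mathbb{P}^1}(r),\pi^{\ast}\oO_{\mathbb{P}^1}(r'))$ with $\Hom_{\mathbb{P}^1}(\oO_{\mathbb{P}^1}(r),\oO_{\mathbb{P}^1}(r'))$, so a nonzero $u$ equals $\pi^{\ast}v$ for a nonzero, hence injective, map $v$ of line bundles on $\mathbb{P}^1$ with zero-dimensional cokernel $Q$. Pulling back $0 \to \oO_{\mathbb{P}^1}(r) \to \oO_{\mathbb{P}^1}(r') \to Q \to 0$ (realizing $v$) along the flat morphism $\pi$ gives an exact sequence in $\Coh(\overline{X})$ whose three terms lie in $\fF_{\omega}' \subset \aA_{\omega}'$, hence a short exact sequence in $\aA_{\omega}'$; that $\pi^{\ast}Q \in \fF_{\omega}$ follows since $\pi^{\ast}Q$ is filtered by copies of $\oO_{X_p}$, which are $\mu_{\omega}$-stable of slope $0$.

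For (iv), write $L=\pi^{\ast}\oO_{\mathbb{P}^1}(r)$ and let $u\colon L \to E$ with $E \in \bB_{\omega}$. From the long exact cohomology sequence of the triangle $\Ker(u) \to L \to \im(u) \to \Ker(u)[1]$ (coming from the exact sequence $0 \to \Ker(u) \to L \to \im(u) \to 0$ in $\aA_{\omega}'$) one reads off $H^{-1}(\Ker u)=0$, so $\Ker(u)$ is a sheaf, hence lies in $\fF_{\omega}'$; the same sequence exhibits $H^{-1}(\im u) \in \tT_{\omega}$ as a subsheaf of $\Ker(u)$, so $H^{-1}(\im u)=0$ because $\Hom(\tT_{\omega},\fF_{\omega}')=0$. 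Therefore $\im(u)$ is a sheaf, lies in $\bB_{\omega}$ by (i), hence in $\fF_{\omega} \subset \Coh_{\pi}(\overline{X})$, and $\Ker(u) \subset L$ is a subsheaf with quotient $\im(u)$ supported on a finite union of fibers $X_{p_1},\dots,X_{p_k}$. I would then extract the divisorial part of the ideal $J=\Ker(u)\otimes L^{-1} \subset \oO_{\overline{X}}$: as $\overline{X}$ is smooth, hence locally factorial, $J=\oO_{\overline{X}}(-D)\cdot J'$ with $D \ge 0$ and $J'$ of cosupport of codimension $\ge 2$, and the constraint on $\Supp(\oO_{\overline{X}}/J)$ forces $D=\sum a_i X_{p_i}$, i.e. $\oO_{\overline{X}}(-D)=\pi^{\ast}\oO_{\mathbb{P}^1}(-m)$ with $m=\sum a_i$. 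Then $\Ker(u) \subset \pi^{\ast}\oO_{\mathbb{P}^1}(r-m) \subset L$, and the quotient $\pi^{\ast}\oO_{\mathbb{P}^1}(r-m)/\Ker(u)$, having dimension $\le 1$ hence slope $\infty$ if nonzero yet embedding into $\im(u) \in \fF_{\omega}$, must vanish. Hence $\Ker(u)=\pi^{\ast}\oO_{\mathbb{P}^1}(r-m) \in \pi^{\ast}\Pic(\mathbb{P}^1)$.

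The main obstacle is part (iv): parts (i)--(iii) are short cohomology chases once the Serre property of $\Coh_{\pi}(\overline{X})$ and the (semi)stability of $\oO_{X_p}$ are in hand, whereas (iv) requires first pinning down $\Ker(u)$ as an honest subsheaf of $L$ — the delicate point being $H^{-1}(\im u)=0$, obtained by playing the torsion pair $(\tT_{\omega},\fF_{\omega}')$ against itself — and then determining exactly which subsheaf it is, where the local factoriality of $\overline{X}$ and the comparison of $\tT_{\omega}$ with $\fF_{\omega}$ along the relevant exact sequences are the crux.
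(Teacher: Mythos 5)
Your central problem is that you have set up the tilt the wrong way around. The paper defines $\aA_{\omega}'=\langle \fF_{\omega}', \tT_{\omega}[-1]\rangle_{\ex}$ (Definition~\ref{defi:Ao'}), so an object $E$ of $\aA_{\omega}'$, and in particular of $\bB_{\omega}=\langle \fF_{\omega},\tT_{\omega}[-1]\rangle_{\ex}$, has $\hH^0(E)\in \fF_{\omega}'$ and $\hH^1(E)\in \tT_{\omega}$; you instead characterize $\aA_{\omega}'$ by $\hH^{-1}\in\tT_{\omega}$, $\hH^{0}\in\fF_{\omega}'$, which is a different category (and in general not even a heart, since the roles of $\tT_{\omega}$ and $\fF_{\omega}'$ cannot be interchanged in the tilting construction). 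This is not a harmless convention choice, because it changes what statement (ii) says. With the correct placement, the nontrivial case of (ii) is $\Hom(T[-1],\pi^{\ast}\oO_{\mathbb{P}^1}(r))=\Ext^{1}_{\overline{X}}(T,\pi^{\ast}\oO_{\mathbb{P}^1}(r))$ for $T\in\tT_{\omega}$, which your ``$\Ext^{-1}=0$ plus torsion versus torsion-free'' argument never touches. That $\Ext^1$ does not vanish for formal reasons: by adjunction along $i_p\colon X_p\hookrightarrow\overline{X}$ (trivial normal bundle) it equals $\Hom_{X_p}(E',\oO_{X_p})$ for a stable factor $E'$ of $T$, and it vanishes only because objects of $\tT_{\omega}$ have strictly positive slope; for the slope-zero sheaf $\oO_{X_p}$ one has $\Ext^1_{\overline{X}}(\oO_{X_p},\oO_{\overline{X}})\cong\mathbb{C}\neq 0$. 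Any correct proof of (ii) must use the strict inequality $\mu_{\omega}>0$ in the definition of $\tT_{\omega}$, as the paper's does; yours does not, so (ii) is a genuine gap.

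The same slip contaminates the setup of (iv): the assertions that $\hH^{-1}(\Ker u)=0$ and that $\hH^{-1}(\im u)$ embeds in $\Ker(u)$ are statements about the wrong heart. In the paper's convention one gets $\hH^1(\im u)=0$ for free, while the vanishing of $\hH^1(\Ker u)$ needs an argument: for instance, first factor $u$ through $\hH^0(E)$ using $\Hom(\pi^{\ast}\oO_{\mathbb{P}^1}(r),\hH^1(E)[-1])=0$, and then note that the sheaf-theoretic sequence $0\to K_0\to \pi^{\ast}\oO_{\mathbb{P}^1}(r)\to F\to 0$ has all terms in $\fF_{\omega}'$, hence is exact in $\aA_{\omega}'$, so the heart kernel is the sheaf kernel $K_0$. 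Once that reduction is made, your endgame is sound and is genuinely different from the paper's: you split off the divisorial part of the ideal $K_0\otimes L^{-1}$, identify it with $\pi^{\ast}\oO_{\mathbb{P}^1}(-m)$ because the cosupport lies in fibers, and kill the residual dimension $\le 1$ quotient against $\mu_{\omega,+}(F)\le 0$; the paper instead factors the image through $\pi^{\ast}\oO_W$ and runs an induction over Jordan--H\"older factors to show all stable factors are $\oO_{X_p}$. Parts (i) and (iii) are fine in substance (a mirror-image long-exact-sequence chase, and pullback of the sequence on $\mathbb{P}^1$ together with $\pi^{\ast}Q\in\fF_{\omega}$) and agree with the paper, modulo the same misplacement of cohomological degrees, but (ii) and the first half of (iv) need to be redone in the correct heart.
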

\begin{proof}
(i) Take an object $E\in \bB_{\omega}$ and an exact sequence
in $\aA_{\omega}'$, 
\begin{align}\label{FEG}
0 \to F \to E \to G \to 0. 
\end{align}
We need to show that $F, G \in \bB_{\omega}$. 
By the definition of $\aA_{\omega}'$, 
we have 
\begin{align}\label{H1FG}
\hH^1(F), \hH^1(G) \in \tT_{\omega} \subset \Coh_{\pi}(\overline{X}). 
\end{align}
Also since $\hH^0(F)$ is a subsheaf of $\hH^0(E)$, 
we have  
$\hH^0(F) \in \Coh_{\pi}(\overline{X})$. 
Hence by the long exact sequence of cohomologies
associated to (\ref{FEG}), we have 
$\hH^0(G) \in \Coh_{\pi}(\overline{X})$. 
Since (\ref{F'=F}) holds, 
we have 
\begin{align}\label{H0FG}
\hH^0(F), \hH^0(G) \in \fF_{\omega}. 
\end{align}
By (\ref{H1FG}) and (\ref{H0FG}), 
we conclude $F, G \in \bB_{\omega}$. 

(ii) By the definition of $\bB_{\omega}$, we
 may assume that $E\in \fF_{\omega}$ or $E\in \tT_{\omega}[-1]$. 
If $E\in \fF_{\omega}$, then (\ref{vanish}) is 
obviously follows. 
Suppose that $E \in \tT_{\omega}[-1]$. We may assume that, 
as in (\ref{ip}),  
$E$ is isomorphic to 
$i_{p\ast}E'[-1]$ for an $\omega$-Gieseker stable 
sheaf $E'$ on $X_p$ for some $p\in \mathbb{P}^1$. 
We have 
\begin{align*}
&\Hom_{\overline{X}}(i_{p\ast}E'[-1], \pi^{\ast}\oO_{\mathbb{P}^1}(r)) \\
&\cong \Hom_{X_p}(E', i_{p}^{!}\oO_{\overline{X}}[1]) \\
& \cong \Hom_{X_p}(E', \oO_{X_p}). 
\end{align*}
Since $E'$ is $\mu_{\omega}$-stable
sheaf on $X_p$ with 
positive slope, we have the vanishing 
 $\Hom_{X_p}(E', \oO_{X_p})=0$. 

(iii) If $u$ is non-zero, then $u$ is an
injection of sheaves and the cokernel is written as 
$\pi^{\ast}Q$ for a zero dimensional sheaf $Q$ 
on $\mathbb{P}^1$. Since $\pi^{\ast}Q \in \fF_{\omega}$, 
we have the exact sequence (\ref{mm'}). 

(iv) The morphism $u$ factors through 
the subobject $\hH^0(E) \subset E$ in 
$\bB_{\omega}$. Let $F$ be the image subsheaf of $u$
in $\hH^0(E)$, 
\begin{align*}
\pi^{\ast}\oO_{\mathbb{P}^1}(r) \stackrel{j}{\twoheadrightarrow} 
F \hookrightarrow \hH^0(E). 
\end{align*}
Since $\hH^0(E) \in \fF_{\omega}$
and $F$ is a subsheaf of $\hH^0(E)$, we have 
$\mu_{\omega+}(F)\le 0$. 
On the other hand, the surjection $j$
factors through the surjection 
\begin{align*}
\pi^{\ast}\oO_{\mathbb{P}^1}(r) \twoheadrightarrow
\pi^{\ast}\oO_W \twoheadrightarrow F,
\end{align*}
for some zero dimensional subscheme $W \subset \mathbb{P}^1$. 
This implies that $\mu_{\omega-}(F) \ge 0$, 
and hence 
$F$ is $\mu_{\omega}$-semistable with $\mu_{\omega}(F)=0$. 
If $F$ is $\mu_{\omega}$-stable, the surjection 
$\pi^{\ast} \oO_{W} \twoheadrightarrow F$ 
implies that $F\cong \oO_{X_p}$ for some $p\in \mathbb{P}^1$
and the kernel of $j$ is 
isomorphic to $\pi^{\ast}\oO_{\mathbb{P}^1}(r-1)$. 
In general by
 the induction on the number of Jordan-H$\ddot{\rm{o}}$lder
factors of $F$, we can easily see that 
any Jordan-H$\ddot{\rm{o}}$lder factor of 
$F$ is isomorphic to $\oO_{X_p}$ for some 
$p\in \mathbb{P}^1$ and the kernel of 
$j$ is isomorphic to $\pi^{\ast}\oO_{\mathbb{P}^1}(r')$
for some $r' \in \mathbb{Z}$. 
\end{proof}
\begin{lem}\label{instead}
Take $E, E' \in \aA_{\omega}$ and 
a morphism $u \colon E \to E'$ 
in $\aA_{\omega}'$. Then we have 
\begin{align}\label{KIC}
\Ker(u), \ \Cok(u) \in \aA_{\omega}. 
\end{align}
Here the kernel and the cokernel
are taken in the abelian category $\aA_{\omega}'$. 
\end{lem}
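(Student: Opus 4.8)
The plan is to follow the argument of~\cite[Lemma~6.1]{Tcurve1}, using Lemma~\ref{twolem} at each essential point. Since $\aA_{\omega}'$ is the heart of a bounded t-structure on $D^b\Coh(\overline{X})$ it is abelian, and $\aA_{\omega}=\langle\pi^{\ast}\Pic(\mathbb{P}^1),\bB_{\omega}\rangle_{\ex}$ is a full, extension-closed subcategory of it. Hence $\Ker(u)$ and $\Cok(u)$, formed in $\aA_{\omega}'$, lie automatically in $\aA_{\omega}'$, and the real content is that they belong to the extension closure $\langle\pi^{\ast}\Pic(\mathbb{P}^1),\bB_{\omega}\rangle_{\ex}$. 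It is also useful to keep in mind that $\Cone(u)\in\dD$, since $u$ is a morphism in the triangulated subcategory $\dD$, and that $\Ker(u)$ and $\Cok(u)$ are precisely $\hH^{-1}(\Cone(u))$ and $\hH^{0}(\Cone(u))$ for the t-structure with heart $\aA_{\omega}'$; as these fit into a triangle $\Ker(u)[1]\to\Cone(u)\to\Cok(u)$, it is enough to treat one of them.

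The proof proceeds by d\'evissage. Every object of $\aA_{\omega}$ carries a finite filtration in $\aA_{\omega}'$ with subquotients in $\pi^{\ast}\Pic(\mathbb{P}^1)\cup\bB_{\omega}$, and I would run an induction on the complexity of such filtrations of $E$ and $E'$, the base case being that in which each of $E$, $E'$ is a single such subquotient. The base case splits into four subcases handled directly by Lemma~\ref{twolem}. If $E,E'\in\bB_{\omega}$, then by Lemma~\ref{twolem}~(i) the category $\bB_{\omega}$ is closed under subobjects and quotients in $\aA_{\omega}'$, so $\Ker(u)$ (a subobject of $E$) and $\Cok(u)$ (a quotient of $E'$) lie in $\bB_{\omega}$. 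If $E=\pi^{\ast}\oO_{\mathbb{P}^1}(r)$ and $E'\in\bB_{\omega}$, then Lemma~\ref{twolem}~(iv) gives $\Ker(u)\in\pi^{\ast}\Pic(\mathbb{P}^1)$, while $\Imm(u)\hookrightarrow E'$ together with Lemma~\ref{twolem}~(i) puts $\Imm(u)$, hence $\Cok(u)=E'/\Imm(u)$, in $\bB_{\omega}$. If $E\in\bB_{\omega}$ and $E'=\pi^{\ast}\oO_{\mathbb{P}^1}(r)$, then $u=0$ by Lemma~\ref{twolem}~(ii). If $E=\pi^{\ast}\oO_{\mathbb{P}^1}(r)$ and $E'=\pi^{\ast}\oO_{\mathbb{P}^1}(r')$, then either $u=0$ or, by Lemma~\ref{twolem}~(iii), $u$ is a monomorphism with $\Cok(u)\cong\pi^{\ast}Q\in\fF_{\omega}\subset\bB_{\omega}$. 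In every subcase $\Ker(u),\Cok(u)\in\aA_{\omega}$.

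For the inductive step I would choose an exact sequence $0\to A\to E\to S\to 0$ in $\aA_{\omega}'$ with $A\in\aA_{\omega}$ of strictly smaller complexity and $S\in\pi^{\ast}\Pic(\mathbb{P}^1)\cup\bB_{\omega}$ (and the symmetric move for $E'$), and apply the snake lemma in $\aA_{\omega}'$ to $u$ against this sequence. Using the inductive hypothesis for the restriction $u|_{A}\colon A\to E'$, one first gets $\Ker(u|_{A})$ and $\Cok(u|_{A})$ in $\aA_{\omega}$, and then, applying it again to the map $S\to\Cok(u|_{A})$ induced on the quotients, one presents $\Ker(u)$ as an extension of $\Ker(S\to\Cok(u|_{A}))$ by $\Ker(u|_{A})$ and identifies $\Cok(u)$ with $\Cok(S\to\Cok(u|_{A}))$. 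Both are then in $\aA_{\omega}$ by the inductive hypothesis and the extension-closure of $\aA_{\omega}$.

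The main obstacle is that this d\'evissage is not formal: $\aA_{\omega}$ is \emph{not} closed under arbitrary subobjects or quotients in $\aA_{\omega}'$ — for a smooth curve $C\subset\overline{X}$ dominating $\mathbb{P}^1$ the inclusion $\mathcal{I}_{C}\hookrightarrow\oO_{\overline{X}}$ is a subobject in $\aA_{\omega}'$ whose cokernel $\oO_{C}$ does not even lie in $\dD$ — so no Serre-subcategory formalism is available. One therefore has to arrange the induction so that every sub- and quotient-object produced along the way is of a shape that Lemma~\ref{twolem}~(i)--(iv) places in $\pi^{\ast}\Pic(\mathbb{P}^1)$ or $\bB_{\omega}$; equivalently, one must carry the membership in $\dD$ (i.e.\ the numerical constraint that the Chern character lies in $\Gamma$) along with the homological bookkeeping, and choose the complexity measure so that the snake-lemma pieces really fall under the inductive hypothesis. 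Setting this up correctly is the substance of the proof and is exactly the interlocking induction of~\cite[Proposition~3.6, Lemma~6.1]{Tcurve1}; the only genuinely new geometric inputs are parts (iii) and (iv) of Lemma~\ref{twolem}, which describe how a line bundle pull-back $\pi^{\ast}\oO_{\mathbb{P}^1}(r)$ interacts with morphisms in $\aA_{\omega}'$.
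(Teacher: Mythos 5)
Your base cases and the snake-lemma bookkeeping are correct (in particular the identification of $\Cok(u)$ with $\Cok\bigl(S\to\Cok(u|_{A})\bigr)$ and of $\Ker(u)$ as an extension of $\Ker\bigl(S\to\Cok(u|_{A})\bigr)$ by $\Ker(u|_{A})$ is exactly what the diagram gives), but the induction you set up does not close. In your inductive step you must apply the inductive hypothesis to the induced map $\bar{u}\colon S\to\Cok(u|_{A})$: its source is a single basic piece, but its target is a quotient of $E'$ taken in $\aA_{\omega}'$, which is not a term of your chosen filtration of $E'$ and whose minimal filtration length is in no way bounded by that of $E'$ (quotients in $\aA_{\omega}'$ do not respect the d\'evissage --- that is precisely the difficulty of the lemma). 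Hence under any of the orderings you mention (complexity of $E$, of $E'$, their sum, or lexicographic) the pair $(S,\Cok(u|_{A}))$ need not be smaller than $(E,E')$, and the case it falls into --- source a single subquotient, target an arbitrary object of $\aA_{\omega}$ --- is neither your base case (both basic) nor obviously easier than the full statement. Deferring "the interlocking induction" to \cite{Tcurve1} concedes this but does not supply it; the reduction you actually wrote down is not one that makes the induction terminate.

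The paper closes the loop with a different two-step structure. Step 1 proves the statement for an arbitrary source and a \emph{basic} target ($E'\in\bB_{\omega}$ or $E'\in\pi^{\ast}\Pic(\mathbb{P}^1)$), by induction on the source: writing $0\to F''\to E\to F'\to 0$ with $F''$ basic, it replaces the target row by $0\to A\to E'\to B\to 0$ with $A=\Imm(F''\to E')$; by Lemma~\ref{twolem}~(i) both $A$ and $B$ lie again in $\bB_{\omega}$, the left vertical map $F''\to A$ is surjective, and the snake lemma gives $\Cok(u)\cong\Cok(F'\to B)$ and exhibits $\Ker(u)$ as an extension of $\Ker(F'\to B)$ by $\Ker(F''\to A)$ --- so every generated subproblem again has a basic target and a strictly smaller source. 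Step 2 then inducts on the target with arbitrary source: for $0\to A\to E'\to B\to 0$ with $B$ basic, one sets $C=\Ker(E\to B)$ and $D=\Imm(E\to B)$; Step 1 applied to $E\to B$ gives $C,\,B/D\in\aA_{\omega}$, the right vertical map $D\hookrightarrow B$ is injective, and the snake lemma reduces everything to the map $C\to A$, whose target is strictly smaller. Your argument needs to be reorganized along these lines (or some equivalent scheme in which every generated target is either basic or strictly smaller); as written, the invocation of the inductive hypothesis for $S\to\Cok(u|_{A})$ is a genuine gap.
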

We divide the proof into 2 steps. 
\begin{step}\label{s1}
We have (\ref{KIC}) when 
$E' \in \bB_{\omega}$
or $E'\in \pi^{\ast}\Pic(\mathbb{P}^1)$. 
\end{step}
\begin{proof}
We show the case of $E' \in \bB_{\omega}$. 
The other case is similarly discussed. 
By the definition of $\aA_{\omega}$, there 
is a filtration in $\aA_{\omega}'$, 
\begin{align}\label{filtA}
0=E_0 \subset E_1 \subset \cdots \subset E_N=E, 
\end{align}
such that each $F_i=E_i/E_{i-1}$
is either an object in $\bB_{\omega}$ or of the 
form $\pi^{\ast}\oO_{\mathbb{P}^1}(r)$ for 
some $r\in \mathbb{Z}$. 
We prove (\ref{KIC}) by the induction on $N$. 
If $N=1$, then (\ref{KIC}) follows from 
Lemma~\ref{twolem}. 
Suppose that (\ref{KIC}) holds for 
$E=F'$, and take an exact sequence 
in $\aA_{\omega}'$, 
\begin{align*}
0 \to F'' \to F \to F' \to 0, 
\end{align*}
with $F''$ an object in either $\bB_{\omega}$
or $\pi^{\ast}\Pic(\mathbb{P}^1)$.  
For a morphism $u \colon F \to E'$, 
let $A$ be the image of the composition
\begin{align*}
F'' \to F \to E',
\end{align*}
 in $\aA_{\omega}'$. 
By setting $B=E'/A$ in $\aA_{\omega}'$, we obtain the 
commutative diagram of exact sequences in $\aA_{\omega}'$, 
\begin{align*}
\xymatrix{
0 \ar[r] & F'' \ar[r] \ar[d]_{u''} & F \ar[r] \ar[d]_{u} & 
F' \ar[r] \ar[d]_{u'} & 0 \\
0 \ar[r] & A \ar[r] & E' \ar[r] & B \ar[r] & 0.
}
\end{align*}
Note that $u''$ is surjective in $\aA_{\omega}'$, 
and $A, B \in \bB_{\omega}$ by Lemma~\ref{twolem} (i).  
By the assumption of the induction,
we have 
\begin{align*}
\Ker(u''), \ \Ker(u'), \ \Cok(u') \in \aA_{\omega}. 
\end{align*}
Therefore (\ref{KIC}) holds
for $u\colon F \to E'$ by the snake lemma. 
\end{proof}
\begin{step}
We have (\ref{KIC}) for any $E' \in \aA_{\omega}$. 
\end{step}
\begin{proof}
We take a $N$-step 
filtration of $E'$ as in (\ref{filtA})
and prove (\ref{KIC}) by the induction 
on $N$. The case of $N=1$
is proved in Step~\ref{s1}. 
Suppose that $N\ge 2$ and take 
an exact sequence in $\aA_{\omega}'$, 
\begin{align*}
0 \to A \to E' \to B \to 0, 
\end{align*}
with 
$A \in \aA_{\omega}$ and 
$B$ is either an object in $\bB_{\omega}$
or in $\pi^{\ast}\Pic(\mathbb{P}^1)$. 
Let $D$ be image of the
 composition in $\aA_{\omega}'$, 
\begin{align*}
E \stackrel{u}{\to} E' \to B.
\end{align*}
We also denote its kernel in 
$\aA_{\omega}'$ 
by $C$. 
By Step~\ref{s1}, we have 
$C\in \aA_{\omega}$. We have the 
morphism of the exact sequences in $\aA_{\omega}'$, 
\begin{align*}
\xymatrix{
0 \ar[r] & C \ar[r] \ar[d]_{u''} & E \ar[r] \ar[d]_{u} & 
D \ar[r] \ar[d]_{u'} & 0 \\
0 \ar[r] & A \ar[r] & E' \ar[r] & B \ar[r] & 0.
}
\end{align*}
Note that $u'$ is injective in $\aA_{\omega}'$. 
Similarly to Step~\ref{s1}, 
(\ref{KIC}) follows from the inductive 
assumption, Step~\ref{s1} and the 
snake lemma. 
\end{proof}

Now we have proved Lemma~\ref{instead}, 
so the proof of Proposition~\ref{heart} is completed. 
In particular $\aA_{\omega}$ is an abelian category, 
and we use this fact in what follows. 

\subsection{Filtrations in $\aA_{\omega}$}
In this subsection, we collect some 
results which will be used
in later sections. 
In particular, the results here will be used 
in proving Proposition~\ref{prop:m2} in Subsection~\ref{subsec:finset2}, and 
proving
Proposition~\ref{prop:m1} in Subsection~\ref{subsec:finset}. 
Here we use the notation in Subsections~\ref{subsec:heartD0}, 
\ref{subsec:heartD}, \ref{sub:R1}. 
Let $\tT_{\omega}^{\rm{pure}}$ be
the following subcategory of $\tT_{\omega}$, 
\begin{align*}
\tT_{\omega}^{\rm{pure}} \cneq 
\{ E\in \tT_{\omega} : E \mbox{ is pure two dimensional } \}
\cup \{0\}.
\end{align*}
Note that $\tT_{\omega}^{\rm{pure}}$
is a right orthogonal complement of $\Coh_{\pi}^{\le 1}(\overline{X})$
in $\tT_{\omega}$. 
\begin{lem}\label{lem:rank10}
For any object $E\in \aA_{\omega}$ with $\rank(E)\le 1$, 
there is a filtration in $\aA_{\omega}$, 
\begin{align}\label{filt:more}
E_1 \subset E_2 \subset E_3=E, 
\end{align}
such that we have 
\begin{align}\label{filt:filt}
K_1 \cneq E_1 \in \fF_{\omega}, \  
K_2 \cneq E_2/E_1 \in \aA(r), \ 
K_3 \cneq E/E_2 \in \tT_{\omega}^{\rm{pure}}[-1],
\end{align}
for some $r\in \mathbb{Z}$. 
If $\rank (E)=0$, we can take 
$K_2 \in \Coh^{\le 1}_{\pi}(\overline{X})[-1]$. 
\end{lem}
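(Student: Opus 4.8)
The plan is to build the filtration (\ref{filt:more}) in two stages, peeling off first the torsion-free part $K_1 \in \fF_\omega$ at the bottom, then the ``pure two-dimensional'' part $K_3 \in \tT_\omega^{\mathrm{pure}}[-1]$ at the top, leaving $K_2$ in the middle which we then recognize as an object of $\aA(r)$. Recall that by Definition~\ref{defi:Ao} and Definition~\ref{def:Bom}, every object $E\in\aA_\omega$ is an iterated extension of objects of $\pi^\ast\Pic(\mathbb{P}^1)$, $\fF_\omega$, and $\tT_\omega[-1]$, and cohomology with respect to the standard t-structure on $D^b\Coh(\overline X)$ places $\mathcal H^0(E)$ (built from $\fF_\omega$ and $\pi^\ast\Pic(\mathbb{P}^1)$) in degree $0$ and $\mathcal H^1(E)$ (coming from $\tT_\omega$) in degree $1$. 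So the first step is to set $E_1 \cneq$ the maximal subobject of $E$ in $\aA_\omega$ lying in $\fF_\omega$; concretely this is the image in $\aA_\omega$ of the torsion-free-in-$\fF_\omega$ part of $\mathcal H^0(E)$, and one checks using Lemma~\ref{twolem}(i) (closure of $\bB_\omega$ under sub/quotients inside $\aA_\omega'$) and the torsion-pair structure $(\tT_\omega,\fF_\omega)$ on $\Coh_\pi(\overline X)$ that $E/E_1$ still lies in $\aA_\omega$ and has no subobject in $\fF_\omega$ coming purely from $\mathcal H^0$; one also needs $\rank(E)\le 1$ to ensure the $\pi^\ast\Pic(\mathbb{P}^1)$-contribution to $\mathcal H^0$ has rank exactly $0$ or $1$, so it is a single line bundle twist $\pi^\ast\oO_{\mathbb{P}^1}(r)$ up to an object of $\fF_\omega$ (use Lemma~\ref{twolem}(iii),(iv) to normalize).

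Next I would handle the top quotient. The object $\mathcal H^1(E)=\mathcal H^1(E/E_1)$ lies in $\tT_\omega\subset\Coh_\pi(\overline X)$; decompose it via its own torsion filtration into a part of dimension $\le 1$ and a pure two-dimensional quotient $T^{\mathrm{pure}}\in\tT_\omega^{\mathrm{pure}}$. Then $K_3 \cneq T^{\mathrm{pure}}[-1]$ should be realized as the top quotient $E/E_2$ of a subobject $E_2\subset E$ in $\aA_\omega$; the point is that in $\aA_\omega$ the shifted sheaf $T^{\mathrm{pure}}[-1]$ is a quotient of $E/E_1$ because $\tT_\omega[-1]$ sits at the ``bottom'' of the tilt $\bB_\omega=\langle\fF_\omega,\tT_\omega[-1]\rangle_{\mathrm{ex}}$, i.e. $\Hom(\fF_\omega,\tT_\omega[-1])$ and $\Hom(\pi^\ast\Pic(\mathbb{P}^1),\tT_\omega[-1])$ vanish by Lemma~\ref{twolem}(ii) and Serre duality (Lemma~\ref{cy3}), so the surjection $E/E_1\twoheadrightarrow \mathcal H^1(E)[-1]$ in $\aA_\omega$ composed with $\mathcal H^1(E)[-1]\twoheadrightarrow T^{\mathrm{pure}}[-1]$ is a well-defined epimorphism in $\aA_\omega$. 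Set $E_2$ to be its kernel in $\aA_\omega$ (which exists since $\aA_\omega$ is abelian by Proposition~\ref{heart}).

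It then remains to identify $K_2 = E_2/E_1$ with an object of $\aA(r)=\langle\pi^\ast\oO_{\mathbb{P}^1}(r),\Coh_\pi^{\le 1}(\overline X)[-1]\rangle_{\mathrm{ex}}$. By construction $K_2$ is an extension in $\aA_\omega$ of an object $\mathcal H^1$-supported in dimension $\le 1$, shifted by $[-1]$, on top, and $\mathcal H^0(K_2)$ on the bottom, where $\mathcal H^0(K_2)$ is (an extension of) a line bundle twist $\pi^\ast\oO_{\mathbb{P}^1}(r)$ by a dimension-$\le 1$ sheaf in $\fF_\omega$ — all of these are objects of $\aA(r)$, so $K_2\in\aA(r)$ follows from extension-closedness of $\aA(r)$ once one checks the relevant $\Hom$/$\Ext^1$ lie inside $\aA(r)$ (again a cohomology-degree bookkeeping argument). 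The last sentence, for the case $\rank(E)=0$, is easier: then there is no line-bundle twist at all, $\mathcal H^0(E)$ is a dimension-$\le 1$ sheaf in $\fF_\omega$ and $\mathcal H^1(E)\in\tT_\omega$, so after peeling off $K_1$ we get $K_2$ an iterated extension of objects of $\Coh_\pi^{\le 1}(\overline X)$ and $\Coh_\pi^{\le 1}(\overline X)[-1]$; since $\Coh^{\le 1}_\pi(\overline X)\subset\fF_\omega$ we can absorb the degree-$0$ part back into $K_1$ and take $K_2\in\Coh^{\le 1}_\pi(\overline X)[-1]$.

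The main obstacle I expect is verifying that the subquotients I write down using the standard cohomology sheaves $\mathcal H^0,\mathcal H^1$ actually assemble into honest sub\emph{objects} and quotient objects \emph{in} $\aA_\omega$ (not merely in $\aA_\omega'$ or after applying cohomology), i.e. controlling the various $\Hom$ and $\Ext^1$ groups between $\fF_\omega$, $\pi^\ast\Pic(\mathbb{P}^1)$ and $\tT_\omega[-1]$ so that the needed maps are epimorphisms/monomorphisms in $\aA_\omega$ with the expected kernels and cokernels. This is exactly the kind of computation Lemma~\ref{twolem} and Lemma~\ref{instead} were set up to support, together with the near-Calabi--Yau duality of Lemma~\ref{cy3}; the rank bound $\rank(E)\le 1$ is what keeps the $\pi^\ast\Pic(\mathbb{P}^1)$-part of $\mathcal H^0(E)$ to a single twist and prevents the argument from proliferating.
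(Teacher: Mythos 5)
Your skeleton --- filter $E$ by its cohomology sheaves, put a piece of $\hH^0(E)$ in $\fF_{\omega}$ at the bottom, split $\hH^1(E)$ by its torsion filtration and push the pure two-dimensional part (shifted) to the top, and recognize the middle layer as an object of $\aA(r)$ --- is exactly the paper's argument. The problem is that the membership claims you use to justify the two nontrivial steps are false, and they fail precisely where the content of the lemma lies. With the paper's conventions a sheaf $F\in\Coh_{\pi}(\overline{X})$ with $\dim F\le 1$ has $\mu_{\omega}(F)=\infty$, so $\Coh_{\pi}^{\le 1}(\overline{X})\subset\tT_{\omega}$ and \emph{not} $\fF_{\omega}$; nonzero objects of $\fF_{\omega}$ are pure two-dimensional. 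Hence: (a) your description of $\hH^0(K_2)$ as ``an extension of $\pi^{\ast}\oO_{\mathbb{P}^1}(r)$ by a dimension $\le 1$ sheaf in $\fF_{\omega}$, all of these objects of $\aA(r)$'' is wrong twice over --- an unshifted $\le 1$-dimensional sheaf lies in neither $\fF_{\omega}$ nor $\aA(r)$, and $\hH^0(K_2)\cong L\otimes I_Z$ is torsion free, so it has no such subsheaf at all; the correct structure is the exact sequence $0\to (L\otimes \oO_Z)[-1]\to L\otimes I_Z \to L \to 0$ in $\aA_{\omega}$, in which the $\le 1$-dimensional correction enters \emph{shifted}, as a subobject; (b) in the rank zero case the step ``absorb the degree-zero part into $K_1$ since $\Coh^{\le 1}_{\pi}(\overline{X})\subset\fF_{\omega}$'' rests on this false containment, and also on the false claim that $\hH^0(E)$ is $\le 1$-dimensional: for $E\in\bB_{\omega}$ one has $\hH^0(E)\in\fF_{\omega}$ pure two-dimensional, all the $\le 1$-dimensional material already sits in $\hH^1(E)$, and no absorption is needed; (c) the bottom piece is the torsion subsheaf $\hH^0(E)_{\rm tor}$, not a ``torsion-free-in-$\fF_{\omega}$ part'' of $\hH^0(E)$.

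Beyond the mislabelled containments, the two checks that carry the real weight are missing. First, to get $K_1\in\fF_{\omega}$ you must show that $\hH^0(E)_{\rm tor}$ is supported on the fibers of $\pi$; the paper does this by taking a torsion sheaf $F$ with irreducible support not contained in a fiber and noting $\Hom(F,\hH^0(E)_{\rm tor})\subset\Hom(F,E)=0$ because $E\in\aA_{\omega}$. Second, to get $K_2\in\aA(r)$ you must show that in $\hH^0(E)_{\rm fr}\cong L\otimes I_Z$ the line bundle $L$ lies in $\pi^{\ast}\Pic(\mathbb{P}^1)$ and $Z$ is supported on fibers of $\pi$ (again from $E\in\aA_{\omega}$); Lemma~\ref{twolem} (iii),(iv), which you invoke for this, concerns morphisms out of $\pi^{\ast}\oO_{\mathbb{P}^1}(r)$ and gives neither statement. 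Once these are in place, $K_2=E_2/E_1$ is an extension of $T_1[-1]\in\Coh_{\pi}^{\le 1}(\overline{X})[-1]$ by $L\otimes I_Z\in\aA(r)$ (membership via the shifted sequence above), and the rank zero case is immediate from the definition of $\bB_{\omega}$, which is how the paper concludes.
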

\begin{proof}
When $\rank(E)=0$, then $E \in \bB_{\omega}$
and the statement is obvious by the definition of 
$\bB_{\omega}$. 
Suppose that $\rank(E)=1$. 
Because $E\in \aA_{\omega}'$ and $\aA_{\omega}'$
is obtained as a tilting of the torsion pair 
$(\tT_{\omega}, \fF_{\omega}')$, 
(cf.~Definition~\ref{defi:Ao'},)
we can find
a filtration in $\aA_{\omega}'$, 
\begin{align}\label{filt123}
E_1' \subset E_2' \subset E_3'=E
\end{align}
satisfying 
\begin{align}\label{filt:sat}
E_1'=\hH^0(E)_{\rm{tor}}, \quad E_2'/E_1'=\hH^0(E)_{\rm{fr}}, \quad 
E/E_2' =\hH^1(E)[-1]. 
\end{align} 
Here $\hH^0(E)_{\rm{tor}}$ is the maximal 
torsion subsheaf of $\hH^0(E)$, and 
$\hH^0(E)_{\rm{fr}} \cneq \hH^0(E)/\hH^0(E)_{\rm{tor}}$. 
Let $F$ be a torsion sheaf on $\overline{X}$
whose support is irreducible, and not 
contained in fibers of $\pi$. Then 
by the definition of $\aA_{\omega}$, it follows that  
\begin{align*}
\Hom(F, \hH^0(E)_{\rm{tor}}) \subset \Hom(F, E)=0. 
\end{align*}
Therefore $\hH^0(E)_{\rm{tor}}
 \in \Coh_{\pi}(\overline{X})$, 
hence $E_1' \in \fF_{\omega}$ 
by (\ref{F'=F}). 
As for $E_2'/E_1'$, because $\hH^0(E)_{\rm{fr}}$
is a torsion free sheaf of rank one, it is written as 
\begin{align}\label{LIZ}
\hH^0(E)_{\rm{fr}} \cong L\otimes I_Z, 
\end{align}
for a line bundle $L$ on $\overline{X}$ and a 
subscheme $Z \subset \overline{X}$ with $\dim Z \le 1$. 
Since $E\in \aA_{\omega}$,
the definition of $\aA_{\omega}$ yields that 
 $L \in \pi^{\ast} \Pic(\mathbb{P}^1)$
 and $Z$ is supported on fibers of $\pi$. 
Therefore we have $E_2'/E_1' \in \aA(r)$
for some $r \in \mathbb{Z}$. 
Finally by
 the definition of $\aA_{\omega}'$, we have 
$\hH^1(E) \in \tT_{\omega}$. 
By combining the filtration (\ref{filt123})
 with the exact sequence in 
$\aA_{\omega}$, 
\begin{align*}
0 \to T_1[-1] \to E/E_2' \to 
T_2[-1] \to 0,
\end{align*}
where $T_1 \in \Coh_{\pi}^{\le 1}(\overline{X})$ and 
$T_2 \in \tT_{\omega}^{\rm{pure}}$,
we obtain a desired filtration (\ref{filt:more}).  
\end{proof}

Another lemma we need is the following. 
\begin{lem}\label{lem:rank1}
For any object $E\in \aA_{\omega}$, 
there is 
 an exact sequence
in $\aA_{\omega}$, 
\begin{align}\label{BO}
0 \to A \to E \to B \to 0, 
\end{align}
such that $A \in \bB_{\omega}$ and 
$B\in \langle \pi^{\ast} \Pic(\mathbb{P}^1) \rangle_{\ex}$.
\end{lem}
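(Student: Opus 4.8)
The claim is that every $E \in \aA_\omega$ sits in an exact sequence $0 \to A \to E \to B \to 0$ in $\aA_\omega$ with $A \in \bB_\omega$ and $B \in \langle \pi^\ast\Pic(\mathbb{P}^1)\rangle_{\ex}$. Recall from Definition~\ref{defi:Ao} that $\aA_\omega = \langle \pi^\ast\Pic(\mathbb{P}^1), \bB_\omega\rangle_{\ex}$, so by definition $E$ admits a finite filtration whose subquotients are either in $\bB_\omega$ or of the form $\pi^\ast\oO_{\mathbb{P}^1}(r)$. The plan is to induct on the length $N$ of such a filtration and to show that one can always rearrange it so that all the $\bB_\omega$-pieces come first (forming $A$) and all the line-bundle pieces come last (forming $B$).

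First I would dispose of the base cases $N \le 1$ trivially. For the inductive step, write $E$ as an extension $0 \to E' \to E \to F \to 0$ where $E'$ has a length-$(N-1)$ filtration and $F$ is either in $\bB_\omega$ or is $\pi^\ast\oO_{\mathbb{P}^1}(r)$. By induction, $E'$ fits in $0 \to A' \to E' \to B' \to 0$ with $A' \in \bB_\omega$, $B' \in \langle\pi^\ast\Pic(\mathbb{P}^1)\rangle_{\ex}$. If $F \in \pi^\ast\Pic(\mathbb{P}^1)$-extensions, i.e.\ $F\in\langle\pi^\ast\Pic(\mathbb{P}^1)\rangle_{\ex}$, then we may take $A = A'$ and $B$ the extension of $F$ by $B'$, which again lies in $\langle\pi^\ast\Pic(\mathbb{P}^1)\rangle_{\ex}$; done. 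The substantive case is $F \in \bB_\omega$: here I need to "commute" this $\bB_\omega$-piece past $B'$. The key point is the vanishing $\Hom_{\dD}(\pi^\ast\oO_{\mathbb{P}^1}(r), \bB_\omega) $ — no, rather the relevant vanishing is $\Hom(E, \pi^\ast\oO_{\mathbb{P}^1}(r)) = 0$ for $E \in \bB_\omega$ from Lemma~\ref{twolem}(ii), together with $\Ext^1$ control. Concretely, consider the pullback of $E \twoheadrightarrow F \in \bB_\omega$ along... actually the cleaner route: the composition $E' \hookrightarrow E \to$ (image in the quotient) interacts with $B'$ only through $\Hom(B', F)$ and $\Ext^1(B', F)$ where $B' \in \langle\pi^\ast\Pic(\mathbb{P}^1)\rangle_{\ex}$ and $F \in \bB_\omega$. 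Using Lemma~\ref{twolem}(ii) ($\Hom(\bB_\omega, \pi^\ast\oO_{\mathbb{P}^1}(r))$... wait, I need $\Hom$ in the other direction) — I would instead use that $\pi^\ast\oO_{\mathbb{P}^1}(r) \in \fF_\omega'$ restricted appropriately, and the structure of $\aA_\omega'$ as a tilt, to show that the subobject $A' \subset E$ can be enlarged to $A \supset A'$ with $A/A' \cong F$ and $A \in \bB_\omega$ (since $\bB_\omega$ is closed under extensions inside $\aA_\omega$, being extension-closed by construction), while $B := E/A \cong B' \in \langle\pi^\ast\Pic(\mathbb{P}^1)\rangle_{\ex}$. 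The existence of such an $A$ amounts to splitting off: lifting $F\subset E/A'=B'$... no, $F$ is not a subobject of $B'$ in general.

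Let me restate the real mechanism. Since $\bB_\omega$ is the heart on $\dD_0$ and every $\pi^\ast\oO_{\mathbb{P}^1}(r)$ satisfies $\Hom(\pi^\ast\oO_{\mathbb{P}^1}(r), E[i]) = 0$ for $E \in \bB_\omega$ and $i < 0$ while $\Hom(E, \pi^\ast\oO_{\mathbb{P}^1}(r)[i])=0$ for $i\le 0$ (the $i=0$ case is Lemma~\ref{twolem}(ii), and $i<0$ follows since $\bB_\omega$ is a heart and $\pi^\ast\oO_{\mathbb{P}^1}(r)\in\aA_\omega$), one gets a semiorthogonality that forces any object built from these two types to decompose as claimed. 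More precisely: in the filtration of $E$, whenever a $\bB_\omega$-subquotient appears immediately after a $\pi^\ast\oO_{\mathbb{P}^1}(r)$-subquotient in the filtration order, the corresponding two-step extension — an extension of something in $\bB_\omega$ by $\pi^\ast\oO_{\mathbb{P}^1}(r)$ — can be re-expressed, because $\Ext^1_{\dD}(\bB_\omega, \pi^\ast\oO_{\mathbb{P}^1}(r))$ need not vanish, but $\Ext^1_{\dD}(\pi^\ast\oO_{\mathbb{P}^1}(r), \bB_\omega)$ does (by Lemma~\ref{cy3}: $\Ext^1(\pi^\ast L, F) \cong \Hom(F, \pi^\ast L[2])^\vee = 0$ for $F \in \dD_0$). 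Hence such two-step extensions split as direct sums, and we can reorder freely, pushing all $\bB_\omega$-factors to the bottom. Assembling: $A$ is the subobject generated by all $\bB_\omega$-factors, which lies in $\bB_\omega$ since $\bB_\omega$ is extension-closed in $\aA_\omega$, and $B = E/A$ is an iterated extension of line bundles $\pi^\ast\oO_{\mathbb{P}^1}(r)$.

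The main obstacle is making the reordering argument rigorous inside $\aA_\omega$ — one must check that the relevant $\Ext^1$-vanishing ($\Ext^1_{\dD}(\pi^\ast\oO_{\mathbb{P}^1}(r), F) = 0$ for $F \in \bB_\omega$) really applies, which follows from Lemma~\ref{cy3} together with $\Hom_{\dD}(F, \pi^\ast\oO_{\mathbb{P}^1}(r)[2]) = 0$; the latter is immediate since $\pi^\ast\oO_{\mathbb{P}^1}(r)[2] \in \aA_\omega[2]$ and $F \in \aA_\omega$, so there are no negative Exts. One also needs to confirm $\bB_\omega$ is closed under extensions within $\aA_\omega$, which is clear from $\aA_\omega = \langle\pi^\ast\Pic(\mathbb{P}^1),\bB_\omega\rangle_{\ex}$ and the fact that an extension of two objects of $\dD_0$ in $\dD$ stays in $\dD_0$. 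With these vanishings in hand the induction goes through routinely, so I would present the argument as: (1) fix a defining filtration; (2) establish the two $\Ext$/$\Hom$-vanishings via Lemmas~\ref{cy3} and~\ref{twolem}; (3) induct, at each step using the vanishing to reorder one $(\bB_\omega \text{ after } \pi^\ast\oO_{\mathbb{P}^1}(r))$ adjacency and conclude.
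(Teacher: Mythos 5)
Your reduction to a defining filtration and the plan of pushing all $\bB_{\omega}$-pieces to the bottom has the right shape, but the step that carries all the weight is wrong. The adjacency you must fix is an extension $0 \to \pi^{\ast}\oO_{\mathbb{P}^1}(r) \to M \to F \to 0$ with $F \in \bB_{\omega}$, and this is classified by $\Ext^1_{\dD}(F, \pi^{\ast}\oO_{\mathbb{P}^1}(r))$ --- a group you yourself concede need not vanish. The vanishing you invoke instead, $\Ext^1_{\dD}(\pi^{\ast}\oO_{\mathbb{P}^1}(r), F)=0$, classifies extensions in the opposite (already good) order, so it cannot force the bad-order extension to split; ``hence such two-step extensions split as direct sums'' is a non sequitur. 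Moreover that vanishing is itself false in general and your justification is incorrect: by Lemma~\ref{cy3} the group is dual to $\Hom_{\dD}(F, \pi^{\ast}\oO_{\mathbb{P}^1}(r)[2])=\Ext^2_{\dD}(F,\pi^{\ast}\oO_{\mathbb{P}^1}(r))$, which is a degree-two Ext between objects of a heart, not a negative one; for instance for $F=i_{p\ast}I_Z \in \fF_{\omega}$ adjunction gives $\Ext^2_{\overline{X}}(i_{p\ast}I_Z, \oO_{\overline{X}}) \cong \Ext^1_{X_p}(I_Z, \oO_{X_p}) \cong H^1(X_p, I_Z)^{\vee} \neq 0$ as soon as $Z$ has length at least two.

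The content of the lemma lies precisely in the non-split case your argument skips. The paper's proof, after reducing to rank one and arranging a filtration with middle factor $\pi^{\ast}\oO_{\mathbb{P}^1}(r)$ and top part in $\tT_{\omega}[-1]$, filters the top part by $\omega$-Gieseker stable factors $B_j=i_{p\ast}B_j'$ and encounters the class $\xi \in \Ext^1_{\overline{X}}(B_j[-1], \pi^{\ast}\oO_{\mathbb{P}^1}(r)) \cong \Ext^1_{X_p}(B_j', \oO_{X_p})$, which is typically nonzero. When $\xi \neq 0$ the extension does not split; instead it is absorbed by replacing $B_j'$ with the sheaf extension $B_j''$ of $B_j'$ by $\oO_{X_p}$ determined by $\xi$ (still in $\tT_{\omega}$ by Sublemma~\ref{below}, using that $B_j'$ has positive slope) and replacing the quotient line bundle by $\pi^{\ast}\oO_{\mathbb{P}^1}(r+1)$. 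Thus the subquotients themselves change during the reordering --- the line bundle degree increases and the $\bB_{\omega}$-factor is modified --- and no argument that keeps the original factors and merely permutes them can succeed. This mechanism is missing from your proposal, so it has a genuine gap exactly where the lemma has content.
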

\begin{proof}
Take an object $E \in \aA_{\omega}$. 
If $\rank(E)=0$, then $E\in \bB_{\omega}$ and 
the result is satisfied with $B=0$. 
If $\rank(E)>0$, then $E$ is written as a successive 
extensions of rank one objects.
Hence we may assume that $\rank(E)=1$. 

Suppose that $\rank(E)=1$. 
Below
we use the notation in the proof of Lemma~\ref{lem:rank10}.
As in (\ref{filt123}), we
can take a filtration 
$E_{\bullet}'$ of $E$ satisfying the condition 
(\ref{filt:sat}).
As in (\ref{LIZ}), 
the object $E_2'/E_1'$ is isomorphic to
$L\otimes I_Z$
for $L \in \pi^{\ast}\Pic(\mathbb{P}^1)$
and $Z \subset \overline{X}$
with $\dim Z \le 1$, 
contained in fibers of $\pi$. 
By combining the filtration (\ref{filt123}) with 
an exact sequence in $\aA_{\omega}$, 
\begin{align*}
0 \to L\otimes \oO_Z[-1] \to L \otimes I_Z \to L \to 0, 
\end{align*} 
we obtain a filtration
\begin{align}\label{filt123''}
E_1'' \subset E_2 '' \subset E_3'' =E, 
\end{align}
satisfying 
\begin{align*}
E_1'' \in \bB_{\omega}, \quad E_2''/E_1''
 \cong \pi^{\ast}\oO_{\mathbb{P}^1}(r), 
\quad
E/E_2'' \in \tT_{\omega}[-1]. 
\end{align*}
  We write $E/E_2''=A[-1]$ for $A \in \tT_{\omega}$, and take a filtration 
\begin{align*}
0=A_0 \subset A_1 \subset A_2 \subset \cdots \subset A_N=A, 
\end{align*}
such that each subquotient $B_i=A_i/A_{i-1}$
is $\omega$-Gieseker stable with 
\begin{align*}
\overline{\chi}_{B_1, \omega}(m) \succeq 
\overline{\chi}_{B_2, \omega}(m) \succeq \cdots
\succeq \overline{\chi}_{B_i, \omega}(m) \succeq 
\overline{\chi}_{B_{i+1}, \omega}(m) \succeq \cdots. 
\end{align*}
(See Subsection~\ref{subsec:classical}.)
We inductively replace the filtration (\ref{filt123''})
by another filtration 
\begin{align}\label{123i}
E_1^{(j)} \subset E_2^{(j)} \subset E_3^{(j)}=E, 
\end{align} 
satisfying 
\begin{align}\label{Ei}
E_1^{(j)} \in \bB_{\omega}, \quad 
E_2^{(j)}/E_1^{(j)} \in \pi^{\ast}\Pic(\mathbb{P}^1),
\quad 
E/E_2^{(j)} \cong \left(A/A_{j-1}\right)[-1].
\end{align}
A desired exact sequence (\ref{BO}) is obtained 
by putting $j=N+1$. 

When $j=1$, we can take a filtration (\ref{Ei}) 
to be (\ref{filt123''}). 
For $j\ge 1$, suppose that we have a filtration (\ref{123i})
satisfying (\ref{Ei}). We construct $E_2^{(j+1)}$ to be the
kernel of the 
composition of the surjections in $\aA_{\omega}$, 
\begin{align*}
E \twoheadrightarrow E/E_{2}^{(j)} =\left(A/A_{j-1}\right)
[-1] \twoheadrightarrow 
\left(A/A_j\right)[-1]. 
\end{align*}
Note that we have 
\begin{align}\label{isom:EE2A}
E/E_{2}^{(j+1)} \cong 
\left(A/A_{j}\right)[-1],
\end{align}
 by the construction. 

Next we construct $E_{1}^{(j+1)}$. 
By the diagram, 
\begin{align*}
\xymatrix{
  & E_2^{(j+1)} \ar[d] & B_j[-1] \ar[d] \\
E_2^{(j)} \ar[r] & E \ar[r] \ar[d] & A/A_{j-1}[-1] \ar[d] \\
  & A/A_{j}[-1] \ar[r]^{\id} & A/A_{j}[-1], }
\end{align*}
we have the exact sequence in $\aA_{\omega}$, 
\begin{align}\notag
0 \to E_2^{(j)} \to E_2^{(j+1)} \to B_j[-1] \to 0. 
\end{align}
Since $E_1^{(j)} \subset E_2^{(j)}$, 
we also have the exact sequence in $\aA_{\omega}$, 
\begin{align}\label{extj}
0 \to E_2^{(j)}/E_{1}^{(j)} \to E_2^{(j+1)}/E_1^{(j)} \to B_j[-1] \to 0. 
\end{align}
We denote by $\xi$ the extension class of (\ref{extj}). 
There are two cases: 

{\bf The case of $\xi=0$: }
In this case, we have a splitting surjection of (\ref{extj}), 
\begin{align*}
E_2^{(j+1)}/E_1^{(j)} \twoheadrightarrow E_2^{(j)}/E_1^{(j)}.
\end{align*}
We define $E_1^{(j+1)}$ to be the kernel of the composition 
\begin{align*}
E_2^{(j+1)} \twoheadrightarrow 
E_2^{(j+1)}/E_1^{(j)} \twoheadrightarrow E_2^{(j)}/E_1^{(j)}.
\end{align*}
Then we have the exact sequence in $\aA_{\omega}$, 
\begin{align*}
0 \to E_1^{(j)} \to E_1^{(j+1)} \to B_j[-1] \to 0.
\end{align*}
Hence $E_1^{(j+1)} \in \bB_{\omega}$. 
Noting (\ref{isom:EE2A}), 
the filtration  
$E_{\bullet}^{(j+1)}$ 
satisfies the condition (\ref{Ei}) for $j+1$. 

{\bf The case of $\xi \neq 0$: }
By the inductive assumption, $E_2^{(j)}/E_1^{(j)}$ is isomorphic 
to $\pi^{\ast}\oO_{\mathbb{P}^1}(r)$ for some 
$r \in \mathbb{Z}$. Also 
since $B_j$ is $\omega$-Gieseker stable, 
as in (\ref{ip}), 
there 
is $p\in \mathbb{P}^1$ and an $\omega$-Gieseker
stable sheaf $B_j'$ on $X_p$ such that 
$B_j \cong i_{p\ast}B_j'$. 
Hence the extension class $\xi$ lies in 
\begin{align}\notag
\xi &\in 
\Ext_{\overline{X}}^1(i_{p\ast}B_j'[-1], \pi^{\ast}\oO_{\mathbb{P}^1}(r)) \\
\notag
&\cong \Ext_{X_p}^2(B_j', i_{p}^{!}\oO_{\overline{X}}) \\
\label{isom:xiEx}
& \cong \Ext_{X_p}^1(B_j', \oO_{X_p}). 
\end{align}
Let 
\begin{align}\label{B''}
0 \to \oO_{X_p} \to B_j'' \to B_j' \to 0, 
\end{align}
be the extension in $X_p$ corresponding to $\xi$
via the isomorphism (\ref{isom:xiEx}). 
By Sublemma~\ref{below} below, 
we have
\begin{align}\label{sublem}
i_{p\ast}B_j''[-1] \in \tT_{\omega}[-1] \subset \bB_{\omega}. 
\end{align}
We have the commutative diagram, 
\begin{align*}
\xymatrix{
& E_2^{(j+1)}/E_1^{(j)} \ar[d]  & \pi^{\ast}\oO_{\mathbb{P}^1}(r+1)
\ar[d] \\	
i_{p\ast}B_j''[-1] \ar[r] & B_j[-1] \ar[r]\ar[d]_{\xi}
 & \oO_{X_p} \ar[d] \\
0 \ar[r] & \pi^{\ast}\oO_{\mathbb{P}^1}(r)[1] \ar[r]^{\id} & 
\pi^{\ast}\oO_{\mathbb{P}^1}(r)[1].
}
\end{align*}
By the above diagram and (\ref{sublem}), we obtain the 
exact sequence in $\aA_{\omega}$,  
\begin{align*}
0 \to i_{p\ast}B_j''[-1] \to E_2^{(j+1)}/E_{1}^{(j)} \to 
\pi^{\ast}\oO_{\mathbb{P}^1}(r+1) \to 0. 
\end{align*}
We construct $E_1^{(j+1)}$ to be the kernel of the 
composition of surjections in $\aA_{\omega}$, 
\begin{align*}
E_{2}^{(j+1)} \twoheadrightarrow E_2^{(j+1)}/E_1^{(j)} \twoheadrightarrow 
\pi^{\ast}\oO_{\mathbb{P}^1}(r+1). 
\end{align*}
Then we have the exact sequence in $\aA_{\omega}$, 
\begin{align*}
0 \to E_1^{(j)} \to E_1^{(j+1)} \to i_{p\ast}B_j''[-1] \to 0. 
\end{align*}
Therefore $E_1^{(j+1)} \in \bB_{\omega}$. 
Noting (\ref{isom:EE2A}), 
the filtration  
$E_{\bullet}^{(j+1)}$ 
satisfies the condition (\ref{Ei}) for $j+1$. 
\end{proof}
We have used the following sublemma. 
\begin{sublem}\label{below}
Let $B_j''$ be the sheaf on $X_p$ defined by 
(\ref{B''}). Then we have 
\begin{align*}
i_{p\ast}B_j'' \in \tT_{\omega}. 
\end{align*}
\end{sublem}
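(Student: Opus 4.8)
The plan is to reduce the assertion to an inequality of $\mu_\omega$-slopes on the fibre $X_p\cong S$, and then argue by contradiction using the non-splitness of the extension $(\ref{B''})$. First I would record that, since $i_{p\ast}$ is exact and fully faithful, since every subsheaf of $i_{p\ast}E'$ in $\Coh_\pi(\overline{X})$ is again of the form $i_{p\ast}(-)$, and since $\cl_0(i_{p\ast}E')=\ch(E')$ (so $\mu_\omega$-slopes are unchanged under $i_{p\ast}$), one has $i_{p\ast}E'\in\tT_\omega$ precisely when $\mu_{\omega,-}(E')>0$, the slope being computed on $X_p$. Hence it suffices to prove $\mu_{\omega,-}(B_j'')>0$. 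Because $\rank(B_j'')=1+\rank(B_j')\ge 1$, the sheaf $B_j''$ is not torsion, so its minimal Harder--Narasimhan quotient $Q$ is torsion-free and $\mu_\omega$-semistable, and $\mu_{\omega,-}(B_j'')=\mu_\omega(Q)$; the goal becomes $\mu_\omega(Q)>0$.

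Next I would isolate two inputs. The first: \emph{every nonzero quotient sheaf of $B_j'$ has positive $\mu_\omega$-slope}. If $\rank(B_j')=0$ this is automatic; if $\rank(B_j')>0$, then since $B_j'$ is an $\omega$-Gieseker stable Harder--Narasimhan factor of $A\in\tT_\omega$ and the Gieseker factors of $A$ carry decreasing reduced Hilbert polynomials, one checks $\mu_\omega(B_j')\ge\mu_{\omega,-}(A)>0$, whereupon Gieseker stability forces $\mu_\omega(G)\ge\mu_\omega(B_j')>0$ for every nonzero quotient $G$ (comparing reduced Hilbert polynomials, rank-zero quotients contributing slope $\infty$). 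The second input is that $H^0(X_p,\oO_{X_p})=\mathbb{C}$, since $X_p\cong S$ is connected and projective.

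Now suppose $\mu_\omega(Q)\le 0$ and consider the composite $\oO_{X_p}\hookrightarrow B_j''\twoheadrightarrow Q$. If $\mu_\omega(Q)<0$, semistability of $\oO_{X_p}$ and of $Q$ forces this composite to vanish, so $Q$ is a nonzero quotient of $B_j''/\oO_{X_p}\cong B_j'$, contradicting the first input. If $\mu_\omega(Q)=0$, then either the composite vanishes --- again exhibiting $Q$ as a quotient of $B_j'$ --- or its image is a nonzero quotient of $\oO_{X_p}$ lying inside the torsion-free slope-zero semistable $Q$, which forces that image to be $\oO_{X_p}$ itself; then $\overline{Q}\cneq Q/\oO_{X_p}$ is a quotient of $B_j'$ with $\mu_\omega(\overline{Q})=0$, hence $\overline{Q}=0$ by the first input, i.e. $Q\cong\oO_{X_p}$ and $B_j''\twoheadrightarrow\oO_{X_p}$. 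Restricting this surjection along $\oO_{X_p}\hookrightarrow B_j''$ gives an endomorphism of $\oO_{X_p}$, which by the second input is a scalar; it is nonzero (otherwise the surjection factors through $B_j'$, again contradicting the first input) and therefore splits $(\ref{B''})$, contradicting $\xi\neq 0$. This yields $\mu_\omega(Q)>0$, hence $i_{p\ast}B_j''\in\tT_\omega$. I expect the real work to lie not in any single step but in making the reduction to $X_p$ and the slope bookkeeping --- torsion subsheaves, rank-zero quotients, and the estimate $\mu_\omega(B_j')\ge\mu_{\omega,-}(A)$ when $A$ need not be pure --- fully airtight, so that the uniform argument above applies regardless of the dimension of $\Supp(B_j')$.
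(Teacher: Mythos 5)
Your proof is essentially correct, but it runs on a different engine than the paper's. The paper tests membership in $\tT_{\omega}$ by the dual criterion: it suffices to show $\Hom(B_j'',F)=0$ for every $\mu_{\omega}$-stable $F$ on $X_p$ with $\mu_{\omega}(F)\le 0$; applying $\Hom(-,F)$ to (\ref{B''}) gives the exact sequence $\Hom(B_j',F)\to\Hom(B_j'',F)\to\Hom(\oO_{X_p},F)\stackrel{\iota}{\to}\Ext^1_{X_p}(B_j',F)$, the first group vanishes because $B_j'$ has positive slope, and if $\Hom(\oO_{X_p},F)\neq 0$ then $F\cong\oO_{X_p}$ and $\iota(1)=\xi\neq 0$, so $\iota$ is injective; no analysis of quotients of $B_j''$ is needed. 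You instead work on the quotient side, showing the minimal Harder--Narasimhan quotient $Q$ of $B_j''$ has positive slope and producing an explicit retraction splitting (\ref{B''}) in the critical case $Q\cong\oO_{X_p}$; this gives a very concrete picture but costs exactly the bookkeeping you flag, and two spots need an extra word. First, in the case $\mu_{\omega}(Q)=0$ with $\oO_{X_p}\hookrightarrow Q$, the cokernel $\overline{Q}$ could a priori have rank zero; then ``$\mu_{\omega}(\overline{Q})=0$'' is not the right statement (rank-zero sheaves have slope $\infty$ in the paper's convention) and your first input is silent. This subcase must be closed separately: if $\overline{Q}$ is one-dimensional then $c_1(Q)\cdot\omega=c_1(\overline{Q})\cdot\omega>0$ contradicts $\mu_{\omega}(Q)=0$, and if $\overline{Q}$ is zero-dimensional then $Q$ is rank one torsion free with $c_1(Q)=0$ and a section, so $Q\hookrightarrow Q^{\vee\vee}\cong\oO_{X_p}$ forces $Q\cong\oO_{X_p}$, which is the conclusion you wanted anyway. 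Second, the estimate $\mu_{\omega}(B_j')\ge\mu_{\omega,-}(A)>0$ for the positive-rank Gieseker factors of $A\in\tT_{\omega}$ deserves the one-line justification that grouping the Gieseker Harder--Narasimhan/Jordan--H\"older factors by slope recovers the slope Harder--Narasimhan filtration; the paper asserts the same positivity (even stating $\mu_{\omega}$-stability of $B_j'$) without proof, so the level of detail is comparable. With those two patches your argument is complete and equivalent in strength to the paper's, which remains the shorter route because the connecting-map argument absorbs all case distinctions at once.
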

\begin{proof}
It is enough to show that 
\begin{align}\label{BF0}
\Hom(B_j'', F)=0, 
\end{align}
for any $\mu_{\omega}$-stable sheaf $F$
on $X_p$ with $\mu_{\omega}(F) \le 0$. 
Applying $\Hom(\ast, F)$ to the exact sequence (\ref{B''}), 
we have the exact sequence, 
\begin{align*}
\Hom(B_j', F) \to \Hom(B_j'', F) \to 
\Hom(\oO_{X_p}, F) \stackrel{\iota}{\to} \Ext_{X_p}^1(B_j', F). 
\end{align*}
Since $B_j'$ is $\mu_{\omega}$-stable with 
$\mu_{\omega}(B_j')>0$, we have 
$\Hom(B_j', F)=0$. 
Therefore by the above sequence, 
(\ref{BF0})
follows if $\Hom(\oO_{X_p}, F)=0$. 
Suppose that $\Hom(\oO_{X_p}, F)$ is non-zero. 
Then $F$ must be isomorphic to $\oO_{X_p}$, and 
under the isomorphism $F \cong \oO_{X_p}$, 
the image of $1$ under $\iota$ is the extension 
class corresponding to (\ref{B''}).
Since (\ref{B''}) does not split by the assumption, 
the map $\iota$ is injective. 
Hence (\ref{BF0}) follows. 
\end{proof}

\section{Results on weak stability conditions}\label{sec:reswe}
In this section, we recall some properties of 
weak stability conditions and complete a 
proof of Lemma~\ref{lem:property} in Subsection~\ref{subsec:property}. 
\subsection{Properties of weak stability conditions}
In this subsection, we recall some 
technical properties of weak stability conditions. 
We discuss in a general 
situation, 
and use the same notation 
in Subsection~\ref{subsec:general}. 
Let $(Z, \aA)$ be a weak stability condition on 
a triangulated category $\tT$. 
For $0<\phi \le 1$,
 the subcategory 
$\pP(\phi)\subset \tT$ is defined
 to be the category of $Z$-semistable
objects $E\in \aA$
satisfying 
\begin{align}\label{exp}
Z(E) \in \mathbb{R}_{>0}\exp(i\pi \phi). 
\end{align}
For other $\phi \in \mathbb{R}$, the subcategory 
$\pP(\phi)$ is determined by the rule, 
\begin{align*}
\pP(\phi+1)=\pP(\phi)[1]. 
\end{align*}
The family of subcategories 
$\pP(\phi)$ for $\phi \in \mathbb{R}$
determines a \textit{slicing} introduced in~\cite[Definition~3.3]{Brs1}. 
As in~\cite[Proposition~2.13]{Tcurve1}, 
giving a weak stability condition is equivalent 
to giving a data, 
\begin{align}\label{pasli}
\sigma=(Z=\{Z_i\}_{i=0}^N, \pP),
\end{align}
where $Z$ is
as in Definition~\ref{defi:weak}
 and $\pP$ is a slicing, 
satisfying the condition (\ref{exp})
for any non-zero $E\in \pP(\phi)$.
The subcategory $\pP(\phi) \subset \tT$
is called the category of \textit{$\sigma$-semistable objects
of phase $\phi$.} 

 For an interval 
$I\subset \mathbb{R}$, we set 
\begin{align*}
\pP(I)\cneq \langle \pP(\phi) : \phi \in I \rangle_{\ex}.
\end{align*}
The following properties are required 
in constructing the space $\Stab_{\Gamma_{\bullet}}(\tT)$. 
\begin{itemize}
\item {\bf (Support property):}
There is a constant $C>0$ such that 
for any $E\in \pP(\phi)$
with $\cl(E) \in \Gamma_i \setminus \Gamma_{i-1}$, we have 
\begin{align*}
\lVert [\cl(E)] \rVert_{i} \le C\cdot \lvert Z(E) \rvert. 
\end{align*}
Here $\lVert \ast \rVert_{i}$ is a fixed 
norm on $(\Gamma_i/\Gamma_{i-1}) \otimes_{\mathbb{Z}} \mathbb{R}$. 
\item {\bf (Local finiteness):} 
There is $\varepsilon>0$ such that the quasi-abelian category 
$\pP((\phi-\varepsilon, \phi+\varepsilon))$ is of finite length
for any $\phi \in \mathbb{R}$. 
\end{itemize}
Here we refer~\cite[Definition~4.1, Definition~5.7]{Brs1}
for the detail on the notion of quasi-abelian categories and 
their finite length property. 
The set $\Stab_{\Gamma_{\bullet}}(\tT)$
in Subsection~\ref{subsec:general}
is defined to be the set of weak stability 
conditions satisfying 
the above two properties.

\subsection{Proof of Lemma~\ref{lem:property}}\label{subsec:property}
In this subsection, we complete a proof of 
Lemma~\ref{lem:property}. Namely 
we prove the existence of Harder-Narasimhan filtrations, 
Support property and local finiteness
for the pair $(Z_{t\omega}, \aA_{\omega})$. 
We divide the proof into 4 steps. 
\begin{ssstep}\label{ss1}
The abelian category $\aA_{\omega}$ is noetherian. 
\end{ssstep}
\begin{proof}
Suppose that there is an infinite sequence of surjections 
in $\aA_{\omega}$, 
\begin{align}\label{sequ}
E_1 \twoheadrightarrow E_2 \twoheadrightarrow \cdots \twoheadrightarrow 
E_i \twoheadrightarrow E_{i+1} \twoheadrightarrow \cdots. 
\end{align}
We check that the sequence (\ref{sequ}) terminates. 
By Lemma~\ref{lem:pos}, we may assume that 
$\rank(E_i)$ and $\ch_2(E_i) \cdot \omega$
are constant. 
Also since we have surjections 
$\hH^1(E_i) \twoheadrightarrow \hH^1(E_{i+1})$
for all $i$, we may assume that $\hH^1(E_i) \cong \hH^1(E_{i+1})$
for all $i$. Let us take an exact sequence in $\aA_{\omega}$, 
\begin{align*}
0 \to K_i \to E_1 \to E_i \to 0. 
\end{align*}
We have the sequence of subsheaves, 
\begin{align*}
\hH^0(K_i) \subset \hH^0(K_{i+1}) \subset \cdots \subset \hH^0(E_1), 
\end{align*}
so we may assume that $\hH^0(K_i) \cong \hH^0(K_{i+1})$
for all $i$. 
Also since $\rank(K_i)=0$, we have $K_i \in \bB_{\omega}$. 
Furthermore since $\ch_2(K_i) \cdot \omega=0$, we have 
\begin{align*}
\dim \Supp \hH^1(K_i)=0. 
\end{align*}
Hence it is enough to bound the length of $\hH^1(K_i)$. 
Setting 
\begin{align*}
A=\hH^0(E_1)/\hH^0(K_1),
\end{align*} we have the exact sequence
of sheaves, 
\begin{align*}
0 \to A \to \hH^0(E_i) \to \hH^1(K_i) \to 0. 
\end{align*}
Let $A', \hH^0(E_i)'$ be the torsion parts 
and $A'', \hH^0(E_i)''$ the 
free parts of $A, \hH^0(E_i)$ respectively. 
We have the exact sequences of sheaves, 
\begin{align}
\label{AT1}
&0 \to A' \to \hH^0(E_i)' \to T_i' \to 0, \\
\label{AT2}
&0 \to A'' \to \hH^0(E_i)'' \to T_i'' \to 0, \\
\label{AT3}
&0 \to T_i' \to \hH^1(K_i) \to T_i'' \to 0,
\end{align}
where $T_i'$ and $T_i''$ are zero dimensional 
sheaves. 
By (\ref{AT1}) and (\ref{AT2}), we have the inclusions, 
\begin{align*}
T_i' \subset (A')^{\vee \vee}/A', \quad
T_i'' \subset (A'')^{\vee \vee}/A''. 
\end{align*}
Here for a pure two dimensional sheaf $F$, we set 
\begin{align*}
F^{\vee} \cneq \eE xt^1_{\overline{X}}(F, \oO_{\overline{X}}). 
\end{align*}
Therefore the length of $T_i'$ and $T_i''$ are bounded. 
By (\ref{AT3}), the length of $\hH^1(K_i)$ is also bounded. 
\end{proof}

\begin{ssstep}\label{ss2}
There exist Harder-Narasimhan filtrations 
for the pair $(Z_{t\omega}, \aA_{\omega})$. 
\end{ssstep}
\begin{proof}
By~\cite[Proposition~2.12]{Tcurve1}
and Step~\ref{ss1},
it is enough to check that there 
is no infinite sequence of subobjects in $\aA_{\omega}$, 
\begin{align}\label{seq:ar}
\cdots \subset E_{j+1} \subset E_j \subset \cdots \subset E_2 \subset E_1
\end{align}  
with $\arg Z_{t\omega}(E_{j+1})> \arg Z_{t\omega}(E_{j}/E_{j+1})$
for all $j$. Suppose that such a sequence exists. 
By Lemma~\ref{lem:pos}, we may assume that 
$\rank(E_j)$ and $\ch_2(E_j) \cdot \omega$ are constant, 
hence 
\begin{align*}
\rank(E_j/E_{j+1})=0, \quad \ch_2(E_j/E_{j+1}) \cdot \omega =0,
\end{align*}
for all $j$. By the definition of $\aA_{\omega}$
and $Z_{t\omega}$, the above condition is equivalent 
to $Z_{t\omega}(E_j/E_{j+1}) \in \mathbb{R}_{<0}$. 
This contradicts to 
$\arg Z_{t\omega}(E_{j+1})> \arg Z_{t\omega}(E_{j}/E_{j+1})$, 
hence there is no such a sequence. 
\end{proof}

\begin{ssstep}
The pair $(Z_{t\omega}, \aA_{\omega})$ satisfies 
the support property. 
\end{ssstep}
\begin{proof}
Let $E \in \aA_{\omega}$ be a $Z_{t\omega}$-semistable 
object
with $\cl(E)=(R, r, \beta, n)$. If $R\neq 0$, we have 
\begin{align*}
\frac{\lVert [\cl(E)] \rVert}{\lvert Z_{t\omega}(E) \rvert}=1. 
\end{align*}
If $R=0$, then $E \in \bB_{\omega}$ 
and $E$ is a $Z_{t\omega, 0}$-semistable 
object. (cf.~Remark~\ref{rmk:BA}.)
Hence the support property for such $E$ 
follows from that of the pair 
\begin{align}\label{pair}
(Z_{t\omega, 0}, \bB_{\omega}) \in \Stab_{\Gamma_0}(\dD_0).
\end{align}
The support property of the pair
(\ref{pair}) follows from the same 
argument for the surface case. (cf.~\cite[Section~4]{BaMa}.)
\end{proof}
\begin{sssstep}
The pair $(Z_{t\omega}, \aA_{\omega})$ satisfies 
the local finiteness. 
\end{sssstep}
\begin{proof}
For a pair $(Z_{t\omega}, \aA_{\omega})$, let 
$\{\pP(\phi)\}_{\phi \in \mathbb{R}}$ be
the corresponding slicing. 
For each $\phi \in \mathbb{R}$, we need to 
find $\varepsilon>0$ so that 
$\pP((\phi-\varepsilon, \phi+\varepsilon))$
is of finite length. 
Note that if $\phi \notin 1/2 + \mathbb{Z}$, then 
any $E \in \pP(\phi)$ is a semistable object
with respect to the pair (\ref{pair}). 
Hence the local finiteness in this 
case follows from that of (\ref{pair}), 
which can be proved along with the same 
argument for the surface case. (cf.~\cite[Lemma~4.4]{Brs2}.)
Suppose that $\phi \in 1/2 + \mathbb{Z}$. 
We may assume $\phi=1/2$. In this case, 
it is enough to show that $\pP((0, 1))$ is 
of finite length. 
Since $\pP((0, 1))$ is a subcategory of $\aA_{\omega}$, 
$\pP((0, 1))$ is noetherian by Step~\ref{ss1}. 
The proof that $\pP((0, 1))$ is artinian 
follows from the same argument
in Step~\ref{ss2} that there are no infinite sequence (\ref{seq:ar}). 
\end{proof}

\section{Results on semistable objects}\label{sec:semistable}
In this section, we give proofs of several
results on semistable objects in $\aA_{\omega}$. 
In particular we prove some of the results stated in
Section~\ref{section:wstab}: 
Proposition~\ref{prop:wall} in Subsection~\ref{subsec:wall}, 
Proposition~\ref{prop:m2} in Subsection~\ref{subsec:finset2}, 
Lemma~\ref{rmk:lim} in Subsection~\ref{subsec:mulim}
and 
Proposition~\ref{prop:m1} in Subsection~\ref{subsec:finset}. 
 
\subsection{Duality of semistable objects}
In this subsection, we discuss 
a duality of $Z_{t\omega}$-semistable 
objects in $\aA_{\omega}$. 
For an object $E \in \dD$, note that 
\begin{align*}
\mathbb{D}(E) \cneq \dR \hH om_{\overline{X}}
(E, \oO_{\overline{X}}) \in \dD. 
\end{align*}
Also note that 
$\aA_{\omega}$ contains the following subcategory, 
\begin{align*}
\cC_{\omega} \cneq \left\langle F, \oO_x[-1] :
F \in \fF_{\omega} \mbox{ with } 
\mu_{\omega}(F)=0, \
 x\in \overline{X} 
\right\rangle_{\ex}. 
\end{align*}
We have the following lemma:
\begin{lem}\label{lem:equiv:C}
We have the autoequivalence, 
\begin{align*}
\mathbb{D} \circ [1] \colon \cC_{\omega} 
\stackrel{\sim}{\to} \cC_{\omega}. 
\end{align*}
\end{lem}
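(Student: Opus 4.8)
The plan is to verify that the functor $\mathbb{D}\circ[1]$ sends each generator of $\cC_\omega$ into $\cC_\omega$, and that it is its own inverse up to natural isomorphism, so it is an autoequivalence of $\cC_\omega$. Recall $\cC_\omega = \langle F, \oO_x[-1] : F\in\fF_\omega,\ \mu_\omega(F)=0,\ x\in\overline{X}\rangle_{\ex}$. Since $\mathbb{D}\circ[1]$ is exact on $\dD$ (it is the composition of a contravariant triangulated functor with a shift), it suffices to compute $\mathbb{D}(G)[1]$ for $G$ a generator and check membership in $\cC_\omega$; extension-closedness of $\cC_\omega$ then handles the general object. The key point for the first generator is that an $\omega$-slope-$0$ object $F\in\fF_\omega$ is, by the Harder--Narasimhan/Jordan--H\"older structure recalled in Subsection~\ref{subsec:classical} and formula (\ref{ip}), filtered by sheaves of the form $i_{p\ast}F'$ with $F'$ a $\mu_\omega$-semistable sheaf of slope $0$ on a fiber $X_p\cong S$. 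On such a fiber, $F'$ is either a torsion-free slope-$0$ semistable sheaf on the K3 surface $S$ — whose derived dual $\mathbb{D}_S(F')$ is again torsion-free slope-$0$ semistable (using $K_S\cong\oO_S$ and purity) — or a zero-dimensional sheaf, whose dual is $\eE xt^2(\,\cdot\,,\oO)$ placed in degree $2$. Pushing forward and dualizing on $\overline{X}$ introduces one further shift from the normal bundle of $X_p$, and tracking these shifts carefully is the heart of the argument.

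First I would treat $G=F$ with $F\in\fF_\omega$ pure of slope $0$: reduce to $F=i_{p\ast}F'$ as above, use $\mathbb{D}_{\overline{X}}(i_{p\ast}F') \cong i_{p\ast}\mathbb{D}_{X_p}(F')[-1]$ (Grothendieck--Verdier duality for the closed embedding $i_p$, with $i_p^!\oO_{\overline{X}}\cong\oO_{X_p}[-1]$ since $X_p$ is a divisor), and then invoke the duality theory for sheaves on the K3 surface $S$: for a $\mu_\omega$-semistable torsion-free sheaf $F'$ of slope $0$ on $S$, $\mathbb{D}_S(F')=F'^\vee$ is again a $\mu_\omega$-semistable torsion-free sheaf of slope $0$, up to possibly a zero-dimensional correction sitting in degree $1$ (the sheaf $(F'^{\vee\vee}/F')$ dualized, exactly as in the argument of Step~\ref{ss1}). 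Combining shifts, $\mathbb{D}(F)[1]$ then lands in $\langle\fF_\omega^{\mu=0}, \oO_x[-1]\rangle_{\ex} = \cC_\omega$. Second, I would treat $G=\oO_x[-1]$: here $\mathbb{D}(\oO_x[-1])[1]=\mathbb{D}(\oO_x)[2]$, and $\mathbb{D}_{\overline{X}}(\oO_x)\cong\oO_x[-3]$ since $\overline{X}$ is smooth threefold (the skyscraper is self-dual up to shift by $\dim\overline{X}$), so $\mathbb{D}(\oO_x[-1])[1]\cong\oO_x[-1]\in\cC_\omega$. Finally, $\mathbb{D}\circ[1]$ applied twice gives $\mathbb{D}[1]\mathbb{D}[1]\cong\mathbb{D}\mathbb{D}[1][1]\cong[2]\circ(\text{shift from }\mathbb{D}^2)$; since $\mathbb{D}^2\cong\id$ on the relevant objects (perfect complexes, or at least those of finite Tor-dimension, which covers the generators) one checks $(\mathbb{D}[1])^2\cong\id_{\cC_\omega}$, so $\mathbb{D}\circ[1]$ is an involutive autoequivalence.

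The main obstacle I expect is bookkeeping of the two sources of shift — the $[-1]$ from $i_p^!\oO_{\overline{X}}$ when dualizing across the fiber, and the degree-$1$ torsion correction $(F'^{\vee\vee}/F')$ that appears when a torsion-free sheaf on $S$ is not reflexive — and checking that after all shifts the result genuinely lies in $\cC_\omega$ rather than in some larger subcategory of $\aA_\omega$. In particular one must confirm that the torsion correction, which is a zero-dimensional sheaf on $S$ hence of the form $\oO_x$-extensions after push-forward, enters with the shift $[-1]$ compatible with the generator $\oO_x[-1]$ of $\cC_\omega$; this is where the condition $\mu_\omega(F)=0$ (as opposed to $\mu_\omega(F)\le 0$) is essential, since it makes the dual again a slope-$0$ object of $\fF_\omega$ rather than forcing it into $\tT_\omega$. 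Once the shifts are reconciled, extension-closedness of $\cC_\omega$ finishes the proof that $\mathbb{D}\circ[1]$ restricts to a functor $\cC_\omega\to\cC_\omega$, and involutivity gives that it is an autoequivalence.
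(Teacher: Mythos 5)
Your argument is correct and takes essentially the same route as the paper's proof: reduce to the generators of $\cC_{\omega}$ and use extension-closedness, treat $\oO_x[-1]$ via $\mathbb{D}(\oO_x)\cong \oO_x[-3]$, and for a slope-zero stable $F=i_{p\ast}F'$ combine duality along $i_p$ with the double-dual sequence on the K3 fiber, so that $\mathbb{D}(F)[1]$ is an extension of a zero-dimensional piece placed in the shift $[-1]$ by $i_{p\ast}F'^{\vee}$, which is again a slope-zero object of $\fF_{\omega}$. The only (harmless) slip is the alternative case ``$F'$ zero-dimensional,'' which never occurs for these generators since such sheaves have $\mu_{\omega}=\infty$ and lie in $\tT_{\omega}$ rather than $\fF_{\omega}$.
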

\begin{proof}
It is enough to show that 
\begin{align}\label{duality:cond1}
\mathbb{D}(\oO_x[-1])[1] &\in \cC_{\omega}, \\
\label{duality:cond2}
\mathbb{D}(F)[1] &\in \cC_{\omega}, 
\end{align}
for $x \in \overline{X}$ and 
 a $\mu_{\omega}$-stable sheaf 
$F \in \fF_{\omega}$
with $\mu_{\omega}(F)=0$.
The condition (\ref{duality:cond1}) follows
from $\mathbb{D}(\oO_x)=\oO_x[-3]$. 
For the sheaf $F$ as above, we write 
$F=i_{p\ast}F'$ for a $\mu_{\omega}$-stable 
sheaf on $X_p$ as in (\ref{ip}). 
We have the distinguished triangle, 
\begin{align*}
Q[-1] \to F \to i_{p\ast}F^{'\vee \vee},
\end{align*} 
for some zero dimensional sheaf $Q$. 
Note that $F^{'\vee \vee}$ is a locally 
free sheaf on $X_p$. 
Then the condition (\ref{duality:cond2})
follows from (\ref{duality:cond1}) and 
the fact that 
\begin{align*}
\mathbb{D}(i_{p\ast}F^{'\vee \vee})[1] 
\cong i_{p\ast} F^{'\vee}, 
\end{align*} 
which is $\mu_{\omega}$-stable with 
$\mu_{\omega}=0$. 
\end{proof}
We also consider the 
right orthogonal complement 
of $\cC_{\omega}$,  
\begin{align}\label{ROCom}
\cC_{\omega}^{\perp}\cneq \{ E \in \aA_{\omega} : 
\Hom(\cC_{\omega}, E)=0 \}. 
\end{align}
The following result implies that $\cC_{\omega}^{\perp}$ is 
also self dual. 
\begin{lem}\label{lem:dual}
We have the autoequivalence, 
\begin{align*}
\mathbb{D} \colon \cC_{\omega}^{\perp}
\stackrel{\sim}{\to} \cC_{\omega}^{\perp}. 
\end{align*}
\end{lem}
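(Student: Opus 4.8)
The plan is to exploit that $\mathbb{D}=\dR\hH om_{\overline{X}}(-,\oO_{\overline{X}})$ is a contravariant autoequivalence of $\dD$ with $\mathbb{D}\circ\mathbb{D}\cong\id$, and to bootstrap the statement from the already established self-duality of $\cC_\omega$ (Lemma~\ref{lem:equiv:C}). The key structural observation is the following reduction: \emph{it suffices to prove that $\mathbb{D}(E)\in\aA_\omega$ for every $E\in\cC_\omega^{\perp}$.} Indeed, granting this, let $E\in\cC_\omega^{\perp}$ and $C\in\cC_\omega$ be arbitrary. Since $\mathbb{D}$ is a contravariant autoequivalence with $\mathbb{D}^2\cong\id$, we have $\Hom_{\dD}(C,\mathbb{D}(E))\cong\Hom_{\dD}(E,\mathbb{D}(C))$. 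By Lemma~\ref{lem:equiv:C} the object $\mathbb{D}(C)[1]$ lies in $\cC_\omega\subset\bB_\omega\subset\aA_\omega$, so $\mathbb{D}(C)$ has the form $C'[-1]$ with $C'\in\aA_\omega$; as $\aA_\omega$ is the heart of a bounded t-structure on $\dD$ (Proposition~\ref{heart}) and $E\in\aA_\omega$, we get $\Hom_{\dD}(E,C'[-1])=0$. Hence $\mathbb{D}(E)\in\cC_\omega^{\perp}$, so $\mathbb{D}$ restricts to a functor $\cC_\omega^{\perp}\to\cC_\omega^{\perp}$; since $\mathbb{D}^2\cong\id$ on $\dD$ restricts to $\id$ on $\cC_\omega^{\perp}$, this restriction is the desired (contravariant) autoequivalence.

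It remains to establish $\mathbb{D}(E)\in\aA_\omega$ for $E\in\cC_\omega^{\perp}$. By Lemma~\ref{lem:rank1} there is an exact sequence $0\to A\to E\to B\to 0$ in $\aA_\omega$ with $A\in\bB_\omega$ and $B\in\langle\pi^{\ast}\Pic(\mathbb{P}^1)\rangle_{\ex}$. Since $A\hookrightarrow E$ is a monomorphism in $\aA_\omega$, left exactness of $\Hom_{\aA_\omega}(\cC_\omega,-)$ gives $A\in\cC_\omega^{\perp}\cap\bB_\omega$. Applying $\mathbb{D}$ to the triangle $A\to E\to B\to A[1]$ yields a triangle with outer terms $\mathbb{D}(A)$ and $\mathbb{D}(B)$, so because $\aA_\omega$ is a heart it is enough to show $\mathbb{D}(A)\in\aA_\omega$ and $\mathbb{D}(B)\in\aA_\omega$. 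For $B$ this is immediate: $\mathbb{D}(\pi^{\ast}\oO_{\mathbb{P}^1}(r))=\pi^{\ast}\oO_{\mathbb{P}^1}(-r)\in\aA_\omega$, and $\mathbb{D}$ carries an extension to an extension, so by induction on the length of a filtration of $B$ with subquotients $\pi^{\ast}\oO_{\mathbb{P}^1}(r)$ we get $\mathbb{D}(B)\in\langle\pi^{\ast}\Pic(\mathbb{P}^1)\rangle_{\ex}\subset\aA_\omega$.

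Thus everything comes down to: \emph{if $A\in\bB_\omega$ satisfies $\Hom_{\dD}(\cC_\omega,A)=0$, then $\mathbb{D}(A)\in\aA_\omega$} — and this is where I expect the real work to lie. I would reduce it fibrewise to the K3 surface. Every object of $\bB_\omega\subset\dD_0=D^b\Coh_\pi(\overline{X})$ is supported on finitely many fibres $X_p=\pi^{-1}(p)\cong S$, and for the inclusion $i_p\colon X_p\hookrightarrow\overline{X}$, which has trivial normal bundle, one has $i_p^{!}\oO_{\overline{X}}\cong\oO_{X_p}[-1]$, whence $\mathbb{D}(i_{p\ast}G)\cong i_{p\ast}\bigl(\dR\hH om_{X_p}(G,\oO_{X_p})\bigr)[-1]$. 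Since $\bB_\omega$ is a fibrewise HRS-tilt of $\Coh_\pi(\overline{X})$ at the torsion pair $(\tT_\omega,\fF_\omega)$, the claim becomes the corresponding statement on $S$: the derived dual of an object of the tilted heart on $S$ admitting no nonzero morphism from a $\mu_\omega$-semistable sheaf of slope $0$ nor from any $\oO_x$ lies, after the shift by $[-1]$, again in such an orthogonal-complement subcategory. One checks this on the generators $\fF_\omega$ and $\tT_\omega[-1]$ by the usual surface computation (as in~\cite{Brs2}, \cite{BaMa}): for a torsion-free $F$ with $\mu_{\omega,+}(F)\le 0$ the reflexive dual $F^{\vee}$ has $\mu_{\omega,-}(F^{\vee})\ge 0$, and orthogonality to the slope-$0$ semistables upgrades this to $\mu_{\omega,-}(F^{\vee})>0$, so $F^{\vee}\in\tT_\omega$; the potential $\eE xt^{\ge 2}$-contributions to $\mathbb{D}(A)$ are killed because $\Hom_{\dD}(\oO_x[-1],A)=0$ forces the relevant sheaves to be pure; the $\tT_\omega[-1]$-part is handled symmetrically. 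Assembling these, $\mathbb{D}(A)$ has cohomology sheaves lying in $\fF_\omega'$ and in $\tT_\omega$ in the appropriate degrees, i.e. $\mathbb{D}(A)\in\aA_\omega'\cap\dD=\aA_\omega$, which completes the argument. The main obstacle is precisely this last fibrewise analysis — carefully controlling the non-pure ($\eE xt^{\ge 2}$) part of the derived dual and the slope-$0$ boundary, which is exactly what membership in $\cC_\omega^{\perp}$ is designed to exclude.
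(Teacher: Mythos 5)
Your skeleton up to the last step coincides with the paper's proof: the reduction to showing $\mathbb{D}(E)\in\aA_{\omega}$ via biduality and Lemma~\ref{lem:equiv:C}, the use of Lemma~\ref{lem:rank1} to split off a piece $B\in\langle\pi^{\ast}\Pic(\mathbb{P}^1)\rangle_{\ex}$ whose dual is visibly in $\aA_{\omega}$, and the observation that the remaining piece inherits orthogonality as a subobject are all exactly what the paper does. The divergence is at the step you yourself flag as ``the real work'', and there your plan has a genuine gap: the statement that $\mathbb{D}(A)\in\bB_{\omega}$ for $A\in\bB_{\omega}$ orthogonal to $\cC_{\omega}$ \emph{cannot} be checked on the generators $\fF_{\omega}$ and $\tT_{\omega}[-1]$, and the $\tT_{\omega}[-1]$-part is not ``handled symmetrically''. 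The right orthogonal $\cC_{\omega}^{\perp}$ is closed under subobjects but not under quotients, so in the canonical sequence $0\to F\to A\to T[-1]\to 0$ with $F=\hH^0(A)\in\fF_{\omega}$, $T=\hH^1(A)\in\tT_{\omega}$, only $F$ inherits the orthogonality. Moreover $\Hom(\oO_x[-1],A)=0$ does \emph{not} force $T$ to have no zero-dimensional part: a map $\oO_x[-1]\to T[-1]$ coming from zero-dimensional torsion of $T$ need not lift to $A$, since the connecting map to $\Ext^1(\oO_x[-1],F)$ can absorb it. For instance $A=\mathbb{D}\bigl(i_{p\ast}(I_x\otimes L)\bigr)[1]$, with $L$ a line bundle of positive fibre degree, lies in $\cC_{\omega}^{\perp}$ and has $\hH^0(A)=i_{p\ast}L^{\vee}$, $\hH^1(A)=\oO_x$; its quotient generator piece $\oO_x[-1]$ has dual $\oO_x[-2]\notin\aA_{\omega}$, so no assembly of generator-level facts can yield $\mathbb{D}(A)\in\bB_{\omega}$, even though $\mathbb{D}(A)=i_{p\ast}(I_x\otimes L)[-1]$ does lie there.

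What the paper does instead is intrinsically object-level. Lemma~\ref{lem:sub} records the $\eE xt$-sheaf computations for $F\in\cC_{\omega}^{\perp}\cap\fF_{\omega}$ (here your slope argument, forcing all slopes of $F$ strictly negative so that $\eE xt^1_{\overline{X}}(F,\oO_{\overline{X}})\in\tT_{\omega}$, is correct and is exactly part (i)) and for an \emph{arbitrary} $T\in\tT_{\omega}$, allowing a zero-dimensional $\eE xt^3_{\overline{X}}(T,\oO_{\overline{X}})$. Running the long exact sequence of the triangle $\mathbb{D}(T)[1]\to\mathbb{D}(A)\to\mathbb{D}(F)$ then gives $\hH^0\mathbb{D}(A)\in\fF_{\omega}$, $\hH^1\mathbb{D}(A)\in\tT_{\omega}$, and that $\hH^2\mathbb{D}(A)$ is zero-dimensional; this last, potentially nonzero, term is killed only at the very end by dualizing back: a nonzero $\Hom(\mathbb{D}(A),\oO_x[-2])$ would produce a nonzero map $\oO_x[-1]\to A\hookrightarrow E$, contradicting $E\in\cC_{\omega}^{\perp}$. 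Without this final duality step and the long-exact-sequence bookkeeping for the whole object $A$, the degree-two contributions (from $\eE xt^3$ of $T$ and $\eE xt^2$ of $F$) cannot be disposed of, so your step as sketched would fail precisely there.
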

\begin{proof}
For an object $E \in \cC_{\omega}^{\perp}$
and $K \in \cC_{\omega}$, 
we have  
\begin{align*}
\Hom(K, \mathbb{D}(E)) &\cong 
\Hom(E, \mathbb{D}(K)[1][-1]) \\
& \cong 0,
\end{align*}
since 
$\mathbb{D}(K)[1] \in \cC_{\omega}$
by Lemma~\ref{lem:equiv:C}. 
Therefore it is enough to show that 
$\mathbb{D}(E) \in \aA_{\omega}$. 
By Lemma~\ref{lem:rank1}, 
there is an exact sequence in $\aA_{\omega}$
\begin{align}\label{seq:AEB}
0 \to A \to E \to B \to 0, 
\end{align}
such that $A \in \bB_{\omega}$ and 
$B\in \langle\pi^{\ast}\Pic(\mathbb{P}^1)\rangle_{\ex}$. 
Since $\mathbb{D}(B) \in \aA_{\omega}$, 
it is enough to show that 
\begin{align*}
\mathbb{D}(A) \in \bB_{\omega}. 
\end{align*}
We take an exact sequence in $\bB_{\omega}$, 
\begin{align*}
0 \to F \to A \to T[-1] \to 0, 
\end{align*}
with $F \in \fF_{\omega}$ and $T \in \tT_{\omega}$. 
Since $F$ is a subobject of $E$ in $\aA_{\omega}$
and $E \in \cC_{\omega}^{\perp}$, we have 
$F \in \cC_{\omega}^{\perp}$. 
Taking the dual of the above sequence, we obtain 
the distinguished triangle, 
\begin{align*}
\mathbb{D}(T)[1] \to \mathbb{D}(A) \to \mathbb{D}(F).  
\end{align*}
By Lemma~\ref{lem:sub} below
and the long exact sequence of cohomologies,  
we have 
\begin{align*}
&\hH^i \mathbb{D}(A) =0, \ i\neq 0, 1, 2, \\
&\hH^0\mathbb{D}(A) \cong \eE xt_{\overline{X}}^1(T, \oO_{\overline{X}}), \\
&\dim \hH^2 \mathbb{D}(A) =0,
\end{align*}
and 
the exact sequence of sheaves, 
\begin{align*}
0 \to \eE xt_{\overline{X}}^2(T, \oO_{\overline{X}})
\to \hH^1 \mathbb{D}(A) \to \eE xt_{\overline{X}}^1(F, \oO_{\overline{X}})
\to Q \to 0,
\end{align*}
for some zero dimensional sheaf $Q$. 
Applying Lemma~\ref{lem:sub} again, we have 
\begin{align*}
\hH^0 \mathbb{D}(A) \in \fF_{\omega}, \ 
\hH^1 \mathbb{D}(A) \in \tT_{\omega}. 
\end{align*}
Suppose that $\hH^2 \mathbb{D}(A) \neq 0$. 
Then there is $x \in \overline{X}$ such that 
\begin{align*}
\Hom(\mathbb{D}(A), \oO_x[-2]) \neq 0. 
\end{align*}
Applying $\mathbb{D}$, we have 
\begin{align*}
\Hom(\oO_x[-1], A) \neq 0, 
\end{align*}
which contradicts to $E \in \cC_{\omega}^{\perp}$. 
\end{proof}
We have used the following lemma. 
\begin{lem}\label{lem:sub}
(i) For $F\in \cC_{\omega}^{\perp} \cap \fF_{\omega}$, we have 
\begin{align}\label{sub1}
&\eE xt^{i}_{\overline{X}}(F, \oO_{\overline{X}})=0, \quad 
i\neq 1, 2, \\
\label{sub2}
&\eE xt^{1}_{\overline{X}}(F, \oO_{\overline{X}}) \in \tT_{\omega}, \\
\label{sub3}
&\dim \eE xt_{\overline{X}}^2(F, \oO_{\overline{X}})=0. 
\end{align}
(ii) For $T \in \tT_{\omega}$, we have 
\begin{align}\label{sub4}
&\eE xt^{i}_{\overline{X}}(T, \oO_{\overline{X}})=0, \quad 
i\neq 1, 2, 3, \\
\label{sub5}
&\eE xt^{1}_{\overline{X}}(T, \oO_{\overline{X}}) \in \fF_{\omega}, \\
\label{sub6}
&\dim \eE xt_{\overline{X}}^i(T, \oO_{\overline{X}})=3-i, \ i=2, 3.  
\end{align}
\end{lem}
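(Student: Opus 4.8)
The plan is to convert both statements into computations with the local $\eE xt$-sheaves $\eE xt^{i}_{\overline{X}}(-,\oO_{\overline{X}})$ on the smooth threefold $\overline{X}=S\times\mathbb{P}^{1}$. The reduction rests on three elementary inputs. First, every nonzero object of $\fF_{\omega}$ or $\tT_{\omega}$ is an iterated extension of sheaves of two kinds: sheaves $i_{p\ast}H$ with $H$ a $\mu_{\omega}$-stable torsion free sheaf on $X_{p}\cong S$ (this follows from Harder--Narasimhan and Jordan--H\"older filtrations together with the structure statement near (\ref{ip})), and sheaves of dimension $\le 1$ supported on fibers of $\pi$. Second, since $X_{p}\subset\overline{X}$ is a smooth divisor with trivial normal bundle, $\eE xt^{0}_{\overline{X}}(i_{p\ast}G,\oO_{\overline{X}})=0$ and $\eE xt^{i}_{\overline{X}}(i_{p\ast}G,\oO_{\overline{X}})\cong i_{p\ast}\eE xt^{i-1}_{X_{p}}(G,\oO_{X_{p}})$ for $i\ge 1$; and on the K3 surface $S$ every torsion free sheaf $H$ has a locally free resolution of length $\le 1$, so $\eE xt^{i}_{S}(H,\oO_{S})=0$ for $i\ge 2$, $\eE xt^{1}_{S}(H,\oO_{S})$ is zero dimensional, and $H^{\vee}:=\HOM_{S}(H,\oO_{S})$ is locally free, $\mu_{\omega}$-stable, with $\mu_{\omega}(H^{\vee})=-\mu_{\omega}(H)$. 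Third, on $\overline{X}$ one has $\eE xt^{i}_{\overline{X}}(-,\oO_{\overline{X}})=0$ for $i>3$ and $\codim\eE xt^{i}_{\overline{X}}(-,\oO_{\overline{X}})\ge i$ for every coherent sheaf.

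For (i), I would first observe that $F\in\fF_{\omega}$ forces $F$ to be pure two dimensional, since a subsheaf of dimension $\le 1$ would have slope $\infty$ and contradict $\mu_{\omega,+}(F)\le 0$; then I would use $F\in\cC_{\omega}^{\perp}$ to upgrade this to $\mu_{\omega,+}(F)<0$, for if the maximal destabilizing subsheaf $F_{1}\subset F$ had slope $0$ it would be a $\mu_{\omega}$-semistable sheaf in $\fF_{\omega}$ of slope $0$, hence a generator of $\cC_{\omega}$, and $F_{1}\hookrightarrow F$ would be a nonzero morphism in $\aA_{\omega}$, a contradiction. Thus $F$ is an iterated extension of sheaves $i_{p\ast}H$ with $H$ $\mu_{\omega}$-stable torsion free of slope $<0$; for each such piece the inputs above give $\eE xt^{0}=\eE xt^{3}=0$, $\eE xt^{1}_{\overline{X}}(i_{p\ast}H,\oO_{\overline{X}})=i_{p\ast}(H^{\vee})$ which is $\mu_{\omega}$-stable of slope $>0$ and hence in $\tT_{\omega}$, and $\eE xt^{2}_{\overline{X}}(i_{p\ast}H,\oO_{\overline{X}})$ zero dimensional. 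A d\'evissage along the filtration of $F$ then finishes: since $\eE xt^{0}$ vanishes on all subquotients, the long exact sequences give $\eE xt^{3}(F,\oO_{\overline{X}})=0$ and $\dim\eE xt^{2}(F,\oO_{\overline{X}})=0$ directly, while $\eE xt^{1}(F,\oO_{\overline{X}})$ is built from the sheaves $i_{p\ast}(H^{\vee})\in\tT_{\omega}$ and from subsheaves of the partially built $\eE xt^{1}$'s having zero dimensional cokernel (coming from connecting maps into the zero dimensional $\eE xt^{2}$'s). The remaining point is that $\tT_{\omega}$ is closed under extensions and that a subsheaf $Q\subset E$ with $E\in\tT_{\omega}$ pure two dimensional and $E/Q$ zero dimensional again lies in $\tT_{\omega}$ (the minimal slope torsion free quotient of $Q$ is also a torsion free quotient of $E$, hence has slope $>0$). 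Putting these together gives $\eE xt^{1}(F,\oO_{\overline{X}})\in\tT_{\omega}$, completing (i).

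For (ii), $T\in\tT_{\omega}$ is torsion and supported on fibers of $\pi$, so $\eE xt^{0}(T,\oO_{\overline{X}})=\HOM_{\overline{X}}(T,\oO_{\overline{X}})=0$, while $\eE xt^{i}(T,\oO_{\overline{X}})=0$ for $i>3$; the codimension bound from the third input gives at once $\dim\eE xt^{2}(T,\oO_{\overline{X}})\le 1$ and $\dim\eE xt^{3}(T,\oO_{\overline{X}})\le 0$, which are the asserted dimension estimates. It remains to check $\eE xt^{1}(T,\oO_{\overline{X}})\in\fF_{\omega}$, for which I would write $T$ as an iterated extension of sheaves $i_{p\ast}H$ with $H$ $\mu_{\omega}$-stable torsion free of slope $>0$ and of sheaves of dimension $\le 1$; for the former $\eE xt^{1}_{\overline{X}}(i_{p\ast}H,\oO_{\overline{X}})=i_{p\ast}(H^{\vee})$ is $\mu_{\omega}$-stable of slope $-\mu_{\omega}(H)<0$, hence in $\fF_{\omega}$, and for the latter $\eE xt^{1}(-,\oO_{\overline{X}})=0$ by the codimension bound. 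Since $\eE xt^{0}$ vanishes on all subquotients, $\eE xt^{1}(T,\oO_{\overline{X}})$ is built by extensions out of subsheaves of these pieces, and $\fF_{\omega}$ is closed under subsheaves and extensions, so $\eE xt^{1}(T,\oO_{\overline{X}})\in\fF_{\omega}$.

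The main obstacle is the d\'evissage in (i). Because $\eE xt^{1}_{\overline{X}}(-,\oO_{\overline{X}})$ is not exact, along the filtration one does not recover $\eE xt^{1}(F,\oO_{\overline{X}})$ as a genuine iterated extension of the pieces $i_{p\ast}(H^{\vee})$, but only up to subsheaves with zero dimensional cokernel arising from the $\eE xt^{2}$ connecting maps; and $\tT_{\omega}$ is \emph{not} closed under arbitrary subsheaves. So the real work is the stability bookkeeping: extracting $\mu_{\omega,+}(F)<0$ from the orthogonality hypothesis, and verifying the precise closure property of $\tT_{\omega}$ under passing to a subsheaf with zero dimensional quotient. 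Everything else is standard homological algebra on the smooth threefold $\overline{X}$ and on the K3 surface $S$.
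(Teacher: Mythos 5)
Your proposal is correct and follows essentially the same route as the paper: the standard vanishing and dimension bounds are quoted from Huybrechts--Lehn, and (\ref{sub2}), (\ref{sub5}) are proved by reducing via Harder--Narasimhan and Jordan--H\"older filtrations to $\mu_{\omega}$-stable factors $i_{p\ast}H$ on a single fiber, using $F\in\cC_{\omega}^{\perp}$ to force strictly negative slope, and computing $\eE xt^{1}_{\overline{X}}(i_{p\ast}H,\oO_{\overline{X}})\cong i_{p\ast}H^{\vee}$ by adjunction along the trivial-normal-bundle divisor $X_{p}$. The only difference is that you make explicit the d\'evissage bookkeeping (non-exactness of $\eE xt^{1}$ and the closure of $\tT_{\omega}$, resp.~$\fF_{\omega}$, under the relevant subsheaves and extensions) which the paper compresses into ``we may assume $F\cong i_{p\ast}F'$''; this is a harmless and correct elaboration.
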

\begin{proof}
The properties (\ref{sub1}), (\ref{sub3}), (\ref{sub4}), (\ref{sub6})
are well-known and the proofs are standard. See~\cite{Hu} for instance. 
We show the property 
(\ref{sub2}). The property (\ref{sub5}) is similarly proved.
Let us take $F\in \cC_{\omega}^{\perp} \cap \fF_{\omega}$. 
By taking Harder-Narasimhan filtration and Jordan-H$\ddot{\rm{o}}$lder
filtration with respect to $\mu_{\omega}$-stability, we
may assume that $F \cong i_{p\ast}F'$ for 
some $\mu_{\omega}$-stable sheaf $F'$ on $X_p$
as in (\ref{ip}). 
The condition $F \in \cC_{\omega}^{\perp}$ implies that
$\mu_{\omega}(F')<0$. 
Then by the adjunction, we have 
\begin{align*}
\eE xt_{\overline{X}}^1(F, \oO_{\overline{X}}) & \cong 
i_{p\ast}\hH om_{X_p}(F', \oO_{X_p}) \\
& \cong i_{p\ast}F^{'\vee}. 
\end{align*} 
Since $F^{'\vee}$ is $\mu_{\omega}$-stable 
with $\mu_{\omega}(F^{'\vee})>0$, 
we have $i_{p\ast}F^{'\vee} \in \tT_{\omega}$.  
\end{proof}
In order to see the duality of 
semistable objects, we show 
the following lemma. 
\begin{lem}\label{lem:cri}
An object $E\in \aA_{\omega}$ 
with $\Imm Z_{t\omega}(E)>0$ is $Z_{t\omega}$-semistable 
if and only if 
$E \in \cC_{\omega}^{\perp}$ and
for any exact sequence in $\aA_{\omega}$
\begin{align}\label{seqC}
0 \to F \to E \to G \to 0
\end{align}
with $F, G \in \cC_{\omega}^{\perp}$, the 
inequality 
\begin{align}\label{ineq:C}
\arg Z_{t\omega}(F) \le \arg Z_{t\omega}(G)
\end{align}
is satisfied. 
\end{lem}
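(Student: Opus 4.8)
The plan is to prove both directions of the equivalence, reducing the stability condition on $\aA_{\omega}$ to a statement about subobjects lying in the self-dual subcategory $\cC_{\omega}^{\perp}$, which is the natural arena in which the duality functor $\mathbb{D}$ acts (Lemma~\ref{lem:dual}).

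\textbf{The easy direction.} First I would show that if $E\in\aA_{\omega}$ with $\Imm Z_{t\omega}(E)>0$ is $Z_{t\omega}$-semistable, then $E\in\cC_{\omega}^{\perp}$ and the inequality \eqref{ineq:C} holds for every exact sequence \eqref{seqC} with $F,G\in\cC_{\omega}^{\perp}$. The second assertion is immediate from the definition of $Z_{t\omega}$-semistability, since semistability requires $\arg Z_{t\omega}(F)\le\arg Z_{t\omega}(G)$ for \emph{all} sub/quotient sequences, a fortiori for those with $F,G\in\cC_{\omega}^{\perp}$. For $E\in\cC_{\omega}^{\perp}$: any object $K\in\cC_{\omega}$ has $\arg Z_{t\omega}(K)=\pi$ (the objects $F\in\fF_{\omega}$ with $\mu_{\omega}(F)=0$ satisfy $Z_{t\omega,0}(F)\in\mathbb{R}_{<0}$, and $\oO_x[-1]$ likewise has $Z_{t\omega}=0$-real-negative after checking; in fact $Z_{t\omega}(\oO_x[-1])\in\mathbb{R}_{<0}$). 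A nonzero map $K\to E$ would then produce, via its image in $\aA_{\omega}$, a subobject $E'\subseteq E$ which is a quotient of $K$ and hence lies in $\cC_{\omega}$ — so $\arg Z_{t\omega}(E')=\pi>\arg Z_{t\omega}(E/E')$ unless $E/E'=0$; but $E/E'=0$ forces $E\in\cC_{\omega}$, contradicting $\Imm Z_{t\omega}(E)>0$. So $\Hom(\cC_{\omega},E)=0$.

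\textbf{The hard direction.} Conversely, suppose $E\in\cC_{\omega}^{\perp}$ and \eqref{ineq:C} holds for all sequences \eqref{seqC} with both terms in $\cC_{\omega}^{\perp}$. I must show $E$ is $Z_{t\omega}$-semistable, i.e.\ \emph{every} exact sequence $0\to F\to E\to G\to 0$ in $\aA_{\omega}$ satisfies $\arg Z_{t\omega}(F)\le\arg Z_{t\omega}(G)$. The point is to reduce an arbitrary such sequence to one with $F,G\in\cC_{\omega}^{\perp}$. Given $F\subseteq E$, consider the $Z_{t\omega}$-Harder--Narasimhan or rather the torsion-type decomposition of $F$ with respect to $\cC_{\omega}$: since $\cC_{\omega}$ is (I would check) closed under quotients and extensions, $\aA_{\omega}$ — or at least the relevant slice — admits a torsion pair $(\cC_{\omega},\cC_{\omega}^{\perp})$ in an appropriate subcategory, so $F$ sits in $0\to F_t\to F\to F_f\to 0$ with $F_t\in\cC_{\omega}$, $F_f\in\cC_{\omega}^{\perp}$. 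Because $\Hom(\cC_{\omega},E)=0$, the composite $F_t\to F\to E$ vanishes, so $F_t=0$ and already $F\in\cC_{\omega}^{\perp}$. Then $G=E/F$ need not lie in $\cC_{\omega}^{\perp}$, but replacing $G$ by $G_f=G/G_t$ (its $\cC_{\omega}$-free quotient) and correspondingly enlarging $F$ to $F'=\ker(E\twoheadrightarrow G_f)$ gives a sequence $0\to F'\to E\to G_f\to 0$ with $F',G_f\in\cC_{\omega}^{\perp}$; since $F'/F=G_t\in\cC_{\omega}$ has $\arg Z_{t\omega}=\pi$, passing from $F$ to $F'$ only increases (weakly) $\arg Z_{t\omega}$, and dually $\arg Z_{t\omega}(G_f)\le\arg Z_{t\omega}(G)$. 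Combining with \eqref{ineq:C} applied to $(F',G_f)$ yields $\arg Z_{t\omega}(F)\le\arg Z_{t\omega}(F')\le\arg Z_{t\omega}(G_f)\le\arg Z_{t\omega}(G)$, as desired. The main obstacle I anticipate is verifying that $(\cC_{\omega},\cC_{\omega}^{\perp})$ genuinely behaves as a torsion pair on the relevant part of $\aA_{\omega}$ — specifically that $\cC_{\omega}$ is closed under quotients in $\aA_{\omega}$ and that every object has a maximal $\cC_{\omega}$-subobject — which requires the noetherian property of $\aA_{\omega}$ (Step~\ref{ss1} of Lemma~\ref{lem:property}'s proof) together with a careful analysis of how objects of $\cC_{\omega}$ (built from torsion sheaves on fibers and skyscrapers) interact with $\aA_{\omega}$; the sign computations $Z_{t\omega}(K)\in\mathbb{R}_{<0}$ for $K\in\cC_{\omega}$ are routine but must be pinned down to make the phase comparisons above valid.
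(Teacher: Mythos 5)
Your proposal is correct and follows essentially the same route as the paper: the forward direction is immediate from $Z_{t\omega}(\cC_{\omega})\subset\mathbb{R}_{\le 0}$ together with semistability, and the converse is proved, exactly as in the paper, by splitting off the maximal $\cC_{\omega}$-subobject $G_t\subset G$ (whose existence the paper likewise deduces from the noetherian property of $\aA_{\omega}$), enlarging $F$ to $F'=\ker(E\twoheadrightarrow G/G_t)$, and running the same chain of phase inequalities $\arg Z_{t\omega}(F)\le\arg Z_{t\omega}(F')\le\arg Z_{t\omega}(G/G_t)\le\arg Z_{t\omega}(G)$. The only cosmetic difference is that in the easy direction you invoke closure of $\cC_{\omega}$ under quotients to place the image of $K\to E$ in $\cC_{\omega}$, whereas it suffices (and is simpler) to note that any nonzero quotient of $K\in\cC_{\omega}$ in $\aA_{\omega}$ lies in $\bB_{\omega}$ and has $Z_{t\omega}\in\mathbb{R}_{<0}$ by additivity of $Z_{t\omega}$ and positivity of imaginary parts on $\bB_{\omega}$.
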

\begin{proof}
Take $E\in \aA_{\omega}$ with 
$\Imm Z_{t\omega}(E)>0$, and 
suppose that $E$ is $Z_{t\omega}$-semistable. 
Since $\arg Z_{t\omega}(E)<\pi$ and 
\begin{align}\label{Cle}
Z_{t\omega}(\cC_{\omega}) \subset \mathbb{R}_{\le 0},
\end{align}
we have $E\in \cC_{\omega}^{\perp}$
by the $Z_{t\omega}$-semistability of $E$.  
The inequality 
(\ref{ineq:C})
with respect to the sequence (\ref{seqC}) 
follows from the $Z_{t\omega}$-semistability of $E$. 

Conversely, suppose that $E \in \cC_{\omega}$
satisfies the inequality (\ref{ineq:C})
with respect to 
any sequence (\ref{seqC}). 
We take an exact sequence in $\aA_{\omega}$, 
\begin{align*}
0 \to F' \to E \to G' \to 0. 
\end{align*}
Since $\aA_{\omega}$ is 
noetherian, (see Subsection~\ref{subsec:property},)
there is an exact sequence in $\aA_{\omega}$, 
\begin{align*}
0 \to G''' \to G' \to G'' \to 0, 
\end{align*}
with $G''' \in \cC_{\omega}$ and $G'' \in \cC_{\omega}^{\perp}$. 
By composing the above sequences, we obtain the
exact sequence in $\aA_{\omega}$, 
\begin{align*}
0 \to F'' \to E \to G'' \to 0,
\end{align*}
with $F'' , G'' \in \cC_{\omega}^{\perp}$. 
Using the assumption and (\ref{Cle}), we obtain 
\begin{align*}
\arg Z_{t\omega}(F') &\le \arg Z_{t\omega}(F'') \\
& \le \arg Z_{t\omega}(G'') \\
&\le \arg Z_{t\omega}(G'). 
\end{align*}
Hence $E$ is $Z_{t\omega}$-semistable. 
\end{proof}
Summarizing
the above results, we obtain the following result. 
\begin{prop}\label{prop:bij}
Suppose that $R\ge 1$ or $R=0$, $\beta \cdot \omega \neq 0$. 
Then we have the bijection, 
\begin{align}\label{bij1}
\mathbb{D}: M_{t\omega}(R, r, \beta, n) 
\stackrel{1:1}{\to} M_{t\omega}(R, -r, \beta, -n), 
\end{align}
If $R=\beta \cdot \omega =0$, we have 
\begin{align}\label{bij2}
\mathbb{D} \circ [1] \colon 
M_{t\omega}(0, r, \beta, n) \stackrel{1:1}{\to}
M_{t\omega}(0, r, -\beta, n). 
\end{align}
\end{prop}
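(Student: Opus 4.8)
The plan is to deduce the two bijections from the duality functor $\mathbb{D}$ together with the criterion for $Z_{t\omega}$-semistability established in Lemma~\ref{lem:cri}, and the self-duality of $\cC_{\omega}^{\perp}$ from Lemma~\ref{lem:dual}. First I would record the effect of $\mathbb{D}$ on Chern characters: for $E \in \dD$ with $\cl(E) = (R, r, \beta, n)$, applying Grothendieck-Riemann-Roch on $\overline{X}$ (using $\omega_{\overline{X}} \cong \oO$ along fibers, or more simply that $\overline{X} = S\times\mathbb{P}^1$ has trivial canonical class on the relevant factor) gives $\cl(\mathbb{D}(E)) = (R, -r, \beta, -n)$, possibly up to an overall shift which accounts for the extra $[1]$ in the second statement. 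This is the routine computation I would not grind through, but it is what forces the precise target classes $(R,-r,\beta,-n)$ and $(0,r,-\beta,n)$ appearing in \eqref{bij1} and \eqref{bij2}; note in the rank-zero case $\mathbb{D}\circ[1]$ sends a sheaf $i_{p\ast}F'[1]$-type object to one with the sign of $\beta$ flipped, consistent with Lemma~\ref{lem:equiv:C}.

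Next, for the case $R \ge 1$ or $R = 0$ with $\beta\cdot\omega \neq 0$: given $E \in M_{t\omega}(R,r,\beta,n)$, I would first check $\Imm Z_{t\omega}(E) > 0$ so that Lemma~\ref{lem:cri} applies — this holds because $R \ge 1$ forces $\arg Z_{t\omega}(E) = \pi/2$, while $R = 0$, $\beta\cdot\omega\neq 0$ forces $E \in \bB_{\omega}$ with $\Imm Z_{t\omega,0}(E) = -\beta\cdot\omega > 0$ by Lemma~\ref{lem:pos}. Then by Lemma~\ref{lem:cri}, $E \in \cC_{\omega}^{\perp}$ and $E$ satisfies the semistability inequality \eqref{ineq:C} against all sub/quotient sequences with terms in $\cC_{\omega}^{\perp}$. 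By Lemma~\ref{lem:dual}, $\mathbb{D}(E) \in \cC_{\omega}^{\perp}$. Applying $\mathbb{D}$ to an exact sequence $0 \to F \to \mathbb{D}(E) \to G \to 0$ in $\aA_{\omega}$ with $F, G \in \cC_{\omega}^{\perp}$ produces (after rotating the resulting triangle and using that $\mathbb{D}$ is a contravariant equivalence preserving $\cC_{\omega}^{\perp}$) an exact sequence $0 \to \mathbb{D}(G) \to E \to \mathbb{D}(F) \to 0$ in $\aA_{\omega}$ with terms in $\cC_{\omega}^{\perp}$; here I must verify that $\mathbb{D}$ carries short exact sequences of objects in $\cC_{\omega}^{\perp}$ to short exact sequences, which follows since on $\cC_{\omega}^{\perp}$ the functor $\mathbb{D}$ lands in $\aA_{\omega}$ by Lemma~\ref{lem:dual} and hence is exact there. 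The semistability inequality for $E$ gives $\arg Z_{t\omega}(\mathbb{D}(G)) \le \arg Z_{t\omega}(\mathbb{D}(F))$, and since $Z_{t\omega}(\mathbb{D}(W)) = \overline{Z_{t\omega}(W)}$ (complex conjugation, because $\ch_1, \ch_3$ change sign while $\ch_0, \ch_2$ do not, matching the definition of $Z_{t\omega}$), one has $\arg Z_{t\omega}(\mathbb{D}(W)) = \pi - \arg Z_{t\omega}(W)$, so this rearranges to $\arg Z_{t\omega}(F) \le \arg Z_{t\omega}(G)$. Lemma~\ref{lem:cri} then certifies $\mathbb{D}(E)$ is $Z_{t\omega}$-semistable. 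Since $\mathbb{D}^2 \cong \id$ on $\cC_{\omega}^{\perp}$ (as $\mathbb{D}^2 \cong \id$ on all of $\dD$ for objects with no higher self-Ext into $\oO$, or simply because $\mathbb{D}$ is an involutive equivalence of $\cC_{\omega}^{\perp}$), the assignment is a bijection.

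For the remaining case $R = \beta\cdot\omega = 0$: here $E \in \bB_{\omega}$ with $Z_{t\omega,0}(E) \in \mathbb{R}_{<0}$, so $E[1] \in \Coh_{\pi}(\overline{X})$ and $E$ is $Z_{t\omega}$-semistable iff $E[1]$ is $\omega$-Gieseker semistable. In this situation I would argue directly: $E[1]$ is a pure two-dimensional sheaf supported on fibers (by Lemma~\ref{lem:Bog}-type considerations one reduces to $E[1] \cong i_{p\ast}F'$ with $F'$ a torsion-free semistable sheaf on $X_p \cong S$), and the usual sheaf-theoretic dual $F'^{\vee\vee}$ together with the derived dual $\mathbb{D}$ exchanges Gieseker-semistable sheaves of Mukai vector $v$ with those of the dual Mukai vector; this is classical on a K3 surface (e.g. via $\Hom_{X_p}(F', \oO) $ and the short exact sequence relating $F'$ to its double dual). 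The shift $[1]$ in $\mathbb{D}\circ[1]$ simply puts the dualized sheaf back into $\bB_{\omega}$, and the sign of $\beta$ flips as recorded above while $r$ and $n$ are preserved because $\ch_0 = 0$ here. The main obstacle I anticipate is bookkeeping: making sure the exact sequences in $\aA_{\omega}$ really map to exact sequences under $\mathbb{D}$ (one must use that on $\cC_{\omega}^{\perp}$ there are no unwanted cohomology sheaves, which is exactly what Lemma~\ref{lem:dual} and Lemma~\ref{lem:sub} are set up to provide), and tracking the $\pi - (\cdot)$ flip of arguments through the contravariance so that a semistability inequality for $E$ yields the correctly-oriented inequality for $\mathbb{D}(E)$ rather than its reverse. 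Once the sign conventions are pinned down, each step is formal.
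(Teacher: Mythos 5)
Your treatment of the first case ($R\ge 1$, or $R=0$ and $\beta\cdot\omega\neq 0$) is essentially the paper's own argument: check $\Imm Z_{t\omega}(E)>0$, invoke Lemma~\ref{lem:cri} and Lemma~\ref{lem:dual}, dualize short exact sequences with terms in $\cC_{\omega}^{\perp}$, and transfer the semistability inequality through the argument flip. One slip there: the correct relation is $Z_{t\omega}(\mathbb{D}(E))=-\overline{Z}_{t\omega}(E)$, not $\overline{Z}_{t\omega}(E)$ (the real part of $Z_{t\omega,0}$ involves $\ch_1$ and $\ch_3$, which change sign, while the imaginary part involves $\ch_2$, which does not); your stated formula would send $\mathbb{H}$ out of $\mathbb{H}$, but the consequence you actually use, $\arg Z_{t\omega}(\mathbb{D}(W))=\pi-\arg Z_{t\omega}(W)$, is the one that follows from the correct formula, so that part of the argument survives once the sign is fixed.

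The genuine gap is in the case $R=\beta\cdot\omega=0$. You assert that such a semistable $E$ satisfies $E[1]\in\Coh_{\pi}(\overline{X})$ and that $Z_{t\omega}$-semistability of $E$ is equivalent to $\omega$-Gieseker semistability of $E[1]$, and you then reduce to classical duality for Gieseker-semistable sheaves on a K3 fiber. This mischaracterizes the objects: the semistable objects of phase $\pi$ with $\beta\cdot\omega=0$ lie in $\cC_{\omega}=\langle F,\oO_x[-1]: F\in\fF_{\omega},\ \mu_{\omega}(F)=0\rangle_{\ex}$, so they are in general two-term complexes with a two-dimensional slope-zero piece sitting in degree $0$ and a zero-dimensional piece in degree $1$. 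For instance $E=i_{p\ast}F'$ for a $\mu_{\omega}$-stable bundle $F'$ on $X_p$ with $\mu_{\omega}(F')=0$ is itself such a semistable object, unshifted, and $E[1]$ is not a sheaf; nor is $Z_{t\omega}$-semistability for these classes the same as Gieseker semistability (that identification is only made in the paper for rank-zero objects of $\aA(r)$, which lie in $\Coh_{\pi}^{\le 1}(\overline{X})[-1]$, a different situation). The paper's route avoids this entirely: one observes that any semistable $E$ with $R=\beta\cdot\omega=0$ lies in $\cC_{\omega}$, and then \eqref{bij2} follows from Lemma~\ref{lem:equiv:C}, i.e. from the fact that $\mathbb{D}\circ[1]$ is an autoequivalence of $\cC_{\omega}$, combined with the same phase bookkeeping as in the first case. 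Your proposal needs to be repaired along these lines; as written, the reduction to sheaf duality on a single fiber does not cover the objects actually parametrized by $M_{t\omega}(0,r,\beta,n)$.
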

\begin{proof}
Take an object $E\in M_{t\omega}(R, r, \beta, n)$
and suppose that $R\ge 1$ or $R=0$, $\beta \cdot \omega \neq 0$. 
Then $\Imm Z_{t\omega}(E)>0$, hence 
 noting Lemma~\ref{lem:dual}, Lemma~\ref{lem:cri} and 
\begin{align*}
Z_{\omega}(\mathbb{D}(E))=-\overline{Z}_{\omega}(E),
\end{align*}
we easily see that $\mathbb{D}(E)$ is a
$Z_{t\omega}$-semistable object in $\aA_{\omega}$. 
Therefore (\ref{bij1}) follows. 
If $R=\beta \cdot \omega =0$, 
then we have $E\in \cC_{\omega}$
and (\ref{bij2}) follows 
from Lemma~\ref{lem:equiv:C}. 
\end{proof}
By applying the dualizing functor, we
can also prove the following lemma. 
\begin{lem}\label{lem:Ostable}
For any $r\in \mathbb{Z}$, there is no 
non-trivial exact sequence in $\aA_{\omega}$, 
\begin{align}\label{no-nontriv}
0 \to A \to \pi^{\ast}\oO_{\mathbb{P}^1}(r) \to B \to 0, 
\end{align}
with $A, B \in \cC_{\omega}^{\perp}$. 
In particular, 
the object $\pi^{\ast}\oO_{\mathbb{P}^1}(r) \in \aA_{\omega}$
is $Z_{t\omega}$-stable for any $t\in \mathbb{R}_{>0}$. 
\end{lem}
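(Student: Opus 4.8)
The plan is to prove the two assertions in turn, deducing the second from the first together with Lemma~\ref{lem:cri}. For the first assertion, suppose for contradiction that there is a non-trivial exact sequence in $\aA_{\omega}$ of the form \eqref{no-nontriv} with $A, B \in \cC_{\omega}^{\perp}$ and $A, B \neq 0$. Since $\rank(\pi^{\ast}\oO_{\mathbb{P}^1}(r))=1$ and rank is additive on exact sequences with $\rank \ge 0$ by Lemma~\ref{lem:pos}, exactly one of $A$, $B$ has rank one and the other has rank zero. I would dualize the sequence using $\mathbb{D}$: by Lemma~\ref{lem:dual} both $\mathbb{D}(A)$ and $\mathbb{D}(B)$ lie in $\cC_{\omega}^{\perp}$, and since $\mathbb{D}(\pi^{\ast}\oO_{\mathbb{P}^1}(r)) \cong \pi^{\ast}\oO_{\mathbb{P}^1}(-r)$, one obtains a dual exact sequence of the same shape for $\pi^{\ast}\oO_{\mathbb{P}^1}(-r)$ (up to re-ordering the roles of kernel and cokernel).

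The key point is then to rule out the rank-zero piece. The rank-zero term, call it $K$, lies in $\bB_{\omega}\cap \cC_{\omega}^{\perp}$; since it also satisfies $\ch_2(K)\cdot\omega = 0$ (as $\ch_2$ is additive and $\ch_2(\pi^{\ast}\oO_{\mathbb{P}^1}(r))\cdot\omega = 0$, while the rank-one term has $-\ch_2\cdot\omega \ge 0$), the object $K$ is built only out of $\fF_{\omega}$-sheaves of slope $0$ and shifted skyscrapers $\oO_x[-1]$ — in other words $K \in \cC_{\omega}$. But $K \in \cC_{\omega}\cap\cC_{\omega}^{\perp}$ forces $\Hom(K,K)=0$, hence $K=0$. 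This gives a contradiction unless the sequence is trivial. Here I would use the structural description of objects of $\bB_{\omega}$ with $\ch_2\cdot\omega = 0$ coming from Definition~\ref{def:Bom} and the torsion pair $(\tT_{\omega},\fF_{\omega})$, together with Lemma~\ref{twolem}(ii) (vanishing of $\Hom(E, \pi^{\ast}\oO_{\mathbb{P}^1}(r))$ for $E\in\bB_{\omega}$, used in identifying how $K$ sits inside $\pi^{\ast}\oO_{\mathbb{P}^1}(r)$) and Lemma~\ref{twolem}(iv). The step I expect to be the main obstacle is precisely pinning down that the rank-zero constituent must land in $\cC_{\omega}$: one must argue that no $\omega$-Gieseker semistable fiber sheaf of positive slope, nor any pure two-dimensional piece, can appear, using that $\ch_2\cdot\omega$ and $\rank$ of the surrounding object are both as small as possible.

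For the second assertion, I would invoke Lemma~\ref{lem:cri}: since $\Imm Z_{t\omega}(\pi^{\ast}\oO_{\mathbb{P}^1}(r)) = 1 > 0$, the object $\pi^{\ast}\oO_{\mathbb{P}^1}(r)$ is $Z_{t\omega}$-semistable if and only if it lies in $\cC_{\omega}^{\perp}$ and the inequality \eqref{ineq:C} holds for every exact sequence \eqref{seqC} with middle term $\pi^{\ast}\oO_{\mathbb{P}^1}(r)$. That $\pi^{\ast}\oO_{\mathbb{P}^1}(r) \in \cC_{\omega}^{\perp}$ is immediate from Lemma~\ref{twolem}(ii) together with $\Hom(\oO_x[-1], \pi^{\ast}\oO_{\mathbb{P}^1}(r)) = 0$ (no $\Hom$ from a shifted skyscraper into a shifted-degree-zero line bundle pullback). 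By the first assertion there are no non-trivial such sequences at all, so the stability condition is vacuously satisfied, and in fact there is nothing to destabilize — giving not merely semistability but stability. I would also note that this holds for every $t \in \mathbb{R}_{>0}$ since none of the above used any particular value of $t$. Finally I would remark that the case $r' = r$ of a potential self-extension is covered since any nonzero endomorphism of $\pi^{\ast}\oO_{\mathbb{P}^1}(r)$ in $\aA_{\omega}$ is a scalar (it is a line bundle on a surface fibered over $\mathbb{P}^1$, pulled back), so no proper subobject with $\cC_{\omega}^{\perp}$ quotient arises there either.
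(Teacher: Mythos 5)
Your proposal is correct, but the route to the main (first) assertion is genuinely different from the paper's. The paper reduces, via the duality of Lemma~\ref{lem:dual}, to the case $\rank(B)=0$, then uses Lemma~\ref{twolem}~(iv) to see that the kernel $A$ lies in $\pi^{\ast}\Pic(\mathbb{P}^1)$ and Lemma~\ref{twolem}~(iii) to identify $B\cong\pi^{\ast}Q$ for a zero-dimensional sheaf $Q$ on $\mathbb{P}^1$; since $\pi^{\ast}Q\in\cC_{\omega}$, this contradicts $B\in\cC_{\omega}^{\perp}$. You argue numerically instead: by Lemma~\ref{lem:pos} and additivity of $\ch$, the rank-zero constituent $K$ satisfies $\ch_2(K)\cdot\omega=0$, and the torsion pair $(\tT_{\omega},\fF_{\omega})$ then forces $\hH^0(K)\in\fF_{\omega}$ to have slope zero and $\hH^1(K)$ to be zero-dimensional (any one-dimensional piece would contribute $\omega\cdot\beta>0$), so $K\in\cC_{\omega}\cap\cC_{\omega}^{\perp}=\{0\}$, a contradiction. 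This works whether $K$ is the sub or the quotient, so your preliminary dualization is redundant, and Lemma~\ref{twolem}~(iii),(iv) are never needed --- the step you flag as the main obstacle is carried by the slope/Harder--Narasimhan analysis rather than by the cited parts of Lemma~\ref{twolem}. What the paper's argument buys is the explicit shape of any candidate sequence; what yours buys is the more robust general fact that no nonzero object of $\bB_{\omega}\cap\cC_{\omega}^{\perp}$ has $\ch_2\cdot\omega=0$. One small caution on the second assertion: stability is not literally ``vacuous'', since a destabilizing sequence $0\to F\to\pi^{\ast}\oO_{\mathbb{P}^1}(r)\to G\to 0$ need not have both terms in $\cC_{\omega}^{\perp}$; as in the proof of Lemma~\ref{lem:cri} one first quotients $G$ by its maximal $\cC_{\omega}$-subobject, and the residual case $G\in\cC_{\omega}$ is harmless because then $\arg Z_{t\omega}(G)=\pi>\pi/2=\arg Z_{t\omega}(F)$; the paper glosses this at the same level, so this is a remark, not a gap.
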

\begin{proof}
Since $\pi^{\ast}\oO_{\mathbb{P}^1}(r) \in \cC_{\omega}^{\perp}$, 
the $Z_{t\omega}$-stability of $\pi^{\ast}\oO_{\mathbb{P}^1}(r)$
follows from the first statement and Lemma~\ref{lem:cri}. 
Suppose that a non-trivial 
sequence (\ref{no-nontriv}) exists. 
Then we have $\rank(A)=0$ or $\rank(B)=0$,
and by the duality in Lemma~\ref{lem:dual}, we may 
assume that $\rank(B)=0$, i.e. $B\in \bB_{\omega}$. 
Then by Lemma~\ref{twolem} (iii), (iv), 
the object $B$ is written as $\pi^{\ast}Q$
for a zero dimensional sheaf $Q$ on $\mathbb{P}^1$.
Since $\pi^{\ast}Q \in \cC_{\omega}$, 
this contradicts to $B \in \cC_{\omega}^{\perp}$. 
\end{proof}

\subsection{Proof of Proposition~\ref{prop:wall}}\label{subsec:wall}
In this subsection, we give a proof of Proposition~\ref{prop:wall}, 
that is the existence of wall and chamber structure on $t\in \mathbb{R}_{>0}$.
For the reader's convenience, we restate the proposition. 
\begin{prop}\label{prop:wall2}
For fixed $\beta \in \mathrm{NS}(S)$ and 
an ample divisor $\omega$ on $S$, there is a finite
sequence of real numbers, 
\begin{align*}
0=t_0 <t_1 < \cdots <t_{k-1} <t_k=\infty,
\end{align*}
such that the set of objects 
\begin{align*}
\bigcup_{\begin{subarray}{c} (R, r, n), \\
\arg Z_{t\omega}(R, r, \beta, n)=\pi/2
\end{subarray}}
M_{t\omega}(R, r, \beta, n),
\end{align*} 
is constant for each $t \in (t_{i-1}, t_i)$. 
\end{prop}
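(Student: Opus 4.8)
The plan is to establish the wall-and-chamber structure by combining the support property of $\sigma_{t\omega}$ (Lemma~\ref{lem:property}) with explicit bounds on the numerical classes of destabilizing subobjects. First I would note that since $\arg Z_{t\omega}(R, r, \beta, n) = \pi/2$ forces either $R \ge 1$, or $R = 0$ together with the relation $n = \frac{1}{2}r t^2 \omega^2$, the set of relevant classes $(R, r, n)$ with fixed $\beta$ and bounded $|Z_{t\omega}|$ is controlled. By Lemma~\ref{lem:Bog} (applied via the filtration of Lemma~\ref{lem:rank10}, whose subquotients $K_1, K_3[1]$ are Gieseker-semistable sheaves and $K_2 \in \aA(r)$) together with the Hodge index theorem, for $E \in M_{t\omega}(R, r, \beta, n)$ with $R \le 1$ the invariants $r$ and $n$ are bounded in terms of $\beta \cdot \omega$, $\beta^2$ and $t$ — and uniformly so for $t$ in any bounded interval. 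The same type of argument, as already used in the proof of Lemma~\ref{lem:constructible}(ii), bounds the numerical classes of all the Harder-Narasimhan and Jordan-Hölder factors that can occur.

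Next I would make precise the notion of a wall. For a pair of classes $v' = (R', r', \beta', n')$, $v'' = (R'', r'', \beta'', n'')$ in $\Gamma$ with $\beta' + \beta'' = \beta$ (and $\beta', \beta''$ effective or zero, both with $\omega$-degree bounded by $\omega\cdot\beta$), the locus of $t$ where $\arg Z_{t\omega}(v') = \arg Z_{t\omega}(v'')$ is either all of $\mathbb{R}_{>0}$ or a finite set, since $Z_{t\omega, 0}(r, \beta, n) = n - \frac12 r t^2 \omega^2 - (\omega\cdot\beta)\sqrt{-1}$ depends on $t$ only through $t^2$, so the equality $\mathrm{Re}(Z_{t\omega}(v')) \mathrm{Im}(Z_{t\omega}(v'')) = \mathrm{Re}(Z_{t\omega}(v'')) \mathrm{Im}(Z_{t\omega}(v'))$ becomes a polynomial identity in $t^2$. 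By the boundedness established in the previous paragraph, only finitely many such pairs $(v', v'')$ are numerically possible for subobjects/quotients of objects in the relevant $M_{t\omega}(R, r, \beta, n)$, so their walls form a finite set $\{t_1 < \cdots < t_{k-1}\}$ in $\mathbb{R}_{>0}$; one must also check no wall accumulates at $0$ or $\infty$, which again follows from the uniform bounds (near $0$ use Proposition~\ref{prop:m2}-type vanishing, near $\infty$ the classes stabilize). Setting $t_0 = 0$, $t_k = \infty$ gives the required sequence.

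Finally, on each open interval $(t_{i-1}, t_i)$ I would argue that the set of semistable objects is constant: if $E \in M_{t\omega}(R, r, \beta, n)$ for some $t$ in the interval but $E$ fails to be $Z_{t'\omega}$-semistable for another $t'$ in the same interval, then there is a destabilizing short exact sequence $0 \to F \to E \to G \to 0$ in $\aA_\omega$ with $\arg Z_{t\omega}(F) \le \arg Z_{t\omega}(G)$ but $\arg Z_{t'\omega}(F) > \arg Z_{t'\omega}(G)$; by continuity of $t \mapsto \arg Z_{t\omega}(\cdot)$ there is a $t''$ strictly between $t$ and $t'$ with $\arg Z_{t''\omega}(F) = \arg Z_{t''\omega}(G)$, placing $t''$ on a wall associated to $(\cl(F), \cl(G))$, contradicting that $(t_{i-1}, t_i)$ contains no wall. (Here the finiteness of possible $\cl(F)$, $\cl(G)$ is what guarantees that "destabilizing" only ever happens across the finitely many walls we have enumerated.)

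The main obstacle I anticipate is the boundedness step: one must show that across a whole bounded interval of $t$, the numerical classes of \emph{all} potential sub- and quotient objects of semistable $E$ — not just the semistable pieces but the full set of Harder-Narasimhan factors entering the wall analysis — range over a finite set. This requires carefully feeding the Bogomolov-type inequalities of Lemma~\ref{lem:Bog} and the argument of Lemma~\ref{lem:constructible}(ii) through the two-step filtration structure (Lemmas~\ref{lem:rank10} and \ref{lem:rank1}) of objects in $\aA_\omega$, being attentive to the rank-one versus rank-zero cases and to the interaction with the $\pi^{\ast}\Pic(\mathbb{P}^1)$-part, which is rigid and contributes no walls by Lemma~\ref{lem:Ostable}. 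Once this uniform finiteness is in hand, the rest is the standard wall-crossing bookkeeping, and I would refer to the analogous arguments in~\cite[Section~3]{Tcurve1} and~\cite[Section~3]{Tolim2} rather than repeat them.
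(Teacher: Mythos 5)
Your overall strategy coincides with the paper's: bound the numerical classes of potential destabilizers via the filtration of Lemma~\ref{lem:rank10} and Bogomolov-type inequalities, note that each candidate wall is cut out by an equation that is (at most) linear in $t^2$, and get constancy on chambers by a continuity/intermediate-value argument. The genuine gap is in the boundedness step, which is exactly the point the proposition turns on. You only assert that the relevant classes are bounded ``uniformly for $t$ in any bounded interval'', and then dispose of the behaviour near $t=0$ and $t=\infty$ by appealing to ``Proposition~\ref{prop:m2}-type vanishing'' and to ``the classes stabilize''. Neither works as stated: Proposition~\ref{prop:m2} is formulated and proved relative to the wall sequence $t_1<\cdots<t_{k-1}$ produced by the present proposition, so invoking it here is circular; and no argument at all is given for $t\to\infty$, although the claims that the last chamber is $(t_{k-1},\infty)$ and the first is $(0,t_1)$ are precisely what has to be proved. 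With only interval-wise boundedness one cannot exclude walls $t^2=2n'/(r'\omega^2)$ with $r'\to\infty$ and $n'$ bounded (accumulating at $0$), or with $n'\to\infty$ and $r'$ bounded (escaping to $\infty$).

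The paper closes this by extracting $t$-\emph{independent} constraints directly from semistability at the wall. If $A\in\bB_{\omega}$ with $\cl_0(A)=(r',\beta',n')$ is a subobject or quotient of a semistable $E$ and $\arg Z_{t\omega}(A)=\pi/2$, the filtration of Lemma~\ref{lem:rank10} gives pieces $K_1\in\fF_{\omega}$, $K_2\in\Coh_{\pi}^{\le 1}(\overline{X})[-1]$, $K_3\in\tT_{\omega}^{\rm{pure}}[-1]$, and semistability of $E$ forces $\Ree Z_{t\omega}(K_1)\ge 0$ and $\Ree Z_{t\omega}(K_3)\le 0$, i.e. $n_1\ge \frac{1}{2}r_1t^2\omega^2\ge 0$ and $n_3\le \frac{1}{2}r_3t^2\omega^2\le 0$, for every relevant $t$; combined with $\beta\cdot\omega\le\beta_i\cdot\omega\le 0$ from Lemma~\ref{lem:pos}, Lemma~\ref{finset} then confines $(r_1,n_1)$ and $(r_3,n_3)$ to a finite set that does not depend on $t$. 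If moreover $(r',n')\neq(0,0)$, one of $K_1,K_3$ is nonzero, so $r_1>0$ or $r_3<0$, and the displayed inequalities bound the wall values of $t$ from above; the wall equation $n'=\frac{1}{2}r't^2\omega^2$ then leaves only finitely many integral pairs $(r',n')$, hence finitely many walls on all of $(0,\infty)$, with no accumulation at either end. This $t$-uniform finiteness is the missing ingredient in your write-up (a smaller slip: the pieces $K_1$ and $K_3[1]$ are not Gieseker-semistable sheaves, only objects of $\fF_{\omega}$ and $\tT_{\omega}^{\rm{pure}}$; one must pass to their Harder-Narasimhan and Jordan-H\"older factors, as in the proof of Lemma~\ref{finset}, before Lemma~\ref{lem:Bog} applies).
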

\begin{proof}
We fix $\beta$, $\omega$ and 
take an object, 
\begin{align}\label{objR}
E \in \bigcup_{\begin{subarray}{c} (R, r, n), \\
\arg Z_{t\omega}(R, r, \beta, n)=\pi/2
\end{subarray}}M_{t\omega}(R, r, \beta, n).
\end{align} 
Suppose that $A \in \bB_{\omega}$ is a subobject or a quotient of 
$E$ in $\aA_{\omega}$ and satisfies 
\begin{align}\label{condA}
\arg Z_{t\omega}(A)=\frac{\pi}{2}. 
\end{align}
If we
 write $\cl_0(A)=(r', \beta', n')$, 
then we have 
\begin{align}\label{eq:wall}
\Ree Z_{t\omega}(A)=n'-\frac{1}{2}r't^2 \omega^2=0. 
\end{align}
By Lemma~\ref{lem:rank10}, 
there is a filtration in $\bB_{\omega}$, 
\begin{align}\label{filt:w1}
0=A_0 \subset A_1 \subset A_2 \subset A_3=A,
\end{align}
such that each subquotient $K_i \cneq A_i/A_{i-1}$
satisfies the condition (\ref{filt:filt}).
We write $\cl_0(K_i)=(r_i, \beta_i, n_i)$. 
By the $Z_{t\omega}$-semistability of $E$
and the condition (\ref{condA}),
we have 
\begin{align}\label{wall2}
\Ree Z_{t\omega}(K_1) &= n_1 -\frac{1}{2}r_1 t^2 \omega^2 \ge 0, \\
\label{wall22}
\Ree Z_{t\omega}(K_3) &= n_3 -\frac{1}{2}r_3 t^2 \omega^2 \le 0. 
\end{align} 
Since $r_1\ge0$ and $r_3 \le0$, 
the inequalities (\ref{wall2}), (\ref{wall22})
imply that $n_1 \ge 0$ and $n_3 \le 0$. 
Also by Lemma~\ref{lem:pos}, 
we have $\beta \cdot \omega \le \beta_i \cdot \omega \le 0$.  
Therefore we can apply Lemma~\ref{finset} below and 
conclude that $(r_1, n_1)$,
 $(r_3, n_3)$,  
hence $r'=r_1 +r_3$, have only
a finite number of possibilities. 

Suppose that $K_1=K_3=0$. 
Then the equality (\ref{eq:wall}) is satisfied 
only if $(r', n')=(0, 0)$. 
Otherwise, for instance if $K_1 \neq 0$, then 
$r_1 >0$ and 
the inequality (\ref{wall2})
implies that 
\begin{align*}
n_1 -\frac{1}{2}t^2 \omega^2 r_1 \ge 0. 
\end{align*} 
Therefore such $t$ is bounded above.
A similar argument shows the boundedness of $t$
under the assumption $K_3 \neq 0$. 
Therefore the set of possible 
$t \in \mathbb{R}$ 
satisfying the equation (\ref{eq:wall})
for some $A \in \bB_{\omega}$, 
which is a subobject or a quotient 
of some object (\ref{objR})
with $(r', n') \neq 0$, is a finite set. 
If we denote this finite set 
by $0=t_0<t_1< \cdots <t_k<\infty$, 
then $t_{\bullet}$ satisfies the desired condition. 
\end{proof}
We have used the following lemma.
\begin{lem}\label{finset}
For fixed ample divisor
$\omega$ on $S$ and 
$a, b \in \mathbb{R}$, 
the following 
subsets in $\mathbb{Z}^{\oplus 2}$ are finite sets:
\begin{align}\label{fs1}
&
\left\{ (r', n') : \begin{array}{l}
\mbox{ there is }
T \in \tT_{\omega}^{\rm{pure}}, 
\ \cl_0(T)=(r', \beta', n'), \\
\mbox{ satisfying }
\beta' \cdot \omega \le a, \ 
n' \ge b
\end{array} \right\},  \\
\label{fs2}
&
\left\{ (r', n') : \begin{array}{l}
\mbox{ there is }
F \in \fF_{\omega}, 
\ \cl_0(F)=(r', \beta', n'), \\
\mbox{ satisfying }
\beta' \cdot \omega \ge a, \ 
n' \ge b
\end{array} \right\}.
\end{align}
\end{lem}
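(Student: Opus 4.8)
The plan is to reduce both finiteness statements to bounding the numerical invariants of the $\mu_{\omega}$-stable sheaves on $S$ that occur as Jordan--H\"older constituents, and then to extract those bounds from Bogomolov-type inequalities together with the constraint $n'\ge b$. First I would transport the problem from $\overline{X}$ to $S$ via $p$: for $T\in\tT_{\omega}^{\rm{pure}}$, resp.\ $F\in\fF_{\omega}$, the sheaf $p_{\ast}T$, resp.\ $p_{\ast}F$, is a torsion-free coherent sheaf on $S$ with $\ch=(r',\beta',n')$, and the defining condition of $\tT_{\omega}^{\rm{pure}}$, resp.\ $\fF_{\omega}$, says precisely that every $\mu_{\omega}$-Harder--Narasimhan factor has strictly positive slope, resp.\ non-positive slope. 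Taking the $\mu_{\omega}$-HN filtration with factors $G_1,\dots,G_k$, $\cl_0(G_i)=(r_i,\beta_i,n_i)$, and refining each $G_i$ into $\mu_{\omega}$-stable Jordan--H\"older factors $F_{i,j}$, it suffices to bound the total number of the $F_{i,j}$ together with the integers $\rho_{i,j}=\rank F_{i,j}$ and $\ch_2(F_{i,j})$.

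For the number of factors: whenever $\mu_{\omega}(G_i)\neq0$ the integer $\beta_i\cdot\omega=r_i\mu_{\omega}(G_i)$ has absolute value $\ge1$, and each Jordan--H\"older factor of $G_i$ contributes a summand of $\beta_i\cdot\omega$ of the same sign; since $\sum_i\beta_i\cdot\omega=\beta'\cdot\omega$ is constrained by $\beta'\cdot\omega\le a$, resp.\ $\beta'\cdot\omega\ge a$, the number of HN factors of nonzero slope and the number of their Jordan--H\"older constituents are bounded solely in terms of $|a|$. For each $\mu_{\omega}$-stable $F_{i,j}$, writing its Mukai vector as $v(F_{i,j})=(\rho,\gamma,\sigma)$ with $\gamma\cdot\omega$ bounded, the Hodge index theorem gives $\gamma^2\le(\gamma\cdot\omega)^2/\omega^2$, bounded above, and the classical Bogomolov inequality bounds $\ch_2(F_{i,j})\le\gamma^2/(2\rho)$, hence above by a constant depending only on $\omega$ and $a$. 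Summing over the boundedly many factors and using $n'=\sum_{i,j}\ch_2(F_{i,j})\ge b$, each $\ch_2(F_{i,j})$ is then bounded below as well, so lies in a finite set. Finally the inequality $(v(F_{i,j}),v(F_{i,j}))\ge-2$, which follows from (\ref{asum1}) and Serre duality on $S$ (this is the inequality behind Lemma~\ref{lem:Bog}(i)), reads $\gamma^2-2\rho(\rho+\ch_2(F_{i,j}))\ge-2$, that is, $\rho^2+\rho\,\ch_2(F_{i,j})\le(\gamma^2+2)/2$; with $\gamma^2$ and $\ch_2(F_{i,j})$ already bounded, this is a quadratic inequality forcing $\rho$ into a finite set. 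Hence $r'=\sum_{i,j}\rho_{i,j}$ and $n'=\sum_{i,j}\ch_2(F_{i,j})$ each range over finite sets, which proves (\ref{fs1}), and also (\ref{fs2}) away from the slope-zero case.

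The one place where this does not immediately work, and where I expect the main difficulty to lie, is a Harder--Narasimhan factor of slope exactly zero in the second set: for such a factor $\beta_i\cdot\omega=0$, so the counting step above no longer controls the number of its Jordan--H\"older constituents. Here I would instead classify the slope-zero $\mu_{\omega}$-stable sheaves on $S$ that can appear: since $\gamma\cdot\omega=0$ forces $\gamma^2\le0$ by the Hodge index theorem, the inequality $(v,v)\ge-2$ pins down $\rho=1$ and $\gamma^2\in\{-2,0\}$, so each such constituent has a very constrained numerical type, and then a separate argument exploiting $n'\ge b$ (and the fact that an object of $\fF_{\omega}$ has no nonzero lower-dimensional subsheaf) is needed to bound how many of them can sit inside a single $F$, and hence their total contribution to $(r',n')$. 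Everything else is routine bookkeeping with Bogomolov's inequality and the Hodge index theorem.
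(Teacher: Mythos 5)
Your treatment of (\ref{fs1}) is correct and is essentially the paper's own argument: pass to the $\mu_{\omega}$-Harder--Narasimhan and Jordan--H\"older factors, bound their number by $a$ using strict positivity and integrality of the slopes, bound $\beta_i^2$ by the Hodge index theorem, and control the remaining invariants by the inequality of Lemma~\ref{lem:Bog}~(i) (your $(v,v)\ge -2$) together with $n'\ge b$; the paper just sums the single bound $n_i\le \tfrac12(s(a,\omega)+2)-r_i$ instead of splitting the bookkeeping into a discriminant step and a rank step, which is immaterial. Note, however, that the paper only writes out (\ref{fs1}) and declares (\ref{fs2}) ``similarly proved'', so the place you flag as the difficulty is precisely what the paper's proof does not address.

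The gap is therefore real, but the route you sketch cannot close it. First, the classification claim is false: for a $\mu_{\omega}$-stable factor with $\gamma\cdot\omega=0$, the inequality $(v,v)\ge -2$ only gives $\rho(\rho+\ch_2)\le(\gamma^2+2)/2\le 1$, i.e.\ $\ch_2\le \tfrac{1}{\rho}-\rho$; this permits arbitrary rank as soon as $\ch_2\le-\rho$ (for instance stable rank-two bundles with $c_1=0$ and $c_2\ge 2$ occur in abundance), so it does not pin down $\rho=1$. Second, and more fundamentally, no supplementary argument can bound the contribution of the slope-zero constituents from the hypotheses $\beta'\cdot\omega\ge a$, $n'\ge b$ alone, because the statement (\ref{fs2}) fails as literally written: the slope-zero stable factor of class $\cl_0=(1,0,0)$, e.g.\ $\oO_{X_p}$, contributes nothing to $\beta'\cdot\omega$ and nothing to $n'$, so $F=\oO_{X_p}^{\oplus k}\in\fF_{\omega}$ puts $(r',n')=(k,0)$ into the set (\ref{fs2}) for every $k\ge 1$ once $a\le 0$ and $b\le 0$; hence $r'$ is genuinely unbounded there. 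What your computation does yield is the salvageable half: every slope-zero stable factor has $\ch_2\le 0$, with equality only for the class $(1,0,0)$, and the at most $\lvert a\rvert$ factors of negative slope have $\ch_2$ bounded above, so $b\le n'\le C(a,\omega)$ and the possible values of $n'$ form a finite set. In the actual applications (the proofs of Proposition~\ref{prop:wall} and Lemma~\ref{lem:constructible}) the missing bound on $r'$ has to be extracted from the additional inequality $n'\ge\tfrac12 t^2\omega^2 r'$ with $t>0$ fixed, not from the purely numerical hypotheses of (\ref{fs2}); so rather than trying to bound the number of slope-zero constituents inside $F$, the correct move is to reformulate (\ref{fs2}) (finiteness of $n'$, plus a bound on $r'$ under that extra constraint), and your proposed separate argument should be replaced accordingly.
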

\begin{proof}
For simplicity, we prove the 
finiteness of (\ref{fs1}). 
The finiteness of (\ref{fs2}) is similarly proved. 
Take $T\in \tT_{\omega}^{\rm{pure}}$ 
with $\cl_0(T)=(r', \beta', n')$, 
$\beta' \cdot \omega \le a$
and $n' \ge b$. 
Taking the Harder-Narasimhan filtrations and 
Jordan-H$\ddot{\rm{o}}$lder filtrations 
of $T$ with respect to $\mu_{\omega}$-stability, 
we have a filtration of coherent sheaves,
\begin{align*}
0=T_0 \subset T_1 \subset \cdots \subset T_N=T, 
\end{align*}
such that each $M_i \cneq T_i/T_{i-1}$
is $\mu_{\omega}$-stable. 
We write $\cl_0(M_i)=(r_i, \beta_i, n_i)$. 
Since $\beta_i \cdot \omega >0$
for all $i$, 
we have 
\begin{align*}
\beta_i \cdot \omega \le \beta' \cdot \omega 
\le a, \quad
0<N\le a.
\end{align*}
By the Hodge index theorem, there is 
a constant $s(a, \omega)>0$
which depends only on $a$ and $\omega$ such that 
\begin{align}\notag
\beta_i^2 \le s(a, \omega). 
\end{align}
Also note that $r_i>0$ for all $i$, 
since $T$ is a pure two dimensional sheaf. 
Therefore applying Lemma~\ref{lem:Bog}, we have 
\begin{align}\notag
n_i &\le \frac{\beta_i^2 +2}{2r_i} -r_i \\
\notag
&\le \frac{1}{2} \left(s(a, \omega)+2  \right) -r_i.
\end{align}
Taking the sum from $i=1$ to $i=N$, we obtain 
\begin{align}\label{n7}
n'  &\le \frac{N}{2}(s(a, \omega)+2)-r' \\
\notag
&\le \frac{a}{2}(s(a, \omega)+2)-r'. 
\end{align}
Combined with $n' \ge b$,
we have 
\begin{align*}
0< r'
\le \frac{a}{2}(s(a, \omega)+2)-b. 
\end{align*}
Therefore there is only a finite number of possibilities 
for $r'$. 
By (\ref{n7}) and $n'\ge b$,
there is also a finite number of possibilities for $n'$. 
\end{proof}

\subsection{Proof of Proposition~\ref{prop:m2}}\label{subsec:finset2}
In this subsection, we prove Proposition~\ref{prop:m2}, 
which is restated as follows: 
\begin{prop}\label{prop:m23}
In the same situation of Proposition~\ref{prop:wall2},
we have  
\begin{align*}
M_{t\omega}(R, r, \beta, n)=\emptyset,  
\end{align*}
for any $t\in (0, t_1)$ and $(R, r, n) \in \mathbb{Z}^{\oplus 3}$
 with $R \ge 1$ and $n\neq 0$. 
\end{prop}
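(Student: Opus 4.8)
The plan is to combine three facts already available: the structure result decomposing an arbitrary object of $\aA_\omega$ into an object of $\bB_\omega$ and an iterated extension of pullbacks of line bundles (Lemma~\ref{lem:rank1}); the wall-and-chamber structure on the ray $(0,t_1)$ (Proposition~\ref{prop:wall2}); and the duality $\mathbb{D}$ on semistable objects of rank $\geq 1$ (Proposition~\ref{prop:bij}). Suppose, for a contradiction, that $E\in M_{t\omega}(R,r,\beta,n)$ with $R\geq 1$, $n\neq 0$, $t\in(0,t_1)$. Since $R\neq 0$ one has $Z_{t\omega}(E)=R\sqrt{-1}$, so $E$ lies in the union of Proposition~\ref{prop:wall2}; as that union is constant on $(0,t_1)$ and $\cl(E)$ is fixed, $E$ is $Z_{t'\omega}$-semistable for \emph{every} $t'\in(0,t_1)$, and likewise, by Proposition~\ref{prop:bij} (which applies because $R\geq 1$), $\mathbb{D}(E)$ is $Z_{t'\omega}$-semistable for every such $t'$, with $\cl(\mathbb{D}(E))=(R,-r,\beta,-n)$.

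First I would apply Lemma~\ref{lem:rank1} to $E$: there is an exact sequence $0\to A\to E\to B\to 0$ in $\aA_\omega$ with $A\in\bB_\omega$ and $B\in\langle\pi^{\ast}\Pic(\mathbb{P}^1)\rangle_{\ex}$. Since $\rank(A)=0$, the object $B$ has rank $R\geq 1$, hence is an iterated extension of line bundles $\pi^{\ast}\oO_{\mathbb{P}^1}(r_j)$ and $\cl(B)=(R,\sum_j r_j,0,0)$; therefore $\cl_0(A)=(a,\beta,n)$ with $a=r-\sum_j r_j\in\mathbb{Z}$ (of no definite sign). If $A=0$ then $E\cong B$, so $\beta=0$ and $n=0$, contradicting $n\neq 0$; thus $A\neq 0$. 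For every $t'\in(0,t_1)$, $A$ is a subobject of $E$ with quotient $B$, and $\arg Z_{t'\omega}(B)=\pi/2$ because $\rank(B)\geq 1$, so $Z_{t'\omega}$-semistability of $E$ gives $\arg Z_{t'\omega}(A)\leq\pi/2$. As $A\neq 0$ lies in $\bB_\omega$, the value $Z_{t'\omega,0}(A)=n-\tfrac{1}{2}a (t')^2\omega^2-\sqrt{-1}\,(t'\,\omega\cdot\beta)$ lies in $\mathbb{H}$; together with $\omega\cdot\beta\leq 0$ (Lemma~\ref{lem:pos}) and the inequality $\arg\leq\pi/2$ this forces $\omega\cdot\beta<0$ (if $\omega\cdot\beta=0$ the value would be a negative real number, of argument $\pi$) and $\Ree Z_{t'\omega,0}(A)=n-\tfrac{1}{2}a (t')^2\omega^2\geq 0$. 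Letting $t'\to 0^{+}$ yields $n\geq 0$. Running the identical argument with $\mathbb{D}(E)$ in place of $E$ yields $-n\geq 0$, whence $n=0$, a contradiction; so $M_{t\omega}(R,r,\beta,n)=\emptyset$.

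The point that needs care is the limit $t'\to 0^{+}$: at a fixed $t'$ the inequality only reads $n\geq\tfrac{1}{2}a (t')^2\omega^2$, which is vacuous when $a<0$, and $a=r-\sum_j r_j$ genuinely need not have a sign. It is precisely to remove this defect that Proposition~\ref{prop:wall2} is invoked, ensuring that the \emph{same} object $E$ (hence the same $A$ with the same numerical class) remains semistable for all small $t'$, so that one may pass to the limit. Everything else is routine bookkeeping with the region $\mathbb{H}$ and the sign conventions of $Z_{t'\omega,0}$; the degenerate cases $A=0$ and $\omega\cdot\beta=0$ are disposed of at the start as above, and the appeal to $\mathbb{D}$ reduces the upper bound on $n$ to the lower bound applied to a different semistable object of rank $\geq 1$.
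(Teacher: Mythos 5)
Your proof is correct and follows essentially the same route as the paper: the paper likewise invokes Proposition~\ref{prop:bij} (to reduce to $n<0$ rather than proving $n\ge 0$ and $-n\ge 0$ separately, a purely cosmetic difference), applies Lemma~\ref{lem:rank1} to get $0\to A\to E\to B\to 0$ with $\cl_0(A)=(r',\beta,n)$, and uses the constancy of the semistable objects on the chamber $(0,t_1)$ to let $t\to 0$ in the inequality $n-\tfrac{1}{2}r't^2\omega^2\ge 0$. Your explicit treatment of the degenerate cases $A=0$ and $\omega\cdot\beta=0$ only spells out steps the paper leaves implicit.
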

\begin{proof}
Suppose that 
there is an object $E\in M_{t\omega}(R, r, \beta, n)$
for $t\in (0, t_1)$ and $R \ge 1$. 
By Proposition~\ref{prop:bij}, 
we may assume that $n<0$. 
By Lemma~\ref{lem:rank1}, 
there is an exact sequence in $\aA_{\omega}$, 
\begin{align*}
0 \to A \to E \to B \to 0, 
\end{align*}
such that $A \in \bB_{\omega}$ and $B\in \langle\pi^{\ast}\Pic(\mathbb{P}^1) \rangle_{\ex}$.
We have 
\begin{align*}
\cl_0(A)=(r', \beta, n),
\end{align*}
for some $r' \in \mathbb{Z}$. 
By the $Z_{t\omega}$-semistability of $E$, 
we have $\arg Z_{t\omega}(A)\le \pi/2$, or 
equivalently 
\begin{align}
n-\frac{1}{2}t^2 \omega^2 r' \ge 0. 
\end{align}
The above inequality should be satisfied 
for any $t\in (0, t_1)$, 
therefore we must have $n\ge 0$. 
This contradicts to $n<0$, 
hence we have $M_{t\omega}(R, r, \beta, n)=\emptyset$
for $t\in (0, t_1)$, $R \ge 1$ and $n\neq 0$. 
\end{proof}

\subsection{Rank zero semistable objects for small $t$}
Using the technique in the previous subsections, 
we give the following proposition 
on rank zero $Z_{t\omega}$-semistable
objects for small $t$. 
This result will be used later. 
\begin{prop}\label{prop:rank0}
For fixed $\beta$ and $\omega$
with $\beta \cdot \omega>0$, there is 
$t'>0$ such that 
the following set of objects 
is constant for $0<t<t'$, 
\begin{align*}
\bigcup_{r \in \mathbb{Z}}M_{t\omega}(0, r, \beta, 0). 
\end{align*}
\end{prop}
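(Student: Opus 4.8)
The plan is to mimic the strategy already used for Proposition~\ref{prop:wall2} and Proposition~\ref{prop:m23}, but now working with rank zero objects, where $Z_{t\omega}$-semistability is governed by $Z_{t\omega, 0}$-semistability in $\bB_{\omega}$ (Remark~\ref{rmk:BA}). First I would observe that, for a rank zero object $E \in \aA_{\omega}$ with $\cl_0(E) = (r, \beta, 0)$ and $\beta \cdot \omega > 0$, the condition $\arg Z_{t\omega}(E) = \arg Z_{t\omega, 0}(E)$ is automatic and, since $\Ree Z_{t\omega, 0}(E) = -\frac{1}{2} r t^2 \omega^2$ and $\Imm Z_{t\omega, 0}(E) = -\beta \cdot \omega < 0$, we have $\arg Z_{t\omega}(E) \to \pi$ as $t \to 0$. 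So the only walls to worry about come from subobjects or quotients $A \in \bB_{\omega}$ with $\cl_0(A) = (r', \beta', n')$ satisfying $\arg Z_{t\omega, 0}(A) = \arg Z_{t\omega, 0}(E)$; the strict semistability walls are cut out by equalities of the form $\Ree Z_{t\omega, 0}(A) \cdot \Imm Z_{t\omega, 0}(E) = \Ree Z_{t\omega, 0}(E) \cdot \Imm Z_{t\omega, 0}(A)$, i.e. $(n' - \frac{1}{2}r' t^2 \omega^2)(\beta \cdot \omega) = (\frac{1}{2}r t^2 \omega^2)(\beta' \cdot \omega)$.

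Next I would bound the numerical invariants of such destabilizers. Given a $Z_{t\omega}$-semistable $E$ with $\cl_0(E) = (r, \beta, 0)$, a subobject $A \in \bB_{\omega}$ with $\arg Z_{t\omega, 0}(A) = \arg Z_{t\omega, 0}(E)$, and the quotient $G = E/A \in \bB_{\omega}$: applying Lemma~\ref{lem:rank10} (the $\rank = 0$ case) to $A$ and to $G[1]$ gives filtrations whose subquotients $K_i$ lie in $\fF_{\omega}$ or $\Coh_{\pi}^{\le 1}(\overline{X})[-1]$ or $\tT_{\omega}^{\rm{pure}}[-1]$, and by Lemma~\ref{lem:pos} each $\cl_0(K_i) = (r_i, \beta_i, n_i)$ satisfies $\beta \cdot \omega \le \beta_i \cdot \omega \le 0$ (with signs adjusted for the shifted pieces). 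The $Z_{t\omega}$-semistability of $E$ forces sign constraints on the $\Ree Z_{t\omega, 0}(K_i)$, hence on the $n_i$; then Lemma~\ref{finset} applied to the $\fF_{\omega}$- and $\tT_{\omega}^{\rm{pure}}$-pieces bounds the $(r_i, n_i)$, and the Hodge index theorem (via $\beta_i \cdot \omega$ bounded) bounds $\beta_i^2$, leaving only finitely many possibilities for $(r', \beta', n')$. Therefore only finitely many walls of the above type accumulate, and since each such wall equation $(n' - \frac{1}{2}r' t^2 \omega^2)(\beta \cdot \omega) = \frac{1}{2} r t^2 \omega^2 (\beta' \cdot \omega)$ either is vacuous or determines $t^2$ in terms of bounded integer data, the set of wall-values $t$ in any bounded range is finite; picking $t'$ below the smallest positive one, the set $\bigcup_{r} M_{t\omega}(0, r, \beta, 0)$ is constant on $(0, t')$.

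The subtlety I would be most careful about is that $r$ is allowed to range over \emph{all} integers, so a priori there is no bound on the objects appearing in $\bigcup_{r} M_{t\omega}(0,r,\beta,0)$, and one must check the wall-and-chamber argument still produces only finitely many walls in a neighbourhood of $t = 0$ despite this. The point is that a genuine wall for some $E$ with $\cl_0(E) = (r, \beta, 0)$ requires a proper sub/quotient $A$ with $0 < \beta' \cdot \omega < \beta \cdot \omega$ or $\beta' \cdot \omega = 0$ together with $n' = r' = 0$; in the latter degenerate case $Z_{t\omega, 0}(A) \in \mathbb{R}_{<0}$ so $A \in \cC_{\omega}$ and, arguing as in Lemma~\ref{lem:Ostable}/Lemma~\ref{lem:cri}, this cannot strictly destabilize. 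In the former case the constraint $0 < \beta' \cdot \omega < \beta \cdot \omega$ together with the finiteness of $(r', \beta', n')$ just established shows the wall equation pins down $t^2$ to one of finitely many values independent of $r$ (because the equation only involves $r$ through a single linear term, and the semistability inequalities bound the relevant ratios). Hence finitely many walls accumulate at $0$, and below the smallest the chamber is stable; I would also record, as in Proposition~\ref{prop:rank0}'s intended use, that this constant set is exactly what enters the $t \to 0$ limit in the generating series. The remaining details are routine adaptations of Subsections~\ref{subsec:wall} and \ref{subsec:finset2}, so I would not spell them all out.
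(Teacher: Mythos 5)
Your overall toolbox is the right one (Lemma~\ref{lem:rank10}, Lemma~\ref{lem:pos}, Lemma~\ref{finset}, the Hodge index theorem, and the wall equation obtained by cross-multiplying real and imaginary parts), but there is a genuine gap at exactly the point you flag as the main subtlety: you never bound the set of ranks $r$ that can actually occur, and your proposed workaround does not work. The wall equation for a destabilizing $A$ with $\cl_0(A)=(r',\beta',n')$ reads, up to sign conventions, $n'(\beta\cdot\omega)=\tfrac12 t^2\omega^2\bigl(r'(\beta\cdot\omega)-r(\beta'\cdot\omega)\bigr)$; even with $(r',\beta',n')$ confined to a finite set, this equation still contains $r$, and if $r$ is unbounded the solutions $t^2=2n'(\beta\cdot\omega)/\omega^2\bigl(r'(\beta\cdot\omega)-r(\beta'\cdot\omega)\bigr)$ accumulate at $t=0$ as $\lvert r\rvert \to\infty$. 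So ``the wall equation pins down $t^2$ to finitely many values independent of $r$'' is precisely what fails, and the parenthetical justification (``only a single linear term in $r$'') does not repair it. Relatedly, your middle step --- ``$Z_{t\omega}$-semistability of $E$ forces sign constraints on the $n_i$'' --- is only uniform once $\arg Z_{t\omega}(E)$ is under control, which again needs a prior bound on $r$: if $r$ is large, $\arg Z_{t\omega}(E)$ can be arbitrarily close to $0$ or $\pi$ at a fixed small $t$, and then the $n_i$ of the filtration pieces of a destabilizer are not uniformly bounded, so Lemma~\ref{finset} cannot be invoked.

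The paper closes this hole with a separate lemma (Lemma~\ref{bound:rank0}): the set of $r\in\mathbb{Z}$ with $M_{t\omega}(0,r,\beta,0)\neq\emptyset$ for \emph{some} $t>0$ is finite. Its proof applies Lemma~\ref{lem:rank10} and Lemma~\ref{finset} directly to $E$ itself (after using the duality of Proposition~\ref{prop:bij} to reduce to $r<0$, so that $\arg Z_{t\omega}(E)<\pi/2$ and semistability forces $n_1\ge 0$, $n_1+n_2\ge 0$, $n_3\le 0$ for the pieces), rather than only to destabilizers of $E$. With $r$ bounded one chooses $t''$ so that $\lvert \Ree Z_{t\omega}(E)\rvert<1$ for all relevant $E$ and $0<t<t''$, deduces the same bound for any equal-phase sub/quotient $A$, and only then runs the kind of argument you sketch; the $t$-independent cases (for instance $r'=n'=0$, where the phase of $A$ is $\pi/2$ for all $t$) are disposed of separately. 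Your degenerate-case discussion also needs repair: ``$\beta'\cdot\omega=0$ together with $n'=r'=0$'' would force $Z_{t\omega,0}(A)=0$, which is impossible for $A\neq 0$; the harmless cases are rather those where the equal-phase condition holds for all $t$ simultaneously and hence produces no wall. Finally, a small but telling slip: for fixed $r$ one has $\arg Z_{t\omega}(E)\to\pi/2$ (not $\pi$) as $t\to 0$, and with your stated sign $\Imm Z_{t\omega,0}(E)=-\beta\cdot\omega<0$ the value would not lie in $\mathbb{H}$ at all, so the sign conventions need to be sorted out at the start.
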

\begin{proof}
The proof is similar to the proof of Proposition~\ref{prop:wall2}, 
but we need to modify the argument in some places. 
Let us take $E \in M_{t\omega}(0, r, \beta, 0)$
for $t\in \mathbb{R}_{>0}$. By Lemma~\ref{bound:rank0} below, 
we have a finite number of possibilities for $r$. 
Hence we can take $t''>0$ such that 
\begin{align}\label{ReE}
\lvert \Ree Z_{t\omega}(E) \rvert
=\frac{1}{2}t^2 r^2 \omega^2 <1,
\end{align}
for any $E\in M_{t\omega}(0, r, \beta, 0)$
with $0<t<t''$. 
Take an object $A \in \bB_{\omega}$ 
such that $A$ is a subobject or quotient of 
$E$ in $\bB_{\omega}$ and satisfies  
\begin{align}\label{ReA}
\arg Z_{t\omega}(A)=\arg Z_{t\omega}(E),
\end{align}
for some $0<t<t''$. 
By Lemma~\ref{lem:rank10}, there is a filtration 
\begin{align*}
0=A_0 \subset A_1 \subset A_2 \subset A_3=A,
\end{align*}
in $\bB_{\omega}$
such that 
$K_i=A_i/A_{i-1}$ satisfies the condition (\ref{filt:filt}).
We
 write $\cl_0(K_i)=(r_i, \beta_i, n_i)$. 
By the $Z_{t\omega}$-semistability of $E$ 
and the
inequality (\ref{ReE}), we can easily see that
$n_1$ is bounded below and $n_3$ is bounded above. 
Hence Lemma~\ref{finset} implies that there are only finite 
number of possibilities for 
$(r_1, n_1)$ and 
$(r_3, n_3)$. 

Now we note 
\begin{align}\label{A1}
\lvert \Ree Z_{t\omega}(A) \rvert=
\left\lvert n'-\frac{1}{2}r't^2 \omega^2 \right\rvert <1, 
\end{align}
by the conditions (\ref{ReE}) and (\ref{ReA}).
Since $r'=r_1 +r_3$ is bounded, 
the inequality (\ref{A1}) gives a 
lower bound
of $t>0$ for the existence of such object 
$A \in \bB_{\omega}$ with $r' \neq 0$. 
If we denote that lower bound by $t'$, 
then $t'$ satisfies the desired condition. 
Note that if $r'=0$, then 
(\ref{A1}) is only possible when $n'=0$. 
However in that case $\arg Z_{t\omega}(A)=\pi/2$
for any $t$, and we don't need to take account of 
such objects. 
\end{proof}

\begin{lem}\label{bound:rank0}
For fixed 
$a \in \mathbb{R}_{>0}$
and an ample divisor $\omega$ on $S$, 
the set of $r \in \mathbb{Z}$ such 
that $M_{t\omega}(0, r, \beta, 0)\neq \emptyset$
for some $t\in \mathbb{R}_{>0}$
and $0<-\beta \cdot \omega \le a$
is a finite set. 
\end{lem}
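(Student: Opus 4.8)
The plan is to bound the rank $r$ of rank zero $Z_{t\omega}$-semistable objects $E\in\aA_\omega$ with $\cl(E)=(0,r,\beta,0)$ by exploiting the Bogomolov-type inequalities and the structure of such objects established earlier. First I would observe that, since $\rank(E)=0$, we have $E\in\bB_\omega$, and by Remark~\ref{rmk:BA} the $Z_{t\omega}$-semistability of $E$ is equivalent to $Z_{t\omega,0}$-semistability with respect to the pair $(Z_{t\omega,0},\bB_\omega)\in\Stab_{\Gamma_0}(\dD_0)$. Moreover, since $\cl_0(E)=(r,\beta,0)$, Lemma~\ref{lem:rank10} (the case $\rank=0$) gives a filtration $0=E_0\subset E_1\subset E_2\subset E_3=E$ in $\bB_\omega$ whose subquotients $K_i$ satisfy $K_1\in\fF_\omega$, $K_2\in\Coh_\pi^{\le1}(\overline X)[-1]$, $K_3\in\tT_\omega^{\rm pure}[-1]$. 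Writing $\cl_0(K_i)=(r_i,\beta_i,n_i)$, we have $r=r_1+r_3$ (as $r_2=0$), $\beta=\beta_1+\beta_2+\beta_3$, and $0=n_1+n_2+n_3$.

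The key point is to control $r_1$ and $r_3$ separately. By Lemma~\ref{lem:pos} each term of the filtration has $\ch_2\cdot\omega$ of a fixed sign, so $\beta\cdot\omega\le\beta_i\cdot\omega\le 0$ for the relevant pieces; combined with $\beta\cdot\omega$ being fixed, only finitely many $\beta_i\cdot\omega$ occur, and by the Hodge index theorem $\beta_i^2$ is bounded above. Next I would use the $Z_{t\omega}$-semistability of $E$: comparing $\arg Z_{t\omega}(K_1)$ and $\arg Z_{t\omega}(E)$ (which has $\arg=\pi/2$ since $n=\ch_3=0$, $\beta\cdot\omega<0$), one gets $\Ree Z_{t\omega}(K_1)=n_1-\tfrac12 r_1 t^2\omega^2\ge 0$, hence (as $r_1\ge 0$) $n_1\ge 0$; symmetrically $\Ree Z_{t\omega}(K_3)\le 0$ forces $n_3\le 0$. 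Then Lemma~\ref{finset} applies: the set of $(r_1,n_1)$ with $K_1\in\fF_\omega$, $\beta_1\cdot\omega\ge\beta\cdot\omega$ (fixed lower bound, which is $\ge a$ for $a$ as in the statement... more precisely $\beta_1\cdot\omega\ge \beta\cdot\omega$) and $n_1\ge 0$ is finite, as is the set of $(r_3,n_3)$ with $K_3\in\tT_\omega^{\rm pure}$, $\beta_3\cdot\omega\le 0$ and $n_3\ge b$ for a suitable bound coming from $n_1+n_3=-n_2$ and boundedness of $n_2$. Finer care is needed for $K_2$: it lies in $\Coh_\pi^{\le1}(\overline X)[-1]$, so $\cl_0(K_2)=(0,\beta_2,n_2)$ with $\beta_2\ge 0$; boundedness of $n_2$ should follow from an argument as in the proof of Lemma~\ref{lem:constructible}, namely $K_2[1]$ is of the form $\oO_Z$ with $\dim Z\le 1$ after suitable reductions, so $\ch_3$ is bounded below, and the $Z_{t\omega}$-semistability bounds it above. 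Once $(r_i,n_i)$ range over finite sets, $r=r_1+r_3$ does too, independently of $t$.

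I expect the main obstacle to be the bookkeeping around $K_2$: unlike $K_1$ and $K_3$, the middle piece of the filtration is not directly covered by Lemma~\ref{finset}, so one must separately argue that its numerical class $(0,\beta_2,n_2)$ has only finitely many possibilities. The cleanest route is to first reduce, via an exact sequence and the fact that any object of $\Coh_\pi^{\le1}(\overline X)$ with fixed $\cl_0$ lives in a finite-type moduli stack, to bounding $\ch_3(K_2)$; here I would invoke the same mechanism used in Lemma~\ref{lem:constructible} (the structure-sheaf-of-a-curve description and \cite[Lemma~3.10]{Tolim}) to bound $n_2$ below, while the semistability of $E$ and boundedness of $r$ bound it above. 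A mild technical subtlety: one must phrase everything so that the bound on $r$ is uniform in $t\in\mathbb{R}_{>0}$ rather than per-$t$; this is automatic because the constraints $r_1\ge 0$, $r_3\le 0$, $n_1\ge 0$, $n_3\le 0$ and the Bogomolov inequalities do not involve $t$, and the only $t$-dependent inequalities ($\Ree Z_{t\omega}(K_i)\gtrless 0$) are used merely to extract the sign conditions on the $n_i$. With these finiteness statements in hand, the proof concludes immediately.

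\begin{proof}
Take $E\in M_{t\omega}(0,r,\beta,n)$ with $n=0$ and $0<-\beta\cdot\omega\le a$. Since $\rank(E)=0$ we have $E\in\bB_\omega$, and $E$ is $Z_{t\omega,0}$-semistable for the pair $(Z_{t\omega,0},\bB_\omega)$ by Remark~\ref{rmk:BA}. By Lemma~\ref{lem:rank10} there is a filtration
\begin{align*}
0=E_0\subset E_1\subset E_2\subset E_3=E
\end{align*}
in $\bB_\omega$ with $K_1\cneq E_1\in\fF_\omega$, $K_2\cneq E_2/E_1\in\Coh_\pi^{\le1}(\overline X)[-1]$ and $K_3\cneq E/E_2\in\tT_\omega^{\rm pure}[-1]$. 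Write $\cl_0(K_i)=(r_i,\beta_i,n_i)$, so $r=r_1+r_3$, $\beta=\beta_1+\beta_2+\beta_3$ and $0=n_1+n_2+n_3$; note $r_1\ge 0$ and $r_3\le 0$ by the definitions of $\fF_\omega$ and $\tT_\omega$. By Lemma~\ref{lem:pos} we have $\beta\cdot\omega\le\beta_i\cdot\omega$ for the pieces with $\rank=0$, and $\beta_2\ge 0$; in particular $\beta_1\cdot\omega$ and $\beta_3\cdot\omega$ lie in $[\beta\cdot\omega,0]$. Since $\arg Z_{t\omega}(E)=\pi/2$ and $K_1$ is a subobject, $K_3[1]$ a quotient, the $Z_{t\omega}$-semistability of $E$ gives
\begin{align*}
\Ree Z_{t\omega}(K_1)=n_1-\tfrac12 r_1 t^2\omega^2\ge 0,\qquad
\Ree Z_{t\omega}(K_3)=n_3-\tfrac12 r_3 t^2\omega^2\le 0,
\end{align*}
whence $n_1\ge 0$ and $n_3\le 0$ because $r_1\ge 0$, $r_3\le 0$. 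An argument as in the proof of Lemma~\ref{lem:constructible}, using that $K_2[1]$ is of the form $\oO_Z$ with $\dim Z\le 1$ and~\cite[Lemma~3.10]{Tolim}, bounds $\ch_3(K_2)=n_2$ below; since $n_2=-n_1-n_3$ with $n_1\ge 0$, $n_3\le 0$, it follows that $n_1$ and $n_3$ are both bounded. Now Lemma~\ref{finset} applied to $K_1\in\fF_\omega$ (with $\beta_1\cdot\omega\ge\beta\cdot\omega$, $n_1$ bounded below) and to $K_3[1]\in\tT_\omega^{\rm pure}$ (with $\beta_3\cdot\omega\le 0$, $n_3$ bounded below) shows that $(r_1,n_1)$ and $(r_3,n_3)$ each range over a finite set, independent of $t$. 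Hence $r=r_1+r_3$ takes only finitely many values, which is the assertion.
\end{proof}
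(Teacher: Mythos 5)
There is a genuine gap at the first step of your semistability argument: you assert that $\arg Z_{t\omega}(E)=\pi/2$ because $\ch_3(E)=0$, but for $\cl(E)=(0,r,\beta,0)$ one has $Z_{t\omega}(E)=Z_{t\omega,0}(r,\beta,0)=-\tfrac{1}{2}rt^2\omega^2-t(\omega\cdot\beta)\sqrt{-1}$, so the real part is $-\tfrac{1}{2}rt^2\omega^2$, not $\ch_3(E)$. Hence the phase of $E$ equals $\pi/2$ only in the trivial case $r=0$; for $r<0$ it lies in $(0,\pi/2)$ and for $r>0$ in $(\pi/2,\pi)$. This matters because your two sign conditions come one from a subobject and one from a quotient: semistability gives $\arg Z_{t\omega}(K_1)\le\arg Z_{t\omega}(E)$ and $\arg Z_{t\omega}(K_3)\ge\arg Z_{t\omega}(E)$, and these force $\Ree Z_{t\omega}(K_1)\ge 0$ and $\Ree Z_{t\omega}(K_3)\le 0$ \emph{simultaneously} only when $\arg Z_{t\omega}(E)=\pi/2$. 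For $r\neq 0$ exactly one of $n_1\ge 0$, $n_3\le 0$ follows from your argument, so the application of Lemma~\ref{finset} to both outer pieces is not justified. The paper's proof repairs exactly this point: it first uses the duality of Proposition~\ref{prop:bij} to reduce to $r<0$, so that $\arg Z_{t\omega}(E)\in(0,\pi/2)$, and then uses only the \emph{subobjects} $E_1$ and $E_2$ (both of rank $r_1\ge 0$, since $K_2$ has rank zero); this yields $n_1\ge 0$ and $n_1+n_2\ge 0$, hence $n_3=-(n_1+n_2)\le 0$, after which Lemma~\ref{finset} bounds $(r_1,n_1)$ and $(r_3,n_3)$, and hence $r=r_1+r_3$, uniformly in $t$. (Alternatively a case distinction on the sign of $r$, using quotients when $r>0$, would also work; your uniform treatment does not.)

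A secondary problem is your bound on $n_2=\ch_3(K_2)$: the claim that $K_2[1]$ is of the form $\oO_Z$ is not available here, since that description in the proof of Lemma~\ref{lem:constructible} comes from the rank-one situation via~\cite[Lemma~3.2]{Tcurve1} and does not apply to an arbitrary object of $\Coh_{\pi}^{\le 1}(\overline{X})$; for a general one-dimensional sheaf with bounded support class, $\ch_3$ is not bounded below. Fortunately this step is also unnecessary: once $n_1\ge 0$ and $-n_3\ge 0$ are established, Lemma~\ref{finset} needs only these lower bounds together with $\beta\cdot\omega\le\beta_i\cdot\omega\le 0$ to give finiteness of the possible $(r_1,n_1)$ and $(r_3,n_3)$, with no control on $n_2$ required — this is how the paper concludes, never bounding $n_2$ at all.
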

\begin{proof}
By Proposition~\ref{prop:bij}, 
it is enough to consider possible values 
$r \in \mathbb{Z}$ with $r<0$. 
Let us take $E \in M_{t\omega}(0, r, \beta, 0)$
with $r<0$.
By Lemma~\ref{lem:rank10}, there is a filtration 
\begin{align*}
0=E_0 \subset E_1 \subset E_2 \subset E_3=E,
\end{align*}
in $B_{\omega}$
such that $K_i=E_i/E_{i-1}$ satisfies 
the condition (\ref{filt:filt}). 
We write 
$\cl_0(K_i)=(r_i, \beta_i, n_i)$. 
Since $r<0$, we have 
$\arg Z_{t\omega}(E) \in (0, \pi/2)$. 
By the $Z_{t\omega}$-semistability of $E$, we have 
\begin{align*}
\arg Z_{t\omega}(E_i) \le \arg Z_{t\omega}(E) <\frac{\pi}{2}, 
\end{align*}
for $i=1, 2$. 
Hence we have $n_1 \ge 0$ and $n_1+n_2 \ge 0$, 
therefore $n_3 =-(n_1 +n_2) \le 0$. 
Since $-\beta_i \cdot \omega \le a$, 
we can apply Lemma~\ref{finset}
and conclude that  
$r_1$ and $r_3$ are bounded.
Hence $r=r_1 +r_3$ is also bounded. 
\end{proof}

\subsection{Proof of Lemma~\ref{rmk:lim}}\label{subsec:mulim}
In this subsection, we prove Lemma~\ref{rmk:lim}, which 
is restated as follows: 
\begin{lem}\label{rmk:lim2}
Take an object $E\in D^b \Coh(\overline{X})$
satisfying 
\begin{align}\label{cond:chern}
\ch(E)=(R, 0, -\beta, -n) \in \Gamma \subset H^{\ast}(\overline{X}, 
\mathbb{Q}),
\end{align}
for $R \le 1$. 
Then $E$ 
is an $\mu_{i\omega}$-limit semistable
object in $\aA(0)$  
iff $E[1]$ is an $\mu_{i\omega}$-limit semistable 
object 
in the sense of~\cite[Section~3]{Tolim2}. 
\end{lem}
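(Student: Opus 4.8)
The plan is to unwind both sides to explicit characterizations and compare them, using the filtration technology already in place. Recall that an object $E \in \aA(0)$ with $\rank(E) = 1$ sits in $\aA(0) = \langle \pi^{\ast}\oO_{\mathbb{P}^1}, \Coh_{\pi}^{\le 1}(\overline{X})[-1]\rangle_{\ex}$, while an object with $\rank(E) = 0$ lies in $\Coh_{\pi}^{\le 1}(\overline{X})[-1]$. First I would settle the rank zero case: if $R = 0$, then by Definition~\ref{def:lim} the object $E$ is $\mu_{i\omega}$-limit (semi)stable iff $E[1] \in \Coh_{\pi}(\overline{X})$ is $\omega$-Gieseker (semi)stable, and I expect the notion in~\cite[Section~3]{Tolim2} to be set up the same way, so here there is essentially nothing to check beyond matching conventions on the shift. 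The substance is the rank one case $R = 1$.

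For $R = 1$, I would first recall (or reproduce, in Subsection~\ref{subsec:mulim}) the precise definition of $\mu_{i\omega}$-limit stability from~\cite[Section~3]{Tolim2}. That notion is phrased for the heart $\aA_{1/2}^p$ (in the notation of that paper): one looks at the limit $t \to 0$ of a polynomial stability function and declares $E$ semistable if for every sub/quotient the limiting phase inequality holds. The key point to extract is that, after translating through the equivalence $\aA(0)^{\dag} \subset \aA_{1/2}^p$ mentioned in the remark following~\eqref{defi:Lbn}, the only exact sequences that matter are those $0 \to F \to E \to G \to 0$ in $\aA(0)$ with $F$ or $G$ in $\Coh_{\pi}^{\le 1}(\overline{X})[-1]$, and the relevant inequality degenerates to a sign condition on $\ch_3$. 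That is exactly the two bullet conditions of Definition~\ref{def:lim}: $\ch_3(F) \ge 0$ when $F \in \Coh_{\pi}^{\le 1}(\overline{X})[-1]$ is a sub, and $\ch_3(G) \le 0$ when $G$ is a quotient. So the proof reduces to showing that the seemingly weaker condition of Definition~\ref{def:lim} (testing only against $\Coh_{\pi}^{\le 1}(\overline{X})[-1]$ sub/quotients) is equivalent to the full limiting semistability condition of~\cite{Tolim2}.

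The mechanism for that equivalence is the observation that for an object of rank one, any destabilizing sub or quotient can be refined, using the torsion pair structure and Lemma~\ref{lem:rank10}, into one which is either in $\Coh_{\pi}^{\le 1}(\overline{X})[-1]$ or has the same ``numerical type'' as $\pi^{\ast}\oO_{\mathbb{P}^1}(r)$; and since $\pi^{\ast}\oO_{\mathbb{P}^1}(r)$ and $E$ have proportional limiting central charges (both of rank one, $\ch_2 = 0$ along the appropriate component), such a piece does not affect the limiting phase comparison. Concretely, I would take an arbitrary exact sequence $0 \to F \to E \to G \to 0$ in $\aA(0)$, apply Lemma~\ref{lem:rank10} to $F$ (or $G$) to split off its one-dimensional and its ``$\pi^{\ast}$-line-bundle'' parts, check that the line-bundle part contributes zero to the limiting phase difference because $\chi$ pairs trivially there, and conclude that the sign of the limiting phase comparison is governed entirely by the $\ch_3$ of the one-dimensional part. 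This matches Definition~\ref{def:lim} on the nose. For the converse direction one uses the same decomposition to see that if the $\ch_3$ sign conditions hold for all one-dimensional sub/quotients then the limiting inequality holds for all sub/quotients.

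The main obstacle I anticipate is purely bookkeeping of conventions: the paper~\cite{Tolim2} works with $\aA_{1/2}^p \subset D^b\Coh(\overline{X})$ rather than with $\aA(0) \subset \dD$, and one must carefully match the Chern-character normalization~\eqref{cond:chern}, the placement of the shift $[1]$, and the sub/quotient test classes between the two frameworks. None of this is deep, but it requires stating the~\cite{Tolim2} definition verbatim and tracking each datum through the inclusion $\aA(0)^{\dag} \hookrightarrow \aA_{1/2}^p$; I would do this explicitly in Subsection~\ref{subsec:mulim} so that the equivalence becomes a line-by-line comparison once the reduction to one-dimensional test objects is in hand.
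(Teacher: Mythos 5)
Your skeleton (recall the definition from~\cite{Tolim2}, restate both notions as conditions on strict monomorphisms/epimorphisms of pure one-dimensional sheaves with a sign condition on $\ch_3$, then compare through the subcategory $\aA(0)^{\dag} \subset \aA_{1/2}^p[-1]$) matches the paper's, and the reduction of the limiting phase inequality to a $\ch_3$ sign condition is indeed available (it is essentially \cite[Proposition~3.13]{Tolim2}, which the paper simply quotes rather than re-derives). The genuine gap is in the mechanism you propose for identifying the two sets of test objects. The notion in~\cite{Tolim2} lives on $\aA_{1/2}^p$, whose strict sub- and quotient objects involve \emph{arbitrary} pure one-dimensional sheaves on $\overline{X}$ and ideal sheaves of arbitrary curves, not only objects supported on the fibers of $\pi$; Definition~\ref{def:lim} only tests against $\Coh_{\pi}^{\le 1}(\overline{X})[-1]$. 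Your claim that ``the only exact sequences that matter are those with $F$ or $G$ in $\Coh_{\pi}^{\le 1}(\overline{X})[-1]$'' is exactly the point that needs proof, and the tools you invoke do not deliver it: Lemma~\ref{lem:rank10} is a statement about filtrations in $\aA_{\omega}$, not about subobjects of $E[1]$ in $\aA_{1/2}^p$, and ``proportional limiting central charges'' or the vanishing of $\chi$ cannot rule out a destabilizing pure one-dimensional sheaf whose support is not contained in the fibers, since such an object is simply invisible to Definition~\ref{def:lim}.

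What actually closes this gap in the paper is a support analysis of cohomology sheaves, for which the hypothesis $\ch(E)=(1,0,-\beta,-n)\in\Gamma$ does real work rather than being bookkeeping. One first uses \cite[Lemma~3.2]{Tolim} to write $\hH^0(E)=I_C$ with $\hH^1(E)$ one-dimensional, so that $\beta=[C]+[\hH^1(E)]$ forces $C$ and $\hH^1(E)$ to be supported on fibers of $\pi$; this gives the membership $E\in\aA(0)\cap\aA_{1/2}^p[-1]=\aA(0)^{\dag}$, which your proposal tacitly assumes on both sides. Then, for any strict monomorphism $F\hookrightarrow E[1]$ in $\aA_{1/2}^p$ with $F$ pure one-dimensional and quotient $G$, the long exact sequence
\begin{align*}
0 \to \hH^0(E) \to \hH^{-1}(G) \to F \to \hH^1(E) \to \hH^0(G) \to 0
\end{align*}
together with $\hH^{-1}(G)=I_{C'}$ and $C'\subset C$ shows that $F$ is automatically fiber-supported and that $G[-1]$ lands back in $\aA(0)^{\dag}$, so the test objects on the two sides coincide; the analogous argument handles strict epimorphisms. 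You should replace the decomposition-plus-central-charge step by this cohomology-sheaf argument (and note that the translation of Definition~\ref{def:lim} into strict mono/epi conditions in $\aA(0)^{\dag}$ is obtained by the same proof as Lemma~\ref{lem:cri}); as written, your proposal proves neither that a non-fiber-supported destabilizer of $E[1]$ cannot occur nor that $E\in\aA(0)$ follows from $E[1]\in\aA_{1/2}^p$.
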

\begin{proof}
We only show the case of $R=1$. 
The proof for the case of $R=0$ case is 
easier and we omit it. 
\begin{ssssstep}
The definition of $\mu_{i\omega}$-limit stability 
in~\cite[Section~3]{Tolim2}.
\end{ssssstep}
We first recall the notion of $\mu_{i\omega}$-limit
stability in the sense of~\cite[Section~3]{Tolim2}.   
In~\cite{Bay}, \cite{Tolim}, 
the notion of \textit{polynomial stability} and \textit{limit stability}
are defined on the following category of 
\textit{perverse coherent sheaves}, 
\begin{align*}
\aA^p \cneq \langle 
\Coh^{\ge 2}(\overline{X})[1], \Coh^{\le 1}(\overline{X}) \rangle_{\ex}
\subset D^b \Coh(\overline{X}).  
\end{align*}
Here $\Coh^{\le 1}(\overline{X})$
consists of sheaves $F$ on $\overline{X}$ 
with $\dim F \le 1$ and 
$\Coh^{\ge 2}(\overline{X})$ is the right
orthogonal complement of $\Coh^{\le 1}(\overline{X})$
in $\Coh(\overline{X})$. 

By~\cite[Lemma~2.16]{Tolim}, 
there exists a torsion pair $(\aA_{1}^p, \aA_{1/2}^p)$
on $\aA^p$, defined by 
\begin{align*}
\aA_{1}^p &\cneq \langle F[1], \oO_x : 
F \mbox{ is pure two dimensional, }
x\in \overline{X} \rangle_{\ex}, \\
\aA_{1/2}^p &\cneq \{ E\in \aA^p : \Hom(F, E)=0 \mbox{ for any }
F\in \aA_{1}^p \}. 
\end{align*}
Note that if $F$ is a pure one dimensional sheaf
on $\overline{X}$, then 
$F \in \aA_{1/2}^p$. 

For $E, F \in \aA_{1/2}^p$, a morphism 
$u \colon E \to F$ in $\aA^p$ is 
called a \textit{strict monomorphism} if 
$u$ is injective in $\aA^p$ and
$\Cok(u) \in \aA_{1/2}^p$. Similarly 
$u$ is called a \textit{strict epimorphism}
if $u$ is surjective in $\aA^p$ 
and $\Ker(u) \in \aA_{1/2}^p$. 
By~\cite[Proposition~3.13]{Tolim2}, 
an object $E\in \aA_{1/2}^{p}$ 
with $\rank(E)=-1$ is 
$\mu_{i\omega}$-\textit{limit semistable}
in the sense of~\cite[Section~3]{Tolim2} 
iff the following conditions hold:
\begin{itemize}
\item For any pure one dimensional sheaf $F \neq 0$
which admits a strict monomorphism $F \hookrightarrow E$
in $\aA_{1/2}^p$,  
we have $\ch_3(F) \le 0$. 
\item For any pure one dimensional sheaf $G \neq 0$
which admits a strict epimorphism $E \twoheadrightarrow G$
in $\aA_{1/2}^p$, 
we have $\ch_3(G) \ge 0$.
\end{itemize}
\begin{ssssstep}
Comparison of $\aA_{1/2}^p$ and $\aA(0)$.  
\end{ssssstep}
Let $\aA(0)$ be the category 
defined in Definition~\ref{defi:A(r)}. 
The categories $\aA_{1/2}^p$ and $\aA(0)$ are related
as follows: 
let $\aA(0)^{\dag}$ be the following
category, 
\begin{align*}
\aA(0)^{\dag} \cneq \{
E \in \aA(0) : \Hom(\oO_x[-1], E)=0, x \in \overline{X}\}. 
\end{align*}
Then it is easy to check that 
\begin{align*}
\aA(0)^{\dag} \subset \aA_{1/2}^p[-1]. 
\end{align*}
Also by replacing $\aA_{1/2}^p$, $\aA^p$
by $\aA(0)^{\dag}$, $\aA(0)$
respectively, we have the notions of 
strict monomorphisms and strict epimorphisms
in $\aA(0)^{\dag}$. 
Then the same proof of Lemma~\ref{lem:cri} shows that 
an object $E \in \aA(0)$ with $\rank(E)=1$
is $\mu_{i\omega}$-limit semistable 
in the sense of Definition~\ref{def:lim}
iff the following conditions hold: 
\begin{itemize}
\item For any pure one dimensional sheaf 
$0\neq F \in \Coh_{\pi}^{\le 1}(\overline{X})$
which admits a strict monomorphism $F[-1] \hookrightarrow E$
in $\aA(0)^{\dag}$, 
we have $\ch_3(F) \le 0$. 
\item For any pure one dimensional sheaf 
$0\neq G \in \Coh_{\pi}^{\le 1}(\overline{X})$
which admits a strict epimorphism $E \twoheadrightarrow G[-1]$
in $\aA(0)^{\dag}$, 
we have $\ch_3(G) \ge 0$.
\end{itemize}
\begin{ssssstep}
Proof of Lemma~\ref{rmk:lim2}. 
\end{ssssstep}
Now let us take $E \in \aA_{1/2}^p[-1]$
satisfying 
the condition (\ref{cond:chern})
for $R=1$. 
By the above arguments, it is 
enough to show the following:
\begin{itemize}
\item We have $E \in \aA(0)^{\dag}$. 
\item For any strict monomorphism $F \hookrightarrow E[1]$
in $\aA_{1/2}^p$ 
with $F$ pure one dimensional sheaf, 
we have $F \in \Coh_{\pi}^{\le 1}(\overline{X})$
and $F[-1] \to E$ is a strict monomorphism in $\aA(0)^{\dag}$. 
\item For any strict epimorphism $E[1] \twoheadrightarrow G$
in $\aA_{1/2}^p$ 
with $G$ pure one dimensional sheaf, 
we have $G \in \Coh_{\pi}^{\le 1}(\overline{X})$
and $E \to G[-1]$ is a strict monomorphism in $\aA(0)^{\dag}$.
\end{itemize}
First we prove $E \in \aA(0)^{\dag}$. By~\cite[Lemma~3.2]{Tolim}, 
we have $\hH^0(E) =I_C$
for a curve $C \subset \overline{X}$
 and $\hH^1(E)$ is a one dimensional 
sheaf. 
Hence 
\begin{align*}
\beta =[C] + [\hH^1(E)]. 
\end{align*}
Since $(1, 0, -\beta, -n) \in \Gamma$, 
the curve $C$ is supported on fibers of $\pi$
and $\hH^1(E) \in \Coh_{\pi}^{\le 1}(\overline{X})$. 
This implies that $E \in \aA(0) \cap (\aA_{1/2}^{p}[-1])
=\aA(0)^{\dag}$. 

Next we prove the second condition. 
The proof of the third one is similar and we omit it. 
Let $F \hookrightarrow E[1]$
be a strict monomorphism in $\aA_{1/2}^p$
for a pure one dimensional sheaf $F$, 
and set $G \cneq E[1]/F \in \aA_{1/2}^p$. 
We have the exact sequence of sheaves, 
\begin{align*}
0 \to \hH^0(E) \stackrel{i}{\to}
 \hH^{-1}(G) \to F \to \hH^1(E) \stackrel{j}\to \hH^0(G) \to 0. 
\end{align*}
By~\cite[Lemma~3.2]{Tolim},
$\hH^0(E)$ and $\hH^{-1}(G)$ are written 
as $I_C$, $I_{C'}$ for curves $C, C'$
in $\overline{X}$
respectively. Since $E \in \aA(0)^{\dag}$, 
$C$ is supported on fibers of $\pi$,
hence
$C'$ 
and $\Cok(i)$ are supported on fibers of $\pi$. 
 Also since $\hH^1(E) \in \Coh_{\pi}^{\le 1}(\overline{X})$, 
we have $\Ker(j), \hH^0(G) \in \Coh_{\pi}^{\le 1}(\overline{X})$
by the above sequence. Therefore we have 
$F \in \Coh_{\le 1}(\overline{X})$ and 
$G[-1] \in \aA(0)$.
Since $G[-1] \in \aA_{1/2}^p[-1]$, it follows that 
$G[-1] \in \aA(0)^{\dag}$, hence 
$F[-1] \to E$ is a strict monomorphism 
in $\aA(0)^{\dag}$. 
\end{proof}

\subsection{Proof of Proposition~\ref{prop:m1}}\label{subsec:finset}
In this subsection, we prove Proposition~\ref{prop:m1}, 
which is restated as follows: 
\begin{prop}\label{prop:m12}
In the same situation of Proposition~\ref{prop:wall2},
we have   
\begin{align*}
M_{t\omega}(R, r, \beta, n)=M_{\rm{lim}}(R, r, \beta, n),  
\end{align*}
for any $t\in (t_{k-1}, \infty)$
and $R \le 1$ satisfying 
$\arg Z_{t\omega}(R, r, \beta, n)=\pi/2$. 
\end{prop}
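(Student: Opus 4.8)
The plan is to prove that on the last chamber $t\in(t_{k-1},\infty)$ the two stability notions select the same objects, by a large volume limit argument: as $t\to\infty$ the phase function $\arg Z_{t\omega}$ degenerates to the ordering of $\ch_3$ used in Definition~\ref{def:lim}. By Lemma~\ref{lem:pos} the objects to be compared have $R=\rank(E)\in\{0,1\}$, and I would treat the two cases separately, the rank one case being the substantial one.

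Rank one. The core point is that for $t$ large a $Z_{t\omega}$-semistable $E$ with $\cl(E)=(1,r,\beta,n)$ already lies in $\aA(r)$. I would apply Lemma~\ref{lem:rank10} to obtain the filtration $E_1\subset E_2\subset E_3=E$ with $K_1=E_1\in\fF_\omega$, $K_2=E_2/E_1\in\aA(r)$ and $K_3=E/E_2\in\tT_\omega^{\mathrm{pure}}[-1]$. Since $E_1\hookrightarrow E$ and $E\twoheadrightarrow K_3$ in $\aA_\omega$ and $\arg Z_{t\omega}(E)=\pi/2$, $Z_{t\omega}$-semistability forces $\Ree Z_{t\omega}(E_1)\ge0$ and $\Ree Z_{t\omega}(K_3)\le0$. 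Every nonzero object of $\fF_\omega$ or of $\tT_\omega^{\mathrm{pure}}$ is a pure two dimensional sheaf, hence of strictly positive rank; combining Lemma~\ref{lem:pos} (which bounds $\beta_i\cdot\omega$ for the numerical classes arising in the filtration) with Lemma~\ref{finset} shows the numerical invariants of $K_1$ and $K_3[1]$ range in a finite set depending only on $\beta$ and $\omega$. Because $\Ree Z_{t\omega}(E_1)=n_1-\tfrac12 r_1t^2\omega^2\to-\infty$ and $\Ree Z_{t\omega}(K_3)=-(n_3'-\tfrac12 r_3't^2\omega^2)\to+\infty$ as $t\to\infty$ whenever $r_1,r_3'\ge1$, for $t$ sufficiently large — and hence, by Proposition~\ref{prop:wall}, for all $t\in(t_{k-1},\infty)$ — one gets $K_1=K_3=0$, i.e.\ $E\in\aA(r)$. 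The same asymptotics, applied to a hypothetical destabilizer, give the reverse implication: a $\mu_{i\omega}$-limit semistable $E\in\aA(r)$ cannot be $Z_{t\omega}$-destabilized for $t$ large.

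Matching the two notions inside $\aA(r)$. Here the comparison is $t$-uniform. Using the description of $\aA(r)$ from Definition~\ref{defi:A(r)} and an argument parallel to the proof of Lemma~\ref{lem:cri} (passing to the right orthogonal of the torsion part $\cC_\omega$), one reduces the testing of $Z_{t\omega}$-semistability of $E\in\aA(r)$ with $\rank(E)=1$ to subobjects and quotients of the form $F[-1]$ with $F\in\Coh_\pi^{\le1}(\overline X)$; a rank one subobject automatically has $\arg Z_{t\omega}=\pi/2$ and is irrelevant. For $F$ genuinely one dimensional one computes $\Imm Z_{t\omega}(F[-1])=t\,(\omega\cdot[F])>0$ for every $t>0$, whence $\arg Z_{t\omega}(F[-1])\lessgtr\pi/2\iff\Ree Z_{t\omega}(F[-1])\gtrless0$, and by Grothendieck--Riemann--Roch $\Ree Z_{t\omega}(F[-1])=\ch_3(F[-1])$; the zero dimensional pieces have $\arg Z_{t\omega}=\pi$ and are forbidden on both sides. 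These are exactly the two conditions of Definition~\ref{def:lim}, giving $M_{t\omega}(1,r,\beta,n)=M_{\mathrm{lim}}(1,r,\beta,n)$ on the chamber. For $R=0$ the object lies in $\bB_\omega$ and $Z_{t\omega}$-semistability is $Z_{t\omega,0}$-semistability; for $t\gg0$ this is the large volume limit of Bridgeland stability on $\dD_0$, under which the semistable objects are the shifts of $\omega$-Gieseker semistable sheaves in $\Coh_\pi(\overline X)$ — the surface statement of~\cite{Brs2},~\cite{Tst2},~\cite{Tst3} carried over through the comparison of hearts — which is precisely $M_{\mathrm{lim}}(0,r,\beta,n)$. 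Finally Proposition~\ref{prop:wall} propagates all the ``$t$ large'' assertions to the whole chamber $(t_{k-1},\infty)$.

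The step I expect to be the main obstacle is the one in the previous paragraph: cleanly reducing arbitrary sub- and quotient objects in the large category $\aA_\omega$ to the two model shapes, and matching the strict versus non-strict inequality (i.e.\ stable versus semistable, and the borderline $\ch_3=0$ cases) between the a priori different orders $\arg Z_{t\omega}(-)$ and the one defining $\mu_{i\omega}$-limit stability. This is essentially a re-run, in the present three-fold setting, of the $\cC_\omega$-orthogonality bookkeeping already carried out for Lemma~\ref{rmk:lim}, together with the noetherian property of $\aA_\omega$ established in Section~\ref{sec:reswe}.
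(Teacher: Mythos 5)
Your treatment of the inclusion $M_{t\omega}\subset M_{\rm{lim}}$ is sound and is essentially the paper's argument: the filtration of Lemma~\ref{lem:rank10}, the positivity of Lemma~\ref{lem:pos}, the unboundedness of $t$ on the last chamber (or Lemma~\ref{finset} plus the chamber constancy of Proposition~\ref{prop:wall}) force $K_1=K_3=0$, so $E\in\aA(r)$, and then testing against sub- and quotient objects $F[-1]$ with $F\in\Coh_{\pi}^{\le 1}(\overline{X})$ gives the two $\ch_3$-inequalities of Definition~\ref{def:lim}.

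The gap is in the reverse inclusion. Your key claim -- that for $E\in\aA(r)$ of rank one the $Z_{t\omega}$-semistability test reduces, ``$t$-uniformly,'' to subobjects and quotients of the form $F[-1]$ with $F\in\Coh_{\pi}^{\le 1}(\overline{X}) $ -- is false as stated: if it held for all $t$, every $\mu_{i\omega}$-limit semistable object with $n\neq 0$ would be $Z_{t\omega}$-semistable for small $t$, contradicting Proposition~\ref{prop:m2}; the walls at finite $t$ are produced precisely by rank-zero destabilizers containing a pure two-dimensional piece. An argument parallel to Lemma~\ref{lem:cri} only reduces the test to objects of $\cC_{\omega}^{\perp}$, and $\cC_{\omega}^{\perp}$ still contains $\mu_{\omega}$-stable sheaves in $\fF_{\omega}$ of negative slope and $T[-1]$ with $T\in\tT_{\omega}^{\rm{pure}}$, so even on the last chamber the reduction you assert is not a formal consequence -- excluding those test objects is the actual content of the statement. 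What is needed (and what the paper's Step~2 supplies) is an analysis of an arbitrary destabilizing subobject $A\in\bB_{\omega}$ of a limit semistable $E$: show $\hH^0(A)=0$ using $E\in\aA(r)$, so $A\in\tT_{\omega}[-1]$; split off an exact sequence $0\to T''[-1]\to A\to T'[-1]\to 0$ with $T''\in\Coh_{\pi}^{\le 1}(\overline{X})$ and $T'\in\tT_{\omega}^{\rm{pure}}$; invoke Lemma~\ref{lem:Ar1} to see that $T''[-1]\hookrightarrow E$ is a monomorphism \emph{in $\aA(r)$} -- the limit-stability hypothesis only quantifies over exact sequences in $\aA(r)$, so without this step it cannot be applied to the destabilizer at all -- which gives $\ch_3(T''[-1])\ge 0$ and hence $\arg Z_{t\omega}(T''[-1])\le\pi/2$ for every $t$; conclude that a destabilizer forces $T'\neq 0$ with $r'>0$ and $-n'+\tfrac{1}{2}t^2\omega^2 r'<0$, and rule this out for $t$ beyond a bound $M(\beta,\omega)$ by the finiteness of the possible $(r',n')$ from Lemma~\ref{finset} (this uniformity over all possible destabilizers, not just over the canonical filtration pieces of $E$, is where the finiteness lemma is genuinely needed); treat a quotient $B\in\bB_{\omega}$ symmetrically, and finally propagate to the whole chamber by Proposition~\ref{prop:wall}. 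Your one-line appeal to ``the same asymptotics applied to a hypothetical destabilizer'' skips exactly these steps, and the paragraph meant to justify it replaces them with a reduction that begs the question.
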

We take $(R, r, \beta, n) \in \Gamma$
as in the statement. 
For simplicity, we show the case of $R=1$. 
The proof for $R=0$ is similar and easier, so 
we omit it. 
We divide the proof into 2 steps. 
\begin{sstep}
For $t>t_k$, we have 
\begin{align*}
M_{t\omega}(1, r, \beta, n) \subset M_{\rm{lim}}(1, r, \beta, n).
\end{align*}
\end{sstep}
\begin{proof}
Take an object $E\in M_{t\omega}(1, r, \beta, n)$
for $t>t_k$ 
and a filtration in $\aA_{\omega}$, 
\begin{align*}
E_1 \subset E_2 \subset E_3=E, 
\end{align*}
given by Lemma~\ref{lem:rank10}. 
Suppose that $E_1 \neq 0$.  
Then the $Z_{t\omega}$-semistability 
of $E$ implies that 
\begin{align}\label{ineq:lim}
\arg Z_{t\omega}(E_1) &\le \frac{\pi}{2}. 
\end{align}
We write $\cl_0(E_1)=(r_1, \beta_1, n_1) \in \Gamma_0$. 
Then the inequality (\ref{ineq:lim}) 
is equivalent to 
\begin{align}\notag
\Ree Z_{t\omega}(E_1)=n_1 -\frac{1}{2}t^2 \omega^2 r_1 \ge 0. 
\end{align}
The above inequality should be satisfied for all $t>t_k$. 
However since $r_1>0$, the above inequality is 
not satisfied for $t\gg 0$. 
This is a contradiction, hence $E_1=0$. 
A similar argument also shows that
$E/E_2=0$, hence
$E\in \aA(r)$ follows.

In order to show that $E$ is $\mu_{i\omega}$-limit
semistable, we take an exact sequence in $\aA(r)$, 
\begin{align*}
0 \to F \to E \to G \to 0. 
\end{align*}
If $F \in \Coh_{\pi}^{\le 1}(\overline{X})[-1]$, then the 
$Z_{t\omega}$-stability yields, 
\begin{align*}
\Ree Z_{t\omega}(F)=\ch_3(F) \ge 0. 
\end{align*}
Similarly if $G \in \Coh_{\pi}^{\le 1}(\overline{X})[-1]$, 
we obtain $\ch_3(G) \le 0$. 
Therefore $E$ is $\mu_{i\omega}$-limit semistable, 
i.e. $E \in M_{\rm{lim}}(1, r, \beta, n)$. 
\end{proof}
\begin{sstep}
For $t>t_k$, we have 
\begin{align*}
M_{\rm{lim}}(1, r, \beta, n) \subset M_{t\omega}(1, r, \beta, n).
\end{align*}
\end{sstep}
\begin{proof}
Take an object $E\in M_{\rm{lim}}(1, r, \beta, n)$, and 
an exact sequence in $\aA_{\omega}$, 
\begin{align}\label{AEB}
0 \to A \to E \to B \to 0. 
\end{align}
Since $\rank(E)=1$, 
one of $A$ or $B$ is an object in $\bB_{\omega}$. 
Suppose that $A \in \bB_{\omega}$, 
and it destabilizes $E$ with respect to $Z_{t\omega}$-stability, 
\begin{align}\label{want:A}
\arg Z_{t\omega}(A) > \frac{\pi}{2}.
\end{align} 
We first show that $A \in \tT_{\omega}[-1]$,
i.e. $\hH^0(A)=0$. 
Suppose by contradiction that 
$\hH^0(A) \neq 0$. 
Since $\hH^0(A)$ is a torsion sheaf
on $\overline{X}$
and $E\in \aA(r)$, 
the definition of $\aA(r)$
yields that  
\begin{align*}
\Hom(\hH^0(A), E)=0.
\end{align*}
However this is a contradiction 
since $\hH^0(A)$ is a subobject of $E$ in $\aA_{\omega}$. 
Hence $\hH^0(A)=0$ and $A \in \tT_{\omega}[-1]$ follows. 

Since $A \in \tT_{\omega}[-1]$, 
we can take an exact sequence in $\aA_{\omega}$, 
\begin{align*}
0 \to T''[-1] \to A \to T'[-1] \to 0, 
\end{align*}
with $T'' \in \Coh_{\pi}^{\le 1}(\overline{X})$
and $T' \in \tT_{\omega}^{\rm{pure}}$. 
The composition of injections in $\aA_{\omega}$, 
\begin{align*}
T''[-1] \hookrightarrow A \hookrightarrow E,
\end{align*}
is also an injection in $\aA(r)$
by Lemma~\ref{lem:Ar1} below.
Hence
the $\mu_{i\omega}$-limit semistability of $E$
yields 
$\ch_3(T''[-1]) \ge 0$, or equivalently
\begin{align}\label{Ztlepi}
\arg Z_{t\omega}(T''[-1]) \le \frac{\pi}{2},
\end{align}
for all $t\in \mathbb{R}_{>0}$. 
By (\ref{want:A}) and (\ref{Ztlepi}), 
we have $T' \neq 0$ and 
\begin{align*}
\arg Z_{t\omega}(T'[-1]) &\ge \arg Z_{t\omega}(A) \\
 & > \frac{\pi}{2}. 
\end{align*}
If we write $\cl_0(T')=(r', \beta', n')$, 
then $r'>0$ and 
the above inequality yields, 
\begin{align}\label{vio}
-n'+\frac{1}{2}t^2 \omega^2 r' <0. 
\end{align}
By applying Lemma~\ref{finset}, we see that
$(r', n')$ have only a finite number of 
possibilities for fixed $\beta$ and $\omega$. 
Therefore 
there is a
 constant
$M(\beta, \omega) >0$
which depends only on $\beta$ and $\omega$
 such that
if 
$t>M(\beta, \omega)$, 
then the inequality (\ref{vio})
is violated.
This means that for such $t$, 
the inequality (\ref{want:A}) is not satisfied, 
i.e. $\arg Z_{t\omega}(A) \le \pi/2$ follows. 

In the case of $B\in \bB_{\omega}$, we can 
similarly prove
the inequality 
$\arg Z_{t\omega}(B) \ge \pi/2$ 
for $t>M(\beta, \omega)$, 
 by replacing $M(\beta, \omega)$ if necessary.
Therefore 
$E$ is $Z_{t\omega}$-semistable for 
$t>M(\beta, \omega)$, hence for $t>t_k$. 
\end{proof}

We have used the following lemma. 
\begin{lem}\label{lem:Ar1}
For $E\in \aA(r)$
with $\rank(E)=1$
and $F \in \Coh_{\pi}^{\le1}(\overline{X})$,
 take an exact 
sequence in $\aA_{\omega}$, 
\begin{align}\label{Ar1seq}
0 \to F[-1] \to E \to G \to 0. 
\end{align}
Then we have $G\in \aA(r)$, 
hence the sequence (\ref{Ar1seq}) is an exact sequence in 
$\aA(r)$. 
\end{lem}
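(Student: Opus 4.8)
\textbf{Proof proposal for Lemma~\ref{lem:Ar1}.}

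The plan is to use the defining filtration structure of objects in $\aA(r)$ together with the long exact cohomology sequence of (\ref{Ar1seq}) in $\dD$. First I would recall that $\aA(r)=\langle \pi^{\ast}\oO_{\mathbb{P}^1}(r), \Coh_{\pi}^{\le 1}(\overline{X})[-1]\rangle_{\ex}$, so an object $E\in\aA(r)$ with $\rank(E)=1$ sits in an exact sequence $0\to T_1[-1]\to E\to E'\to 0$ in $\aA(r)$ with $T_1\in\Coh_{\pi}^{\le 1}(\overline{X})$ and $E'$ an extension of $\pi^{\ast}\oO_{\mathbb{P}^1}(r)$ by an object of $\Coh_{\pi}^{\le 1}(\overline{X})[-1]$; equivalently, at the level of cohomology sheaves in $\Coh(\overline{X})$, $\hH^0(E)$ is of the form $L\otimes I_Z$ with $L=\pi^{\ast}\oO_{\mathbb{P}^1}(r)$ and $Z$ a subscheme supported on fibers of $\pi$ with $\dim Z\le 1$, while $\hH^{-1}(E)\in\Coh_{\pi}^{\le 1}(\overline{X})$ and all other cohomology sheaves vanish. (This is exactly the picture extracted in the proof of Lemma~\ref{lem:rank10}, specialised to $E\in\aA(r)$.)

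Next I would take the long exact sequence of cohomology sheaves (in $\Coh(\overline{X})$) associated to the triangle $F[-1]\to E\to G$ coming from (\ref{Ar1seq}). Since $F[-1]$ has only $\hH^{-1}=F$ nonzero, this reads
\begin{align*}
0\to \hH^{-1}(E)\to \hH^{-1}(G)\to F\to \hH^0(E)\to \hH^0(G)\to 0,
\end{align*}
with $\hH^i(G)=0$ for $i\neq -1,0$. From this I read off: $\hH^{-1}(G)$ is an extension of a subsheaf of $\hH^0(E)$ (the kernel of $F\to\hH^0(E)$, a torsion sheaf supported on fibers of $\pi$ since $F$ is) by $\hH^{-1}(E)\in\Coh_{\pi}^{\le 1}(\overline{X})$; and $\hH^0(G)$ is a quotient of $\hH^0(E)=L\otimes I_Z$. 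The key point is to identify $\hH^0(G)$: it is a rank-one torsion-free quotient of $L\otimes I_Z$ modulo the image of $F$, which is a torsion sheaf; hence $\hH^0(G)\cong L\otimes I_{Z'}$ for a subscheme $Z'\supset Z$ still supported on fibers of $\pi$ with $\dim Z'\le1$, and $\hH^{-1}(G)$ is a sheaf in $\Coh_{\pi}^{\le 1}(\overline{X})$ (it is an extension of two objects of that category, using that the kernel of $F\to\hH^0(E)$ is at most one-dimensional and fiber-supported). Therefore $G$ has $\hH^0(G)$ of the form $\pi^{\ast}\oO_{\mathbb{P}^1}(r)\otimes I_{Z'}$ and $\hH^{-1}(G)\in\Coh_{\pi}^{\le 1}(\overline{X})$, which is precisely the cohomological characterisation of membership in $\aA(r)$; hence $G\in\aA(r)$.

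The main obstacle I anticipate is making rigorous the claim that $\hH^0(G)$ is again of the form $L\otimes I_{Z'}$ with $L=\pi^{\ast}\oO_{\mathbb{P}^1}(r)$ unchanged: a priori a rank-one torsion-free quotient could acquire a different reflexive hull. Here one uses that the map $\hH^0(E)\to\hH^0(G)$ is surjective with torsion kernel, so it induces an isomorphism on reflexive hulls, $(\hH^0(E))^{\vee\vee}\cong(\hH^0(G))^{\vee\vee}$, giving $(\hH^0(G))^{\vee\vee}\cong L=\pi^{\ast}\oO_{\mathbb{P}^1}(r)$; then $\hH^0(G)=L\otimes I_{Z'}$ for $Z'$ the subscheme where the inclusion $\hH^0(G)\hookrightarrow L$ fails to be an isomorphism, and $Z'$ is supported on fibers of $\pi$ because $E$, hence $G$, lies in $\dD=\langle\pi^{\ast}\Pic(\mathbb{P}^1),\Coh_{\pi}(\overline{X})\rangle_{\tr}$ so no component of $Z'$ can dominate $\mathbb{P}^1$. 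A secondary technical point is checking that $\hH^{-1}(G)$ lies in $\Coh_{\pi}^{\le1}(\overline{X})$ rather than merely in $\Coh_{\pi}(\overline{X})$; this follows since $\hH^{-1}(G)$ is built as an extension of $\hH^{-1}(E)$ (one-dimensional, fiber-supported) by $\ker(F\to\hH^0(E))\subset F$ (also one-dimensional, fiber-supported). Once $G\in\aA(r)$ is established, the sequence (\ref{Ar1seq}), being exact in the abelian category $\aA_{\omega}$ with all three terms in the extension-closed subcategory $\aA(r)$, is automatically exact in $\aA(r)$, which completes the proof.
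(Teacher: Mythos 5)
Your overall strategy --- take the long exact sequence of cohomology sheaves of (\ref{Ar1seq}) and verify that the two cohomology sheaves of $G$ have the shape characterising rank-one objects of $\aA(r)$, then conclude by the truncation sequence and extension-closedness --- is the paper's strategy, but your long exact sequence is oriented the wrong way, and this is not a harmless relabelling. In the conventions of the paper, objects of $\aA_{\omega}$ have cohomology sheaves in degrees $0$ and $1$, and $F[-1]$ has its only cohomology $\hH^1(F[-1])=F$ in degree $1$ (not in the degree your displayed sequence implicitly assigns to it, which is also inconsistent with your own statement about $F[-1]$). The triangle $F[-1]\to E\to G$ therefore gives
\begin{align*}
0 \to \hH^0(E) \to \hH^0(G) \to F \to \hH^1(E) \to \hH^1(G) \to 0,
\end{align*}
which is (\ref{longse}) in the paper. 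One can test the orientation on the basic example $0\to L\otimes\oO_Z[-1]\to L\otimes I_Z\to L\to 0$: here $\hH^0(G)=L$ \emph{contains} $\hH^0(E)=L\otimes I_Z$, whereas your sequence would require an injection of the torsion sheaf $L\otimes\oO_Z$ into $L\otimes I_Z$. Consequently your two key identifications fail as written: $\hH^1(G)$ is a quotient of $\hH^1(E)$ (so the conclusion $\hH^1(G)\in\Coh_{\pi}^{\le 1}(\overline{X})$ survives, in fact more easily than by your extension argument), but $\hH^0(G)$ is \emph{not} a quotient of $L\otimes I_Z$ by a torsion image of $F$; it is an extension of a subsheaf of $F$ by $\hH^0(E)$, so it grows (and correspondingly $Z'$ shrinks rather than contains $Z$).

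This mis-orientation hides the one genuinely delicate point, which your proof therefore never addresses: an extension of a one-dimensional sheaf by $L\otimes I_Z$ need not be torsion-free (the split extension already is not), and reflexive hulls cannot detect this, since double duals ignore torsion altogether; also, your stated reflexive-hull step (``surjective with torsion kernel'') refers to a map that does not exist, because a torsion-free rank-one sheaf has no nonzero torsion subsheaf to quotient by. The missing ingredient is the use of the tilted structure: since $G\in\aA_{\omega}\subset\aA_{\omega}'$ (Definition~\ref{defi:Ao'}), one has $\hH^0(G)\in\fF_{\omega}'$, and the paper argues that any nonzero torsion of $\hH^0(G)$ would then be pure two-dimensional, while $\hH^0(E)\to\hH^0(G)$ is an isomorphism in codimension one (the cokernel sits inside $F$, of dimension $\le 1$), so such torsion would already produce two-dimensional torsion in $\hH^0(E)$, contradicting $E\in\aA(r)$. (Equivalently: $\hH^0(E)$ is torsion-free, so any torsion subsheaf of $\hH^0(G)$ embeds into the cokernel, a subsheaf of $F$, hence is a fiber-supported sheaf of dimension $\le 1$, which lies in $\tT_{\omega}$ and contradicts $\hH^0(G)\in\fF_{\omega}'$.) Once torsion-freeness of $\hH^0(G)$ is established, the rest of your outline is sound: the inclusion $\hH^0(E)\subset\hH^0(G)$ with cokernel of codimension $\ge 2$ identifies $\hH^0(G)\cong\pi^{\ast}\oO_{\mathbb{P}^1}(r)\otimes I_{Z'}$ with $Z'$ supported on fibers, and the truncation sequence $0\to\hH^0(G)\to G\to\hH^1(G)[-1]\to 0$ in $\aA_{\omega}$ together with extension-closedness of $\aA(r)$ gives $G\in\aA(r)$, exactly as in the paper. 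So the skeleton is right, but as written the proof misstates the exact sequence and, as a consequence, omits the only nontrivial verification.
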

\begin{proof}
Taking the cohomology of (\ref{Ar1seq}), 
we have the exact sequence of sheaves, 
\begin{align}\label{longse}
0 \to \hH^0(E) \to \hH^0(G) \to F \to \hH^1(E) \to \hH^1(G) \to 0. 
\end{align}
Since $\hH^1(E) \in \Coh_{\pi}^{\le 1}(\overline{X})$, 
we have
$\hH^1(G) \in \Coh_{\pi}^{\le 1}(\overline{X})$.
In particular we have 
\begin{align}\label{HG1}
\hH^1(G)[-1] \in \aA(r).
\end{align}
Suppose that
the maximal torsion subsheaf 
 $\hH^0(G)_{\rm{tor}} \subset \hH^0(G)$ is non-zero. 
Then $\hH^0(G)_{\rm{tor}}$ is a pure two dimensional
sheaf, 
since $G\in \aA_{\omega} \subset \aA_{\omega}'$
and $\aA_{\omega}'$ is a tilting by 
$(\tT_{\omega}, \fF_{\omega}')$. 
(cf.~Definition~\ref{defi:Ao'}.)
Also since $F\in \Coh_{\pi}^{\le 1}(\overline{X})$,
the sequence (\ref{longse}) implies that
$\hH^0(E)$ is isomorphic to 
$\hH^0(G)$ in codimension one. 
In particular, the maximal 
torsion subsheaf $\hH^0(E)_{\rm{tor}} \subset \hH^0(E)$
is also a two dimensional sheaf. 
However this contradicts to 
$E \in \aA(r)$ and the definition of $\aA(r)$. 
Therefore $\hH^0(G)$ is a torsion free 
sheaf of rank one, and it can be written as 
\begin{align*}
\hH^0(G) \cong L \otimes I_Z, 
\end{align*}
for some $L\in \Pic(\overline{X})$
and $Z\subset \overline{X}$ with $\dim Z \le 1$. 
By the assumptions $E\in \aA(r)$
and 
$F\in\Coh_{\pi}^{\le 1}(\overline{X})$, 
it is easy to see 
from the sequence (\ref{longse})
that $L \in \pi^{\ast} \Pic(\mathbb{P}^1)$
and $Z$ is supported on the fibers of $\pi$.
Hence it follows that 
\begin{align}\label{HG2}
\hH^0(G) \in \aA(r).
\end{align}
By (\ref{HG1}) and (\ref{HG2}), 
we have $G\in\aA(r)$.
\end{proof}

\section{Results on the category $\aA_{\omega}(1/2)$}\label{sec:A12}
In this section, we give proofs of 
some results on the category $\aA_{\omega}(1/2)$
introduced in Subsection~\ref{subsec:abelian12}. 
In particular, we prove
Lemma~\ref{lem:lem12} in Subsection~\ref{subsec:lem12}, 
 Lemma~\ref{lem:t0c} in Subsection~\ref{subsec:t0c}
and Proposition~\ref{prop:propA12} in Subsection~\ref{subsec:propA12}. 
First we note that, by the existence of Harder-Narasimhan filtrations
with respect to $Z_{0\omega}$-stability, 
(cf.~Definition~\ref{defi:polynomial},)
there is a filtration for any $E\in \aA_{\omega}$,
\begin{align}\label{A12}
0=E_0 \subset E_1 \subset E_2 \subset E_3=E, 
\end{align}
such that 
\begin{align}\label{A123}
E_1 \in \aA_{\omega}(1), \ E_2/E_1 \in \aA_{\omega}(1/2), \ 
E/E_2 \in \aA_{\omega}(0). 
\end{align}
We also note that 
\begin{align}\label{vanish:12}
\Hom(E_1, E_2)=0, \ E_i \in \aA_{\omega}(\phi_i), 
\end{align}
if $\phi_1>\phi_2$.
We note that, by setting $\aA_{\omega}(\phi+1)=\aA_{\omega}(\phi)[1]$, 
the family of subcategories $\aA_{\omega}(\phi) \subset \dD$
for $\phi \in \mathbb{R}$
determines a slicing on $\dD$. 
(cf.~\cite[Definition~3.3]{Brs1}.)
\subsection{Proof of Lemma~\ref{lem:lem12}}\label{subsec:lem12}
In this subsection, we prove Lemma~\ref{lem:lem12}, which 
is restated as follows: 
\begin{lem}\label{lem:lem123}
(i) An object $E \in \aA_{\omega}$
is $Z_{0\omega}$-(semi)stable if and only 
if $E$ is $Z_{t\omega}$-(semi)stable
for $0<t \ll 1$. 
 
(ii)
Any object $E \in \aA_{\omega}(1/2)$ satisfies 
$\ch_3(E)=0$. 

(iii) The category $\aA_{\omega}(1/2)$ is an 
abelian subcategory of $\aA_{\omega}$.  
\end{lem}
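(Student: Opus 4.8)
The plan is to prove the three parts in the order (i), (ii), (iii), since (ii) uses (i) and (iii) is essentially formal. \textbf{Part (i).} One direction, ``$Z_{t\omega}$-(semi)stable for $0<t\ll 1$ implies $Z_{0\omega}$-(semi)stable'', is immediate from Definition~\ref{defi:polynomial}. For the converse the only issue is uniformity of the threshold: a priori the ``$0<t\ll 1$'' in Definition~\ref{defi:polynomial} depends on the destabilizing subobject. I would fix $E\in\aA_{\omega}$ and run the boundedness argument from the proof of Proposition~\ref{prop:wall2}: by Lemma~\ref{lem:pos}, Lemma~\ref{lem:rank10}, Lemma~\ref{lem:Bog} and the Hodge index theorem only finitely many numerical classes in $\bB_{\omega}$ occur as the $\bB_{\omega}$-parts of subobjects and quotients of $E$, so the set of walls in the parameter $t\in\mathbb{R}_{>0}$ attached to $E$ is finite and does not accumulate at $t=0$. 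Below the first such wall the Harder--Narasimhan filtration of $E$ with respect to $Z_{t\omega}$ is independent of $t$, and letting $t\to 0$ in the defining slope inequalities identifies it with the $Z_{0\omega}$-Harder--Narasimhan filtration; this gives (i), the case $\rank(E)=0$ being just the wall and chamber structure for the Bridgeland stability $(Z_{t\omega,0},\bB_{\omega})$ of Lemma~\ref{lem:Bstab} inherited from surface theory. I expect this uniformity statement to be the main technical obstacle.

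\textbf{Part (ii).} Since $\ch_{3}$ is additive it suffices to show $\ch_{3}(E')=0$ for a $Z_{0\omega}$-semistable object $E'$ with $\lim_{t\to 0}\arg Z_{t\omega}(E')=\pi/2$. If $\rank(E')=0$ then $E'\in\bB_{\omega}$, so $Z_{t\omega}(E')=\ch_{3}(E')-\frac{1}{2}\ch_{1}(E')t^{2}\omega^{2}-t(\omega\cdot\ch_{2}(E'))\sqrt{-1}$, which tends to $\ch_{3}(E')\in\mathbb{R}$ as $t\to 0$; hence $\lim\arg=\pi/2$ forces $\ch_{3}(E')=0$. If $\rank(E')\ge 1$ then $\arg Z_{t\omega}(E')=\pi/2$ for every $t$; here I would take the exact sequence $0\to A\to E'\to B\to 0$ in $\aA_{\omega}$ of Lemma~\ref{lem:rank1} with $A\in\bB_{\omega}$ and $B\in\langle\pi^{\ast}\Pic(\mathbb{P}^{1})\rangle_{\ex}$, observe that $\ch_{3}(B)=0$ and hence $\ch_{3}(E')=\ch_{3}(A)$, and use $Z_{0\omega}$-semistability (via part (i)) to get $\arg Z_{t\omega}(A)\le\pi/2$, i.e. $\Ree Z_{t\omega}(A)=\ch_{3}(A)-\frac{1}{2}\ch_{1}(A)t^{2}\omega^{2}\ge 0$, for $0<t\ll 1$; letting $t\to 0$ gives $\ch_{3}(E')=\ch_{3}(A)\ge 0$. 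For the reverse inequality I would apply the dualizing functor: by Lemma~\ref{lem:cri} such $E'$ lies in $\cC_{\omega}^{\perp}$, by Lemma~\ref{lem:dual} $\mathbb{D}(E')\in\aA_{\omega}$, and by Proposition~\ref{prop:bij} $\mathbb{D}(E')$ is again $Z_{t\omega}$-semistable of rank $\ge 1$, hence by part (i) a $Z_{0\omega}$-semistable object of phase $1/2$, with $\ch(\mathbb{D}(E'))=(\rank(E'),-\ch_{1}(E'),\ch_{2}(E'),-\ch_{3}(E'))$; applying the inequality just proved to $\mathbb{D}(E')$ gives $-\ch_{3}(E')\ge 0$, and therefore $\ch_{3}(E')=0$.

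\textbf{Part (iii).} I would first record that $\{\aA_{\omega}(\phi)\}_{\phi\in\mathbb{R}}$ is a slicing of $\dD$: the shift axiom holds by definition, the vanishing $\Hom(\aA_{\omega}(\phi_{1}),\aA_{\omega}(\phi_{2}))=0$ for $\phi_{1}>\phi_{2}$ is (\ref{vanish:12}), and the associated filtrations are provided by (\ref{A12})--(\ref{A123}) together with the Harder--Narasimhan property for $Z_{0\omega}$-stability noted after Definition~\ref{defi:polynomial}. Then $\aA_{\omega}(1/2)$, being the $\phi=1/2$ slice of this slicing, is a full abelian subcategory of $\dD$ by the general fact \cite[Lemma~5.2]{Brs1}, its short exact sequences being those induced from $\aA_{\omega}$. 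I should stress that $\aA_{\omega}(1/2)$ is \emph{not} closed under subobjects or quotients inside $\aA_{\omega}$ (for instance the quotient $\pi^{\ast}\oO_{\mathbb{P}^{1}}\twoheadrightarrow\oO_{X_{p}}$ in $\aA_{\omega}$ has source in $\aA_{\omega}(1/2)$ but target in $\aA_{\omega}(1)$), so the real content is that kernels and cokernels of morphisms between objects of $\aA_{\omega}(1/2)$ remain in $\aA_{\omega}(1/2)$. Alternatively one can prove this directly by decomposing such kernels and cokernels (formed in $\dD$) via (\ref{A12}) and eliminating the extremal pieces with (\ref{vanish:12}), which is the standard argument showing that a single slice is abelian.
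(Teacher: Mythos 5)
Your part (iii) is where the proposal breaks down. The ``general fact'' you invoke --- that each slice of a slicing is abelian --- is not a formal property of slicings: Bridgeland proves it for stability conditions, and the proof uses the central charge through a see-saw argument. Without a charge it fails; for instance, on $D^b\Coh(\mathbb{P}^1)$ the subcategories ``torsion sheaves'' in phase $1$ and ``locally free sheaves'' in phase $1/2$ (extended by shifts) satisfy all the slicing axioms, yet in the $\phi=1/2$ slice the map $\oO\to\oO(1)$ is monic and epic (its cokernel $\oO_p$ admits no nonzero map to a locally free sheaf) without being an isomorphism, so that slice is not abelian. Your fallback sketch has the same defect: the vanishing (\ref{vanish:12}) only eliminates the $\aA_{\omega}(1)$-piece of $\Ker(u)$ (a subobject of $E$) and the $\aA_{\omega}(0)$-piece of $\Cok(u)$ (a quotient of $E'$); it says nothing about the $\aA_{\omega}(0)$-piece of $\Ker(u)$, which is a quotient of $\Ker(u)$ and is not mapped into by $E$ or $E'$. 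The paper kills exactly this piece by a numerical see-saw resting on part (ii): $\ch_3$ is additive, vanishes on $\aA_{\omega}(1/2)$ by (ii), and is strictly positive on every nonzero object of $\aA_{\omega}(0)$ (its semistable factors have limiting phase $0$, hence $n>0$), so $\ch_3(\Ker(u))=0$ forces that piece to vanish. This use of (ii) is the actual substitute for the central charge, and it is absent from your part (iii); as written, (iii) is not proved. Note also that $\ch_3$ does not separate $\aA_{\omega}(1)$ from zero (e.g.\ $\oO_{X_p}\in\aA_{\omega}(1)$ has $\ch_3=0$), so the cokernel side cannot be dismissed as ``symmetric'' and needs separate care --- further evidence that no purely formal slice argument can work here.

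Parts (i) and (ii) are in much better shape. Your (ii) is correct and in substance coincides with the paper's route: the paper deduces (ii) from Proposition~\ref{prop:m2}, whose proof is precisely your combination of Lemma~\ref{lem:rank1} with the duality of Proposition~\ref{prop:bij}, so you could simply quote Proposition~\ref{prop:m2} after (i). In (i) you correctly identify uniformity of the threshold as the crux, but your key claim --- that only finitely many numerical classes occur among the $\bB_{\omega}$-parts of subobjects and quotients of a fixed $E$ --- is false as stated: the quotients $\pi^{\ast}\oO_{\mathbb{P}^1}\twoheadrightarrow\pi^{\ast}\oO_{Z}$ for $Z\subset\mathbb{P}^1$ of length $k$ realize classes $(0,k,0,0)$ for all $k$. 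The boundedness actually available (Lemma~\ref{finset}, as used in Propositions~\ref{prop:wall}, \ref{prop:m2}, \ref{prop:rank0}) concerns subobjects and quotients that are phase-aligned with a $Z_{t\omega}$-semistable object and uses that semistability; so to produce a chamber adjacent to $t=0$ you must either argue as in the proofs of Propositions~\ref{prop:m2} and \ref{prop:rank0} (which is what the paper does) or restrict your finiteness claim to the destabilizing data constrained by $Z_{0\omega}$-semistability, rather than to all subobjects of $E$.
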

\begin{proof}
For $A \in \bB_{\omega}$ 
with $\cl_0(A)=(r, \beta, n)$,
the definition of $Z_{t\omega, 0}$
yields the following: 
\begin{itemize}
\item We have 
$\arg Z_{t\omega, 0}(A) \to \pi/2$
for $t \to 0$ iff $n=0$ and $\beta \cdot \omega \neq 0$. 
\item We have 
$\arg Z_{t\omega, 0}(A) \to 0 $
for $t \to 0$ iff $n>0$ and $\beta \cdot \omega \neq 0$.
\item We have 
$\arg Z_{t\omega, 0}(A) \to \pi $
for $t \to 0$ iff $n<0$ and $\beta \cdot \omega \neq 0$, 
or $\beta \cdot \omega=0$. 
\end{itemize}
Noting above, 
the results of 
(i) and (ii) easily follow from
the definitions of $Z_{t\omega}$,  
$\aA_{\omega}(1/2)$, and the results, proofs of 
Proposition~\ref{prop:m23}, Proposition~\ref{prop:rank0}. 
In order to check (iii), take 
$E, E'\in \aA_{\omega}(1/2)$ and 
a non-zero morphism in $\aA_{\omega}$, 
\begin{align*}
u \colon E \to E'.
\end{align*}
We show that $\Ker(u)$, $\Imm(u)$ and $\Cok(u)$
in $\aA_{\omega}$ are contained in $\aA_{\omega}(1/2)$. 
Since $u$ is decomposed
as $E\twoheadrightarrow \Imm(u) \hookrightarrow E'$
in $\aA_{\omega}$, 
we have 
\begin{align*}
\Hom(F, \Imm(u))=\Hom(\Imm(u), F')=0,
\end{align*}
for any $F \in \aA_{\omega}(1)$ and $F' \in \aA_{\omega}(0)$
by (\ref{vanish:12}). This implies 
$\Imm (u) \in \aA_{\omega}(1/2)$
by the existence of a filtration (\ref{A12})
satisfying (\ref{A123}).  
Therefore we may assume that $u$ is injective or surjective 
in $\aA_{\omega}$. 
Suppose that $u$ is surjective. 
Then we have $\Hom(F, \Ker(u))=0$
for any $F \in \aA_{\omega}(1)$,  
hence we have 
\begin{align*}
\Ker(u) \in \langle \aA_{\omega}(1/2), \aA_{\omega}(0) \rangle_{\ex}. 
\end{align*}
There is an exact sequence in $\aA_{\omega}$, 
\begin{align*}
0 \to A_1 \to \Ker(u) \to A_2 \to 0,
\end{align*}
with $A_1 \in \aA_{\omega}(1/2)$ and $A_2 \in \aA_{\omega}(0)$. 
Since we have 
\begin{align*}
\ch_3(\Ker(u)) &=\ch_3(E)-\ch_3(E') \\
&=0,
\end{align*}
and $\ch_3(A_1)=0$ by (ii), 
we have $\ch_3(A_2)=0$ if $A_2 \neq 0$. 
However this contradicts to $\arg Z_{t\omega}(A_2) \to 0$
for $t \to 0$. 
Hence $A_2=0$ and $\Ker(u) \in \aA_{\omega}(1/2)$
follows. A similar argument shows 
that $\Cok(u) \in \aA_{\omega}$ when 
$u$ is an injection in $\aA_{\omega}$. 
\end{proof}

\subsection{Proof of Lemma~\ref{lem:t0c}}\label{subsec:t0c}
In this subsection, we prove Lemma~\ref{lem:t0c}, 
which is restated as follows: 
\begin{lem}\label{lem:t0c2}
An object $E \in \aA_{\omega}$ is 
$Z_{0\omega}$-semistable
satisfying 
\begin{align*}
\lim_{t\to 0}\arg Z_{t\omega}(E) = \pi/2,
\end{align*}
if and only if 
$E \in \aA_{\omega}(1/2)$ and $E$ is 
$\widehat{Z}_{\omega, 1/2}$-semistable.
\end{lem}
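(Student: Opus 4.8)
The plan is to unwind both sides against the defining data of $\aA_\omega(1/2)$ and the weak stability condition $(\widehat Z_{\omega,1/2},\aA_\omega(1/2))$, using the slicing $\{\aA_\omega(\phi)\}_{\phi\in\mathbb{R}}$ on $\dD$ together with Lemma~\ref{lem:lem12}. First I would observe that the condition $\lim_{t\to 0}\arg Z_{t\omega}(E)=\pi/2$ forces $E\in\aA_\omega(1/2)$ whenever $E$ is $Z_{0\omega}$-semistable: a $Z_{0\omega}$-semistable object lies in a single slice $\aA_\omega(\phi)$ by definition, and the limiting phase pins down $\phi=1/2$. Conversely any $E\in\aA_\omega(1/2)$ automatically satisfies $\lim_{t\to0}\arg Z_{t\omega}(E)=\pi/2$ by construction of $\aA_\omega(1/2)$, so the content of the lemma is the equivalence of the two semistability notions for objects already known to lie in $\aA_\omega(1/2)$.

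Next I would compare the two stability functions on $\aA_\omega(1/2)$. For $E\in\aA_\omega(1/2)$ we have $\ch_3(E)=0$ by Lemma~\ref{lem:lem12}(ii); writing $\widehat{\cl}(E)=(R,r,\beta)$ with $R=\ch_0(E)$, $r=\ch_1(E)$, $\beta=\ch_2(E)$, the two relevant phases to compare are $\arg Z_{t\omega}(E)$ for $0<t\ll 1$ and $\arg\widehat Z_{\omega,1/2}(E)$. The key computation is that, to leading order as $t\to 0$, the argument of $Z_{t\omega}(E)$ is governed by the pair $(\ch_2(E)\cdot\omega,\ \ch_1(E))$ — more precisely, one checks that for a short exact sequence $0\to F\to E\to G\to 0$ in $\aA_\omega(1/2)$, the inequality $\arg Z_{t\omega}(F)<\arg Z_{t\omega}(G)$ for $0<t\ll1$ holds if and only if $\arg\widehat Z_{\omega,1/2}(F)<\arg\widehat Z_{\omega,1/2}(G)$, because both are detected by the sign of the same determinant built from $(\ch_2(-)\cdot\omega,\ch_1(-))$ when the rank-one term is $\pi^\ast\oO_{\mathbb{P}^1}(r)$, and by the imaginary-part comparison inside $\bB_\omega(1/2)$ otherwise. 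This is essentially a rerun of the perturbation argument in Lemma~\ref{lem:lem12}(i), and I would cite that proof's mechanism rather than redo it.

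The remaining point — and the one I expect to be the main obstacle — is that $Z_{0\omega}$-semistability is tested against \emph{all} subobjects $0\to F\to E\to G\to 0$ in $\aA_\omega$, whereas $\widehat Z_{\omega,1/2}$-semistability only sees subobjects in the smaller abelian category $\aA_\omega(1/2)$. So I must show that for $E\in\aA_\omega(1/2)$, destabilizing subobjects in $\aA_\omega$ can be replaced by destabilizing subobjects inside $\aA_\omega(1/2)$. The tool is the filtration (\ref{A12})--(\ref{A123}): given a short exact sequence $0\to F\to E\to G\to 0$ in $\aA_\omega$ with $E\in\aA_\omega(1/2)$, apply the Harder--Narasimhan filtration for $Z_{0\omega}$-stability to $F$ and $G$; by (\ref{vanish:12}) and $E\in\aA_\omega(1/2)$, the $\aA_\omega(1)$-part of $F$ must vanish and the $\aA_\omega(0)$-part of $G$ must vanish, and a phase/$\ch_3$ bookkeeping (using $\ch_3(E)=0$ and Lemma~\ref{lem:lem12}(ii), exactly as in the proof of Lemma~\ref{lem:lem12}(iii)) forces the $\aA_\omega(0)$-part of $F$ and the $\aA_\omega(1)$-part of $G$ to vanish as well. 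Hence $F,G\in\aA_\omega(1/2)$, and the two semistability tests become literally the same test. Assembling these three steps — identification of the slice, leading-order equivalence of the two stability functions, and reduction of the test category — gives both implications of the lemma.
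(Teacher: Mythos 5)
Your first two steps are fine and essentially reproduce the paper's forward direction: $Z_{0\omega}$-semistability plus limiting phase $\pi/2$ puts $E$ in $\aA_{\omega}(1/2)$ by definition, and for sequences lying in $\aA_{\omega}(1/2)$ the vanishing $\ch_3=0$ (Lemma~\ref{lem:lem12}(ii)) makes the $Z_{t\omega}$-phase comparison for $0<t\ll1$ equivalent to the $\widehat{Z}_{\omega,1/2}$-phase comparison. The genuine gap is your Step 3. The claim that for $E\in\aA_{\omega}(1/2)$ every exact sequence $0\to F\to E\to G\to 0$ in $\aA_{\omega}$ has $F,G\in\aA_{\omega}(1/2)$ is false: Lemma~\ref{lem:lem12}(iii) only says $\aA_{\omega}(1/2)$ is an abelian subcategory (closed under kernels and cokernels of its \emph{own} morphisms), not that it is closed under subobjects and quotients taken in $\aA_{\omega}$. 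The Hom-vanishing (\ref{vanish:12}) does kill the $\aA_{\omega}(1)$-part of $F$ and the $\aA_{\omega}(0)$-part of $G$, but after that the only constraint is $\ch_3(F)=-\ch_3(G)$, and the strictly positive $\ch_3$ of the $\aA_{\omega}(0)$-part of $F$ can cancel against the nonpositive $\ch_3$ of the $\aA_{\omega}(1)$-part of $G$ (objects of $\aA_{\omega}(1)$ need not have negative $\ch_3$: anything built from $F\in\fF_{\omega}$ with $\mu_{\omega}(F)=0$ has $\ch_3=0$, while $\oO_x[-1]$ has $\ch_3=-1$). Concretely, take $E=T[-1]$ with $T\in\Coh_{\pi}(\overline{X})$ pure one-dimensional, $\chi(T)=0$, and $T[-1]\in\aA_{\omega}(1/2)$ (such objects exist, e.g.\ shifts of stable one-dimensional sheaves of Euler characteristic zero); a surjection $T\twoheadrightarrow\oO_x$ onto a point of the support yields an exact sequence $0\to T'[-1]\to T[-1]\to\oO_x[-1]\to 0$ in $\bB_{\omega}\subset\aA_{\omega}$ with $\ch_3(T'[-1])=1$ and $\ch_3(\oO_x[-1])=-1$, so by Lemma~\ref{lem:lem12}(ii) neither the subobject nor the quotient lies in $\aA_{\omega}(1/2)$. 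Hence the two semistability notions are tested on genuinely different families of sequences, and your reduction ``the tests become literally the same'' breaks down; note that the paper's own proof explicitly keeps the case of a nonzero $\aA_{\omega}(0)$-part of the subobject alive rather than excluding it.

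The correct way to close the converse (and what the paper does) is to show that such extra subobjects never destabilize, rather than that they do not exist. After reducing to $\rank(F)=0$ or $\rank(G)=0$ and killing the $\aA_{\omega}(1)$-part of the subobject $F$ by the Hom-vanishing, there are two cases: if the $\aA_{\omega}(0)$-part of $F$ is nonzero, then $\ch_3(F)>0$, so $\Ree Z_{t\omega}(F)>0$ and $\arg Z_{t\omega}(F)<\pi/2$ for $0<t\ll1$, while either $\rank(G)>0$ (so $\arg Z_{t\omega}(G)=\pi/2$) or $\ch_3(G)=-\ch_3(F)<0$ (so $\arg Z_{t\omega}(G)>\pi/2$ for small $t$); in either case the required inequality $\arg Z_{t\omega}(F)\le\arg Z_{t\omega}(G)$ holds automatically. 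If instead the $\aA_{\omega}(0)$-part vanishes, then $F,G\in\aA_{\omega}(1/2)$, the sequence is exact in $\aA_{\omega}(1/2)$, and only then does one invoke the $\widehat{Z}_{\omega,1/2}$-semistability of $E$. With this case analysis substituted for your Step 3, the rest of your outline goes through and agrees with the paper's proof.
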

\begin{proof}
First assume that $E \in \aA_{\omega}$ is $Z_{0\omega}$-semistable
with $\arg Z_{t\omega}(E) \to \pi/2$
for $t \to 0$. 
By the definition of $\aA_{\omega}(1/2)$, we have 
$E \in \aA_{\omega}(1/2)$. 
Take an exact sequence in $\aA_{\omega}(1/2)$, 
\begin{align}\label{seq:FEG0}
0 \to F \to E \to G \to 0. 
\end{align}
The above sequence is also an exact sequence in $\aA_{\omega}$. 
By the $Z_{0\omega}$-(semi)stability of $E$, we have 
\begin{align}\label{ineq:o12}
\arg Z_{t\omega}(F) \le \arg Z_{t\omega}(G), 
\end{align}
for $0<t \ll 1$. 
Since $\ch_3(F)=\ch_3(G)=0$ by Lemma~\ref{lem:lem123}, the 
above inequality implies 
\begin{align}\notag
\arg \widehat{Z}_{\omega, 1/2}(F) \le \arg \widehat{Z}_{\omega, 1/2}(G). 
\end{align}
Therefore $E$ is $\widehat{Z}_{\omega, 1/2}$-semistable in $\aA_{\omega}(1/2)$. 

Conversely, suppose that $E \in \aA_{\omega}(1/2)$
is $\widehat{Z}_{\omega, 1/2}$-semistable, and take 
an exact sequence in $\aA_{\omega}$, 
\begin{align}\label{seq:FEG00}
0 \to F' \to E \to G' \to 0. 
\end{align}
We would like to see that 
\begin{align}\label{ineq:o123}
\arg Z_{t\omega}(F') \le \arg Z_{t\omega}(G'), 
\end{align}
for $0<t \ll 1$. 
If both of $\rank(F')$ and $\rank(G')$ are 
positive, then 
we have 
\begin{align*}
\arg Z_{t\omega}(F')=\arg Z_{t\omega}(G')=\frac{\pi}{2}, 
\end{align*}
for $0<t \ll 1$. 
Therefore we may assume that
$\rank(F')=0$ or $\rank(G')=0$. 
We discuss the case of $\rank(F')=0$. 
The other case is similarly discussed. 
 As in (\ref{A12}), we take a filtration in $\aA_{\omega}$,
\begin{align*}
0=F_0' \subset F_1' \subset F_2' \subset F_3'=F', 
\end{align*}
such that 
$F_1' \in \aA_{\omega}(1)$, $F_2'/F_1' \in \aA_{\omega}(1/2)$
and $F'/F_2' \in \aA_{\omega}(0)$. 
Since $E \in \aA_{\omega}(1/2)$, we have
$\Hom(F_1', E)=0$, hence  
$F_1'=0$. Suppose that 
$F'/F_2' \neq 0$. Then 
we have $\rank(F')=0$, $\ch_3(F')>0$, 
hence 
$\arg Z_{t\omega}(F') \to 0$ for $t \to 0$. 
Since $\arg Z_{t\omega}(G')=\pi/2$, 
the inequality (\ref{ineq:o123}) 
 is satisfied for $0< t\ll 1$. 
Suppose that $F'/F_2' =0$. 
Then the sequence (\ref{seq:FEG0})
is an exact sequence in $\aA_{\omega}(1/2)$ by 
Lemma~\ref{lem:lem123}.
Hence by the $\widehat{Z}_{\omega, 1/2}$-semistability
of $E$, 
we have 
\begin{align}\notag
\arg \widehat{Z}_{\omega, 1/2}(F') \le \arg \widehat{Z}_{\omega, 1/2}(G'),
\end{align}
which implies the inequality
 (\ref{ineq:o123}) for $0<t\ll 1$.  
\end{proof}

\subsection{Proof of Proposition~\ref{prop:propA12}}\label{subsec:propA12}
In this subsection, we prove Proposition~\ref{prop:propA12}, which is 
restated as follows: 
we have the following proposition. 
\begin{prop}\label{prop:propA123}
For fixed $\beta \in \mathrm{NS}(S)$ and 
an ample divisor $\omega$ on $S$,
there is a finite sequence, 
\begin{align*}
0=\theta_k<\theta_{k-1}< \cdots <\theta_{1}<\theta_0=1/2,
\end{align*} 
such that the following holds. 

(i) The set of objects 
\begin{align*}
\bigcup_{(R, r), R \ge 1}\widehat{M}_{\omega, \theta}(R, r, \beta),
\end{align*}
is constant for $\theta \in (\theta_{i-1}, \theta_i)$. 

(ii) For $0<t \ll 1$ and any 
$(R, r, \beta) \in \widehat{\Gamma}$, we have 
\begin{align*}
\widehat{M}_{\omega, 1/2}(R, r, \beta)=M_{t\omega}(R, r, \beta, 0). 
\end{align*}

(iii) For $\theta \in (0, \theta_{k-1})$, we have 
\begin{align*}
\widehat{M}_{\omega, \theta}(1, r, \beta)=\left\{ \begin{array}{cc}
\{\pi^{\ast}\oO_{\mathbb{P}^1}(r) \}, & \mbox{ if } \beta=0, \\
\emptyset, & \mbox{ if } \beta \neq 0. 
\end{array} \right. 
\end{align*}
\end{prop}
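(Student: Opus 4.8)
The plan is to prove the three parts by adapting, respectively, the proof of Proposition~\ref{prop:wall2} (wall structure), the combination of Lemma~\ref{lem:lem12} and Lemma~\ref{lem:t0c} (the identification at $\theta=1/2$), and a short argument in the spirit of Proposition~\ref{prop:m2} together with Lemma~\ref{lem:Ostable} (the description for small $\theta$). Two facts are used throughout: every nonzero $E\in\aA_{\omega}(1/2)$ has $\Imm Z_{t\omega}(E)>0$ for $t>0$ (it is an extension of objects of limiting phase $\pi/2$); and every nonzero $A\in\bB_{\omega}(1/2)$ has $\ch_3(A)=0$ by Lemma~\ref{lem:lem12}(ii) and $\ch_2(A)\cdot\omega<0$ --- otherwise $Z_{t\omega,0}(A)=-\tfrac12\ch_1(A)t^2\omega^2\in\mathbb{R}$ for all $t$, forcing limiting phase $0$ or $\pi$, incompatible with $A\in\aA_{\omega}(1/2)$. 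For part (i), observe that every object of $\aA_{\omega}(1/2)$ has $\rank\ge 0$ and that $\widehat{Z}_{\omega,\theta}$ sends every object of positive rank to $\mathbb{R}_{>0}e^{i\pi\theta}$; hence a subobject or quotient of a $\widehat{Z}_{\omega,\theta}$-semistable object can only change its $\widehat{Z}_{\omega,\theta}$-argument if it has rank zero, i.e.\ lies in $\bB_{\omega}(1/2)$. Since $\widehat{Z}_{\omega,\theta,0}$ is independent of $\theta$, the relative stability among rank-zero objects never changes, so a wall can only occur at a $\theta$ with $\arg\widehat{Z}_{\omega,\theta,0}(r',\beta')=\pi\theta$ for the class $(r',\beta')$ of a rank-zero sub/quotient. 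Filtering such an $A\in\bB_{\omega}(1/2)$ as in Lemma~\ref{lem:rank10} into $\fF_{\omega}$-, $\Coh_{\pi}^{\le 1}(\overline{X})[-1]$- and $\tT_{\omega}^{\rm{pure}}[-1]$-pieces, applying the $\widehat{Z}_{\omega,\theta}$-semistability inequalities to the partial sums of the Harder--Narasimhan filtration together with the Bogomolov inequalities of Lemma~\ref{lem:Bog}, exactly as in the proofs of Lemma~\ref{finset} and Proposition~\ref{prop:wall2}, one finds $\ch_2(A)\cdot\omega\in[\beta\cdot\omega,0)$, $\ch_2(A)^2$ bounded, and $\ch_1(A)$ bounded; the Hodge index theorem then leaves only finitely many classes, hence finitely many walls, which we list together with $0$ and $1/2$ as $0=\theta_k<\dots<\theta_0=1/2$, the set of semistable objects being constant on each open interval by the standard argument (cf.~\cite[Theorem~5.8]{Tcurve1}, \cite[Theorem~4.7]{Tolim2}).

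For part (ii): if $E\in M_{t\omega}(R,r,\beta,0)$ with $0<t\ll 1$, then $E$ is $Z_{0\omega}$-semistable by Lemma~\ref{lem:lem12}(i), and since $\ch_3(E)=0$ the relevant $E$ satisfy $\lim_{t\to 0}\arg Z_{t\omega}(E)=\pi/2$, so Lemma~\ref{lem:t0c} gives $E\in\widehat{M}_{\omega,1/2}(R,r,\beta)$. Conversely, given $E\in\widehat{M}_{\omega,1/2}(R,r,\beta)$, Lemma~\ref{lem:lem12}(ii) gives $\ch_3(E)=0$, so $\cl(E)=(R,r,\beta,0)$; Lemma~\ref{lem:t0c} gives that $E$ is $Z_{0\omega}$-semistable of limiting phase $\pi/2$; and Lemma~\ref{lem:lem12}(i) upgrades this to $Z_{t\omega}$-semistability for $0<t\ll 1$, so $E\in M_{t\omega}(R,r,\beta,0)$. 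Thus part (ii) is a bookkeeping corollary of results already established.

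For part (iii), apply Lemma~\ref{lem:rank1} to $E\in\widehat{M}_{\omega,\theta}(1,r,\beta)$ to obtain an exact sequence $0\to A\to E\to B\to 0$ in $\aA_{\omega}$ with $A\in\bB_{\omega}$ and $B\in\langle\pi^{\ast}\Pic(\mathbb{P}^1)\rangle_{\ex}$; since $\rank(B)=\rank(E)=1$ and the rank-one objects of $\langle\pi^{\ast}\Pic(\mathbb{P}^1)\rangle_{\ex}$ are exactly the single line bundles, $B\cong\pi^{\ast}\oO_{\mathbb{P}^1}(m)$, hence $\ch_2(A)=\beta$ and $\ch_3(A)=0$. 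Because $\ch_3(A)=0$, the $Z_{0\omega}$-Harder--Narasimhan factors of $A$ have limiting phase $\le\pi/2$ and cannot include an $\aA_{\omega}(0)$-factor (which would have $\ch_3>0$), so $A\in\aA_{\omega}(1/2)$: thus $A=0$ or $A\in\bB_{\omega}(1/2)$. If $\beta=0$, then $A\in\bB_{\omega}(1/2)$ would have $\ch_2(A)\cdot\omega=0$, contradicting the preliminary fact; so $A=0$ and $E\cong\pi^{\ast}\oO_{\mathbb{P}^1}(r)$, which is $\widehat{Z}_{\omega,\theta}$-stable by the argument of Lemma~\ref{lem:Ostable}. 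If $\beta\ne 0$, then $A\ne 0$, so $A\in\bB_{\omega}(1/2)$ and $\beta\cdot\omega=\ch_2(A)\cdot\omega<0$ (in particular $\widehat{M}_{\omega,\theta}(1,r,\beta)=\emptyset$ whenever $\beta\cdot\omega=0\ne\beta$). By part (i), the isomorphism class of $E$ is $\widehat{Z}_{\omega,\theta'}$-semistable for every $\theta'$ in the last chamber $(0,\theta_{k-1})$, so semistability applied to the fixed subobject $A$ forces $\arg\widehat{Z}_{\omega,\theta'}(A)\le\pi\theta'$, hence $\ch_1(A)<0$ and $|\ch_1(A)|\ge|\beta\cdot\omega|/\tan(\pi\theta')$ for all $\theta'\in(0,\theta_{k-1})$ --- impossible, since the right-hand side is unbounded as $\theta'\to 0$. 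Therefore $\widehat{M}_{\omega,\theta}(1,r,\beta)=\emptyset$ for $\beta\ne 0$ and $\theta$ in the last chamber.

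The main obstacle is the Bogomolov-inequality bookkeeping inside part (i): one must combine the Lemma~\ref{lem:rank10} filtration with Harder--Narasimhan and Jordan--H\"older filtrations for $\mu_{\omega}$-stability and feed the $\widehat{Z}_{\omega,\theta}$-semistability inequalities into Lemma~\ref{lem:Bog} and the Hodge index theorem to bound the numerical classes of all rank-zero sub/quotients of a $\widehat{Z}_{\omega,\theta}$-semistable object of fixed class. This is the same kind of argument as in the proofs of Lemma~\ref{finset}, Lemma~\ref{bound:rank0}, and the finiteness statement in Lemma~\ref{lem:constructible}(ii), but carried out for the pair $(\widehat{Z}_{\omega,\theta},\aA_{\omega}(1/2))$; everything else is a fairly direct transcription of arguments already in the paper.
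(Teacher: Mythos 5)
Parts (ii) and (iii) of your proposal are correct and essentially the paper's own arguments: (ii) is exactly the combination of Lemma~\ref{lem:lem12} and Lemma~\ref{lem:t0c}, and in (iii) your use of part (i) to propagate the small-$\theta$ contradiction over the whole last chamber is the intended argument (made a bit more explicit than in the paper); your alternative check that $A\in\bB_{\omega}(1/2)$, via $\ch_3(A)=0$ and the exclusion of $\aA_{\omega}(1)$- and $\aA_{\omega}(0)$-factors, is also fine.

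The gap is in part (i), at the one step that carries all the content: bounding $\ch_1(A)$ for the rank-zero objects $A\in\bB_{\omega}(1/2)$ with $0<-\ch_2(A)\cdot\omega\le-\beta\cdot\omega$, which is what makes the set of wall values of $\theta$ finite. You propose to get this from the Lemma~\ref{lem:rank10} filtration of $A$ together with ``the $\widehat{Z}_{\omega,\theta}$-semistability inequalities applied to partial sums'', Lemma~\ref{lem:Bog} and the Hodge index theorem, ``exactly as in Lemma~\ref{finset} and Proposition~\ref{prop:wall}''. This does not go through as stated. First, the partial sums of that filtration are subobjects of $A$ in $\bB_{\omega}$ (or $\aA_{\omega}$), not in $\aA_{\omega}(1/2)$, so the $\widehat{Z}_{\omega,\theta}$-semistability of $E$ gives no inequality for them. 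Second, and more seriously, $\widehat{Z}_{\omega,\theta}$ only sees $(\ch_0,\ch_1,\ch_2\cdot\omega)$, whereas the bookkeeping of Lemma~\ref{finset} and Lemma~\ref{lem:Bog} needs $\ch_3$-bounds on the pieces $K_1\in\fF_{\omega}$ and $K_3\in\tT_{\omega}^{\rm{pure}}[-1]$; in Proposition~\ref{prop:wall} those bounds came from the real part of $Z_{t\omega}$, which has no counterpart for $\widehat{Z}_{\omega,\theta}$. The purely numerical data you do have ($\ch_3(A)=0$ and $\lvert \ch_2(A)\cdot\omega\rvert$ bounded) cannot bound $\ch_1(A)$ inside $\bB_{\omega}$: for example $F\oplus \oO_{X_p}^{\oplus m}$ with $F\in\bB_{\omega}(1/2)$ has the same $\ch_3$ and $\ch_2\cdot\omega$ as $F$ but unbounded $\ch_1$. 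What rules such objects out is membership in $\bB_{\omega}(1/2)$, and your argument never uses it beyond $\ch_3=0$.

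The paper exploits that membership through the $t$-picture rather than through Bogomolov: take the Harder--Narasimhan factors $F_i$ of $A$ with respect to $Z_{0\omega}$-stability; each $F_i$ again lies in $\bB_{\omega}(1/2)$, hence has class $(0,r_i,\beta_i,0)$ and is $Z_{t\omega}$-semistable for $0<t\ll 1$ by Lemma~\ref{lem:lem12}~(i), so Lemma~\ref{bound:rank0} bounds each $r_i$ (there the needed $\ch_3$-control is available because the rank-zero object itself is $Z_{t\omega}$-semistable); since each nonzero $F_i$ has $-\beta_i\cdot\omega\ge 1$, there are at most $-\beta\cdot\omega$ factors, so $\ch_1(A)=\sum_i r_i$ is bounded, only finitely many pairs $(\ch_1(A),\ch_2(A)\cdot\omega)$ occur, and hence only finitely many $\theta$ with $\arg\widehat{Z}_{\omega,\theta,0}(\widehat{\cl}_0(A))=\pi\theta$. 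You gesture at Lemma~\ref{bound:rank0} in your closing paragraph, but the reduction to it (HN factors with respect to $Z_{0\omega}$ plus Lemma~\ref{lem:lem12}~(i), and the bound $N\le-\beta\cdot\omega$ on the number of factors) is the missing step; with it in place of your Bogomolov/Hodge-index argument, part (i) is complete.
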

\begin{proof}
(i) Take $E\in \bB_{\omega}(1/2)$
with $\widehat{\cl}_0(E)=(r', \beta')$. 
Suppose that $0\le -\beta' \cdot \omega \le -\beta \cdot \omega$, 
and let $F_1, F_2, \cdots, F_N \in \bB_{\omega}$ be the
Harder-Narasimhan factors of $E$ with respect
to $Z_{0\omega}$-stability. 
Since $E\in \bB_{\omega}(1/2)$, 
we have $F_i \in \bB_{\omega}(1/2)$
for all $i$, and we write $\widehat{\cl}_0(F_i)=(r_i, \beta_i)$. 
Because $-\beta_i \cdot \omega \le -\beta \cdot \omega$,
Lemma~\ref{lem:lem123} (i) and  
Lemma~\ref{bound:rank0} imply that there is only a 
finite number of possibilities for $r_i$
w.r.t. fixed $\beta$ and $\omega$. 
Also noting that $N\le -\beta \cdot \omega$, the value
\begin{align*}
r'=\sum_{i=1}^{N} r_i, 
\end{align*}
is also bounded. Therefore 
for fixed $\beta$ and $\omega$, the set of 
$\theta \in (0, 1/2]$ satisfying 
\begin{align*}
\widehat{Z}_{\omega, \theta}(E) 
&=-r' -(\omega \cdot \beta') \sqrt{-1} \\
&\in \mathbb{R}_{>0} e^{i\pi \theta},
\end{align*}
for some $E \in \bB_{\omega}(1/2)$
with $\widehat{\cl}_0(E)=(r', \beta')$, 
$-\beta' \cdot \omega \le -\beta \cdot \omega$
is a finite set. 
If we denote this finite set by 
$0=\theta_k <\theta_{k-1} < \cdots <\theta_{1}<\theta_0=1/2$, 
then $\theta_{\bullet}$ satisfies the desired 
condition. 

(ii) The result of (ii) is a consequence of 
Lemma~\ref{lem:lem123} (i) and Lemma~\ref{lem:t0c2}. 

(iii) For
an object $E \in \widehat{M}_{\omega, \theta}(1, r, \beta)$,
take an exact sequence in $\aA_{\omega}$, 
\begin{align}\label{seq:AEB:A12}
0 \to A \to E \to B \to 0, 
\end{align}
with $A \in \bB_{\omega}$ and $B\in  \pi^{\ast}\Pic(\mathbb{P}^1)$, as 
in Lemma~\ref{lem:rank1}.
By Lemma~\ref{lem:Ostable}, we have $B \in \aA_{\omega}(1/2)$, 
hence $A \in \bB_{\omega}(1/2)$ by Lemma~\ref{lem:lem123}, 
i.e. (\ref{seq:AEB:A12}) is an exact sequence in $\aA_{\omega}(1/2)$. 
Suppose that $A$ is non-zero. 
Then we can write 
$\widehat{\cl}_0(A)=(r', \beta)$ for some $r' \in \mathbb{Z}$, 
and $\beta$ should satisfy $\beta \cdot \omega \neq 0$. 
By the $\widehat{Z}_{\omega, \theta}$-semistability of $E$, we have 
\begin{align*}
\arg \widehat{Z}_{\omega, \theta}(A) \le \pi \theta.  
\end{align*}
The above inequality is equivalent to 
\begin{align*}
\frac{r'}{\beta \cdot \omega} \ge \frac{1}{\tan \pi\theta}.
\end{align*}
Since the RHS goes to $\infty$ for $\theta \to 0$, the 
object $E$ is $\widehat{Z}_{\omega, \theta}$-semistable
for $0<\theta \ll 1$ only if 
$A=0$, i.e. $E\cong \pi^{\ast}\oO_{\mathbb{P}^1}(r)$. 
Therefore we have $\widehat{M}_{\omega, \theta}(1, r, \beta) =\emptyset$
for $\beta \neq 0$ and $0<\theta \ll 1$, 
and an only possible object in 
$\widehat{M}_{\omega, \theta}(1, r, 0)$
for $0<\theta \ll 1$ 
is $\pi^{\ast}\oO_{\mathbb{P}^1}(r)$. 
On the other hand, 
since $\aA_{\omega}(1/2) \subset \cC_{\omega}^{\perp}$, 
(cf.~(\ref{ROCom}),)
Lemma~\ref{lem:Ostable} immediately 
implies that $\pi^{\ast}\oO_{\mathbb{P}^1}(r)$ 
is $\widehat{Z}_{\omega, \theta}$-stable 
for any $\theta \in (0, 1)$.
Therefore we obtain the result. 
\end{proof}

Todai Institute for Advanced Studies (TODIAS), 

Kavli Institute for the Physics and 
Mathematics of the Universe, 

University of Tokyo, 
5-1-5 Kashiwanoha, Kashiwa, 277-8583, Japan.

\textit{E-mail address}: yukinobu.toda@ipmu.jp

\end{document}